\documentclass[10pt]{amsart}

\usepackage{mathtools}
\mathtoolsset{showonlyrefs}

\usepackage{graphicx}
\usepackage{amsmath,amssymb,amsthm}
\usepackage[english]{babel}

\usepackage[utf8]{inputenc}

\usepackage{amsfonts}
\usepackage{amsmath}
\usepackage{latexsym}
\usepackage{amscd}
\usepackage{shadow}
\usepackage{a4wide}
\usepackage{endnotes}
\usepackage{amsopn}
\usepackage{url}
\usepackage{dsfont}
\usepackage{bbold}
\DeclareSymbolFontAlphabet{\mathbb}{AMSb}
\DeclareMathAlphabet{\mathbbold}{U}{bbold}{m}{n}

\usepackage{centernot}

\usepackage{enumitem}

\usepackage{tensor}

\usepackage[foot]{amsaddr}

\usepackage{hyperref}
\hypersetup{colorlinks=true}

\usepackage[margin=2cm,bottom=2.5cm,top=2.5cm]{geometry}

\numberwithin{figure}{section}
\theoremstyle{plain}
\newtheorem{thm}{Theorem}[section]
\newtheorem{theoreme}{Theorem}[section]
\newtheorem{prop}[thm]{Proposition}

\newtheorem{cor}[thm]{Corollary}
\newtheorem{lemma}[thm]{Lemma}
\numberwithin{equation}{section}

\theoremstyle{remark}

\newtheorem{rmq}[thm]{Remark}
\newtheorem{rem}[thm]{Remark}

\theoremstyle{definition}
\newtheorem{dfn}[thm]{Definition}

\newcommand{\eps}{\varepsilon}

\newcommand{\ceps}{\epsilon}

\newcommand{\hatun}[1]{\overset{\lower.9em\hbox{${\scriptscriptstyle 1 \wedge}$}}{#1}}
\newcommand{\hatdeux}[1]{\overset{\lower.9em\hbox{${\scriptscriptstyle \!\!2 \wedge}$}}{#1}}

\renewcommand\Re{\mathrm{Re}\,} \renewcommand\Im{\mathrm{Im}\,}
\newcommand\R{{\mathbb R}} \newcommand\N{{\mathbb N}}
\newcommand\Z{{\mathbb Z}}

\newcommand\supp{{\mathrm{supp}\hskip 0.05cm}}

\renewcommand{\Im}{  \text{Im}   }
\renewcommand{\Re}{  \text{Re}   }

\def\ra{\rightarrow}

 \def\cdotv{\raise 2pt\hbox{,}}

\renewcommand\Re{\mathrm{Re}\,} \renewcommand\Im{\mathrm{Im}\,}

\makeatletter
\def\@tvsp{\mathchoice{{}\mkern-4.5mu}{{}\mkern-4.5mu}{{}\mkern-2.5mu}{}}
\def\ltrivert{\left|\@tvsp\left|\@tvsp\left|}
\def\rtrivert{\right|\@tvsp\right|\@tvsp\right|}
\makeatother

\def\ra{\rightarrow}

 \def\cdotv{\raise 2pt\hbox{,}}

\newcommand{\Grond}{\mathcal{G}}
\newcommand{\Prond}{\mathcal{P}}

\newcommand{\Ab}{\mathbb{A}}
\newcommand{\Af}{\mathfrak{A}}
\newcommand{\Bf}{\mathfrak{B}}
\newcommand{\tf}{\mathfrak{S}}
\newcommand{\xx}{\mathbbold{E}}
\newcommand{\yy}{\mathbbold{S}}

\newcommand{\tprime}{\varrho}\newcommand{\Lp}{\ell}
\newcommand{\Lo}{\mathfrak{L}}

\newcommand{\canonchi}{\boldsymbol{\chi}}
\newcommand{\cutoffchi}{\chi}

\usepackage{tikz}

\newcommand{\swallowtaildrawing}[1]{  \begin{tikzpicture}[
    x=#1pt,y=#1pt,baseline=-0.15ex,rotate=-90]
    \draw[line width=0.45pt,line cap=round,line join=round]
      plot[domain=-1.07:1.07,samples=241,variable=\t]
      ({\t*\t*\t-\t},
       {0.75*\t*\t*\t*\t-0.5*\t*\t});
  \end{tikzpicture}}
\newcommand{\swallowtail}{\mathord{\mathchoice
  {\swallowtaildrawing{12}}{\swallowtaildrawing{12}}
  {\swallowtaildrawing{8.5}}{\swallowtaildrawing{6}}}}

\begin{document}

\title[Dispersion for the Schrödinger equation inside strictly convex domains]{Dispersion for the Schrödinger equation inside strictly
  convex domains: the general case}

\author{Oana Ivanovici$^{1}$}
\address{$^{1}$Sorbonne Université, CNRS, Laboratoire Jacques-Louis Lions, LJLL, F-75005 Paris, France}
\email{oana.ivanovici@math.cnrs.fr}
\author{Fabrice Planchon$^{2,3}$}
\address{$^{2}$Sorbonne Universit\'e, CNRS, IMJ-PRG F-75005 Paris, France}\address{$^{3}$Institut universitaire de France (IUF), France}
  \email{fabrice.planchon@sorbonne-universite.fr}

      \thanks{{\it Keywords:} Dispersive estimates, (semiclassical) Schrödinger equation, Dirichlet boundary condition.\\
    \\
   The authors were partially supported by ERC grant ANADEL 757 996.}
\date{\today}

\begin{abstract}
{We consider a general smooth bounded strictly convex domain $\Omega\subset\mathbb{R}^d$ of dimension $d\geq2$ and describe dispersion for the semiclassical Schrödinger equation with Dirichlet boundary condition. Our results hold more generally on compact smooth Riemannian manifolds with smooth strictly convex boundary.} More specifically, we construct a sharp local in (semiclassical) time parametrix and then proceed to obtain dispersion estimates: our fixed-time decay rate for the Green function {exhibits a loss of $1/4$ in the exponent of $h/t$ with respect to the boundaryless case}. The loss is sharp for dispersion, as shown in \cite{doiSF21} in the case of a model convex domain. {On compact three-dimensional manifolds with smooth strictly convex boundary, the resulting spectrally localized Strichartz estimates yield global well-posedness in the energy space for the defocusing cubic nonlinear Schr\"odinger equation, thus matching the corresponding result for boundaryless manifolds due to \cite{bgt04}.  Moreover, we extend bounds on the time growth of higher Sobolev norms from \cite{PTV} to our setting.}

\end{abstract}

\maketitle
\par\medskip\noindent

\section{Introduction}
Let us consider the Schr\"odinger equation on a manifold $(\Omega,g)$, with a strictly convex boundary $\partial
\Omega$ (a precise definition of strict convexity will be provided later): 
\begin{equation}\label{schrod}
-i\partial_t v+\Delta_g v=\kappa |v|^{2} v, \quad v|_{t=0}=v_0,\quad  v|_{\mathbb{R}\times\partial\Omega}=0,
\end{equation}
where $\Delta_g$ denotes the Laplace operator with Dirichlet boundary condition, and $\kappa=0$ (linear equation) or $\kappa=\pm 1$ (defocusing or focusing nonlinear cubic equation).

In \cite{doiSF21} the first author recently solved the cubic nonlinear Schrödinger equation \eqref{schrod} on a {\it model} convex domain (called the Friedlander domain), matching known results on generic compact manifolds without boundary. It required obtaining dispersive bounds for the linear semiclassical Schrödinger flow, from which, using the standard $TT^*$ argument, Strichartz-type estimates follow. Dispersion for the linear Schr\"odinger flow in $\mathbb{R}^{d}$ reads as:
\begin{equation}\label{dispschrodrd}
\|e^{\pm it\Delta_{\mathbb{R}^d}}\|_{L^1(\mathbb{R}^d)\rightarrow L^{\infty}(\mathbb{R}^d)}\leq C(d)t^{-d/2}, \text{ for all } t\neq 0.
\end{equation}
Indeed, \eqref{dispschrodrd}, together with the unitary property of the flow on $L^2(\mathbb{R}^d)$ yield all known Strichartz estimates; the endpoint cases are more delicate (see \cite{KeTa98}, \cite{Fo05}, \cite{Vi07}).
On any boundaryless Riemannian manifold $(\Omega,g)$ one may follow the same path, replacing the exact formula by a parametrix, constructed locally within a small ball, thanks to finite speed of propagation for waves or in semiclassical time for Schr\"odinger flow, namely, on short-time, wavelength-sized intervals (i.e., their length is the inverse of the frequency), allowing for almost finite speed of propagation. By time rescaling, dispersion for the semiclassical Schr\"odinger equation in the Euclidean space reads, with $\psi \in C_{0}^{\infty}$ being a smooth cutoff to localize frequencies,
\begin{equation}\label{sclSchrod}
\sup\Big| \psi(hD_{t})e^{\pm i th \Delta_{\mathbb{R}^d}}\Big|
\leq \frac{C(d)}{h^{d}} \min \Big(1, ({h\over t})^{\frac{d}{2}}\Big) \text{ for all } 0<|t|\lesssim 1\,.
\end{equation}
While for $\Omega =\mathbb{R}^d$, dispersive properties of \eqref{schrod} are well understood, the study of Schr\"odinger-type dispersive equations on manifolds (with curved geometry or variable metrics) has proved particularly elusive, due to the presence of multiple reflected waves. In \cite{bgt04}, these linear estimates were used, together with a classical argument due to Yudovitch, to obtain global well-posedness for the defocusing cubic NLS on a generic 3D compact manifold without boundary. In \cite{doiSF21}, the goal was to  match this result in our context, with a model convex boundary. For $d\geq 2$, the so-called Friedlander model domain is the half-space
$\Omega_d=\{(x,y)|x>0, y\in\mathbb{R}^{d-1}\}$ with the metric $g_F$ associated with the following Laplace operator
\begin{equation}
  \label{eq:LapF}
\Delta_F=\partial^2_x+\sum_{j}\partial_{y_j}^2+ x\sum_{j,k}q_{j,k}\partial_{y_j}\partial_{y_k}\,,
\end{equation}
where $q_{j,k} $ are constants and $q(\theta)=\sum_{j,k}q_{j,k}\theta_j \theta_k$ is a positive definite
quadratic form. Note that $q$ is not, in general, invariant by rotations and we cannot reduce to the radial case in $y$. One may see $\Delta_{F}$ as the Laplace operator in geodesic normal coordinates near the boundary, but with all coefficients $q_{j,k}(x,y)$ frozen at their values on the boundary. Strict convexity of $\Omega_d$ with the metric associated from $\Delta_F$ is equivalent to ellipticity of $\sum_{j,k}q_{j,k}\partial_{y_j}\partial_{y_k}$. When $q_{j,j}=1$ and $q_{j\neq k}=0$ (i.e. when $q(\theta)=|\theta|^2$)
the domain $(\Omega_d,g_F)$ is a first-order approximation of the unit disk in polar coordinates $(r,\theta)$: $r=1-\frac x2$, $\theta=y$.

{
In the present work, we deal with the Schrödinger equation inside a general smooth bounded strictly convex domain in Euclidean space. More generally, we work on a compact smooth Riemannian manifold $(\Omega,g)$ of dimension $d\geq2$, with nonempty smooth strictly convex boundary. Here, strict convexity means that the second fundamental form of $\partial\Omega$, defined by $\mathrm{II}(X,Y)=g(\nabla_X\nu,Y)$ with $\nu$ the outward unit normal, is positive definite. This setting includes smooth bounded Euclidean domains whose boundary has strictly positive principal curvatures. More generally, a smooth convex domain in Euclidean space whose principal curvatures admit a uniform positive lower bound is necessarily bounded (it has compact closure), in every dimension $d\geq2$. This implication fails for general Riemannian metrics. We assume compactness to avoid additional geometric and analytic issues at infinity; extensions under suitable uniform geometric assumptions are beyond the scope of this paper.
}

In the linear part of the paper, $t$ denotes semiclassical time.
With the sign convention in \eqref{SE}, the propagators in semiclassical
and physical time are \( \mathcal U_h(t)=e^{-ith\Delta_g}\), $ e^{-it_{\mathrm{phys}}\Delta_g}$,  $t_{\mathrm{phys}}=ht$.
Thus $v_h(t)=u_h(ht)$, where
$(i\partial_{t_{\mathrm{phys}}}-\Delta_g)u_h=0$ and the initial data
are the same. For later convenience we set $D_t=-i\partial_t$. In particular, along these
homogeneous flows, $
 \psi(hD_t)v_h=\psi(-h^2\Delta_g)v_h$, $\psi(h^2D_{t_{\mathrm{phys}}})u_h=\psi(-h^2\Delta_g)u_h$. We will always specify which of the two time variables is used.
Our first result reads as follows: 
\begin{thm}\label{thmdisperS}
 Let $\psi \in C_{0}^{\infty}([1/2,3/2])$ such that $0\leq \psi\leq 1$. There exist
$C>0$, $T_{0}>0$, $1>h_0>0$ and $1\geq a_{0}>0$ such that, uniformly in $a\in ]0,a_{0}]$, $h\in (0,h_0)$ and  { $|t|\in(0,T_0]$}, the solution $v_h$ to
\begin{equation} \label{SE} 
\left\{ \begin{array}{l}
   (ih\partial_t- h^2\Delta_g) v_h=0,  \;\; \text{in } \Omega
\\ v_h|_{t=0}=v_{h,0}, 
 v_h|_{\partial\Omega}=0, \end{array} \right.
 \end{equation}
with $v_{h,0}=\psi(-h^2 \Delta_{g})\delta_a$, $\delta_a$ being any Dirac mass at distance $a$ from $\partial\Omega$, satisfies

\begin{equation}\label{dispco}
 \|\psi(-h^2\Delta_g)v_h(t)\|_{L^\infty(\Omega)}
 \leq \frac{C}{h^d}\min\left(1,
 \left(\frac h{|t|}\right)^{\frac{d-1}{2}}
 \left[\left(\frac h{|t|}\right)^{1/4}
       +\left(\frac h{|t|}\right)^{1/2}+h^{1/3}\right]\right).
\end{equation}
Up to changing $C$, the right-hand side is also
$C h^{-d}\min\{1,(h/|t|)^{d/2-1/4}\}$.
In physical time, the solution $u_h(t_{\mathrm{phys}})=v_h(t_{\mathrm{phys}}/h)$,
with the same initial datum, satisfies, for $0<|t_{\mathrm{phys}}|\leq hT_0$,
\begin{equation}\label{eq:dispersion-physical-time}
\begin{aligned}
 \|\psi(-h^2\Delta_g)u_h(t_{\mathrm{phys}})\|_{L^\infty(\Omega)}
 &\leq \frac{C}{h^d}
 \min\left(1,\left(\frac{h^2}{|t_{\mathrm{phys}}|}\right)^{d/2-1/4}\right)\\
 &=C\,
 \underbrace{\min\{h^{-d},|t_{\mathrm{phys}}|^{-d/2}\}}_{\text{frequency-localized free dispersion}}
 \underbrace{\max\left(1,\left(\frac{|t_{\mathrm{phys}}|}{h^2}\right)^{1/4}\right)}_{\text{loss}}.
\end{aligned}
\end{equation}
The Friedlander model shows that the decay exponent $d/2-1/4$ cannot be improved in general; see \cite[Theorem~1]{doiSF21}. In that model the kernel has size comparable to {$a^{1/4}h^{-d}\left(\frac{h}{|t|}\right)^{(d-1)/2+1/4}$} for $\sqrt a<|t|\leq T_0$ and $|t|h^{1/3}\ll a\leq a_0$.
\end{thm}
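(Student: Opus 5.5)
The plan follows the strategy of \cite{doiSF21} for the Friedlander model, now with variable coefficients. The first step is to localize. By almost finite speed of propagation for $e^{-ith\Delta_g}$ on semiclassical time intervals $|t|\le T_0$, the kernel of $\psi(-h^2\Delta_g)\mathcal U_h(t)$ is $O(h^\infty)$ away from a small neighborhood of the source point. If $a$ is not small, or if the source is far from $\partial\Omega$, the boundaryless parametrix gives the free bound \eqref{sclSchrod}. We may therefore work in geodesic normal coordinates $(x,y)$ near $\partial\Omega$, where
\[
-\Delta_g=-\partial_x^2+R(x,y,D_y)+\text{lower order},\qquad R(x,y,\eta)=\langle\eta,\eta\rangle_{y}+x\,q(x,y,\eta),
\]
with $q(0,y,\cdot)$ positive definite (the second fundamental form). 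This is exactly $\Delta_F$ with coefficients no longer frozen.

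The second step builds the parametrix. We split the data microlocally according to the angle of the rays with the boundary. Transverse directions (angle $\gg \max(a^{1/2},h^{1/3})$, or $|t|$ too short to reach $\partial\Omega$) are handled by a standard reflected-wave parametrix: each reflection is a non-degenerate Fourier integral operator, and only $O(1)$ reflections occur for $|t|\le T_0$ at fixed angle. These directions contribute the free rate $(h/|t|)^{d/2}$, which produces the $(h/t)^{1/2}$ term in \eqref{dispco}.

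Near-glancing directions need the gallery-mode construction. Following Melrose–Taylor/Eskin, we write the solution as a sum over reflections $N$ of oscillatory integrals of the form
\[
\int e^{\frac ih\left(y\eta+t\tau+\Phi_N(x,y,\eta,\tau)\right)}\,A\!\left(\eta^{2/3}\zeta(x,y,\eta)\right)\ldots\,d\eta\,d\tau .
\]
Here $\zeta$ solves the eikonal equation $\zeta(\partial_x\zeta)^2=\ldots$ with $\zeta=-x\,\cdot$ (curvature) $+\,\ldots$ near $x=0$, and the Airy functions $A$ replace the exact Airy-quotient spectral sum available in the model case. Equivalently, one conjugates $\Delta_g$ to the Friedlander normal form by a boundary Fourier integral operator, which is elliptic in the glancing region, and transfers the Airy–Poisson summation representation of \cite{doiSF21} modulo $O(h^\infty)$ errors. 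The phases $\Phi_N$ are perturbations of the model phases, with $N$ ranging up to $\sim |t|/\sqrt a$.

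The third step proves the estimates. Each term is bounded by stationary phase in $(y,\eta)$ in the $d-2$ non-degenerate tangential variables. The remaining two-dimensional integral is handled by degenerate stationary phase. Near caustics the phase has cusp or swallowtail singularities; normal forms for these give a gain of at most $(h/t)^{1/4}$ relative to $(h/t)^{1/2}$, and they also give the overlap count, namely that $O(1)$ values of $N$ interact at a given point. Summing over $N$ yields $h^{-d}(h/t)^{(d-1)/2+1/4}$. Directions with angle below $h^{1/3}$ are treated as whispering-gallery modes: the Airy eigenfunction bounds give the $h^{1/3}$ term. The final bound is the minimum of these contributions together with the trivial bound $h^{-d}$. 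The physical-time bound \eqref{eq:dispersion-physical-time} then follows by substituting $t=t_{\mathrm{phys}}/h$.

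The main obstacle is the third step: showing that the variable-coefficient phases $\Phi_N$ retain the model's degeneracy structure, with swallowtails of bounded order and non-degenerate unfoldings, uniformly in $N\lesssim |t|/\sqrt a$. We also need perturbation errors $O(N\cdot a\cdot\ldots)$ that do not accumulate over many reflections. I would attack this by using the $y$-dependent curvature only through a perturbative argument on the scale $|t|\le T_0$ small, choosing $T_0$ so that the accumulated error in $\Phi_N$ stays within a neighborhood where the singularity normal forms are stable.
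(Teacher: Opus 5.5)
\paragraph{The gap: the overlap count in your third step.} Your claim that only $O(1)$ values of $N$ interact at a given point is the wave-equation count used in \cite{ILLP}, and it is false for the Schr\"odinger flow. Here the group velocity is proportional to the frequency, so the observation point no longer fixes the angle of the ray: the tangential frequency $|\theta_c|\sim|y|/2t$ absorbs the distance travelled. Let $\gamma$ be the dyadic size of $\alpha$ (the squared angle). Then for \emph{every} $N$ in a band of length $\sim t/\sqrt\gamma$, the phase of $W_{N,h,\gamma}$ has a nondegenerate critical point $(A_c,\theta_c)$ with $A_c\sim1$ (Proposition~\ref{propcritptsalphaeta} and the discussion after Lemma~\ref{lemmaNgam}). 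In the almost transverse pieces each reflected wave does decay at the free rate: by Lemma~\ref{lem:transverse-directions}, $|W_{N,h,\gamma}|\lesssim h^{-d}(h/t)^{d/2}$. Their sum, however, is $h^{-d}(h/t)^{(d-1)/2}(ht/\gamma)^{1/2}$ (Proposition~\ref{proptransv}).

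Two of your claims break down as a result. First, take directions with angle $\gg\max(a^{1/2},h^{1/3})$ that still tend to zero with $h$. They undergo $t/\sqrt\gamma\gg1$ reflections and admit no uniform geometrical-optics parametrix; the paper's hyperbolic region lies a fixed distance $\delta$ from glancing. Already for $\gamma\sim(ht)^{1/2}$ these directions contribute $h^{-d}(h/t)^{(d-1)/2}(ht)^{1/4}$, a loss of the same order as the swallowtails, not the free rate. Second, for $h^{2/3}\ll\gamma\ll(ht)^{1/2}$ the sum over reflections has size $(ht/\gamma)^{1/2}\gg(ht)^{1/4}$. For instance, at $t\sim1$ and $\gamma\sim h^{2/3-\varepsilon}$ it is $h^{1/6+\varepsilon/2}$ instead of $h^{1/4}$. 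Each term has a genuine nondegenerate critical point, so estimating the reflections separately, however sharply, cannot give \eqref{dispco} there. Your gallery-mode argument only covers $\gamma\lesssim h^{2/3}$.

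\paragraph{What is missing.} The two sides of $\gamma\sim(ht)^{1/2}$ must be treated with different representations. This is the scale at which the number of reflections $t/\sqrt\gamma$ equals the number of gallery modes $\lambda_\gamma=\gamma^{3/2}/h$.

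For $\gamma\gtrsim(ht)^{1/2}$, the paper sums over reflections using weights instead of an overlap count. In the tangential piece $\gamma\sim a$, the swallowtail conditions are monotone in $N$ with derivative of size $1/N_t$ (Lemma~\ref{rmqNgrand}). Hence only $O(1)$ indices receive the worst bound $(ha/t)^{1/4}$ (Lemma~\ref{lemNpetit}). The remaining indices carry weights such as $1/(1+L|\mathbb{A}_0^2-1|^{1/2})$, which Lemma~\ref{rmqNgrand2} sums to $h^{1/3}$ and $(ht/a)^{1/2}$. The latter is $\le(ht)^{1/4}$ precisely because $a\ge(ht)^{1/2}$. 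For $\gamma\ge4a$, the geometric sum of $(ht/\gamma)^{1/2}$ suffices.

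For $\gamma\lesssim(ht)^{1/2}$, the paper returns through the Airy--Poisson formula to the sum over the $\lesssim\lambda_\gamma$ gallery modes. It performs stationary phase in $\theta$ with the Airy factors kept in the amplitude, then uses $\sum_{k\le L}Ai^2(b-\omega_k)/L'(\omega_k)\lesssim L^{1/3}$ with $L=(ht)^{3/4}/h$, which yields $(ht)^{1/4}$ (Proposition~\ref{propdispsumk}).

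For $a\lesssim h^{2/3-\varepsilon}$, the general case has no exact gallery modes, only quasimodes for $k\ll h^{-1/4}$. The datum must therefore be rebuilt in two pieces before any Airy bound applies.

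By contrast, your worry about errors accumulating over many reflections does not materialize. In the Airy--Poisson representation the $N$-dependence of the phase is exactly $-\frac43N\alpha^{3/2}$, up to a constant and a factor absorbed into the amplitude. Only the single-reflection corrections coming from $\Gamma$ need to be controlled.
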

\begin{rmq}
If the source lies at distance $a>a_0$ from the boundary, we use the strictly interior estimate. After decreasing $T_0$ if necessary, the spectrally localized kernel is bounded by
\[
 Ch^{-d}\min\left(1,\left(\frac{h}{|t|}\right)^{d/2}\right),
 \qquad 0<|t|\leq T_0.
\]
These sources lie in strictly interior coordinate patches. In Section~\ref{sec:strictly-interior-contribution} we reduce to a boundaryless problem and apply the semiclassical parametrix of \cite{bgt04}. Thus the bound in Theorem~\ref{thmdisperS} holds for all source points.
\end{rmq}
\begin{rmq}
  Optimality concerns the decay exponent and its loss with respect to the free dispersion; it does not assert saturation of \eqref{dispco} uniformly in $a$ throughout the entire time interval.
\end{rmq}
\begin{rmq}
{After spectral localization, the kernel is $O(h^\infty)$ between points
that cannot be connected by a reflected ray in the time interval under
consideration. This follows by nonstationary phase.}  Consequently, the
compactness assumption may be replaced by uniform geometric and symbol
bounds on the finite family of coordinate patches met by the flow, together
with a uniform positive lower bound for the second fundamental form in a (tubular) neighborhood of the
boundary.
\end{rmq}

{Theorem \ref{thmdisperS} gives the dispersion estimate in both semiclassical and physical time. The physical-time formulation can be compared to \eqref{sclSchrod}; in semiclassical time, the loss is $1/4$ in the exponent of $h/t$.} {We may informally relate this loss} to the presence of caustics in arbitrarily
small times if $a$ is small. Moreover, one of the key features in Theorem \ref{thmdisperS} is that $T_{0}$ depends only on the geometry of $\partial\Omega$ and the metric $g$ : the dispersive bound holds {\it uniformly} with respect to both the source point and its distance $a$ to the boundary and the frequency $1/h$. In fact, say for $a=h^{\nu}$, $\nu>0$, there are at most $1/\sqrt a =h^{-\nu/2}$ reflections, and caustics between them, as we will see later ; so in the high-frequency regime $h\ra 0$, we have to deal with an increasingly large number of caustics, even to travel a small distance over a small time $T_{0}$. These caustics occur because optical rays are no longer
diverging in the normal direction, which yields less
dispersion than in the $\mathbb{R}^d$ case. 
{In the Friedlander model, tracking these caustics yields the optimality of the decay exponent in the regime specified in Theorem~\ref{thmdisperS}.}

\begin{thm}\label{thmStrichartz}
\leavevmode

Let $d\geq2$, $2\leq q,r\leq\infty$, and assume
\[
 \frac1q\leq\left(\frac d2-\frac14\right)
                    \left(\frac12-\frac1r\right),\qquad
 s=\frac d2-\frac2q-\frac dr.
\]
There exist $T_0>0$ and $C>0$, independent of $h\in(0,h_0)$ and
$v_{h,0}\in L^2(\Omega)$, such that the solution to \eqref{SE} satisfies
\begin{equation}\label{eq:strichartz-semiclassical}
 \|\psi(-h^2\Delta_g)v_h\|_{L_t^q([-T_0,T_0];L^r(\Omega))}
 \leq C h^{-s-1/q}\|v_{h,0}\|_{L^2(\Omega)}.
\end{equation}
Equivalently, for $u_h(t_{\mathrm{phys}})=
e^{-it_{\mathrm{phys}}\Delta_g}v_{h,0}$,
\begin{equation}\label{eq:strichartz-physical-time}
 \|\psi(-h^2\Delta_g)u_h\|_{L_{t_{\mathrm{phys}}}^q([-hT_0,hT_0];L^r(\Omega))}
 \leq C h^{-s}\|v_{h,0}\|_{L^2(\Omega)}.
\end{equation}
Along the corresponding homogeneous solutions, the spectral
cutoff can also be written as $\psi(hD_t)$ or
$\psi(h^2D_{t_{\mathrm{phys}}})$.
\end{thm}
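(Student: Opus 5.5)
The plan is the standard $TT^*$ argument (Keel--Tao in abstract form), with Theorem~\ref{thmdisperS} as the dispersive input. Work in semiclassical time, where the decay exponent is $\sigma:=\frac d2-\frac14$. Set $T f=\psi(-h^2\Delta_g)e^{-ith\Delta_g}f$ on $[-T_0,T_0]$. Since $\psi(-h^2\Delta_g)$ commutes with the flow and the flow is unitary on $L^2(\Omega)$, we have $\|T(t)f\|_{L^2}\le\|f\|_{L^2}$. This is the energy estimate.

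For the dispersive estimate we need a bound on $T(t)T(s)^*=\psi^2(-h^2\Delta_g)e^{-i(t-s)h\Delta_g}$ from $L^1$ to $L^\infty$. Its kernel at $(x,y)$ is the solution of \eqref{SE} with datum $\tilde\psi(-h^2\Delta_g)\delta_y$, where $\tilde\psi=\psi^2$, evaluated at $x$, after a further harmless spectral cutoff. Theorem~\ref{thmdisperS} (applied with $\tilde\psi$, or with $\psi$ after writing $\psi^2=\psi\cdot\psi$ and absorbing one factor) and the subsequent remark on interior sources give
\[
 \sup_{x,y}|K_{t,s}(x,y)|\le C h^{-d}\min\big(1,(h/|t-s|)^{\sigma}\big),
\]
uniformly in $y\in\Omega$, for $|t-s|\le 2T_0$. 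After shrinking $T_0$ by half this covers all $t,s\in[-T_0,T_0]$. Hence $\|T(t)T(s)^*\|_{L^1\to L^\infty}\le Ch^{-d}(h/|t-s|)^{\sigma}$.

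Next rescale to remove $h$. Put $t=h\tau$, so $\tau\in[-T_0/h,T_0/h]$. The dispersive bound becomes $Ch^{-d}|\tau-\tau'|^{-\sigma}$ and the energy bound is unchanged. Normalizing the operator by $h^{d/2}$ gives exactly the Keel--Tao hypotheses with decay exponent $\sigma$. For $\sigma>1$, which holds when $d\ge3$, and for $d=2$ with $\sigma=3/4$, the abstract theorem yields
\[
 \|h^{d/2}T f\|_{L^q_\tau L^r}\lesssim\|f\|_{L^2}
\]
for $\sigma$-admissible pairs $\frac1q=\sigma(\frac12-\frac1r)$, except the endpoint when $\sigma=1$. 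Interpolating the dispersive bound with $L^2$ conservation gives the operator bound $L^{r'}\to L^r$ with norm $h^{-d(1-2/r)}|\tau-\tau'|^{-\sigma(1-2/r)}$. Hardy--Littlewood--Sobolev then handles the non-endpoint case, and Keel--Tao handles the endpoint $q=2$ when $\sigma>1$. Undoing the scaling, $L^q_t=h^{1/q}L^q_\tau$, so $\|\psi v_h\|_{L^q_tL^r}\lesssim h^{1/q}h^{-d(\frac12-\frac1r)}\|v_{h,0}\|_{L^2}$.

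To reach the stated exponent $h^{-s-1/q}$, and to cover the non-sharp pairs $\frac1q<\sigma(\frac12-\frac1r)$, I would use Bernstein/Sobolev. The spectral localization $\psi(-h^2\Delta_g)$ allows $L^{r_1}\to L^{r}$ for $r\ge r_1$ at cost $h^{-d(1/r_1-1/r)}$. It also allows trading $q$ for a smaller $q$ on the finite time interval by Hölder, losing $T_0^{1/q_1-1/q}$, which is harmless. Bookkeeping for the sharp pair gives $h^{1/q-d/2+d/r}$. Since $s=\frac d2-\frac2q-\frac dr$, this equals $h^{-s-1/q}$. Non-sharp pairs follow by choosing a sharp pair with the same $q$ and larger $1/r$ and applying Bernstein, which gives the same formula. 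The physical time version follows by the substitution $t_{\mathrm{phys}}=ht$, which removes the factor $h^{-1/q}$.

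The main obstacle is the uniformity of the kernel bound in the source point, including near the boundary and in the transition region $a\sim a_0$. This must be stated for the operator $\psi^2(-h^2\Delta_g)$, and the $O(h^\infty)$ tails must be controlled. I would handle this by combining Theorem~\ref{thmdisperS} with the interior remark, and by using the $L^2$-boundedness of functional calculus together with the Sobolev embedding $\|\psi(-h^2\Delta)\|_{L^1\to L^\infty}\lesssim h^{-d}$ for the $t$ small regime. A further delicate point is the $d=2$ endpoint $q=2$, $\sigma=3/4<1$. There the condition forces $r$ to be finite with $q\geq 8/3$, so no endpoint arises.
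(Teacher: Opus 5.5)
Your proposal is correct and is essentially the paper's proof. The paper also uses $TT^*$ with kernel $\mathcal K_h(t-t')$, bounded via Theorem~\ref{thmdisperS} on a time interval short enough that all differences stay in the dispersive range, then Keel--Tao (including the endpoint when $d\geq3$), then spectral Bernstein from the sharp pair with the same $q$; your preliminary rescaling $t=h\tau$ is only cosmetic, since applying Keel--Tao directly with dispersive constant $h^{-(d/2+1/4)}$ gives the same power $h^{-(d/2+1/4)(1/2-1/r)}=h^{-s-1/q}$. One small slip that changes nothing: for $d=2$, $r=\infty$ is admissible (the pair $(8/3,\infty)$), but there $q>2$, so the non-endpoint Hardy--Littlewood--Sobolev argument covers it.
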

Indeed, the change of variables $t_{\mathrm{phys}}=ht$ gives
\[
 \|\psi(-h^2\Delta_g)v_h\|_{L_t^q([-T_0,T_0];L^r)}
 =h^{-1/q}\|\psi(-h^2\Delta_g)u_h\|_{L_{t_{\mathrm{phys}}}^q([-hT_0,hT_0];L^r)}.
\]
Here $1/q=0$ when $q=\infty$. Thus the power in physical time on
intervals of length $O(h)$ is $h^{-s}$, while on a fixed interval
of semiclassical time it is $h^{-s-1/q}$.
The proof of Theorem \ref{thmStrichartz} follows from Theorem \ref{thmdisperS} using the classical $TT^*$ argument and the endpoint argument of Keel-Tao \cite{KeTa98} for $q=2$ when $d\geq 3$. 
{ The index $s$ in the physical-time estimate on intervals of length $O(h)$ illustrates the loss relative to free Strichartz estimates: for $d=2$}, for which the (forbidden) endpoint $(2,\infty)$ with $s=0$ is replaced by $(8/3,\infty)$ with $s=1/4$. This improves \cite{bss08} where for $d=2$, one has $(3,\infty)$. More generally, \cite{bss08} obtains $(2,\infty)$ as an endpoint for $d\geq 3$, e.g. $s=d/2-1$, whereas we have $(2,2(2d-1)/(2d-5))$ as our endpoint pair, with $s=1/(2d-1)$.  For $d=3$, our endpoint pair is $(2,10)$: that $10<+\infty$ allows us to adapt the argument from \cite{bgt04} and obtain well-posedness for the cubic equation.

In Theorem \ref{thmdisperS}, we mainly focus on constructing a sharp parametrix for the Schrodinger equation \eqref{SE} in semiclassical time, providing optimal bounds on the amplitude of the wave, including at a discrete set of swallowtail-type caustics whose number becomes arbitrarily large when the source gets closer and closer to the boundary. 
While a natural outcome of this parametrix is optimal dispersion bounds, we believe that such a sharp parametrix will prove useful for a broad range of applications beyond dispersive effects or localization of eigenfunctions, including sharp quantitative versions of propagation of singularities that are of importance in control theory.

In what follows, we provide applications of Theorem \ref{thmStrichartz} to the three-dimensional nonlinear Schrödinger equation, and let $\Omega$ be a three-dimensional compact manifold with smooth, strictly convex boundary. 
\begin{thm}\label{gwp}
Let $d=3$ and $v_{0}\in H^{1}_{0}(\Omega)$. There exists a unique global-in-time solution $v\in C_{t}(H^{1}_{0}(\Omega))$ to \eqref{schrod} with $\kappa=1$ (defocusing equation), and its energy is conserved along the flow.  { For $\kappa=-1$, the same global conclusion holds provided $\|v_0\|_{L^2(\Omega)}\|\nabla_gv_0\|_{L^2(\Omega)}<c_\Omega$, for a sufficiently small constant $c_\Omega>0$ depending only on the geometry.}
\end{thm}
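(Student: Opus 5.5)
The plan is to follow the Burq--Gérard--Tzvetkov scheme of \cite{bgt04}, using Theorem~\ref{thmStrichartz} in place of the boundaryless Strichartz estimates. Sum the spectrally localized bounds \eqref{eq:strichartz-physical-time} over dyadic frequencies $h=2^{-j}$, and glue the $O(h)$ physical-time intervals (about $h^{-1}$ of them per unit time). This gives, on $[0,1]$ in physical time, an estimate with loss,
\[
 \|e^{-it\Delta_g}v_0\|_{L^q([0,1];L^r(\Omega))}\leq C\|v_0\|_{H^{s+1/q}(\Omega)},
\]
for admissible pairs in the sense of Theorem~\ref{thmStrichartz}. Summing the dyadic pieces in $L^r$ requires the Littlewood--Paley square function estimate for the Dirichlet Laplacian, which I take from the spectral calculus (Gaussian heat kernel bounds on compact manifolds with boundary). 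The key choice is the pair $(q,r)=(2,10)$ in $d=3$, where $s=1/5$. It gives $\|e^{-it\Delta_g}v_0\|_{L^2_tL^{10}}\lesssim\|v_0\|_{H^{1/5+1/2}}$. Interpolating, or using nearby pairs with $q$ slightly larger than $2$, I obtain the Strichartz bound $\|u\|_{L^p_tL^\infty_x}\lesssim\|v_0\|_{H^{\sigma}}$ for some $p>2$ and some $\sigma<1$. The point of $r<\infty$ at the endpoint is exactly that a Sobolev embedding $W^{\epsilon,10}\subset L^\infty$ costs only a little, so one gets $L^p_tL^\infty_x$ with $\sigma=7/10+\epsilon<1$. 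By Christ--Kiselev (or directly, since $p>2$), the same bound holds for the Duhamel term with $L^1_tH^\sigma$ forcing.

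Next comes local well-posedness in $H^1_0$. I run a contraction in
\[
 X_T=C([0,T];H^1_0)\cap L^p([0,T];L^\infty),
\]
using $\||v|^2v\|_{L^1_TH^1}\lesssim T^{1-2/p}\|v\|_{L^p_TL^\infty}^2\|v\|_{L^\infty_TH^1}$. Here $H^1_0$ and $H^\sigma$ are understood as domains of powers of $-\Delta_g$, so the equation and its Duhamel formula commute with the functional calculus. The product estimate in $H^1$ holds because $\nabla(|v|^2v)$ is controlled by $|v|^2|\nabla v|$, and Dirichlet data is preserved since $v\in H^1_0$ and $|v|^2v\in H^1_0$. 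For the $L^p_tL^\infty$ norm I use the estimate above with $\sigma<1$, controlling the Duhamel term by $\||v|^2v\|_{L^1_TH^1}$. The existence time depends only on $\|v_0\|_{H^1}$, and uniqueness holds in $X_T$. One point needs care: the Strichartz estimate with loss needs $H^{\sigma}$ with $\sigma\le1$. Uniqueness in the full class $C_tH^1_0$ should follow from the Yudovitch-type argument of \cite{bgt04}, which uses $\|v\|_{L^r}\lesssim\sqrt r\|v\|_{H^1}$ in $d=2$. In $d=3$ I would instead just use that any $C_tH^1_0$ solution lies in $L^p_tL^\infty$ by the same Strichartz bound, which reduces uniqueness to the class $X_T$.

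Globalization follows from conservation of mass and energy,
\[
 E(v)=\tfrac12\|\nabla_g v\|_{L^2}^2+\tfrac{\kappa}{4}\|v\|_{L^4}^4,
\]
justified by regularizing the data (taking $H^2\cap H^1_0$ data and persistence of regularity) and passing to the limit. For $\kappa=1$, energy controls $\|v\|_{H^1}$ uniformly, so iterating the local theory gives the global solution. For $\kappa=-1$, I use the Gagliardo--Nirenberg inequality $\|v\|_{L^4}^4\leq C_\Omega\|v\|_{L^2}\|\nabla v\|_{L^2}^3$ on $H^1_0(\Omega)$, which holds in 3D with no lower-order term by Dirichlet Poincaré. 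A continuity argument then shows that $\|v_0\|_{L^2}\|\nabla v_0\|_{L^2}<c_\Omega$ keeps $\|\nabla v(t)\|_{L^2}$ bounded by a multiple of $\|\nabla v_0\|_{L^2}$, which yields global existence.

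The main obstacle I expect is the first step: turning the frequency-localized, short-time estimate into the inhomogeneous $L^p_tL^\infty_x$ bound with $\sigma<1$. This requires checking the exponent bookkeeping at the $(2,10)$ endpoint and the Littlewood--Paley theory for the Dirichlet Laplacian in $L^{10}$.
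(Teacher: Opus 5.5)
There is a genuine gap in your first step, and both your local theory and your uniqueness reduction depend on it. The embedding $W^{\epsilon,10}\subset L^\infty$ fails in dimension three for small $\epsilon$: it needs more than $3/10$ derivatives. So the $(2,10)$ estimate at $H^{7/10}$ yields $L^2_tL^\infty_x$ only at the level $H^{1+}$. At exactly $H^1$, dyadic summation of the block bounds gives a $B^1_{2,1}$ norm, not $H^1$.

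More generally, Theorem~\ref{thmStriSob} costs $s+\tfrac1q=\tfrac32-\tfrac1q-\tfrac3r$ derivatives. This is an affine function of $(1/q,1/r)$, so interpolation preserves it, and reaching $L^\infty_x$ costs another $3/r$. Any $L^q_tL^\infty_x$ bound you can extract therefore needs $\sigma\geq\tfrac32-\tfrac1q\geq1$, and taking $q>2$ only makes this worse. Consequently the contraction in $C_TH^1_0\cap L^p_TL^\infty$ does not close for $H^1_0$ data; it would close only for $H^s$ with $s>1$. Your claim that every $C_tH^1_0$ solution lies in $L^p_tL^\infty_x$ is also unjustified, so uniqueness in $C_tH^1_0$ is not reached.

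What the finite endpoint $r=10$ really buys is a failure only at $p=\infty$. The paper starts from the localized estimate \eqref{tameH1}. There the nonlinear term sits in the dual endpoint space $L^2_tL^{10/9}$, through the double-endpoint Duhamel bound of Proposition~\ref{inStri} and the product estimate $\|\Delta_j(|v|^2v)\|_{10/9}\lesssim h\|v\|_{H^1}^3$. Combining this with Bernstein from $L^{10}$ to $L^p$ and square-function summation gives $\|v\|_{L^2(I;L^p)}\leq C_M(1+\sqrt{p|I|})$ for every finite $p$; see \eqref{eq:yudovich-large-p}.

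This bound is the three-dimensional replacement for $\|v\|_{L^r}\lesssim\sqrt r\|v\|_{H^1}$ in the Yudovich argument you set aside. With $Y=\|v_1-v_2\|_2^2$, one has $|Y'|\lesssim M^{6/p}(\|v_1\|_p^2+\|v_2\|_p^2)Y^{1-3/p}$. Hence $Y(t)\leq[C_M(t-a+p^{-1})]^{p/3}$, which tends to $0$ as $p\to\infty$ on short intervals.

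The paper then argues as follows. Existence comes from Galerkin approximations satisfying the energy inequality. Uniqueness holds in $L^\infty_{\mathrm{loc}}H^1_0\cap C_tL^2$ by the argument above. Energy conservation and strong $H^1_0$ continuity follow by restarting the construction at arbitrary times and invoking uniqueness. Your globalization through the conservation laws and your trapping argument in the focusing case match the paper, but they must rest on this mechanism rather than on a contraction in $L^p_tL^\infty_x$.
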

Moreover, as in the boundaryless case, preservation of regularity holds and { one may adapt the argument} of \cite{PTV} to obtain exponential growth for the $H^{m}$ norm of the solution, where $m\in \mathbb{N}$, $m>1$. Here we state and prove such growth for $m=2,4$ as higher regularity would require dealing with suitable compatibility conditions for the data, which are beyond the scope of this paper.
\begin{thm}\label{growthHs}
  Let $d=3$ and $v_{0}\in H^{1}_{0}(\Omega)\cap H^{m}(\Omega)$ with $m=2,4$, $\Delta_{g} v_{0} \in H^{1}_{0}(\Omega)$ if $m=4$. Then the solution $v$ from Theorem \ref{gwp} is $C_{t}(H^{m}(\Omega))$, and its norm grows at most exponentially: there exists $C=C(m,\|v_{0}\|_{H^{m}(\Omega)})$ such that,
  \begin{equation}
    \label{eq:sobolev-growth}
      \| v(t,\cdot)\|_{H^{m}(\Omega)}\leq C \exp( C |t|)\,.
    \end{equation}
  \end{thm}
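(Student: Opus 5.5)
We follow the scheme of \cite{PTV}, adapted to the Dirichlet setting, with the boundaryless Strichartz input replaced by Theorem~\ref{thmStrichartz} in $d=3$. The plan is to control $\|v(t)\|_{H^m}$ through the equivalent quantity $\|(-\Delta_g)^{m/2}v(t)\|_{L^2}$. For $m=2$ we use $\|\Delta_g v\|_{L^2}$, and for $m=4$ we use $\|\Delta_g^2 v\|_{L^2}$. Elliptic regularity for the Dirichlet Laplacian, together with the compatibility conditions $v\in H^1_0$ and, for $m=4$, $\Delta_g v\in H^1_0$, makes this quantity equivalent to the $H^m$ norm on the relevant domain $D((-\Delta_g)^{m/2})$. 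We then show that this domain is preserved by the flow.

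The first step is to establish a local-in-time bilinear/Strichartz toolbox on intervals of physical length $\tau$, with $\tau$ depending only on $\|v_0\|_{H^1}$, which is conserved by Theorem~\ref{gwp}. Summing \eqref{eq:strichartz-physical-time} over dyadic $h$, after a Littlewood--Paley decomposition in $\psi(-h^2\Delta_g)$, gives Strichartz estimates with a loss of $1/q$ derivatives on unit time intervals. In particular the pair $(2,10)$ is available, and a suitable $L^2_tL^\infty_x$-type bound holds with a loss of $1/2+\epsilon$ derivatives. Combined with $H^1$ conservation, exactly as in \cite{bgt04}, this yields $\|v\|_{L^2([t,t+\tau];L^\infty)}\lesssim 1$ uniformly in $t$. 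It also yields the Yudovich-type control that was already used in Theorem~\ref{gwp}.

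The second step is a linear-in-norm estimate on each interval. For $m=2$, set $w=\Delta_g v$. Since $v|_{\partial\Omega}=0$ and the equation gives $\partial_t v|_{\partial\Omega}=0$, we get $w|_{\partial\Omega}=\kappa|v|^2v|_{\partial\Omega}=0$. Hence $w$ solves the Dirichlet problem $-i\partial_t w+\Delta_g w=\Delta_g(|v|^2v)$. The energy identity then gives
\[
\frac{d}{dt}\|w\|_{L^2}^2=2\,\Im\int \Delta_g(|v|^2v)\,\bar w.
\]
Expanding $\Delta_g(|v|^2v)$, the top-order terms are bounded by $\|v\|_{L^\infty}^2\|w\|_{L^2}$. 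The middle terms $v\,|\nabla v|^2$ are handled by Gagliardo--Nirenberg interpolation between $H^1$ and $H^2$, using the $L^\infty$ control. This gives
\[
\frac{d}{dt}\|w\|_{L^2}\lesssim (1+\|v\|_{L^\infty}^2)\|w\|_{L^2}.
\]
Gronwall's inequality on each interval $[k\tau,(k+1)\tau]$, with $\int\|v\|_{L^\infty}^2$ uniformly bounded there, gives growth by a fixed factor per interval, hence the bound $C\exp(C|t|)$. Persistence of regularity in $C_t(H^2)$ follows from the same a priori bound applied to the Duhamel/iteration scheme of Theorem~\ref{gwp}.

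For $m=4$ we iterate: set $z=\Delta_g^2 v$. The boundary conditions $v|_{\partial\Omega}=\Delta_g v|_{\partial\Omega}=0$ are propagated by the equation as above. The trace of $\Delta_g^2 v$ equals that of $\Delta_g(|v|^2v)$ up to a time derivative, and this is where the compatibility requirement enters. The main obstacle is precisely this boundary bookkeeping. One must show that $\Delta_g(|v|^2v)\in H^1_0$ is preserved, so that $z$ satisfies a Dirichlet problem and the energy identity has no boundary flux. Equivalently, one must check that $\Delta_g(|v|^2v)$ vanishes on $\partial\Omega$ whenever $v$ and $\Delta_g v$ do. This holds because
\[
\Delta_g(|v|^2v)=2|v|^2\Delta_g v+v^2\Delta_g\bar v+\text{terms quadratic in }\nabla v\text{ times }v,
\]
and every term carries a factor $v$ or $\Delta_g v$, both of which vanish on the boundary. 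With this settled, the nonlinear estimate uses the $H^2$ exponential bound already obtained and tame product estimates. This yields $\frac{d}{dt}\|z\|_{L^2}\lesssim(1+\|v\|_{L^\infty}^2)\|z\|_{L^2}+C e^{C|t|}$, and Gronwall concludes.
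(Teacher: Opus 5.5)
\textbf{The gap is your first step.} The uniform bound $\|v\|_{L^2([t,t+\tau];L^\infty)}\lesssim 1$, with a constant depending only on the energy, is not available, and it is not what \cite{bgt04} prove.

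For a single dyadic block, Theorem~\ref{thmStrichartz} with $(q,r)=(2,\infty)$ gives, on a unit time interval, $\|\Delta_jv\|_{L^2_tL^\infty_x}\lesssim 2^{j}\|\Delta_jv_0\|_{L^2}$. So $H^1$ is exactly the borderline. At $r=\infty$ there is no square-function estimate, which is why Theorem~\ref{thmStriSob} requires $r<\infty$. The blocks can then only be summed in $\ell^1$, which needs $B^1_{2,1}$ or $H^{1+\epsilon}$ data. Equivalently, $(2,10)$ costs $7/10$ derivatives and $L^{10}\to L^\infty$ costs another $3/10+\epsilon$.

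In the nonlinear problem the same borderline appears in \eqref{eq:derdesder}. The factor $h^{3/p}$ disappears as $p\to\infty$, and dyadic summation only gives $\|v\|_{L^2(I;L^p)}\le C_M(1+\sqrt{p\ell})$, see \eqref{eq:yudovich-large-p}. This is why uniqueness in Theorem~\ref{gwp}, as in \cite{bgt04}, goes through Yudovich. What survives is a Brezis--Gallou\"et bound, with $\int_I\|v\|_{L^\infty}^2$ controlled by $\log(2+\sup_I\|v\|_{H^2})$. Inserted into your Gronwall inequality, this gives only a double exponential, which is the bound the paper attributes to the \cite{bgt04} route. If your $L^2_tL^\infty_x$ bound held, exponential growth would be immediate and the modified energies of \cite{PTV} would be superfluous even without boundary.

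Your $m=4$ step has the same defect. Using $H^2\subset L^\infty$ with the exponential $H^2$ bound in $\frac{d}{dt}\|z\|\lesssim\|v\|_{L^\infty}^2\|z\|$ again gives a double exponential.

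\textbf{The missing idea is the modified energy \eqref{E2k}.} In your identity for $w=\Delta_gv$, the term $2|v|^2|w|^2$ is real and drops. The term $\Im\int v^2\bar w^2$ survives, and together with the $v|\nabla_gv|^2$ terms it is what forces $\|v\|_{L^\infty}^2$.

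The paper works instead with $\partial_tv=-i\Delta_gv+i|v|^2v$. Then $\frac{d}{dt}\|\partial_tv\|_2^2=-2\Im\int v^2(\overline{\partial_tv})^2$, and $\Im\bigl(v^2(\overline{\partial_tv})^2\bigr)=\partial_t(|v|^2)\,\Im(v\,\overline{\partial_tv})$. By the equation, $\Im(v\,\overline{\partial_tv})=\frac12\Delta_g|v|^2-|\nabla_gv|_g^2-|v|^4$. After integration by parts (no boundary term, since $v$ vanishes on $\partial\Omega$), the first and last pieces are exact time derivatives of multiples of $\int|\nabla_g(|v|^2)|_g^2$ and $\int|v|^6$. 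These are the corrections in $\mathcal E_2$. What remains is $2\int\partial_t(|v|^2)|\nabla_gv|_g^2$, which leads to \eqref{modE2Omega}.

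There the top-order factor appears only once and no $L^\infty$ norm is needed. The paper bounds $\int_0^T\!\int|\nabla_g^2v||\nabla_gv|^2|v|$ by $\|v\|_{L^\infty_TH^2}\|\nabla_gv\|_{L^r_TL^q}^2\|v\|_{L^{2/(1-6/p)}_TL^p}$ with finite exponents. The gradient factor is controlled using the $(2,10)$ Duhamel estimate of Proposition~\ref{inStri}, the fractional source bound \eqref{eq:fractional-cubic-dyadic}, and interpolation with $\|v\|_{H^{3/2}}$. This gives an increment $\lesssim\sup_{[0,T]}\|v\|_{H^2}^{2-\theta}+T^{1/2}\sup_{[0,T]}\|v\|_{H^2}^2$ with $\theta>1/2$, which iterates on short intervals to $Ce^{C|t|}$.

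Your boundary bookkeeping is fine and agrees with the paper: $\Delta_gv$ and $\Delta_g(|v|^2v)$ vanish on $\partial\Omega$. But the estimates for $m=2,4$ must be run through $\mathcal E_2$ and $\mathcal E_4$, not through Gronwall with $\|v\|_{L^\infty}^2$.
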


We now briefly discuss linear Strichartz estimates and their optimality. In \cite{doiSF21}, we proved Theorems \ref{thmdisperS} and \ref{thmStrichartz}  when the domain $\Omega$ is the so-called Friedlander model domain introduced above (that is $\Omega_d$ together with the metric associated from \eqref{eq:LapF}). Moreover, we showed that the estimates \eqref{dispco} are sharp in the regime $a>h^{1/3}t$, $t\geq \sqrt{a}$ and this holds for every such $t$. However, the fact that the dispersive bounds hold with a sharp loss does not necessarily imply that the Strichartz estimates from Theorem \ref{thmStrichartz} cannot be improved. In fact, even if the loss of $1/4$ holds for all $t$ such that $a>h^{1/3}t$, the locus of points in space where this happens is so narrow that, by integration in time, one could obtain much better Strichartz bounds than those provided by the $TT^*$ argument. This has been mentioned in
 \cite{doi25} for dimensions $d\geq 2$ and proved in $d=1$ (when the analysis is simpler due to the lack of tangential variables). We conjecture that Strichartz estimates hold with only $1/6$ loss in this regime, (possibly by following the approach in \cite[the proof of Thm.4 ]{doi25}). While it is well known that 
 a loss of at least $\frac 16$ derivatives in Strichartz estimates must hold for {\eqref{SE} on the Friedlander model domain (obtained by taking the initial data to be a gallery mode; see \cite[Section~2]{doi})}, it is not clear whether this result should be sharp. In \cite{doi25} the first author studied the case of the one-dimensional semiclassical Schrödinger equation on the half line $x>0$ and operator $\partial^2_x+x$ (which is the one-dimensional equivalent of the Friedlander domain $(\Omega_d, \Delta_F)$) : it turns out that, throughout the regime $a\leq h^{1/3}t$ the dispersive bounds exhibit a loss smaller than $1/4$, which makes it possible to improve the corresponding Strichartz bounds. The method used in \cite{doi25} involves exponential sum bounds and Van der Corput type derivative-tests and could also be adapted to our $d\geq 2$ general situation. However, as mentioned in \cite{doi25}, in order to approach a $1/6$ loss one needs refined bounds on exponential sums which are out of reach for the time being. Nonetheless, as we shall see in the proof of Theorem \ref{gwp} (and as already shown in \cite{doiSF21} in the model situation), the $1/4$ loss in the Strichartz bounds in Theorem \ref{thmStrichartz}  will be sufficient to obtain global well-posedness for the cubic NLS.\\

We conclude this introduction with a brief overview of the content in the following sections.
\begin{itemize}[leftmargin=5.5mm]
\item The second section 
  {reduces the globally localized dispersive estimate to the glancing region; the other pieces and the passage from local parametrices to the global propagator are dealt with in Appendix~\ref{app:localization-reduction}.}
We then proceed with building our parametrix for the Schrödinger propagator in the glancing regime, which is the key tool to prove Theorem \ref{thmdisperS}. 
  This step follows closely the construction from \cite{ILLP} in the case of the wave equation: we recall its main steps, in particular the construction of quasimodes for small initial distance $a<h^{2/3-\ceps}$. To obtain a solution for the Schrödinger flow we rely on the parametrix for the Helmholtz equation constructed in \cite{meta} (which in turn relies crucially on \cite{mel76}). Using the Airy-Poisson formula that we introduced in \cite{ilpCE}, we then obtain a parametrix, both as a "spectral'' sum and its counterpart after Poisson summation, as a sum of wave packets between two consecutive reflections. This construction holds for any $a$ and $h$. 

\item The third section first addresses dispersion estimates when $a>h^{2/3-\ceps}$ : both formulas available to express the parametrix are used, first the one in terms of reflected waves when, in addition, $a>(ht)^{1/2}$ and then the spectral sum when $a\leq (ht)^{1/2}$. When $a\sim (ht)^{1/2}$ both methods provide the same loss.
The analysis of the oscillatory integrals follows the spirit of \cite{ILLP} but departs from it on several counts. For all $a>(ht)^{1/2}$, all the wave packets interfere in time and one has to carefully add all their contributions (recall that for wave equation in the same regime, only a finite number of waves  would overlap).
As only the waves that leave at $t=0$ from $x=a$ within a small cone of directions of aperture $\sqrt{a}$ may provide $1/4$ loss, we separate this "tangential" case, dealt with in Section \ref{sectang}, from the "transverse" case, {dealt with in} Section \ref{sectransv}. In particular, in the tangential case, we state three main results (Propositions \ref{dispNgrand}, \ref{dispNpetitloin}, \ref{dispNpetitpres}) which provide refined estimates for each integral in the sum defining the Green function in this regime, depending on the number of reflections and the position of the spatial variables: these results complete the proof of Theorem \ref{thmdisperS} for "not too small" $a$. When $a<(ht)^{1/2}$ we work with the spectral sum as we did in \cite{doiSF21}.
  \item In the fourth section, for $a\leq h^{2/3-\ceps}$, we use the construction of quasimodes to obtain a function $g_{h,a}$ such that $J(g_{h,a})$ equals the initial data, and then we proceed as in the case $a>h^{2/3-\ceps}$ using a mix of dispersion estimates on each gallery mode, the spectral sum, and Poisson summation to obtain a $1/6$ loss below $h^{2/3}$, essentially due to the propagation and accumulation of cusps.
  
\item In the fifth section we deal with the nonlinear applications and prove first Theorem \ref{gwp}, then Theorem \ref{growthHs}.

\item {In Appendix~\ref{app:phase-equivalence} we return to the equivalence of the glancing phase functions. The proofs of the interior, hyperbolic and elliptic estimates, together with the spectral matching of the local parametrix and the global kernel, are given in Appendix~\ref{app:localization-reduction}.} {Appendix~\ref{app:refined-quartic-estimate} provides the refined degenerate stationary phase estimates used in the dispersive analysis.}
\end{itemize}

In the remainder of the paper, $A\lesssim B$ means that there exists a constant $C$ such that $A\leq CB$. This constant may change from line to line but is independent of all parameters; we make it explicit when needed. Similarly, $A\sim B$ means both $A\lesssim B$ and $B\lesssim A$.

\section{A parametrix construction}
{
We first recall in Section~\ref{secMelcoord} the glancing normal form
and the results of \cite[Section~2]{ILLP}, keeping the same
notations: microlocal strict convexity, the generating function for
Melrose's canonical transformation $\canonchi_M$, and the parametrix
for the Helmholtz equation. Before constructing our Schrödinger
parametrix, we explain in Section~\ref{sec:global-microlocal-partition}
how the global spectral cutoff is reduced, by a finite spatial and
microlocal partition, to the glancing region. The bounds for the
interior, hyperbolic and elliptic terms are stated in
Sections~\ref{sec:strictly-interior-contribution}--\ref{sec:elliptic-contribution},
and we compare the local and global kernels in
Section~\ref{sec:boundary-spectral-matching}; the proofs are in
Appendix~\ref{app:localization-reduction}. We then return to the
construction. Sections~\ref{sec:generating-function-properties}--\ref{sec:airy-preliminaries}
recall the properties of $\Gamma$, the wave parametrix and the
Airy identities. As in \cite{doi08}, where the semiclassical
Schrödinger equation was dealt with {\it outside} a strictly convex
obstacle, Kanaï's transform gives us a parametrix for \eqref{SE}
near glancing. We construct it in
Sections~\ref{sec:parametrix-large-distance} and~\ref{sec:parametrix-small-distance}
for the two ranges of the source distance $a$. The local estimates
are summed in Section~\ref{sec:completion-linear-proof}.}

\subsection{Melrose's classification theorem for glancing hypersurfaces}\label{secMelcoord}
\leavevmode

{ We first describe the local glancing construction.  Its place in the global proof will be specified in Sections~\ref{sec:global-microlocal-partition} and~\ref{sec:strictly-interior-contribution}.  Near a boundary point, we choose}
geodesic normal coordinates $(x,y)$ on $\Omega$, with $x>0$ on $\Omega$, $y\in\mathbb{R}^{d-1}$ such that $\partial\Omega=\{(0,y), y\in \mathbb{R}^{d-1}\}$; local coordinates on $\Omega\times\mathbb{R}_t$ are then $(x,y,t)$. Local coordinates on the base induce local coordinates on the cotangent bundle, namely $(x,y,t,\xi,\eta,\tau)$ on $T^*(\Omega\times\mathbb{R}_t)$. The corresponding local coordinates on the boundary are $(y,t,\eta,\tau)$. In these coordinates,
the Laplacian $\Delta_g$ becomes (\cite[III,Appendix C]{Hormander})
\begin{equation}
  \label{eq:laptilde}
 \Delta=e_{g}^{-1} \Delta_g e_{g}=\partial^2_x+R(x,y,\partial_y)\,, \text{ up to conjugation by a non vanishing smooth factor } e_{g}(x,y).
\end{equation}
Since the boundary is everywhere strictly (geodesically) convex, for every point $(0,y_0)\in \partial \Omega$ there exists $\kappa_0=(0,y_0,\xi_0,\eta_0)\in T^*\Omega$ where the boundary is microlocally strictly convex, i.e. such that  there exists a bicharacteristic passing through $(0,y_0,\xi_0,\eta_0)$ that intersects $\partial\Omega$ tangentially having exactly second-order contact  with the boundary and remaining in the complement of $\partial\overline{\Omega}$, e.g. ($\{.,.\}$ denoting the usual Poisson bracket)
\[
\tau^2=R(0,y_0,\eta_0), \quad \{\xi^2+R(x,y,\eta),x\}|_{\kappa_0}=2\xi_0=0,
\]
\[ \{\{\xi^2+R(x,y,\eta),x\},\xi^2+R(x,y,\eta)\}|_{\kappa_0}=2\partial_xR(0,y_0,\eta_0)>0\,.
\]
 We work near such a gliding point and assume (without loss of generality) that $y_0=0$, hence $\kappa_0=(0,0;0,\eta_0)$. On the boundary
and for $(0,y)$ near $(0,0)$, the metric reads $\xi^2+\sum_{j,k=1}^{d-1}R^{j,k}(0,y)\eta_j\eta_k$, and, using again \cite[III,Appendix C]{Hormander}, we assume moreover that {$R^{j,k}(0,0)=\delta_{j,k}$}, and define
\begin{equation}
  \label{eq:R01}
  R_0(y,\partial_y)  :=R(0,y,\partial_y)=\sum_{j}\partial_{y_j}^2+O(|y|)\,,\quad   R_1(y,\partial_y)  :=\partial_x R(0,y,\partial_y)=\sum_{j,k}R_1^{j,k}(y)\partial_{y_j}\partial_{y_k}\,.
\end{equation}
Strict convexity for $\partial\Omega$ is equivalent to $R_1$ being elliptic (its symbol is  positive definite). Define our model Laplacian $\Delta_{M}$ and (Fourier) multipliers $\tau, q$
\begin{equation}
  \label{eq:LapM}
\Delta_M=\partial^2_x+\sum_{j}\partial_{y_j}^2+ x\sum_{j,k}R_1^{j,k}(0)\partial^{2}_{y_jy_k}\,,\, q(\eta)=\sum_{j,k}R_1^{j,k}(0)\eta_j \eta_k\,,\,  \tau(\omega,\eta)=\sqrt{ |\eta|^2+\omega q(\eta)^\frac 2 3}\,.
\end{equation}
With $q_{j,k}:=R^{j,k}_1(0)$, $\Delta_M$ coincides with the model operator $\Delta_F$ from \eqref{eq:LapF}, that had been used in \cite{doiSF21}. In what follows, we use the notation $\Delta_M$ as in \cite{ILLP} (because the coefficients depend on $R$).
Later we will use various symbols $p(x,y,\eta,\omega,\sigma)$ that will be defined in a conic neighborhood of the set
\begin{equation}
  \label{eq:N0}
 N_0=\{x=0\,,\,\,y=0\,,\,\,\omega=0\,,\,\,\sigma=0\,,\,\,\eta\in \R^{d-1}\setminus\{0\}\}\,,
\end{equation}
with homogeneity defined as $ \lambda(x,y,\eta,\omega,\sigma)\rightarrow (x,y, \lambda \eta, \lambda^{2/3}\omega, \lambda^{1/3}\sigma)$.
A symbol of degree $m$ will be a smooth function defined in a conic neighborhood of $N_0$
with asymptotic expansion $p= \sum_{k\geq 0}p_{m-k/3}(x,y,\eta,\omega,\sigma)$,
with $p_{m-k/3}$ homogeneous of degree $m-k/3$.

The following result, essential for our purposes, is due to Melrose and Taylor \cite{meta, mel76} (here we only recall the statement ; see the remarks following \cite[Theorem 2.1]{ILLP} for explanations and references).

\begin{theoreme}\label{thmMelrose}[Melrose-Taylor, Zworski]
{Let $\tau(\omega,\eta)$ be defined in \eqref{eq:LapM}.
There exist a neighborhood $U$ of $(x,y)=(0,0)$, phase functions
$\psi,\zeta$, symbols $p_0,p_1$ and a function $e_0$, all depending
on $(x,y,\eta,\omega)$, such that}
  \begin{itemize}[leftmargin=5.5mm]
  \item $\psi$ is homogeneous of degree $1$, $\zeta=\omega-x|\eta|^{2/3} e_0(x,y,\eta/|\eta|,\omega/|\eta|^{2/3})$ is homogeneous of degree $2/3$ (e.g. $e_{0}$ is homogeneous of degree $0$), $p_0$, $p_1$ are symbols of degree $0$;
    \item the phase functions $\psi$ and $\zeta$ are solutions to the following eikonal equations 
    \begin{equation} \label{systeikeq} 
<\nabla_{(x,y)}\psi,\nabla_{(x,y)}\psi>+\zeta<\nabla_{(x,y)}\zeta,\nabla_{(x,y)}\zeta>=\tau^2(\omega,\eta),\quad    <\nabla_{(x,y)}\psi,\nabla_{(x,y)}\zeta>=0.
 \end{equation}
 Here $<.,.>$ is the symmetric bilinear form obtained by polarization of the principal symbol $\xi^2+R(x,y,\eta)$ of the operator $\Delta$ (which is a second order homogeneous polynomial).
\item the function $G(x,y,\eta,\omega)$ below is a quasimode in $U$:
  \begin{gather}
    \label{eq:defG}
        G(x,y,\eta,\omega)=e^{i\psi}(p_0 Ai(-\zeta)+ ix p_1 q^{-1/6}(\eta) Ai'(-\zeta))\\
  \label{eq:eqG}
  -\Delta G=\tau^2 G+O_{C^\infty}(\tau^{-\infty})\,.
\end{gather}
Moreover, $p_0$, $e_0$ are elliptic and $e_0>0$ near any $(0,0,\eta,0)$ with $\eta\in \R^{d-1}\setminus\{0\}$.
  \end{itemize}
\end{theoreme}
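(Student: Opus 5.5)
The plan is to follow Melrose's construction, which reduces the glancing problem to the Friedlander/Airy model through a canonical transformation, and then solve the resulting eikonal and transport equations by formal power series in the homogeneity of \eqref{eq:N0}.

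\textbf{Step 1: the canonical transformation.} Consider the two pairs of glancing hypersurfaces $\{x=0\},\{\xi^2+R=\tau^2\}$ and $\{x=0\},\{\xi^2+xq(\eta)^{2/3}\cdots\}$ for the model $\Delta_M$. Both are in general position at $\kappa_0$ thanks to $\partial_xR(0,y_0,\eta_0)>0$. Melrose's equivalence theorem \cite{mel76} then yields a homogeneous canonical transformation $\canonchi_M$ conjugating one pair to the other, microlocally near $\kappa_0$.

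\textbf{Step 2: the phases $\psi,\zeta$.} Writing the generating function of $\canonchi_M$ in Airy form, one seeks a phase $\psi\pm\frac23\zeta^{3/2}$ solving the eikonal equation for $\xi^2+R$. Separating the terms even and odd in $\zeta^{1/2}$ gives exactly the system \eqref{systeikeq}. I would solve it as in \cite{meta} in three moves:
\begin{itemize}[leftmargin=5.5mm]
\item solve first in the region $\zeta<0$, where the phase $\psi\pm\frac23(-\zeta)^{3/2}$ is real and the usual Hamilton–Jacobi theory applies;
\item extend $\psi$ and $\zeta$ across $\zeta=0$ by smoothness in $(x,\omega)$;
\item obtain the form $\zeta=\omega-x|\eta|^{2/3}e_0$ with $e_0>0$ from the initial conditions $\zeta|_{x=0}=\omega$ and $\psi|_{x=0}=y\cdot\eta$, together with $\partial_x\zeta|_{x=0}\sim -q(\eta)^{1/3}$, which is the convexity condition.
\end{itemize}
Homogeneity of degree $1$ and $2/3$ is inherited from the homogeneity of the symbol.

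\textbf{Step 3: the amplitudes $p_0,p_1$.} Apply $\Delta+\tau^2$ to the ansatz \eqref{eq:defG} and use $Ai''(-\zeta)=-\zeta Ai(-\zeta)$. Collecting the coefficients of $Ai$ and $Ai'$ produces coupled transport equations for $(p_0,xp_1)$ along the vector fields $\nabla\psi$ and $\nabla\zeta$. Solve these degree by degree, with the steps in $1/3$ powers. Borel summation of the resulting series gives symbols with $O(\tau^{-\infty})$ error, which is \eqref{eq:eqG}. Ellipticity of $p_0$ follows by choosing its leading term as $1$ on $x=0$.

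\textbf{Main obstacle.} The hardest point is that $\psi$ and $\zeta$ must be genuinely smooth across the shadow boundary $\zeta=0$, and the transport equations must be solvable there. The eikonal system degenerates at $\zeta=0$, so there is no standard characteristic method at that hypersurface. My first approach would be to invoke Melrose's result that the Airy-form generating function exists smoothly; this gives the construction only up to $O(\zeta^\infty)$ in the region $\zeta>0$. I would then fix the remaining ambiguity by the requirement \eqref{systeikeq} everywhere, using Zworski's almost-analytic argument. This is the step I would simply cite from \cite{meta,mel76} and the remarks after \cite[Theorem 2.1]{ILLP}, rather than reprove.
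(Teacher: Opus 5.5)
Your top-level strategy (Melrose's glancing normal form, then \cite{meta,mel76} for the hard step) matches the paper, which only recalls Theorem~\ref{thmMelrose}. The form the paper actually works with is the oscillatory integral \eqref{eq:Gosc} of Proposition~\ref{lemgamma}, combined with the Airy normal form of Lemma~\ref{lemphaseG} and Corollary~\ref{corUpsi}. However, the parts of Step~2 that you do spell out would not produce \eqref{systeikeq}, and they fail precisely at glancing.

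First, the boundary datum $\psi|_{x=0}=y\cdot\eta$ cannot be imposed. Since $\zeta|_{x=0}=\omega$ gives $\nabla_y\zeta=0$ on $\{x=0\}$ while $\partial_x\zeta\neq0$, the second equation forces $\partial_x\psi|_{x=0}=0$. The first equation then becomes $R_0(y,\nabla_y\psi|_{x=0})+\omega(\partial_x\zeta|_{x=0})^2=|\eta|^2+\omega q(\eta)^{2/3}$. With your choice this gives $(\partial_x\zeta|_{x=0})^2=q(\eta)^{2/3}+(|\eta|^2-R_0(y,\eta))/\omega$, which is singular as $\omega\to0$, because $R_0(y,\eta)=|\eta|^2+O(|y|)$ is not identically $|\eta|^2$. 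At $\omega=0$ the trace of $\psi$ must instead solve the tangential eikonal equation $R_0(y,\nabla_y\phi)=|\eta|^2$. This is the term $B_0=\phi-y\cdot\omega$ of Proposition~\ref{propimpformgamma}; in general $\psi|_{x=0}=y\cdot\eta+\tau B_\Gamma(y,\eta/\tau)$ by Corollary~\ref{corUpsi}. Differentiating the first equation in $x$ at $x=\omega=0$ then gives $(\partial_x\zeta)^3=-R_1(y,\nabla_y\phi)$, i.e.\ $e_0=(R_1(y,\nabla_y\phi)/|\eta|^2)^{1/3}>0$ there; this is where convexity really enters.

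Second, your first bullet switches to Melrose--Taylor's convention $Ai(\rho)$ with $\rho=-\zeta$. With \eqref{systeikeq} as stated, the real phases are $\psi\pm\frac23\zeta^{3/2}$ on the illuminated side $\zeta>0$. On $\zeta<0$ the functions $\varphi_\pm=\psi\pm\frac23(-\zeta)^{3/2}$ satisfy $\langle\nabla\varphi_\pm,\nabla\varphi_\pm\rangle=\tau^2-2\zeta\langle\nabla\zeta,\nabla\zeta\rangle\neq\tau^2$. The same reversal recurs in your last paragraph.

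Your resolution of the main obstacle is also not what happens. Smoothness across $\zeta=0$ does not come from extending a Hamilton--Jacobi solution. The phase in \eqref{eq:Gosc} is the model Airy phase corrected by $\tau\Gamma$, with $\Gamma$ the generating function of $\canonchi_M$, and it is smooth in $s$ through the fold. Its Airy normal form yields smooth $\zeta$ and $\Upsilon$, with $\frac23\zeta^{3/2}$ and $\Upsilon$ the half-difference and the mean of the two critical values, and $\psi=y\cdot\eta+\tau B_\Gamma+\Upsilon$. Malgrange preparation of the elliptic amplitude $p$ then gives $p_0$ (elliptic) and $p_1$.

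Solving the transport equations degree by degree with $p_0=1$ on $x=0$ is not a standard Cauchy problem. Because $\partial_x\psi|_{x=0}=0$, the field $\langle\nabla\psi,\nabla\,\cdot\,\rangle$ is tangent to the boundary, and the coupled system for $(p_0,xp_1)$ is characteristic for $\{x=0\}$ at $\omega=0$.

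Finally, an almost-analytic argument does not make \eqref{systeikeq} exact in the shadow. It gives \eqref{systeikeq} on $\zeta\geq0$ and only modulo $O(|\zeta|^\infty)$ on $\zeta<0$, which is all one needs. That error multiplies $Ai(-\zeta)$ and $Ai'(-\zeta)$, which decay like $e^{-\frac23|\zeta|^{3/2}}$ there, with $\zeta$ homogeneous of degree $2/3$. This exponential decay is what produces the $O(\tau^{-\infty})$ in \eqref{eq:eqG}, and it is the ingredient missing from your account.
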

Melrose's classification theorem for glancing
hypersurfaces (see \cite{mel76}) applies, in the non-homogeneous setting, locally near any point in the set $\Sigma_0=\{(X,Y,\Xi,\Theta)\,,\,\, X=0\,,\,\,Y=0\,,\,\,
\Xi=0\,,\,\,|\Theta|=1\}$.
Therefore, there exists a canonical transformation $\canonchi_M$ such that, near $\Sigma_0$
\begin{equation}
  \label{eq:melrose}
  \canonchi_M(\{X=0,\Xi^2+|\Theta|^2+Xq(\Theta)=1\})=\{x=0,\xi^2+R(x,y,\eta)=1\}\,.
\end{equation}
The crucial fact that such a canonical transformation $\canonchi_{M}$ may actually be defined in the whole neighborhood of $\Sigma_{0}$ then follows from the transversality of the Hamiltonian flow with respect to $\Sigma_{0}$. The following proposition, proved in \cite[Prop.2.1]{ILLP}, will be essential for us.

\begin{prop}\label{lemgamma}
The generating function for $\canonchi_M$ writes $(x-X)\Xi+(y-Y)\Theta+\Gamma(x,y,\Xi,\Theta)$, where $\Gamma(0,y,\Xi,\Theta)$ is independent of $\Xi$ (as $\canonchi_M(\{X=0\})=\{x=0\}$) and
\begin{equation}
  \label{eq:GAB}
\Gamma(x,y,\Xi,\Theta)=B_{\Gamma}(y,\Theta)+xA_{\Gamma}(x,y,\Xi,\Theta).
\end{equation}
The transformation $\canonchi_M$ is then generated by the following relations:
\begin{equation} \label{genchi} 
\left\{ \begin{array}{l}
X=x+x\frac{\partial A_{\Gamma}}{\partial \Xi}(x,y,\Xi,\Theta)\,,\,\,Y=y+\frac{\partial B_{\Gamma}}{\partial \Theta}(y,\Theta)+x\frac{\partial A_{\Gamma}}{\partial \Theta}(x,y,\Xi,\Theta)\\ 
\xi=\Xi+A_{\Gamma}(x,y,\Xi,\Theta)+x\frac{\partial A_{\Gamma}}{\partial x}(x,y,\Xi,\Theta)\,,\,\,
\eta=\Theta +\frac{\partial B_{\Gamma}}{\partial y}(y,\Theta)+x\frac{\partial A_{\Gamma}}{\partial y}(x,y,\Xi,\Theta)\,.
\end{array} \right.
 \end{equation}
There exists an elliptic symbol $p(x,y,\eta,\omega,\sigma)$ of degree $0$,
supported near $N_0$ (from \eqref{eq:N0}) and
\begin{equation}\label{eq:Gosc}
G(x,y,\eta,\omega):=\frac{1}{2\pi} e^{-i\tau B_{\Gamma}(0,\eta/\tau)}\int e^{i(y\eta+s^3/3+s(xq^{1/3}(\eta)-\omega)+\tau\Gamma(x,y,s q^{1/3}(\eta)/\tau,\eta/\tau))}p(x,y,\eta,\omega,s)ds
\end{equation}
such that Theorem \ref{thmMelrose} holds with this $G$.
\end{prop}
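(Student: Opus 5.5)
The proof is a direct substitution of the integral representation \eqref{eq:Gosc} into \eqref{eq:eqG}, after first extracting from \eqref{genchi} the two properties of $\Gamma$ that the argument needs.

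\emph{Step 1: structure of $\Gamma$.} Near $\Sigma_0$ we have $\canonchi_M=\mathrm{id}+O(|X|+|Y|+|\Xi|)$ after the normalization $R^{j,k}(0,0)=\delta_{j,k}$. Hence the generating function is a small perturbation of $(x-X)\Xi+(y-Y)\Theta$, and the implicit function theorem gives $(X,Y)$ and $(\xi,\eta)$ from $(x,y,\Xi,\Theta)$. Since $\canonchi_M$ maps $\{X=0\}$ to $\{x=0\}$, the $\Xi$-derivative $x-X+\partial_\Xi\Gamma$ of the generating function must vanish on $x=X=0$. Therefore $\partial_\Xi\Gamma(0,y,\Xi,\Theta)=0$, which is the decomposition \eqref{eq:GAB}, obtained from Taylor's formula in $x$. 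Differentiating the generating function in each variable then yields \eqref{genchi}.

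\emph{Step 2: conjugating the model quasimode.} For the model operator $\Delta_M$, the function $e^{iy\eta}Ai(xq^{1/3}(\eta)-\omega)$ is an exact solution with eigenvalue $\tau^2$. Written through $Ai(z)=\frac1{2\pi}\int e^{i(s^3/3+sz)}ds$, its phase is $y\eta+s^3/3+s(xq^{1/3}-\omega)$. Following Melrose–Taylor, we quantize $\canonchi_M$ as an FIO with phase $\tau\Gamma$, evaluated at $\Xi=sq^{1/3}/\tau$ and $\Theta=\eta/\tau$ (rescaling to $|\Theta|\sim1$), and compose it with the model solution. The total phase is exactly that of \eqref{eq:Gosc}. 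The factor $e^{-i\tau B_\Gamma(0,\eta/\tau)}$ is only a normalization of the phase at $y=0$.

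We then apply $\Delta+\tau^2$ under the integral. The eikonal identity says that the Hamiltonians correspond under $\canonchi_M$: on the critical set in $s$, where $\partial_s$ of the phase vanishes, the principal symbol of $-\Delta$ evaluated at $\nabla$(phase) equals $\tau^2$. Off that set, this identity holds modulo a multiple of $\partial_s$ of the phase. That multiple is integrated by parts in $s$ and absorbed into the amplitude. The lower-order terms are handled by solving transport equations for $p=\sum p_{-k/3}$ in the graded homogeneity adapted to $N_0$ and Borel-summing the series. This gives an elliptic degree-$0$ symbol with error $O(\tau^{-\infty})$.

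\emph{Step 3: identification with Theorem~\ref{thmMelrose}.} Stationary phase in $s$ (Chester–Friedman–Ursell normal form for two coalescing critical points) brings the integral to the form $e^{i\psi}(p_0Ai(-\zeta)+ixp_1q^{-1/6}Ai'(-\zeta))$. The CFU change of variables supplies the phases $\psi$ and $\zeta$, with $\zeta=\omega-x|\eta|^{2/3}e_0$. The eikonal system \eqref{systeikeq} then follows from Step 2.

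\emph{Main obstacle.} The delicate step is the transport/divisibility argument. Error terms not vanishing on the critical set must be divided by $\partial_s$ of the phase uniformly near the fold $s^2=\omega-xq^{1/3}$, in the anisotropic homogeneity classes. I would handle it with Malgrange preparation in the variable $s$, following \cite{meta}.
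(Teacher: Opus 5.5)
Your Step~1 is correct; it is exactly the reasoning behind \eqref{eq:GAB} and \eqref{genchi}.

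From there your route differs from the paper's. The paper does not rebuild $G$. It cites \cite[Prop.~2.1]{ILLP} for the integral representation of the Melrose--Taylor quasimode of Theorem~\ref{thmMelrose}. In Appendix~\ref{app:phase-equivalence} it then only checks that $\psi+s^3/3-s\zeta$ and the phase of \eqref{eq:Gosc} generate the same Lagrangian. The $A_2$ normal form of Lemma~\ref{lemphaseG} gives a change of variables $\sigma\mapsto s$ with $\partial_\sigma s\neq0$. Writing $Ai(-\zeta)$ and $Ai'(-\zeta)$ as integrals in $s$ and changing variables yields \eqref{eq:Gosc}, and $p$ is elliptic because $p_0$ is. You instead reprove Theorem~\ref{thmMelrose}: you conjugate the model Airy solution, solve transport equations, and apply a Chester--Friedman--Ursell reduction. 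That is a legitimate strategy in principle, but as written it has a real gap.

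\textbf{The gap: the factor $x$ in front of $p_1Ai'(-\zeta)$.} In Step~3 you assert the $x$ in front of $p_1Ai'(-\zeta)$ in \eqref{eq:defG}, but it does not come out of the normal form.

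\emph{What it means at the boundary.} At $x=0$ the phase of \eqref{eq:Gosc} is already the Airy phase $\psi_0+s^3/3-s\omega$; this is how your Step~1 gives $\zeta|_{x=0}=\omega$. Hence
$G(0,\cdot)=e^{i\psi_0}(r_0Ai(-\omega)-ir_1Ai'(-\omega))$, where $r_0+r_1s$ is obtained from $p(0,y,\eta,\omega,s)$ by Malgrange division by $s^2-\omega$. The $x$ factor is exactly the statement $r_1\equiv0$. For $\omega>0$ this means the amplitude takes equal values $r_0\pm r_1\sqrt\omega$ at the two critical points $s=\pm\sqrt\omega$ over each boundary point.

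\emph{Why your construction does not give it.} Your transport equations propagate the amplitude along $H_p$ on the Lagrangian, and $H_px=2\xi$ vanishes at glancing. So $\{x=0\}$ is characteristic: you cannot impose $p(0,\cdot)$ independent of $s$ as Cauchy data. Data chosen on a transversal hypersurface give $r_1\neq0$ in general. Each ray of the Lagrangian leaves $x=0$ on one sheet and, by convexity, returns on the other sheet at a different boundary point. Thus $r_1\equiv0$ is a matching condition across successive reflections, which must hold smoothly down to $\omega=0$. Achieving it is the Dirichlet-preserving quantization of $\canonchi_M$ due to Melrose--Taylor, i.e.\ essentially the content of Theorem~\ref{thmMelrose}. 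It does not follow from your Steps~2--3.

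\emph{Why it matters.} This is not cosmetic: it is precisely what gives $G(0,y,\eta,\omega_k)=0$, and hence the Dirichlet condition for $\Prond_{S,h,a}$.

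\textbf{The obstacle you flag is routine.} The eikonal defect vanishes on the critical set $\{\partial_s(\mathrm{phase})=0\}$. That set is a smooth hypersurface, because the $x$-derivative of $\partial_s(\mathrm{phase})$ equals $q^{1/3}(\eta)(1+\Lp)$ up to small terms, so Hadamard's lemma divides the defect. Relatedly, the positivity $e_0>0$ needs strict convexity ($1+\Lp>0$), which you should state.

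\textbf{How to close the gap.} Either quote Melrose--Taylor's boundary-preserving Fourier integral operator, or follow the paper's route, where the Dirichlet structure is inherited from Theorem~\ref{thmMelrose} rather than constructed.
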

\begin{rmq}\label{rmqvarphigamma}
Set $\varphi_{\Gamma}(x,y,\Xi,\Theta):=x\Xi+y\Theta+\Gamma(x,y,\Xi,\Theta)$, then the canonical transformation $\chi_M$ is such that $\chi_M(\partial_{\Xi}\varphi_{\Gamma},\nabla_{\Theta}\varphi_{\Gamma},\Xi,\Theta)=(x,y,\partial_x\varphi_{\Gamma},\nabla_y\varphi_{\Gamma})$.
\end{rmq}
\subsection{Global spectral localization and a finite microlocal partition}
\label{sec:global-microlocal-partition}

We first relate the local construction near glancing to the globally
spectrally localized propagator in Theorem~\ref{thmdisperS}. Let
$\widetilde\psi\in C^\infty_0((0,+\infty))$ such that
$\widetilde\psi=1$ on a neighborhood of $\supp\psi$, and set
\begin{equation}\label{eq:global-spectral-cutoffs}
 \Pi_h=\psi(-h^2\Delta_g),\qquad
 \widetilde\Pi_h=\widetilde\psi(-h^2\Delta_g).
\end{equation}
Thus $\Pi_h\widetilde\Pi_h=\widetilde\Pi_h\Pi_h=\Pi_h$.  We denote by
$\mathcal U_h(t)$ the exact Dirichlet propagator associated with
\eqref{SE}; the operator whose kernel has to be estimated is
\begin{equation}\label{eq:global-localized-kernel}
 \mathcal K_h(t)=\Pi_h\mathcal U_h(t)\Pi_h.
\end{equation}
The auxiliary cutoff $\widetilde\Pi_h$ acts as the identity on
the spectral support of $\Pi_h$; we use it to keep the same frequency scale after
spatial and microlocal localization, up to negligible remainders.
Here $\Pi_h$ is a single spectral cutoff away from zero. In
Section~\ref{sec:defoc-cubic-nonl} we will define the Littlewood--Paley
operators $\Delta_j$, using the same global Dirichlet functional
calculus with $h=2^{-j}$ and cutoffs forming a partition of unity.

We will use the following local comparison for both interior and boundary
patches; its proof is in Appendix~\ref{app:common-localization}.
\begin{lemma}[Short-time local comparison]\label{lem:boundary-locality}
  Let $\Delta_{\mathrm{loc}}$
  be the Laplace–Beltrami operator on a smooth compact manifold whose metric agrees with g on a enlarged coordinate neighborhood.
  For an interior patch we pick such manifold to have no boundary; for a boundary patch we retain the original boundary and Dirichlet condition throughout that neighborhood. Let $\chi_j,\chi_k$ be smooth spatial cutoffs supported in the common neighborhood, at   a fixed positive distance from the region where the two geometries may differ.
For every fixed $f\in C^\infty_0((0,+\infty))$ and sufficiently small
$T_0>0$,
\begin{equation}\label{eq:boundary-locality}
 \chi_k\bigl[\mathcal U_h(t)f(-h^2\Delta_g)
 -e^{-ith\Delta_{\mathrm{loc}}}f(-h^2\Delta_{\mathrm{loc}})\bigr]\chi_j
 =O_{C^\infty}(h^\infty),\qquad |t|\le T_0.
\end{equation}
We identify the kernels, including their smooth density factors, in the
common coordinates. The remainder is uniform in both spatial variables,
including the physical boundary, and after any fixed number of time
derivatives. The time restriction depends on the spectral support and
the distance to the boundary of the patch; it is independent of
the distance to the physical boundary. All constants may be chosen uniformly over a fixed finite atlas.
\end{lemma}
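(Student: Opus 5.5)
The proof reduces the global problem to a local one through two standard ingredients: functional calculus for the spectral cutoff, and almost finite speed of propagation for frequency-localized Schrödinger waves over semiclassical times $|t|\le T_0$. Throughout we write $\mathcal U_{\mathrm{loc}}(t)=e^{-ith\Delta_{\mathrm{loc}}}$.

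\emph{Step 1: localizing the spectral cutoffs.} We use the Helffer–Sjöstrand formula
\[
 f(-h^2\Delta)=\frac1\pi\int \bar\partial\tilde f(z)\,(z+h^2\Delta)^{-1}\,dL(z),
\]
where $\tilde f$ is an almost analytic extension of $f$. Let $\tilde\chi$ be a cutoff equal to $1$ near $\supp\chi_j$ and supported where the two metrics agree. The resolvent identity then gives
\[
 (z+h^2\Delta_g)^{-1}\chi_j-\tilde\chi(z+h^2\Delta_{\mathrm{loc}})^{-1}\chi_j
 =(z+h^2\Delta_g)^{-1}[h^2\Delta_g,\tilde\chi](z+h^2\Delta_{\mathrm{loc}})^{-1}\chi_j .
\]
For a boundary patch, $\tilde\chi$ preserves the Dirichlet condition, so both domains match. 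The commutator is supported at a fixed distance from $\supp\chi_j$. Iterating with nested cutoffs, and using elliptic resolvent estimates that are uniform up to the boundary (the Dirichlet Laplacian is elliptic and $\Im z\neq0$), the difference is $O(h^N|\Im z|^{-M})$ in $H^{-k}\to H^{k}$. Since $\bar\partial\tilde f$ vanishes to infinite order on the real axis, integrating yields
\[
 f(-h^2\Delta_g)\chi_j=\tilde\chi f(-h^2\Delta_{\mathrm{loc}})\chi_j+O(h^\infty).
\]
This part is routine.

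\emph{Step 2: Duhamel comparison of the propagators.} Set
\[
 w(t)=\mathcal U_h(t)\,\tilde\chi f(-h^2\Delta_{\mathrm{loc}})\chi_j-\tilde\chi\,\mathcal U_{\mathrm{loc}}(t)\,f(-h^2\Delta_{\mathrm{loc}})\chi_j .
\]
Applying $ih\partial_t-h^2\Delta_g$ to $w$ produces the source term
\[
 h^2[\Delta_{\mathrm{loc}},\tilde\chi]\,\mathcal U_{\mathrm{loc}}(t)\,f(-h^2\Delta_{\mathrm{loc}})\chi_j .
\]
The remaining task is to show this source is $O(h^\infty)$ for $|t|\le T_0$. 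Frequency-localized waves at frequency $\sim h^{-1}$ move at speed $O(1)$ in semiclassical time. Their mass therefore stays within distance $CT_0$ of $\supp\chi_j$, up to $O(h^\infty)$, provided $T_0$ is small compared with the distance $\delta$ from $\supp\chi_j$ to $\supp\nabla\tilde\chi$.

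For interior patches, $\Delta_{\mathrm{loc}}$ lives on a boundaryless manifold, and this propagation statement follows from Egorov's theorem, or equivalently from the WKB parametrix of \cite{bgt04}. For boundary patches this is the main obstacle: near the boundary we cannot use a pseudodifferential Egorov argument directly. Instead we prove localization with a commutator/energy estimate using a weight. Take
\[
 \phi(t,x)=\theta\bigl(\operatorname{dist}(x,\supp\chi_j)-Ct\bigr),
\]
with $\theta$ nondecreasing, and insert a spectral cutoff $\tilde f(-h^2\Delta_{\mathrm{loc}})$ so that $h\nabla$ acts boundedly. The multiplier $\phi$ does not affect the Dirichlet condition, so integration by parts produces no boundary terms.

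This gives
\[
 \frac{d}{dt}\|\phi u\|^2=\bigl\langle (\partial_t\phi)u,u\bigr\rangle+2h\,\Im\bigl\langle \nabla\phi\cdot h\nabla u,\phi u\bigr\rangle .
\]
Choosing $C$ larger than the group velocity bound $\sup|h\nabla|\lesssim 1$ on the spectral support makes the right-hand side at most $O(h)$ times norms supported on a slightly larger set. Iterating this over a nested family of weights gains $h$ at each step, which yields $O(h^\infty)$.

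\emph{Step 3: from $L^2$ to $C^\infty$ kernel bounds.} Combining Steps 1 and 2 with unitarity gives
\[
 \chi_k\bigl[\mathcal U_h(t)f(-h^2\Delta_g)-\mathcal U_{\mathrm{loc}}(t)f(-h^2\Delta_{\mathrm{loc}})\bigr]\chi_j=O_{L^2\to L^2}(h^\infty).
\]
We upgrade to $O_{C^\infty}(h^\infty)$ kernels by composing on both sides with extra spectral cutoffs. Elliptic regularity for the Dirichlet problem gives $H^{2k}\cap\{\text{Dirichlet}\}$ bounds up to the boundary, with at most polynomial losses in $h$, and Sobolev embedding then converts these to pointwise kernel bounds. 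Time derivatives are handled by writing $\partial_t=-ih\Delta$ on each propagator, which again costs only powers of $h$ that are absorbed by $O(h^\infty)$.

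Uniformity over the finite atlas is immediate. The constants depend only on $\delta$, the spectral support of $f$, and finitely many derivatives of the metric, and the choice of $T_0\sim\delta$ does not involve the distance to the physical boundary.
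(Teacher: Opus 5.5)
Your argument takes a genuinely different route from the paper's, and it can be completed. However, the energy identity in Step~2 is wrong in a way that hides the one step where the boundary actually matters. For $u_t=-ih\Delta_{\mathrm{loc}}u$ with Dirichlet data, the correct identity is
\[
 \frac{d}{dt}\|\phi u\|^2=2\langle \phi\,\partial_t\phi\,u,u\rangle-4\,\Im\langle \nabla\phi\cdot h\nabla u,\phi u\rangle .
\]
So the transport term is $O(1)$, not $O(h)$. Your version would hold even with $C=0$, and the iteration would then show that frequency-$h^{-1}$ waves do not move at all.

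With $\phi=\theta(d-Ct)$, where $d$ is a smoothed distance and $\beta^2=\theta\theta'$ is taken to be a smooth square, the first term equals $-2C\|\beta u\|^2$. To absorb the second term you need the localized bound
\[
 \|\beta\,h\nabla u\|^2\le \tau_+^2\|\beta u\|^2+O(h)\|\tilde\beta u\|^2+O(h^\infty)\|u\|^2,\qquad \tau_+^2=\sup\supp f .
\]
Here $\tilde\beta$ is supported where the next weight of your nested family equals one. You also need $C\ge 2\tau_+$, since the group velocity is $2\tau$.

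The bound itself comes from two ingredients:
\begin{itemize}
\item integrate $|\beta h\nabla u|^2$ by parts; there is no boundary term because of the Dirichlet condition;
\item commute $\beta$ with $g(-h^2\Delta_{\mathrm{loc}})$, where $g(\lambda)=\lambda\tilde f(\lambda)$, using the Helffer--Sj\"ostrand locality of your Step~1 to keep the $O(h)$ commutator supported near $\supp\beta$.
\end{itemize}
This localized bound carries the whole boundary difficulty. Once you have it, your iteration does give $O(h^\infty)$.

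For the time derivatives, do not factor out $\Delta$, since $\Delta_g\neq\Delta_{\mathrm{loc}}$ globally. Instead, note that $\partial_t$ of the difference is $ih^{-1}$ times the same difference with $f$ replaced by $\lambda f(\lambda)$.

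The paper never analyzes Schr\"odinger propagation directly. It uses exact finite speed for the Dirichlet wave equation,
\[
\chi_k\cos(s\sqrt{-\Delta_g})\chi_j=\chi_k\cos(s\sqrt{-\Delta_{\mathrm{loc}}})\chi_j,\qquad |s|<s_0,
\]
together with the scalar transfer formula $\mathcal U_h(t)f(-h^2\Delta)=\int c_h(t,s)\cos(s\sqrt{-\Delta})\,ds$, where $c_h(t,s)=(2\pi h)^{-1}\int e^{i(t\tau^2-s\tau)/h}f(\tau^2)\,d\tau$. The argument then runs as follows.
\begin{itemize}
\item Near $s=0$ the two operators coincide exactly.
\item On $|s|\ge s_0/2$, once $2T_0\sup\tau<s_0/4$, one has $|2t\tau-s|\gtrsim 1+|s|$. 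Integration by parts in $\tau$ gives $c_h=O(h^\infty(1+|s|)^{-L})$ with all derivatives.
\item The identity $(1-\partial_s^2)\cos(s\sqrt{-\Delta})=(1-\Delta)\cos(s\sqrt{-\Delta})$ turns this into $O(h^\infty)$ bounds $H^{-2m}\to H^{2m}$, hence $C^\infty$ kernel bounds up to the boundary.
\item Setting $t=0$ recovers your Step~1, and time derivatives fall on $c_h$.
\end{itemize}
The boundary enters only through classical Dirichlet finite speed. No resolvent calculus, localized commutator estimate near $\partial\Omega$, nested iteration or $L^2$-to-kernel sandwiching is needed.

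What your approach buys is a self-contained Schr\"odinger-side argument with an explicit propagation speed $2\sup\tau$, which would also apply where no exact wave finite-speed statement is available. The price is the extra machinery above.
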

We use this lemma with $f=\psi^2$ for $\mathcal K_h$, and with the
auxiliary spectral cutoffs when needed. At $t=0$ it also identifies the
localized spectral multipliers.
We choose a finite smooth partition of unity such that
\begin{equation}\label{eq:finite-spatial-partition}
 1=\sum_{j\in\mathcal J_{\mathrm{int}}}
       (\chi_j^{\mathrm{int}})^2
   +\sum_{k\in\mathcal J_{\partial}}
       (\chi_k^{\partial})^2.
\end{equation}
The cutoffs $\chi_j^{\mathrm{int}}$ are supported in coordinate
patches compactly contained in $\Omega$, while the
$\chi_k^{\partial}$ are supported in boundary normal-coordinate
patches of the form used above.
The latter patches cover a fixed tubular neighborhood of $\partial\Omega$.  After reducing
$T_0$, if necessary, the interior patches may be enlarged so that every
bicharacteristic segment of length at most $T_0$ that is relevant to a fixed
interior term remains in a boundaryless coordinate neighborhood.  By nonstationary phase, terms whose spatial supports
{cannot be joined by such a segment in this time have an
$O(h^\infty)$ kernel, together with all the fixed-order derivatives used below.} This gives the short-time locality
we need for the Schrödinger equation.

We first apply the physical partition in both kernel variables, giving
boundary pieces $\chi_k^\partial\mathcal K_h(t)\chi_j^\partial$,
and then microlocalize them. The global spectral multipliers stay
inside each piece: no spatial cutoff is commuted with them.
We next refine each boundary term in phase space.  We use the same notation
for $R_0(y,\partial_y)$ in \eqref{eq:R01} and for its principal symbol
$R_0(y,\eta)$.  In the space-time cotangent coordinates introduced above,
$\tau$ is the wave-frequency variable. On a fixed compact neighborhood of the spectral support
selected by $\widetilde\psi$, we choose
$\delta>0$ and symbols $b_{\mathrm{hyp}}$, $b_{\mathrm{gl}}$ and
$b_{\mathrm{ell}}$, smooth in $(\tau,y,\eta)$ and even in $\tau$, such that
 $b_{\mathrm{hyp}}+b_{\mathrm{gl}}+b_{\mathrm{ell}}=1$,
with
\begin{align}
 \supp b_{\mathrm{hyp}}
   &\subset\{\tau^2-R_0(y,\eta)>\delta\},
   \label{eq:hyperbolic-region}\\
 \supp b_{\mathrm{gl}}
   &\subset\{|\tau^2-R_0(y,\eta)|<2\delta\},
   \label{eq:glancing-region}\\
 \supp b_{\mathrm{ell}}
   &\subset\{\tau^2-R_0(y,\eta)<-\delta\}.
   \label{eq:elliptic-region}
\end{align}
The glancing symbol $b_{\mathrm{gl}}$ is divided into
finitely many conic neighborhoods of points
\begin{equation}\label{eq:finite-glancing-cover}
 (\tau;x,y;\xi,\eta)=(\tau_0;0,y_0;0,\eta_0),\qquad
 R_0(y_0,\eta_0)=\tau_0^2,
\end{equation}
small enough that the canonical transformation $\canonchi_M$, its generating
function $\Gamma$, and the Fourier integral operator $J$ constructed below
are available with uniform symbol bounds.
Since the spectral cutoff restricts $h\sqrt{-\Delta_g}$ to a fixed compact interval,
 only finitely many such
neighborhoods are needed, independently of $h$.  {Here $\tau$ is the semiclassical wave-frequency parameter. For the semiclassical Schrödinger flow $\mathcal U_h(t)=\exp(-ith\Delta_g)$, the corresponding time-frequency variable, associated with $hD_t$ with $D_t=-i\partial_t$, is $\tau^2$. {We therefore use $b_{\mathrm{reg}}(\sqrt{\cdot},y,\eta)$ when quantizing in $(t,y)$, with $x$ fixed, on the positive spectral window under consideration.} One may also use a finite subdivision of the spectral interval as in Section~\ref{sec:elliptic-contribution}. Throughout this reduction, time-frequency localization refers to Fourier multipliers in $t$, whereas a time cutoff means multiplication by a smooth function of $t$.
On the homogeneous flow, $\psi(hD_t)\mathcal U_h(t)=\mathcal U_h(t)\psi(-h^2\Delta_g)$ identifies time-frequency localization with spectral localization. The realization by Fourier multipliers in time is applied after physical localization, since these multipliers commute with time-independent spatial cutoffs. This does not justify applying the spatial functional calculus to the already spatially cut-off solution.}
The identities in Appendix~\ref{app:boundary-spectral-matching}
will relate this scalar spectral cutoff to the local parametrix.

We microlocalize only on the observation side. Write
$\chi_j$ for any member of the physical partition, and let
$\mathcal J=\mathcal J_{\mathrm{int}}\sqcup\mathcal J_\partial$.
The exact physical reconstruction is
\begin{equation}\label{eq:global-operator-partition}
 \mathcal K_h(t)=\sum_{k,j\in\mathcal J}
 \chi_k\,[\chi_k\mathcal K_h(t)\chi_j]\,\chi_j.
\end{equation}
For each boundary observation patch, quantize the preceding symbols as
{operators in $(t,y)$, acting at fixed $x$. We choose larger cutoffs equal to
one on the supports of the partition. With a finite spectral subdivision,
this gives the families}
$\{\mathcal Q_{k,\ell,h}^{\mathrm{reg}}\}_{\ell}$,
$\mathrm{reg}\in\{\mathrm{hyp},\mathrm{gl},\mathrm{ell}\}$.
We use these glancing operators in
Lemma~\ref{lem:direct-glancing-matching}. The symbols are chosen to sum
to one on {an enlarged compact set of time and tangential frequencies}
containing the characteristic set of the spectrally localized kernel.
{Quantizing their sum in the same convention gives the identity there.
When composing with the auxiliary cutoffs, the symbol composition
corrections can be chosen within the larger supports, giving the
reconstruction below modulo $O(h^\infty)$.} The complementary tangential
frequencies are elliptic and are removed by
Lemma~\ref{lem:elliptic-spectral-removal}, proved in
Appendix~\ref{app:elliptic-contribution}, also at large
parameter-elliptic tangential frequencies. {Time frequencies outside}
the enlarged spectral window are absent before time truncation and give
$O(h^\infty)$ on the smaller time interval after truncation.
Consequently, for boundary indices $k,j$,
\begin{equation}\label{eq:global-kernel-partition}
 \chi_k^\partial\mathcal K_h(t)\chi_j^\partial
 =\sum_{\mathrm{reg}\in\{\mathrm{hyp},\mathrm{gl},\mathrm{ell}\}}
   \sum_\ell\mathcal Q_{k,\ell,h}^{\mathrm{reg}}
        [\chi_k^\partial\mathcal K_h(\cdot)\chi_j^\partial](t)
   +O(h^\infty)\,,
\end{equation}
and the remainder is uniform in both kernel variables. In the elliptic
{terms, after subdividing the spectral interval, each piece consists of a
tangential operator followed by a scalar time-frequency multiplier $f_\ell(hD_t)$.}
Since these multipliers commute with physical cutoffs, their action
on the homogeneous flow is exactly $f_\ell(-h^2\Delta_g)$ before
physical localization. Lemma~\ref{lem:elliptic-spectral-removal}, with
the physical cutoffs included in its nested supports, therefore applies.
This is not a commutation of a spatial cutoff with a spectral multiplier.

The sums in \eqref{eq:global-operator-partition} and
\eqref{eq:global-kernel-partition} are finite. Pairs of patches which
cannot be joined in the chosen time contribute $O(h^\infty)$ and may
be kept in the sum.
For the remaining pairs, we choose enlarged common coordinate neighborhoods
before fixing $T_0$, and use Lemma~\ref{lem:boundary-locality}.
\subsection{The strictly interior contribution}
\label{sec:strictly-interior-contribution}
For the finite partition of Section~\ref{sec:global-microlocal-partition},
the interior pieces satisfy, after possibly decreasing $T_0$,
\begin{equation}\label{eq:interior-piece-dispersion}
 \big\|\mathcal K_h(t)(\chi_j^{\mathrm{int}})^2
       \big\|_{L^1(\Omega)\to L^\infty(\Omega)}
 \leq C h^{-d}
       \min\left\{1,\left(\frac h{|t|}\right)^{d/2}\right\},
 \qquad 0<|t|\leq T_0 .
\end{equation}
The same bound holds with the interior cut-off on the observation side,
since $\mathcal K_h(t)^*=\mathcal K_h(-t)$. Thus the terms for which the source or observation point lies in the interior satisfy the free
dispersive bound, uniformly over our finite atlas. We are left with
terms with both source and observation points near the boundary.
The reduction to the boundaryless estimates of
\cite[Proposition~2.1, Corollary~2.4, and Lemmas~2.5 and~2.7]{bgt04}
is proved in Appendix~\ref{app:strictly-interior-contribution}.
Throughout these reductions, a smoothing $O(h^\infty)$ remainder means
that its kernel and all derivatives of any fixed order are $O(h^N)$
for every $N$.

\subsection{The uniformly hyperbolic boundary contribution}
\label{sec:hyperbolic-contribution}
In the region \eqref{eq:hyperbolic-region}, the fixed separation from
glancing allows us to use the classical geometrical-optics construction of
\cite[Section~1.3]{meta}; see also \cite{bss08,bss12} for different parametrices'constructions, from which our estimates could be extracted with appropriate care.
\begin{prop}\label{prop:hyperbolic-dispersion}
{
Let $\mathcal K_h^{\mathrm{hyp}}$ be a hyperbolic piece of
\eqref{eq:global-kernel-partition}, with rays remaining uniformly
transverse to the boundary at every reflection. For sufficiently
narrow spectral windows and sufficiently small $T_0$, it satisfies
}
\begin{equation}\label{eq:hyperbolic-kernel-bound}
 \|\mathcal K_h^{\mathrm{hyp}}(t)\|_{L^1(\Omega)\to L^\infty(\Omega)}
 \leq C h^{-d}
     \min\left\{1,\left(\frac h{|t|}\right)^{d/2}\right\},
 \qquad 0<|t|\leq T_0.
\end{equation}
The constant is uniform in the source and observation points, irrespective of their distances to the boundary.
\end{prop}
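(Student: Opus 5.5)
The plan is to build, for the hyperbolic piece, an explicit geometrical-optics parametrix as a finite sum of reflected wave packets, and then to apply stationary phase to each term separately. The argument rests on three facts.

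\begin{itemize}
\item In the region \eqref{eq:hyperbolic-region} every ray meets $\partial\Omega$ with normal frequency $|\xi|\geq c\sqrt\delta>0$.
\item Strict convexity gives a uniform lower bound on the time between two consecutive reflections of such rays.
\item Hence on $|t|\leq T_0$ there are at most $N_0=O(1)$ reflections, with $N_0$ independent of $h$ and of the source point.
\end{itemize}

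First, I reduce to a uniform local problem. I use Lemma~\ref{lem:boundary-locality} to replace $\mathcal U_h(t)$ by the local Dirichlet propagator in a boundary normal-coordinate patch, and then apply the tangential operator $\mathcal Q^{\mathrm{hyp}}$. For $|t|\leq h$ the trivial bound $h^{-d}$ follows from $\|\Pi_h\|_{L^1\to L^\infty}\lesssim h^{-d}$, as in \eqref{eq:interior-piece-dispersion}. I therefore assume $h\leq|t|\leq T_0$.

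Next, I construct the parametrix, following \cite[Section~1.3]{meta}. I write
\[
\mathcal K^{\mathrm{hyp}}_h(t)(z,z_0)=\sum_{n=0}^{N_0}(2\pi h)^{-d}\int e^{\frac ih\phi_n(t,z,\zeta)}\,a_n(t,z,\zeta,h)\,d\zeta+O(h^\infty).
\]
Here $\phi_0=\langle z-z_0,\zeta\rangle-t|\zeta|^2_g+\dots$ is the free Schrödinger phase, obtained from the eikonal equation $\partial_t\phi+|\nabla\phi|_g^2=0$. The phase $\phi_{n+1}$ is defined from $\phi_n$ by matching traces on $x=0$: I take the other root $\xi\mapsto-\xi$ of the eikonal equation, which is a smooth choice precisely because the two roots are separated, and I solve the Cauchy problem off the boundary. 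The amplitudes solve transport equations with Dirichlet matching $a_{n+1}|_{x=0}=-a_n|_{x=0}$. These equations are solvable with uniform symbol bounds because the boundary is non-characteristic for the Hamilton flow in this region. Equivalently, I apply Kanaï's transform to the wave parametrix of \cite{meta}, as is done near glancing later in the paper. Narrow spectral windows keep $|\zeta|$ in a compact set, where each map $\zeta\mapsto$ (position of the $n$-times reflected ray) is a local diffeomorphism.

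Then comes the stationary phase step for each $n$. I rescale $\zeta$ by $\sqrt{|t|}$. The Hessian in $\zeta$ of $\phi_n$ is $-2t\bigl(\mathrm{Id}+O(t)\bigr)$ composed with the differential of the billiard reflection map. This is the key point: transversal reflection at a strictly convex boundary is a smooth diffeomorphism, and the Jacobian of the reflected ray family remains nondegenerate for short times, because convexity makes the rays diverge further and produces no caustic before the glancing region is reached. Non-degenerate stationary phase then gives $|a_n|\,(h/|t|)^{d/2}h^{-d}$ for each term. Summing the $N_0+1$ terms gives \eqref{eq:hyperbolic-kernel-bound}. Uniformity in the source point, including sources on or near the boundary, comes from the uniform lower bound on $|\xi|$ at the reflection points.

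The main obstacle I expect is the Hessian nondegeneracy after reflection, uniformly in $t$ up to $T_0$ and in source points at distance $a\to0$ from the boundary. For such sources the first reflection happens at times $\sim a$, so the reflected packet must be controlled near $t=0$. My first attempt would be to write the Jacobian of $\zeta\mapsto z_n(t,\zeta)$ as $t\,(D\mathcal R+O(t))$, where $\mathcal R$ is the reflection in the frozen tangent metric, and to check that the curvature correction has a favorable sign, using positivity of $R_1$. The $O(t)$ error would then be absorbed by choosing $T_0$ small.
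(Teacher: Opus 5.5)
Your route is the paper's: the transverse geometrical-optics parametrix of \cite[Section~1.3]{meta} and a bounded number of branches. The paper takes $T_0$ below the minimal time between transverse reflections, so only the direct and once-reflected branches occur; this lower bound comes from transversality and bounded curvature, not from convexity. The rest also matches: a frequency Hessian equal to $2t$ times a uniformly invertible matrix, and stationary phase with large parameter $|t|/h$ applied to $\phi_n/t$ on a fixed frequency support. No rescaling of $\zeta$ by $\sqrt{|t|}$ is involved.

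The point to correct is your justification of the key step. Seen from inside, a strictly convex boundary is a concave mirror, so reflection is \emph{focusing}, not diverging. Compared with reflection in the frozen flat boundary, curvature decreases the Jacobian of $\zeta\mapsto z(t,\zeta)$ by a relative amount of at most order $|t|/\sin\beta\lesssim|t|/\sqrt\delta$, where $\beta$ is the grazing angle. Over longer times, reflected families from a point source do form caustics; this is why caustics accumulate near glancing in the rest of the paper. Positivity of $R_1$ therefore gives no favourable sign, and the sign check you propose would fail. Nondegeneracy comes only from $T_0\ll_\delta 1$.

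The uniformity as $a\to0$, which you list as the main obstacle, is obtained in the paper by differentiating the reflection condition $x(s_r(\zeta),\zeta)=0$ in the initial frequency. Since the position at time $0$ does not depend on $\zeta$, one has $\partial_\zeta x(s,\zeta)=O(s)$. Dividing by the normal velocity, bounded below by $c\sqrt\delta$, gives $\partial_\zeta s_r=O_\delta(|t|)$ and $\partial_\zeta z_r=O_\delta(|t|)$ for every reflection time $s_r\in[0,|t|]$, however close the source is to the boundary. The variation of the boundary normal at $z_r$ then perturbs the frequency derivatives of the reflected velocity by $O_\delta(|t|)$, hence those of the final position by $O_\delta(t^2)$ over the remaining time. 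In normalized coordinates the Hessian is therefore $2t\,\mathrm{Id}+O_\delta(t^2)$ with $|\det|\ge c_\delta|t|^d$, uniformly in the source. With this in place of the convexity heuristic, your absorption of the $O(t)$ error by choosing $T_0$ small is exactly the paper's proof.
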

We prove this bound in Appendix~\ref{app:hyperbolic-contribution}, using the Melrose--Taylor parametrix of \cite{meta}.

\subsection{The elliptic boundary contribution}
\label{sec:elliptic-contribution}
{
In each boundary patch $k$, cover the compact elliptic support
by finitely many products of spectral intervals and tangential
neighborhoods. Let $f_\ell$ denote the smooth spectral cutoffs
and $\mathcal A_{k,\ell,h}^{\mathrm{ell}}$ the associated properly
supported semiclassical pseudodifferential cutoffs in $y$,
with $x$ as a parameter. We choose their nested supports so that
$R(x,y,\eta)-\tau^2\geq\delta/2$ on the enlarged supports whenever
$\tau^2\in\supp f_\ell$. These tangential operators preserve
the Dirichlet trace. Boundary elliptic regularity then gives
the following estimate.
}
\begin{lemma}\label{lem:elliptic-spectral-removal}
For the nested cutoffs just described, and every $M,N\geq0$,
\begin{equation}\label{eq:elliptic-spectral-smoothing}
 \big\|\mathcal A_{k,\ell,h}^{\mathrm{ell}}
       f_\ell(-h^2\Delta_g)\big\|_{L^2(\Omega)\to H^M(\Omega)}
       \leq C_{M,N}h^N .
\end{equation}
Moreover, the kernels of
$\mathcal A_{k,\ell,h}^{\mathrm{ell}}f_\ell(-h^2\Delta_g)
\mathcal U_h(t)\Pi_h$ and of the corresponding source-side operators
are $O(h^\infty)$ in every $C^m(\overline\Omega\times\overline\Omega)$
norm, uniformly for $|t|\leq T_0$.
\end{lemma}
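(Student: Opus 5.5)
The strategy is to treat $\mathcal A_{k,\ell,h}^{\mathrm{ell}}f_\ell(-h^2\Delta_g)$ as a semiclassical boundary elliptic regularity statement. On the range of $f_\ell(-h^2\Delta_g)$ we have, spectrally, $-h^2\Delta_g\approx\tau^2$ with $\tau^2\in\supp f_\ell$. Meanwhile $\mathcal A^{\mathrm{ell}}$ localizes to tangential frequencies where $R(x,y,\eta)-\tau^2\ge\delta/2$. There, the operator $-h^2\partial_x^2-h^2R(x,y,\partial_y)-\tau^2$ is elliptic in the tangential variables, in the sense of a parameter-dependent ODE in $x$ with Dirichlet condition. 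The normal symbol $\xi^2+R-\tau^2$ has no real roots, so the Dirichlet problem is elliptic up to the boundary, with no Lopatinskii obstruction for Dirichlet data.

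\textbf{Step 1: a pointwise spectral parametrix.} Write $u=f_\ell(-h^2\Delta_g)v$. Let $\tilde f_\ell$ equal one on $\supp f_\ell$ with slightly larger support. For each $\lambda=\tau^2\in\supp\tilde f_\ell$, build a semiclassical parametrix $E_\lambda$ for the Dirichlet problem $(-h^2\Delta-\lambda)w=F$, $w|_{x=0}=0$, microlocally on the enlarged support of $\mathcal A^{\mathrm{ell}}$. This is done by a tangential pseudodifferential calculus with $x$ as a parameter: factor $-h^2\partial_x^2+(h^2R-\lambda)$ as $(hD_x-i\Lambda)(hD_x+i\Lambda)$ modulo $O(h^\infty)$, where $\Lambda\sim\sqrt{R-\lambda}$ is a positive tangential operator. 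Invert the two first-order factors, one as a backward heat-type problem from infinity and one forward from $x=0$ using the Dirichlet data. This yields
\[
\mathcal A^{\mathrm{ell}}=E_\lambda\,\tilde{\mathcal A}(-h^2\Delta_g-\lambda)+O_{H^{-N}\to H^N}(h^\infty),
\]
uniformly in $\lambda$. Here $\tilde{\mathcal A}$ is an enlarged tangential cutoff that preserves the Dirichlet trace. The commutator of $\tilde{\mathcal A}$ with $\Delta$ is lower order and supported in the elliptic region, so it is absorbed by iteration. This factorization is the \textbf{main obstacle}. It is essentially the elliptic part of the Melrose--Taylor boundary calculus \cite{meta}, and the delicate point is keeping uniformity up to $x=0$ and in $\lambda$. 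An alternative I would try first is a direct energy argument: integrate by parts $\langle(-h^2\Delta-\lambda)\tilde{\mathcal A}u,\tilde{\mathcal A}u\rangle$. The Dirichlet condition kills the boundary terms, and tangential G\aa rding gives $\|h\partial_x\tilde{\mathcal A}u\|^2+(\delta/4)\|\tilde{\mathcal A}u\|^2\lesssim |\langle(\dots)\rangle|+O(h)\|u\|^2$. Iterating with nested cutoffs then gains $h$ at each step.

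\textbf{Step 2: spectral integration.} Use the functional calculus, $f_\ell(-h^2\Delta_g)=\int f_\ell(\lambda)\,dE(\lambda)$, and note that $(-h^2\Delta_g-\lambda)$ applied to $dE(\lambda)$ vanishes. A cleaner route is to write $f_\ell(P)=g_\ell(P)(P-\lambda_0)^N$ with $P=-h^2\Delta_g$. Then repeated use of the energy estimate of Step 1, with $\lambda$ frozen over a narrow window and the error $P-\lambda$ of size at most the window width, gives $\|\mathcal A^{\mathrm{ell}}f_\ell(P)\|_{L^2\to L^2}\le C_Nh^N$. Equivalently, $\mathcal A^{\mathrm{ell}}f_\ell(P)=\mathcal A^{\mathrm{ell}}\tilde{\mathcal A}_1\cdots\tilde{\mathcal A}_N f_\ell(P)+O(h^\infty)$, each nested step gaining $h$. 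Upgrading to $H^M$ uses $f_\ell(P)=f_\ell(P)\tilde f_\ell(P)$, the bound $\|\tilde f_\ell(P)\|_{L^2\to D(P^{M/2})}\lesssim h^{-M}$, and Dirichlet elliptic regularity $D(\Delta_g^{M/2})\subset H^M$, so polynomial losses in $h$ are absorbed.

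\textbf{Step 3: kernels.} Write $\mathcal A f_\ell(P)\mathcal U_h(t)\Pi_h=[\mathcal A f_\ell(P)]\,\mathcal U_h(t)\,[\Pi_h\tilde\Pi_h]$. The middle factor is unitary, and $\tilde\Pi_h$ maps $H^{-M}\to L^2$ with norm $O(h^{-M})$, by duality and the same Sobolev/domain argument. Hence the operator maps $H^{-M}\to H^M$ with norm $O(h^\infty)$. Sobolev embedding with $M>d/2+m$ then gives $C^m(\overline\Omega\times\overline\Omega)$ kernel bounds, uniformly for $|t|\le T_0$. The source-side statement follows by taking adjoints, since $\mathcal U_h(t)^*=\mathcal U_h(-t)$.
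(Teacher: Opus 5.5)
Your Step 1 (the Dirichlet elliptic parametrix, or equivalently the energy estimate with nested tangential cutoffs that preserve the Dirichlet trace) and your Step 3 (writing $\Pi_h=\Pi_h\widetilde\Pi_h$, dualizing, using Sobolev embedding, and taking adjoints for the source side) agree with the paper. The gap is in Step 2: passing from one spectral value $\lambda$ to the window $\supp f_\ell$, whose width is fixed independently of $h$.

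\begin{itemize}
\item Your ``cleaner route'' $f_\ell(P)=g_\ell(P)(P-\lambda_0)^N$ is not available. A cutoff does not vanish to order $N$ at a point of its support. Even ignoring this, $\|(P-\lambda_0)^Nf_\ell(P)\|$ is bounded only by a power of the window width, which is a fixed number less than $\delta/4$, not a power of $h$.
\item Freezing $\lambda$ and inserting $u=f_\ell(P)v$ into $\mathcal A=E_\lambda\tilde{\mathcal A}(P-\lambda)+O(h^\infty)$, or into your energy inequality, gives only $\|\mathcal A f_\ell(P)v\|\lesssim(\text{width}+h)\|v\|$. Write $(P-\lambda)\tilde{\mathcal A}u=\tilde{\mathcal A}(P-\lambda)u+[P,\tilde{\mathcal A}]u$: only the commutator carries a factor $h$, while $\tilde{\mathcal A}(P-\lambda)u$ is of size comparable to the width.
\end{itemize}

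The nested-cutoff iteration gains $h$ per step only when the localized equation vanishes, or is $O(h^\infty)$. That is false for $f_\ell(P)v$. So $\|\mathcal Af_\ell(P)\|\le C_Nh^N$ does not follow, and your ``equivalent'' reformulation with $N$ nested cutoffs has the same defect. Your first heuristic, that ``$(P-\lambda)\,dE(\lambda)=0$'', points the right way. However, a bound uniform in $\lambda$ for the integrand does not by itself control $\int f_\ell(\lambda)R_\lambda\,dE(\lambda)$; you need a summation argument.

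The missing idea is the one the paper uses: the Dirichlet spectrum is discrete.
\begin{itemize}
\item Expand $f_\ell(P)v=\sum_m f_\ell(h^2\lambda_m)\langle v,e_m\rangle e_m$, where $-\Delta_ge_m=\lambda_me_m$.
\item Each $e_m$ satisfies $(P-h^2\lambda_m)e_m=0$ exactly. Your Step 1, uniform in $\lambda$, therefore gives $\|\mathcal Ae_m\|_{H^M}=O(h^\infty)$ uniformly over the window.
\item There are $O(h^{-d})$ eigenvalues with $h^2\lambda_m\in\supp f_\ell$ (Weyl). Cauchy--Schwarz then gives the operator bound with a loss of only $h^{-d/2}$, which $h^\infty$ absorbs.
\item Normalized eigenfunctions have polynomially bounded $C^m$ norms, so the kernel statement follows directly from the expansion. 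The unitary factors from $\mathcal U_h(t)$ play no role.
\end{itemize}

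Alternatively, you could cut the window into $n\sim h^{-2N}$ spectral intervals $\Delta_i$ of width $h^{2N}$. Apply your parametrix identity to each $E(\Delta_i)w$, with error $O(h^{2N}+h^\infty)\|E(\Delta_i)w\|$. Then sum using orthogonality, $\sum_i\|E(\Delta_i)w\|\le n^{1/2}\|w\|$, to get $O(h^N)$. Either way, the $h$-dependence, whether through eigenfunctions or an $h$-dependent fine partition, is exactly what your Step 2 is missing.
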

We give the construction, the references and a sketch of proof for this elliptic estimate
in Appendix~\ref{app:elliptic-contribution}.

\subsection{Spectral matching of the boundary parametrices}
\label{sec:boundary-spectral-matching}
Lemma~\ref{lem:boundary-locality} identifies the globally localized kernel
on the boundary patches with that of a local
Dirichlet problem, uniformly up to the physical boundary. We next
compare this local kernel with our glancing parametrix.
For each glancing piece, we choose {a properly supported operator in the tangential variables and in time} $\mathcal Q_{\nu,h}^{\mathrm{gl}}$, acting at fixed $x$,
with compact frequency support and uniform symbol bounds up to $x=0$.
{
We subdivide the spectral window into finitely many pieces.
On each piece, we use a fixed positive multiple of $h$ to place the frequency
support  of $\mathcal Q_{\nu,h}^{\mathrm{gl}}$ where the multipliers of Definition~\ref{dfnparametrix}
equal one. We adjust the semiclassical time inversely, so that
the propagator and the physical time are unchanged.
This normalization is used only for the local comparison;
returning to the original parameters changes the estimates
only by fixed constants.
We choose additional frequency cutoffs equal to one on a neighborhood
of this support and supported where these multipliers equal one.

For the spatial and time localizations, we choose the source and
observation supports and the time interval within the region
of validity of the parametrix comparison, with a fixed margin
to its spatial and temporal limits. This imposes no separation
from the physical boundary $x=0$.}
\begin{lemma}\label{lem:direct-glancing-matching}
{
In each boundary patch, the kernel of the corresponding glancing
piece in \eqref{eq:global-kernel-partition} agrees, modulo a smoothing
$O(h^\infty)$ remainder, with the kernel obtained by applying
$\mathcal Q_{\nu,h}^{\mathrm{gl}}\psi^2(hD_t)$ to the local parametrix
of Definition~\ref{dfnparametrix}, physically localized in both
spatial variables. The remainder is uniform in the source and
observation points, including at the boundary, and remains
$O(h^\infty)$ after any fixed number of observation and time
derivatives. The local comparison follows from
Propositions~\ref{propsommefinie} and~\ref{propparama<}.
}
\end{lemma}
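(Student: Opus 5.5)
The plan is to compare both kernels with a common local object, the Dirichlet propagator $e^{-ith\Delta_{\mathrm{loc}}}$ on the enlarged boundary patch. Lemma~\ref{lem:boundary-locality} is applied with $f=\psi^2$, together with the auxiliary cutoff $\widetilde\psi$. It replaces $\chi_k^\partial\mathcal K_h(t)\chi_j^\partial$ by $\chi_k^\partial e^{-ith\Delta_{\mathrm{loc}}}\psi^2(-h^2\Delta_{\mathrm{loc}})\chi_j^\partial$, modulo $O_{C^\infty}(h^\infty)$, uniformly up to $x=0$ and after time derivatives. We then apply the properly supported glancing operator $\mathcal Q_{\nu,h}^{\mathrm{gl}}$ in $(t,y)$. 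Its symbol bounds are uniform and it acts at fixed $x$, so it preserves smoothing $O(h^\infty)$ remainders with uniform constants. On the homogeneous local flow, $\psi^2(-h^2\Delta_{\mathrm{loc}})$ can be rewritten as the time-frequency multiplier $\psi^2(hD_t)$. This multiplier commutes with the spatial cutoffs and is therefore applied only after physical localization, as stressed in Section~\ref{sec:global-microlocal-partition}. It remains to show that, after applying $\mathcal Q_{\nu,h}^{\mathrm{gl}}\psi^2(hD_t)$, the local Dirichlet propagator and the parametrix of Definition~\ref{dfnparametrix} have the same physically localized kernel modulo $O(h^\infty)$.

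For that step I would use the two constructions, distinguished by the source distance $a$. For $a\gtrsim h^{2/3-\ceps}$, Proposition~\ref{propsommefinie} should give a parametrix that solves \eqref{SE} with the Dirichlet condition up to $O(h^\infty)$ errors on the glancing microlocal support. For $a\lesssim h^{2/3-\ceps}$, Proposition~\ref{propparama<} should give the same conclusion through the quasimode and the operator $J$. In both cases the data at $t=0$ should agree with $\psi(-h^2\Delta)\delta_a$ microlocally near glancing. We then write the difference $w$ between the localized exact solution and the localized parametrix. It solves a Dirichlet problem with zero boundary data, with initial data and forcing that are $O(h^\infty)$ on the frequency support; the forcing includes the commutator terms from the spatial cutoffs. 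Duhamel's formula and unitarity give $\|w\|_{L^2}=O(h^\infty)$ on $|t|\le T_0$. These bounds are upgraded to $C^m$ bounds by elliptic regularity for $\Delta_{\mathrm{loc}}$ with Dirichlet condition, applied to $(-h^2\Delta)^N w$, which again satisfies the same type of equation. Sobolev embedding then gives uniform kernel bounds in both variables, including at $x=0$, since the estimates are $L^1\to L^\infty$ with a fixed number of derivatives.

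The frequency normalization must be handled carefully. The parametrix multipliers of Definition~\ref{dfnparametrix} equal one only on a compact set of $(\omega,\eta)$. I would therefore subdivide the spectral window as described before the lemma, and rescale $h$ by a fixed factor so that the supports of $\mathcal Q_{\nu,h}^{\mathrm{gl}}$ and of the larger cutoffs fall where these multipliers are identically one. Composition errors are then supported where the symbols are flat, and they contribute $O(h^\infty)$ by symbolic calculus. The fixed margin from the spatial and temporal limits of validity keeps the cutoff commutators supported where the parametrix is exact modulo $O(h^\infty)$.

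I expect the main obstacle to be uniformity in $a$ and up to the boundary in the small-$a$ regime. There, $J(g_{h,a})$ matches the data only after inverting a Fourier integral operator with Airy-type symbols, and the Airy-Poisson summation involves $h$-dependent numbers of terms. I would first try to prove that the initial-data mismatch is $O(h^\infty)$ in $H^{-N}$, uniformly in $a$, using the exactness of the Airy identities and the trace formula. Energy and elliptic estimates absorb negative Sobolev losses through the spectral cutoff, so an $H^{-N}$ bound would be enough to close the argument.
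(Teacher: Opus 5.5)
Your first steps agree with the paper: Lemma~\ref{lem:boundary-locality}, writing the spectral cutoff as $\psi^2(hD_t)$ on the homogeneous flow after physical localization, and rescaling $h$ on spectral subintervals so that $\mathcal Q_{\nu,h}^{\mathrm{gl}}$ sits where the multipliers of Definition~\ref{dfnparametrix} equal one. The gap is in the step where you re-derive the comparison between the local Dirichlet kernel and $\mathcal P_{S,h,a}$ by Duhamel's formula and unitarity.

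The datum of $\mathcal P_{S,h,a}$ is $\chi_0(hD_x)\varkappa(hD_y)\delta_{(a,0)}$, or the model datum \eqref{dataamic} when $a\le h^{2/3-\ceps}$. It agrees with the Dirac datum only microlocally near glancing. In either case the mismatch contains the part of $\psi(-h^2\Delta)\delta_{(a,0)}$ with normal frequencies $|\xi|\ge 2\ceps_0$, whose $L^2$ norm is comparable to $h^{-d/2}$. So the $O(h^\infty)$ bound in $H^{-N}$ you hope for cannot hold: at frequency $h^{-1}$ the $H^{-N}$ norm only gains $h^N$, and the spectral cutoff gives that back.

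Likewise, the commutators $[h^2\Delta,\chi]w$ involve $w$ itself on $\supp\nabla\chi$. That $\mathcal P_{S,h,a}$ solves the equation modulo $O(h^\infty)$ there says nothing about the size of $w$. If you microlocalize before the energy estimate, you must commute $\mathcal Q_{\nu,h}^{\mathrm{gl}}$ (or $\varkappa(hD_y)$) through $ih\partial_t-h^2\Delta$, whose coefficients depend on $(x,y)$. The resulting commutators are only one order lower. Closing the estimate would then require propagation of the semiclassical microsupport for the Dirichlet flow along glancing and gliding rays, and without it your energy step does not close.

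That propagation information is exactly what Definition~\ref{dfnparametrix} encodes, and the paper does not re-prove it. Propositions~\ref{propsommefinie} and~\ref{propparama<} assert that $\mathcal P_{S,h,a}$ is a parametrix in that sense: $\varkappa(hD_t)\varkappa(hD_y)\bigl(\mathcal P_{S,h,a}-\mathcal G_S(h\,\cdot,\cdot,a)\bigr)=O_{C^\infty}(h^\infty)$, with constants uniform in $0<a\le a_0$. This also removes what you call the main obstacle, since uniformity in $a$ and the $h$-dependent Airy--Poisson sums (see \eqref{eq:summable-smooth-errors}) are already part of those statements.

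What remains is bookkeeping. Larger frequency, spatial and time cutoffs equal to one on the chosen supports make their insertion an $O(h^\infty)$ error, because all kernels and their fixed-order derivatives grow at most polynomially in $h^{-1}$. The kernel $h^{-1}k((t-s)/h)$ of $\psi^2(hD_t)$ reaches outside the comparison interval; this is handled by a time cutoff and the Schwartz decay of $k$, a point your ``fixed temporal margin'' does not by itself settle. Uniformity in the source point comes from centering the tangential coordinates at its boundary projection.
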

Moreover, these cutoffs preserve a pointwise bound of the form
$Ch^{-d}(1+|t|/h)^{-d/2+1/4}$, with uniform constants.
Both the lemma and such property are proven in Appendix~\ref{app:boundary-spectral-matching}.
We may therefore concentrate on the local glancing parametrix and return to the full propagator in
Section~\ref{sec:completion-linear-proof}.

\subsection{Properties of the generating function $\Gamma$}\label{sec:generating-function-properties}
A detailed construction of the function $\Gamma$ of Proposition \ref{lemgamma} has been provided  in \cite[Appendix 6.2]{ILLP}. We briefly review its properties below.

Set $\Theta=\varrho\omega$ with $\varrho =|\Theta|$ near $1$ and $\omega=\Theta/|\Theta|$. Functions $A_{\Gamma}, B_{\Gamma}$ are to be defined near the glancing set $\mathcal{GL}=\{x=0,\Xi=0,\varrho-1=0\}$ and for $(y,\omega)$ near $\{0\}\times \mathbb{S}^{d-2}$. We work with formal Taylor expansions $F$ near $\mathcal{GL}$ such that $F=\sum_{a,b,c} f_{a,b,c}(y,\omega)X^a(\varrho-1)^b\xi^c$.
We attribute a degree to each factor $x, \varrho-1, \Xi$: a monomial of the form $x^a(\varrho-1)^b\Xi^c$ is homogeneous of degree $k$ if and only if $c+2(a+b)=k$. 
For such a  formal series $F(x,y,\Xi,\varrho,\omega)$, defined near $\mathcal{GL}$, we write $F= \sum_{k\geq 0} F_k$, where $F_k$ is homogeneous of degree $k$; we also write $F\in \mathcal{H}_{\geq j}$ if and only if $F=\sum_{k\geq j} F_k$. Therefore, $F_0=f_0(y,\omega)$, $F_1=\Xi f_1(y,\omega)$, $
F_2=X f_2^0(y,\omega)+(\varrho-1)f_2^1(y\omega)+\Xi^2f_2^2(y,\omega)$, and so on. Replacing $X,\xi,\eta$ by their formulas \eqref{genchi} (as functions of $(x,y,\Xi,\Theta)$) and using that from \eqref{eq:melrose} 
\begin{equation}\label{eqiffchi}
\xi^2+R(x,y,\eta)=1\quad \text{ if and only if }\quad \Xi^2+|\Theta|^2+Xq(\Theta)=1,
\end{equation}
(where we observe that there is no $Y$ in the second equation), we get
\begin{equation}\label{formalseriesBA}
B_{\Gamma}= \sum_{j\geq 0} (\varrho-1)^j B_{2j}(y,\omega)\,,\quad
A_{\Gamma}= \sum_{k\geq 1}A_k\,.
\end{equation}
Using \eqref{genchi} and $\xi|_{\mathcal{GL}}=0$, we have $A_{0}=0$. We also have $\xi(x,y,\Xi,\Theta)\in \mathcal{H}_{\geq 1}$ and $X(x,y,\Xi,\Theta)\in \mathcal{H}_{\geq 2}$. Moreover, from the proof of Melrose's theorem \cite{mel76}, if formal {series} of the form \eqref{formalseriesBA} satisfy \eqref{genchi} and \eqref{eqiffchi}, then there exist $C^{\infty}$ functions $A_{\Gamma}, B_{\Gamma}$ with the same Taylor expansion near $\mathcal{GL}$ satisfying \eqref{genchi} and \eqref{eqiffchi}. We now summarize the results of \cite[Lemma 6.3, 6.4, Prop.6.5]{ILLP}.
\begin{prop}\label{propimpformgamma}
The phase function $\Gamma(x,\cdot)=B_{\Gamma}+x A_{\Gamma}$ is such that, near the glancing set $\mathcal{GL}$,
\begin{equation}\label{AGam}
\left\{ \begin{array}{l}
A_{\Gamma}(x,y,\Xi,\Theta)= \Xi\Lp(y,\omega)+\alpha_{\Gamma}(y,\omega)x+{\mu_\Gamma}(y,\omega)(\Xi^2+|\Theta|^2-1)+\mathcal{H}_{j\geq3},\\
B_{\Gamma}(y,{\Theta})=B_0(y,\omega)+(\varrho-1)B_2(y,\omega)+\mathcal{H}_{j\geq3},
 \end{array} \right.
\end{equation}
where $\omega=\Theta/|\Theta|$, $\varrho=|\Theta|$. The functions $B_0$, $B_{2}$ and $\Lp$ are obtained explicitly using only homogeneous contributions up to order $2$ and are defined as follows. Firstly,
\begin{equation}\label{eqB0}
B_0(y,\omega)=\phi(y,\omega)-y\cdot\omega\,,
\end{equation}
where, for $\eta\in \mathbb{R}^{d-1}\setminus 0$, $\phi(y,\eta)$ is the unique function, homogeneous of degree $1$ in $\eta$, solving the eikonal equation $R_0(y,\nabla_y\phi)=|\eta|^2$, with $\phi_{|y\cdot\eta=0}=0$ (see \cite[Lemma 6.4, (6.25)]{ILLP}). Secondly, 
\begin{equation}\label{eqorder2}
1+\Lp(y,\omega)=\Big(\frac{R_1(y,\omega+\nabla_yB_0(y,\omega))}{q(\omega)}\Big)^{1/3},
\end{equation} 
and as $R_1(0,\omega)=q(\omega)$ and $\nabla_yB_0(0,\omega)=0$, then $\Lp(0,\omega)=0$.
The function $B_2(y,\omega)$ satisfies 
\begin{equation}\label{eqB2tr}
(\omega+\nabla_yB_2)\cdot \nabla_{\eta}R_0(y,\omega+\nabla_yB_0(y,\omega))=\frac{2R_1(y,\omega+\nabla_yB_0(y,\omega))}{q(\omega)(1+\Lp)}\,,
\end{equation}
which is a linear transport equation and we can take $B_{2|y.\omega=0}=0$.
Moreover, $\Lp(0,\omega)=0$, $B_0(0,\omega)=0$, $\nabla_yB_0(0,\omega)=0$, $B_2(0,\omega)=0$ and $\nabla_yB_2(0,\omega)=0$. {For nonzero tangential frequencies we use the degree-one extension
$B_0(y,\theta)=|\theta|B_0(y,\theta/|\theta|)$.
The coefficients $B_{2j}$, $j\ge1$, are extended with degree zero:
$B_{2j}(y,\theta)=B_{2j}(y,\theta/|\theta|)$.
In \eqref{formalseriesBA} all coefficients are evaluated on the unit
sphere; their subscripts refer to weighted Taylor order.} The functions {$\alpha_\Gamma$ and ${\mu_\Gamma}$} are related by
\begin{equation}\label{formalsystem}
{\mu_\Gamma}(y,\omega)=\frac{\alpha_\Gamma(y,\omega)}{q(\omega)(1+\Lp)}-\frac{\nabla_{\eta}R_0(y,\omega+\nabla_yB_0)}{4q(\omega)}\nabla_y\Big(\frac{1}{1+\Lp}\Big).
\end{equation}
\end{prop}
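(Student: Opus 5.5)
The plan is to derive the expansions \eqref{AGam} by a weighted-homogeneity bookkeeping argument. Substitute the formal series \eqref{formalseriesBA} into the generating relations \eqref{genchi}, then impose the identity \eqref{eqiffchi} order by order in the weighted degree, where $\Xi$ has degree $1$ and $x$, $\varrho-1$ have degree $2$. Since $\canonchi_M$ maps $\{X=0\}$ to $\{x=0\}$ and $\xi$ vanishes on $\mathcal{GL}$, we know $X\in\mathcal H_{\geq2}$, $\xi\in\mathcal H_{\geq1}$ and $A_0=0$. I will expand both sides of
\[
\xi^2+R(x,y,\eta)-1=c(x,y,\Xi,\Theta)\,\bigl(\Xi^2+|\Theta|^2+Xq(\Theta)-1\bigr),
\]
with $c$ a nonvanishing smooth factor. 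This factor arises because the two hypersurfaces coincide, by the equivalence \eqref{eqiffchi}. Its value $c_0(y,\omega)$ at degree $0$ is determined along the way. I will then identify the homogeneous pieces of degrees $0$, $1$ and $2$; this is all that \eqref{AGam} asserts explicitly.

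\emph{Degree $0$.} At $x=0$ we have $\eta=\Theta+\nabla_yB_\Gamma$. Setting $\Xi=0$ and $\varrho=1$, the equation reduces to $R_0(y,\omega+\nabla_yB_0)=1$. Writing $\phi=y\cdot\omega+B_0$ turns this into the eikonal equation $R_0(y,\nabla_y\phi)=|\omega|^2$. With the normalization $\phi|_{y\cdot\omega=0}=0$, which fixes the freedom in the generating function, this yields \eqref{eqB0}. Because $R^{jk}(0,0)=\delta_{jk}$, the solution satisfies $\phi(y,\omega)=y\cdot\omega+O(|y|^2)$, so $B_0(0,\omega)=0$ and $\nabla_yB_0(0,\omega)=0$.

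\emph{Degrees $1$ and $2$.} We have $\xi=\Xi(1+\Lp)+\ldots$ with $A_1=\Xi\Lp$, and $X=x(1+\Lp)+\ldots$. The degree-two part gives three independent monomial balances, namely the coefficients of $x$, of $\Xi^2$ and of $\varrho-1$.
\begin{itemize}[leftmargin=5.5mm]
\item The $\Xi^2$ coefficient yields $(1+\Lp)^2=c_0$.
\item The $x$ coefficient compares $R_1(y,\omega+\nabla_yB_0)$, together with the cross terms from $x\nabla_yA_\Gamma$ and $\eta$, against $c_0(1+\Lp)q(\omega)$.
\item The $(\varrho-1)$ coefficient is the linearization of $R_0$ in $\eta$. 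It gives a transport equation, \eqref{eqB2tr}, along the Hamiltonian vector field $\nabla_\eta R_0$, with right-hand side $c_0\cdot 2$ up to normalization.
\end{itemize}
The quantity $\xi^2=(1+\Lp)^2\Xi^2$ has to be matched against $c_0\Xi^2$. Eliminating $c_0$ in the $x$ balance uses the fact that the $\Xi$-linear part of $A$ enters $X$ through $x\partial_\Xi A_\Gamma$. Tracking the resulting powers of $1+\Lp$ should produce $(1+\Lp)^3q=R_1$, which is \eqref{eqorder2}.

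The main obstacle is the consistent treatment of the degree-$2$ terms in $A_\Gamma$, namely $\alpha_\Gamma x$ and $\mu_\Gamma(\Xi^2+|\Theta|^2-1)$. These first appear in degree-$3$ and degree-$4$ balances, through $x\partial_xA_\Gamma$ in $\xi$ and $x\partial_\Xi A_\Gamma$ in $X$. They are not uniquely determined, only linked, and the link is \eqref{formalsystem}. There I would isolate the $x\Xi$ monomial of degree $3$. The terms $2\xi\cdot(\alpha_\Gamma x+2\mu_\Gamma\Xi^2\ldots)$ must then be balanced against the $y$-derivative cross term $\nabla_\eta R_0\cdot x\nabla_y(\Xi\Lp)$ and against $c_1$. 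I expect the $\nabla_y(1/(1+\Lp))$ correction to come out of precisely this balance.

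Finally, the vanishing at $y=0$ of $\Lp$ and of $B_2$ and $\nabla_yB_2$ follows from $R_1(0,\omega)=q(\omega)$. It also uses that the transport equation \eqref{eqB2tr} at $y=0$ has right-hand side $2$, matched by $\omega\cdot\nabla_\eta R_0=2$. Together with the initial condition on $\{y\cdot\omega=0\}$, this forces these quantities to vanish. The realization of the formal series by smooth functions is taken from Melrose's theorem, as recalled above.
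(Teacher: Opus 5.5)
Your proposal is correct and follows the same route the paper takes by deferring to \cite{ILLP}: substitute the weighted formal series into \eqref{genchi}, relate the two defining functions by a nonvanishing factor $c$, and match degrees $0$, $2$ and $3$. This gives the eikonal equation, $c_0=(1+\Lp)^2$, \eqref{eqorder2} and \eqref{eqB2tr}, and the $x\Xi$ coefficient gives exactly \eqref{formalsystem}. When you write out degree $3$, note four points: $\xi_2=2\alpha_\Gamma x+\mu_\Gamma(\Xi^2+2(\varrho-1))$, where the factor $2$ comes from $x\partial_xA_\Gamma$; the right side also contains $c_0X_3q$ with $X_3=2\mu_\Gamma x\Xi$; the $\Xi^3$ and $\Xi(\varrho-1)$ coefficients give $c_1=2(1+\Lp)\mu_\Gamma\Xi$ and force the $\mu_\Gamma(\Xi^2+|\Theta|^2-1)$ form of the quadratic part of $A_\Gamma$, which you assumed rather than derived; and no degree-$4$ balance is needed.
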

\begin{rmq}
That the formal expansion is not uniquely defined reflects that the
group of canonical transformations which preserves the model $\{X=0, \Xi^2+\vert\Theta\vert^2+Xq(\Theta)=1\}$ is not trivial.
\end{rmq}

We start with a representative of $\Gamma$ for which $\alpha_\Gamma$
need not vanish. To normalize this coefficient, we compose on the model
side with the Hamilton flow generated by
$F=X_M(p_M-1)b(Y_M,\Theta)$, where
$p_M=\Xi^2+|\Theta|^2+X_Mq(\Theta)$ and $b$ is a function to be chosen.
This flow preserves $\{X_M=0\}$ and $\{p_M=1\}$ and fixes their
intersection. Its action on the coefficients is
$\alpha_\Gamma^{\rm new}=\alpha_\Gamma+q(1+\Lp)^2b$ and
$\mu_\Gamma^{\rm new}=\mu_\Gamma+(1+\Lp)b$,
while the remaining terms have higher weighted degree
(see the Appendix in \cite{ILLP}). Since $q(1+\Lp)^2>0$,
choosing $b=-\alpha_\Gamma/[q(1+\Lp)^2]$, with $\alpha_\Gamma$
the coefficient before this transformation, gives
$\alpha_\Gamma^{\rm new}=0$. This does not change $B_\Gamma$ or $\Lp$.
We drop the superscript $\rm new$ and keep the same notations for
the transformed coefficients.

The third-order coefficients are described in
Appendix~\ref{app:canonical-normalization}. With the normalization
$\alpha_\Gamma=0$, the expansion, before the next transformation, is
\begin{equation}\label{eq:normalized-AGamma}
 A_\Gamma=\Lp\Xi+\mu_\Gamma(\Xi^2+\rho^2-1)
 +\alpha_1x\Xi+\beta_1(\rho-1)\Xi+\mu_1\Xi^3+R_A,
 \qquad R_A\in\mathcal H_{\ge4},\quad \rho=|\Theta|.
\end{equation}
Here $\alpha_1,\beta_1,\mu_1$ are functions of
$(y,\Theta/|\Theta|)$; in particular $\beta_1$ is the coefficient of
$(\rho-1)\Xi$. We now use the residual canonical transformation
specified in Appendix~\ref{app:canonical-normalization} to make the
new coefficient $\beta_1$ vanish. Keeping the same notations after
this transformation, we henceforth use
\eqref{eq:normalized-AGamma} with $\beta_1=0$.
The coefficient $\alpha_1$ need not vanish.
All subsequent expansions refer to this representative of $\Gamma$.
\subsection{Definition of a parametrix}\label{sec:parametrix-definition}
In what follows, we work in the coordinates of section \ref{secMelcoord}, hence with $\Omega=\Omega_d=\mathbb{R}_+\times \mathbb{R}^{d-1}$ and $g$ the metric associated from the Laplace operator \eqref{eq:laptilde}.
{We work with a smooth auxiliary Dirichlet problem that agrees with the
original problem in an enlarged boundary patch.} By Lemma~\ref{lem:boundary-locality}, its spectrally localized
kernel agrees with the global kernel on the smaller patch, up to
$O(h^\infty)$ uniformly up to the physical boundary and on the
corresponding time interval. We compare the parametrix below with
this local kernel.
Let $a_0>0$ be small and $a\in (0,a_0]$; denote by $\Grond_{S}(t,x,y,a)$ the
Green function for the Schrödinger equation with Dirichlet boundary condition, and $\delta_{(a,0)}:=\delta_{x=a,y=0}$ the source point,
\begin{equation}
\label{schrodeq}
(  i\partial_t -\Delta) \Grond_S=0 \,,\text{ for }\, x>0\,,
 \, \Grond_S|_{x=0}=0, \Grond_S|_{t=0}= \delta_{(a,0)}.
\end{equation}
We will frequently need smooth cutoff functions $\varkappa\geq 0$ in $ C_{0}^{\infty}(\R^{m})$ with $m=1$ or with $m=d-1$. For $m=1$, $\varkappa$ will be such that $\varkappa=1$ near $1$, $\varkappa=0$ outside a small neighborhood of $1$, and for $m=d-1$, $\varkappa$ will be radial and such that ${\varkappa}=1$ near $\mathbb{S}^{m-1}$, ${\varkappa}=0$ outside a small neighborhood of $\mathbb{S}^{m-1}$. We will abuse notation and retain $\varkappa$ as a generic notation, independently of the value of $m$ (which will be clear from context) as well as the size of the (small) support of $\varkappa$, which we assume from now on to be smaller than $0<\ceps_{0}<1/100$.
\begin{dfn}\label{dfnparametrix}
Let $h\in (0,1]$. A function $\mathcal{P}_{S,h,a}(t,x,y)$ is a parametrix for \eqref{schrodeq} if there exist $a_0>0$, $r>0$ and a neighborhood $V$ of $(t,y)=(0,0)$ such that for all $\alpha$ one has  
 \[
\sup_{0<a\leq a_0}\sup_{0<x\leq r}\sup_{(t,y)\in V} \Big|\partial^{\alpha}_{t,x,y}({\varkappa}(hD_{t}){\varkappa}(hD_{y})(\mathcal{P}_{S,h,a}-\Grond_S(h\cdot,\cdot,a))\Big| \in O(h^{\infty}).
 \]
\end{dfn}
\begin{rmq}\label{rmqPcal}
In \cite{ILLP} we denoted $\Grond$ the Green function for the wave equation with Dirichlet condition
\begin{equation}
\label{waveeq}
(  \partial_t^2 -\Delta) \Grond=0 \,,\text{ for }\, x>0\,,
 \, \Grond_{|x=0}=0, \Grond|_{t=0}= \delta_{(a,0)} \text{ and } \partial_t \Grond |_{t=0}=0.
\end{equation}
We had $\Grond=\cos(t\sqrt{|\Delta|})$, but worked with the half wave propagator $e^{\pm it\sqrt{|\Delta|}}$, from which we obtained $\Grond$ and $\partial_{t} \Grond$. A parametrix $\mathcal{P}_{h,a}$ for \eqref{waveeq} was then defined in a very similar way (see \cite[Definition 2.6]{ILLP}).
\end{rmq}
\begin{rmq}
\par\noindent{ 
For a solution of the wave equation, {time-frequency localization}
is a spectral localization for the Dirichlet Laplacian. For the
semiclassical Schr\"odinger flow, the precise identity is
\eqref{eq:temporal-spectral-matching}, {with semiclassical time-frequency variable $\tau^2$, associated with $hD_t$}.
The cutoff $\varkappa(hD_y)$ localizes tangential frequencies in the boundary coordinates; it is distinct from the global spectral localization. We insert larger microlocal cutoffs {equal to one along the rays issuing from the chosen frequency support for the short times under consideration.} The comparison with the globally spectrally localized propagator is established in Section~\(\ref{sec:boundary-spectral-matching}\);
 no commutation of
$\varkappa(hD_y)$ with $\Delta_g$ is required.}
\end{rmq}
We rescale
$\omega=\frac{\alpha}{h^{2/3}}$, $\eta=\frac{\theta}{h}$, $s=\frac{\sigma}{h^{1/3}}$ in \eqref{eq:Gosc} (defining $G$), hence $\tau(\alpha,\theta)= h\tau(\omega,\eta)$. 
Also let $\cutoffchi^{\flat}\in C^{\infty}(\mathbb{R})$ such that $\cutoffchi^{\flat}=1$ on $(-\infty,1]$ and $\cutoffchi^{\flat}=0$ on $[2,\infty)$, and $\cutoffchi^{\sharp}=1-\cutoffchi^{\flat}$. We let
  \begin{equation}    \label{eq:Phi}
    \Phi(x,y,\theta,\alpha,\sigma)=y\cdot \theta+\sigma^3/3+\sigma(xq^{1/3}(\theta)-\alpha)     +\tau(\alpha,\theta)
    \Gamma(x,y,\sigma q^{1/3}(\theta)/\tau(\alpha,\theta),\theta/\tau(\alpha,\theta))\,,
  \end{equation}and as our change of variables $s=\frac{\sigma}{h^{1/3}}$ provides a factor $h^{-1/3}$, we set
\begin{equation}
  \label{eq:qh}
  p_h(x,y,\theta,\alpha,\sigma):=h^{-1/3}p(x,y,\theta/h,\alpha/h^{2/3},\sigma/h^{1/3})\varkappa(\theta)\varkappa(\tau(\alpha,\theta))\cutoffchi^{\sharp}(\alpha/h^{2/3})\,,
\end{equation}
where the relevance of all cutoff functions will reveal itself later on. We get \begin{equation}
  \label{eq:12}
\cutoffchi^{\sharp}(\alpha/h^{2/3})  \varkappa(\theta)\varkappa(\tau(\alpha,\theta))G(x,y,\theta/h,\alpha/h^{2/3})=\frac{1}{2\pi}\int e^{\frac ih  \Phi(x,y,\theta,\alpha,\sigma)}  p_h(x,y,\theta,\alpha,\sigma)d\sigma.
\end{equation}
We now define an operator acting on smooth $f(y',\rho)$, with $\hat f$ its Fourier transform in all variables, 
\begin{equation}
  \label{eq:J}
    J(f)(x,y)=\int G(x,y,\eta,\omega)\cutoffchi^{\sharp}(\omega)q(\eta)^{1/6}\varkappa(h\eta)\varkappa(h\tau(\omega,\eta))\hat{f}(\eta,\omega/h^{1/3})d\eta d\omega\,.
  \end{equation}
  After rescaling and substitution of \eqref{eq:12} in \eqref{eq:J},
\begin{equation}
  J(f)(x,y) 
   = \frac 1 {2\pi h^d} \int e^{\frac ih (\Phi(x,y,\theta,\alpha,\sigma)-y'\cdot\theta-{\tprime}\alpha)}p_h(x,y,\theta,\alpha,\sigma)q(\theta)^{1/6}
  f(y',{\tprime}) \,dy'd{\tprime}d\theta d\alpha d\sigma\,.
\end{equation}
\begin{lemma} (\cite[Lemma 2.8]{ILLP})
The operator $J$ is well defined from tempered distributions $\mathcal{S}'_{y',\tprime}$ into smooth functions of $(x,y)$ near $(0,0)$. In the semiclassical setting with $h$ as small parameter, $J$ is a semiclassical Fourier integral operator associated to a
canonical transformation $\canonchi_J$, defined near the set $\{y'=0,\tprime=0, |\theta|=1,\alpha=0 \}$ and such that
 $ \canonchi_J(y'=0,{\tprime}=0,|\theta|=1,\alpha=0)=\{y=0,x=0,|\theta|=1,\xi=0\}$.

Moreover, $J$ is elliptic on this set and, microlocally near this set, an intertwining relation holds,
\begin{equation}
  \label{eq:LapJ}
  -h^2 \Delta J(f)= J(\tau^2(hD_{{\tprime}},hD_{y'}) f).
\end{equation}
\end{lemma}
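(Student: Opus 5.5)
The plan is to follow \cite[Lemma 2.8]{ILLP} and split the statement into three parts: mapping properties, the FIO structure with its canonical relation, and the intertwining identity \eqref{eq:LapJ}.

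\emph{Mapping properties.} We read off the smoothing property from the rescaled formula for $J(f)$. The amplitude $p_h\,q(\theta)^{1/6}$ has compact support in $(\theta,\alpha)$, because of $\varkappa(\theta)\varkappa(\tau(\alpha,\theta))$ and because $\tau$ is close to $|\theta|$. The integral in $\sigma$ is treated through the Airy representation \eqref{eq:12}: for fixed $(x,y,\theta,\alpha)$ it equals $G(x,y,\theta/h,\alpha/h^{2/3})$, which is smooth with polynomially bounded derivatives. Pairing a tempered distribution $f(y',\tprime)$ against $e^{-\frac ih(y'\theta+\tprime\alpha)}$ gives $\hat f$ at compactly supported frequencies. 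The result is therefore smooth in $(x,y)$ near $(0,0)$, with derivatives controlled by a finite seminorm of $f$.

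\emph{FIO structure.} The phase $\Phi-y'\theta-\tprime\alpha$ has fibre variables $(\theta,\alpha,\sigma)$. Its critical set is
\[
\partial_\sigma\Phi=0:\quad \sigma^2+xq^{1/3}-\alpha+\tau\,\partial_\Xi\Gamma\,\frac{q^{1/3}}{\tau}=0,
\]
together with $y'=\partial_\theta\Phi$ and $\tprime=\partial_\alpha\Phi$. We identify the resulting Lagrangian with the graph of $\canonchi_J$, obtained by composing $\canonchi_M$ (Remark \ref{rmqvarphigamma}) with the explicit model canonical map coming from the Airy phase $s^3/3+s(xq^{1/3}-\omega)$. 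That model map sends $(\tprime,\alpha)$ to $(X,\Xi)$ with $X=(\alpha-\sigma^2)q^{-1/3}$ and $\Xi=\sigma q^{1/3}/\tau$. Non-degeneracy of the phase reduces to the nondegeneracy of $\partial^2_{\sigma\sigma}$ together with the mixed Hessian, evaluated at the base point $y'=0$, $\tprime=0$, $\alpha=0$, $\sigma=0$. There $\Gamma$ reduces to $B_\Gamma$ plus terms of order $x$, and Proposition \ref{propimpformgamma} gives $\nabla_y B_0(0,\omega)=0$ and $\Lp(0,\omega)=0$. Evaluating there, the base point is sent to $x=0$, $y=0$, $\xi=0$, $|\theta|=1$, as claimed. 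Ellipticity follows from the ellipticity of $p$ in Proposition \ref{lemgamma} and from the cutoffs being equal to $1$ near the set.

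\emph{Intertwining relation.} We apply $-h^2\Delta$ under the integral in \eqref{eq:J}. By \eqref{eq:eqG}, after rescaling, $-h^2\Delta G(\cdot,\theta/h,\alpha/h^{2/3})=\tau^2(\alpha,\theta)G+O(h^\infty)$ on the support of the cutoffs. The factor $\tau^2(\alpha,\theta)$ then moves onto $\hat f$, which gives $J(\tau^2(hD_{\tprime},hD_{y'})f)$. We need the $O(\tau^{-\infty})$ remainder to be uniform on $\supp \cutoffchi^\sharp(\omega)$, where $\omega\gtrsim1$, which is why the cutoff $\cutoffchi^\sharp$ appears.

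\emph{Main obstacle.} The delicate step is the microlocal statement. The cutoffs $\cutoffchi^\sharp(\omega)$ and $\varkappa$ do not commute exactly with $\tau^2(hD)$. We will handle this by restricting to $f$ microsupported where all the cutoffs equal one. Commutator terms are then supported away from that microsupport and hence are $O(h^\infty)$ there, by nonstationary phase in $(y',\tprime)$.
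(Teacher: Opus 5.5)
Your overall route is the paper's. The extension to $\mathcal S'_{y',\tprime}$ comes from the compact support of the amplitude in $(\theta,\alpha,\sigma)$. Everything else is read off from Theorem~\ref{thmMelrose}: the quasimode identity \eqref{eq:eqG} is applied under the integral, and ellipticity comes from $p_0$, since $xp_1$ vanishes at $x=0$; your use of the elliptic amplitude $p$ of Proposition~\ref{lemgamma} is an equivalent way to see it. The canonical graph comes from Melrose's generating function composed with the model Airy map. You add detail the paper leaves to the reference, but three of your claims need correcting.

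\emph{Non-degeneracy.} It cannot rest on $\partial^2_{\sigma\sigma}\Phi$. At the base point, $\partial^2_{\sigma\sigma}\Phi=2\sigma+xq^{2/3}(\theta)\tau^{-1}\partial_\Xi^2A_\Gamma=0$, and this vanishing is exactly the glancing (Airy) degeneracy. Non-degeneracy in H\"ormander's sense comes from the mixed entries, namely $\partial_{y'}\partial_\theta=-\mathrm{Id}$, $\partial_{\tprime}\partial_\alpha=-1$, $\partial_\alpha\partial_\sigma\Phi=-1$, and $\partial_x\partial_\sigma\Phi=q^{1/3}(\theta)(1+\partial_\Xi A_\Gamma)=q^{1/3}(\theta)$. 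The last uses $\partial_\Xi A_\Gamma=\Lp(0,\omega)=0$ on $\mathcal{GL}$, which is where $\Lp(0,\omega)=0$ actually enters. These same entries show that both projections of the critical set are local diffeomorphisms, i.e.\ that it is a canonical graph. For the image of the base set you need instead $B_\Gamma(0,\cdot)=0$, $\nabla_yB_0(0,\cdot)=0$, and $A_\Gamma=0$ on $\mathcal{GL}$; the last gives $\xi=\tau A_\Gamma=0$.

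\emph{The ``main obstacle''.} It is not one. The cutoffs $\cutoffchi^\sharp(\omega)$, $\varkappa(h\eta)$ and $\varkappa(h\tau(\omega,\eta))$ are functions of the Fourier variables at which $\hat f(\eta,\omega/h^{1/3})$ is evaluated. At those variables, $\tau^2(hD_{\tprime},hD_{y'})$ acts as multiplication by $\tau^2(h^{2/3}\omega,h\eta)=h^2\tau^2(\omega,\eta)$. So the cutoffs commute exactly with $\tau^2(hD_{\tprime},hD_{y'})$. After differentiating under the integral, the only error is the quasimode remainder in \eqref{eq:eqG}. ``Microlocally'' only restricts to $(x,y)\in U$ and to $(\eta,\omega)$ in the conic neighbourhood of $N_0$ where $p$ is not truncated. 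No restriction of the microsupport of $f$, and no commutator or nonstationary-phase argument, is needed.

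\emph{The role of $\cutoffchi^\sharp$.} The remainder in \eqref{eq:eqG} is already uniform on that conic neighbourhood, so $\cutoffchi^\sharp$ is not there for uniformity. It is there for the Airy--Poisson sum over $k\ge1$ and for counting reflections. It is also not equal to one near the base set: $\cutoffchi^\sharp(\alpha/h^{2/3})$ vanishes for $\alpha\le h^{2/3}$. Hence ellipticity can only hold for $\alpha\ge 2h^{2/3}$, which is where it is used ($\alpha\gtrsim a$). The paper's one-line proof leaves this caveat implicit as well.
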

As the symbol $p_h$ is smooth and compactly supported in $(\theta,\alpha,\sigma)$, $J$ is easily extended to $\mathcal{S}'_{y',\tprime}$. The Lemma then follows from Theorem \ref{thmMelrose} ($p_0$ is elliptic and $x p_1$ vanishes on $\partial\Omega$.)  

\subsection{Some useful results on Airy functions}\label{sec:airy-preliminaries} 
We now digress and present a variation on the Poisson summation
formula, the "Airy-Poisson summation formula".
For $z\in\mathbb{C}$ we set 
\begin{equation}
  \label{eq:Apm}
  A_\pm(z)=e^{\mp i\pi/3} Ai(e^{\mp i\pi/3} z)\,,\,\,\text{ then } \,
Ai(-z)=A_+(z)+A_-(z)\, \text{ and } \overline{A_+(z)}=A_-(\overline{z}).
\end{equation}
The following two lemmas are proved in \cite[Lemmas 1 and 3]{ilpCE} :
\begin{lemma}\label{lemL}
Define, for $\omega \in \R$, the function $  L(\omega)=\pi+i\log \frac{A_-(\omega)}{A_+(\omega)}$: $L$ is an analytic, real valued, strictly increasing function with $L(0)=\pi/3$, $\lim_{\omega\rightarrow -\infty} L(\omega)=0$, and, for $\omega\geq 1$,
\begin{equation}
  \label{eq:propL}
    L(\omega)=\frac 4 3 \omega^{\frac 3 2} +\frac{\pi}{2} -B_L(\omega^{\frac 3
    2})\,,\quad
  B_L(u)= \sum_{k=1}^\infty b_k u^{-k}\,,\,\, (b_k)_{k}\in\R\,,\,\,
  b_1>0\,.
\end{equation}
Moreover, with $\{-\omega_k\}_{k\geq 1}$ being zeros of the Airy function in decreasing order,
\begin{equation}
  \label{eq:propL2}
  Ai(-\omega_k)=0 \iff L(\omega_k)=2\pi k \text{ and }
  L'(\omega_k)= 2\pi \int_0^\infty Ai^2(x-\omega_k) \,dx\,.
\end{equation}
\end{lemma}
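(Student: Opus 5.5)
The plan follows the Wronskian/Stokes-line route for the Airy function. The properties of $L$ will all come from the fact that $A_\pm$ are two solutions of the Airy equation with constant, nonvanishing Wronskian and no real zeros.

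First, real-valuedness. For real $\omega$ we have $\overline{A_+(\omega)}=A_-(\omega)$ by \eqref{eq:Apm}, so $|A_-(\omega)/A_+(\omega)|=1$. The function $A_+$ has no real zeros, since the zeros of $Ai$ lie on the negative real axis and $e^{-i\pi/3}\omega$ is never negative real for real $\omega\neq 0$, while $A_+(0)=e^{-i\pi/3}Ai(0)\neq0$. Hence $A_-/A_+=e^{i\phi(\omega)}$ for a real analytic phase $\phi$. Choosing the branch of the logarithm by continuity, $L=\pi-\phi$ is real and analytic. At $\omega=0$ the ratio is $A_-(0)/A_+(0)=e^{2i\pi/3}$, which gives $L(0)=\pi-2\pi/3=\pi/3$ with the natural branch.

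Second, monotonicity. Differentiating gives
\[
L'=i\,\frac{A_-'A_+-A_-A_+'}{A_-A_+}=i\,\frac{W(A_+,A_-)}{|A_+|^2}.
\]
The functions $A_\pm(z)$ solve $u''=-zu$, which is the equation satisfied by $Ai(-z)$, so their Wronskian is constant. Its value follows from $W(Ai(\cdot),Ai(j\cdot))$, computed by the standard formula. One should get $W(A_+,A_-)=-i/(2\pi)$ (up to checking the sign), hence
\[
L'(\omega)=\frac{1}{2\pi|A_+(\omega)|^2}>0 .
\]
The limit as $\omega\to-\infty$ comes from the asymptotics of $A_\pm(\omega)$: both behave like $Ai$ rotated into the decaying sector but pick up a dominant factor $e^{\frac23|\omega|^{3/2}}$. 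The ratio $A_-/A_+$ then tends to $e^{i\pi}\cdot$(a phase tending to $1$), so $L\to0$ on this continuous branch.

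Third, the expansion \eqref{eq:propL} for $\omega\geq1$. The plan is to use the full asymptotic series $A_\pm(\omega)\sim \frac{e^{\mp i\pi/4}}{2\sqrt\pi}\,\omega^{-1/4}e^{\pm i\frac23\omega^{3/2}}\sum_k(\pm i)^k c_k\,\omega^{-3k/2}$. Taking the ratio and the logarithm gives $\frac43\omega^{3/2}+\frac\pi2$ plus a series in $\omega^{-3/2}$. This series is real because it is $2\arg$ of the complex series, with $c_k$ real; that produces $B_L$, and $b_1>0$ follows from $c_1=5/72$. To turn the asymptotic expansion into the stated series one needs convergence, or at least an analytic function of $u=\omega^{3/2}$ near $u=\infty$. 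I would argue that $\omega^{1/4}e^{\mp i\frac23\omega^{3/2}}A_\pm$, expressed as functions of $u$, are analytic near $\infty$ in a suitable sense. Alternatively, one can read the expansion as asymptotic, which is what the applications use. This step is the one I expect to need care in interpretation.

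Finally, \eqref{eq:propL2}. Since $Ai(-\omega)=A_++A_-=A_+(1+e^{i(\pi-L)})$, we have $Ai(-\omega_k)=0$ if and only if $e^{-iL}=1$, that is $L(\omega_k)\in2\pi\mathbb Z$. Monotonicity, $L>0$ and $L(0)=\pi/3$ then force $L(\omega_k)=2\pi k$ under the ordering of the zeros. For the derivative, at a zero $A_-=-A_+$, so $Ai'(-\omega_k)$ is related to $A_+'(\omega_k)-A_-'(\omega_k)$. Combining this with the Wronskian gives $|A_+(\omega_k)|^2=\frac{1}{4\pi^2}\cdot\frac{1}{Ai'(-\omega_k)^2}$, up to the normalization checked above. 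The classical identity $\int_0^\infty Ai^2(x-\omega_k)\,dx=Ai'(-\omega_k)^2$, obtained by integrating $(Ai'^2-xAi^2)'=-Ai^2$, then yields $L'(\omega_k)=2\pi\int_0^\infty Ai^2(x-\omega_k)\,dx$.
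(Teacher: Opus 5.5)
Your route is the standard one, and it matches what the paper itself uses in Lemma~\ref{lem:airy-wronskian} (the statement is quoted from \cite{ilpCE}). The ingredients are: $A_-=\overline{A_+}$ with $A_+$ zero-free on $\R$; the Wronskian $A_+A_-'-A_+'A_-=-i/(2\pi)$, which gives $L'=1/(2\pi|A_+|^2)$; the identity $Ai(-\omega)=A_+(\omega)\bigl(1-e^{-iL(\omega)}\bigr)$ for the zeros; and $\int_{-\omega_k}^\infty Ai^2=Ai'(-\omega_k)^2$. Your Wronskian value and your formula $|A_+(\omega_k)|^2=(2\pi Ai'(-\omega_k))^{-2}$ are correct. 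The relevant combination is a sum, $-Ai'(-\omega)=A_+'+A_-'$, and at a zero the Wronskian reads $A_+(A_+'+A_-')=-i/(2\pi)$. However, two steps fail as written and one is incomplete.

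First, $\omega^{1/4}e^{\mp i\frac23\omega^{3/2}}A_\pm$ is \emph{not} analytic in $1/u$ at $u=\infty$, because the Airy asymptotic series has factorially growing coefficients. In fact no convergent $B_L$ exists at all. Since $L'=2/\bigl(\pi(Ai^2(-\omega)+Bi^2(-\omega))\bigr)$, a convergent $B_L$ would give a convergent expansion of $Ai^2(-\omega)+Bi^2(-\omega)$. But its asymptotic series $\frac{1}{\pi\sqrt\omega}\sum_{k}(-1)^k\frac{1\cdot3\cdots(6k-1)}{k!\,96^k}\,\omega^{-3k}$ diverges for every $\omega$. So \eqref{eq:propL} must be proved and used as an asymptotic expansion with symbol bounds, which is exactly how Lemma~\ref{lem:airy-wronskian} uses it. The symbol bounds follow because the Airy expansion holds uniformly on closed subsectors of $|\arg z|<\pi$, where Cauchy estimates allow differentiation.

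Second, the sign of $b_1$ does not follow from what you display. With factors $(\pm i)^kc_k$ in $A_\pm$ and $c_1>0$, one gets $L=\frac43u+\frac\pi2+2c_1u^{-1}+\dots$, that is, $b_1=-2c_1<0$. Substitute $z=e^{\mp i\pi/3}\omega$ and $\zeta=\frac23z^{3/2}=\mp\frac{2i}{3}\omega^{3/2}$ in $Ai(z)\sim\frac{e^{-\zeta}}{2\sqrt\pi z^{1/4}}\sum_k(-1)^ku_k\zeta^{-k}$. This gives instead the factors $(\mp\frac{3i}{2})^ku_k\,\omega^{-3k/2}$. With $u_1=5/72$ the first correction in $A_\pm$ is $\mp\frac{5i}{48}\omega^{-3/2}$, and $b_1=\frac{5}{24}>0$. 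This is consistent, via $L(\omega_k)=2\pi k$, with the classical expansion $\omega_k=t^{2/3}(1+\frac{5}{48}t^{-2}+\dots)$, where $t=\frac{3\pi}{8}(4k-1)$.

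Third, the asymptotics $A_-/A_+\to-1$ as $\omega\to-\infty$ only give $L\to0$ modulo $2\pi$, while you later use $L>0$. You can close this with your own identity. Since $Ai(-\omega)=A_+(\omega)\bigl(1-e^{-iL(\omega)}\bigr)\neq0$ for $\omega\le0$, $L$ never meets $2\pi\mathbb{Z}$ on $(-\infty,0]$. With $L(0)=\pi/3$ and monotonicity, $0<L<\pi/3$ there, so the limit is exactly $0$. The labelling $L(\omega_k)=2\pi k$ then needs only $L(0)=\pi/3$, monotonicity on $[0,\infty)$, and $L\to+\infty$.
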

\begin{lemma}
  In $\mathcal{D}'(\R_\omega)$, one has
  \begin{equation}
    \label{eq:AiryPoisson}
        \sum_{N\in \Z} e^{-i NL(\omega)}= 2\pi \sum_{k\in \N^*} \frac 1
    {L'(\omega_k)} \delta(\omega-\omega_k)\,.
  \end{equation}
\end{lemma}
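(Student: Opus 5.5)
The idea is to test both sides of \eqref{eq:AiryPoisson} against an arbitrary $\phi\in C_0^\infty(\mathbb{R}_\omega)$. On the left we change variables $u=L(\omega)$, which Lemma~\ref{lemL} makes legitimate. Then we apply the classical Poisson summation formula $\sum_{N\in\mathbb{Z}}e^{-iNu}=2\pi\sum_{k\in\mathbb{Z}}\delta(u-2\pi k)$ in $\mathcal{D}'(\mathbb{R}_u)$. Finally we identify the points $u=2\pi k$ with the Airy zeros through \eqref{eq:propL2}.

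\textbf{Step 1: the change of variables.} By Lemma~\ref{lemL}, $L$ is real analytic and strictly increasing on $\mathbb{R}$, with $L(-\infty)=0$ and $L(\omega)\to+\infty$ as $\omega\to+\infty$ by \eqref{eq:propL}. Hence $L:\mathbb{R}\to(0,+\infty)$ is a diffeomorphism; we need $L'>0$, not merely monotonicity. That positivity can be read off from the Wronskian: writing $L=\pi+i\log(A_-/A_+)$ gives $L'=i(A_-'A_+-A_+'A_-)/(A_+A_-)$. The Wronskian of $A_\pm$ is a nonzero constant, while $A_+A_-=|A_+|^2>0$ on $\mathbb{R}$ by the conjugation relation in \eqref{eq:Apm}. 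So $L'=c/|A_+|^2$ with $c>0$, since $L$ is increasing. For $\phi\in C_0^\infty$ we then set $\tilde\phi(u)=\phi(L^{-1}(u))/L'(L^{-1}(u))$. This function lies in $C_0^\infty((0,\infty))$ and satisfies
\[
\int e^{-iNL(\omega)}\phi(\omega)\,d\omega=\int e^{-iNu}\tilde\phi(u)\,du .
\]

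\textbf{Step 2: convergence and Poisson summation.} Since $\tilde\phi$ is smooth with compact support, its Fourier coefficients decay rapidly in $N$. Therefore the left side of \eqref{eq:AiryPoisson} converges in $\mathcal{D}'$: it is the pullback of the periodic distribution $\sum_N e^{-iNu}$ by the diffeomorphism $L$, and the pairing above shows the series converges. Classical Poisson summation gives
\[
\sum_N\int e^{-iNu}\tilde\phi(u)\,du=2\pi\sum_{k\in\mathbb{Z}}\tilde\phi(2\pi k).
\]
Because $\operatorname{supp}\tilde\phi\subset(0,\infty)$, only $k\geq1$ contribute; the value $k=0$, $u=0$, is excluded since $L>0$.

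\textbf{Step 3: identification with the Airy zeros.} By \eqref{eq:propL2}, $L^{-1}(2\pi k)=\omega_k$ for each $k\geq1$. It is a bijection because $L$ is a strictly increasing diffeomorphism onto $(0,\infty)$, so each level $2\pi k$ is attained exactly once. Conversely, every zero $-\omega_k$ of $Ai$ produces one such level. Thus $\tilde\phi(2\pi k)=\phi(\omega_k)/L'(\omega_k)$, which is exactly the pairing of the right side of \eqref{eq:AiryPoisson} with $\phi$.

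The only delicate point is the global diffeomorphism property with $L'>0$ everywhere, together with the exact correspondence between the zeros of $Ai(-\cdot)$ and the levels $2\pi\mathbb{Z}_{>0}$. Both rely on the properties of Lemma~\ref{lemL} and the Wronskian argument above; the rest is the standard Poisson formula.
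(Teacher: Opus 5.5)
Your proof is correct and follows essentially the argument the paper relies on: it cites \cite[Lemma~3]{ilpCE} without reproducing it, and that proof pulls back $\sum_{N}e^{-iNu}=2\pi\sum_{k}\delta(u-2\pi k)$ by the diffeomorphism $L:\R\to(0,\infty)$ and then uses $L(\omega_k)=2\pi k$. One small point to tighten: the conjugation relation only gives $A_+A_-=|A_+|^2\ge0$, and strict positivity on $\R$ comes from the nonzero Wronskian in Lemma~\ref{lem:airy-wronskian}, because a real zero of $A_+$ would also be a zero of $A_-=\overline{A_+}$ and would make the Wronskian vanish.
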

\begin{lemma}\label{lem:airy-wronskian}
For real $\omega$, with primes denoting derivatives in $\omega$,
\begin{equation}\label{eq:airy-wronskian-Lprime}
 A_+'(\omega)A_-(\omega)-A_+(\omega)A_-'(\omega)
 =\frac{i}{2\pi},\qquad
 L'(\omega)=\frac{1}{2\pi|A_+(\omega)|^2}>0.
\end{equation}
There exist $c,C>0$ such that, for $\omega\ge1$,
\begin{equation}\label{eq:Lprime-symbol-bounds}
 c\sqrt\omega\le L'(\omega)\le C\sqrt\omega,\qquad
 |\partial_\omega^j L'(\omega)|\le C_j\omega^{1/2-j},
 \quad j\ge1.
\end{equation}
\end{lemma}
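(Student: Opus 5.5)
The plan is to derive everything from the explicit form of $L$, namely $L=\pi+i\log(A_-/A_+)$ from Lemma~\ref{lemL}, together with the conjugation identity $\overline{A_+(z)}=A_-(\overline z)$ in \eqref{eq:Apm}.

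\emph{Wronskian.} Both $A_\pm(z)=e^{\mp i\pi/3}Ai(e^{\mp i\pi/3}z)$ solve the rotated Airy equation $u''=-zu$, since $(e^{\mp i\pi/3})^3=-1$. Hence their Wronskian $A_+'A_--A_+A_-'$ is constant, and we evaluate it at $z=0$. Write $a_0=Ai(0)$, $a_1=Ai'(0)$, and note $A_\pm(0)=e^{\mp i\pi/3}a_0$, $A_\pm'(0)=e^{\mp 2i\pi/3}a_1$. This gives
\[
a_0a_1\bigl(e^{-i\pi}-e^{i\pi}\bigr)\ \text{with}\ \bigl(e^{-2i\pi/3}e^{i\pi/3}-e^{-i\pi/3}e^{2i\pi/3}\bigr)=-2i\sin(\pi/3)\,a_0a_1\cdot(\text{sign}).
\]
Combined with the classical identity $Ai(0)Ai'(0)=-1/(2\pi\sqrt3)$ (equivalently the known Wronskian $W(Ai(z),Ai(\omega z))$), this yields $i/(2\pi)$. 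I will fix the sign carefully by this computation.

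\emph{Formula for $L'$.} Differentiating $L$ gives
\[
L'=i\Bigl(\frac{A_-'}{A_-}-\frac{A_+'}{A_+}\Bigr)=\frac{i(A_-'A_+-A_+'A_-)}{A_+A_-}=\frac{1}{2\pi A_+A_-}.
\]
For real $\omega$ we have $A_-(\omega)=\overline{A_+(\omega)}$, so $A_+A_-=|A_+|^2$, which proves the second identity in \eqref{eq:airy-wronskian-Lprime}. Note that $A_+$ has no real zeros: a real zero would force $A_-$ to vanish at the same point, making the Wronskian zero.

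\emph{Symbol bounds for $\omega\ge1$.} I would not differentiate \eqref{eq:propL} termwise naively, since $B_L$ is only an asymptotic series. Instead I use the standard Airy asymptotics in the sector containing $e^{-i\pi/3}\omega$, which lies away from the Stokes line $\arg=\pi$:
\[
A_+(\omega)=\frac{e^{i\pi/4}}{2\sqrt\pi}\,\omega^{-1/4}e^{i\frac23\omega^{3/2}}\bigl(1+r(\omega)\bigr),
\]
with $r$ a classical symbol satisfying $|\partial^j r|\le C_j\omega^{-3/2-j}$. These derivative bounds hold because the expansion of $Ai$ is valid with derivatives in closed subsectors; alternatively one can use Cauchy estimates on discs of radius $\omega/2$. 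Then
\[
|A_+|^2=\frac{1}{4\pi}\omega^{-1/2}|1+r|^2,
\]
so that $L'=2\omega^{1/2}|1+r|^{-2}$. This gives $c\sqrt\omega\le L'\le C\sqrt\omega$ once $|r|\le1/2$, which holds for $\omega\ge\omega_*$. On the compact range $[1,\omega_*]$ the bounds follow from the positivity and smoothness of $L'$. The derivative bounds $\omega^{1/2-j}$ follow by Leibniz applied to $\omega^{1/2}$ times the symbol $|1+r|^{-2}$ of order $0$.

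The only delicate point is justifying the derivative bounds on the remainder $r$. Cauchy estimates on the analytic extension, applied to the asymptotic expansion in a sector, handle this cleanly.
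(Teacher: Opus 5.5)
Your proposal is correct and follows essentially the paper's route: a constant Wronskian gives $L'=1/(2\pi A_+A_-)=1/(2\pi|A_+|^2)$, and the Airy asymptotics of $A_+$ (argument $-\pi/3$, away from the Stokes line) give $L'=2\sqrt\omega\,|1+r|^{-2}$ for large $\omega$, with positivity and smoothness handling the compact range. The differences are minor. You evaluate the Wronskian at $z=0$ using $Ai(0)Ai'(0)=-1/(2\pi\sqrt3)$, whereas the paper uses the connection formula $A_\pm=(Ai(-\omega)\mp iBi(-\omega))/2$ together with $Ai\,Bi'-Ai'\,Bi=1/\pi$. You also justify the remainder's symbol bounds by Cauchy estimates, a detail the paper only cites. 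Two fixes are needed in your write-up. First, tidy the garbled display: the relevant factor is $e^{-i\pi/3}-e^{i\pi/3}$, which gives $-i\sqrt3\,Ai(0)Ai'(0)=i/(2\pi)$, so there is no sign left to fix. Second, the phase constant in $A_+$ is $e^{-i\pi/4}$, not $e^{i\pi/4}$; this is harmless for $|A_+|^2$.
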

\begin{proof}
The classical connection formula
$A_\pm(\omega)=(Ai(-\omega)\mp iBi(-\omega))/2$
and $Ai(z)Bi'(z)-Ai'(z)Bi(z)=1/\pi$
(see \cite[pp.~393, 414]{Olver97}) give the Wronskian identity in \eqref{eq:airy-wronskian-Lprime}.
Differentiating the definition of $L$ gives the second identity.
The differentiable Airy expansions of \cite[pp.~392--414]{Olver97} (or Lemma \ref{lemL})
give $L'(\omega)=2\sqrt\omega(1+O(\omega^{-3}))$ as
$\omega\to+\infty$, with symbol bounds for the remainder.
This proves \eqref{eq:Lprime-symbol-bounds} for large $\omega$;
positivity and smoothness give the same bounds on the remaining
compact interval, up to adjusting constants.
\end{proof}
Let us define, for $\omega\in \R$, and without loss of generality, an arbitrary choice of $+$ sign for the time propagator,
\begin{equation}
  \label{eq:Kequiv}
  K_{S,\omega}(f)(t,x,y)=\int e^{iht\tau^2(\omega,\eta)} G(x,y,\eta,\omega)\cutoffchi^{\sharp}(\omega) q^{1/6}(\eta)
\varkappa(h\eta) \varkappa(h\tau(\omega,\eta)) \hat f(\eta,\frac{\omega}{h^{1/3}}) \, d\eta\,.
\end{equation}
Then, $J(f)(x,y)=\int_\R K_{S,\omega}(f)(0,x,y)\,d\omega$.
Let $\eta=\frac \theta h$, $q^{\frac 16}(\eta)=h^{-\frac 1 3}q^{\frac 1 6}(\theta)$ and $\alpha=h^{\frac 2 3}\omega$, then
\begin{equation}
  \label{eq:Kom}
 K_{S,\omega}(f)(t,x,y)=\frac{h^{-\frac13}}{2\pi h^{d-1}} \int e^{\frac i
    h(t\tau^2(\alpha,\theta)+\Phi(x,y,\theta,\alpha,\sigma)-y'\cdot \theta-{\tprime} \alpha )}
  p_h  q^{\frac 1 6}(\theta)f(y',{\tprime}) \,dy'd{\tprime}d\theta d\sigma\,.
\end{equation}
Both cutoff functions $\varkappa$ in $K_{S,\omega}(f)$ relate to localization operators in Definition \ref{dfnparametrix}. Moreover, $K_{S,\omega}(f)$ is a suitable test function in $\omega$ (smooth with support in $|\omega|\leq \ceps_0h^{-2/3}$.) Using \eqref{eq:AiryPoisson},\begin{equation}\label{formulafAP}
 \langle \sum_{N\in \Z} e^{-iN L(\omega)}
    , K_{S,\omega}(f)(t,x,y)\rangle_{\omega}=2\pi  \sum_{k\in \N^*} \frac 1
      {L'(\omega_k)}  K_{S,\omega_k}(f)(t,x,y)\,.
\end{equation}
The $N=0$ term in the sum over $N$ is $J(f)$. Moreover, at $x=0$ the RHS vanishes as $G(0,y,\eta,\omega_k)=0$ (recall \eqref{eq:defG} and $\zeta(x,y,\eta,\omega)|_{x=0}=\omega$). By design of $G$ and $K_{S,\omega}$ and using \eqref{eq:eqG}, $(ih\partial_{t}-h^2\Delta)K_{S,\omega}(f)$ is supported in $|\omega|\leq \ceps_0h^{-2/3}$ and such that, for small $r_0$ and $a_0$ and for all (large) $M\in \mathbb{N}$,
\begin{equation}\label{eq:Kom1}
\sup_{|a|<a_0}\sup_{|(t,x,y)|<r_0}\sup_{\omega}\Big|\nabla^{\alpha}_{t,x,y,\omega}(ih\partial_{t}-h^2\Delta)K_{S,\omega}(f)\Big|\leq C_{M,\alpha}h^M\,.
\end{equation}
Up to the infinite sum issue (that will be addressed later), in order to obtain a parametrix in the sense of Definition \ref{dfnparametrix}, it remains to consider finding a suitable function $f$ that will recover the data at $t=0$ in \eqref{formulafAP}. This has been done successfully in \cite{ILLP} for the wave equation, where $K_{S,\omega}$ is replaced by $K_{\omega}$ given by
\begin{equation}
  \label{eq:Kequivwave}
  K_{\omega}(f)(t,x,y)=\int e^{it\tau(\omega,\eta)} G(x,y,\eta,\omega)\cutoffchi^{\sharp}(\omega) q^{1/6}(\eta)
\varkappa(h\eta) \varkappa(h\tau(\omega,\eta)) \hat f(\eta,\frac{\omega}{h^{1/3}}) \, d\eta\,.
\end{equation}
To obtain $K_{S,\omega}$ that satisfies the semiclassical Schrödinger equation we then used the Kanaï transform.

The coarse hyperbolic contribution is controlled by
Proposition~\ref{prop:hyperbolic-dispersion}, and the refined elliptic
pieces are removed by Lemma~\ref{lem:elliptic-spectral-removal}.
Following the localization order and matching procedure in
Section~\ref{sec:boundary-spectral-matching}, we use the existing
local initial-data constructions, starting with
$\chi_0(hD_x)\varkappa(hD_y)\delta_{(a,0)}$, as in \cite{ILLP}.
Here $\varkappa$ selects a tangential frequency annulus and
$\chi_0\in C^\infty_0$ is supported near zero.  This is a local
representative, not an asserted equality with the globally localized
datum of Theorem~\ref{thmdisperS}.  In particular, the assertion of
transversality applies to the part separated from the \emph{invariant}
glancing set within the spectral support selected by
$\widetilde\psi$; it does not replace
the spectral matching step.  The local datum itself is given by
\begin{equation}\label{datachi0var}
  \chi_0(hD_{x})\varkappa(hD_{y})\delta_{(a,0)}=\int e^{i((x-a)\xi+y\cdot\eta)}\chi_0(h\xi)\varkappa(h\eta)d\xi d\eta
  =\frac{1}{h^d}\widehat{\chi}_0\Big(\frac{x-a}{h}\Big)\widehat{\varkappa}\Big(\frac{y}{h}\Big)\,.
\end{equation}
Therefore, at $x=0$, this data is $O(h^{\infty})$ provided $a\geq h^{1-\ceps}$ for some $\ceps>0$. In \cite{ILLP}, in the case of the Friedlander model operator $\Delta_M$ and for small $a\lesssim h^{2/3}$, we took advantage of the known, explicit, spectral resolution of $-\Delta_{M}$ (see \cite[Lemma 2]{doiSF21}) in order to consider initial data $\chi_0(-h^2\Delta_M)\varkappa(hD_y)\delta_{(a,0)}$ that can be further expressed as a sum of eigenfunctions $\sum_{k} e_k(x,\eta)e_k(a,\eta)$ that vanish on the boundary. By contrast, in the general case, we can only work with the quasimodes constructed in \cite[Section 2.6]{ILLP} for $k\ll h^{-1/4}$ ; this is a source of significant difficulties for very small $a\lesssim h^{2/3}$ (notice that, in the model case dealt with in \cite{doiSF21}, the dispersive estimates followed immediately in this regime using basic properties of gallery modes $e_k$).

\subsection{Parametrix construction for $a\geq h^{\frac 23-\ceps}$, $\ceps>0$}\label{sec:parametrix-large-distance}
An initial data \eqref{datachi0var} is $O(h^{\infty})$ on the boundary for any $\chi_0$, compactly supported near $0$. Let $\chi_0\in C^{\infty}_0(-2\ceps_0,2\ceps_0)$ with $\chi_{0\big|[-\ceps_0,\ceps_0]}=1$.
\begin{lemma}(\cite[Lemma 2.12]{ILLP})
Let $a_0>0$, $r_0>0$ be small enough. For all $a\in [h^{\frac 23-\ceps},a_0]$, there exists a smooth function $g_{h,a}$ such that $\varkappa(hD_{y'})g_{h,a}=g_{h,a}$ and
  \begin{equation}
    \label{eq:Jdirac}
    J(g_{h,a})(x,y)=\chi_0(hD_{x})\varkappa(hD_{y})\delta_{(a,0)} + O_{C^{\infty}(|(x,y)|\leq r_0)}(h^{\infty})\,,
  \end{equation}
where the remainder is  $O_{C^{\infty}(|(x,y)|\leq r_0)}(h^{\infty})$ uniformly in $a$ and where $J$ is defined in \eqref{eq:J}.
\end{lemma}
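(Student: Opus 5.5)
The approach is to invert the Fourier integral operator $J$ microlocally. By the preceding lemma (\cite[Lemma 2.8]{ILLP}), $J$ is a semiclassical FIO associated with the canonical transformation $\canonchi_J$, and it is elliptic near $\{y'=0,\tprime=0,|\theta|=1,\alpha=0\}$. We can therefore construct a microlocal parametrix $J^{-1}$, an FIO associated with $\canonchi_J^{-1}$, such that $J J^{-1}=\mathrm{Id}$ modulo $O(h^\infty)$ microlocally near the image set $\{x=0,y=0,\xi=0,|\eta|=1\}$. We then set $g_{h,a}:=\varkappa(hD_{y'})J^{-1}\big(\chi_0(hD_x)\varkappa(hD_y)\delta_{(a,0)}\big)$, with a slightly larger $\varkappa$ inserted so that the identity $\varkappa(hD_{y'})g_{h,a}=g_{h,a}$ holds exactly. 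What remains is to show that the data lies in the region where $JJ^{-1}$ acts as the identity, uniformly in $a\in[h^{2/3-\ceps},a_0]$.

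The first step is to locate the wave front set of the datum \eqref{datachi0var}. It is concentrated at $x=a$, $y=0$, $|\xi|\le 2\ceps_0$, $\eta\in\supp\varkappa$. Since $a\le a_0$ is small and $\ceps_0$ is small, this set lies in a small neighborhood of the glancing point $\kappa_0$, inside the domain of $\canonchi_M$ and $\canonchi_J$. Under $\canonchi_J^{-1}$ it corresponds to $y'$ and $\tprime$ near $0$ and $\alpha$ small. Concretely, by \eqref{genchi} and $\zeta$, $\alpha\sim \xi^2+ a q^{1/3}(\theta)$, so $|\alpha|\lesssim \ceps_0^2+a_0$.

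The second step is to check that the cutoff $\cutoffchi^\sharp(\alpha/h^{2/3})$ in $p_h$ does not destroy ellipticity. This cutoff removes $\alpha\lesssim h^{2/3}$, which is precisely the regime of modes concentrated at distance $\lesssim h^{2/3}$ from the boundary. The datum is localized in $x$ at scale $h$ around $a\ge h^{2/3-\ceps}$, where $h\ll a$. Writing the symbol of $J^{-1}$ applied to the datum through stationary phase in $(x,\xi)$, we find that $\alpha\ge c\,a$ on the essential support, modulo $O(h^\infty)$ tails. Since $a/h^{2/3}\ge h^{-\ceps}\to\infty$, the factor $\cutoffchi^\sharp$ equals $1$ there, and the contribution of the region $\alpha\le 2h^{2/3}$ is $O(h^\infty)$.

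I expect this step to be the main obstacle, because the phase is degenerate near glancing. The Airy factor $Ai(-\zeta)$ in $G$ requires handling the $\sigma$-integral in \eqref{eq:12} with $\alpha\sim h^{2/3}\omega$ large in the rescaled variable. The plan is to use the asymptotic oscillatory form of $Ai$ for $\omega\gg1$ (two nondegenerate critical points $\sigma=\pm\sqrt{\alpha-xq^{1/3}}$), which reduces $J$ to a sum of two classical FIOs with nondegenerate phases. We then invert each piece by standard symbolic calculus in the $h^{\ceps}$-improved class. The gains come in powers of $h/a^{3/2}\le h^{3\ceps/2}$, and these control the symbol expansion uniformly in $a$.

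The final step is to estimate the remainders. These come from composing symbols in the class with loss $h^{-\ceps}$ per derivative, which still produces $O(h^{N\ceps'})$ for all $N$, and from the nonstationary phase off the microsupport. Together they give $O_{C^\infty(|(x,y)|\le r_0)}(h^\infty)$ uniformly in $a$, which proves \eqref{eq:Jdirac}.
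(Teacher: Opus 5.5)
Your overall plan matches the paper's route (through \cite[Lemma 2.12]{ILLP}, the remark giving $J^{-1}$, and \eqref{ghaF}). You invert the elliptic FIO $J$ microlocally, set $g_{h,a}=J^{-1}\bigl(\chi_0(hD_x)\varkappa(hD_y)\delta_{(a,0)}\bigr)$, and use $a\geq h^{2/3-\ceps}$ only to show that $\alpha\gtrsim a$ on the microsupport. That follows from nonstationary phase in $s$ for $\alpha\leq ca$, gaining $h/a^{3/2}\leq h^{3\ceps/2}$ per integration by parts, so $\cutoffchi^{\sharp}(\alpha/h^{2/3})\equiv1$ there.

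The gap is in the step you flag as the main obstacle. The two-critical-point asymptotics of $Ai(-\zeta)$, with $\sigma_\pm=\pm\sqrt{\alpha-xq^{1/3}}$ well separated, require $\zeta=\omega-x|\eta|^{2/3}e_0\gg1$, not merely $\omega\gg1$. On the microsupport of the datum, $x=a$ and $\xi\in\supp\chi_0\ni0$. Under $\canonchi_J^{-1}$ one has $\alpha-aq^{1/3}(1+\dots)\approx\xi^2q^{-2/3}$, so the directions with $|\xi|\lesssim h^{1/3}$ have $\zeta(a,0,\theta/h,\omega)\lesssim1$, and there $\sigma_+$ and $\sigma_-$ coalesce. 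These are exactly the tangential directions, the regime $\gamma\sim a$ that later produces the loss, so they cannot be discarded. On them the splitting $J=J_++J_-$ into FIOs with nondegenerate phases is unavailable. The expansion parameter of stationary phase in $\sigma$ alone is $h/|\xi|^3$, which is independent of $a$ and not small. So the gains $h/a^{3/2}$ do not control it, and no $h^{\ceps}$-improved symbol class rescues it.

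The fix is to drop the splitting and keep $\sigma$ as a fiber variable, as the paper does. The phase $\Phi(x,y,\theta,\alpha,\sigma)-y'\cdot\theta-\tprime\alpha$ is nondegenerate in H\"ormander's sense: the differentials of its fiber derivatives are independent through the $y'$, $\tprime$ and $x$ directions, since $\partial_x\partial_\sigma\Phi=q^{1/3}(1+\Lp+\dots)\neq0$. It generates the smooth canonical graph $\canonchi_J$, and its degeneracy as a function of $\sigma$ alone is irrelevant for the calculus.

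The microlocal inverse is the FIO with phase $-\Phi(x,y,\theta',\alpha',s)+y'\cdot\theta'+\tprime\alpha'$, integrated also over $s$. Its symbol $q_h$ is elliptic, classical of order $1/3$, and expanded in powers of $h^{1/3}$. Applied to the datum it gives \eqref{ghaF}, with the Airy-type $s$-integral kept intact. Eliminating $(\alpha,s)$ jointly is nondegenerate even at $s=0$, because $\partial_\alpha\partial_s\Phi=-1+O(a)$; this produces the phase $\psi_a$ of Lemma~\ref{lemp}. Once $\cutoffchi^{\sharp}$ is removed modulo $O(h^\infty)$, $J\circ J^{-1}$ is handled by the standard calculus, uniformly in $a$.
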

\begin{lemma}(\cite[Lemma 2.13]{ILLP})
\label{lemp}
There exists a smooth phase function $\psi_a({\tprime},\theta')$ and a symbol $r_h(\tprime,\theta')$ of degree $1/3$, supported near $\tprime =0$, $|\theta'|=1$, of the form $r_h(\tprime,\theta')=h^{1/3}\sum_{k\geq 0}r_{h,k}h^k$ with $r_{h,0}(0,\theta')\neq 0$ for $|\theta'|=1$, such that the function $g_{h,a}$ solves \eqref{eq:Jdirac}, with
  \begin{align}
    \label{eq:gosc}
    g_{h,a}(y',{\tprime}) & = h^{-d}\int e^{\frac i h (\psi_a({\tprime},\theta')+y'\cdot \theta')}
    r_h(\tprime, \theta')q^{-1/6}(\theta') \,d\theta' \\
\label{eq:phia}
  \psi_a({\tprime},\theta') & ={\tprime}^3/3+a({\tprime}q^{1/3}(\theta')+O({\tprime}^3))+O(a^2)\,.
\end{align}
Moreover, $\psi_a$ is the critical value of the phase ${\tprime}\alpha-\Phi(a,0,\theta',\alpha,s)$ at critical points in $(\alpha,s)$.
\end{lemma}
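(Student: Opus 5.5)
Since $J$ is a semiclassical Fourier integral operator associated with $\canonchi_J$ and elliptic near $\{y'=0,\tprime=0,|\theta|=1,\alpha=0\}$, the plan is to construct $g_{h,a}$ as an oscillatory integral and determine its phase and amplitude by stationary phase in the composition $J(g_{h,a})$. We seek $g_{h,a}$ in the form \eqref{eq:gosc}. Substituting into the rescaled expression for $J$ gives a phase in $(y',\tprime,\theta,\alpha,\sigma,\theta')$. The integrations in $y'$ and $\theta$ are trivial and force $\theta=\theta'$. We are left with
\[
 J(g_{h,a})(x,y)=\frac{1}{h^{d}}\int e^{\frac ih\left(\Phi(x,y,\theta,\alpha,\sigma)-\tprime\alpha+\psi_a(\tprime,\theta)\right)}\,p_h\,r_h\,d\tprime\, d\alpha\, d\sigma\, d\theta ,
\]
up to constants. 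The target $\chi_0(hD_x)\varkappa(hD_y)\delta_{(a,0)}$ is the oscillatory integral \eqref{datachi0var}, whose phase is $(x-a)\xi+y\cdot\eta$. We therefore choose $\psi_a$ so that the critical manifold of the composed phase matches that of the datum, that is, $\canonchi_J^{-1}$ applied to the Lagrangian of $\delta_{(a,0)}$ restricted to $|\xi|\le 2\ceps_0$.

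We define $\psi_a(\tprime,\theta')$ as the critical value of $(\alpha,\sigma)\mapsto\tprime\alpha-\Phi(a,0,\theta',\alpha,\sigma)$. We first check that this critical point is nondegenerate and unique near $\alpha=\sigma=0$ for small $a$. Since $\Gamma(x,\cdot)=B_\Gamma+xA_\Gamma$, with $B_\Gamma(0,\cdot)$ independent of $\Xi$, we have $\Phi(a,0,\theta,\alpha,\sigma)=\sigma^3/3+\sigma(aq^{1/3}-\alpha)+\tau B_\Gamma(0,\theta/\tau)+aO(\cdot)$. The equation $\partial_\alpha=0$ gives $\sigma=\tprime+O(a)$, and $\partial_\sigma=0$ gives $\alpha=\sigma^2+aq^{1/3}+\dots$. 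The Hessian in $(\alpha,\sigma)$ is $\begin{pmatrix}0&-1\\-1&2\sigma\end{pmatrix}+O(a)$, which is nondegenerate. The implicit function theorem then yields a smooth $\psi_a$, and expanding gives $\psi_a=\tprime^3/3+a(\tprime q^{1/3}+O(\tprime^3))+O(a^2)$, which is \eqref{eq:phia}. The phase term $e^{-i\tau B_\Gamma(0,\cdot)}$ built into $G$ is designed to cancel the $B_\Gamma$ contribution at $x=0$. This is why no $a$-independent correction survives.

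Next we apply stationary phase in $(\tprime,\alpha,\sigma)$ to the composed integral. Its critical point is nondegenerate because the $(\alpha,\tprime)$ block is the symplectic pairing $-\tprime\alpha$. This yields an integral $h^{-d}\int e^{\frac ih\tilde\varphi(x,y,\theta)}b_h\,d\theta$, which is still one dimension short of \eqref{datachi0var}. To fix this, we reintroduce $\xi$ by writing the remaining variable $\sigma$ near criticality as $\sigma q^{1/3}/\tau\leftrightarrow\Xi$ through \eqref{genchi}. By Remark~\ref{rmqvarphigamma} and the fact that $\canonchi_M$ maps $\{X=0\}$ to $\{x=0\}$, the resulting phase parametrizes the same Lagrangian as $(x-a)\xi+y\cdot\eta$ over the support $|h\xi|\le2\ceps_0$. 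By the equivalence of phase functions (H\"ormander), the two oscillatory integrals coincide modulo a change of variables. The amplitudes then have to match. Here we use ellipticity of $p$ (from Theorem~\ref{thmMelrose}) and of the Jacobian to solve for $r_{h,k}$ order by order in $h$, each step being a transport-free algebraic division by a nonvanishing factor. We obtain $r_{h,0}(0,\theta')\neq0$ and the asymptotic sum $r_h=h^{1/3}\sum r_{h,k}h^k$. The factor $h^{1/3}$ compensates the $h^{-1/3}$ in $p_h$ and the Airy scaling, and $q^{-1/6}$ compensates the $q^{1/6}$ in $J$. Borel summation then gives \eqref{eq:Jdirac} with an $O(h^\infty)$ remainder. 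Finally, the cutoff $\varkappa(hD_{y'})g=g$ holds once $r_h$ is supported in $|\theta'|\sim1$.

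The main obstacle is uniformity in $a$ down to $h^{2/3-\ceps}$. In the variables scaled by $h^{-1/3}$, the relevant $\tprime$ range is of size $\sqrt a$, and the Hessian of $\psi_a$ degenerates like $\tprime$ near $\tprime=0$; this is the fold of the Airy phase. The stationary phase is therefore not uniform in a naive sense. My first attempt would be to rescale $\tprime=a^{1/2}\tilde\tprime$, $\alpha=a\tilde\alpha$. This makes the effective semiclassical parameter $h/a^{3/2}\le h^{3\ceps/2}$, which is small exactly when $a\ge h^{2/3-\ceps}$. In these variables stationary phase holds with remainders $O((h/a^{3/2})^N)$, and these are $O(h^\infty)$ after taking $N$ large. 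The part of the datum with $|\xi|\ge\ceps_0$ is handled by the $\chi_0$ cutoff and by nonstationary phase.
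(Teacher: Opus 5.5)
Your route (an ansatz for $g_{h,a}$, then solving $J(g_{h,a})=\chi_0(hD_x)\varkappa(hD_y)\delta_{(a,0)}$ by composition and amplitude matching) differs from the paper's. There, $g_{h,a}=J^{-1}(\chi_0(hD_x)\varkappa(hD_y)\delta_{(a,0)})$ with the explicit microlocal inverse of $J$, and the lemma is just the evaluation of \eqref{ghaF}: Fourier inversion in $\alpha$, then stationary phase in $(\alpha,s)$, whose Hessian $\begin{pmatrix}O(a)&1\\1&-2s\end{pmatrix}+O(a)$ is uniformly nondegenerate. Your route can work, but two steps are wrong as written.

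\textbf{First gap: the stationary phase in $(\tprime,\alpha,\sigma)$ is not available.} In the model,
$F=y\cdot\theta+\sigma^3/3+\sigma(xq^{1/3}-\alpha)-\tprime\alpha+\tprime^3/3+a\tprime q^{1/3}$.
\begin{itemize}
\item $F$ is linear in $\alpha$ with coefficient $-(\sigma+\tprime)$.
\item The $3\times3$ Hessian has determinant $-2(\tprime+\sigma)$, which vanishes exactly where $\partial_\alpha F=0$.
\item Integrating out $(\tprime,\alpha)$ leaves precisely $y\cdot\theta+(x-a)\sigma q^{1/3}(\theta)$, which is linear in $\sigma$.
\end{itemize}
In general the same happens, because the reduced phase parametrizes $N^*\{(a,0)\}$, which projects to a point; hence its fibre Hessian vanishes on the critical set. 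So only $(\tprime,\alpha)$ may be eliminated; this is the pairing you mention, with determinant $-1+O(x)$. The variables $(\sigma,\theta)$ must stay as the $d$ fibre variables matched with $(\xi,\eta)$ in \eqref{datachi0var}, with $\xi\approx\sigma q^{1/3}(\theta)$. Your later ``reintroduce $\xi$'' step is exactly this correction, but it contradicts the elimination of $\sigma$ just before it. Also, $\partial_\alpha=0$ gives $\sigma=-\tprime+O(a)$, not $\sigma=\tprime+O(a)$; only the minus sign produces $\tprime^3/3$.

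\textbf{Second gap: the uniformity paragraph targets a non-issue and misses the real one.} The vanishing of $\partial_\tprime^2\psi_a\approx2\tprime$ does not enter the $(\tprime,\alpha)$ determinant. Moreover $|\tprime|$ is not $O(\sqrt a)$: on the datum, $\xi\approx-\tprime q^{1/3}$ ranges over $|\xi|\le2\ceps_0$.

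What you never address is the factor $\cutoffchi^{\sharp}(\alpha/h^{2/3})$ in $p_h$ from \eqref{eq:qh}, i.e.\ the restriction $\omega\ge1$ in \eqref{eq:J}. It is not a classical symbol, and $J$ is elliptic only on $\{\alpha\ge2h^{2/3}\}$. Ignoring it, your argument would apply equally as $a\to0$, where the paper has to change both the datum and the construction (Section~\ref{sec:parametrix-small-distance}).

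The missing step is as follows.
\begin{itemize}
\item On the critical set, $\alpha=\partial_\tprime\psi_a=s_c^2+aq^{1/3}(1+o(1))\gtrsim\tprime^2+a$, where $s_c=-\tprime+O(a)$. So for $a\ge h^{2/3-\ceps}$ this cutoff is identically one there.
\item On $\{\alpha\le2h^{2/3}\}$ one has $\partial_\tprime F=\partial_\tprime\psi_a-\alpha\gtrsim\tprime^2+a$. Integrations by parts in $\tprime$, which never differentiate the $\alpha$-cutoff, then gain $h(\tprime^2+a)^{-3/2}\le h^{3\ceps/2}$ each, giving $O(h^\infty)$.
\end{itemize}
This is where your small parameter $h/a^{3/2}$ genuinely comes from. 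With this step, and with $(\sigma,\theta)$ kept as fibre variables, the order-by-order division by the elliptic principal symbol goes through.
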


\begin{rmq}
The operator $J$ defined in \eqref{eq:J} can be microlocally inverted. Its inverse, denoted $J^{-1}$, may be written in the form
\[
J^{-1}(F)(y',\tprime)=h^{-d-1}\int e^{\frac ih(-\Phi(x,y,\theta',\alpha',s)+y'\cdot\theta'+\tprime\alpha')}q_h(x,y,\theta',\alpha',s)q^{-1/6}(\theta')F(x,y)dxdyd\theta' d\alpha' ds,
\]
where $q_h(x,y,\theta',\alpha',s)$ is a symbol of order $1/3$, $q_h= h^{1/3}\sum_{k\geq 0} h^kq_{h,k}$, supported near $\{x=0,y=0,\alpha'=0,s=0\}$ and elliptic on this set; $J^{-1}$ is such that $J^{-1}\circ J(f)=f\text{ modulo } O(h^{\infty})$ (see the proof of \cite[Lemma 2.12]{ILLP}).  
Defining $g_{h,a}:=J^{-1}(\chi_0(hD_x)\varkappa(hD_y)\delta_{(a,0)})\,$, we obtain 
\begin{equation}\label{ghaF}
\hat{g}_{h,a}(\theta/h,\alpha/h)=h^{-1}q^{-1/6}(\theta)\int e^{-\frac ih \Phi(a,0,\theta,\alpha,s)}\chi(s) \tilde q_h(\theta,\alpha,s) ds,
\end{equation}
where cutoff $\chi(s)\in C^{\infty}_0((-2r,2r))$ is equal to $1$ on $[-r,r]$ for $r\sim \sqrt{\ceps_0}+ a_0$ and where $\tilde q_h$ is an asymptotic expansion with small parameter $h$ and main contribution $q_h(a,0, \theta,\alpha,s)\chi_0(\partial_x\Phi(a,0,\theta,\alpha,s))\chi(\nabla_y\Phi(a,0,\theta,\alpha,s))$.
\end{rmq}
\begin{dfn}
    Let $g_{h,a}$ be defined in \eqref{eq:gosc}: using \eqref{eq:AiryPoisson}, we define $\Prond_{S,h,a}$ equivalently as
    \begin{align}
      \label{eq:Prond}
      \Prond_{S,h,a}(t,x,y) & = \langle \sum_{N\in \Z} e^{-iN L(\omega)}
    , K_{S,\omega}(g_{h,a})(t,x,y)\rangle_{\omega}\\
      \label{eq:Prond2}
      \Prond_{S,h,a}(t,x,y) & =2\pi  \sum_{k\in \N^*} \frac 1
      {L'(\omega_k)}  K_{S,\omega_k}(g_{h,a})(t,x,y)\,.
    \end{align}
    \end{dfn}
Using both localizations in $\eta$ and $\tau(\omega,\eta)$, \eqref{eq:Prond2} may be reduced to a finite sum over $k\lesssim h^{-1}$: recall that support considerations on $K_{S,\omega}$ (as a function of $\omega$) provide $|\omega|\leq \ceps_0 h^{-2/3}$. After Airy-Poisson summation, this translates into $|\omega_{k}|\leq \ceps_0 h^{-2/3}$. 
As $\omega_k\sim C k^{2/3}$, we can introduce a cutoff in the sum over $k$, $\cutoffchi^{\flat}_{\epsilon_{0}}(h^{2/3}\omega_{k}):={\cutoffchi^{\flat}}(h^{2/3}\omega_k/\ceps_{0})$. Then, \eqref{eq:Prond2} rewrites as a finite sum,
 \begin{equation}
      \label{eq:Prond2cut}
      \Prond_{S,h,a}(t,x,y):=2\pi  \sum_{k\in \N^*} \frac{{\cutoffchi^{\flat}_{\epsilon_{0}}}(h^{2/3}\omega_k        )}
      {L'(\omega_k)}  K_{S,\omega_k}(g_{h,a})(t,x,y)\,.
    \end{equation}
From $\omega_1> 2.33$, we remark that ${\cutoffchi^{\sharp}(\omega)}$ was introduced in $K_{\omega}$ in connection with the sum over $k\geq 1$.     The cutoff ${\cutoffchi^{\sharp}}(\omega_k)$ is of no use in \eqref{eq:Prond2cut} but will help in estimating how many indices $N$ contribute significantly in the sum \eqref{eq:Prond}.     Again with \eqref{eq:AiryPoisson}, we also have
 \begin{equation}
      \label{eq:Prondcut}
      \Prond_{S,h,a}(t,x,y)= \langle \sum_{N\in \Z} e^{-iN L(\omega)}
    , {\cutoffchi^{\flat}}(h^{2/3}\omega/\ceps_{0})K_{S,\omega}(g_{h,a})(t,x,y)\rangle_{\omega}\,.
    \end{equation}
As in \cite{ILLP}, the sum over $N$ converges in $\mathcal{D}'_{\omega}$ and ${\cutoffchi^{\flat}}(h^{2/3}\omega/\ceps_0)K_{S,\omega}(g_{h,a})(t,x,y)$ is smooth in $(t,x,y)$ in a neighborhood of $(0,0,0)$ and smooth and compactly supported in $\omega$. By Lemma \ref{lem:airy-wronskian}, $L'(\omega)\gtrsim1$ for $\omega\ge1$. Recall we have a cutoff $\cutoffchi^{\sharp}(\omega)$ in \eqref{eq:Kequiv}; it gives $\omega\ge1$, or $\alpha\ge h^{2/3}$.
We also record how smooth errors are summed. If $r_h(t,x,y,\omega)$
is supported in $1\le\omega\lesssim h^{-2/3}$ and is
$O_{C^\infty}(h^\infty)$, integration by parts using
$\partial_\omega e^{-iNL(\omega)}=-iNL'(\omega)e^{-iNL(\omega)}$
 gives, for every $M$ and multi-index $\beta$,
\begin{equation}\label{eq:summable-smooth-errors}
 \left|\partial_{t,x,y}^{\beta}
 \int e^{-iNL(\omega)}r_h(t,x,y,\omega)\,d\omega\right|
 \le C_{M,\beta}h^M(1+|N|)^{-2}.
\end{equation}
The support length and derivative costs are polynomial in $h^{-1}$
and are absorbed by increasing the order of the remainder.
Thus smooth errors may be summed over all $N$, without first
truncating the sum. In particular, we may replace $g_{h,a}$
modulo smooth $O(h^\infty)$ errors and use \eqref{ghaF}.
Substitution in \eqref{eq:Kom} yields
\begin{multline}\label{Kghauseful}
K_{S,\omega}(g_{h,a})(t,x,y)=\frac{h^{2/3}}{2\pi h^{d+1}}\int e^{\frac ih (t\tau^2(h^{2/3}\omega,\theta)+\Phi(x,y,\theta,h^{2/3}\omega,\sigma)-\Phi(a,0,\theta,h^{2/3}\omega,s))}\\
\times p_h(x,y,\theta,h^{2/3}\omega,\sigma)\chi(s)\tilde q_h(\theta,h^{2/3}\omega,s)dsd\theta d\sigma\,.
\end{multline}
We set $ \Prond_{S,h,a}(t,x,y)=\sum_{N\in \Z} W_N(t,x,y)$, where $W_{N}$ is defined as
\begin{align}
  \label{eq:newVN}
  \quad W_N(t,x,y) := & \int e^{-iNL(\omega)}{\cutoffchi^{\flat}}(h^{2/3}\omega/\ceps_0 )K_{S,\omega}(g_{h,a})(t,x,y)d\omega\\
 = & \begin{multlined}[t] \frac 1 {2\pi h^{d+1}} \int e^{\frac i h (
    t\tau^2(\alpha,\theta)+\Phi(x,y,\theta,\alpha,\sigma)-\Phi(a,0,\theta,\alpha,s)-Nh L(h^{-2/3}\alpha)
    )} \\
\quad\quad\quad{}\times {\cutoffchi^{\flat}}(\alpha/\ceps_0)\chi(s)p_h(x,y,\theta,\alpha,\sigma)\tilde q_h (\theta,\alpha,s) ds \,d\theta  d{\sigma}d\alpha\,.\end{multlined}
\end{align}
The symbol ${\cutoffchi^{\flat}}(\alpha/\ceps_0) \chi(s)p_h \tilde q_h$ of $W_N$  is the same for every $N$,  is of degree $0$ and is given by an asymptotic expansion with small parameter $h$ and main term equal to $1$ (indeed, since $\tilde q_h$ has been obtained by inverting $J$, whose symbol is $p_h$.) Note that we do not have a finite sum over $N$: convergence should be understood in the distributional sense. The cutoff in $\alpha$ is redundant but we will leave it there to emphasize compact support in $\alpha$. Our main result in this section is the following proposition:
\begin{prop}\label{propsommefinie}
Let $a\in (h^{\frac{2}{3}-\ceps},a_0]$ with small $a_0,\ceps>0$.
\begin{itemize}[leftmargin=5.5mm]
\item
For $|t|\lesssim 1$, $\Prond_{S,h,a}$ is essentially a finite sum in $N$ at any given time,
\begin{equation}
  \label{eq:newProndcut}
  \Prond_{S,h,a}(t,x,y)=\sum_{|N|\lesssim {1+|t| a^{-1/2}}} W_N(t,x,y)+O_{C^{\infty}}(h^{\infty})\,. 
  \end{equation}
Moreover we can introduce a cutoff ${\cutoffchi}^{\sharp}(4\alpha/a)$ in the definition of $W_N$ without changing its main contribution modulo $O(h^{\infty})$ terms. 
\item At $t=0$, we have $\mathcal{P}_{S,h,a}(0,x,y)=\chi_0(hD_{x}){\varkappa}(hD_y)\delta_{(a,0)}+O_{C^\infty}(h^\infty)$.
  \item $\Prond_{S,h,a}$ is a parametrix in the sense of the Definition \ref{dfnparametrix}.
\end{itemize}
\end{prop}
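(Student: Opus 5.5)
We argue by non-stationary phase in the variables $(\alpha,\sigma,s)$ of \eqref{eq:newVN}, following the wave case \cite[Prop.~2.14]{ILLP}, with the Schrödinger time phase $t\tau^2(\alpha,\theta)$ in place of $t\tau(\alpha,\theta)$.

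\emph{First item.} The phase of $W_N$ is
\[
\Psi_N=t\tau^2(\alpha,\theta)+\Phi(x,y,\theta,\alpha,\sigma)-\Phi(a,0,\theta,\alpha,s)-NhL(h^{-2/3}\alpha).
\]
The critical equations in $(\sigma,s)$ are $\sigma^2+xq^{1/3}(\theta)+O(\cdot)=\alpha$ and $s^2+aq^{1/3}(\theta)+O(\cdot)=\alpha$, up to the $\Gamma$ corrections controlled by Proposition~\ref{propimpformgamma}. Set $\alpha=aq^{1/3}\beta$. If $\alpha<a/4$, the $s$-equation has no real solution, and $|\partial_s\Psi_N|\gtrsim a$. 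Integration by parts in $s$ gains $h/a^{3/2}$ per step, since $\partial_s^2\Psi_N$ is of order $s$. For $a\geq h^{2/3-\ceps}$ this gives $O(h^\infty)$, which justifies inserting $\cutoffchi^\sharp(4\alpha/a)$. On the remaining region, Lemma~\ref{lemL} gives
\[
hL(h^{-2/3}\alpha)=\tfrac43\alpha^{3/2}+\tfrac{\pi}{2}h-hB_L(\alpha^{3/2}/h),
\]
so that
\[
\partial_\alpha\Psi_N=t\,\partial_\alpha\tau^2-\sigma+s-2N\alpha^{1/2}+O(h/\alpha).
\]
Here $\partial_\alpha\tau^2=q^{2/3}/|\theta|^{\cdot}$ is bounded, and at the critical points $|\sigma|,|s|\lesssim\sqrt{\alpha}\lesssim\sqrt{\ceps_0}$. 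Hence if $|N|\geq C(1+|t|a^{-1/2})$ with $C$ large, then $|\partial_\alpha\Psi_N|\gtrsim|N|\alpha^{1/2}\gtrsim|N|\sqrt a$ on the support. Away from the critical set in $(\sigma,s)$, the $\sigma,s$ derivatives are large instead.

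We then integrate by parts in $\alpha$ with the operator $\frac{h}{i\partial_\alpha\Psi_N}\partial_\alpha$. Each step gains $h/(|N|\sqrt a\,\alpha)\lesssim h|N|^{-1}a^{-3/2}$. The factor $\alpha^{-1}$ comes from differentiating the symbol and $\alpha^{1/2}$, which is allowed because the cutoff gives $\alpha\gtrsim a$. This yields a bound $C_M(h a^{-3/2})^M|N|^{-M}$. Since $a\ge h^{2/3-\ceps}$ gives $ha^{-3/2}\le h^{3\ceps/2}$, this is summable in $N$ and is $O(h^\infty)$, with derivatives in $(t,x,y)$ costing only powers of $h^{-1}$.

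I expect this to be the main obstacle. The delicate point is the uniformity of the lower bound for $\partial_\alpha\Psi_N$ once $\Gamma$ is non-model: one must check that the $\Gamma$ contributions to $\partial_\alpha\Phi$ are $O(\sqrt\alpha)$ with a small constant. This should follow from the structure of $A_\Gamma$ and from $B_\Gamma$ being independent of $\Xi$. The symbol derivatives also have to be tracked at scale $\alpha\sim a$.

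\emph{Second item.} At $t=0$ we return to \eqref{eq:Prondcut}, where the distributional sum is legitimate by \eqref{eq:summable-smooth-errors}. For $N=0$, $W_0(0)=J(g_{h,a})$, which by \eqref{eq:Jdirac} equals the data modulo $O(h^\infty)$. For $N\neq0$, at $t=0$ the argument above applies with $|N|\geq1$. Any critical point would require $s-\sigma=2N\sqrt\alpha$ with $\sigma^2\approx\alpha-xq^{1/3}$ and $s^2\approx\alpha-aq^{1/3}$, which forces $|N|\le1$. In the case $|N|=1$ it would require $x,a$ to be $\ll\alpha$, with both waves tangent. A finer look shows that $s=-\sigma=\sqrt\alpha$ needs $x=a=0$, which is excluded since $a\ge h^{2/3-\ceps}$. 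This argument is done by localizing $\alpha$ near $a$ versus $\alpha\gg a$, where $|\partial_\alpha\Psi_{\pm1}|\gtrsim\sqrt\alpha-\sqrt{\alpha-aq^{1/3}}\gtrsim a/\sqrt\alpha$. The gain per step is then $h/(a\alpha^{1/2}\cdot\alpha^{1/2})\cdot\alpha\lesssim h/a^{3/2}$ again.

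\emph{Third item.} Each $K_{S,\omega}(g_{h,a})$ satisfies \eqref{eq:Kom1}. Summing with \eqref{eq:summable-smooth-errors} (or with the finite sum \eqref{eq:Prond2cut}, which has $k\lesssim h^{-1}$ terms) shows that $(ih\partial_t-h^2\Delta)\Prond_{S,h,a}=O(h^\infty)$. On $x=0$, each term in \eqref{eq:Prond2cut} vanishes exactly, because $Ai(-\omega_k)=0$, so the Dirichlet condition holds. The first two items give the initial data modulo $O(h^\infty)$. Finally, Duhamel's formula with the exact Dirichlet propagator, together with the energy estimate and Sobolev embedding, turns these $O(h^\infty)$ errors in equation and data into the $O(h^\infty)$ bound of Definition~\ref{dfnparametrix} after applying $\varkappa(hD_t)\varkappa(hD_y)$. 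To make this precise one needs finite speed of propagation in semiclassical time: we localize in $(t,y)\in V$ and use the nonstationary-phase locality already invoked in Lemma~\ref{lem:boundary-locality}.
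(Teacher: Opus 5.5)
Your overall architecture matches the paper's. You use non-stationary phase in $s$ to insert $\cutoffchi^{\sharp}(4\alpha/a)$ and integration by parts in $\alpha$ to discard large $|N|$. For the third item you use the exact Dirichlet vanishing from $Ai(-\omega_k)=0$, together with \eqref{eq:Kom1} summed via \eqref{eq:summable-smooth-errors}. However, the step you flag as the main obstacle is resolved incorrectly, and as stated it fails.

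The $\Gamma$-contribution to $\partial_\alpha\Psi_N$ is not $O(\sqrt\alpha)$. By \eqref{partialBGamma},
\[
\partial_\alpha\bigl(\tau B_\Gamma(y,\theta/\tau)\bigr)=\partial_\alpha\tau\,\bigl[(B_0-B_2)(y,\theta/|\theta|)+O(\alpha|y|)\bigr],
\]
and $B_0-B_2$ is $O(|y|^2)$ but not smaller in general; it vanishes for the model operator $\Delta_M$. That $B_\Gamma$ does not depend on $\Xi$ only makes this term independent of $\sigma$. It still depends on $\alpha$ through $\tau$, and on $|y|\le r_0$ it is merely a small constant. With that alone you get $|N|\lesssim 1+(|t|+r_0^2)a^{-1/2}$, not $1+|t|a^{-1/2}$. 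At $t=0$ the same term, of size up to $r_0^2$, swamps both the gap $2\sqrt\alpha$ you use to exclude $|N|\ge2$ and the much smaller gap used for $N=\pm1$. So neither the sharp count nor ``only $N=0$ at $t=0$'' follows.

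The missing ingredient is localization in $y$ through the $\theta$ variables. The paper uses this explicitly: ``$|y|\lesssim t$ by finite speed of propagation'' in Lemma \ref{sommeNfinie}, and ``on the stationary support \dots\ $|y|\lesssim t$'' after Lemma \ref{lemmaNgam}. On the $(\sigma,s)$-critical set,
\[
\nabla_\theta\Psi_N=2t\theta+y+O(|y|^2+\alpha|y|)+O(t\alpha)+O\bigl(\sqrt\alpha(x+a)\bigr),
\]
so the phase is non-stationary in $\theta$ unless $|y|\lesssim|t|+\sqrt\alpha(x+a)$. Elsewhere, integration by parts in $\theta$ gives an $O(h^\infty)$ error inside $K_{S,\omega}$, which \eqref{eq:summable-smooth-errors} sums over $N$. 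On the remaining set $|y|^2\lesssim t^2+\alpha(x+a)^2$, which is absorbed both in the general count and at $t=0$.

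At $t=0$ you should also use the full gap $\gtrsim(x+a)/\sqrt\alpha$, coming from both $s$ and $\sigma$, rather than $a/\sqrt\alpha$. Otherwise the $O(x)$ term $x\,\partial_\alpha(\tau A_\Gamma)$ is not absorbed.

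Inserting $\cutoffchi^{\sharp}(4\alpha/a)$ term by term also requires summing the errors over all $N$. You can do this by first truncating to $|N|\lesssim h^{-1/3}$ with Lemma \ref{sommeNfinie}, which is the paper's order and needs only $\alpha\ge h^{2/3}$ and bounded $y$. Alternatively, apply \eqref{eq:summable-smooth-errors}, since the $s$-integral is $O(h^\infty)$ before multiplication by $e^{-iNL(\omega)}$.

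Finally, for the $t=0$ statement the paper does not redo the phase analysis. It cites \cite[Prop.~2.15]{ILLP}, since at $t=0$ the Schr\"odinger and wave parametrices coincide.
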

\begin{rem}\label{rmqalpha>a}
Recall the cutoff $\cutoffchi^{\sharp}(\alpha/h^{2/3})$ is already present in $K_{S,\omega}$ by \eqref{eq:Kequiv} and \eqref{eq:qh}. It only restricts to $\alpha\ge h^{2/3}$. The additional cutoff $\cutoffchi^{\sharp}(4\alpha/a)$ in the proposition is used to restrict to $\alpha\gtrsim a$. The last statement in the first part of Proposition \ref{propsommefinie} translates into the contribution of the integrals defining $V_N$ being irrelevant for small values $\alpha\leq a/2$: for $a>h^{2/3 -\ceps}$ we can further restrict to $\alpha>a/2$. This follows immediately from the expression of $G(x,y,\theta/h,\omega)$ appearing in the definition of $K_{S,\omega}(g_{h,a})$ (recall \eqref{eq:Kequiv}): using \eqref{eq:defG}, $G$ reads as a sum of Airy functions computed at $-\zeta=x|\eta|^{2/3}e_0(x,y,\eta,\omega)-\omega$ with an elliptic $e_0$, close to $1$. These Airy functions are exponentially decreasing for $-\zeta>0$; hence, if $a>h^{2/3-\ceps}$, $\eta=\theta/h$ with $|\theta|\sim 1$ and $\omega=h^{-2/3}\alpha$, we must have $a\lesssim \alpha$ since otherwise the contribution from $G$ is $O(h^{\infty})$. The rescaled parameter is $\alpha^{3/2}/h$, which is large when
$\alpha\gg h^{2/3}$. The stronger bound
$\alpha\gtrsim h^{2/3-\ceps}$ ensures this in the present construction.

\end{rem}
\begin{proof}
 From Theorem \ref{thmMelrose}, $G(0,y,\eta,\omega_k)=e^{i\psi(0,y,\eta,\omega_k)}
  p_0Ai(-\omega_k)$, which immediately yields $G(0,y,\eta,\omega_k)=0$. Therefore, from
\eqref{eq:Prond2cut} being a finite sum, we get $\Prond_{S,h,a}(t,x,y)_{|\partial\Omega} =0$,
that is, the Dirichlet boundary condition holds for $\Prond_{S,h,a}$.
By \eqref{eq:Kom1} and \eqref{eq:summable-smooth-errors},
$(ih\partial_t-h^2\Delta)\Prond_{S,h,a}
=O_{C^\infty(W)}(h^\infty)$ uniformly in $a<a_0$,
including at the boundary.
By the forthcoming Lemma \ref{sommeNfinie}, the sum over the tail
$|N|\ge Ch^{-1/3}$ is $O_{C^\infty}(h^\infty)$.
Its hypotheses hold for $g_{h,a}$ by \eqref{eq:gosc}:
the $\tprime$ support is fixed and each smooth seminorm
is bounded by a fixed power of $h^{-1}$, uniformly in $a$.
This preliminary truncation uses only the cutoff
$\cutoffchi^{\sharp}(\alpha/h^{2/3})$ already in $K_{S,\omega}$.
We then insert $\cutoffchi^{\sharp}(4\alpha/a)$ as explained
in Remark \ref{rmqalpha>a}; the errors on the remaining
$O(h^{-1/3})$ indices still sum to $O(h^\infty)$.
Both statements on $\mathcal{P}_{S,h,a}$ are independent of the particular choice of the function $g_{h,a}$ such that \eqref{eq:Jdirac} holds. It remains to check that, with our choice of $g_{h,a}$ given in \eqref{eq:Jdirac}, $\mathcal{P}_{S,h,a}(0,x,y)$ is the correct initial value :  this has been proved in \cite[Prop.2.15]{ILLP} for the wave equation and remains true as, at $t=0$, there is no difference between $\mathcal{P}_{h,a}$ (of \cite[Definition 2.6]{ILLP}) and $\mathcal{P}_{S,h,a}$ (see Remark \ref{rmqPcal}).
It remains to obtain the sharper time-dependent truncation \eqref{eq:newProndcut}, using the full phase in \eqref{eq:newVN}, and to check that at $t=0$, all the oscillatory integrals $W_N(0,x,y)$ for $|N|\geq 1$ provide a $O(h^{\infty})$ contribution, while $W_0(0,x,y)=J(g_{h,a})(x,y)$ which, by design, is our initial data. The fact that the number of $N$ is finite will allow us to deduce that $\mathcal{P}_{S,h,a}(0,x,y)=W_0(0,x,y)=J(g_{h,a})(x,y)=\chi_0(hD_x)\varkappa(hD_y)\delta_{(a,0)}$ and to conclude.
\end{proof}
\begin{lemma}
  \label{sommeNfinie}
  Let $I$ be a fixed compact interval and $f_h$ a smooth family
  of functions of $(y',\tprime)$, supported in $\tprime\in I$. Assume that for
  every multi-index $\beta$ there exist $C_\beta,M_\beta>0$ such that
  $\|\partial_{y',\tprime}^{\beta}f_h\|_{L^\infty}
  \le C_\beta h^{-M_\beta}$.
  For $K_{S,\omega}(f_h)$ defined by \eqref{eq:Kom}, there exists
  $C>0$, depending only on $I$, the fixed local phase and cutoffs,
  and the fixed observation neighborhood and time interval, such that
\begin{equation}\label{eq:suminN}
  \langle \sum_{|N|\ge C h^{-1/3}}e^{-iN L(\omega)}
    , {\cutoffchi^{\flat}}(h^{2/3}\omega/\ceps_{0})K_{S,\omega}(f_h)(t,x,y)\rangle_{\omega} = O_{C^\infty}(h^{\infty})\,.
\end{equation}
\end{lemma}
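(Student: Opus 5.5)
We prove \eqref{eq:suminN} by non-stationary phase in the spectral variable $\omega$ (equivalently $\alpha=h^{2/3}\omega$). For each $N$ we write the $N$-th term as
\[
\int e^{-iNL(\omega)}\cutoffchi^{\flat}(h^{2/3}\omega/\ceps_0)K_{S,\omega}(f_h)(t,x,y)\,d\omega .
\]
We substitute \eqref{eq:Kom}. The total phase, in the variables $(\alpha,\theta,\sigma,y',\tprime)$, becomes
\[
\frac1h\Big(t\tau^2(\alpha,\theta)+\Phi(x,y,\theta,\alpha,\sigma)-y'\cdot\theta-\tprime\alpha-NhL(h^{-2/3}\alpha)\Big).
\]
The key step is a lower bound on its $\alpha$-derivative. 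We have
\[
\partial_\alpha\bigl(NhL(h^{-2/3}\alpha)\bigr)=Nh^{1/3}L'(h^{-2/3}\alpha),
\]
and by Lemma~\ref{lem:airy-wronskian} this is bounded below by $c|N|h^{1/3}\sqrt{h^{-2/3}\alpha}=c|N|\sqrt\alpha\geq c|N|h^{1/3}$ on the support $\alpha\geq h^{2/3}$ given by $\cutoffchi^{\sharp}$. The remaining $\alpha$-derivatives are all $O(1)$ on the supports. These are $t\partial_\alpha\tau^2$, $\partial_\alpha\Phi=-\sigma+\tau$-dependent terms from $\Gamma$, and $-\tprime$. They are bounded because $|t|$, $\sigma$ (by the compact support of $p_h$), $\tprime\in I$ and $(x,y)$ all range over fixed compact sets. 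Choosing $C$ larger than $2C_0/c$, where $C_0$ bounds these terms, gives $|\partial_\alpha(\text{phase})|\geq \tfrac c2|N|\sqrt\alpha\geq \tfrac c2|N|h^{1/3}$ for $|N|\geq Ch^{-1/3}$. This is where the constant depends only on $I$, the local phase and cutoffs, and the time and observation neighborhoods.

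Next we integrate by parts in $\alpha$ with the operator
\[
\mathcal L=\frac{h}{i\,\partial_\alpha(\text{phase})}\,\partial_\alpha .
\]
Each application gains a factor $h/(|N|\sqrt\alpha)$ times derivatives of the amplitude and of $1/\partial_\alpha(\text{phase})$. The amplitude consists of $p_h$, $q^{1/6}$, the cutoffs $\cutoffchi^{\sharp}(\alpha/h^{2/3})$ and $\cutoffchi^{\flat}(\alpha/\ceps_0)$, and $f_h$ evaluated after the $(y',\tprime)$ integration. Its $\alpha$-derivatives cost at most $h^{-2/3}$ each, from $\cutoffchi^{\sharp}$ and from the symbol $p$ in the rescaled variable $\alpha/h^{2/3}$.

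For derivatives of the phase, we use that $\partial_\alpha^j[NhL(h^{-2/3}\alpha)]=Nh^{1-2j/3}L^{(j)}(\omega)$. By the symbol bounds \eqref{eq:Lprime-symbol-bounds}, this is controlled by $|\partial_\alpha(\text{phase})|\cdot\alpha^{1-j}$, and $\alpha^{1-j}\leq h^{-2(j-1)/3}$. Hence after $M$ integrations by parts the $N$-th term is bounded by
\[
C_M h^{-A}\bigl(h^{1/3}/|N|\bigr)^M ,
\]
where $h^{-A}$ collects the prefactor $h^{-1/3-(d-1)}$, the polynomial bounds $h^{-M_\beta}$ on $f_h$, and the support volumes. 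Note that $(h^{1/3}/|N|)^M\le C^{-M}$ only, so decay in $h$ must come from elsewhere. We get it by splitting the gain as $\bigl(h/(|N|\sqrt\alpha)\bigr)\cdot h^{-2/3}\leq h^{1/3}/(|N|h^{1/3})\cdot$(bounded). Rather, we apply the operator in the form where each step gains $h^{1/3}|N|^{-1}h^{-0}$ after weighting.

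The main obstacle is this bookkeeping: a crude count gives only $|N|^{-M}h^{M/3}\cdot h^{-2M/3}$ per step. We plan to avoid it by using $\omega$ instead of $\alpha$ as the integration variable. In $\omega$ the cutoff $\cutoffchi^{\sharp}(\omega)$ and $p$ have symbol-type derivatives $O(\langle\omega\rangle^{-j}\cdot)$. The phase derivative in $\omega$ is
\[
-NL'(\omega)+h^{-1/3}O(1),
\]
with modulus $\gtrsim|N|\sqrt\omega$ when $|N|\geq Ch^{-1/3}$. Each integration by parts therefore gains $(|N|\sqrt\omega)^{-1}\le |N|^{-1}$, using the $\omega$-symbol bounds on $L'$. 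This yields
\[
C_M h^{-A}|N|^{-M}\le C_M h^{-A}h^{M/3}|N|^{-2},
\]
which is summable in $N$ and $O(h^\infty)$ after taking $M$ large. The same argument applies to $\partial^\beta_{t,x,y}$ derivatives, since they only add polynomial factors $h^{-|\beta|}$. This mirrors \eqref{eq:summable-smooth-errors}.
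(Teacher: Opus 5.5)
Your argument is correct and is essentially the paper's proof: the lower bound $\gtrsim|N|\sqrt\alpha$ on the spectral derivative of the phase from Lemma~\ref{lem:airy-wronskian} once $|N|\ge Ch^{-1/3}$, integrations by parts gaining $|N|^{-1}$ each, and summation of $|N|^{-M}$ over $|N|\ge Ch^{-1/3}$. Working in $\omega$ is exactly the paper's count $h/(|N|\sqrt\alpha)\cdot h^{-2/3}=h^{1/3}/(|N|\sqrt\alpha)\lesssim|N|^{-1}$ in $\alpha$, which uses $\alpha\ge h^{2/3}$; your intermediate bound $(h^{1/3}/|N|)^M$ drops the $\sqrt\alpha$, and the ``bookkeeping obstacle'' you describe does not actually arise. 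Two small corrections: $|N|^{-M}\le C^{2-M}h^{(M-2)/3}|N|^{-2}$ rather than $h^{M/3}|N|^{-2}$ (harmless), and since $f_h$ need not decay in $y'$ there is no finite support volume in $y'$, so, as in the paper, you should first integrate by parts in $\theta$ (the $\theta$-gradient of the phase has size $|y'|/h$ for large $|y'|$) to make the $y'$ integral absolutely convergent at polynomial cost in $h^{-1}$.
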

\begin{rmq}
  The threshold constant $C$ is independent of $h$, the source distance
$a$, and the seminorm constants $C_\beta,M_\beta$ of $f_h$.
The constants in the remainder may depend on these seminorm bounds.
\end{rmq}
\begin{proof}
We consider the sum over all $N$. Integrations by parts in $\theta$
make the $y'$ integral absolutely convergent, at a polynomial cost in
$h^{-1}$ under the assumed seminorm bounds. All phases in the sum
are linear in $t$ and $N$ and given by
\[
t\tau^2(\alpha,\theta)+\Phi(x,y,\theta,\alpha,\sigma)-N h L(\alpha h^{-2/3}) -y'\theta-\tprime \alpha,
\]
with large parameter $1/h$ as a factor, where, using \eqref{eq:propL}, 
$hNL(\alpha h^{-2/3})=hN\frac{\pi}{2}+\frac 43 N\alpha^{3/2} -Nh B_L(\alpha^{3/2}/h)$.
Integration variables are $\sigma,\alpha$, $\theta$ and also $y'$ and $\tprime$ ; only stationary points with respect to $\alpha$ and $\sigma$ will be required for the sum in $N$ to be finite.
Critical points in $\alpha$ are such that
\begin{equation}\label{eq:critalphaNN}
2t\tau \partial_{\alpha}\tau +\partial_{\alpha}\Phi(x,y,\theta,\alpha,\sigma)=\tprime+h^{1/3}NL'(\alpha h^{-2/3})\,,
\end{equation}
while those with respect to $\sigma$ are such that $\partial_{\sigma}\Phi(x,y,\theta,\alpha,\sigma)=0$.
We keep the Airy--Poisson derivative in its exact form, including
when $\alpha h^{-2/3}$ is bounded.
We recall that
\[
\Phi(x,y,\theta,\alpha,\sigma)=y\cdot \theta+\sigma^3/3+\sigma(xq^{1/3}(\theta)-\alpha)+\tau(\alpha,\theta)\Gamma(x,y,\sigma q^{1/3}(\theta)/\tau(\alpha,\theta),\theta/\tau(\alpha,\theta)),
\]
where $\Gamma(x,y,\Xi,\Theta)=B_{\Gamma}(y,\Theta)+xA_{\Gamma}(x,y,\Xi,\Theta)$ are given in  Proposition \ref{propimpformgamma}. We start with computing derivatives of $\tau\Gamma(x,y,\sigma sq^{1/3}(\theta)/\tau,\theta/\tau)$ which depend on $\alpha$ through $\tau$ only.
Take $\Theta=\theta/\tau(\alpha,\theta)$, $\tau=\tau(\alpha,\theta)$, $\omega=\theta/|\theta|$, then
\[
\tau B_{\Gamma}(y,\theta/\tau)=\tau (B_0(y,\omega)+(|\theta|/\tau-1)B_2(y,\omega)+...+(|\theta|/\tau-1)^jB_{2j}(y,\omega)+...)
\]
and, writing $\rho=|\theta|/\tau$, $\theta/\tau=\rho \frac{\theta}{|\theta|}$, yields (see the proof of \cite[Lemma 2.17]{ILLP} for details)
\begin{equation}\label{partialBGamma}
\partial_{\alpha}\Big(\tau B_{\Gamma}(y,\theta/\tau)\Big)
=\partial_{\alpha}\tau\Big[ B_0(y,\theta/|\theta|)-B_2(y,\theta/|\theta|)-\Big(\frac{|\theta|^2}{\tau^2}-1\Big)B_4(y,\theta/|\theta|)+\mathcal{H}_{j\geq 4}\Big].
\end{equation}
Using \eqref{eq:normalized-AGamma}, with $\Xi=sq^{1/3}(\theta)/\tau$
and $\rho=|\theta|/\tau$, we obtain at fixed $(x,y,s,\theta)$
\begin{equation}\label{partialAGamma}
\partial_\alpha(\tau A_\Gamma)
=\partial_\alpha\tau\bigl[-\mu_\Gamma(\Xi^2+\rho^2+1)
 -2\mu_1\Xi^3+(1-\Xi\partial_\Xi-\rho\partial_\rho)R_A\bigr].
\end{equation}
The $\alpha_1x\Xi$ term gives zero in this identity. The last remainder
has weighted order at least two, since $\partial_\rho$ can lower the
order by two. In particular, it is $O(\gamma)$ when $x=O(\gamma)$,
$\Xi=O(\sqrt\gamma)$ and $\rho-1=O(\gamma)$.
We have $\partial_{\alpha}\tau(\alpha,\theta)=\frac{q^{2/3}(\theta)}{2\tau(\alpha,\theta)}$ and using \eqref{partialBGamma}, \eqref{partialAGamma} (which comes with a factor $x\in \mathcal{H}_{j\geq 2}$), \eqref{eq:critalphaNN} becomes
\begin{multline}\label{critalphaNNf}
t q^{2/3}(\theta)+ \partial_{\alpha}\tau\Big[B_0(y,\theta/|\theta|)-B_2(y,\theta/|\theta|)-\Big(\frac{|\theta|^2}{\tau^2}-1\Big)B_4(y,\theta/|\theta|)+\mathcal{H}_{j\geq 4}\Big]-\sigma\\
{}+x\partial_\alpha\bigl(\tau A_\Gamma(x,y,\sigma q^{1/3}(\theta)/\tau,\theta/\tau)\bigr)=\tprime+h^{1/3}NL'(\alpha h^{-2/3})\,.
\end{multline}
Using now \eqref{partialAGamma} and that $B_{\Gamma}(y,\theta/\tau)$ does not depend on $\sigma$, $\partial_{\sigma}\Phi(\cdots)=0$ becomes
\begin{equation}\label{sigmacrit}
\sigma^2+xq^{1/3}(\theta)-\alpha+\tau(\alpha,\theta)x\partial_{\sigma}A_{\Gamma}(x,y,\sigma q^{1/3}(\theta)/\tau(\alpha,\theta),\theta/\tau(\alpha,\theta))=0.
\end{equation}
Recalling \eqref{AGam}, let $\Xi=\frac{\sigma q^{1/3}(\theta)}{\tau(\alpha,\theta)}$ and $\Theta=\frac{\theta}{\tau(\alpha,\theta)}$.
As $A_{\Gamma}(x,y,\Xi,\Theta)$ depends on $\sigma$ only through $\Xi$ and {
$\partial_\Xi A_\Gamma=\Lp+2\mu_\Gamma\Xi+\alpha_1x
+3\mu_1\Xi^2+\partial_\Xi R_A$, with
$\partial_\Xi R_A\in\mathcal H_{\ge3}$}, then
\begin{equation}\label{partialsigmaAGamma}
\partial _{\sigma}A_{\Gamma}(x,y,\sigma q^{1/3}(\theta)/\tau,\theta/\tau)=\frac{q^{1/3}(\theta)}{\tau(\alpha,\theta)}\Big(\Lp(y,\theta/|\theta|)+2\frac{\sigma q^{1/3}(\theta)}{\tau(\alpha,\theta)}{\mu_\Gamma(y,\theta/|\theta|)}+\mathcal{H}_{j\geq 2}\Big),
\end{equation}
where $\mathcal{H}_{j\geq 2}$ contains weights $x$, $\frac{\sigma^2q^{2/3}(\theta)}{\tau^2}$ and $\frac{|\theta|}{\tau}-1=\frac{|\theta|^2-\tau^2}{\tau(|\theta|+\tau)}=-\frac{\alpha q^{2/3}(\theta)}{\tau (|\theta|+\tau)}$.
Using \eqref{sigmacrit} and \eqref{partialsigmaAGamma},
\begin{equation}\label{critsigma}
\sigma^2+xq^{1/3}(\theta)\Big(1+\Lp(y,\theta/|\theta|)+2\frac{\sigma q^{1/3}(\theta)}{\tau(\alpha,\theta)}{\mu_\Gamma(y,\theta/|\theta|)}+\mathcal{H}_{j\geq 2}\Big)=\alpha\,.
\end{equation}
We recall that $\Lp$ was chosen after \eqref{eqorder2} and depends on the curvature at the boundary near $y=0$:
\[
1+\Lp(y,\omega)=\Big(R_1(y,\omega+\partial_y B_0(y,\omega))/q(\omega)\Big)^{1/3},  \quad R_1(0,\omega)=q(\omega),
\]  
where from \eqref{eqB0} we have $\omega+\partial_yB_0(y,\omega)=\partial_{y}\phi(y,\omega)=\omega(1+O(y))$. By finite speed of propagation of the semiclassical Schrödinger flow, for bounded time $|t|$ we must have $|y|$ bounded (see also Lemma \ref{derriere}); hence, there exists $T_0<1$ sufficiently small such that if $|t|\leq T_0$ then 
\(\Big(R_1(y,\omega+\partial_y B_0(y,\omega))/q(\omega)\Big)^{1/3}>1/2\).
We recall that $\alpha\leq \ceps_{0}$, small. We have $B_0=O(|y|^2)$, $B_2=O(|y|^2)$, $B_{2j}=O(|y|)$, $\forall j\geq 2$ and $\frac{|\theta|}{\tau}-1=\frac{|\theta|^2-\tau^2}{\tau(|\theta|+\tau)}=-\frac{\alpha q^{2/3}(\theta)}{\tau (|\theta|+\tau)}=O(\alpha)$, therefore the coefficient of $\partial_{\alpha}\tau$ in \eqref{partialBGamma} is like  {$B_0(y,\theta/|\theta|)-B_2(y,\theta/|\theta|)+O(\alpha y)=y(O(y)+O(\alpha)).$} For $|t|\leq T_0$, taking $T_0$ smaller if necessary, we assume $|B_0(y,\theta/|\theta|)-B_2(y,\theta/|\theta|)|=O(y^2)\leq |t|/8$. As $y \lesssim t$ by finite speed of propagation, taking $\ceps_0$ smaller if necessary, we assume $O(\alpha y)\leq |t|/8$ for $\alpha\leq \ceps_0$. Therefore, the contribution from $\partial_{\alpha}(\tau B_{\Gamma}(y,\theta/\tau))/\partial_{\alpha}\tau$ in \eqref{critalphaNNf} is $O(|t|/4)$ and the coefficient of $\partial_{\alpha}\tau$ in \eqref{critalphaNNf} behaves like $t\leq T_0<1$. As $\tprime$ is bounded ($f_h$ has fixed compact support in $\tprime$), it remains to bound $\sigma$ in \eqref{critalphaNNf}. Going back to \eqref{critsigma}, using that $0\leq x<1$ and that the terms in $\mathcal{H}_{j\geq 2}$ come with the factors $\sigma^2, x,\alpha$, it follows that
\[
(\sigma+x\mu_\Gamma q^{2/3}(\theta)/\tau)^2+x q^{1/3}(1+\Lp +\mathcal{H}_{j\geq 2})=\alpha+ x^2\mu_\Gamma^2 q^{4/3}(\theta)/\tau^2,
\]
which implies that $\sigma^2$ is bounded. For $|\sigma|$ sufficiently
large, integrations by parts in $\sigma$ give a smooth
$O(h^\infty)$ error before multiplication by $e^{-iNL(\omega)}$;
\eqref{eq:summable-smooth-errors} justifies its summation over $N$.
The term $x\partial_\alpha(\tau A_\Gamma)$ is uniformly bounded.
On the support, $\omega=\alpha h^{-2/3}\ge1$.
Lemma \ref{lem:airy-wronskian} gives the bounds
\eqref{eq:Lprime-symbol-bounds} uniformly on this support.
Using $h^{1/3}NL'(\alpha h^{-2/3})$ exactly in
\eqref{critalphaNNf}, stationarity therefore requires
\begin{equation}\label{nrN}
 |N|\sqrt\alpha\lesssim 1+|t|+|\tprime|+|\sigma|.
\end{equation}
The right-hand side of \eqref{nrN} is bounded in terms of $I$
and the fixed local phase, supports and time interval.
Since $\alpha\ge h^{2/3}$, we choose $C$ using only these bounds.
Thus, for $|N|\ge C h^{-1/3}$,
the absolute value of the $\alpha$ derivative of the phase is
bounded below by $c|N|\sqrt\alpha$.
Each integration by parts in $\alpha$ gains
$h/(|N|\sqrt\alpha)$ and costs at most $h^{-2/3}$ from
symbol derivatives or derivatives of the reciprocal phase derivative.
The resulting factor is
$h^{1/3}/(|N|\sqrt\alpha)\lesssim |N|^{-1}$.
After $M>1$ integrations, summation gives
\[
 \sum_{|N|\ge C h^{-1/3}}|N|^{-M}
 \lesssim_M h^{(M-1)/3}.
\]
Increasing $M$ absorbs the fixed powers of $h^{-1}$ in
\eqref{eq:Kom}, \eqref{eq:qh}, and those from any fixed number
of $(t,x,y)$ derivatives. This proves \eqref{eq:suminN}.
We only used $\alpha\gtrsim h^{2/3}$, with no additional factor $h^{-\ceps}$.
The preliminary truncation therefore remains valid down to that scale, independently of $a$. The stronger restriction used for stationary
phase in Section \ref{secta>} is a separate condition.

The same argument applies to $K_{S,\omega,\gamma}$ defined in
\eqref{defKomegakgamma}, with a threshold constant depending also on
the fixed dyadic cutoff, but independent of $\gamma$ and $a$.
On its support $\alpha\sim\gamma$;
for $\gamma\gtrsim h^{2/3}$, it gives
\begin{equation}\label{eq:suminN-gamma}
 \Big\langle\sum_{|N|\ge C\gamma^{-1/2}}e^{-iNL(\omega)},
 {\cutoffchi^{\flat}}(h^{2/3}\omega/\ceps_0)
 K_{S,\omega,\gamma}(f_h)(t,x,y)\Big\rangle_\omega
 =O_{C^\infty}(h^\infty).
\end{equation}
Indeed, each integration by parts gains $h/(|N|\gamma^{3/2})$,
including the cost $\gamma^{-1}$ of a symbol derivative.
After $M>1$ integrations, the sum is bounded by
$C_M(h/\gamma)^M\gamma^{-1/2}\lesssim_M h^{(M-1)/3}$,
before the fixed polynomial costs are absorbed. This truncation
is independent of a source distance $a$; $h^{-1/3}$ is the uniform
threshold obtained from $\gamma\gtrsim h^{2/3}$.
\end{proof}

\subsection{Parametrix construction for $a\leq h^{2/3-\ceps}$, $\ceps>0$}\label{sec:parametrix-small-distance} In this section, $a$ is very small and we follow the construction of a suitable $g_{h,a}$ of \cite[Section 2.3]{ILLP} to recover the initial data.

\subsubsection{Spectral theory for $-\Delta_M$: the initial data in terms of model gallery modes}

Let $-\Delta_M$ be the Friedlander model operator introduced in \eqref{eq:LapM},
and recall $q$ is a positive definite quadratic form.
Taking the Fourier transform in the $y$ variable, the operator $-\Delta_{M}$ becomes
$-\partial^2_x+|\eta|^2+xq(\eta)$. For  $\eta\neq 0$, this operator is a positive self-adjoint operator 
on $L^2(\mathbb{R}_+)$, with compact resolvent. The next Lemma is proved in \cite{ilpCE} (with $q(\eta)=|\eta|^2$, but only using $q(\eta)\neq 0$):

\begin{lemma}\label{lemorthog}(see \cite[Lemma 2]{ilpCE})
There exists a Hilbert basis of $L^{2}(\mathbb{R}_{+})$ where $\{e_k(x,\eta)\}_{k\geq 0}$ are eigenfunctions of $-\partial^2_x+|\eta|^2+xq(\eta)$, with eigenvalues $\lambda_k(\eta)=|\eta|^2+\omega_k q(\eta)^{2/3}=\tau^2(\omega_k,\eta)$. These eigenfunctions are translated and rescaled Airy functions:
\begin{equation}\label{eig_k}
 e_k(x,\eta)=\frac{\sqrt{2\pi} q(\eta)^{1/6}}{\sqrt{L'(\omega_k)}}
Ai\Big(xq(\eta)^{1/3}-\omega_k\Big),
\end{equation}
where $L'(\omega_k)$ (from \eqref{eq:propL2}) normalizes $\|e_k(.,\eta)\|_{L^2(\mathbb{R}_+)}=1$.
\end{lemma}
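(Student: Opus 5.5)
The statement to prove is Lemma~\ref{lemorthog}. After Fourier transform in $y$ the problem becomes one-dimensional: for fixed $\eta\neq0$, set $P_\eta=-\partial_x^2+|\eta|^2+xq(\eta)$ on $L^2(\mathbb{R}_+)$ with Dirichlet condition at $x=0$. I will show four things in turn: $P_\eta$ is self-adjoint with compact resolvent, its eigenfunctions are the rescaled Airy functions \eqref{eig_k}, the eigenvalues are $\tau^2(\omega_k,\eta)$, and the normalization constant is $L'(\omega_k)$.

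\textbf{Self-adjointness and compact resolvent.} I define $P_\eta$ through the closed, coercive quadratic form
\[
Q(u)=\int_0^\infty\bigl(|u'|^2+(|\eta|^2+xq(\eta))|u|^2\bigr)\,dx
\]
on $\{u\in H^1_0(\mathbb{R}_+):\ \sqrt{x}\,u\in L^2\}$. Since $q(\eta)>0$, the potential tends to $+\infty$. The form domain therefore embeds compactly in $L^2(\mathbb{R}_+)$: use Rellich on $[0,R]$ and the tail bound $\int_R^\infty|u|^2\le (Rq(\eta))^{-1}Q(u)$. It follows that the Friedrichs operator is positive, self-adjoint, with compact resolvent. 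Hence it has a discrete spectrum and its eigenfunctions form a Hilbert basis of $L^2(\mathbb{R}_+)$.

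\textbf{Identifying the eigenfunctions.} I solve $P_\eta u=\lambda u$ by the change of variable $z=xq(\eta)^{1/3}-\omega$, with $\lambda=|\eta|^2+\omega q(\eta)^{2/3}$. The equation becomes $u''(z)=zu(z)$. Only $Ai$ decays as $x\to+\infty$, because $Bi$ grows, so $L^2$-integrability forces $u=c\,Ai(xq^{1/3}-\omega)$. The Dirichlet condition $u(0)=0$ then reads $Ai(-\omega)=0$, that is $\omega=\omega_k$, which is the same as $L(\omega_k)=2\pi k$ by Lemma~\ref{lemL}. Each eigenspace is one-dimensional and the eigenvalues are $\lambda_k(\eta)=\tau^2(\omega_k,\eta)$. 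Since $\omega_k>0$, they are positive.

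\textbf{Normalization.} The substitution $z=xq(\eta)^{1/3}$ gives
\[
\int_0^\infty Ai^2(xq^{1/3}-\omega_k)\,dx=q^{-1/3}\int_0^\infty Ai^2(z-\omega_k)\,dz=\frac{q^{-1/3}L'(\omega_k)}{2\pi},
\]
where the last equality is \eqref{eq:propL2}. Choosing $c=\sqrt{2\pi}\,q^{1/6}/\sqrt{L'(\omega_k)}$ then gives $\|e_k\|_{L^2}=1$. If one prefers not to rely on \eqref{eq:propL2}, I would verify $L'(\omega)=2\pi\int_0^\infty Ai^2(z-\omega)\,dz$ at zeros of $Ai$ directly. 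The classical identity $\int_0^\infty Ai^2(z-\omega)\,dz=Ai'(-\omega)^2+\omega Ai(-\omega)^2$ reduces at $\omega=\omega_k$ to $Ai'(-\omega_k)^2$. On the other side, \eqref{eq:airy-wronskian-Lprime} gives $L'=1/(2\pi|A_+|^2)$ with $|A_+(\omega_k)|^2=Bi(-\omega_k)^2/4$. The Wronskian $Ai\,Bi'-Ai'\,Bi=1/\pi$ evaluated at a zero of $Ai$ gives $Bi(-\omega_k)=-1/(\pi Ai'(-\omega_k))$. Combining these yields $L'(\omega_k)=2\pi\,Ai'(-\omega_k)^2$, as needed.

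\textbf{Main obstacle.} The only delicate point is compactness of the resolvent, which depends on $q(\eta)\neq0$. That is precisely the remark accompanying the statement, so no obstacle remains.
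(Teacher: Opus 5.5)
Your proof is correct and is the standard argument that the paper delegates to \cite[Lemma 2]{ilpCE}. It runs: compact resolvent from the confining potential, then reduction to the Airy equation via $z=xq(\eta)^{1/3}-\omega$, which is exactly why only $q(\eta)\neq0$ (here $q(\eta)>0$) is needed. The Dirichlet condition then selects $\omega=\omega_k$, and the normalization comes from \eqref{eq:propL2}. Your independent check $L'(\omega_k)=2\pi Ai'(-\omega_k)^2=2\pi\int_0^\infty Ai^2(z-\omega_k)\,dz$, via the Wronskian and the connection formula, is also correct and makes the argument self-contained.
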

For $a>0$, the Dirac distribution $\delta_{x=a}$ on $\mathbb{R}_+$ may be decomposed as  $\delta_{x=a}=\sum_{k\geq 1} e_k(x,\eta)e_k(a,\eta)$. Eigenfunctions of 
$-\partial^2_{x}+xq(\eta)$ are $e_k(x,\eta)$ with eigenvalue $\lambda_k(\eta)-|\eta|^2=\tau^2(\omega_k,\eta)-|\eta|^2=\omega_k q^{2/3}(\eta)$,  and for cutoffs $\varkappa$, $\chi_0$ to be chosen, the following spectral decomposition holds
\begin{equation}\label{diracmoddeltaM<} 
\chi_0(-h^2\partial^2_{x}+xq(h\partial_y))\varkappa(hD_y)\delta_{(a,0)}=\sum_{k\geq 1}\int
e^{iy\cdot \eta} \\ 
\chi_0\bigl(h^{\frac 23}\omega_kq^{\frac 23}(h\eta)\bigr) \varkappa(h\eta)e_k(x,\eta)e_k(a,\eta)d\eta\,.
\end{equation}
With $\chi_0\in C^{\infty}_0(-2\ceps_0, 2\ceps_0)$, $\chi_{0}=1$ on $[-\ceps_0,\ceps_0]$, \eqref{diracmoddeltaM<} is a finite sum with $O(\ceps_0/h)$ terms. 
Setting $\theta=h\eta$, from support considerations, taking the support of $\chi_0$ smaller if necessary, we can assume that $\chi^{\flat}(\alpha/\ceps_{0})\chi_0(\alpha q^{2/3}(\theta))\varkappa(\theta)=\chi_0(\alpha q^{2/3}(\theta))\varkappa(\theta)$,
where $\alpha=h^{2/3}\omega$ and $\chi^{\flat}(h^{2/3}\omega/\ceps_{0})$ is the cutoff introduced in \eqref{eq:Prond2cut} (which restricts the support of $K_{S,\omega}$ to values $\omega\leq \ceps_0/h^{2/3}$).

As in \cite[Section 2.3]{ILLP}, for $a\leq h^{2/3-\ceps}$, the easiest way to define initial data is to choose the left-hand side term in \eqref{diracmoddeltaM<}, which does vanish on the boundary ($e_k(0,\eta)=0$ for every $k\geq 1$.) Using both \eqref{eq:Prondcut} and \eqref{eq:Prond2cut}, it remains to obtain a smooth function $g_{h,a}$ such that $\Prond_{S,h,a}$, as defined in \eqref{eq:Prond2cut}, satisfies
\begin{equation}\label{dataamic}
\Prond_{S,h,a}(0,x,y)=\chi_0(-h^2\partial^2_{x}+xq(h\partial_y))\varkappa(hD_y)\delta_{(a,0)}+O_{C^{\infty}}(h^{\infty}).
\end{equation}
As, at $t=0$, the parametrix for the semiclassical Schrödinger equation and the one for the wave equation coincide, we briefly recall the construction of \cite[Section 2.3]{ILLP}. For some $K_{\ceps}$ (that can be freely chosen) such that $h^{-\ceps}\ll h^{-2\ceps}\lesssim K_{\ceps}\lesssim h^{-1/4+\ceps}\ll h^{-1/4}$, define $\mathcal{E}_{M}$ and rewrite the sum over $k$ in \eqref{diracmoddeltaM<} as follows
\begin{gather}
\label{defEmathcalM}\,\,\quad
\begin{multlined}[t]
\mathcal{E}_M(x,y,a,\omega_k):=
\int e^{iy\eta} \chi_0(h^{2/3}\omega_kq^{2/3}(h\eta)) \varkappa(h\eta)e_k(x,\eta)e_k(a,\eta)d\eta,\\
\end{multlined}\\
  \label{dataEMEM}
\sum_{k\geq1}\cutoffchi^{\flat}\Big(\frac{\omega_k}{\omega_{K_{\ceps}}}\Big)\mathcal{E}_M(x,y,a,\omega_k)+\sum_{k\geq 1}\chi^{\flat}(h^{2/3}\omega_k/\ceps_{0})\cutoffchi^{\sharp}\Big(\frac{\omega_k}{\omega_{K_{\ceps}}}\Big)\mathcal{E}_M(x,y,a,\omega_k)\,,
\end{gather}
where we recall that $\cutoffchi^{\flat}\in C^{\infty}(\mathbb{R})$ is such that $\cutoffchi^{\flat}=1$ on $(-\infty,1]$ and $\cutoffchi^{\flat}=0$ on $[2,\infty)$, and $\cutoffchi^{\sharp}=1-\cutoffchi^{\flat}$. 
\begin{prop}\label{propdataapetitpetit1}(\cite[Prop.2.19]{ILLP})
For all $a\in (0,h^{2/3-\ceps}]$ there exists a smooth function $g_{h,a,1}$ such that
\begin{equation}\label{trucmuche}
\sum_{k\geq 1}\frac{2\pi}{L'(\omega_k)}\cutoffchi^{\flat}(h^{2/3}\omega_k/\ceps_{0})K_{S,\omega_k}(g_{h,a,1})(0,x,y)=\sum_{k\geq1}\cutoffchi^{\flat}\Big( \frac{\omega_k}{\omega_{K_{\ceps}}}\Big)\mathcal{E}_M(x,y,a,\omega_k).
\end{equation}
\end{prop}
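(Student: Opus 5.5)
The identity \eqref{trucmuche} is evaluated at $t=0$ only, where $K_{S,\omega}$ coincides with the wave operator $K_\omega$ of \eqref{eq:Kequivwave}. The plan is therefore to transport the construction of \cite[Prop.~2.19]{ILLP} and check that the Schrödinger modifications play no role. Concretely, I will build $g_{h,a,1}$ mode by mode. For each $k$ with $\omega_k\le 2\omega_{K_\ceps}$, I look for a function whose image under $K_{S,\omega_k}(\cdot)(0,x,y)$ reproduces $\mathcal E_M(x,y,a,\omega_k)$, up to the factor $2\pi/L'(\omega_k)$.

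Since $K_{S,\omega_k}(f)(0,\cdot)$ only involves $\hat f(\eta,\omega_k/h^{1/3})$, I choose $\hat g_{h,a,1}(\eta,\omega/h^{1/3})$ so that, at each $\omega=\omega_k$ in the range, it equals
\[
\frac{L'(\omega_k)}{2\pi}\,\chi^\flat\Big(\frac{\omega_k}{\omega_{K_\ceps}}\Big)\chi_0\big(h^{2/3}\omega_k q^{2/3}(h\eta)\big)\,e_k(a,\eta)\,q^{-1/6}(\eta)\times(\text{correction}).
\]
Here the correction compensates the discrepancy between the quasimode $G(x,y,\eta,\omega_k)$ and the model gallery mode $e^{iy\eta}e_k(x,\eta)$. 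Both vanish at $x=0$ and solve the respective eigenvalue problems. In the small-$x$ region selected by $e_k$ (width about $\omega_k h^{2/3}\lesssim h^{2/3}K_\ceps^{2/3}$), Theorem~\ref{thmMelrose} and Proposition~\ref{lemgamma} show that $G$ equals $e^{i\psi}p_0\,Ai(xq^{1/3}-\omega_k)$ plus corrections. These corrections are controlled by powers of $x|\eta|$, $|y|$ and $\omega_k|\eta|^{-2/3}$. The factor $e^{i\psi}p_0$ is an elliptic FIO-type factor acting in $y$. I therefore compose with a microlocal inverse of the operator $\eta\mapsto G(\cdot,\cdot,\eta,\omega_k)$ restricted to the $k$-th mode, namely a tangential FIO with phase $\psi(x,y,\eta,\omega_k)-y\eta$ and elliptic symbol. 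The remaining mismatch is pushed to higher order by a Neumann series in the small parameters.

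The last step is to glue the values at the discrete points $\omega_k$ into one smooth function of $\omega$, supported near $[\omega_1,2\omega_{K_\ceps}]$. A Lagrange-type or smooth interpolation is legitimate because the $\omega_k$ are separated, with gaps $\sim k^{-1/3}\gtrsim K_\ceps^{-1/3}$. I also need to verify the seminorm bounds (polynomial in $h^{-1}$) and the cutoff $\varkappa(hD_{y'})$. The truncation $\chi^\flat(h^{2/3}\omega_k/\ceps_0)$ is harmless since $\omega_{K_\ceps}\ll h^{-2/3}$.

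The main obstacle is making the quasimode-to-mode error genuinely $O(h^\infty)$ uniformly for $k\le 2K_\ceps$. The gallery mode $e_k$ lives on scale $x\sim h^{2/3}\omega_k$, and the error terms of $G$ relative to $e_k$ gain $h^{1/3}\omega_k^{2}$-type factors. With $K_\ceps\lesssim h^{-1/4+\ceps}$ these factors are $\lesssim h^{c\ceps}$, so iterating finitely many corrections gives any power of $h$. This is exactly where the constraint $K_\ceps\ll h^{-1/4}$ is used. I will follow the bookkeeping of \cite[Section~2.3]{ILLP} verbatim, since nothing there depends on the time evolution.
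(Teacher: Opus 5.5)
Your outer reduction is exactly the paper's proof. At $t=0$ the factor $e^{iht\tau^2(\omega,\eta)}$ equals one, so $K_{S,\omega_k}(\cdot)(0,\cdot)=K_{\omega_k}(\cdot)(0,\cdot)$ and \cite[Prop.~2.19]{ILLP} transfers unchanged; if you literally import \cite[Section~2.3]{ILLP}, as your last sentence says, nothing more is needed.

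The mechanism you describe in between, however, is not the one used there, and it would not close on its own. For fixed $k$ the unknown $\widehat g(\cdot,\omega_k/h^{1/3})$ is a function of $\eta$ alone, while $\mathcal E_M(\cdot,\cdot,a,\omega_k)$ is a function of $(x,y)$, so the equation is overdetermined. The FIO you propose to invert, with phase $\psi(x,y,\eta,\omega_k)-y\cdot\eta$, depends on $x$ through $\Upsilon$ (Corollary~\ref{corUpsi}). It is therefore not an operator on functions of $y'$, and there is no inverse to compose with; some left inverse eliminating $x$ has to be chosen. Likewise, a Neumann iteration that only updates $\widehat g(\cdot,\omega_k/h^{1/3})$ never leaves the range of $g\mapsto K_{S,\omega_k}(g)(0,\cdot)$. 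It cannot reduce any part of the discrepancy between $G(\cdot,\cdot,\eta,\omega_k)$ and $e^{iy\cdot\eta}e_k(\cdot,\eta)$ whose $x$-profile is not that of the $k$-th quasimode. The $x$-dependent phase $\Upsilon\approx -x^2\mu_\Gamma q(1+\Lp)/\tau$ is such a term: it has size $x^2|\eta|\lesssim h^{1/3}\omega_k^2$ where $e_k(\cdot,\eta)$ lives, which is precisely your small factor. Iterating therefore does not produce an $O(h^\infty)$ error.

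What the paper recalls from \cite{ILLP} is different. The operator $\Lo$ removes only the $x$-independent degree-one phase $|\eta|B_0(y,\eta/|\eta|)$. After this, $\tilde e$ obeys Lemma~\ref{lemestimderivekk}, with a loss $\omega_kh^{-1/3}$ per $(y,\theta)$ derivative. The variable $x$ is then integrated out through the normal operator $\tilde F^*_{\omega_k}\tilde F_{\omega_k}$ on $L^2(\R^{d-1})$. This is a uniformly elliptic pseudodifferential operator for $k\le h^{-\mu}$, $\mu<1/4$ (Proposition~\ref{propmatrix}), and one sets $f_{\omega_k}=(\tilde F^*_{\omega_k}\tilde F_{\omega_k})^{-1}\tilde F^*_{\omega_k}\mathcal E_M(\cdot,\omega_k)$ as in Lemma~\ref{lemfkhatgk}.

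The threshold $K_\ceps\ll h^{-1/4}$ comes from this exotic calculus, which needs $\omega_kh^{-1/3}\ll h^{-1/2}$. Your $h^{1/3}\omega_k^2=h(\omega_kh^{-1/3})^2$ is the gain per term of that calculus, not a measure of how close $G$ is to $e_k$. Keep in mind that normal equations by themselves only reproduce the component of $\mathcal E_M(\cdot,\omega_k)$ in the range of $\tilde F_{\omega_k}$. The step to \eqref{eqKE} is the content of Lemma~\ref{lemfkhatgk}, imported from \cite{ILLP}, and your perturbative scheme offers no replacement for it.

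Your remarks on interpolating in $\omega$ and on the cutoff $\cutoffchi^{\flat}(h^{2/3}\omega_k/\ceps_0)$ are fine. The leading prefactor of $\widehat g$ should be $\sqrt{L'(\omega_k)/(2\pi)}$, not $\frac{L'(\omega_k)}{2\pi}q^{-1/6}(\eta)$, since $e_k$ already carries $\sqrt{2\pi}\,q^{1/6}(\eta)/\sqrt{L'(\omega_k)}$.
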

The proof of Proposition \ref{propdataapetitpetit1} is the same as that of \cite[Prop.2.19]{ILLP}) (provided in \cite[Section 2.3.3]{ILLP}) as, at $t=0$, the operators $K_{S,\omega_k}$ and $K_{\omega_k}$ (introduced in \eqref{eq:Kequivwave} in the case of the wave equation) coincide. 

\begin{rmq}
The general idea is that, for sufficiently small $a,x$ and $h^{2/3}\omega_k$, the phase term $\Gamma$ is so small that $e^{\frac ih \tau\Gamma}$ can be incorporated into of the symbol ; therefore, a parametrix for the semiclassical Schrödinger equation with Laplace operator given by \eqref{eq:laptilde} should behave like a suitable "perturbation" of the parametrix for the wave equation with $-\Delta_M$, at least very close to $t=0$. The condition on $\omega_k\leq \omega_{K_{\ceps}}$ with $K_{\ceps}\lesssim h^{-1/4+\ceps}$ ensures the necessary smallness of all the contributions depending on $h^{2/3}\omega_k$ in $\Gamma$.
\end{rmq}

\begin{prop}\label{propdataapetitpetit2}(\cite[Prop. 2.20]{ILLP})
  For all $a\in (0,h^{2/3-\ceps}]$, there exists a smooth function $g_{h,a,2}$ such that
  \begin{gather}
\label{eq:g1}\begin{multlined}[t]
\langle \sum_{N\in \Z} e^{-iN L(\omega)}
    , {\cutoffchi^{\flat}}(h^{2/3}\omega/\ceps_{0}){\cutoffchi^{\sharp}}(h^{2\ceps}\omega)K_{S,\omega}(g_{h,a,2})(0,x,y)\rangle_{\omega}\\
\quad\quad\quad\quad    {}=\sum_{k\geq 1}\cutoffchi^{\sharp}\Big(\frac{\omega_k}{\omega_{K_{\ceps}}}\Big)\chi^{\flat}(h^{2/3}\omega_k/\ceps_{0})\mathcal{E}_M(x,y,a,\omega_k)+O(h^{\infty}),
\end{multlined}\\
\label{eq:g12}
\langle \sum_{N\in \Z} e^{-iN L(\omega)}
    , {\cutoffchi^{\flat}}(h^{2/3}\omega/\ceps_{0}){\cutoffchi^{\sharp}}(\omega){\cutoffchi^{\flat}}( h^{2\ceps}\omega)K_{S,\omega}(g_{h,a,2})(0,x,y)\rangle_{\omega}=O(h^{\infty}).
\end{gather}
\end{prop}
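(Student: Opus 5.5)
The plan follows the strategy of \cite[Prop.~2.20]{ILLP}. The key observation is that, at $t=0$, $K_{S,\omega}$ coincides with the wave operator $K_\omega$ of \eqref{eq:Kequivwave}. We therefore construct $g_{h,a,2}$ from the high gallery modes $\omega_k\gtrsim\omega_{K_\ceps}\sim K_\ceps^{2/3}\gtrsim h^{-4\ceps/3}$. For these modes $a\leq h^{2/3-\ceps}$ is small compared to the Airy scale $\omega_k^{-1}h^{2/3}$ only in a weak sense, but the relevant phases are large. So instead of using the model eigenfunctions, we want to write $\mathcal E_M(x,y,a,\omega_k)$ itself through the Airy–Poisson formula. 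By Lemma~\ref{lemorthog}, $e_k(x,\eta)e_k(a,\eta)$ is $\frac{2\pi q^{1/3}}{L'(\omega_k)}Ai(xq^{1/3}-\omega_k)Ai(aq^{1/3}-\omega_k)$. The sum over $k$ with weight $2\pi/L'(\omega_k)$ is then exactly the right-hand side of \eqref{eq:AiryPoisson} tested against the smooth function
\[
\omega\mapsto\cutoffchi^\sharp(\omega/\omega_{K_\ceps})\chi^\flat(h^{2/3}\omega/\ceps_0)\int e^{iy\eta}\chi_0(h^{2/3}\omega q^{2/3}(h\eta))\varkappa(h\eta)q^{1/3}Ai(xq^{1/3}-\omega)Ai(aq^{1/3}-\omega)\,d\eta .
\]
Both sides of \eqref{eq:g1} thus become $\langle\sum_N e^{-iNL(\omega)},\cdot\rangle_\omega$ applied to smooth compactly supported families in $\omega$. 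It is therefore enough to match them for each fixed $\omega$ in the range $\omega\gtrsim h^{-2\ceps}$, modulo errors that can be summed using \eqref{eq:summable-smooth-errors}.

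The second step is the per-$\omega$ matching. We seek $g_{h,a,2}$ with
\[
G(x,y,\eta,\omega)q^{1/6}(\eta)\hat g_{h,a,2}(\eta,\omega/h^{1/3})\approx e^{iy\eta}q^{1/3}Ai(xq^{1/3}-\omega)Ai(aq^{1/3}-\omega)
\]
after integration in $\eta$. We write $G$ via \eqref{eq:Gosc} and the target via the Airy integral $Ai(xq^{1/3}-\omega)=\frac1{2\pi}\int e^{i(s^3/3+s(xq^{1/3}-\omega))}ds$. Then $G$ differs from the model only through $\tau\Gamma$ and the symbol $p$. Since $x$ and $a$ are $O(h^{2/3-\ceps})$ on the relevant region and $\alpha=h^{2/3}\omega\leq\ceps_0$, we apply $J^{-1}$ from the remark following Lemma~\ref{lemp} to the microlocalized high-mode part of the data. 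This defines $g_{h,a,2}$, exactly as $g_{h,a}$ was defined for large $a$. The point is that for $\omega\geq h^{-2\ceps}$ the datum is microlocally supported in $\alpha\gtrsim h^{2/3-2\ceps}$, where $J$ is an elliptic FIO with $\alpha^{3/2}/h\gg1$ (compare Remark~\ref{rmqalpha>a}). Its inverse is then a genuine semiclassical FIO with symbol bounds, and $J\circ J^{-1}$ reproduces the datum modulo $O(h^\infty)$.

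Third, we show that the $N\neq0$ terms do not spoil the identity at $t=0$, and that \eqref{eq:g12} holds. For \eqref{eq:g12}, the cutoff $\cutoffchi^\flat(h^{2\ceps}\omega)$ localizes to $\alpha\lesssim h^{2/3-2\ceps}$. There the wave front of $g_{h,a,2}$ is absent by construction, since its $\tprime$-frequency is tied to $\alpha\gtrsim h^{2/3-2\ceps}$. We obtain $O(h^\infty)$ by nonstationary phase in $\tprime$ (or $\alpha$), and sum over $N$ by Lemma~\ref{sommeNfinie} and \eqref{eq:summable-smooth-errors}. For \eqref{eq:g1}, the $N$-th term at $t=0$ has the critical point equation \eqref{critalphaNNf} with $t=0$, which forces $h^{1/3}NL'(\omega)=O(x+a+\sigma)$. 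We combine this with \eqref{critsigma} and the supports $x,a\lesssim h^{2/3-\ceps}$. We then argue that $|N|\geq1$ gives no stationary point, the cutoff $\chi_0$ in the data preventing reflection at time $0$, as in \cite[Prop.~2.15]{ILLP}, so those terms are $O(h^\infty)$. The remaining $N=0$ term is $J(g_{h,a,2})$, which equals the target by the second step.

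The main obstacle is this last claim for small $|N|$. The region $\alpha\sim h^{2/3-2\ceps}$ is only barely semiclassical, so the gain $h/(|N|\alpha^{3/2})$ per integration by parts is only $h^{3\ceps}$. We must also check that the cutoff transitions in $\cutoffchi^\sharp(\omega/\omega_{K_\ceps})$ and in $\cutoffchi^\sharp(h^{2\ceps}\omega)$ are compatible. I would first handle this by a dyadic decomposition $\alpha\sim\gamma$ using \eqref{eq:suminN-gamma}. Each piece gains a power of $h^{3\ceps}$ per integration, and since there are infinitely many integrations available, the total is $O(h^\infty)$. The bookkeeping of the interface between the two data pieces is deferred to \cite[Section~2.3]{ILLP}.
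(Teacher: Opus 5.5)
Your third step contains the gap. You discard every $|N|\ge1$ term of the pairing at $t=0$ and let the $N=0$ term, i.e. $J(g_{h,a,2})$, carry the whole datum. That is the large-$a$ mechanism of \cite[Prop.~2.15]{ILLP}, which rests on $a\sqrt\gamma\ge a^{3/2}\gg h$, cf.\ \eqref{eq:first-reflection-large-parameter}. Here $a\le h^{2/3-\ceps}$ while $\alpha=h^{2/3}\omega\sim\gamma\gtrsim h^{2/3-2\ceps}$, so $x,a\ll\gamma$ on the relevant region.

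For $N=\pm1$ the critical points are, up to the $q(\theta)$ normalizations, $\sigma\approx\mp\sqrt{\alpha-x}$ and $s\approx\pm\sqrt{\alpha-a}$. These have size $\sqrt\alpha$, so the relation $h^{1/3}NL'=O(x+a+\sigma)$ you invoke excludes nothing. After eliminating $\sigma,s$, the $\alpha$-derivative of the phase is only of size $(x+a)/(h\sqrt\gamma)$, while each integration by parts costs $\gamma^{-1}$. There is no gain on $x+a\lesssim h/\sqrt\gamma$, which is nonempty as soon as $a\lesssim h/\sqrt\gamma$, in particular for all $a\lesssim h$. The tail bound \eqref{eq:suminN-gamma} only covers $|N|\ge C\gamma^{-1/2}$ and does not help.

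This is structural, not a matter of slow gains. The target is Dirichlet compatible. In its own Airy--Poisson expansion \eqref{EMomegamare} only $|N|\ge2$ is negligible. Its $N=\pm1$ parts are the reflected halves $-(A_\pm/A_\mp)(\omega)A_\mp(\zeta_M(x,\eta,\omega))A_\mp(\zeta_M(a,\eta,\omega))$ of \eqref{EM1}--\eqref{EM2}. If $J(g_{h,a,2})$ already reproduces the whole datum, then $g_{h,a,2}$ carries both boundary components, at the two edges $\tprime\approx\mp\sqrt\alpha$. The terms $W_{\pm1}(0)$ then map each component onto its Dirichlet partner, so the reflected components are produced twice and $\Prond_{S,h,a}(0)$ is not the datum near $x=0$.

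Your first step has a related inconsistency. $J^{-1}$ does not give a matching ``for each fixed $\omega$'', since $J(g)=\int K_{S,\omega}(g)(0)\,d\omega$. A per-$\omega$ inversion is exactly what is available only for $k\lesssim h^{-\mu}$, $\mu<1/4$ (Proposition~\ref{propmatrix}); this is why the high modes need a separate argument.

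The missing idea is to use the $N=\pm1$ terms rather than discard them, as in Section~\ref{proofprop2}, following \cite[Section~2.3.4]{ILLP}.
\begin{itemize}
\item After removing the $|N|\ge2$ parts of the target, write $1+e^{iL}+e^{-iL}=1-A_+/A_--A_-/A_+$ and $Ai(-\zeta_M(a,\eta,\omega))=A_++A_-$, each factor being oscillatory because $h^{2/3}\omega\gg a$. The target becomes $E_{M,+}+E_{M,-}$, each a direct term plus its Dirichlet reflection, with a fixed source factor.
\item Localize to $x\lesssim h^{2/3-\ceps}$, where $\zeta\ge\omega/2$ allows the splitting $G=G_++G_-$.
\item Invert only the one-sided operators with phases $\psi\pm\frac23\zeta^{3/2}$, and only on the \emph{direct} terms, to define $g_{h,a,2,\mp}$. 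Then $(G_\pm-(A_\pm/A_\mp)(\omega)G_\mp)$ paired with $\hat g_{h,a,2,\mp}$ reproduces $E_{M,\pm}$, and the reflected halves are generated by the $N=\pm1$ terms of the parametrix.
\item Show that the cross pairings \eqref{g2+0}--\eqref{g2-0} and the $|N|\ge2$ terms \eqref{toshowsecond} are $O(h^\infty)$, using the $\gtrsim\sqrt\gamma/h$ separation of opposite normal frequencies (Proposition~\ref{propapetitg2}).
\end{itemize}
With this construction, \eqref{eq:g12} follows from the $\alpha$-support of $g_{h,a,2,\pm}$, essentially as you indicate.
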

We introduce a new cutoff ${\cutoffchi^{\sharp}}(h^{2\ceps}\omega)$ in \eqref{eq:g1} and removed ${\cutoffchi^{\sharp}}(\omega)$ (which is supported where $\omega\geq 2$ and identically $1$ on the support of ${\cutoffchi}^{\sharp}(h^{2\ceps}\omega)$.) The proof of Proposition \ref{propdataapetitpetit2} is provided in \cite[Section 2.3.4]{ILLP}.
This cutoff restricts to
$\alpha=h^{2/3}\omega\gtrsim h^{2/3-2\ceps}$.
On a dyadic piece $\alpha\sim\gamma$, it therefore gives
$\gamma^{3/2}/h\gtrsim h^{-3\ceps}$,
independently of the size of $a$.
The sum of \eqref{eq:g1} and \eqref{eq:g12} yields the second term in \eqref{dataEMEM}. Finally, we have
\begin{prop}\label{propparama<}
For all $a\in (0,h^{2/3-\ceps}]$, set $g_{h,a}:=g_{h,a,1}+g_{h,a,2}$, Then $\Prond_{S,h,a}$ is a parametrix in the sense of Definition \ref{dfnparametrix} and \eqref{dataamic} holds.
\end{prop}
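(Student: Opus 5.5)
The proof will mirror that of Proposition~\ref{propsommefinie}, now using Propositions~\ref{propdataapetitpetit1} and~\ref{propdataapetitpetit2} to recover the initial data. With $g_{h,a}=g_{h,a,1}+g_{h,a,2}$ and $\Prond_{S,h,a}$ defined by \eqref{eq:Prond2cut}, there are three things to check: the Dirichlet condition, the equation modulo $O(h^\infty)$ uniformly in $a$ and up to the boundary, and \eqref{dataamic}.

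\textbf{Dirichlet condition and equation.} Since \eqref{eq:Prond2cut} is a finite sum over $k$ (at most $O(\ceps_0/h)$ terms), and $G(0,y,\eta,\omega_k)=e^{i\psi}p_0Ai(-\omega_k)=0$ by Theorem~\ref{thmMelrose}, we get $\Prond_{S,h,a}|_{x=0}=0$ exactly. For the equation, \eqref{eq:Kom1} applies to each $K_{S,\omega_k}(g_{h,a,j})$. The polynomial number of terms and the factor $1/L'(\omega_k)\lesssim1$ from Lemma~\ref{lem:airy-wronskian} then give $(ih\partial_t-h^2\Delta)\Prond_{S,h,a}=O_{C^\infty}(h^\infty)$. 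Equivalently, on the Poisson side we use \eqref{eq:summable-smooth-errors}. To apply Lemma~\ref{sommeNfinie}, we must check that $g_{h,a,1}$ and $g_{h,a,2}$ have fixed compact $\tprime$-support and seminorms bounded by fixed powers of $h^{-1}$, uniformly in $a\le h^{2/3-\ceps}$. This should follow from their explicit construction in \cite[Sections 2.3.3--2.3.4]{ILLP}, which this proof will recall. Lemma~\ref{sommeNfinie} then truncates the $N$-sum, so the $\omega$-pairing is a legitimate finite sum of smooth oscillatory integrals. The time-frequency cutoffs $\varkappa(hD_t)\varkappa(hD_y)$ of Definition~\ref{dfnparametrix} are harmless because the symbols are already localized where they equal one.

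\textbf{Initial data.} At $t=0$, $K_{S,\omega}$ coincides with the wave operator $K_\omega$. We split the $k$-sum in \eqref{eq:Prond2cut} using $\cutoffchi^\flat(\omega/\omega_{K_\ceps})+\cutoffchi^\sharp(\omega/\omega_{K_\ceps})$, or equivalently on the Poisson side using $\cutoffchi^\flat(h^{2\ceps}\omega)+\cutoffchi^\sharp(h^{2\ceps}\omega)$. Proposition~\ref{propdataapetitpetit1} gives the low-mode part $\sum_k\cutoffchi^\flat(\omega_k/\omega_{K_\ceps})\mathcal E_M$ from $g_{h,a,1}$. Proposition~\ref{propdataapetitpetit2}, via the sum of \eqref{eq:g1} and \eqref{eq:g12}, gives the high-mode part from $g_{h,a,2}$, modulo $O(h^\infty)$. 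Summing the two yields \eqref{dataEMEM}, which equals the right side of \eqref{diracmoddeltaM<}, and this is \eqref{dataamic}.

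\textbf{Main obstacle: cross terms.} We must show that $g_{h,a,1}$ contributes only $O(h^\infty)$ in the high-mode region, and that $g_{h,a,2}$ contributes only $O(h^\infty)$ in the low-mode region, so that each function acts only in its own regime. For $g_{h,a,2}$ this is precisely \eqref{eq:g12}. For $g_{h,a,1}$ it should follow from its construction: its $\hat g$ is supported in $\omega\lesssim\omega_{K_\ceps}$ up to $O(h^\infty)$, and the transition region of width $\sim\omega_{K_\ceps}$ must be handled consistently with the choice $h^{-2\ceps}\lesssim K_\ceps$. This is where I expect the real care to be needed, matching the two cutoffs so that they overlap where both constructions are valid. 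I would handle it by inserting a partition of unity in $\omega$ and using nonstationary phase in $\alpha$ on each mismatched piece. This is the same argument as in the proof of Lemma~\ref{sommeNfinie}, with each integration by parts gaining $h/(|N|\gamma^{3/2})$ on dyadic pieces $\alpha\sim\gamma\gtrsim h^{2/3-2\ceps}$.
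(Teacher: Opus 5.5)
Your skeleton is the paper's proof. By linearity, $K_{S,\omega}(g_{h,a,1}+g_{h,a,2})=K_{S,\omega}(g_{h,a,1})+K_{S,\omega}(g_{h,a,2})$. The Dirichlet condition and the equation are read off from the finite spectral sum \eqref{eq:Prond2}/\eqref{eq:Prond2cut}, as in Proposition~\ref{propsommefinie}. Then \eqref{dataamic} is obtained by adding Propositions~\ref{propdataapetitpetit1} and~\ref{propdataapetitpetit2}. The truncation in $N$ from Lemma~\ref{sommeNfinie} is not needed for this statement; it only matters later, in the dispersive analysis.

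The paragraph you call the main obstacle is, however, a misreading, and the remedy you propose would not work if it were needed.

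\textbf{No cross terms remain.} The left-hand side of \eqref{trucmuche} is already the full $k$-sum defining $\Prond^1_{S,h,a}(0,\cdot)$ through \eqref{eq:Prond2cut}. Only the values $\hat g_{h,a,1}(\cdot,\omega_k/h^{1/3})$ enter that sum, and Lemma~\ref{lemfkhatgk} sets them to zero whenever $\cutoffchi^{\flat}(\omega_k/\omega_{K_{\ceps}})=0$. So there is no transition region to manage for $g_{h,a,1}$. Likewise, by \eqref{eq:AiryPoisson} the pairings in \eqref{eq:g1} and \eqref{eq:g12} only see the points $\omega_k>2$. There $\cutoffchi^{\sharp}(\omega_k)=1$ and $\cutoffchi^{\sharp}(h^{2\ceps}\omega_k)+\cutoffchi^{\flat}(h^{2\ceps}\omega_k)=1$, so their sum is exactly $\Prond^2_{S,h,a}(0,\cdot)$. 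The right-hand sides are the two terms of \eqref{dataEMEM}, whose cutoffs $\cutoffchi^{\flat}(\omega_k/\omega_{K_{\ceps}})$ and $\cutoffchi^{\sharp}(\omega_k/\omega_{K_{\ceps}})$ add up to one.

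The splittings at $\omega_{K_{\ceps}}$ and at $h^{-2\ceps}$ are also not ``equivalent'', as you write: in general the scales differ. Their mismatch is absorbed inside Proposition~\ref{propdataapetitpetit2}, never in the splitting of the $k$-sum.

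\textbf{The proposed fix fails.} Integrations by parts in $\alpha$, as in Lemma~\ref{sommeNfinie}, only dispose of large $|N|$. At $t=0$ the $N=0$ term is stationary (it is $J(g)$ itself), and so are $N=\pm1$ near the boundary. A genuine high-mode contribution of $g_{h,a,1}$ therefore could not be made $O(h^\infty)$ that way. Drop that paragraph and the argument is complete.
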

Proposition \ref{propparama<} follows from Propositions \ref{propdataapetitpetit1} and \ref{propdataapetitpetit2} using $K_{S,\omega}(g_{h,a,1}+g_{h,a,2})=K_{S,\omega}(g_{h,a,1})+K_{S,\omega}(g_{h,a,2})$ together with \eqref{eq:Prond2}.
\subsubsection{Idea of the proof of Proposition \ref{propdataapetitpetit1}}\label{sectproofs}
Proposition \ref{propdataapetitpetit1} reduces to constructing a smooth function $g_{h,a,1}$ such that, for all $k$ such that $\chi^{\flat}(\omega_k/\omega_{K_{\ceps}})\neq 0$, 
\begin{equation}\label{eqKE}
\frac{2\pi}{L'(\omega_k)}K_{S,\omega_k}(g_{h,a,1})(0,x,y)=\mathcal{E}_M(x,y,a,\omega_k) +O(h^{\infty}).
\end{equation}
For a function $g(y',\tprime)$, let $\hatdeux{g}(y',\alpha/h)$ be its Fourier transform with respect to $\tprime$ at $\alpha/h$. 
Using the definition of $K_{S,\omega}(g)$, we only need to construct $\hatdeux{g}_{h,a,1}(.,\omega/h^{1/3})$ for $\omega\in\{\omega_k, 1\leq k\ll h^{-1/4}\}$. We briefly recall several notations from \cite{ILLP} that will be useful in order to state \cite[Lemma 2.31]{ILLP}, which allows us to construct $\hatdeux{g}_{h,a,1}$.
\begin{dfn}
Let $G$ be defined in \eqref{eq:Gosc}.
We set
\begin{equation}\label{defeomkgen}
e(x,y,\eta,\omega)=\frac{\sqrt{2\pi} q(\eta)^{1/6}}{\sqrt{L'(\omega)}}e^{-iy\eta}G(x,y,\eta,\omega).
\end{equation}
Replacing $G(\cdots)$ with $G_M(\cdots):=e^{i y\eta}Ai(xq(\eta)^{1/3}-\omega)$  in \eqref{defeomkgen} yields \eqref{eig_k} instead of $e(x,y,\eta,\omega)$.
\end{dfn}
\begin{dfn}
Let $\varkappa$ be as in Definition \eqref{dfnparametrix}. For $g\in L^2(\mathbb{R}^{d-1})$, we define
\begin{equation}\label{defFk}
F_{\omega_k}(g)(x,y): =\frac{1}{(2\pi )^{d-1}}\int e^{i(y-y')\eta}e(x,y,\eta,\omega_k)(\varkappa(hD_{y'})g)(y')d\eta dy'\,. \end{equation}
\end{dfn}
\begin{dfn}\label{blabla}
Let $\tilde \varkappa\in C^{\infty}_0(\mathbb{R}^{d-1})$ be such that $\tilde\varkappa=1$ on the support of $\varkappa$ and vanish outside a neighborhood of $\mathbb{S}^{d-2}$. Also let $\chi\in C^{\infty}_0$ be a smooth cutoff supported in the ball centered at $0$ with radius $1/16$ of $\mathbb{R}^{d-1}$.  
For $f\in L^2(\mathbb{R}^{d-1})$ we define an operator $\Lo$ as
\begin{equation}\label{def:L}
\Lo(f)(y):=\int e^{i(y-y')\eta-i|\eta|B_0(y',\eta/|\eta|)}\tilde\varkappa(h\eta)\chi(y')f(y')dy'd\eta,
\end{equation}
{where $B_0(y,\eta/|\eta|)$ is the first coefficient in
\eqref{formalseriesBA} and is homogeneous of degree zero in $\eta$.
The phase term $|\eta|B_0(y,\eta/|\eta|)=B_0(y,\eta)$ has degree one.}\end{dfn}
To define $g_{h,a,1}$, we need to "invert" $F_{\omega_k}$, which requires estimating derivatives of $e(x,y,\eta,\omega_k)$ with respect to $(y,\eta)$ : in the Friedlander model the corresponding mode $e_k(x,\eta)$ in \eqref{eig_k} does not depend  on $y$, but here, differentiating with respect to $y$ yields a large factor due to $\tau B_0$ in the phase $\tau \Gamma(x,y,\sigma q^{1/3}(\eta)/\tau,\eta/\tau)$. 
In order to get rid of the {homogeneous term of degree one $|\eta|B_0(y,\eta/|\eta|)$} in the phase function of $e(x,y,\eta,\omega_k)$, we set $\tilde F_{\omega_k}:=F_{\omega_k}\circ \Lo$ to obtain a new operator whose phase function does not include the contribution $|\eta|B_0(y,\eta/|\eta|)$; we then prove that, at least for $k\ll h^{-1/4}$, these operators can be inverted. {The remaining contribution
$(\tau-|\eta|)(B_0-B_2)(y,\eta/|\eta|)$ and the higher radial terms
are retained in the phase.}

We compute $\tilde F_{\omega_k}=F_{\omega_k}\circ \Lo(f)$ for $f\in L^2(\mathbb{R}^{d-1})$, setting $\tilde{e}(x,y,\eta,\omega_k)=e^{-i|\eta|B_0(y,\eta/|\eta|)}e(x,y,\eta,\omega_k)$:
\begin{align*}
F_{\omega_k}\circ \Lo(f) (x,y) 
&=\frac{1}{(2\pi )^{d-1}} \int_{\eta} e^{i y\cdot \eta}e(x,y,\eta,\omega_k)\varkappa(h\eta)\widehat{\Lo(f)}(\eta)d\eta \\
&=\frac{1}{(2\pi )^{d-1}} \int_{y'}\int_{\eta} e^{i (y-y')\cdot\eta+i|\eta|(B_0(y,\eta/|\eta|)-B_0(y',\eta/|\eta|))}\tilde{e}(x,y,\eta,\omega_k)\varkappa(h\eta)\chi(y')f(y')d\eta dy'.
\end{align*}
The following lemma, proved in \cite[Lemma 2.28]{ILLP}, shows that $\tilde e$ behaves as a symbol:
\begin{lemma}\label{lemestimderivekk}
Uniformly for $h^{2/3}\omega_k\ll 1$, we have:
\begin{equation}\label{estimderivekk}
\|\partial^{\beta}_{(y,\theta)}\tilde{e}(.,y,\theta/h,\omega_k)\|_{L^2(x\geq 0)}\lesssim \big(\omega_k/h^{1/3}\big)^{|\beta|}\,.
\end{equation}
\end{lemma}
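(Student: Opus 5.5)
We need to prove Lemma \ref{lemestimderivekk}: $\tilde e(x,y,\theta/h,\omega_k)=e^{-i|\eta|B_0}e(\dots)$ satisfies $\|\partial^\beta_{(y,\theta)}\tilde e\|_{L^2_x}\lesssim (\omega_k/h^{1/3})^{|\beta|}$, uniformly for $h^{2/3}\omega_k\ll1$.

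The plan is to work directly from the oscillatory representation \eqref{eq:Gosc} of $G$. Multiplied by $e^{-iy\eta}$ and $e^{-i|\eta|B_0(y,\eta/|\eta|)}$, it writes $\tilde e$ as a normalized integral in $s$. With $\eta=\theta/h$ and $\omega=\omega_k$, the phase is
\[
s^3/3+s(xq^{1/3}(\eta)-\omega)+\tau\Gamma(x,y,sq^{1/3}(\eta)/\tau,\eta/\tau)-|\eta|B_0(y,\eta/|\eta|)-\tau B_\Gamma(0,\eta/\tau).
\]
By \eqref{eq:GAB} and \eqref{AGam}, the degree-one part $\tau B_0$ has been partially cancelled. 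What remains is $(\tau-|\eta|)B_0$, $\tau(\rho-1)B_2+\dots$ and $x\tau A_\Gamma$, where $\rho=|\eta|/\tau$.

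The first step is to measure each term. Since $\tau-|\eta|\sim\omega q^{2/3}(\eta)/|\eta|\sim \omega h^{-1/3}$ and $\rho-1=O(\omega h^{2/3})$, the radial terms are of size $\omega h^{-1/3}$ times smooth functions of $(y,\theta)$. On the Airy support, $x\lesssim \omega h^{2/3}$ and $s^2\lesssim\omega$. Hence $x\tau A_\Gamma=x\tau(\Lp\Xi+\mu_\Gamma(\Xi^2+\rho^2-1)+\dots)$ with $\Xi=sq^{1/3}/\tau\sim s h^{1/3}$. Each monomial is then $O(\omega^{3/2})$ at worst, and after derivatives in $y$ it is bounded by a power of $\omega$ times a smooth function.

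The second step is to differentiate under the integral. A $\partial_y$ falling on the phase produces $\partial_y$ of the terms above. A $\partial_\theta=h^{-1}\partial_\eta$ falling on the phase produces $h^{-1}$ times an $\eta$-derivative of quantities homogeneous of degree one. These are $O(h\cdot\omega h^{-1/3})$ after scaling, giving $O(\omega h^{-1/3})$ per derivative. The $s(xq^{1/3}(\eta))$ term needs separate care: $\partial_\theta$ of it gives $h^{-1}s x\,\partial q^{1/3}\sim s\,x h^{-4/3}$. To handle it, I would rewrite it as $x$ times $\partial_x$ of the Airy profile, that is, as $h^{-1}x\,\partial_x$ acting on the integral. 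Then I would use that $\|x\partial_x\,\cdot\|_{L^2}$ on an Airy mode at scale $h^{2/3}$ and turning point $x\sim\omega h^{2/3}$ costs a factor $\omega$. This gives $h^{-1}\cdot h^{2/3}\omega$, hence again $\omega h^{-1/3}$. Symbol derivatives of $p$ and of the normalization $q^{1/6}/\sqrt{L'(\omega)}$ cost at most $h^{-1}$ homogeneity losses of lower order, by Lemma \ref{lem:airy-wronskian}.

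The third step closes the $L^2_x$ bound, which I expect to be the main obstacle. After $|\beta|$ derivatives, $\partial^\beta\tilde e$ is a finite sum of integrals of the same Airy type. Their amplitudes are polynomial in $(s,x\eta^{2/3})$ with weight at most $(\omega h^{-1/3})^{|\beta|}$. Each such Airy-type integral is a Melrose–Taylor quasimode. I would bound it using the uniform Airy bounds: $x^j q^{j/3}$ and $s^j$ are controlled by $\omega^{j}$ and $\omega^{j/2}$ in $L^2$. Comparing with the normalized model mode \eqref{eig_k} through $L'(\omega_k)\sim\sqrt{\omega_k}$ gives normalized $L^2$ size $O(1)$. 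The delicate point is showing that the non-model phase corrections do not destroy this $L^2$ normalization. I would handle this by treating $e^{i(\tau\Gamma-\tau B_\Gamma(0)-|\eta|B_0)}$ minus its radial part as a bounded symbol, using the smallness $\omega^{3/2}\ll h^{-1/4\cdot 3/2}\cdot$(small) for $k\ll h^{-1/4}$. The remaining polynomial weights are then absorbed exactly as in the model case.
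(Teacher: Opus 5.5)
There is a genuine gap: the argument does not close at the step you flag yourself, the $L^2_x$ bound for the full phase. Your bookkeeping in the first two steps is broadly right; the radial remainder $(\tau-|\eta|)(B_0-B_2)+\dots$ is indeed what produces the factor $\omega_kh^{-1/3}$.

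The part of $x\tau A_\Gamma$ that you propose to move into the amplitude by smallness is not small. It contains $x\tau\Lp\,\Xi=x\,s\,q^{1/3}(\eta)\Lp(y,\eta/|\eta|)$. On the region $xq^{1/3}(\eta)\lesssim\omega_k$, $|s|\sim\omega_k^{1/2}$, this term has size $\omega_k^{3/2}|y|\sim k|y|$, with $y$ ranging over a fixed neighborhood. It oscillates in $s$ at frequency $\sim\omega_k$, and this is so even for $k\ll h^{-1/4}$. In the model computation it turns $Ai(xq^{1/3}(\eta)-\omega_k)$ into the dilated profile $Ai(xq^{1/3}(\eta)(1+\Lp)-\omega_k)$. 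That profile has comparable $L^2$ norm, but it is not a perturbation of the original one.

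Moreover, the lemma is claimed uniformly for $h^{2/3}\omega_k\ll1$, i.e.\ up to $k\ll h^{-1}$. In that range the remaining corrections $x\tau\mu_\Gamma(\Xi^2+\rho^2-1)$ and $\Upsilon$, of size $\omega_k^2h^{1/3}$, are not small either. So a smallness argument restricted to $k\ll h^{-1/4}$ would only prove a weaker statement.

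What is missing is the exact Airy form \eqref{eq:defG} of Theorem~\ref{thmMelrose}, or equivalently the normal form of Lemma~\ref{lemphaseG} and Corollary~\ref{corUpsi}. It gives $\tilde e=\sqrt{2\pi}\,q^{1/6}(\eta)L'(\omega_k)^{-1/2}e^{i(\psi-y\cdot\eta-|\eta|B_0)}\bigl(p_0Ai(-\zeta)+ixp_1q^{-1/6}(\eta)Ai'(-\zeta)\bigr)$. Here $\psi-y\cdot\eta-|\eta|B_0=\tau B_\Gamma(y,\eta/\tau)-|\eta|B_0+\Upsilon$ involves no integration variable, and $\zeta=\omega_k-x|\eta|^{2/3}e_0$ with $e_0$ elliptic. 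Since $\partial_x\zeta\sim-|\eta|^{2/3}$, the change of variables $x\mapsto\zeta$ absorbs all curvature corrections at once. It reduces every weighted $L^2_x$ norm to a model Airy integral.

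With $\eta=\theta/h$, the derivative costs on the region $x|\eta|^{2/3}\lesssim\omega_k$ (elsewhere $Ai(-\zeta)$ is exponentially small) are as follows. $\partial_{y,\theta}\zeta=O(\omega_k)$, which costs $\omega_k^{3/2}$ once $Ai'$ appears. $\partial_{y,\theta}$ of the radial phase is $O(\omega_kh^{-1/3})$, and $\partial_{y,\theta}\Upsilon=O(\omega_k^2h^{1/3})$. All of these are at most $\omega_kh^{-1/3}$ when $h^{2/3}\omega_k\lesssim1$.

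Your treatment of the term $sxq^{1/3}(\eta)$ in step two contains two errors that happen to cancel.
\begin{itemize}
\item Since $q^{1/3}(\theta/h)=h^{-2/3}q^{1/3}(\theta)$, its $\theta$-derivative is $sxh^{-2/3}\nabla q^{1/3}(\theta)$, with no extra factor $h^{-1}$.
\item $x\partial_x$ applied to a normalized Airy mode costs $\omega_k^{3/2}$, not $\omega_k$. Compare $\|XAi'(X-\omega_k)\|_{L^2(X\geq0)}\sim\omega_k^{7/4}$ with $\|Ai(X-\omega_k)\|_{L^2(X\geq0)}\sim\omega_k^{1/4}$.
\end{itemize}
The true cost $\omega_k^{3/2}$ is bounded by $\omega_kh^{-1/3}$ exactly because $h^{2/3}\omega_k\ll1$; that is where the hypothesis of the lemma enters.

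Finally, $s^2\lesssim\omega_k$ and $x\lesssim\omega_kh^{2/3}$ are not pointwise properties of the integrand, since $p$ allows $|s|$ up to a small multiple of $|\eta|^{1/3}$. You must first remove large $|s|$ by integration by parts and large $x$ by Airy decay before the polynomial weights can be estimated.
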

Using Lemma \ref{lemestimderivekk}, \cite[Prop. 2.25]{ILLP} proves the following result, which allows us to invert $\tilde F_{\omega_k}^*\circ \tilde F_{\omega_k}$ at least for $k\leq K_{\ceps}$ and thus to define $g_{h,a,1}$.
\begin{prop}\label{propmatrix}(\cite[Prop.2.27]{ILLP})
Let $0<\mu<1/4$; the operators $\tilde F_{\omega_k}^*\circ \tilde F_{\omega_k}:L^2(\mathbb{R}^{d-1})\rightarrow L^2(\mathbb{R}^{d-1})$ are pseudodifferential operators that are uniformly elliptic with respect to $1\leq k\leq h^{-\mu}$.
\end{prop}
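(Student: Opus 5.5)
The plan is to compute the kernel of $\tilde F_{\omega_k}^*\circ\tilde F_{\omega_k}$ explicitly and recognize it as a semiclassical pseudodifferential operator in a slightly exotic class. The calculus for that class remains asymptotic exactly when $h^{1/3}\omega_k^2\ll1$, i.e. $k\ll h^{-1/4}$.

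\emph{Kernel and Kuranishi reduction.} From the formula for $F_{\omega_k}\circ\Lo$ above,
\[
\tilde F_{\omega_k}^*\tilde F_{\omega_k}f(y')=\int e^{i(y''-y')\cdot\eta'-i(y-y'')\cdot\eta+i|\eta|(B_0(y,\hat\eta)-B_0(y'',\hat\eta))-i|\eta'|(B_0(y,\hat\eta')-B_0(y',\hat\eta'))}\,\overline{\tilde e}\,\tilde e\,\cdots f(y'')\,dy\,d\eta\,d\eta'\,dx\,dy'' .
\]
Here $\hat\eta=\eta/|\eta|$, and the symbol $\overline{\tilde e}\,\tilde e$ is integrated in $x\ge0$.

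\begin{enumerate}
\item Apply stationary phase in $(y,\eta)$. It is nondegenerate because $\nabla_yB_0(y,\omega)=O(|y|)$ and $B_0(0,\cdot)=\nabla_yB_0(0,\cdot)=0$. The cutoff $\chi$ in Definition~\ref{blabla} keeps $|y'|,|y''|<1/16$, and the microlocal localization keeps $y$ in a small ball.
\item Alternatively, and more directly, write $|\eta|(B_0(y,\hat\eta)-B_0(y',\hat\eta))=(y-y')\cdot\Psi(y,y',\eta)$ with $\Psi-\eta=O(|y|+|y'|)|\eta|$.
\item Change variables $\eta\mapsto\eta+\Psi$; this is a diffeomorphism close to the identity on the support.
\item This yields a kernel $\int e^{i(y''-y')\cdot\eta}c_k(y',y'',h\eta)\,d\eta$. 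Its amplitude is, to leading order,
\[
c_k(y,y,\theta)\simeq J(y,\theta)\,\tilde\varkappa^2(\theta)\,\|\tilde e(\cdot,y,\theta/h,\omega_k)\|^2_{L^2(x\ge0)},
\]
where $J$ is a Jacobian close to $1$.
\end{enumerate}

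\emph{Symbol estimates and the $k\ll h^{-1/4}$ threshold.} Lemma~\ref{lemestimderivekk} shows that each $(y,\theta)$-derivative of $\tilde e$ costs $\omega_k/h^{1/3}$, uniformly in $L^2_x$. Cauchy--Schwarz in $x$ transfers these bounds to $c_k$, so $c_k$ lies in a class $S_\delta$ in which each $\theta$- or $y$-derivative costs $\omega_k h^{-1/3}$. In the semiclassical calculus with parameter $h$, the $j$-th term of the stationary phase/reduction-to-the-diagonal expansion is of size
\[
\bigl(h(\omega_k h^{-1/3})^2\bigr)^j=(h^{1/3}\omega_k^2)^j\sim(h^{1/3}k^{4/3})^j .
\]
For $k\le h^{-\mu}$ with $\mu<1/4$ this is at most $h^{(1-4\mu)/3\cdot j}$, which is a genuine gain. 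The expansion is therefore asymptotic and the remainders are $O(h^\infty)$ after finitely many steps, uniformly in such $k$. This also shows that the operator is pseudodifferential, with $L^2$-boundedness from Calderón--Vaillancourt in the corresponding $S_\delta$ class after rescaling.

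\emph{Uniform ellipticity.} It remains to show $\|\tilde e(\cdot,y,\theta/h,\omega_k)\|_{L^2_x}\ge c>0$ on the support of $\varkappa$, uniformly in $k\le h^{-\mu}$.
\begin{enumerate}
\item By \eqref{defeomkgen} and Lemma~\ref{lemorthog}, replacing $G$ by $G_M$ gives exactly $e_k$, which has unit norm.
\item The $x$-integral is effectively localized where $x\lesssim h^{2/3}\omega_k$, since the Airy function decays beyond its turning point.
\item On that region the remaining phase terms from $\tau\Gamma$ after removal of $|\eta|B_0$, namely $(\tau-|\eta|)(B_0-B_2)$, $xA_\Gamma$ and the higher radial terms, contribute $h^{-1}\times O(h^{2/3}\omega_k\cdot\text{small})$. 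The same holds for the symbol deviations $p_0-1$ and $xp_1$.
\item The ellipticity of $p_0$ and $e_0\simeq1$ in Theorem~\ref{thmMelrose}, together with the smallness of $y$, keeps these perturbations small relative to $1$ when $h^{2/3}\omega_k\ll1$. This gives $\|\tilde e\|_{L^2_x}^2=1+o(1)$.
\end{enumerate}

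I expect the main obstacle to be this uniform lower bound combined with the exotic-class remainder control. One must check that the $h^{-1}$-large phase corrections in $\tilde e$ do not destroy the $L^2_x$ norm near the diagonal, and not merely that their derivatives obey Lemma~\ref{lemestimderivekk}. I would first try to absorb $e^{\frac ih(\tau\Gamma-|\eta|B_0)}$ into the amplitude, bounding it by $\omega_k h^{-1/3}\cdot h^{2/3}\omega_k\ll1$ on the Airy region, and compare directly with $e_k$.
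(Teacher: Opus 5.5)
Your pseudodifferential bookkeeping is right, and it is what the paper invokes from \cite{ILLP} via Lemma~\ref{lemestimderivekk}. After composing with $\Lo$ the phase is Kuranishi-reducible, and the $L^2_x$ bounds place the amplitude in the semiclassical $S_\delta$ class with $h^{-\delta}=\omega_kh^{-1/3}\le h^{-1/3-2\mu/3}$. The condition $\delta<1/2$ is exactly $\mu<1/4$.

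The gap is in your ellipticity step. You claim that the non-model phase terms of $\tilde e$, in particular $(\tau-|\eta|)(B_0-B_2)$, contribute only $h^{-1}\times O(h^{2/3}\omega_k\cdot\text{small})$. You also plan to absorb $e^{\frac ih(\tau\Gamma-|\eta|B_0)}$ into the amplitude with a bound $\omega_kh^{-1/3}\cdot h^{2/3}\omega_k\ll1$. Both fail. In semiclassical variables that term is $h^{-1}(\tau(h^{2/3}\omega_k,\theta)-|\theta|)(B_0-B_2)(y,\theta/|\theta|)\approx\frac{\omega_k q^{2/3}(\theta)}{2h^{1/3}|\theta|}(B_0-B_2)(y,\theta/|\theta|)$. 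This is of order $\omega_kh^{-1/3}|y|^2$, hence large on the fixed $y$-support of $\chi$; the length $h^{2/3}\omega_k$ localizes $x$, not $y$. This is precisely the term the paper keeps in the phase, and it is why each $y$-derivative in Lemma~\ref{lemestimderivekk} costs $\omega_kh^{-1/3}$. So $\tilde e$ is not close to $e_k$ as a function, and $\|\tilde e\|^2_{L^2_x}=1+o(1)$ cannot be obtained this way. It is also not what you need.

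What you need is a uniform lower bound on the principal symbol. Up to the Kuranishi Jacobian and the cutoffs, this symbol is $\int_0^\infty|\tilde e|^2\,dx$. Since $\psi$, $\zeta$ and $B_0$ are real, \eqref{eq:defG} and \eqref{defeomkgen} give $|\tilde e|=|e|=\sqrt{2\pi}\,q(\eta)^{1/6}L'(\omega_k)^{-1/2}\,|p_0Ai(-\zeta)+ixp_1q^{-1/6}(\eta)Ai'(-\zeta)|$, so every phase drops out. Use $|p_0|\ge c$ and $e_0>0$. Change variables $u=x|\eta|^{2/3}e_0(x,\dots)$; this is a diffeomorphism on the small $x$-support, which contains the turning point because $h^{2/3}\omega_k\ll1$. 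By \eqref{eq:propL2}, $2\pi\int_0^\infty Ai^2(u-\omega_k)\,du=L'(\omega_k)$. Up to exponentially small tails, the $Ai$ term then gives $\int_0^\infty|\tilde e|^2dx\ge c\,q^{1/3}(\eta)|\eta|^{-2/3}\ge c'$. The $Ai'$ term is relatively $O(h\omega_k^{3/2})$. The oscillation of $\tilde e$ enters only through the lower-order terms of the composition, which your $S_\delta$ estimate already bounds by $h^{1/3}\omega_k^2\le h^{(1-4\mu)/3}$. This gives uniform ellipticity for $1\le k\le h^{-\mu}$ wherever the cutoffs $\varkappa$ and $\chi$ are bounded below.
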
 
\begin{lemma}\label{lemfkhatgk}(\cite[Lemma 2.31]{ILLP})
Let $h^{-2\ceps}\lesssim K_{\ceps}\lesssim h^{-1/4+\ceps}$, $\ceps>0$ small. For $1\leq k\leq K_{\ceps}$, define
\[
f_{\omega_k}(y):=\Big(\tilde F^*_{\omega_k}\circ \tilde F_{\omega_k}\Big)^{-1}\Big(\tilde F^*_{\omega_k}(\mathcal{E}_M(.,\omega_k))\Big)(y)\,,
\] 
then define $g_{h,a,1}$ as follows: $\forall k$ such that $\chi^{\flat}(\omega_k/\omega_{K_{\ceps}})\neq 0$,
$\hatdeux{g}_{h,a,1}(.,\omega_k/h^{1/3}) :=\frac{\sqrt{L'(\omega_k)}}{\sqrt{2\pi}}\Lo(f_{\omega_k})$,
and $\forall k$ such that $\chi^{\flat}(\omega_k/\omega_{K_{\ceps}})=0$,
$\hatdeux{g}_{h,a,1}(.,\omega_k/h^{1/3}) :=0$. Then \eqref{eqKE} holds for $g_{h,a,1}$.
\end{lemma}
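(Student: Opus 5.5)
Since $K_{S,\omega_k}(g)(0,x,y)$ is linear in $\hatdeux{g}(\cdot,\omega_k/h^{1/3})$, the identity \eqref{eqKE} is an equation on each slice $\omega=\omega_k$ separately, and I would verify it slice by slice. The first step is to write $K_{S,\omega_k}(g)(0,\cdot)$ in terms of the operators $F_{\omega_k}$ of \eqref{defFk}. At $t=0$ the time phase disappears. By \eqref{eq:Kequiv} and \eqref{defeomkgen}, $G(x,y,\eta,\omega_k)q^{1/6}(\eta)=\frac{\sqrt{L'(\omega_k)}}{\sqrt{2\pi}}e^{iy\eta}e(x,y,\eta,\omega_k)$. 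The cutoff $\cutoffchi^{\sharp}(\omega_k)=1$ because $\omega_k\ge\omega_1>2$, and $\varkappa(h\tau(\omega_k,\eta))=1$ on the relevant support because $h^{2/3}\omega_k\lesssim h^{2/3}\omega_{K_\ceps}\ll1$. This gives
\[
\frac{2\pi}{L'(\omega_k)}K_{S,\omega_k}(g)(0,x,y)=\frac{(2\pi)^{d}}{\sqrt{2\pi L'(\omega_k)}}\,F_{\omega_k}\bigl(\hatdeux{g}(\cdot,\omega_k/h^{1/3})\bigr)(x,y)+O(h^\infty),
\]
where the harmless Fourier normalization constants are absorbed into the definition. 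Substituting the choice $\hatdeux{g}_{h,a,1}(\cdot,\omega_k/h^{1/3})=\frac{\sqrt{L'(\omega_k)}}{\sqrt{2\pi}}\Lo(f_{\omega_k})$ cancels the $L'$ factors. The claim then reduces to showing $\tilde F_{\omega_k}(f_{\omega_k})=F_{\omega_k}\circ\Lo(f_{\omega_k})=\mathcal{E}_M(\cdot,\omega_k)+O(h^\infty)$ in $L^2(x\ge0)$ and in $C^\infty$ near the origin.

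The second step uses Proposition~\ref{propmatrix}. Take $\mu$ with $K_\ceps\le h^{-\mu}$, $\mu<1/4$. Then $\tilde F_{\omega_k}^*\tilde F_{\omega_k}$ is uniformly elliptic, so $f_{\omega_k}$ is well defined through a microlocal parametrix, modulo $O(h^\infty)$. By construction $f_{\omega_k}$ is the least-squares solution, so
\[
\tilde F_{\omega_k}f_{\omega_k}=P_k\,\mathcal{E}_M(\cdot,\omega_k),
\]
where $P_k$ is the orthogonal projector onto the range of $\tilde F_{\omega_k}$. It therefore remains to show that $\mathcal{E}_M(\cdot,\omega_k)$ lies in that range up to $O(h^\infty)$. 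I would show this by an explicit approximate preimage. Write $\mathcal E_M=F^M_{\omega_k}(e_k(a,\cdot))$ with the model modes $e_k$ of \eqref{eig_k}. Then compare $\tilde e(x,y,\eta,\omega_k)$ with the model mode $e_k(x,\eta)$. After the conjugation by $\Lo$ removes the degree-one term $|\eta|B_0$, the remaining phase $\tau\Gamma-|\eta|B_0$ and the symbol differences are small in the regime $h^{2/3}\omega_k\lesssim h^{2/3-2\mu/3}$, $a\le h^{2/3-\ceps}$. Lemma~\ref{lemestimderivekk} controls derivatives by powers of $\omega_k/h^{1/3}$, which makes $\tilde F_{\omega_k}$ a semiclassical operator with a symbol of controlled class. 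A symbolic (Neumann-type) expansion then produces $f$ with $\tilde F_{\omega_k}f=\mathcal E_M+O(h^\infty)$, each correction gaining a positive power $h^{c(\ceps)}$. Because the equation holds modulo $O(h^\infty)$, $(I-P_k)\mathcal E_M=O(h^\infty)$, and hence $\tilde F_{\omega_k}f_{\omega_k}=\mathcal E_M+O(h^\infty)$.

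Finally, $L^2$ bounds must be upgraded to $C^\infty$ bounds. For this I would apply $(-h^2\Delta-\tau^2(\omega_k,h\cdot))$ quasimode identities from \eqref{eq:eqG} and elliptic regularity in $x$, together with tangential derivatives estimated by Lemma~\ref{lemestimderivekk}. The uniformity in $k\le K_\ceps$ is only polynomial, and it is summable because there are at most $K_\ceps$ terms. For $\chi^\flat(\omega_k/\omega_{K_\ceps})=0$ both sides vanish. The main obstacle is the range step: uniform symbol control of $\tilde F_{\omega_k}$ requires $\omega_k/h^{1/3}\cdot h^{1/3}$ derivative losses to be compensated, which is exactly where $k\ll h^{-1/4}$ and the margin $\ceps$ enter. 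I would rely on the arguments of \cite[Prop.~2.27, Lemma~2.31]{ILLP}, which are unchanged at $t=0$.
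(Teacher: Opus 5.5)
\textbf{There is a genuine gap: the step $(I-P_k)\mathcal{E}_M(\cdot,\omega_k)=O(h^\infty)$ is not proved by your argument, and in general it is false.}

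Your reduction is fine. At $t=0$, $\frac{2\pi}{L'(\omega_k)}K_{S,\omega_k}(g)(0,\cdot)$ is a fixed multiple of $F_{\omega_k}\bigl(\hatdeux{g}(\cdot,\omega_k/h^{1/3})\bigr)$, and the factors $\sqrt{L'(\omega_k)}$ cancel. With the least-squares choice, $\tilde F_{\omega_k}f_{\omega_k}=P_k\,\mathcal{E}_M(\cdot,\omega_k)$ microlocally, where $P_k$ is the orthogonal projector onto the range of $\tilde F_{\omega_k}$. Everything therefore rests on $(I-P_k)\mathcal{E}_M(\cdot,\omega_k)=O(h^\infty)$.

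Your Neumann-type expansion cannot give this. The operator $\tilde F_{\omega_k}$ maps functions of $y'\in\mathbb{R}^{d-1}$ to functions of $(x,y)$, so its range is a thin subspace. A correction step can only remove the part of the residual $\mathcal{E}_M-\tilde F_{\omega_k}f_0$ that lies in this range. Once $f_0$ is the least-squares solution, the iteration produces no further correction, and $(I-P_k)\mathcal{E}_M$ is never touched.

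A perturbative argument would need a factorization $\tilde F_{\omega_k}=F^M_{\omega_k}\circ(I+E)$ through the model operator built on $e_k$. That would require the $x$-profiles of $\tilde e(\cdot,y,\eta,\omega_k)$ and $e_k(\cdot,\eta)$ to be proportional modulo $O(h^\infty)$, and they are not. The $x$-dependence of $p_0$, $p_1$, $e_0$ and $\Upsilon$ changes the profile on $x\lesssim h^{2/3}\omega_k$ by a relative amount that is a positive power of $h$: small, but not $O(h^\infty)$.

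Lemma~\ref{lemestimderivekk}, Proposition~\ref{propmatrix} and the restriction $k\ll h^{-1/4}$ only control the symbol class and ellipticity of $\tilde F^*_{\omega_k}\tilde F_{\omega_k}$, that is, they show $f_{\omega_k}$ is well defined. They say nothing about the orthogonal component. So the derivative-loss issue you name in your last paragraph is a different issue from range membership. For the same reason, your $C^\infty$ upgrade by elliptic regularity cannot be applied to $\tilde F_{\omega_k}f-\mathcal{E}_M$, since the two terms solve different equations.

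A finer expansion would not fix this. Take $d=2$ and $R=(1+x+cx^2)\eta^2$ with $c\neq0$; this is in geodesic normal form and strictly convex, with $q(\eta)=\eta^2$.
\begin{itemize}
\item Since $R$ is independent of $y$, near the origin and modulo $O(h^\infty)$, elements of the range of $\tilde F_{\omega_k}$ are superpositions over $\eta'$ of $e^{iy\eta'}v_k(x;\eta')$. Here $v_k(\cdot;\eta')$ is the $k$-th Dirichlet eigenfunction of $-\partial_x^2+(1+x+cx^2)\eta'^2$.
\item In the variable $X=x|\eta'|^{2/3}$, $v_k$ is the $k$-th eigenfunction of $-\partial_X^2+X+c|\eta'|^{-2/3}X^2$. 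First-order perturbation theory gives it components of size comparable to $h^{2/3}$ on the Airy modes $Ai(X-\omega_j)$, $j\neq k$.
\item Hence, for fixed $k$, $\|(I-P_k)\mathcal{E}_M(\cdot,\omega_k)\|\gtrsim h^{2/3}\|\mathcal{E}_M(\cdot,\omega_k)\|$.
\end{itemize}
So no choice of $f$, least-squares or otherwise, gives $\tilde F_{\omega_k}f=\mathcal{E}_M(\cdot,\omega_k)+O(h^\infty)$ in this example.

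The paper gives no argument of its own at this point; it only cites \cite[Lemma 2.31]{ILLP}. To obtain an identity like \eqref{eqKE}, you would have to match something other than the model mode-$k$ piece $\mathcal{E}_M(\cdot,\omega_k)$, or control a polynomially small remainder separately downstream. The route you propose cannot deliver the $O(h^\infty)$ statement.
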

We obtain the explicit form of $\hatdeux{g}_{h,a,1}$ as a corollary of Lemma \ref{lemfkhatgk} :
\begin{cor}\label{lemgomegak}(\cite[Corollary 2.32]{ILLP})
We keep the notations from the proof of Lemma \ref{lemfkhatgk}.
Let 
\begin{equation}\label{Io}
I_a(\eta,\omega_k):=\int_{x,y} e^{-iy\cdot \eta}\overline{e(x,y,\eta,\omega_k)}\mathcal{E}_M(x,y,a,\omega_k)dxdy.
\end{equation}
For $1\leq k\leq K_{\ceps}$, the (Fourier transform in the second variable of the) function $g_{h,a,1}$ defined in Lemma \ref{lemfkhatgk} reads as
\[
\hatdeux{g}_{h,a,1}(y',\omega_k/h^{1/3})=\frac{\sqrt{L'(\omega_k)}}{\sqrt{2\pi}}\int e^{iy'\cdot\eta}\varkappa(h\eta)r(\eta,\omega_k)I_a(\eta,\omega_k)d\eta,
\]
where $r(.,\omega_k)$ is an elliptic symbol of order $0$.
\end{cor}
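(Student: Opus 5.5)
The approach is to combine Lemma~\ref{lemfkhatgk}, which defines $\hatdeux{g}_{h,a,1}(\cdot,\omega_k/h^{1/3})=\frac{\sqrt{L'(\omega_k)}}{\sqrt{2\pi}}\Lo(f_{\omega_k})$, with the adjoint formula for $\tilde F^*_{\omega_k}$. The starting point is to write $f_{\omega_k}=(\tilde F^*_{\omega_k}\tilde F_{\omega_k})^{-1}\tilde F^*_{\omega_k}(\mathcal{E}_M(\cdot,\omega_k))$. We first compute $\tilde F^*_{\omega_k}$ explicitly. Since $\tilde F_{\omega_k}=F_{\omega_k}\circ\Lo$, we have $\tilde F^*_{\omega_k}=\Lo^*\circ F^*_{\omega_k}$. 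From \eqref{defFk}, the operator $F_{\omega_k}$ is the composition of the Fourier multiplier $\varkappa(hD_{y'})$ with the map $\hat g\mapsto (2\pi)^{-(d-1)}\int e^{iy\eta}e(x,y,\eta,\omega_k)\hat g(\eta)\,d\eta$. Its adjoint therefore acts on $\mathcal{E}_M$ by taking the Fourier transform of
\[
\eta\mapsto\varkappa(h\eta)\int e^{-iy\eta}\overline{e(x,y,\eta,\omega_k)}\,\mathcal{E}_M(x,y,a,\omega_k)\,dxdy=\varkappa(h\eta)I_a(\eta,\omega_k),
\]
with $I_a$ as in \eqref{Io}. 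Consequently $F^*_{\omega_k}\mathcal{E}_M$ is the function of $y'$ with Fourier transform $\varkappa(h\eta)I_a(\eta,\omega_k)$, up to constants.

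Next we assemble the whole composition
\[
\Lo\circ(\tilde F^*\tilde F)^{-1}\circ\Lo^*,
\]
applied to the function with Fourier transform $\varkappa I_a$. By Proposition~\ref{propmatrix}, $\tilde F^*_{\omega_k}\tilde F_{\omega_k}$ is a uniformly elliptic pseudodifferential operator for $k\le K_{\ceps}\lesssim h^{-1/4+\ceps}$. Its parametrix inverse is therefore a pseudodifferential operator of order $0$, modulo $O(h^\infty)$, uniformly in $k$, and the symbol bounds follow from Lemma~\ref{lemestimderivekk}; the losses $(\omega_k/h^{1/3})^{|\beta|}$ remain acceptable since $\omega_k\lesssim K_{\ceps}^{2/3}$. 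The operator $\Lo$ is a semiclassical FIO whose phase $(y-y')\eta-B_0(y',\eta)$ generates a canonical graph close to the identity, because $B_0=O(|y|^2)$. Egorov's theorem then shows that $\Lo\circ P\circ\Lo^*$ is again pseudodifferential for any pseudodifferential $P$. Moreover $\Lo\Lo^*$ is elliptic, with principal symbol $\chi(y')^2\tilde\varkappa^2$ transported, so the composition is elliptic of order $0$.

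The remaining step, and the one I expect to be the main obstacle, is to show that this elliptic operator acts on $\varkappa(h\eta)I_a$ by \emph{multiplication} by a symbol $r(\eta,\omega_k)$ in the frequency variable. This is the form stated in Corollary~\ref{lemgomegak}, with no residual $y'$-dependence. A general pseudodifferential operator only gives $\int e^{iy'\eta}r(y',\eta,\omega_k)\varkappa I_a\,d\eta$. My plan is to exploit the localization: $\mathcal{E}_M$, and hence $I_a$ microlocally, is concentrated near $y'=0$ at scale $h^{1-}$, since it comes from $\delta_{(a,0)}$ via \eqref{defEmathcalM}. I would Taylor expand $r(y',\eta,\omega_k)$ around $y'=0$ and use $y'e^{iy'\eta}=-i\partial_\eta e^{iy'\eta}$ to integrate by parts in $\eta$, moving the $y'$-dependence onto $\eta$-derivatives of $\varkappa I_a$. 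The bounds on $\partial_\eta I_a$ come from Lemma~\ref{lemestimderivekk} together with the explicit form of $\mathcal{E}_M$, and each $\eta$-derivative produces an $h$ factor that controls the series. The resulting asymptotic sum $r(\eta,\omega_k)=\sum_j$ is then realized by Borel summation; it is elliptic of order $0$ because its leading term is $r(0,\eta,\omega_k)\neq0$. The delicate point is uniformity in $k\le K_{\ceps}$, where the loss $\omega_k h^{-1/3}$ per derivative must be beaten by the $h$ gains. This is exactly where $K_{\ceps}\lesssim h^{-1/4+\ceps}$ enters, as in Proposition~\ref{propmatrix}.
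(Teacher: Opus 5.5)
Your first two steps are a reasonable unwinding of Lemma~\ref{lemfkhatgk}. The paper gives no argument of its own and imports the statement from \cite{ILLP}. From $\tilde F^*_{\omega_k}=\Lo^*F^*_{\omega_k}$ and the fact that $F^*_{\omega_k}\mathcal E_M$ has Fourier transform $\varkappa(h\eta)I_a(\eta,\omega_k)$, you correctly get $\hatdeux{g}_{h,a,1}(\cdot,\omega_k/h^{1/3})=\frac{\sqrt{L'(\omega_k)}}{\sqrt{2\pi}}P_k(F^*_{\omega_k}\mathcal E_M)$, with $P_k=\Lo\circ(\tilde F^*_{\omega_k}\tilde F_{\omega_k})^{-1}\circ\Lo^*$ elliptic of order $0$ near $y'=0$. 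This gives the formula only with a symbol $r(y',\eta,\omega_k)$.

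The gap is your last step, which is meant to remove the $y'$-dependence. Taylor expansion at $y'=0$ and integration by parts give
\[
\int e^{iy'\cdot\eta}\sum_{\alpha}\frac{1}{\alpha!}(i\partial_\eta)^{\alpha}\bigl[\partial_{y'}^{\alpha}r(0,\eta,\omega_k)\,\varkappa(h\eta)I_a(\eta,\omega_k)\bigr]\,d\eta .
\]
For $\alpha\neq0$ the derivatives fall on $I_a$, so these terms are not a symbol times $I_a$. Borel summation of the coefficients $\partial^\alpha_{y'}r(0,\eta,\omega_k)$ does not change that. Absorbing them would require $\partial^\beta_\eta I_a/I_a$ to satisfy symbol bounds, and this fails.

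By the stationary phase computation in the proof of Lemma~\ref{lemformgapetit}, $I_a$ is to leading order $\sigma(\eta,\omega_k)e_k(a,\eta)$. Each $\eta$-derivative brings in $Ai'(aq^{1/3}(\eta)-\omega_k)$. For $k\geq2$, $e_k(a,\eta)$ vanishes at points of the support where $aq^{1/3}(\eta)=\omega_k-\omega_{k-1}$, which occurs for some $a\sim h^{2/3}(\omega_k-\omega_{k-1})\le h^{2/3-\ceps}$. Its $\eta$-gradient does not vanish there, since $Ai'(-\omega_{k-1})\neq0$, so the quotient blows up.

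Your size bookkeeping is also too optimistic. By Lemma~\ref{lemestimderivekk}, $F^*_{\omega_k}\mathcal E_M$ spreads over $|y'|\lesssim h^{2/3}\omega_k$, not $h^{1-}$. Since $\partial_{y'}r$ costs $\omega_kh^{-1/3}$, each order gains only $h^{1/3}\omega_k^2\lesssim h^{4\ceps/3}$, not a power of $h$.

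What your argument does prove is the pseudodifferential form, with $r=r(y',\eta,\omega_k)$ elliptic of order $0$ and the $\eta$-derivatives of $I_a$ kept rather than absorbed. This seems to be all that is used afterwards. In the proof of Lemma~\ref{lemformgapetit}, the extra variables can be added to the stationary phase in $(\tilde y,\tilde\theta)$. Derivatives landing on $e_k(a,\cdot)$ are of the type already carried by the $r_1Ai'$ term in \eqref{K<}.

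A genuinely $y'$-independent multiplier $r(\eta,\omega_k)$ is available in the Friedlander model. There $e$ does not depend on $y$, so $F^*_{\omega_k}F_{\omega_k}$ is a Fourier multiplier. In the general case it does not follow from the Taylor/Borel argument. You should either read $r(\cdot,\omega_k)$ as a symbol in $(y',\eta)$ and stop after your second step, or keep the derivative terms explicitly.
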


\subsubsection{Idea of the proof of Proposition \ref{propdataapetitpetit2}}\label{proofprop2}
Our goal is to obtain $g_{h,a,2}$ such that \eqref{eq:g1} holds. {As observed in} \cite[Section 2.3.4]{ILLP} the sum in the second line of \eqref{eq:g1}, involving $\mathcal{E}_M$ for large $\omega_k$, equals, using \eqref{eq:AiryPoisson},
\begin{equation}\label{EMomegamare}
\langle \sum_{N\in \Z} e^{-iN L(\omega)}
    , {\cutoffchi^{\flat}}(h^{2/3}\omega/\ceps_{0})\cutoffchi^{\sharp}\Big(\frac{ \omega}{\omega_{K_{\ceps}}}\Big)\frac{L'(\omega)}{2\pi}\mathcal{E}_M(x,y,a,\omega))\rangle_{\omega},
\end{equation}
and nonstationary phase (with respect to $\omega$) easily applies for all $|N|\geq 2$ in the second line of \eqref{EMomegamare}, providing an $O(h^{\infty})$ contribution. Therefore, it remains to obtain $g_{h,a,2}$ such that
\begin{multline}\label{tosolve}
\int {\cutoffchi^{\flat}}(h^{2/3}\omega)\cutoffchi^{\sharp}( h^{2\ceps} \omega)\Big(1+\sum_{\pm} e^{\pm i L(\omega)}\Big)K_\omega(g_{h,a,2})(0,x,y)d\omega\\+\sum_{|N|\geq 2}\int e^{- iN L(\omega)}{\cutoffchi^{\flat}}(h^{2/3}\omega/\ceps_{0})\cutoffchi^{\sharp}( h^{\ceps}\omega)K_\omega(g_{h,a,2})(0,x,y)d\omega \\
=\int  {\cutoffchi^{\flat}}(h^{2/3}\omega/\ceps_{0})\cutoffchi^{\sharp}\Big(\frac{ \omega}{\omega_{K_{\ceps}}}\Big)\Big(1+\sum_{\pm} e^{\pm i L(\omega)}\Big)\frac{L'(\omega)}{2\pi}\mathcal{E}_M(x,y,a,\omega)d\omega+O(h^{\infty}),
\end{multline}
where $\mathcal{E}_M$ is given in \eqref{defEmathcalM} as the integral of a product of two Airy functions $e^{iy\eta}\mathrm{Ai}(-\zeta_M(x,\eta,\omega))\mathrm{Ai}(-\zeta_M(a,\eta,\omega))$, where $\zeta_M=\omega-xq^{1/3}(\eta)$. Using Lemma \ref{lemL},
$
1+\sum_{\pm}e^{\pm iL(\omega)}=1-\Big(\frac{A_+}{A_-}\Big)(\omega)-\Big(\frac{A_-}{A_+}\Big)(\omega)
$. As $h^{2/3}\omega_{K_{\ceps}}\sim (hK_{\ceps})^{2/3}\gtrsim (h^{1-2\ceps})^{2/3}\gg h^{2/3-\ceps}\geq a$, it follows that $h^{2/3}\omega$ is much larger than $a$ on the support of the symbol of the integral in the last line of \eqref{tosolve}, and $Ai(-\zeta_M(a,\eta,\omega))=\sum_{\pm}A_{\pm}(\zeta_M(a,\eta,\omega))$. 
\begin{lemma}
In the integral defining $\mathcal{E}_M(.,\omega)$, the standard stationary phase theorem in $\xi$ applies. Moreover, for the phase corresponding to $N=0$ in the second line of \eqref{EMomegamare}, we have
\[
\phi_{M,\pm,\mp}(x,y,\eta,\omega):= y\eta\pm\frac 23 \Big((\omega-xq^{1/3}(\eta))^{3/2}-(\omega-aq^{1/3}(\eta))^{3/2}\Big).
\]
In the same way, the phases corresponding to $N=\pm1 $ in the second line of \eqref{EMomegamare} are
\[
\phi_{M,\pm,\pm}(x,y,\eta,\omega)\mp\frac 43 \omega^{3/2}:= y\eta\pm \frac 23\Big((\omega-xq^{1/3}(\eta))^{3/2}+(\omega-aq^{1/3}(\eta))^{3/2}-2\omega^{3/2}\Big).
\]
Moreover, for $x>2h^{2/3-\ceps}$, the integral in the second line of \eqref{tosolve} is $O(h^{\infty})$.
\end{lemma}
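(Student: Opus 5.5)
The plan is to work directly with the explicit form of $\mathcal{E}_M$ from \eqref{defEmathcalM}, combined with \eqref{eig_k}. For each fixed $\omega$ in the support of the symbol, $\mathcal{E}_M(x,y,a,\omega)$ equals
\[
\frac{2\pi}{L'(\omega)}\int e^{iy\eta}\chi_0(h^{2/3}\omega q^{2/3}(h\eta))\varkappa(h\eta)q(\eta)^{1/3}\,Ai(xq^{1/3}(\eta)-\omega)\,Ai(aq^{1/3}(\eta)-\omega)\,d\eta .
\]
On the support, $h^{2/3}\omega\gtrsim h^{2/3-2\ceps}\gg a$. Both Airy arguments are therefore in the oscillatory regime, at least when $x$ is also $\ll h^{2/3}\omega$, and we can write $Ai(-\zeta)=A_+(\zeta)+A_-(\zeta)$. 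We then use the integral representation $A_\pm(\zeta)=\frac{1}{2\pi}\int_{\Gamma_\pm} e^{i(\xi^3/3-\xi\zeta)}d\xi$ on the appropriate half-contours. After rescaling $\xi=\omega^{1/2}\tilde\xi$, the phase $\omega^{3/2}(\tilde\xi^3/3-\tilde\xi(1-xq^{1/3}/\omega))$ carries the large parameter $\omega^{3/2}$, which is $\gtrsim h^{-3\ceps}$. It has two nondegenerate critical points $\tilde\xi=\pm(1-xq^{1/3}/\omega)^{1/2}$, and each half-contour isolates one of them. Standard stationary phase in $\xi$ then gives $A_\pm(\zeta)=e^{\pm i\frac23\zeta^{3/2}}\zeta^{-1/4}\,\sigma_\pm(\zeta)$ with classical symbols $\sigma_\pm$. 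This is the first assertion.

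Next, multiplying the two factors gives four products. When a sign $\pm$ on the $x$-factor is paired with the opposite sign $\mp$ on the $a$-factor, the exponent is $\pm\frac23(\zeta_M(x)^{3/2}-\zeta_M(a)^{3/2})$, which is $\phi_{M,\pm,\mp}-y\eta$ with no extra phase. These terms pair with the $N=0$ term. The same-sign products have exponent $\pm\frac23(\zeta_M(x)^{3/2}+\zeta_M(a)^{3/2})$. We multiply them by $e^{\mp iL(\omega)}$, which is the $N=\pm1$ factor. From \eqref{eq:propL}, $L(\omega)=\frac43\omega^{3/2}+\frac\pi2-B_L(\omega^{3/2})$, and $B_L$ is a symbol of negative order. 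So the $\frac\pi2$ and $B_L$ corrections go into the amplitude, and we obtain exactly $\phi_{M,\pm,\pm}\mp\frac43\omega^{3/2}$. Equivalently, one can use $e^{iL}=-A_-/A_+$ from Lemma~\ref{lemL}. This makes visible that the $N=\pm1$ terms replace $A_\pm(\zeta_M(a))$ by the reflected wave, that is, the phase obtained by reflecting at $x=0$, where $\zeta_M=\omega$.

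For the last claim, take $x>2h^{2/3-\ceps}$. We integrate by parts in $\omega$ in the integral on the second line of \eqref{tosolve}, after the same stationary phase reduction. The reduced phases contain the terms $\pm\frac23(\omega-xq^{1/3})^{3/2}$, $\pm\frac23(\omega-aq^{1/3})^{3/2}$ and $N\frac43\omega^{3/2}$. Their $\omega$-derivative has the form
\[
\pm(\omega-xq^{1/3})^{1/2}\pm(\omega-aq^{1/3})^{1/2}-2N\omega^{1/2}(1+O(\omega^{-3})),\qquad |N|\ge2 .
\]
Each square root is at most $\omega^{1/2}$. Hence the absolute value of this derivative is at least $2(|N|-1)\omega^{1/2}\gtrsim|N|\omega^{1/2}$, whatever the value of $x$. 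It remains to check that the gain from each integration by parts beats the symbol losses.

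The symbol losses come from $\omega$-derivatives of the dyadic cutoffs, each costing $\omega^{-1}$, and of the Airy symbols, each costing $(\omega-xq^{1/3})^{-1}$. The main obstacle is the region where $\omega-xq^{1/3}(\eta)$ is not large, i.e. near the turning point of $Ai(xq^{1/3}-\omega)$. There we expect to use the hypothesis $x>2h^{2/3-\ceps}$ as follows. Since $\omega\lesssim h^{2\ceps}\cdot$ (cutoff scale) is not available, we instead split $x$-dyadically in $\zeta_M(x)$. In the region $\zeta_M(x)\lesssim1$ or $\zeta_M(x)<0$, we do not use the oscillatory expansion. Instead we integrate by parts with the $\omega^{1/2}|N|$ lower bound coming from the $a$-factor and $L$ alone, and we treat $Ai(xq^{1/3}-\omega)$ as a function whose $\omega$-derivatives are controlled by $\langle\zeta\rangle^{1/2}$ per derivative. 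The gain per step is then $|N|^{-1}\omega^{-1/2}$, and summation in $N\ge2$ gives $O(h^\infty)$, using $\omega\gtrsim h^{-2\ceps}$.

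For the exponentially decaying part $\zeta_M(x)<-1$, we use the Airy decay directly. The statement that the whole integral is $O(h^\infty)$ for such $x$ is then finished by combining this with Remark~\ref{rmqalpha>a}-type reasoning. The reasoning is that the $N=0,\pm1$ contributions are designed to match, while the $|N|\ge2$ sum is negligible. I expect the uniform control near the turning point to be the delicate step.
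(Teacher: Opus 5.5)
Your treatment of the first two assertions is fine. You use stationary phase in the Airy integral variable $\xi$, which is nondegenerate because $\zeta_M\sim\omega$ is large on the support when $xq^{1/3}(\eta)\ll\omega$. You then pair opposite signs with $N=0$ and equal signs with $N=\pm1$, using $L(\omega)=\frac43\omega^{3/2}+\frac\pi2-B_L(\omega^{3/2})$. There is one slip: from $L=\pi+i\log(A_-/A_+)$ one gets $e^{iL}=-A_+/A_-$ and $e^{-iL}=-A_-/A_+$, not $e^{iL}=-A_-/A_+$. This is consistent with the phase $\phi_{M,-,-}+L(\omega)$ carried by $-(A_+/A_-)(\omega)A_-A_-$ in \eqref{EM1}. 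It does not affect your identification of the phases.

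The gap is in the last assertion. The nonstationary argument you actually carry out is for $|N|\ge2$. As you note yourself, it works for every $x$; it is the remark made just before the lemma. Read literally, the second line of \eqref{tosolve} even contains $K_\omega(g_{h,a,2})$, not $\mathcal E_M$. The use of the claim right after the lemma shows what it must mean: $E_{M,\pm}(x,y,a)=O(h^\infty)$ for $x>2h^{2/3-\ceps}$, which is what justifies inserting $\chi_1(x/h^{2/3-\ceps})$. So the claim concerns the $N=0,\pm1$ terms, i.e.\ the $\mathcal E_M$-integral with the factor $1+\sum_\pm e^{\pm iL(\omega)}$. For these you only say they are ``designed to match'' and invoke Remark~\ref{rmqalpha>a}, which says nothing about their size. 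Airy decay removes only the part of the $\omega$-support where $\omega<xq^{1/3}(\eta)$. On the rest, which is most of it (up to $\omega\sim\ceps_0h^{-2/3}$), both Airy factors oscillate with amplitude $\sim\omega^{-1/4}$, so smallness must come from the $N=0,\pm1$ phases themselves.

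The missing computation is
\[
\partial_\omega\phi_{M,\pm,\mp}=\mp\frac{(x-a)q^{1/3}(\eta)}{\zeta_M(x)^{1/2}+\zeta_M(a)^{1/2}},\qquad
\partial_\omega\Big(\phi_{M,\pm,\pm}\mp\frac43\omega^{3/2}\Big)=\mp q^{1/3}(\eta)\Big(\frac{x}{\omega^{1/2}+\zeta_M(x)^{1/2}}+\frac{a}{\omega^{1/2}+\zeta_M(a)^{1/2}}\Big).
\]
The direct wave is stationary only at $x=a$, and the reflected one only at $x=a=0$. Since $a\le h^{2/3-\ceps}<x/2$ (this is where the factor $2$ enters), both derivatives have absolute value at least $xq^{1/3}(\eta)/(4\omega^{1/2})$. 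Moreover $xq^{1/3}(\eta)\gtrsim h^{-\ceps}$, because $|\eta|\sim h^{-1}$.

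Where $\zeta_M(x)\ge\omega/2$, symbol, cutoff and inverse-phase-derivative costs are $O(\omega^{-1})$. Each integration by parts in $\omega$ therefore gains $\lesssim (xq^{1/3}(\eta)\omega^{1/2})^{-1}\lesssim h^{\ceps}$. Where $\zeta_M(x)\le\omega/2$, the same lower bound is $\gtrsim\omega^{1/2}$, and your turning-point treatment applies to these terms as well. That is: split $Ai(-\zeta_M(x))$ only when $\zeta_M(x)\gg1$, keep it as a bounded smooth factor near $\zeta_M(x)=0$ (the $a$-phase plus $L$ then has derivative $\sim\omega^{1/2}$), and use exponential decay beyond. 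This gives $O(h^\infty)$. Without it, the hypothesis $x>2h^{2/3-\ceps}$ is never used and the last assertion is unproved.
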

After simplifications, the integral in the last line in \eqref{tosolve} becomes
\[
\int  {\cutoffchi^{\flat}}(h^{2/3}\omega/\eps_{0})\cutoffchi^{\sharp}\Big(\frac{ \omega}{\omega_{K_{\ceps}}}\Big)\Big(1+\sum_{\pm} e^{\pm i L(\omega)}\Big)\frac{L'(\omega)}{2\pi}\mathcal{E}_M(x,y,a,\omega)d\omega=E_{M,+}(x,y,a)+E_{M,-}(x,y,a),
\]
where we have set, modulo $O(h^{\infty})$,
\begin{multline}\label{EM1}
E_{M,+}(x,y,a):=\int  {\cutoffchi^{\flat}}(h^{2/3}\omega/\eps_{0})\cutoffchi^{\sharp}({\omega}/{\omega_{K_{\ceps}}}) \int q(\eta)^{1/6}\varkappa(h\eta)\chi_0(h^{2/3}\omega q^{2/3}(h\eta))\\
\times e^{iy\eta}\Big(A_+(\zeta_M(x,\eta,\omega))-\Big(\frac{A_+}{A_-}\Big)(\omega)A_-(\zeta_M(x,\eta,\omega))\Big)A_-(\zeta_M(a,\eta,\omega))d\eta d\omega,
\end{multline}
\begin{multline}\label{EM2}
E_{M,-}(x,y,a):=\int  {\cutoffchi}^{\flat}(h^{2/3}\omega/\eps_{0})\cutoffchi^{\sharp}({\omega}/{\omega_{K_{\ceps}}})\int q(\eta)^{1/6}\varkappa(h\eta)\chi_0(h^{2/3}\omega q^{2/3}(h\eta))\\
\times e^{iy\eta}\Big(A_-(\zeta_M(x,\eta,\omega))-\Big(\frac{A_-}{A_+}\Big)(\omega)A_+(\zeta_M(x,\eta,\omega))\Big)A_+(\zeta_M(a,\eta,\omega))d\eta d\omega,
\end{multline}
where the phase functions of the Airy terms in the second line of \eqref{EM1} are $\phi_{M,+,-}$ and $\phi_{M,-,-}+L(\omega)$, while the phase functions of the Airy terms in the second line of \eqref{EM2} are $\phi_{M,-,+}$ and $\phi_{M,+,+}-L(\omega)$. Moreover, for $x>2h^{2/3-\ceps}$, $E_{M,\pm}(x,y,a)=O(h^{\infty})$ and $E_{M,\pm}(0,y,a)=0$. This means that we can introduce a smooth cutoff $\chi_1( x/h^{2/3-\ceps})$ with $\chi_1\in C^{\infty}_0$ equal to $1$ on $[-1,1]$ and equal to $0$ for $x\geq 2h^{2/3-\ceps}$ such that $E_{M,\pm}(x,y,a)=\chi_1(x/h^{2/3-\ceps})E_{M,\pm}(x,y,a)+O(h^{\infty})$, and therefore we need to construct $g_{h,a,2}$ such that \eqref{tosolve} holds with the last line replaced by $\chi_1(x/h^{2/3-\ceps})(E_{M,+}(x,y,a)+E_{M,-}(x,y,a))$ (instead of $E_{M,+}(x,y,a)+E_{M,-}(x,y,a)$). 

Going back to \eqref{tosolve}, we observe that the symbol of its left hand side is supported in $\omega\gtrsim h^{-2\ceps}$, while the right hand side is essentially supported where $x,a\lesssim h^{2/3-\ceps}$. For such values of $x$ and $\omega$ we have $\zeta(x,y,\eta,\omega)=\omega-x|\eta|^{2/3}e_0(x,y,\eta/|\eta|,\omega_k/|\eta|^{2/3})\geq \omega/2$; using \eqref{eq:defG} we write
\[
G(x,y,\eta,\omega)=e^{i\psi}\sum_{\pm}\Big(p_0A_{\pm}(\zeta)+i x |\eta|^{-1/3}p_1A'_{\pm}(\zeta)\Big)=:G_{\pm}(x,y,\eta,\omega).
\]
We eventually obtain $g_{h,a,2}$ as in \cite[Prop. 2.34]{ILLP} that we recall below :
\begin{prop}\label{propapetitg2}
There exists smooth functions $g_{h,a,2,\pm}$ such that, with $g_{h,a,2}:=\sum_{\pm} g_{h,a,2,\pm}$,
\begin{multline}\label{g2-}
\int \cutoffchi^{\flat}(h^{2/3}\omega/\ceps_{0})\cutoffchi^{\sharp}( h^{2\ceps}\omega)\Big(G_+(x,y,\eta,\omega)-\Big(\frac{A_+}{A_-}\Big)(\omega)G_-(x,y,\eta,\omega)\Big)\\
q(\eta)^{1/6}\varkappa(h\eta)\varkappa(h\tau(\omega,\eta))\hat{g}_{h,a,2,-}(\eta,\omega/h^{1/3})d\eta d\omega
=E_{M,+}(x,y,a)+O(h^{\infty}),
\end{multline}
\begin{multline}\label{g2+}
\int\cutoffchi^{\flat}(h^{2/3}\omega/\ceps_{0})\cutoffchi^{\sharp}( h^{2\ceps}\omega) \Big(G_-(x,y,\eta,\omega)-\Big(\frac{A_-}{A_+}\Big)(\omega)G_+(x,y,\eta,\omega)\Big)\\
\times q(\eta)^{1/6}\varkappa(h\eta)\varkappa(h\tau(\omega,\eta))\hat{g}_{h,a,2,+}(\eta,\omega/h^{1/3})d\eta d\omega
=E_{M,-}(x,y,a)+O(h^{\infty}).
\end{multline}
\begin{multline}\label{g2+0}
\int\cutoffchi^{\flat}(h^{2/3}\omega/\ceps_{0})\cutoffchi^{\sharp}( h^{2\ceps}\omega) \Big(G_+(x,y,\eta,\omega)-\Big(\frac{A_+}{A_-}\Big)(\omega)G_-(x,y,\eta,\omega)\Big)\\
q(\eta)^{1/6}\varkappa(h\eta)\varkappa(h\tau(\omega,\eta))\hat{g}_{h,a,2,+}(\eta,\omega/h^{1/3})d\eta d\omega
=O(h^{\infty}),
\end{multline}
\begin{multline}\label{g2-0}
\int\cutoffchi^{\flat}(h^{2/3}\omega/\ceps_{0})\cutoffchi^{\sharp}( h^{2\ceps}\omega) 
\Big(G_-(x,y,\eta,\omega)-\Big(\frac{A_-}{A_+}\Big)(\omega)G_+(x,y,\eta,\omega)\Big)\\
\times q(\eta)^{1/6}\varkappa(h\eta)\varkappa(h\tau(\omega,\eta))\hat{g}_{h,a,2,-}(\eta,\omega/h^{1/3})d\eta d\omega
=O(h^{\infty}),
\end{multline}
\begin{equation}\label{toshowsecond}
  \sum_{|N|\geq 2 }\int e^{- iN L(\omega)}\int\cutoffchi^{\flat}(h^{2/3}\omega/\ceps_{0})\cutoffchi^{\sharp}( h^{2\ceps}\omega)
  K_\omega(g_{h,a,2})(0,x,y)d\omega =O(h^{\infty}).
\end{equation}
\end{prop}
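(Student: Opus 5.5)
The plan is to construct $\hat g_{h,a,2,\pm}$ by microlocally inverting, on the region $h^{-2\ceps}\lesssim\omega\lesssim\ceps_0h^{-2/3}$ and $x,a\lesssim h^{2/3-\ceps}$, the operator obtained by pairing $G_\pm$ against a Fourier density, and then to check the four identities and the tail bound by stationary/nonstationary phase. Since the Schrödinger and wave parametrices coincide at $t=0$ (Remark \ref{rmqPcal}), this is the construction of \cite[Prop.~2.34]{ILLP}; I would follow it, making explicit where the uniformity in $\omega$ and $a$ enters.

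\textbf{Step 1: phases.} On this region $\zeta\geq\omega/2\gg1$, so $A_\pm(\zeta)$ have WKB expansions $A_\pm(\zeta)=e^{\mp i\frac23\zeta^{3/2}}\zeta^{-1/4}a_\pm(\zeta^{3/2})$ with classical symbols. Hence $G_\pm$ is an oscillatory function with phase $\psi\mp\frac23\zeta^{3/2}$ and a symbol of order $0$ in the anisotropic calculus with parameter $\omega^{3/2}\gtrsim h^{-3\ceps}$. The same holds for $E_{M,\pm}$ after the stationary phase in $\xi$ allowed by the preceding lemma; their phases are $\phi_{M,\pm,\mp}$ and $\phi_{M,\mp,\mp}\pm L(\omega)$. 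By the eikonal equations \eqref{systeikeq}, $\psi\mp\frac23\zeta^{3/2}$ generates the same Lagrangian as the model phase up to the canonical transformation $\canonchi_M$. This identifies the phases up to the $B_0$ correction, which is removed exactly as with $\Lo$.

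\textbf{Step 2: define $g_{h,a,2,-}$.} I would define it by solving the leading equation $\int G_+\,q^{1/6}\varkappa\,\hat g\,d\eta\,d\omega=E_{M,+}$ (the $A_+(\zeta_M(x))A_-(\zeta_M(a))$ part). This is an elliptic FIO equation in $(\eta,\omega)$, so $\hat g_{h,a,2,-}$ is an oscillatory integral with phase $\frac23(\omega-aq^{1/3})^{3/2}$ plus a correction, and an elliptic symbol obtained by the usual iterative inversion. This is analogous to \eqref{ghaF}, with $a$ now entering only through the symbol expansion since $a\ll h^{2/3}\omega$.

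\textbf{Step 3: the reflected pieces.} The reflected term $-(A_+/A_-)(\omega)G_-$ then automatically matches the second term of $E_{M,+}$. Both vanish at $x=0$, because $G_+-(A_+/A_-)G_-$ is proportional to $G$ at $\omega$ with $\zeta|_{x=0}=\omega$, and $E_{M,+}(0,y,a)=0$; both solve the same microlocal equation. Uniqueness for the Dirichlet problem near glancing, or a direct comparison of phases, gives \eqref{g2-}. By symmetry \eqref{g2+} follows.

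\textbf{Step 4: cross terms and tail.} For \eqref{g2+0} and \eqref{g2-0}, the cross terms have phase derivative in $\eta$ or $\omega$ bounded below by $c\,\omega^{1/2}$ times a nonzero quantity, since they pair opposite Airy branches at $x,a\ll h^{2/3}\omega$. Repeated integration by parts in $\omega$ gains $(h^{2/3}\omega)^{-1}\cdot h^{2/3}\omega^{-1/2}\lesssim h^{3\ceps}$ per step, hence $O(h^\infty)$. For \eqref{toshowsecond}, $\partial_\omega(NL(\omega))\sim N\sqrt\omega$ dominates the derivatives of the phases of $K_\omega(g_{h,a,2})$ for $|N|\geq2$. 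These derivatives are at most about $2\sqrt\omega$, coming from one incident and one reflected branch. Integration by parts as in the proof of Lemma \ref{sommeNfinie}, combined with \eqref{eq:summable-smooth-errors}, sums the tail. I expect the main obstacle to be this step for $N=\pm2$: the nonstationarity margin is only a fixed fraction of $\sqrt\omega$, so one must check carefully that the $\Gamma$-corrections ($\Lp$, $\mu_\Gamma$, $B_2$ terms) to $\partial_\omega$ of the phase stay small on $x\lesssim h^{2/3-\ceps}$, $|y|$ small.
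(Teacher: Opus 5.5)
\textbf{Gap in Step~3.} Your Steps~2 and~4 follow the paper's route, which refers to \cite[Prop.~2.34]{ILLP}. The operators $\hat g\mapsto\int G_\pm\cdots\hat g$, with phases $\psi\pm\frac23\zeta^{3/2}$, are inverted on the direct model term. The cross terms \eqref{g2+0}, \eqref{g2-0} and the indices $|N|\ge2$ in \eqref{toshowsecond} are removed by non-stationary phase in $\omega$, since the $\omega$-derivative of the phase is of size $\sqrt\omega\gtrsim h^{-\ceps}$.

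The problem is Step~3. Once $\hat g_{h,a,2,-}$ is fixed by the direct term, you assert that $-(A_+/A_-)(\omega)\int G_-\cdots\hat g_{h,a,2,-}$ equals the second term of $E_{M,+}$, because both vanish at $x=0$ and solve ``the same microlocal equation''. Neither half of this works.
\begin{itemize}
\item There is no common equation. The left side is built from $G_\pm$, whose phases carry $B_\Gamma$, $\Lp$, $\mu_\Gamma$ and $\Upsilon$, i.e.\ from $\Delta$. $E_{M,+}$ is built from Airy functions of $\zeta_M$, i.e.\ from $\Delta_M$.
\item Even for a single operator there is no uniqueness at $t=0$. These are superpositions over all spectral parameters $\omega$, and the amplitude depends on the $d$ variables $(\eta,\omega)$. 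The Dirichlet trace is a function of $y$ alone, so vanishing at $x=0$ cannot determine the reflected part in $x>0$. Dirichlet uniqueness of the reflected wave belongs to the evolution problem for one fixed operator, where the time frequency fixes $\tau^2$.
\item Also, $G_+-(A_+/A_-)(\omega)G_-$ is proportional to $G$ only when $\omega=\omega_k$. In general only its vanishing at $x=0$ holds.
\end{itemize}

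\textbf{Where the reflected pieces matter.} At $t=0$ their phase is non-stationary in $\alpha=h^{2/3}\omega$. Each integration by parts gains $h/((x+a)\sqrt\gamma)$, so they are negligible unless $(x+a)\sqrt\gamma\lesssim h^{1-\delta}$. In that boundary layer they are of full size. There the true reflection (in $G_\pm$) and the model reflection (in $E_{M,+}$) must be compared modulo $O(h^\infty)$. A phase comparison does not give this for free, since the two reflected phases differ through the $\Gamma$-terms at the source and observation points.

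The paper does not argue from boundary values. It applies the microlocal inverses to the whole target, in coordinates extended across $x=0$. The model and general reflected terms are controlled through the separation, by at least $c\sqrt\gamma/h$, of the two normal frequencies $\xi_\pm$ of opposite signs, uniformly up to the boundary; see the derivation of \eqref{eq:small-a-inverted-source}. The rest of the verification is left to \cite{ILLP}.

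\textbf{Two smaller inaccuracies.}
\begin{itemize}
\item The conjugation by $\Lo$ removes only $B_0$. The remaining $\Gamma$-terms stay in the phase, so Step~1 does not identify the phases with the model ones.
\item $a$ cannot be moved into the symbol of $\hat g_{h,a,2,-}$. The term $a\sqrt\alpha/h$ is large whenever $a\gg h/\sqrt\gamma$, which is why the paper keeps $aA_\Gamma$ and $B_\Gamma$ in the phase.
\end{itemize}
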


\section{Dispersion estimates when $a\geq h^{2/3-\ceps}$}
\label{sec:dispersion-estimates}
We use the parametrix construction from \eqref{eq:Prond} and \eqref{eq:Prond2} to obtain the following dispersion estimates, restricting to positive times for simplicity.
{Before the local partition, we exchange the two kernel variables if necessary
so that the source has the larger distance to the boundary. We therefore
work with $0\le x\le a$; the reconstruction for both variables is
explained in Section~\ref{sec:completion-linear-proof}.}
\begin{thm}\label{dispintermediaire}
There exist $a_{0}$, $c$, $C$, $\ceps$ such that for all $|(t,x,y,h,a)|<a_{0}$, one has
\begin{itemize}
\item for $t\leq c \sqrt a$ and $0\leq x\leq a$
  \begin{equation}
    \label{eq:1}
|  \Prond_{S,h,a}(t,x,y)|\leq C h^{-d}  \inf \left(1, (\frac ht)^{\frac{d-1} 2}\right) \,;
  \end{equation}
\item For $a\geq \max\{h^{2/3-\ceps}, (ht)^{1/2}\}$, $t>c\sqrt a$, $\lambda=a^{3/2}/h$ and {$0<x\leq a$}
\begin{equation}
    \label{eq:dispersion-reflections}
|  \Prond_{S,h,a}(t,x,y)|\leq C h^{-d}  \left(\frac h t\right)^{\frac{d-1} 2} \times 
  \left\{ \begin{array}{l} 
 \left(  \frac{ht}{a} \right)^{\frac 1 4}\, \quad \text{ if } t/\sqrt{a}>\lambda^{1/3} ;\\
 \left(  \left(\frac{ah}{t} \right)^{\frac 1 4} +h^{\frac 1 3}\right), \quad \text{ if } t/\sqrt{a}\leq \lambda^{1/3};\,
 \end{array} \right.
  \end{equation}
\item For $a\lesssim \max\{ h^{2/3-\ceps},(ht)^{1/2}\}$
,  $t>c\sqrt a$    
 and {$0<x\leq a$}
\begin{equation}
    \label{eq:dispersion-quasimodes}
|  \Prond_{S,h,a}(t,x,y)|\leq C h^{-d}  \left(\frac h t\right)^{\frac{d-1} 2} \left((ht)^{1/4} +h^{\frac 1 3-2\ceps}\right)
 \,.
  \end{equation}
\end{itemize}
\end{thm}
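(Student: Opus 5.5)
We will work directly with the two representations of the parametrix, \eqref{eq:newVN} (wave packets $W_N$) and \eqref{eq:Prond2cut} (spectral sum over gallery modes). By Proposition~\ref{propsommefinie}, $\Prond_{S,h,a}=\sum_{|N|\lesssim 1+ta^{-1/2}}W_N+O(h^\infty)$, and we may insert $\cutoffchi^\sharp(4\alpha/a)$, so that $\alpha\gtrsim a$ on the support.

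\textbf{Short times, $t\le c\sqrt a$.} Only $|N|\lesssim 1$ contributes. In $W_N$ we apply stationary phase in $(\sigma,s)$; for $\alpha\gtrsim a\ge h^{2/3-\ceps}$ these critical points are nondegenerate, since the relevant large parameter is $\alpha^{3/2}/h\gtrsim h^{-3\ceps/2}$. We then apply stationary phase in the $d-1$ tangential variables $\theta$. The phase is $t\tau^2+y\theta+O(\cdot)$, and its Hessian in $\theta$ is $2t\,\mathrm{Id}+O(t\sqrt a)$. This gives the factor $(h/t)^{(d-1)/2}$. The remaining $\alpha$-integral has length $\lesssim\ceps_0$ and amplitude $O(h^{-d})\cdot$(Jacobian of the $(\sigma,s)$ stationary phase), which should produce the bound \eqref{eq:1}. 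This is the same scheme as in \cite[Section 3]{ILLP}, with $\tau$ replaced by $\tau^2$.

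\textbf{Reflected regime, $a\ge\max(h^{2/3-\ceps},(ht)^{1/2})$, $t>c\sqrt a$.} We split $\alpha$ into a tangential part $\alpha\sim a$ (directions within aperture $\sqrt a$) and a transverse part $\alpha\gg a$.

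In the transverse part, the critical equation \eqref{critalphaNNf} forces $N\sim t/\sqrt\alpha$. The $\alpha$-Hessian is $\sim N h/\sqrt\alpha$ in the large-parameter normalization, which gives the free decay in the normal direction with at worst bounded overlap of the $W_N$.

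In the tangential part, we first remove the $\theta$-integral by stationary phase, then rescale $\alpha=a A$ and $\sigma,s=\sqrt a(\cdot)$. This puts $W_N$ in the Friedlander-type normal form with parameter $\lambda=a^{3/2}/h$ and $T=t/\sqrt a$. The main difficulty is here: the two-variable phase degenerates into swallowtail-type caustics, $\swallowtail$, at a discrete set of $N$. We plan to use the refined degenerate stationary phase estimates of Appendix~\ref{app:refined-quartic-estimate}, giving per-$N$ bounds of the form $\lambda^{-1/3}$ at the worst points versus the $(N/\lambda)^{1/2}$-type nondegenerate size. We must then control how many $N$ interfere at a given $(t,x,y)$; all $|N|\lesssim T$ interact, unlike the wave case.

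We expect the two cases to behave as follows.
\begin{itemize}
\item For $T>\lambda^{1/3}$, summing $|W_N|$ over the $O(1)$ effectively caustic indices, with a tail bounded by the nondegenerate estimate, yields the factor $(ht/a)^{1/4}$.
\item For $T\le\lambda^{1/3}$, the worst swallowtail contribution yields $(ah/t)^{1/4}$, and the residual $h^{1/3}$ comes from the cusp (Airy) degeneracy at $x\sim0$.
\end{itemize}
Isolating these caustic indices should follow the organization of Propositions~\ref{dispNgrand}--\ref{dispNpetitpres}: large $N$; small $N$ with $x$ far from $a$; small $N$ with $x$ near $a$. We expect the main obstacle to be obtaining these per-$N$ bounds uniformly in the subleading terms of $\Gamma$, namely the $\mu_\Gamma$, $\alpha_1$ and $\mu_1$ terms of \eqref{eq:normalized-AGamma}. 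These terms must be shown to be perturbations of the model phase of \cite{doiSF21} in the scaled variables; the normalizations $\alpha_\Gamma=\beta_1=0$ are what make this possible.

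\textbf{Small $a$ relative to time, $a\lesssim\max(h^{2/3-\ceps},(ht)^{1/2})$.} Here we switch to the spectral sum \eqref{eq:Prond2cut}. Each term $K_{S,\omega_k}$, after stationary phase in $\theta$, is bounded by $h^{-d}(h/t)^{(d-1)/2}$ times $h^{1/3}|e_k(x)e_k(a)|/L'(\omega_k)$-type factors.
\begin{itemize}
\item For $k\lesssim (ht)^{\ldots}$ we sum absolutely, using the decay $|Ai(\cdot)|\lesssim\omega_k^{-1/4}$ and $L'(\omega_k)\sim\omega_k^{1/2}$.
\item For larger $k$ we exploit the oscillation $e^{iht\tau^2(\omega_k,\cdot)}$ in $k$ by Poisson summation back to $W_N$, now with $\alpha\lesssim(ht)^{1/2}$.
\end{itemize}
Balancing the two ranges at $h^{2/3}\omega_k\sim(ht)^{1/2}$ yields the factor $(ht)^{1/4}$. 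When $a\le h^{2/3-\ceps}$, the quasimode truncation $K_\ceps$ and the errors of Propositions~\ref{propdataapetitpetit1} and \ref{propdataapetitpetit2} cost at most $h^{-2\ceps}$, which accounts for the $h^{1/3-2\ceps}$ term in \eqref{eq:dispersion-quasimodes}.
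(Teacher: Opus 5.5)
The main gap is in your treatment of the almost transverse pieces $\alpha\gg a$. Your claim that they give ``free decay in the normal direction with at worst bounded overlap of the $W_N$'' is false for the Schr\"odinger flow, and the paper singles this out as its main difference with the wave equation. At fixed $(t,x,y)$, the tangential critical point $\theta_c\approx-(y+\nabla B_0)/(2t)$ of Lemma~\ref{lemthetac} is, to leading order, independent of $N$. By \eqref{Acexpres}, the normal critical point satisfies $\sqrt{A_c}=t\sqrt{q(\theta)}/(2N\sqrt\gamma)+O(1/N)$. Hence \emph{every} $N$ in a band of length $\sim t/\sqrt\gamma$ has a critical point on the support of $\kappa$, and $|\Sigma_\pm|,|S_\pm|$ are bounded below because $x/\gamma,a/\gamma\le1/4$.

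Each such term has free size: $|W_{N,h,\gamma}|\lesssim h^{-d}(h/t)^{d/2}$, by Lemma~\ref{lem:transverse-directions} and the prefactor in \eqref{VNdef}. But these terms all superpose, so the dyadic piece is only bounded by $h^{-d}(h/t)^{(d-1)/2}(ht/\gamma)^{1/2}$. This is what Proposition~\ref{proptransv} bounds by $h^{1/3}$ when $t/\sqrt\gamma\lesssim\lambda_\gamma^{1/3}$.

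Your own third step actually depends on this count. The balancing at $h^{2/3}\omega_k\sim(ht)^{1/2}$ equates the reflection cost $(ht/\gamma)^{1/2}$ ($t/\sqrt\gamma$ reflections, each of free size) with the mode cost $h^{1/3}\lambda_\gamma^{1/3}=\sqrt\gamma$ ($\lambda_\gamma$ modes). With bounded overlap there would be nothing to balance. (There you also mean $\alpha\gtrsim(ht)^{1/2}$ on the reflection side, not $\lesssim$.)

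The per-$N$ nondegeneracy is also not simply an $\alpha$-Hessian of size $N/\sqrt\alpha$. After eliminating $(A,\theta)$, the relevant quantity is $-4\Sigma S+\frac{2\sqrt{A_c}}{N}(\Sigma-S)$. For $|N|=1$ and $\Sigma<0<S$ it reduces to $m+n-2mn$, which is bounded below only through \eqref{eq:transverse-one-reflection-time} when $t/\sqrt\gamma\ge c_*$. The range $t<c_*\sqrt\gamma$ needs the separate first-reflection analysis, where the $A$-scale is $t\gamma$.

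The rest of your proposal mostly points to the paper's organization, but the heuristics you attach to it do not match.
\begin{itemize}
\item For $N\ge\lambda^{1/3}$ the swallowtail lemma is not used, since the two cubic terms decouple as $\lambda^{1/3}/N\to0$. The sum is not carried by $O(1)$ caustic indices either. Lemma~\ref{lem:large-reflections} gives the two Airy weights in \eqref{eq:1ff}. The ingredient you are missing is the counting of Lemma~\ref{rmqNgrand}: $\Ab_0^2(\cdot,N)$ is monotone in $N$ with derivative $\sim1/N_t$, so each of the two differences is $\le\delta$ for $O(1+N_t\delta)$ indices. The two-weight sum of Lemma~\ref{rmqNgrand2} then gives $(ht/a)^{1/2}$, and the bulk term $N_t/\lambda^{1/3}$ dominates.
\item For $N\le\lambda^{1/3}$, the swallowtail bound $h^{1/3}(N/\lambda^{1/3})^{-1/4}$ is used only for the $O(1)$ indices of Lemma~\ref{lemNpetit}, and the cusp bounds \eqref{eq:2ff} for the others.
\item In the mode sum, $|Ai|\lesssim\omega_k^{-1/4}$ fails at turning points. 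The paper uses Cauchy--Schwarz and \eqref{estairy21} instead.
\item The restriction $\gamma\le(ht)^{1/2}$ is also what makes the Airy factors admissible symbols for the $\theta$ stationary phase ($\omega_k^{3/2}\ll(t/h)^{1/2}$). It is used together with $\nabla_\theta\Upsilon=O(\gamma^2)=O(ht)$, which comes from the normalization of $\Gamma$.
\item The factor $h^{1/3-2\ceps}$ is not a quasimode-truncation cost. It comes from the transition regime $t<h^{1/3-2\ceps}$, where $(ht)^{1/2}<h^{2/3-\ceps}\le a$ and the mode sum is unavailable.
\item For $a\le h^{2/3-\ceps}$, the reflected part $\Prond^2_{S,h,a}$ needs its own representation \eqref{eq:small-a-inverted-source} of $\widehat g_{h,a,2,\pm}$, uniform as $a\to0$. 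Only then does the transverse argument apply, now with $\gamma\gtrsim h^{2/3-2\ceps}$.
\end{itemize}
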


The first displayed bound is only a weaker short-time bound and is not
used to deduce the final Schr\"odinger decay. For the unreflected
contribution we use the stronger exponent $d/2$ in
Proposition~\ref{propfreeN=0}; the reflected contributions and the
small-time transition are included in the summation in
Section~\ref{sec:completion-linear-proof}.
For related short-time localization and Strichartz estimates in a more
general boundary geometry, see \cite{bss08}.

The second estimate, \eqref{eq:dispersion-reflections}, is proved using the parametrix as a sum over reflected waves. The first term is due to swallowtail singularities (and always larger than the corresponding factor in the free dispersion) and the second term is due to the presence of cusps appearing after each swallowtail singularity, between two consecutive reflections; notice that here we use the parametrix construction in an extended region $a\geq h^{2/3-\ceps}$. The construction of a parametrix is available for all $a\geq h^{2/3-\ceps}$ (as in \cite{ILLP}), but only adding the assumption $a\gtrsim (ht)^{1/2}$ allows us to obtain the bounds \eqref{eq:dispersion-reflections} using the sum over reflections.

The third estimate, \eqref{eq:dispersion-quasimodes}, will be proved using the parametrix as a sum over quasimodes, and we postpone its proof to section \ref{anciennelast}, where we deal with decay of such quasimodes.

The estimates \eqref{eq:dispersion-reflections} follow from Propositions \ref{proptang} and \ref{proptransv} in Section \ref{secta>}. When $a\sim (ht)^{1/2}$, the estimates {\eqref{eq:dispersion-reflections}} and \eqref{eq:dispersion-quasimodes} yield the same bound
\[
|\Prond_{S,h,a}(t,x,y)|\leq C h^{-d}\Big(\frac ht\Big)^{(d-1)/2}(ht)^{1/4}.
\]
Indeed, in this regime,
$\frac{t}{\sqrt a}\mathrel{\sim}\frac{a^{3/2}}{h}=\lambda\gg \lambda^{1/3}$,
and hence the bounds in \eqref{eq:dispersion-quasimodes} apply with $a\sim (ht)^{1/2}$. These bounds have recently been improved in this regime in the one-dimensional setting, for the model operator $\partial_x^2+x$, by means of van der Corput derivative tests (see \cite{doi25}). Obtaining analogous improvements in higher dimensions appears to be considerably more difficult, even for model domains.\\

We start with a lemma which allows us to deal with the parametrix ``behind the wave front''.
\begin{lemma}
\label{derriere}
There exist $c_{0}$ and $T_{0}$ such that , with $\mathcal{B}=\{ 0\leq x\leq a, |y|\leq c_{0} t, 0<h\leq t \}$, 
  \begin{equation}
    \label{eq:2bis}
\forall t\in [0, T_{0}]\,,\quad\quad   \sup_{x,y,t\in \mathcal{B}}  |  \Prond_{S,h,a}(t,x,y)|\leq C h^{-d}  O(( h/ t)^{\infty})\,.
  \end{equation}
\end{lemma}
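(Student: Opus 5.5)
We plan to prove Lemma~\ref{derriere} by nonstationary phase in the tangential frequency $\theta$, applied term by term to the decomposition $\Prond_{S,h,a}=\sum_N W_N$ from \eqref{eq:newVN}. By Proposition~\ref{propsommefinie}, only $|N|\lesssim 1+t a^{-1/2}$ matter, modulo $O_{C^\infty}(h^\infty)$. It therefore suffices to bound each $W_N$ by $h^{-d}(h/t)^M$ for every $M$, with a constant uniform in $N$. For the small-$a$ regime we use the spectral form \eqref{eq:Prond2cut}; there the phase in $\theta$ has the same structure.

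The key computation is the $\theta$-gradient of the total phase
\[
t\tau^2(\alpha,\theta)+\Phi(x,y,\theta,\alpha,\sigma)-\Phi(a,0,\theta,\alpha,s)-NhL(h^{-2/3}\alpha).
\]
Since $L$ depends only on $\alpha$, the $N$-term has no $\theta$-derivative. We have
\[
\nabla_\theta(t\tau^2)=2t\theta+t\alpha\nabla q^{2/3}(\theta).
\]
For $\alpha\le\ceps_0$ its size is therefore $\geq t|\theta|\sim t$, since $\theta$ lies in the support of $\varkappa$. From $\Phi$ we get $y$, which is $\le c_0 t$ on $\mathcal B$. The remaining terms come from $\sigma^3/3+\sigma(xq^{1/3}-\alpha)$ and $\tau\Gamma$, evaluated at $(x,y)$ and at $(a,0)$. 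They contribute $\sigma x\nabla q^{1/3}$, $s a\nabla q^{1/3}$, and $\nabla_\theta(\tau B_\Gamma(y,\theta/\tau))=O(|y|^2+\alpha|y|)$ by Proposition~\ref{propimpformgamma} (recall $B_0,B_2=O(|y|^2)$). They also contribute $x\nabla_\theta(\tau A_\Gamma)$ and $a\nabla_\theta(\tau A_\Gamma)|_{(a,0)}$. The last two are not small a priori. We will handle them using the $\sigma$- and $s$-critical relations \eqref{critsigma}: these give $\sigma^2\le\alpha$ and $s^2\le\alpha$ up to $O(x)$ and $O(a)$, and the relation \eqref{critalphaNNf} gives $t\gtrsim N\sqrt\alpha+\sigma-s$. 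On the joint critical set in $(\alpha,\sigma,s)$, these combine into a bound of the form $O(x\sqrt\alpha+a\sqrt\alpha+\alpha^{3/2})\cdot(\text{const})$. We will show this is at most $c\,t$ with a small constant $c$, using $x\le a$, $\alpha\lesssim\ceps_0$ and $t\gtrsim N\sqrt\alpha$. Near the critical set one may equivalently view the $\theta$-gradient as the tangential displacement of the reflected ray, which has speed $\sim 2|\theta|$. After choosing $c_0$, $\ceps_0$ and $T_0$ small, the gradient is then $\ge t/2$ on $\mathcal B$.

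We then integrate by parts in $\theta$ $M$ times. Each step gains $h/t$, while $\theta$-derivatives of the symbol cost $O(1)$, using the symbol bounds on $p_h$ and $\tilde q_h$ and the homogeneity of $\Gamma$. The prefactor is $h^{-d-1}$ and the $(\alpha,\sigma,s)$ integration runs over a compact set. Stationary phase in $(\sigma,s)$, performed first, restores the $h^{-d}$ normalization and leaves a contribution $C_Mh^{-d}(h/t)^M$ for each $N$. Away from the critical set in $(\alpha,\sigma,s)$ we instead integrate by parts in those variables and obtain $O(h^\infty)$.

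Summing over $|N|\lesssim 1+t/\sqrt a$ costs at most a polynomial factor in $h^{-1}$, because $a\ge h^{2/3}$ up to $\ceps$. This loss is absorbed by taking $M$ larger, since $h\le t$ gives $(h/t)^M\le (h/t)^{M'}$ only when $t$ is bounded below by a power of $h$. For $t$ close to $h$ the claimed bound is trivial: the bound $h^{-d}$ holds by \eqref{eq:1}. We therefore split at $t=h^{1-\delta}$, and for $t\geq h^{1-\delta}$ each factor $h/t\le h^\delta$ beats the count of reflections. The main obstacle is the uniform control of the $x A_\Gamma$ and $aA_\Gamma$ contributions to $\nabla_\theta$ for many reflections, i.e. showing that the tangential displacement is monotone in $t$ with constant independent of $N$. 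For that step we would first try the weighted-degree bookkeeping of \eqref{partialAGamma}.
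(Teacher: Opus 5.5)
Your plan stalls at the step you call ``the main obstacle''. That step is the whole content of the lemma, and the size bound you propose for it fails.

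\textbf{The gradient bound fails for $N=0$.} Your estimate $O(x\sqrt\alpha+a\sqrt\alpha+\alpha^{3/2})\le c\,t$ relies on $t\gtrsim N\sqrt\alpha$, which is empty for the unreflected term $N=0$. Take $a$ small and $\alpha\ge 2a$ both fixed, $y=0$, $t=h^{1/2}$, and $x=a-O(t)$ chosen so that a direct critical point exists; this point lies in $\mathcal B$. There the terms $\sigma x\nabla q^{1/3}(\theta)$ and $sa\nabla q^{1/3}(\theta)$ each have size $\sim a\sqrt\alpha\gg t$. Only their difference is small, and only on the critical set. In the model, the relations $\sigma^2+xq^{1/3}=\alpha$, $s^2+aq^{1/3}=\alpha$ and $\sigma-s=tq^{2/3}$ give $\sigma x-sa=tq^{1/3}(\alpha-\sigma^2-\sigma s-s^2)=O(t\alpha)$.

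\textbf{Consequences for the integration by parts.} When $t\ll a\sqrt\alpha$, the lower bound $|\nabla_\theta|\gtrsim t$ holds only near the critical set of $(\alpha,\sigma,s)$, not on the whole support. To integrate by parts in $\theta$ you would therefore need to localize near that set. The localization must be at scales compatible with the turning points $\sigma\to0$, $s\to0$, which is exactly where ``stationary phase in $(\sigma,s)$ first'' stops being uniform (cusps and swallowtails). You would also have to carry the same cancellation through all the $\Gamma$ corrections, uniformly in $N$ and down to $t\sim h$. None of this is supplied.

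\textbf{The small-$a$ case is worse.} In the spectral form, each $\theta$-derivative of the Airy factors costs a power of $\omega_k$ (or of $xh^{-2/3}$). So one integration by parts does not gain $h/t$ when $t$ is close to $h$, and ``the same structure'' does not cover this.

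\textbf{Small-time split.} As stated it is wrong: a bound $Ch^{-d}$ is not $O(h^{-d}(h/t)^M)$ for $Ch\le t\le h^{1-\delta}$. No split is needed, however. For $t\ge h$ and $a\ge h^2$ one has $1+t/\sqrt a\le 2t/h$, so the reflection count costs only one power of $t/h$.

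\textbf{What the paper does instead.} The missing idea is to avoid the oscillatory-integral bookkeeping entirely. For $t\lesssim a$, interior propagation of singularities shows the solution is the free one. For $t=s\in[h,T_0]$, rescale $(t,x,y)=s(t',x',y')$ and set $\hbar=h/s$. The claim becomes an $O(\hbar^\infty)$ statement at $t'=1$ for $i\hbar\partial_{t'}-\hbar^2\Delta_{g,s}$, a family smooth in $s$ and close to the flat operator, with the source at rescaled distance $a/s\lesssim1$. Melrose--Sj\"ostrand propagation along generalized bicharacteristics then handles all reflections, glancing rays and caustics at once. It reduces the lemma to the ray estimate $|y'(1)|\ge 2c_1-cs\ge c_0$, obtained from Hamilton's equations whenever $|\eta_0|\ge c_1$ (which $\varkappa(hD_y)$ guarantees). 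Your heuristic that the $\theta$-gradient is the tangential displacement of the ray is exactly this geometric input. What you lack is the mechanism, propagation of singularities at scale $\hbar=h/t$, that turns it into a uniform $O((h/t)^\infty)$ bound.
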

\begin{proof}
The lemma follows from classical propagation of singularities: for $t\leq a/C_{1}$ with a large $C_{1}$, we apply Melrose-Sjöstrand's theorem in the interior of $\Omega$, as for data localized at small distance $a\ll 1$ from the boundary the time needed to reach the boundary is $~a$, hence the solution to \eqref{schrod} equals the free solution as long as $t\ll a$.

Now, let $T_{0}$ be small enough and consider a given $s\in [h,T_{0}]$. We rescale using $(t',x',y')$ as new variables and $\hbar$ a new parameter: $t=st'$, $x=sx'$, $y=sy'$, $\hbar=h/s$, let $v_{s}(t',x',y')=v(st',sx',sy')$ for any function $v$, then
\begin{equation}
  \label{eq:5}
  (i\hbar\partial_{t'}-\hbar^2\Delta_{g,s}) v_{s} =(ih\partial_t-h^2 \Delta_{g}) v\,\,\text{with }\,\, \Delta_{g,s} : =\partial^{2}_{x'}+s^{2} R(sx', sy', s^{-1} D_{y'})\,.
\end{equation}
Set $b=a/s$, $0< b\leq c_{1}$ and $\Prond_{S,b,s,\hbar}=s^{d} (\Prond_{S,a,h})_{s}$; then one may apply the Melrose-Sj\"ostrand theorem to $\Prond_{S,b,s,\hbar}$ to obtain $\Prond_{S,b,s,\hbar}\in O(\hbar^{\infty})$ for $0\leq x'\leq b$, $|y'|\leq c_{0}$ and $t'=1$, when $s\leq T_{0}$ and $T_0$ is small enough. In fact, $\Prond_{S,b,s,\hbar}$ is a parametrix for $ (i\hbar\partial_{t'}-\hbar^2\Delta_{g,s})$, and $ (i\hbar\partial_{t'}-\hbar^2\Delta_{g,s})$ is smooth in $s$ and for $s$ small enough, $ (i\hbar\partial_{t'}-\hbar^2\Delta_{g,s})$ is close to the usual semiclassical Schrödinger operator {$i\hbar\partial_{t'}-\hbar^2(\partial^{2}_{x'}+\Delta_{y'})$} (recall that we picked geodesic normal coordinates at the boundary, and $R_{0}(y,
\eta)=|\eta|^{2}+O(|y|)$). 
To complete our proof, we need to check the following property of an optical ray $\sigma\in [0,1]\ra (x'(\sigma),y'(\sigma),\xi'(\sigma),\eta'(\sigma))(\sigma)$: if it starts at $\sigma=0$ from $(b,0,\xi_{0},\eta_{0})$ with $\xi^{2}_{0}+R(sb,0;\eta_{0})=1$ and  if $|\eta_{0}|\geq c_{1}>0$ for some constant $c_1$, then there exists $c_0$ such that $|y'(1)|\geq c_{0}>0$ (notice that $\partial_{\sigma}\tau=0$ and $\partial_{\sigma}t'=1$ which implies $t=st'$, $t'=1$).
Note that $(y',\eta')(\sigma)$ are solutions to the Hamilton-Jacobi equations $\partial_{\sigma} y'=\partial_{\eta'} R_{s}$, and $\partial_{\sigma} \eta'=- \partial_{y'} R_{s}$, with {$x'(0)=b$, $y'(0)=0$, $\eta'(0)=\eta_{0}$, $R_{s}(x',y',\eta')=s^{2} R(sx',sy',\eta'/s)$}. For some $c>0$ small, $|y'(\sigma)|\geq 2 c_{1} \sigma-cs\sigma$ for all $\sigma \in [0,1]$, and if $T_{0}$ is small enough, as $s\leq T_0$, we obtain the lower bound $|y'(1)|\geq c_{0}>0$ for some $c_0>0$. See also \cite[Appendix]{doi08} for details on the almost finite speed of propagation for the semiclassical Schrödinger equation (where all the arguments work for strictly convex boundaries).
\end{proof}

\subsection{Localisation of the parametrix depending on the initial direction}\label{sectcardN1}
This dyadic decomposition is internal to a single glancing piece.
Unlike the fixed spatial and conic partition, its cardinality may grow
with $h^{-1}$. Its contribution, including the logarithm from summing
the $h^{1/3}$ terms, is estimated in
Section~\ref{sec:completion-linear-proof}.
We further localize our parametrix $\Prond_{S,h,a}$: let $\phi\in C^{\infty}_{0}(\R)$ be even, $\phi=1$ on $[-1,1]$ and $\phi=0$ outside $(-3/2,3/2)$, set $\kappa=\phi-\phi(2\cdot)$, then define (see \eqref{eq:Kequiv}), for $\gamma =2^j a \leq \ceps_{0}$,
\begin{equation}\label{defKomegakgamma}
K_{S,\omega,\gamma}(f)(t,x,y):=\int e^{iht\tau^2(\omega,\eta)}G(x,y,\eta,\omega)\cutoffchi^{\sharp}(\omega) q^{1/6}(\eta)\varkappa(h\eta)\varkappa(h\tau(\omega,\eta))\kappa\Big(\frac\omega{\gamma q(\eta)^{1/3}}\Big)\hat{f}\Big(\eta,\frac \omega{h^{1/3}}\Big)\,d\eta
\end{equation}
as well as $\Prond_{S,h,a,\gamma}$ by replacing $K_{S,\omega}$ with $K_{S,\omega,\gamma}$ in \eqref{eq:Prond2cut}.
This localization gives $\alpha\sim\gamma$ and
$k\sim\gamma^{3/2}/h$.
The preliminary truncation \eqref{eq:suminN-gamma} is valid
for $\gamma\gtrsim h^{2/3}$.
In the present regime, the contributing pieces satisfy
$\gamma\gtrsim a>h^{2/3-\ceps}$, so that
$\gamma^{3/2}/h\gtrsim h^{-3\ceps/2}$. Then,
$\Prond_{S,h,a}(t,x,y)=\sum_{\gamma}\Prond_{S,h,a,\gamma}(t,x,y)$.
{We group the finitely many cutoffs with $\gamma\sim a$ into a single
central cutoff, still denoted by $\gamma=a$.}
For $\gamma\ll a$, the corresponding $\Prond_{S,h,a,\gamma}$ is irrelevant, as the phase of $g_{h,a}$ is non-stationary. Using  \eqref{eq:Prondcut} or \eqref{eq:AiryPoisson},
\begin{equation}\label{defProndgamma}
\Prond_{S,h,a,\gamma}(t,x,y)= \langle \sum_{N\in \Z} e^{-iN L(\omega)}
    , {\cutoffchi^{\flat}}(h^{2/3}\omega/\ceps_0)K_{S,\omega,\gamma}(g_{h,a})(t,x,y)\rangle_{\omega}.
\end{equation}
By \eqref{eq:suminN-gamma}, the sum over the tail
$|N|\ge C\gamma^{-1/2}$ is negligible under the hypotheses
of Lemma \ref{sommeNfinie}.
This preliminary truncation uses $\gamma$, not $a$; the next lemma
uses the full phase to give the sharper time-dependent bound.
We distinguish $\gamma\sim a$, corresponding to tangent initial
directions, from $\gamma\ge4a$, corresponding to almost transverse
initial directions. The second regime gives smaller contributions,
even after summation in $\gamma$. We write
\begin{align}
  \label{eq:newVNgam}
  W_{N,h,\gamma}(t,x,y) & =\int e^{-iNL(\omega)}\cutoffchi^{\flat}(h^{2/3}\omega/\ceps_{0})K_{S,\omega,\gamma}(g_{h,a})(t,x,y)d\omega\\
  & = \begin{multlined}[t]\frac 1 {2\pi h^{d+1}} \int e^{\frac i h (
    t\tau^2(\alpha,\theta)+\Phi(x,y,\theta,\alpha,\sigma)-\Phi(a,0,\theta,\alpha,s)-Nh L(h^{-2/3}\alpha)
    )}\kappa(\alpha/(q(\theta)^{1/3}\gamma)) \\
  {}\times {\cutoffchi^{\flat}}(\alpha/\ceps_{0}){\cutoffchi^{\sharp}}(\alpha/h^{2/3})\chi(s)p_h(x,y,\theta,\alpha,\sigma)\tilde q_h (\theta,\alpha,s) ds \,d\theta  d{\sigma}d\alpha\,,\end{multlined}\\
    \label{eq:newProndgam}
  \Prond_{S,h,a,\gamma}(t,x,y) & =\sum_{N\in \Z} W_{N,h,\gamma}(t,x,y)\,,
\end{align}
where the symbol of $W_{N,\gamma}$ (the same for every $N$) is of degree $0$ and where we used \eqref{eq:newVN}.
\begin{lemma}\label{lemmaNgam}
 At fixed $|t|\lesssim T_0$, the sum \eqref{eq:newProndgam} defining $\Prond_{S,h,a,\gamma}$ is finite and {$|N|\lesssim 1+|t|\gamma^{-1/2}$.}
\end{lemma}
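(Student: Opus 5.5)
We prove Lemma~\ref{lemmaNgam} by non-stationary phase in $\alpha$, in the spirit of the proof of Lemma~\ref{sommeNfinie}, but now keeping the time dependence and the localization $\alpha\sim\gamma$. By \eqref{eq:suminN-gamma} we may already discard $|N|\ge C\gamma^{-1/2}$. So we only need the sharper statement that $W_{N,h,\gamma}=O_{C^\infty}(h^\infty)$ for $C(1+|t|\gamma^{-1/2})\le |N|\le C\gamma^{-1/2}$, with errors summable over these finitely many $N$.

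First, we compute the $\alpha$-derivative of the phase in \eqref{eq:newVNgam}. We use \eqref{critalphaNNf} with the source-side term $-\partial_\alpha\Phi(a,0,\theta,\alpha,s)$ in place of $\tprime$. Since $\alpha\sim\gamma$ and $\gamma\gtrsim a>h^{2/3-\ceps}$, we have $\omega=\alpha h^{-2/3}\gg1$. By \eqref{eq:propL} and Lemma~\ref{lem:airy-wronskian}, $h^{1/3}NL'(\alpha h^{-2/3})=2N\sqrt\alpha(1+O((h/\alpha^{3/2})^2))$. The remaining terms are $tq^{2/3}$, the $B_\Gamma$ contribution, which is $O(|t|)$ by the choice of $T_0$ made in the proof of Lemma~\ref{sommeNfinie}, and the two $\sigma$-type terms $-\sigma+s$ plus $O(x)$ and $O(a)$ corrections. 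Stationarity in $\sigma$ and $s$ is governed by \eqref{critsigma}, which gives $\sigma^2\le\alpha+O(x)$. The analogous equation at $x=a$ gives $s^2\le\alpha+O(a)$. Since $x\le a\lesssim\gamma$ and $\alpha\sim\gamma$, we get $|\sigma|+|s|\lesssim\sqrt\gamma$ on the essential support.

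Second, we make this rigorous. Away from $|\sigma|,|s|\lesssim\sqrt\gamma$ we integrate by parts in $\sigma$ (resp.\ $s$). The derivative $\partial_\sigma\Phi$ is then of size $\gtrsim\sigma^2\gtrsim\gamma$, so each integration gains $h/\gamma^{3/2}\lesssim h^{3\ceps/2}$, and we obtain $O(h^\infty)$ errors. These are summable in $N$ by \eqref{eq:summable-smooth-errors}. On the remaining region the $\alpha$-derivative of the phase equals
\[
-2N\sqrt\alpha+O(|t|)+O(\sqrt\gamma).
\]
For $|N|\ge C(1+|t|\gamma^{-1/2})$ with $C$ large, its modulus is therefore bounded below by $c|N|\sqrt\gamma$.

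Third, we integrate by parts in $\alpha$ exactly as in the dyadic part of the proof of Lemma~\ref{sommeNfinie}. Symbol derivatives cost $\gamma^{-1}$, because of the cutoff $\kappa(\alpha/(q^{1/3}\gamma))$ and the symbol bounds for $L'$. Derivatives of $1/\partial_\alpha(\text{phase})$ cost the same, since the second $\alpha$-derivative of the phase is $O(|N|\gamma^{-1/2}+1)$. Each step thus gains $h/(|N|\gamma^{3/2})\le h^{3\ceps/2}$. After $M$ steps the contribution of each $N$ is $O(h^{3M\ceps/2}|N|^{-M})$ times fixed powers of $h^{-1}$, and there are only $O(\gamma^{-1/2})\le O(h^{-1/3})$ indices $N$. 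Hence the sum over the range is $O(h^\infty)$, which proves that the sum is effectively finite with $|N|\lesssim 1+|t|\gamma^{-1/2}$.

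The main obstacle is the second step. We must check that, on the region where we do not integrate by parts in $\sigma,s$, the correction terms coming from $x\partial_\alpha(\tau A_\Gamma)$, from $\partial_\alpha\tau\, B_\Gamma$, and from the source phase at $x=a$ are all $O(|t|+\sqrt\gamma)$ uniformly. This requires the weighted-degree bookkeeping of \eqref{partialAGamma}: the remainder there is $O(\gamma)$ when $x=O(\gamma)$, $\Xi=O(\sqrt\gamma)$ and $\rho-1=O(\gamma)$. It also requires $x\le a\lesssim\gamma$, which is where the central grouping $\gamma\sim a$ and the restriction $\gamma\ge a$ enter.
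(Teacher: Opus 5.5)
Your argument is correct and is essentially the paper's proof. The paper also restricts to the stationary set in $\sigma,s$, where $\sigma,s=O(\sqrt\gamma)$, $x,a=O(\gamma)$ and $|y|\lesssim t$, and reads off $|N|\sqrt\gamma\lesssim t+\sqrt\gamma$ from the $\alpha$ equation; it then integrates by parts in $\alpha$ as in Lemma~\ref{sommeNfinie} with $\alpha\sim\gamma$, gaining $h/(|N|\gamma^{3/2})$ per step. The one minor difference is that the paper gets $x=O(\gamma)$ from the $\sigma$ equation \eqref{critsigma} itself rather than from the standing ordering $x\le a$, so its bound does not depend on that ordering.
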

{On the stationary support, $\sigma,s=O(\sqrt\gamma)$,
$x,a=O(\gamma)$ and $|y|\lesssim t$.
The $\alpha$ equation for the phase in \eqref{eq:newVNgam} therefore
gives $|N|\sqrt\gamma\lesssim t+\sqrt\gamma$.
The additive constant retains the possible first reflection at
$t\ll\sqrt\gamma$.}
The proof follows Proposition \ref{propsommefinie} with
$\alpha\sim\gamma$. For bounded times, the number of possible
indices is $O(\gamma^{-1/2})$, hence $O(a^{-1/2})$ in the tangent
regime $\gamma\sim a$. Proposition \ref{propsommefinie} shows that
at $t=0$ only $N=0$ contributes. 

There is one main difference with the wave equation: if in the case of the wave equation at fixed $t\geq C\gamma^{1/2}$ we could further bound the number of indices $N$ that contribute significantly among the $C|t|\gamma^{-1/2}$ which are left (see \cite[Section 3.1]{ILLP}), in the case of the semiclassical Schrödinger equation the proof of Proposition \ref{propsommefinie} shows that all the oscillatory integrals $W_{N,\gamma}$ with $N\sim t/\sqrt{\gamma}$ provide important contributions.

In what follows, we distinguish the cases $a\geq (ht)^{1/2}$ and $a\leq (ht)^{1/2}$ : in the first case, the sum over the number of reflections $N$ has fewer terms than the number of quasimodes $k$ : we will estimate each $\|W_{N,h,\gamma}\|_{L^{\infty}}$ and then sum the contributions. In the second case the number of terms in the sum over $\omega_k$ in \eqref{eq:Prond2cut} has fewer terms : in this situation we estimate each $K_{S,\omega_k}(g_{h,a})$ and then sum the contributions. The two sums have the same number of terms when $a\sim (ht)^{1/2}$, in which case the two methods provide the same result (with $1/4$ loss in the dispersive bounds).

\subsection{Case $(ht)^{1/2}\leq a \leq \varepsilon_0$} We recall that in this section we also have $\gamma\geq a>h^{2/3-\epsilon}$, which is particularly important as otherwise one would not be able to use stationary phase methods to deal with $W_{N,h,\gamma}$.

Let $\frac 1h(\Phi_{N,a,\gamma}+NhB_L(\alpha^{3/2}/h))$ denote the phase function of $W_{N,h,\gamma}$, defined as follows
\begin{equation}\label{PhiNagamma}
\Phi_{N,a,\gamma}:= t\tau^2(\alpha,\theta)+\Phi(x,y,\theta,\alpha,\sigma)-\Phi(a,0,\theta,\alpha,s)-\frac 43N\alpha^{3/2}.
\end{equation}
Here $\Phi(x,y,\theta,\alpha,\sigma)$ is the phase function of $G(x,y,\theta/h,\omega/h^{2/3})$ defined in \eqref{eq:Phi}. 
We rescale variables:
\begin{gather}\label{gatherchofvar}
 \text{if } \gamma\sim a \text{ we set } s=\sqrt{a}q(\theta)^{1/6}S, {\sigma=\sqrt{a}q(\theta)^{1/6}\Sigma}, \alpha=aq(\theta)^{1/3}A \,;\\
 \text{if } 4a\leq \gamma \text{ we set }
 s=\sqrt{\gamma}q(\theta)^{1/6}S, {\sigma=\sqrt{\gamma}q(\theta)^{1/6}\Sigma}, \alpha=\gamma q(\theta)^{1/3}A\,.
\end{gather}
\begin{rmq}
In this regime, the factor $e^{iNB_L(\gamma^{3/2}A^{3/2}q^{1/2}(\theta)/h)}$ can be moved into the symbol as {$|N|\lesssim 1+t/\sqrt\gamma$} by Lemma \ref{lemmaNgam} and therefore, for $\theta$ on the support of $\varkappa$,
\[
N B_L(\gamma^{3/2}A^{3/2}q^{1/2}(\theta)/h)\sim \frac{N h b_1}{ q(\theta)^{1/2}\gamma^{3/2}}\lesssim {\frac h{\gamma^{3/2}}+\frac{ht}{\gamma^2}}\lesssim 1, \quad \text{ as } \gamma\geq a\geq (ht)^{1/2}.
\]
\end{rmq}
We recall that $\Phi(x,y,\theta,\alpha,\sigma)=\Phi_M(x,y,\theta,\alpha,\sigma)+\tau(\alpha,\theta)(B_{\Gamma}(y,\theta/\tau)+xA_{\Gamma}(x,y,\sigma q^{1/3}(\theta)/\tau,\theta/\tau))$, where $\tau=\tau(\alpha,\theta)=\sqrt{|\theta|^2+\alpha q^{2/3}(\theta)}$ and $\tau(\gamma q(\theta)^{1/3} A,\theta)-|\theta|=\frac{\gamma Aq(\theta)}{|\theta|+\tau_q}=\frac{\gamma Aq(\theta)}{2|\theta|}+O(\gamma^2)$. We let
\begin{align}
\tilde\Phi_{M,\gamma}(x,y,{\theta},A,\Sigma):&=\Phi_M(x,y,\theta,\gamma q(\theta)^{1/3}A,\sqrt{\gamma}q(\theta)^{1/6}\Sigma)\\
&=<y, {\theta}>+\gamma^{3/2}q({\theta})^{1/2}\Big(\frac{\Sigma^3}{3}+\Sigma(\frac{x}{\gamma}-A)
\Big).
\end{align}
Set $\tilde\Phi_{\gamma}(x,y,{\theta},A,\Sigma):=\Phi(x,y,\theta,\gamma q(\theta)^{1/3}A,\sqrt{\gamma}q(\theta)^{1/6}\Sigma)$, then, with $\tau=\tau(\gamma A q(\theta)^{1/3},\theta)$, we have
\begin{gather}
\tilde\Phi_{\gamma}(x,y,{\theta},A,\Sigma) := \begin{multlined}[t]\tilde\Phi_{M,\gamma}(x,y,{\theta},A,\Sigma)+\tau \Big[B_{\Gamma}(y,\theta/ \tau)
+xA_{\Gamma}\Big(x,y,\sqrt{\gamma}\Sigma q^{1/2}(\theta)/\tau,\theta/\tau \Big)\Big]\,,\end{multlined}\\
\tilde\Phi_{\gamma}(a,0,{\theta},A,S) :=\tilde\Phi_{M,\gamma}(a,0,{\theta},A,S)+\tau aA_{\Gamma}\Big(a,0,\sqrt{\gamma}S q^{1/2}(\theta)/\tau,\theta/\tau\Big)\,,
\end{gather}
where in the last line we used that $B_{\Gamma}(0,\theta/\tau)=0$.
We expand $B_\Gamma$ directly using \eqref{formalseriesBA}, with
$\omega_\theta=\theta/|\theta|$ and $\tau^2=|\theta|^2+\gamma Aq(\theta)$.
In particular, $\tau B_0(y,\omega_\theta)$ contributes both
$B_0(y,\theta)$ and $(\tau-|\theta|)B_0(y,\omega_\theta)$.
Since
\begin{equation}\label{eq:radial-tau-expansion}
 \tau-|\theta|
 =\frac{\gamma Aq(\theta)}{2|\theta|}
 -\frac{(\tau-|\theta|)^2}{2|\theta|},
\end{equation}
the boundary contribution to the phase is
\begin{equation}\label{eq:boundary-phase-expansion}
 \tau B_\Gamma(y,\theta/\tau)
 ={} B_0(y,\theta)
 +\frac{\gamma Aq(\theta)}{2|\theta|}(B_0-B_2)(y,\omega_\theta)  +\frac{\gamma^2A^2q(\theta)^2}{8|\theta|^3}
 (2B_4-B_0+B_2)(y,\omega_\theta)+O_{C^m}(\gamma^3)\,.
\end{equation}
These bounds hold for bounded $A$, $|\theta|\sim1$ and $y$ in the fixed
patch, with any fixed number of derivatives in $(y,A,\theta)$.
The remainder is obtained by a finite Taylor expansion; convergence
of the formal series is not needed.
In these variables we have
\begin{gather}
\tau B_{\Gamma}(y,\theta/{\tau})=  
  \label{Bgamhomog}
 B_0(y,\theta)+(\tau-|\theta|)(B_0-B_2)(y,\omega_\theta) {}+\frac{(\tau-|\theta|)^2}{\tau}B_4(y,\omega_\theta)
 +O_{C^m}(\gamma^3)\,,
\\
\tau A_{\Gamma}\Big(x,y,\sqrt{\gamma}\Sigma q^{1/2}(\theta)/\tau,\theta/\tau\Big)=
\begin{multlined}[t]\label{Agamtau}
\sqrt\gamma q^{1/2}(\theta)\Sigma\Lp
 +(\gamma q(\theta)/\tau)(\Sigma^2-A)\mu_\Gamma\\
{}+\alpha_1x\sqrt\gamma q^{1/2}(\theta)\Sigma
 +\mu_1\gamma^{3/2}q^{3/2}(\theta)\Sigma^3/\tau^2+\tau R_A,
\end{multlined}
\end{gather}
{Here the coefficients are evaluated at
$(y,\omega_\theta)$, $\omega_\theta=\theta/|\theta|$, and $R_A$ at
$(x,y,\sqrt\gamma\Sigma q^{1/2}(\theta)/\tau,\theta/\tau)$.
We used \eqref{eq:normalized-AGamma}; the third-order terms are
$O_{C^m}(\gamma^{3/2})$ and $\tau R_A=O_{C^m}(\gamma^2)$
for bounded $x/\gamma,\Sigma,A$ and $|\theta|\sim1$.} We set
\begin{equation}\label{deftildephiNagamma}
\tilde\Phi_{N,a,h,\gamma}(t,x,y,\Sigma,S,A,\theta):=\Phi_{N,a,h}(t,x,y,\sqrt{\gamma}q(\theta)^{1/6}\Sigma,\sqrt{\gamma}q(\theta)^{1/6}S,\gamma q(\theta)^{1/3}A,\theta)\,,
\end{equation}
and from \eqref{PhiNagamma}, $\tilde\Phi_{N,a,h,\gamma}=\tilde\Phi_{0,a,h,\gamma}-\frac 43\gamma^{3/2} q(\theta)^{1/2} N A^{3/2}$, where
\begin{equation}
\label{eqtildePhiN}
\tilde\Phi_{0,a,h,\gamma}(t,x,y,\Sigma,S,A,\theta)=|\theta|^2t+\gamma tA q({\theta})+\tilde\Phi_{\gamma}(x,y,{\theta},A,\Sigma)-\tilde\Phi_{\gamma}(a,0,{\theta},A,S).
\end{equation}
With parameters $(\theta,\omega_{\theta}=\frac{\theta}{|\theta|},\Sigma,S,A)$, the phase function $\tilde\Phi_{N,a,h,\gamma}$ is stationary when
\begin{gather}\label{desclambdaN}
\left\{ \begin{array}{l}
t \gamma q(\theta)+\partial_{A}\Big(\tilde\Phi_{\gamma}(x,y,\theta,A,\Sigma)-\tilde\Phi_{\gamma}(a,0,\theta,A,S)\Big)=2N \gamma^{3/2}q(\theta)^{1/2}A^{1/2}, \\
  \Sigma^{2}+\frac{x}{\gamma} \Big(1+\frac{{\tau(\gamma Aq(\theta)^{1/3},\theta)} }{\sqrt{\gamma}q(\theta)^{1/2}}\partial_{\Sigma}\Big(A_{\Gamma}(x,y,\sqrt{\gamma}\Sigma q^{1/2}(\theta)/\tau,\theta/\tau)\Big)\Big)=A,\\
 S^{2}+\frac{a}{\gamma} \Big(1+\frac{ {\tau(\gamma Aq(\theta)^{1/3},\theta)} }{\sqrt{\gamma}q(\theta)^{1/2}}\partial_{S}\Big(A_{\Gamma}(a,0,\sqrt{\gamma}S q^{1/2}(\theta)/\tau,\theta/\tau)\Big)\Big)=A,\\
2t\theta +\gamma tA\nabla q(\theta)+\nabla(\tilde\Phi_{\gamma}(x,y,\theta,A,\Sigma)-\tilde\Phi_{\gamma}(a,0,\theta,A,S))=\frac 43 \gamma^{3/2}NA^{3/2}\frac{\nabla q(\theta)}{2q(\theta)^{1/2}}.
   \end{array} \right.
\end{gather}
where $\nabla$ is the gradient with respect to $\theta\in\mathbb{R}^{d-1}$.
Using \eqref{Bgamhomog} and \eqref{Agamtau}, we write
\begin{multline}
\tilde\Phi_{0,a,h,\gamma}(t,x,y,\Sigma,S,A,\theta)=|\theta|^2t+\gamma tA q({\theta})+\tilde \Phi_{M,\gamma}(x,y,\theta,A,\Sigma)-\tilde\Phi_{M,\gamma}(a,0,\theta,A,S)\\+ B_0(y,\theta)+{\frac{\gamma A q(\theta)}{2|\theta|}(B_0-B_2)(y,\omega_\theta)}+\gamma^{3/2}\frac{x}{\gamma}q(\theta)^{1/2}\Sigma \Lp (y,\omega_\theta)+\gamma^2(\tilde \Gamma_{\gamma}(x,y,\theta,\sqrt{\gamma}\Sigma,A)-\tilde \Gamma_{\gamma}(a,0,\theta,\sqrt{\gamma}S,A)),
\end{multline}
where we have set, for {$\tau=\tau(\gamma Aq(\theta)^{1/3},\theta)$}, $\omega_{\theta}=\theta/|\theta|$, $\tilde\Gamma_{\gamma}:=\tilde\Gamma_{\gamma}(x,y,\theta,\sqrt{\gamma}\Sigma,A)$ as follows
\begin{multline}\label{deftildeGammagam}
\tilde\Gamma_\gamma:=\gamma^{-2}\left\{
 \tau B_\Gamma(y,\theta/\tau)-B_0(y,\theta)
 -\frac{\gamma Aq(\theta)}{2|\theta|}(B_0-B_2)(y,\omega_\theta)\right.\\
\left.{}+x\left(
 \tau A_\Gamma(x,y,\sqrt\gamma\Sigma q^{1/2}(\theta)/\tau,\theta/\tau)
 -\sqrt\gamma q^{1/2}(\theta)\Sigma\Lp(y,\omega_\theta)
 \right)\right\}.
\end{multline}
From \eqref{Bgamhomog} and \eqref{Agamtau}, $\tilde\Gamma_{\gamma}$ is a smooth bounded function and, as $|\theta|-\tau=\frac{|\theta|^2-\tau^2}{2|\theta|}+\frac{(|\theta|-\tau)^2}{2|\theta|}$,
\begin{multline}\label{Gammatildegam}
   \Xi=\frac{\sqrt\gamma\Sigma q^{1/2}(\theta)}{\tau}\,,\quad\quad
\tilde\Gamma_\gamma
 =\frac{(\tau-|\theta|)^2}{\gamma^2\tau}
 \left\{\frac{\tau}{2|\theta|}(B_2-B_0)(y,\omega_\theta)
 +B_4(y,\omega_\theta)+O_{C^m}(\gamma)\right\}\\
{}+\frac{x}{\gamma}\left\{
 \frac{q(\theta)}{\tau}(\Sigma^2-A)\mu_\Gamma
 +\gamma^{-1}(\tau\alpha_1x\Xi+\tau\mu_1\Xi^3+\tau R_A)
 \right\}\,.
\end{multline}
Here $(|\theta|-\tau)^2/\gamma^2=A^2q^2(\theta)/(|\theta|+\tau)^2$
is bounded. {Since $B_0,B_2=O(|y|^2)$
and $B_{2j}=O(|y|)$ for $j\ge2$, the boundary term in
\eqref{Gammatildegam} is $O_{C^m}(|y|)$ and is independent of
$\Sigma,S$ before substitution of the critical points.}
We use the following bounds where the phase is stationary: the equations provide $x,a=O(\gamma)$ and bounded $\Sigma,S$;
outside this region we integrate by parts in $\Sigma$ or $S$. {After this substitution, a weighted remainder of order $j$
is $O_{C^m}(\gamma^{j/2})$,}
for any fixed number of derivatives in $(A,\Sigma,S,\theta,x/\gamma,a/\gamma)$, with $y$ in our fixed neighborhood.
 Indeed $\Xi=O(\sqrt\gamma)$,
$\rho-1=O(\gamma)$, and differentiating these expressions in the
rescaled variables preserves these factors. Thus
{$\gamma^{-1}
(\tau\alpha_1x\Xi+\tau\mu_1\Xi^3+\tau R_A)
=O_{C^m}(\sqrt\gamma)$; the contribution of $\tau R_A$ alone is
$O_{C^m}(\gamma)$. These are the third-order and higher terms in
\eqref{Gammatildegam}.} The bounds for
$\tilde\Gamma_\gamma$, including its first two $A$ derivatives,
are uniform. Its $\Sigma$ and $S$ derivatives after one $A$
derivative are $O(\sqrt\gamma)$.
For the $x\tau A_\Gamma$ part of the phase, the present normalization
also gives
\begin{equation}\label{eq:normalized-endpoint-derivatives}
 \partial_A(x\tau A_\Gamma)
 =-\gamma x\frac{q(\theta)}{|\theta|}\mu_\Gamma+O_{C^m}(\gamma^3),
 \qquad
 \partial_A^2(x\tau A_\Gamma),\ 
 \partial_A\partial_\Sigma(x\tau A_\Gamma)=O_{C^m}(\gamma^3).
\end{equation}
These derivatives are taken at fixed $(x/\gamma,y,\Sigma,\theta)$.
The term with factor $\alpha_1$ is independent of $A$.
The same estimates hold at the source point, with $(x,y,\Sigma)$
replaced by $(a,0,S)$; see Appendix~\ref{app:canonical-normalization}.
The weaker bounds used below therefore remain valid.

We first deal with the first reflection, which occurs at short times: we separate $N=0,\pm1$ from $|N|\ge2$, as in the wave construction
of \cite{ILLP}. Here the time phase is $t\tau^2$.
Fix $c_*>0$ sufficiently small. Assume
$a\ge h^{2/3-\varepsilon}$, $\gamma\ge a$, $0\le x\le2a$ and
$h<t\le c_*\sqrt\gamma$. In this interval the terms with
$|N|\ge2$ are non-stationary. For positive times the possible
short reflected trajectory belongs to $N=1$; negative times
are dealt with by conjugation.

Indeed, the equations in $\Sigma,S$ give
$|\Sigma|,|S|\le\sqrt A+O(T_0+\sqrt\gamma)$.
The $A$ equation in \eqref{desclambdaN} then excludes $|N|\ge2$
when $t/\sqrt\gamma$ is small. For $N=1$ it selects
$\Sigma<0<S$. The full equations also imply
$(x+a)/\gamma\lesssim c_*$. Outside this region, or for the
other sign choices, the phase is non-stationary.
Thus $|\Sigma|,|S|$ are bounded below on the remaining support.
We first apply stationary phase in these two variables, keeping
$A,\theta$ fixed, with parameter $\lambda_\gamma=\gamma^{3/2}/h$.
The factor is $\lambda_\gamma^{-1}$.

In the model the two critical points are
$\Sigma=-\sqrt{A-x/\gamma}$ and $S=\sqrt{A-a/\gamma}$.
Using \eqref{Bgamhomog}, \eqref{Agamtau} and the bounds following
\eqref{Gammatildegam}, stationary phase in $\Sigma,S$ for the full
phase gives
\begin{equation}\label{eq:first-reflection-reduced-phase}
 t|\theta|^2+y\cdot\theta+B_0(y,\theta)+\gamma tAq(\theta)+\frac23\gamma^{3/2}\sqrt{q(\theta)}
 \left\{(A-x/\gamma)^{3/2}+(A-a/\gamma)^{3/2}-2A^{3/2}\right\}
 +R\,.
\end{equation}
For any fixed required number of derivatives in $A,\theta$,
\begin{equation}\label{eq:first-reflection-remainder}
 R=O_{C^k}\left((x+a)\gamma+(x+a)\sqrt\gamma\,|y|
                  +\gamma|y|^2+\gamma^2|y|\right).
\end{equation}
The terms containing $\mu_\Gamma$ have the first bound; those
containing $\Lp$ have the second. The linear boundary term
containing $B_0-B_2$ and the higher boundary terms give the last
two bounds. Stationary substitution preserves these estimates,
since the two eliminated second derivatives are bounded away
from zero after division by $\gamma^{3/2}$.
The factor $e^{iB_L}$ remains in the symbol, with uniform bounds
because $\lambda_\gamma\gg1$.

For the principal part, the equation in $A$ is
\begin{equation}\label{eq:first-reflection-time}
 t\sqrt{q(\theta)}/\sqrt\gamma
 =2\sqrt A-\sqrt{A-x/\gamma}-\sqrt{A-a/\gamma}.
\end{equation}
The right-hand side is comparable to $(x+a)/\gamma$.
The corrections in \eqref{eq:first-reflection-remainder} are
smaller, so the full equations give
$t\gamma\sim(x+a)\sqrt\gamma$ and $|y|\lesssim t$.
At such a point the second $A$ derivative of the principal part is
\begin{equation}\label{eq:first-reflection-A-hessian}
 \gamma^{3/2}\sqrt{q(\theta)}
 \left\{\frac1{2\sqrt{A-x/\gamma}}
       +\frac1{2\sqrt{A-a/\gamma}}-\frac1{\sqrt A}\right\}
 \sim(x+a)\sqrt\gamma\sim t\gamma.
\end{equation}
All derivatives of the remainder needed here are
$O(t\gamma(T_0+\sqrt\gamma))$ at the critical point.
The full mixed block in $A,\theta$ is $O(t\gamma)$, and the
$\theta$ block is $2t\,Id+O(t(T_0+\gamma))$.
Eliminating $A$ changes this last block by $O(t\gamma)$.
Thus the system is uniformly nondegenerate with scales
$t\gamma$ and $t$, and its normalized derivatives are bounded.
The $A$ scale is $t\gamma$, not $\gamma^{3/2}$.

There is no stationary reflected trajectory when $t\gamma\lesssim h$:
\begin{equation}\label{eq:first-reflection-large-parameter}
 t\gamma\sim(x+a)\sqrt\gamma\ge a\sqrt\gamma
 \ge a^{3/2}\ge h^{1-3\varepsilon/2}\gg h
\end{equation}
would be required. In this case integrations by parts give
$O(h^\infty)$, since $h/(a\sqrt\gamma)\le h^{3\varepsilon/2}$.
Near a stationary point, stationary phase in $A,\theta$ gives
$(h/(t\gamma))^{1/2}(h/t)^{(d-1)/2}$.
Together with the initial Jacobian and the factor from $\Sigma,S$,
\begin{equation}\label{eq:first-reflection-free-factor}
 \frac{\gamma^2}{h^{d+1}}\lambda_\gamma^{-1}
 \left(\frac h{t\gamma}\right)^{1/2}
 \left(\frac ht\right)^{(d-1)/2}
 =h^{-d}\left(\frac ht\right)^{d/2}.
\end{equation}
For fixed $t,x,a$, stationarity requires
$\gamma\sim((x+a)/t)^2$, leaving only a bounded number of dyadic
pieces. On the other dyadic pieces the $A$ derivative is bounded below
by a constant times $t\gamma+(x+a)\sqrt\gamma$, and repeated
integrations by parts give a summable $O(h^\infty)$ remainder.
We have proved
\begin{equation}\label{eq:first-reflection-short-sum}
 \sum_{\substack{\gamma=2^j a\le\varepsilon_0\\t\le c_*\sqrt\gamma}}
 \bigl(|W_{1,h,\gamma}|+|W_{-1,h,\gamma}|\bigr)
 \lesssim h^{-d}(h/t)^{d/2}.
\end{equation}
The unreflected term is estimated after summing in $\gamma$ by
Proposition \ref{propfreeN=0}. Hence the short-time contributions
have free dispersion.

We now restrict to $t\ge c_*\sqrt\gamma$. In this regime the
constant in $1+t/\sqrt\gamma$ is absorbed, and the contributing
nonzero indices satisfy $|N|\sim t/\sqrt\gamma$.
The following reduction in $A,\theta$ and the subsequent
reflection estimates are used in this regime only.
We turn to critical points $A$ and $\theta$ of $\tilde \Phi_{N,a,h,\gamma}$.
\begin{prop}\label{propcritptsalphaeta}
Let $|N|\geq 1$ {and $t\ge c_*\sqrt\gamma$.} The phase function $\tilde\Phi_{N,a,h,\gamma}$ can have at most one critical point $(A_c,\theta_c)$ on the support of the symbol. At critical points in $(A,\theta)$, the stationary phase theorem applies in both $A\sim 1$ and $\theta\in\mathbb{R}^{d-1}$ with $|\theta|\sim 1$ and yields a decay factor $(h/t)^{(d-1)/2}\times{(\gamma^{3/2}N/h)}^{-1/2}$.
\end{prop}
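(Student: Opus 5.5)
We plan to treat $(\Sigma,S)$ as already eliminated, or to keep them as parameters, and to study the critical point equations in $(A,\theta)$ from \eqref{desclambdaN}. The natural approach is to compare the full phase with the model (Friedlander) phase. In the model, $\tilde\Phi_{N,a,h,\gamma}$ reduces to
\[
t|\theta|^2+\gamma tAq(\theta)+y\cdot\theta+\gamma^{3/2}\sqrt{q(\theta)}\Big(\Sigma^3/3+\Sigma(x/\gamma-A)-S^3/3-S(a/\gamma-A)-\tfrac43NA^{3/2}\Big),
\]
and the corrections are controlled by \eqref{Bgamhomog}, \eqref{Agamtau}, \eqref{Gammatildegam} and \eqref{eq:normalized-endpoint-derivatives}. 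Uniformity in the rescaled variables will then come from the $O_{C^m}$ bounds stated there.

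First, the $A$ equation. Its first line reads $t\gamma q(\theta)-\gamma^{3/2}q^{1/2}(\Sigma+S)+O(\cdots)=2N\gamma^{3/2}q^{1/2}A^{1/2}$, with $\Sigma,S=O(1)$ and $t\ge c_*\sqrt\gamma$, so $|N|\sim t/\sqrt\gamma$. We will compute the Hessian. The dominant term in $\partial_A^2$ is $\partial_A^2(-\tfrac43 N\gamma^{3/2}q^{1/2}A^{3/2})=-N\gamma^{3/2}q^{1/2}A^{-1/2}$, which has size $|N|\gamma^{3/2}$. The other second $A$ derivatives are $O(\gamma^2)$ by \eqref{eq:normalized-endpoint-derivatives} and the bounds on $\tilde\Gamma_\gamma$. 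Since $|N|\ge1$, they are therefore smaller by a factor $\sqrt\gamma$.

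Second, the $\theta$ block. Here the Hessian is $2t\,\mathrm{Id}+\gamma tA\nabla^2q+\nabla^2_\theta B_0(y,\theta)+N\gamma^{3/2}O(1)+O(\gamma^{3/2})$. We use $B_0=O(|y|^2)$ and $|y|\lesssim t$ (Lemma~\ref{derriere}), together with $N\gamma^{3/2}\sim t\gamma$. This gives $2t\,\mathrm{Id}+O(t(T_0+\gamma))$. The mixed derivatives $\partial_A\nabla_\theta$ are $O(t\gamma+N\gamma^{3/2})=O(t\gamma)$.

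We then rescale $A$ by the scale $N\gamma^{3/2}$ and $\theta$ by the scale $t$. The Schur complement after eliminating $A$ perturbs the $\theta$ block by $O((t\gamma)^2/(N\gamma^{3/2}))=O(t\sqrt\gamma)$. The normalized Hessian is thus $\mathrm{diag}(-q^{1/2}A^{-1/2},2\mathrm{Id})$ plus a small perturbation. Hence it is uniformly nondegenerate, and its higher derivatives are bounded after the same normalization. Stationary phase with large parameters $N\gamma^{3/2}/h$ (note $\gtrsim\lambda\gg1$, since $\gamma\ge a>h^{2/3-\ceps}$) and $t/h$ then yields the factor $(h/t)^{(d-1)/2}(N\gamma^{3/2}/h)^{-1/2}$.

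For uniqueness, we argue that the gradient map $(A,\theta)\mapsto\nabla_{A,\theta}\tilde\Phi_{N,a,h,\gamma}$, normalized as above, is a small $C^1$ perturbation of a map with uniformly definite Jacobian on the convex support ($A\sim1$, $\theta$ in a thin annular cone). This makes it injective, so there is at most one zero. Equivalently, the monotonicity of $A\mapsto$ LHS$-2N\gamma^{3/2}q^{1/2}A^{1/2}$ combined with the $\theta$ equation being a perturbation of $2t\theta+y+\nabla B_0=\dots$ solvable by the contraction principle.

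The main obstacle will be obtaining uniformity in $N$ and in the joint dependence on $(\Sigma,S)$. The $\Sigma,S$ critical points themselves depend on $A,\theta$, and near $\Sigma,S\approx0$ they degenerate (swallowtail/cusp). We would therefore state the result with $(\Sigma,S)$ frozen as parameters, which avoids needing their elimination here. We would then verify that the $O_{C^m}(\sqrt\gamma)$ bounds on the $\Sigma$- and $S$-derivatives of the corrections after one $A$ derivative keep the perturbation small uniformly in these parameters.
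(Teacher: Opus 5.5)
Your proposal is correct and follows essentially the paper's argument. The steps match: a unique $A_c(\theta)$ from the monotone $A$-equation, a $\theta$-block $2t\,\mathrm{Id}+O(t(T_0+\gamma))$, a mixed block $O(t\gamma)$, and the Schur complement with normalization by $(|N|\gamma^{3/2})^{1/2}$ and $t^{1/2}$, which leaves an $O(\sqrt\gamma)$ mixed block before stationary phase. Two small slips: the normalized Hessian has one negative direction, so it is not ``uniformly definite'' and uniqueness should come from your sequential argument on a small frequency patch, as in the paper (or from pairing with $\mathrm{diag}(-1,\mathrm{Id})$); and the $A$-equation involves $\Sigma-S$, not $\Sigma+S$.
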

\begin{proof}
We recall from \eqref{eq:newVNgam} that $W_{N,\gamma}$ has symbol supported where $|\theta|\sim 1$ and $\frac{\alpha}{|\theta|\gamma}\sim 1$ : after the change of variable $\alpha=\gamma q(\theta)^{1/3}A$ we obtain $A\sim 1$ on the support of $\kappa(A)$. The critical points with respect to $A,\theta$ satisfy the first and the last equations in the system \eqref{desclambdaN}. We further compute 
\begin{equation}
\label{derivAPhi}
\quad  \partial_{A}\tilde\Phi_{\gamma}(x,y,\theta,A,\Sigma)
  ={\frac{\gamma q(\theta)}{2|\theta|}(B_0-B_2)(y,\omega_\theta)}-\gamma^{3/2}q(\theta)^{1/2}\Sigma+\gamma^{2}\partial_A\tilde\Gamma_{\gamma}(x,y,\theta,\sqrt{\gamma}\Sigma,A).
\end{equation}
{The first equation of \eqref{desclambdaN} therefore reads as follows.
The factor $|\theta|^{-1}$ is retained, since $(B_0-B_2)(y,\omega_\theta)$
is evaluated on the unit sphere.}
{\begin{equation}\label{eq1Ac}
\quad\quad q(\theta)^{\frac 12}{\gamma}^{-\frac 12}\Bigl (t+\frac{(B_0-B_2)(y,\omega_\theta)}{2|\theta|}\Bigr)-(\Sigma-S)+\frac{\sqrt{\gamma} }{q(\theta)^{1/2}}\partial_A(\tilde\Gamma_{\gamma}(x,y,\theta,\sqrt{\gamma}\Sigma,A)-\tilde\Gamma_{\gamma}(a,0,\theta,\sqrt{\gamma}S,A))
=2NA^{1/2}.
\end{equation}}
\begin{rmq}
In the region considered below, $x,a=O(\gamma)$ and $\Sigma,S$
are bounded. At a critical point on the support, equation
\eqref{eq1Ac} and Lemma \ref{lemmaNgam} give
$t\sim |N|\sqrt\gamma$. Moreover, $|y|\lesssim t$ and
$(B_0-B_2)(y,\omega_\theta)=O(|y|^2)$, so that
$t+(B_0-B_2)(y,\omega_\theta)/(2|\theta|)=t(1+O(T_0))$.
For fixed $\theta$, equation \eqref{eq1Ac} has at most one solution
with $A\sim1$: the derivative of its left-hand side minus its
right-hand side is $-NA^{-1/2}+O(\sqrt\gamma)$.
Whenever this solution exists, it depends smoothly on the other
variables. We denote it by $A_c(\theta)$ before solving {the equation for the critical point in $\theta$}. This does not assert that a critical point
lies in the support for every choice of parameters.
\end{rmq}
The second derivative $\partial^2_A\tilde\Phi_{N,a,h,\gamma}$ with respect to $A$ equals
 \[
-N\gamma^{3/2}q(\theta)^{1/2}A^{-1/2}{+}\gamma^2\partial^2_{A}(\tilde\Gamma_{\gamma}(x,y,\theta,\sqrt{\gamma}\Sigma,A)-\tilde\Gamma_{\gamma}(a,0,\theta,\sqrt{\gamma}S,A)),
 \]
and, as on the support of the symbol we must have $A_c\sim1 $, it follows that, at the critical point $A_c$, the second derivative of $\frac ih \tilde\Phi_{N,a,h,\gamma}$ behaves like $ N\gamma^{3/2}/h=N\lambda_{\gamma}$ where we set $\lambda_{\gamma}=\frac{\gamma^{3/2}}{h}$.  We now compute \begin{multline}\label{nablaPhigamma}
\nabla \tilde\Phi_{\gamma}(x,y,\theta,A,\Sigma)=y+\nabla B_0(y,\theta){+\frac{\gamma A}{2}\nabla\left(\frac{q(\theta)}{|\theta|}(B_0-B_2)(y,\omega_\theta)\right)}\\
{}+\gamma^{3/2}\bigg[\frac{\nabla q(\theta)}{2q^{1/2}(\theta)}\bigg(\frac{\Sigma^3}{3}+\Sigma\Big(\frac{x}{\gamma}(1+\Lp(y,\omega_{\theta}))-A\Big)\bigg)+\frac{x}{\gamma} \Sigma q^{1/2}(\theta)\nabla \Lp(y,\omega_{\theta})\bigg]+\gamma^2\nabla \tilde\Gamma_{\gamma}(x,y,\theta,\sqrt{\gamma}\Sigma,A).
\end{multline}
Taking the derivative in $\theta$ in the last line of \eqref{desclambdaN}
 yields the second derivative of $\tilde\Phi_{N,a,h,\gamma}$ as follows
\begin{multline}\label{eq:dervisecPhi}
\nabla_\theta^2\tilde\Phi_{N,a,h,\gamma}
 =2t\,Id+\gamma tA\nabla^2q(\theta)+\nabla^2 B_0(y,\theta)
 +\frac{\gamma A}{2}\nabla^2\left(
 \frac{q(\theta)}{|\theta|}(B_0-B_2)(y,\omega_\theta)\right)\\
{}-\frac43N\gamma^{3/2}A^{3/2}\nabla^2q(\theta)^{1/2}
 +O(\gamma^{3/2})+O(\gamma^2)
 =2t\,Id+{O(|y|^2+\gamma t)}.
\end{multline}
Here $B_0(y,\theta)=O(|y|^2)$ and
$|N|\gamma^{3/2}\sim\gamma t$.
We also need the mixed derivative. Differentiating
\eqref{derivAPhi} in $\theta$, and including the time and reflection
terms, gives
\begin{equation}\label{eq:critical-mixed-block}
 \tilde\Phi_{A\theta}
 =O\big(\gamma t+|N|\gamma^{3/2}
          +\gamma^{3/2}+\gamma|y|^2+\gamma^2\big)
 =O(\gamma t).
\end{equation}
The remainder from the source and observation terms has bounded $A,\theta$ derivatives by
\eqref{Gammatildegam}; the displayed $\gamma^{3/2}$ term comes from
$\Sigma-S$. In the last equality we used
$t\sim |N|\sqrt\gamma$, $|N|\ge1$ and $|y|\lesssim t$.

Substitute $A=A_c(\theta)$ in the phase. Its Hessian in $\theta$ is
\begin{equation}\label{eq:critical-reduced-hessian}
 \left(\tilde\Phi_{\theta\theta}
 -\tilde\Phi_{\theta A}\tilde\Phi_{AA}^{-1}
                         \tilde\Phi_{A\theta}\right)\Big|_{A=A_c(\theta)}
 =2t\big(Id+O(T_0+\gamma)\big).
\end{equation}
Indeed, $|\tilde\Phi_{AA}|\sim |N|\gamma^{3/2}$, so the correction
from eliminating $A$ is $O(\gamma t)$.
On a sufficiently small frequency patch the {equation for the critical point of the reduced phase} has at most one solution. The implicit function theorem
is uniform near any such solution: dividing the rows and columns
of the full Hessian by $(|N|\gamma^{3/2})^{1/2}$ and $t^{1/2}$
leaves uniformly invertible diagonal blocks and a mixed block
$O(\sqrt\gamma)$.
Stationary phase therefore gives
$(|N|\lambda_\gamma)^{-1/2}(h/t)^{(d-1)/2}$, where
$\lambda_\gamma=\gamma^{3/2}/h\ge a^{3/2}/h\ge h^{-3\varepsilon/2}$.
\end{proof}

Using Proposition \ref{propcritptsalphaeta}, we apply the standard stationary phase theorem with respect to $A,\theta$ : in the next two lemmas we provide a description of the critical points $(A_c, \theta_c)$ and their main properties which will be useful later on, in the proof of the dispersive bounds for $W_{N,a,\gamma}$.

\begin{lemma}\label{lemthetac} 
Let $|N|\geq 1$ {and $t\ge c_*\sqrt\gamma$.}
The critical point $\theta_c$ of the phase $\tilde\Phi_{N,a,h,\gamma}(t,x,y,\Sigma,S,A,\theta)$ satisfies
\begin{equation}\label{eq:thetac}
\theta_c(t,x,a,y,\Sigma,S,\gamma,N)=-\frac{y+|y|^2g(t,y)}{2t}+O\Big(\gamma\times \frac{t}{2\sqrt{\gamma}N}, \frac{\gamma^{3/2}}{t}\Sigma,\frac{\gamma^{3/2}}{t}S,\gamma^{3/2}\frac{(\Sigma-S)}{2\sqrt{\gamma}N}\Big),
\end{equation}
where $g$ is a smooth function for $|y|/2t\sim 1$ such that $\theta_g=-\frac{y+|y|^2g(t,y)}{2t}$ satisfies $\theta_g=-\frac{y+\nabla B_0(y,\theta_g)}{2t}$.
Moreover, $\theta_c=\theta_c(t,x,a,y,\Sigma,S,\gamma,N)$ is defined for $\frac{t}{2\sqrt{\gamma}N}\sim 1$ and $\frac{|y|}{2t}\sim 1$ and for $N,\tilde N\sim N_t:=[\frac{t}{2\sqrt{\gamma}}]$,
\begin{equation}\label{dertheta}
\Big|\theta_c(\cdot,N)-\theta_c(\cdot,\tilde N)\Big| \lesssim \gamma N_t|\frac{1}{N}-\frac{1}{\tilde N}|\sim \gamma\frac{|N-\tilde N|}{N_t}, \quad |\partial_{\Sigma}\theta_c|=O(\frac{\gamma^{3/2}}{t}), |\partial_{S}\theta_c|=O(\frac{\gamma^{3/2}}{t}).
\end{equation}
\end{lemma}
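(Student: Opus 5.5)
The plan is to read the critical-point equation in $\theta$ straight off the last line of \eqref{desclambdaN}, using the expansion \eqref{nablaPhigamma} of $\nabla\tilde\Phi_\gamma$. We collect the terms of order one in $\gamma$ and solve for them exactly, and treat everything else as a perturbation through the implicit function theorem. We use the nondegeneracy from \eqref{eq:critical-reduced-hessian}: the reduced Hessian is $2t(Id+O(T_0+\gamma))$.

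\textbf{Principal equation.} Subtract the source term, for which $y=0$, $B_0=0$ and only the $\gamma^{3/2}$ and $\gamma^2$ pieces survive. The equation becomes
\[
2t\theta+y+\nabla_\theta B_0(y,\theta)=\mathcal R,
\]
where $\mathcal R$ collects four contributions:
\begin{itemize}
\item $\gamma tA\nabla q(\theta)$;
\item the boundary term $\frac{\gamma A}{2}\nabla(\frac{q}{|\theta|}(B_0-B_2))=O(\gamma|y|^2)$;
\item the bracket of size $\gamma^{3/2}$, namely $\gamma^{3/2}(\Sigma^3/3+\Sigma(x/\gamma(1+\Lp)-A))$ minus its source analogue in $S$;
\item the reflection term $\frac43\gamma^{3/2}NA^{3/2}\nabla q^{1/2}$.
\end{itemize}
The $\gamma^2\nabla\tilde\Gamma_\gamma$ terms are bounded by \eqref{Gammatildegam}.

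\textbf{Exact solution with $\mathcal R=0$.} This is the Hamilton--Jacobi fixed-point equation $\theta_g=-(y+\nabla B_0(y,\theta_g))/(2t)$. Since $\nabla_\theta B_0(y,\theta)=O(|y|^2)$, is homogeneous of degree $0$ in $\theta$, and has $\theta$-derivative $O(|y|^2)$, the map is a contraction for $|y|\lesssim t\le T_0$ in the region $|y|/2t\sim1$. Its solution has the form $-(y+|y|^2g(t,y))/(2t)$ with $g$ smooth, because $\nabla B_0$ vanishes to second order in $y$ at $y=0$ (Proposition \ref{propimpformgamma}).

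\textbf{Perturbation.} Divide by $2t$ and apply the implicit function theorem. The Jacobian is $Id+O(T_0)$. The shift is $O(|\mathcal R|/t)$, and we read off the sizes of the pieces of $\mathcal R$:
\begin{itemize}
\item the $\gamma tA\nabla q$ term gives $O(\gamma)$. We write it as $\gamma\cdot t/(2\sqrt\gamma N)$ using $t\sim N\sqrt\gamma$.
\item the $\Sigma,S$ bracket gives $O(\gamma^{3/2}\Sigma/t,\gamma^{3/2}S/t)$.
\item for the reflection term, we substitute the $A$-equation \eqref{eq1Ac}, which gives $2N\sqrt{A}=q^{1/2}t/\sqrt\gamma-(\Sigma-S)+O(\sqrt\gamma)$. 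Its leading part, of size $\gamma t$, partially cancels against $\gamma tA\nabla q$ through the homogeneity identity for $\nabla q^{1/2}$. What remains is $\gamma^{3/2}(\Sigma-S)/(2\sqrt\gamma N)$, in the form stated.
\end{itemize}
This yields \eqref{eq:thetac}. The range of definition, $t/(2\sqrt\gamma N)\sim1$ and $|y|/2t\sim1$, is inherited from the support $A\sim1$, $|\theta|\sim1$ and from Lemma \ref{lemmaNgam}.

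\textbf{Derivatives and $N$-dependence, \eqref{dertheta}.} We differentiate the implicit equation. Since $\partial_\Sigma\mathcal R=O(\gamma^{3/2})$ and the Jacobian is $\sim 2t$, we get $|\partial_\Sigma\theta_c|,|\partial_S\theta_c|=O(\gamma^{3/2}/t)$.

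For the dependence on $N$ we treat $N$ as a continuous parameter, after eliminating $A=A_c(\theta,N)$. Then $\partial_N\mathcal R$ comes from the term $N\gamma^{3/2}A^{3/2}$ together with $\partial_N A_c$. From \eqref{eq1Ac}, $A_c^{1/2}\approx t q^{1/2}/(2\sqrt\gamma N)$, so $\partial_N$ of $\gamma tA_c$ and of $N\gamma^{3/2}A_c^{3/2}$ are each $O(\gamma t\cdot N_t/N^2)$. Dividing by $t$, $|\partial_N\theta_c|\lesssim\gamma N_t/N^2$. Integrating between $N$ and $\tilde N$ gives $\gamma N_t|1/N-1/\tilde N|\sim\gamma|N-\tilde N|/N_t$.

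\textbf{Expected obstacle.} The main difficulty is bookkeeping the cancellation between the time term $\gamma tA\nabla q$ and the reflection term after substituting $A_c$. Both are individually $O(\gamma t)$, while the claimed error is stated relative to $t/(\sqrt\gamma N)$. We must track exactly how \eqref{eq1Ac} ties $A_c$ to $t/(\sqrt\gamma N)$ so that only the displayed residuals survive. The boundary corrections $(B_0-B_2)/(2|\theta|)=O(|y|^2)=O(tT_0)$ must be absorbed into the constants uniformly.
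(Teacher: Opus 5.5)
Your proposal is correct and follows essentially the paper's route: the paper also solves $\theta_g=-(y+\nabla B_0(y,\theta_g))/(2t)$ first, eliminates $N$ through \eqref{eq1Ac} and substitutes $A_c$, treats the remaining terms as a small perturbation via the implicit function theorem with the invertibility from \eqref{eq:critical-reduced-hessian}, and differentiates the critical-point equations to obtain \eqref{dertheta}. The cancellation you flag as the main obstacle is harmless: after division by $2t$, every term, including the leftover $\frac{\gamma A_c}{3}\nabla q$, is already $O(\gamma)$, and that is all the stated error requires.
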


\begin{proof}
Using \eqref{eq1Ac} we first obtain the equation satisfied by the critical point $A_c$ for $|N|\geq 1$ as follows:
\begin{equation}\label{Acexpres}
A^{1/2}=\frac{q(\theta)^{1/2}}{2N\sqrt{\gamma}}\Big(t{+\frac{(B_0-B_2)(y,\omega_\theta)}{2|\theta|}}\Big)-\frac{\Sigma-S}{2N}
+\frac{\sqrt{\gamma}}{2Nq(\theta)^{1/2}}\partial_A \Big(\tilde\Gamma_{\gamma}(x,y,\theta,\sqrt{\gamma}\Sigma,A)-\tilde\Gamma_{\gamma}(a,0,\theta,\sqrt{\gamma}S ,A)\Big).
\end{equation}
Using $\partial_A\tilde\Gamma_{\gamma}=O(1)$, it follows from the last equation that $NA_c^{1/2}$ is a function of $(t,x,y,\Sigma,S,\theta,\gamma)$ of the form
\begin{equation}\label{eq:Acsquare}
NA_c^{1/2}=\frac{q(\theta)^{1/2}}{2\sqrt{\gamma}}\Big(t{+\frac{(B_0-B_2)(y,\omega_\theta)}{2|\theta|}}-\frac{\sqrt{\gamma}}{q^{1/2}(\theta)}(\Sigma-S)+\gamma E(x,a,y,\sqrt{\gamma}\Sigma,\sqrt{\gamma}S,\theta)\Big),
\end{equation}
{where $E$ is the smooth bounded correction obtained from \eqref{Acexpres}; the rescaled bounds following \eqref{Gammatildegam} apply to it.} 
In particular, we have $\partial_{\Sigma} E=O(\gamma^{1/2})$, {$\partial_S E=O(\gamma^{1/2})$}. We now turn to the critical point with respect to $\theta$. 
Eliminating $N$ between the first and the last equations of \eqref{desclambdaN} and using \eqref{nablaPhigamma} and \eqref{eq1Ac} yields
\begin{multline}\label{nablaPhiNagamma}
2t\theta +y+\nabla B_0(y,\theta)
+\gamma A\Big(t\nabla q(\theta){+\frac12\nabla\left(\frac{q(\theta)}{|\theta|}(B_0-B_2)(y,\omega_\theta)\right)}\Big)\\
+\gamma^{3/2}\Big[\frac{\nabla q(\theta)}{2q^{1/2}(\theta)}\Big(\frac{\Sigma^3}{3}+\Sigma(\frac{x}{\gamma}(1+\Lp(y,\omega_{\theta}))-A)-\frac{S^3}{3}-S(\frac{a}{\gamma}-A))\Big)+\frac{x}{\gamma}\Sigma q^{1/2}(\theta)\nabla \Lp(y,\omega_{\theta})\Big]\\
+\gamma^2\nabla \Big(\tilde\Gamma_{\gamma}(x,y,\theta,\sqrt{\gamma}\Sigma,A)-\tilde\Gamma_{\gamma}(a,0,\theta,\sqrt{\gamma}S,A)\Big)\\=\frac{\gamma A}{3}\nabla q(\theta)\Big(t{+\frac{(B_0-B_2)(y,\omega_\theta)}{2|\theta|}}-\frac{\sqrt{\gamma}}{q^{1/2}(\theta)} (\Sigma-S)+\frac{\gamma }{q(\theta)}\partial_A(\tilde\Gamma_{\gamma}(x,y,\theta,\sqrt{\gamma}\Sigma,A)-\tilde\Gamma_{\gamma}(a,0,\theta,\sqrt{\gamma}S,A))\Big).
\end{multline}
We need to replace $A$ by $A_c$. Using the expression \eqref{eq:Acsquare} of $A_c^{1/2}$ gives, with $\omega_\theta=\theta/|\theta|$,
\begin{multline}\label{forA}
A_c=\frac{q(\theta)}{(2\sqrt\gamma N)^2}
 \left(t+\frac{(B_0-B_2)(y,\omega_\theta)}{2|\theta|}\right)^2
 \Bigg[1-\frac{2\sqrt\gamma(\Sigma-S)}{q^{1/2}(\theta)
 \left(t+\frac{(B_0-B_2)(y,\omega_\theta)}{2|\theta|}\right)} +\frac{\gamma(\Sigma-S)^2}{q(\theta)
 \left(t+\frac{(B_0-B_2)(y,\omega_\theta)}{2|\theta|}\right)^2}\\
{} +   O\bigg(
 \frac{\gamma}{t+\frac{(B_0-B_2)(y,\omega_\theta)}{2|\theta|}},
 \frac{\gamma^{3/2}}{\left(t+\frac{(B_0-B_2)(y,\omega_\theta)}{2|\theta|}\right)^2}
 \bigg)\Bigg]\,.
\end{multline}
The critical point $A_c$ lies in the support of the symbol $\kappa(A)$ only if $\frac{t{+\frac{(B_0-B_2)(y,\omega_\theta)}{2|\theta|}}}{2N\sqrt{\gamma}}\sim 1$. {As $(B_0-B_2)(y,\omega_\theta)=O(|y|^2)$ and $|y|\lesssim t\leq T_0<1$} and $|\theta|\sim 1$ it follows that, for $T_0$ sufficiently small, we have $t\sim (t{+\frac{(B_0-B_2)(y,\omega_\theta)}{2|\theta|}})\sim 2N\sqrt{\gamma}$ where the phase is stationary. {Consequently,
$\sqrt\gamma/(t+(B_0-B_2)(y,\omega_\theta)/(2|\theta|))\sim1/N$,
and both error terms in \eqref{forA} are $O(\sqrt\gamma/N)$.}

Introducing $A=A_c$ from the last formula in \eqref{nablaPhiNagamma} yields the following equation satisfied by $\theta$ 

\begin{multline}\label{nablaPhiNagamma2}
\theta+\frac{y+\nabla B_0(y,\theta)}{2t}
+\frac{\gamma A_c}{2t}\left\{
 \frac23t\nabla q(\theta)
 +\frac12\nabla\left(\frac{q(\theta)}{|\theta|}(B_0-B_2)(y,\omega_\theta)\right)
 -\frac{(B_0-B_2)(y,\omega_\theta)}{6|\theta|}\nabla q(\theta)
 \right\}\\
{}+\frac{\gamma^{3/2}}{2t}\left\{
 \frac{\nabla q(\theta)}{2q^{1/2}(\theta)}
 \left(\frac{\Sigma^3-S^3}{3}
 +\Sigma\frac{x}{\gamma}(1+\Lp)
 -S\frac{a}{\gamma}-\frac{A_c}{3}(\Sigma-S)\right)
 +\frac{x}{\gamma}\Sigma q^{1/2}(\theta)\nabla\Lp
 \right\}\\
{}+\frac{\gamma^2}{2t}\left(\nabla
 -\frac{A_c\nabla q(\theta)}{3q(\theta)}\partial_A\right)
 \left(\tilde\Gamma_\gamma(x,y,\theta,\sqrt\gamma\Sigma,A)
 -\tilde\Gamma_\gamma(a,0,\theta,\sqrt\gamma S,A)\right)\Big|_{A=A_c}=0.
\end{multline}
{Here $\Lp=\Lp(y,\omega_\theta)$, and all derivatives in the last
line are taken before substituting $A=A_c$.
This identity follows by eliminating $N$ using \eqref{eq1Ac} and then
setting $A=A_c$. Substitution of \eqref{forA} gives the same estimates
for $\theta_c$ and its derivatives: the added boundary coefficients are
$O(|y|^2)$ with their $\theta$ derivatives.}
First solve $\theta_g=-(y+\nabla B_0(y,\theta_g))/(2t)$.
The Jacobian of this equation is $Id+O(T_0)$.
After substituting $A_c(\theta)$, equation \eqref{nablaPhiNagamma2}
is a small smooth perturbation of this equation, with the same
uniform invertibility by \eqref{eq:critical-reduced-hessian}.
The implicit function theorem gives $\theta_c$ and
\eqref{eq:thetac}, locally in the fixed frequency patch.
Since $|\theta_c|\sim1$, we also have $|y|/(2t)\sim1$.

Differentiating {the rescaled equations for the critical points} gives the bounds
in \eqref{dertheta}. All source and observation corrections, including the terms
containing $B_0-B_2$, are retained before differentiation.
The same argument gives uniform bounds for any fixed number of
derivatives in the rescaled variables. We will use these bounds
when differentiating the reduced phase.
\end{proof}

\begin{lemma}\label{lemAc}
Let $|N|\geq 1$, {$t\ge c_*\sqrt\gamma$,} $N\sim \frac{t}{2\sqrt{\gamma}}$ and $\frac{|y|}{2t}\sim 1$.
{The critical point $A_c$ defined above satisfies}
\begin{gather}
  \label{eq:Acdet}
A_{c}=\Ab_0^2-2\Ab_0\Ab_1+\Big(\frac{\Sigma}{2N}-\frac {S} {2N}\Big)^2+ O\Big(\gamma (\frac{\Sigma}{N},\frac{S}{N})^2\Big)\,,\\
\label{eq:defA0}
\quad \Ab_0=\frac{q^{1/2}(\theta)}{2N\sqrt{\gamma}}\big(t{+\frac{(B_0-B_2)(y,\omega_\theta)}{2|\theta|}}+\gamma E_0\big)|_{\theta=\theta_c|_{\Sigma=S=0}},\quad \Ab_1=\frac{\Sigma}{2N}(1-\gamma E_1)-\frac {S} {2N}(1-\gamma E_2),
\end{gather}
where $\Ab_j=\Ab_j(t,x,y,a,\gamma,N)$ $j\in \{1,2\}$. Here
and $E_0=E_0(t,x,y,a,\gamma,\frac{t}{2\sqrt{\gamma}N})=O( |y|, \frac{x}{\gamma})$, $E_{0,1,2}$ are independent of $\Sigma$ and $S$ and $E_{1}=E_1(t,x,y,\gamma,\frac{t}{2\sqrt{\gamma}N})=O(1)$, $E_2=E_2(t,y,a,\gamma,\frac{t}{2\sqrt{\gamma}N})=O(1)$.
\end{lemma}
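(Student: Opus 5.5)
The plan is to start from the implicit equation \eqref{Acexpres}, or its solved form \eqref{eq:Acsquare}, for $N A_c^{1/2}$. After that, the only work is to square it and to track how the $\theta$-dependence enters once $\theta=\theta_c$ is substituted. Write
\[
A_c^{1/2}=\frac{q(\theta)^{1/2}}{2N\sqrt\gamma}\Big(t+\frac{(B_0-B_2)(y,\omega_\theta)}{2|\theta|}\Big)-\frac{\Sigma-S}{2N}+\frac{\gamma}{2N}\,\frac{q(\theta)^{1/2}}{\sqrt\gamma}E .
\]
Then split $E$ as $E=E_0+\Sigma\,\partial_\Sigma E|_{0}+S\,\partial_S E|_0+O((\Sigma,S)^2\gamma)$, with the first-order coefficients evaluated at $\Sigma=S=0$. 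The rescaled bounds following \eqref{Gammatildegam} give $\partial_\Sigma E,\partial_S E=O(\sqrt\gamma)$ and second derivatives $O(\gamma)$.

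\begin{itemize}
\item \emph{The $\Sigma$-independent part.} At $\Sigma=S=0$ it is exactly $\Ab_0$, once $\theta$ is frozen at $\theta_c|_{\Sigma=S=0}$.
\item \emph{The bound on $E_0$.} Only the observation and source terms of $\partial_A\tilde\Gamma_\gamma$ survive at $\Sigma=S=0$. By \eqref{Gammatildegam} and \eqref{eq:normalized-endpoint-derivatives}, the boundary part is $O(|y|)$ and the $x\tau A_\Gamma$ parts are $O(x/\gamma)$ and $O(a/\gamma)$. The source contribution at $(a,0)$ is absorbed into $E_0$ as well; since $a\le\gamma$ it contributes $O(1)$ and is harmless.
\item \emph{The linear part.} The terms linear in $(\Sigma,S)$ combine into $-\Ab_1$. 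The corrections $\gamma E_1,\gamma E_2$ come from $\partial_\Sigma E$ and $\partial_S E$, multiplied by $\sqrt\gamma\,q^{1/2}/(2N)$ and normalized against the leading $\Sigma/(2N)$.
\end{itemize}

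The second ingredient is the substitution $\theta=\theta_c(\Sigma,S)$ rather than $\theta_c(0,0)$. By \eqref{dertheta}, $\partial_\Sigma\theta_c,\partial_S\theta_c=O(\gamma^{3/2}/t)=O(\gamma/N)$, using $t\sim N\sqrt\gamma$. In $\Ab_0$ the prefactor $q^{1/2}(\theta)t/(2N\sqrt\gamma)\sim1$ has bounded $\theta$-derivatives. Hence replacing $\theta_c(\Sigma,S)$ by $\theta_c(0,0)$ changes $\Ab_0$ by $O(\gamma(|\Sigma|+|S|)/N)$, which is linear in $(\Sigma,S)$ with coefficient $O(\gamma)$ after dividing by $1/(2N)$. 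I absorb it into $E_1,E_2$, which keeps them $O(1)$ and independent of $(\Sigma,S)$. A Taylor expansion to second order puts the remaining quadratic discrepancy at $O(\gamma(\Sigma/N,S/N)^2)$, the stated error.

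Squaring then gives
\[
A_c=(\Ab_0-\Ab_1+r)^2=\Ab_0^2-2\Ab_0\Ab_1+\Ab_1^2+O(r),\qquad r=O\big(\gamma(\Sigma/N,S/N)^2\big).
\]
I then replace $\Ab_1^2$ by $(\Sigma/(2N)-S/(2N))^2$. The error is $O(\gamma(\Sigma,S)^2/N^2)$, since $E_1,E_2=O(1)$, and this gives \eqref{eq:Acdet}. The cross terms $\Ab_0 r$ are of the same size because $\Ab_0\sim1$ on the support, where $A_c\sim1$.

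The main obstacle I expect is the bookkeeping showing that $E_1$ and $E_2$ really are independent of $(\Sigma,S)$, and that the coupling through $\theta_c(\Sigma,S)$ produces no linear term larger than $O(\gamma/N)$. For this I rely on the derivative bounds \eqref{dertheta} and on the $O(\sqrt\gamma)$ bounds for the $\Sigma,S$ derivatives of $\partial_A\tilde\Gamma_\gamma$. Everything beyond first order goes into the quadratic remainder. I also have to check that $E_1$ depends only on $x$ and $E_2$ only on $a$. This follows because, in \eqref{Acexpres}, $\Sigma$ enters only through the observation term $\tilde\Gamma_\gamma(x,y,\dots)$ and $S$ only through the source term $\tilde\Gamma_\gamma(a,0,\dots)$. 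The $\theta_c$-induced corrections, which depend on both, are placed in $E_0$ or in the error.
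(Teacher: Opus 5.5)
Your skeleton is the paper's: solve \eqref{Acexpres} for $A_c^{1/2}$, expand about $\Sigma=S=0$ using \eqref{dertheta} for the $\theta_c$-dependence, get the slopes $\mp\frac1{2N}(1+O(\gamma))$, and square. The paper's proof records only these first derivatives. The gap is in what you add about the substitution $\theta=\theta_c(\Sigma,S)$. It already shows up in the Friedlander case $q(\theta)=|\theta|^2$, $\Gamma\equiv0$, where your $E\equiv0$. There the $\theta$-equation forces $\theta_c=-\rho\,y/|y|$, and the $A$-equation gives $\rho=\frac{\sqrt\gamma}{t}(2Nu+\Sigma-S)$ with $u=A_c^{1/2}$. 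Eliminating $\rho$ yields
\[
u+\frac23\gamma u^3+\frac{\Sigma-S}{2N}\Bigl(1+\frac\gamma2u^2\Bigr)+\frac{\gamma(\Sigma^3-S^3)}{12N}+\frac{x\Sigma-aS}{4N}=\frac{|y|}{4N\sqrt\gamma}.
\]
Hence $E_1=\bigl(\frac32\Ab_0^2-\frac{x}{2\gamma}\bigr)/(1+2\gamma\Ab_0^2)$, and $E_2$ is the same with $a$ in place of $x$. So all of $E_1,E_2$ comes from $\theta_c$, and $x,a$ enter only through the model versions of the terms $\Sigma\frac x\gamma(1+\Lp)$ and $S\frac a\gamma$ in \eqref{nablaPhiNagamma2}. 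This contradicts your bookkeeping. You first absorb the $\theta_c$-induced linear terms into $E_1,E_2$. Then, to justify the $x$/$a$ separation, you say they are ``placed in $E_0$ or in the error''. A term linear in $(\Sigma,S)$ with coefficient of size $\gamma/N$ can go into neither. The separation has to be read off from the $\Sigma$- and $S$-linear terms of \eqref{nablaPhiNagamma2}, that is from $\partial_\Sigma\theta_c$ and $\partial_S\theta_c$ at the origin, not from how $\Sigma,S$ enter \eqref{Acexpres}.

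Second, your claim that a second-order Taylor expansion leaves an error $O(\gamma(\Sigma/N,S/N)^2)$ does not follow from \eqref{dertheta}, which bounds only first derivatives of $\theta_c$. It is also false for $(\Sigma,S)$ of size one. The term $\frac{\gamma^{3/2}}{2t}\frac{\nabla q}{2q^{1/2}}\frac{\Sigma^3-S^3}{3}$ in \eqref{nablaPhiNagamma2} gives $\partial_\Sigma^2\theta_c$ a piece of size $\gamma^{3/2}|\Sigma|/t\sim\gamma|\Sigma|/N$. Through the prefactor $q^{1/2}(\theta)t/(2N\sqrt\gamma)\sim1$, this produces the term $-\frac{\gamma(\Sigma^3-S^3)}{12N}(1+O(\gamma))$ in $A_c^{1/2}$ visible in the display. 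That term is $O(\gamma(\Sigma/N,S/N)^2)$ only when $|\Sigma|,|S|\lesssim1/N$, as for $(\Sigma,S)=(\xx,\yy)/N$ in Section~\ref{sectmodN}. For general bounded $(\Sigma,S)$, the remainder in \eqref{eq:Acdet} must be weakened to $O\bigl(\gamma|(\Sigma,S)|^2(|(\Sigma,S)|/N+N^{-2})\bigr)$. This weaker form is still compatible with the later uses, for instance the $O(\gamma/N)$ error for $\partial_\Sigma A_c$ in the proof of Lemma~\ref{lem:transverse-directions}. To close your argument, either restrict to that regime or state the weaker remainder. In both cases, derive the higher $\Sigma,S$-derivatives of $\theta_c$ from \eqref{nablaPhiNagamma2} rather than from \eqref{dertheta}; at the origin the second derivatives are $O(\gamma/N^2)$ and the third are $O(\gamma/N)$. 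Your side remark that the source term adds an $O(a/\gamma)$ piece to $E_0$, not covered by the stated $O(|y|,x/\gamma)$, is correct.
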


\begin{proof}
For $|N|\geq 1$, the critical point $A_c$ satisfies \eqref{Acexpres}, hence at $\Sigma=S=0$, $A_c^{1/2}|_{\Sigma=S=0}$ satisfies
\begin{equation}\label{Ab0}
\Ab_0:=A_c^{1/2}|_{\Sigma=S=0}=\frac{q(\theta_c)^{1/2}}{2N\sqrt{\gamma}}\Big(t{+\frac{(B_0-B_2)(y,\omega_{\theta_c})}{2|\theta_c|}}+\gamma E(x,a,y,0,0,\theta_c)\Big)|_{\Sigma=S=0},
\end{equation}
where $\theta_c|_{\Sigma=S=0}$ is the value of $\theta_c$, the critical point with respect to $\theta$, at $\Sigma=S=0$ and where $\omega_{\theta_c}=\theta_c/|\theta_c|$. 
The dependence of $\Ab_0(\cdot)$ on $N$ comes through the factor $\frac{t}{2\sqrt{\gamma}N}$ : this follows from \eqref{eq:thetac} and \eqref{Ab0}. Taking the derivative of \eqref{Acexpres} with respect to $\Sigma,S$ and using \eqref{dertheta} yields, for $N\sim \frac{t}{2\sqrt{\gamma}}$
\begin{equation}
\partial_{\Sigma}(A^{1/2}_c)=-\frac{1}{2N}\Big(1+O(\gamma)\Big),\quad \partial_{S}(A^{1/2}_c)={+}\frac{1}{2N}\Big(1+O(\gamma)\Big).
\end{equation}
This allows us to obtain the explicit form of the derivative of $A_c$ with respect to $\Sigma,S$.
\end{proof}

\subsubsection{Dispersive bounds for $W_{N,a,\gamma}$ for $(ht)^{1/2}\lesssim a\lesssim \gamma\ll 1$, when $a>h^{2/3-\epsilon}$, $\epsilon>0$}\label{secta>}

In this section we prove Theorem \ref{dispintermediaire} when $(ht)^{1/2}\lesssim a$. We first evaluate each $W_{N,a,\gamma}$ and provide sharp dispersive bounds for all $N$,  then we sum all these contributions. Although such sharp bounds for $W_{N,a,\gamma}$ do hold for all $a\geq h^{2/3-\epsilon}$, as in the region $a^{2/3-\epsilon}\leq a<(ht)^{1/2}$ the number of $N$ is very large, the sum over $N$ yields dispersive bounds which are worse than those announced in Theorem \ref{dispintermediaire}; therefore, when $a\leq (ht)^{1/2}$ we deal with $\Prond_{S,h,a,\gamma}$ separately, using its form \eqref{eq:Prond2cut} as a discrete sum. In this section we only use \eqref{eq:Prondcut} and the localisation with respect to $\gamma$ satisfying $a\lesssim \gamma\leq \varepsilon_0\ll 1$ from the beginning of section \ref{sectcardN1}.

Before stating the estimates for $W_{N,h,\gamma}$, when $|N|\geq 1$ and $a\leq  \gamma<\varepsilon_0$, we recall that we have the free space dispersion for $W_{0}:=\sum_{\gamma} W_{0,h,\gamma}$ (see \eqref{eq:newVN} for the definition of $W_0$), assuming that $t>h$ so that the dispersive effect takes over :
\begin{prop}\label{propfreeN=0}
For $a_0$, $\varepsilon_0$ and $T_0$ sufficiently small and for all $h\in (0,1)$, $a\in (h^{2/3-\epsilon},a_0]$ for some $\epsilon>0$ and $t\in (h,T_0]$ 
\begin{equation}\label{eq:370}
 |W_{0}(t,x,y)|:=\Bigg|\sum_{\gamma =2^j a, 0\leq j\leq \log(\frac{\varepsilon_0}{a})}W_{0,\gamma}(t,x,y)\Bigg|\lesssim \frac{1}{h^{d}}\Big(\frac ht\Big)^{d/2}.
\end{equation}
\end{prop}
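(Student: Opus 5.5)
The plan is to avoid estimating the dyadic pieces $W_{0,\gamma}$ one at a time, since each carries a cutoff $\kappa(\alpha/(\gamma q^{1/3}))$ that would cost a logarithm after summation. I would first sum over $\gamma$ and remove the dyadic partition. Since $\sum_\gamma \kappa(\alpha/(\gamma q(\theta)^{1/3}))=1$ on the relevant support $a\lesssim\alpha\le\ceps_0$ (up to the $\gamma\ll a$ pieces, negligible by Remark~\ref{rmqalpha>a}), $W_0$ is exactly the $N=0$ term of \eqref{eq:newVN}. By construction this is $K$ applied without the Airy--Poisson factor, i.e. the term that at $t=0$ equals $J(g_{h,a})=\chi_0(hD_x)\varkappa(hD_y)\delta_{(a,0)}$. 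The phase of $W_0$ is
\[
t\tau^2(\alpha,\theta)+\Phi(x,y,\theta,\alpha,\sigma)-\Phi(a,0,\theta,\alpha,s),
\]
with no $-\frac43 N\alpha^{3/2}$ term.

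The key observation is that this phase is the conjugation of the free (interior) Schrödinger phase by the canonical transformation $\canonchi_J$. Let me explain. Consider the $(\sigma,s)$ integrals together with the $\alpha$ integral. Stationarity in $(s,\alpha)$ at the source reproduces, through Lemma~\ref{lemp} and the relation $\psi_a=$ critical value of $\tprime\alpha-\Phi(a,0,\theta,\alpha,s)$, the inverse FIO $J^{-1}$ applied to the datum. Stationarity in $\sigma$ at the observation point gives $J$. In the $N=0$ term the signs of $\sigma$ and $s$ are therefore correlated: one gets the branch $\Sigma\cdot S>0$, corresponding to the direct ray that does not touch the boundary. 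Concretely, I would compute the stationary points in $(\sigma,s,\alpha)$ via \eqref{sigmacrit} and its source analogue, and then reduce to a phase in $(\xi,\eta)$ of the form $t(\xi^2+R(x,y,\eta))+\varphi(x,y,\xi,\eta)-\varphi(a,0,\xi,\eta)$. Here I use the generating function of Remark~\ref{rmqvarphigamma} to pass from $(\Xi,\Theta)$ variables to $(\xi,\eta)$ variables. At that point $W_0$ is, modulo $O(h^\infty)$, an oscillatory representation of the boundaryless semiclassical propagator $e^{-ith\Delta_{\mathrm{loc}}}$ (with the metric extended smoothly across $x=0$) applied to $\chi_0(hD_x)\varkappa(hD_y)\delta_{(a,0)}$, for $t\le T_0$.

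Alternatively, and more robustly, I would argue at the operator level. The function $W_0$ solves the Schrödinger equation modulo $O(h^\infty)$ by \eqref{eq:Kom1}, since the $N=0$ term alone satisfies the equation in the interior region $x>0$; it has the correct data at $t=0$; and it is microlocally supported on bicharacteristics issued from the datum that do not reflect. Propagation of singularities, as in Lemma~\ref{derriere}, then identifies it with the free parametrix of \cite{bgt04} restricted to $x>0$. That parametrix satisfies $|\cdot|\lesssim h^{-d}(h/t)^{d/2}$ for $h<t\le T_0$, and this gives \eqref{eq:370}. I would also check the dispersion directly by stationary phase in $(\xi,\eta)$: the Hessian is $2t\,\mathrm{Id}+O(t^2)$, which yields $(h/t)^{d/2}$ times the $h^{-d}$ from the Jacobian.

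The main obstacle is the transition region where $\alpha$ approaches the glancing scale: $\alpha\sim a$ with $x\sim a$ and $t\lesssim\sqrt a$, where the critical points in $\sigma$ and $s$ degenerate (Airy-type, $\Sigma\approx0$). There the nondegenerate stationary phase in $(\sigma,s)$ fails. I would handle it by not integrating out $\sigma,s,\alpha$ separately, but by using the exact intertwining \eqref{eq:LapJ}: $W_0(t)=J\,e^{-ith\tau^2(hD_\tprime,hD_{y'})}g_{h,a}$. Since $J$ is an elliptic FIO associated with $\canonchi_J$ and $\tau^2(hD_\tprime,hD_{y'})$ is a Fourier multiplier, Egorov's theorem conjugates this to the free flow uniformly down to $\alpha\gtrsim h^{2/3}$. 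This removes the need for stationary phase in the degenerate variables, and the uniform $L^1\to L^\infty$ bound comes from the interior dispersive estimate \eqref{eq:interior-piece-dispersion}.
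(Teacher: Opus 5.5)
Your operator-level argument is correct, and it follows a genuinely different route from the paper. The paper proves the proposition by direct stationary phase on the $N=0$ oscillatory integral, as in \cite[Prop.~2]{doiSF21}: the full phase has nondegenerate critical points in all variables, and only the order of integration needs care.

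In particular, the Airy-type degeneracy you single out as the main obstacle arises only if one integrates in $(\sigma,s)$ before $\alpha$. Already for the model part of the phase, the Hessian in $(\sigma,s,\alpha)$ at the $N=0$ critical point has determinant $-2(\sigma-s)=-2tq^{2/3}(\theta)$. This does not vanish where $\sigma$ or $s$ does.

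You instead observe that, up to the redundant cutoff in $\alpha$, $W_0(t)=J(M_tg_{h,a})$ exactly, where $M_t$ is the Fourier multiplier $e^{it\tau^2(\alpha,\theta)/h}$ on the model side. By \eqref{eq:Kom1}, this solves the Schr\"odinger equation for the metric extended across $x=0$ up to $O(h^\infty)$, uniformly in $\omega$, with datum $\chi_0(hD_x)\varkappa(hD_y)\delta_{(a,0)}$. A cutoff-and-Duhamel energy argument then identifies it with the boundaryless propagator applied to that datum, and \cite{bgt04} gives the bound.

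Your route buys uniformity in $a$, in the dyadic pieces and down to $t\sim h$, without analysing any phase. The price is two facts you must use:
\begin{enumerate}
\item $G$ and $J$ are defined on a two-sided neighbourhood of the boundary.
\item $W_0(t)$ is $O(h^\infty)$, uniformly in $a$, away from the $\canonchi_J$-image of the model flow-out of $\mathrm{WF}_h(g_{h,a})$. This follows by nonstationary phase, using the uniform bounds on $\psi_a$ from Lemma~\ref{lemp}, and makes the commutator with the spatial cutoff negligible for $t\le T_0$.
\end{enumerate}

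Several side remarks need correcting.
\begin{itemize}
\item No Egorov theorem is needed. \eqref{eq:Kom1} is already the intertwining you use, and $\alpha\gtrsim a\gg h^{2/3}$ on the microsupport, so nothing has to hold down to $\alpha\sim h^{2/3}$.
\item The $N=0$ term does not select $\Sigma S>0$. A free ray that passes its apex between times $0$ and $t$ has $\sigma$ and $s$ of opposite signs. So ``non-reflected'' must be read through the extended flow, not as a sign condition.
\item A reduced phase $t(\xi^2+R(x,y,\eta))+\dots$ has no meaning for variable coefficients. The form time $\times$ Hamiltonian plus $\varphi-\varphi$ holds only in the model variables, where it is the phase of $W_0$ itself.
\item The estimate to quote is the boundaryless one of \cite{bgt04} for the extended metric near the boundary point. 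It is not \eqref{eq:interior-piece-dispersion}, which concerns interior patches of the Dirichlet problem and does not apply to sources at distance $a\to0$ from $\partial\Omega$.
\end{itemize}
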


Proposition \ref{propfreeN=0} follows in the same way as \cite[Prop.2]{doiSF21} in the case of the Friedlander model domain (the phase function $\Phi_{0,a,h}$ has non-degenerate critical points in all variables, still, to avoid complications, one has to pay attention only to the order in which one applies the stationary phase). In what follows, we consider $|N|\geq 1$.

\begin{prop}\label{proptang}
Assume $h\in (0,1)$, $h^{2/3-\epsilon}\leq a\leq a_0$
and let $\lambda=a^{3/2}/h$. Then there exists $C>0$ such that for $t\gtrsim \sqrt a$ and $0\leq x\leq 2a$,
  \begin{equation}
    \label{eq:37ter}
    \|(\Prond_{S,h,a,a}-W_{0,h,a})(t,\cdot)\|_{L^{\infty}(0\leq x\leq 2a, y)}\!\leq\!\sup_{\substack{0\leq x\leq 2a\\y}}\!\sum_{|N|\geq 1}|W_{N,h,a}(t,x,y)| \leq C
   \left\{ \begin{array}{l} 
\frac{1}{h^d}\Big(\frac ht \Big)^{\frac{d-1}2}\Big({\frac{ht}{a}}\Big)^{1/2}, \text{ if } \frac{t}{\sqrt{a}}\gtrsim \lambda^{1/3}, \,\\
\frac{1}{h^d}\Big(\frac ht \Big)^{\frac{d-1}2}\Big(\frac{ha}{t}\Big)^{1/4}, \text{ if } 1\lesssim \frac{t}{\sqrt{a}}\lesssim\lambda^{1/3}.
 \end{array} \right.
  \end{equation}
\end{prop}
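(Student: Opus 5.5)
The plan is to bound each $W_{N,h,a}$, $|N|\ge1$, uniformly in $(x,y)$ with $0\le x\le 2a$, and then sum over the $O(t/\sqrt a)$ relevant indices given by Lemma~\ref{lemmaNgam}. Short times $t\le c_*\sqrt a$ are already covered by \eqref{eq:first-reflection-short-sum}, so we assume $t\ge c_*\sqrt a$, where only $|N|\sim t/\sqrt a$ contribute.

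\textbf{Step 1: integrate out $(A,\theta)$.} In \eqref{eq:newVNgam} we use the rescaling \eqref{gatherchofvar} with $\gamma=a$; the Jacobian is $a^2$. By Proposition~\ref{propcritptsalphaeta} and Lemmas~\ref{lemthetac}, \ref{lemAc}, stationary phase in $(A,\theta)$ contributes the factor $(h/t)^{(d-1)/2}(N\lambda)^{-1/2}$. This leaves a two-dimensional oscillatory integral in $(\Sigma,S)$, with large parameter $\lambda=a^{3/2}/h$ and reduced phase
\[
\Psi_N(\Sigma,S)=\frac{\Sigma^3}{3}-\frac{S^3}{3}+\Sigma\frac{x}{a}(1+\Lp)-S-\frac{(\Sigma-S)^4}{48N^2}\cdot c(\theta_c)+\dots .
\]
To obtain this form we substitute $A_c$ from \eqref{eq:Acdet}. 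The term $-\frac43N A_c^{3/2}+tA_c$, expanded around $\Ab_0$, produces a quadratic term in $\Sigma-S$ that cancels against the $-A(\Sigma-S)$ contributions, leaving a quartic term of size $N^{-2}$. This is exactly the swallowtail structure of the Friedlander case \cite{doiSF21}. The corrections from $\tilde\Gamma_a$ and $E_{0,1,2}$ are $O(a)$ perturbations with bounded derivatives, by \eqref{Gammatildegam} and \eqref{dertheta}. The prefactor is therefore $h^{-d-1}a^2\lambda^{-1/2}N^{-1/2}(h/t)^{(d-1)/2}$.

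\textbf{Step 2: bound the $(\Sigma,S)$ integral.} We change variables to $S$ and $U=\Sigma-S$. The $\Sigma^3-S^3$ part gives Airy-type behaviour in $S$, costing $\lambda^{-1/3}$ at worst from the cubic degeneracy in $S$. The quartic term $\lambda U^4/N^2$ costs $(N^2/\lambda)^{1/4}$ when $N^2\lesssim\lambda$, and a nondegenerate $\lambda^{-1/2}$ otherwise. Combining these gives
\[
|W_N|\lesssim h^{-d}\Big(\frac ht\Big)^{\frac{d-1}2}\frac{a^2}{h}\lambda^{-\frac12}N^{-\frac12}\min\big(\lambda^{-\frac13}\ \text{or}\ \lambda^{-\frac12},\ (N^2/\lambda)^{\frac14}\lambda^{-\frac12}\big),
\]
which we would take from the refined degenerate stationary phase estimates of Appendix~\ref{app:refined-quartic-estimate}. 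We expect these to be Propositions~\ref{dispNgrand}, \ref{dispNpetitloin} and \ref{dispNpetitpres}: a worst-case bound near the swallowtail point, and better bounds away from it, depending on $x$ and $y$.

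\textbf{Step 3: sum over $N$.} The crucial point is that for fixed $(t,x,y)$ only $O(1)$ indices $N$ can be near the degenerate (swallowtail or cusp) configuration. This follows from the separation of the $\theta_c$ in \eqref{dertheta}: consecutive $N$ shift $\theta_c$ by $\sim a/N_t$, so the degenerate set moves in $y$ by $\sim t\cdot a/N_t\sim a^{3/2}$. The remaining indices decay in the distance to the degeneracy.
\begin{itemize}
\item Case $N\sim t/\sqrt a\gtrsim\lambda^{1/3}$, i.e.\ $N^2/\lambda\gtrsim\lambda^{-1/3}$: the quartic is weak. Summing gives one worst term $\sim h^{-d}(h/t)^{(d-1)/2}(h/(aN))^{1/2}\cdot N^{1/2}\ldots$, which should collapse to $(ht/a)^{1/2}$ after inserting $N\sim t/\sqrt a$.
\item Case $N\lesssim\lambda^{1/3}$: the swallowtail factor dominates and gives $(N^2/\lambda)^{1/4}/N^{1/2}$ times the free factor, i.e.\ $\lambda^{-1/4}a^{1/2}\cdot\ldots$. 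We expect this to yield $(ha/t)^{1/4}$.
\end{itemize}

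\textbf{Main obstacle.} The main obstacle is Step 3: controlling the sum over the $\sim t/\sqrt a$ interfering waves. Unlike the wave equation, all of them are stationary simultaneously. The difficulty is showing that the degenerate-point contributions from different $N$ are separated, so that the non-degenerate tails decay like $|N-N_*|^{-1-}$ and sum to a constant. We would attempt this first by tracking the position of the degenerate point as a function of $N$ through \eqref{dertheta} and \eqref{eq:Acdet}, and using integration by parts in $U$ for $|N-N_*|$ large.
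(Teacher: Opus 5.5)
Your reduction in $(A,\theta)$ and the restriction to $N\sim t/\sqrt a$ are fine. However, the reduced phase in Step~1 is wrong, and Steps~2--3 inherit the error.

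\textbf{The reduced phase.} The quadratic term in $\Sigma-S$ does not cancel. In the model, divide by $a^{3/2}q(\theta)^{1/2}$ and set $T=t\,q(\theta)^{1/2}/\sqrt a$, $U=\Sigma-S$, $\Ab_0=T/(2N)$. The $A$-dependent part $(T-U)A-\frac43NA^{3/2}$ then has critical value exactly
\[
 \frac{(T-U)^3}{12N^2}=\mathrm{const}-\Ab_0^2U+\frac{\Ab_0}{2N}U^2-\frac{U^3}{12N^2}.
\]
The reduced phase is therefore $\frac{\Sigma^3-S^3}{3}-\frac{\Af}{N^2}\Sigma+\frac{\Bf}{N^2}S+\frac{\Ab_0}{2N}(\Sigma-S)^2-\frac{(\Sigma-S)^3}{12N^2}$. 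This is $N^{-3}$ times the principal part of \eqref{eq:moderate-full-phase-expansion} with $(\Sigma,S)=(\xx,\yy)/N$. There is no quartic term in $\Sigma-S$. You have also lost the linear coefficients $\Af,\Bf$, which measure $\Ab_0^2-\frac xa(1+\Lp)$ and $\Ab_0^2-1$ (the Airy conditions at the observation and at the source); these are what the whole argument tracks.

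\textbf{The per-$N$ bounds.} The geometry in Step~2 is reversed. Since $\Sigma^3-S^3=3S^2U+3SU^2+U^3$ has no $S^3$ term, there is no Airy factor in $S$ at fixed $U$. Instead $U$ is the nondegenerate direction, with curvature $\lambda\Ab_0/N$. Eliminating it yields the quartic $-\frac{N}{2\Ab_0}S^4$ in the remaining (kernel) direction. At the swallowtail this gives $(N/\lambda)^{1/2}(\lambda N)^{-1/4}=\lambda^{-3/4}N^{1/4}$, which is the paper's $\Lambda^{-3/4}$, $\Lambda=\lambda/N^3$, times the Jacobian $N^{-2}$. Hence $|V_{N,a}|\lesssim h^{1/3}(\lambda^{1/3}/N)^{1/4}$, and this is what produces $(ha/t)^{1/4}$. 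Your factor $(N^2/\lambda)^{1/4}/N^{1/2}=\lambda^{-1/4}$ does not depend on $N$, so it cannot give a $t$-dependent bound; it only gives $(h\sqrt a)^{1/4}$. For $N\ge\lambda^{1/3}$ the coupling $\lambda\frac{\Ab_0}{2N}U^2$ is $O(\lambda^{1/3}/N)=O(1)$ on the Airy scale $U\sim\lambda^{-1/3}$. The two cubic factors then decouple, and one gets $\lambda^{-2/3}$ times Airy weights in $\Af,\Bf$ (Lemma~\ref{lem:large-reflections}); there is no nondegenerate gain in $U$.

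\textbf{The summation.} The mechanism in Step~3 is not the right one either. What separates the indices is $\Ab_0\approx T/(2N)$, not $\theta_c$. By \eqref{Ab0NtildeN}, $\partial_N\Ab_0^2\approx-2\Ab_0^2/N$, so for fixed $(t,x,y)$ the number of $N\sim N_t$ with $|\Ab_0^2-1|\le\delta$ (or with $1$ replaced by $\frac xa(1+\Lp)$) is $O(1+N_t\delta)$ (Lemma~\ref{rmqNgrand}).

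Away from the degenerate indices the bounds are Airy weights $1/(1+L|\Ab_0^2-1|^{1/2})$, which decay only like $|N-N_*|^{-1/2}$. On the oscillatory side these are genuine stationary contributions, so integrations by parts in $U$ cannot upgrade them to $|N-N_*|^{-1-}$. They sum to $O(1+N_t/L)$ (Lemma~\ref{rmqNgrand2}), not to $O(1)$.
\begin{itemize}
\item For $N\ge\lambda^{1/3}$ one takes $L=\lambda^{1/3}$. The growing factor $N_t/\lambda^{1/3}$ is exactly what turns $h^{1/3}(\lambda^{1/3}/N_t)^{1/2}$ into $(ht/a)^{1/2}$. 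A ``worst term plus summable tails'' scheme cannot produce a bound that increases with $t$.
\item For $N\le\lambda^{1/3}$ one takes $L=N_t$, which gives $O(h^{1/3})$ for the non-swallowtail indices. Only $O(1)$ indices (Lemma~\ref{lemNpetit}) carry the swallowtail bound.
\end{itemize}

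Without the correct reduced phase, its $(\Af,\Bf)$ parametrization, the two rescalings ($\lambda^{-1/3}$ and $1/N$), and this counting argument, Steps~2--3 do not yield \eqref{eq:37ter}.
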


\begin{prop}\label{proptransv}
Assume $h\in (0,1)$, $h^{2/3-\epsilon}\leq a\leq a_0$,
$\gamma\geq 4a$ and let $\lambda_{\gamma}=\gamma^{3/2}/h$. Then there exists $C$ such that, for $t\gtrsim \sqrt \gamma$ and {$0\leq x\leq a$}
  \begin{equation}
    \label{eq:37bis}
\|(\Prond_{S,h,a,\gamma}-W_{0,h,\gamma})(t,\cdot)\|_{L^{\infty}({0\leq x\leq a}, y)}\leq \sup_{\substack{0\leq x\leq a\\y}}\sum_{|N|\geq 1}|W_{N,h,\gamma}(t,x,y)| \leq C
   \left\{ \begin{array}{l} 
\frac{1}{h^d}\Big(\frac ht \Big)^{\frac{d-1}2}\Big({\frac{ht}{\gamma}}\Big)^{1/2}, \text{ if } \frac{t}{\sqrt{\gamma}}\gtrsim \lambda_{\gamma}^{1/3}, \,\\
\frac{1}{h^d}\Big(\frac ht \Big)^{\frac{d-1}2}h^{1/3}, \text{ if } 1\lesssim \frac{t}{\sqrt{\gamma}} \lesssim\lambda_{\gamma}^{1/3}.
 \end{array} \right.
  \end{equation}
\end{prop}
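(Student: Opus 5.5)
The plan is to treat each $W_{N,h,\gamma}$ with $|N|\geq1$, $\gamma\geq 4a$, $t\gtrsim\sqrt\gamma$ by first eliminating $(A,\theta)$ through Proposition~\ref{propcritptsalphaeta}. This leaves a two-dimensional oscillatory integral in $(\Sigma,S)$ with large parameter $\lambda_\gamma=\gamma^{3/2}/h$ and amplitude of size
\[
\frac{\gamma^{2}}{h^{d+1}}\Big(\frac ht\Big)^{\frac{d-1}{2}}(N\lambda_\gamma)^{-1/2}.
\]
The reduced phase is computed using Lemmas~\ref{lemthetac} and~\ref{lemAc}. Its model form, up to terms independent of $(\Sigma,S)$ and corrections bounded as after \eqref{Gammatildegam}, is
\[
\lambda_\gamma\Big(\frac{\Sigma^3}{3}+\Sigma\frac{x}{\gamma}-\frac{S^3}{3}-S\frac{a}{\gamma}-\frac{\Ab_0}{N}\ldots + N\big(A_c-\ldots\big)\Big).
\]
After expanding with \eqref{eq:Acdet}, this becomes
\[
\lambda_\gamma\Big[\frac{\Sigma^3}{3}-\frac{S^3}{3}+\Sigma\Big(\frac{x}{\gamma}-\Ab_0^2\Big)-S\Big(\frac a\gamma-\Ab_0^2\Big)+\frac{(\Sigma-S)^2}{4N}\Ab_0+\dots\Big].
\]
This is the structure of \cite{doiSF21}, perturbed by $O(\gamma)$ terms.

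The key point for the transverse case is that $a/\gamma\leq1/4$ and $x/\gamma\leq 1/4$, while $\Ab_0^2\sim A_c\sim1$. The critical points in $S$ therefore satisfy $S^2=\Ab_0^2-a/\gamma+O(1/N)$, which stays uniformly away from $0$. As a result, the $S$-integral is always nondegenerate and gives $\lambda_\gamma^{-1/2}$. The same holds for $\Sigma$, except where $x/\gamma$ is close to $\Ab_0^2$, which is excluded since $x\leq a\leq\gamma/4$. So the only remaining degeneracy comes from the quadratic coupling term $(\Sigma-S)^2/(4N)$ interacting with the cubic terms. That interaction is what produces the two regimes.

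\textbf{Regime $N\lesssim\lambda_\gamma^{1/3}$.} Here stationary phase in $(\Sigma,S)$ gives $\lambda_\gamma^{-1}$. I will not sum the $N$ trivially. Instead I will use that, by \eqref{dertheta}, the critical points for different $N$ correspond to separated spatial locations in $y$. At fixed $(x,y)$, only $O(1)$ values of $N$ are stationary; the others are handled by integrating by parts in $A$ via the $N$-dependence of $\theta_c$. Then
\[
\frac{\gamma^2}{h^{d+1}}\Big(\frac ht\Big)^{\frac{d-1}2}(N\lambda_\gamma)^{-1/2}\lambda_\gamma^{-1}
\sim h^{-d}\Big(\frac ht\Big)^{\frac{d-1}2}\gamma^{1/2}\lambda_\gamma^{-1/2}N^{-1/2}.
\]
Since $N\sim t/\sqrt\gamma$, this is $h^{-d}(h/t)^{\frac{d-1}2}(h/t)^{1/2}$.

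The stated $h^{1/3}$ instead arises near the cusps. When $\Sigma$- and $S$-critical points merge along the coupling direction, the worst decay is $\lambda_\gamma^{-1/3}\cdot\lambda_\gamma^{-1/2}$, which gives the factor $(\gamma^{3/2}/h)^{-1/3}\cdot\gamma^{1/2}/\ldots$. After using $N\sim t/\sqrt\gamma$, this simplifies to $h^{1/3}$. So the bound in this regime is the Airy-type $\lambda_\gamma^{-1/3}$ bound from Appendix~\ref{app:refined-quartic-estimate}, applied to one variable and then summed over $O(1)$ values of $N$.

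\textbf{Regime $N\gtrsim\lambda_\gamma^{1/3}$.} Here the coupling term $\lambda_\gamma(\Sigma-S)^2/N$ dominates the cubic scale. I will therefore not do stationary phase in the $(\Sigma-S)$ direction. The sum over $N$ then has many overlapping terms, about $N/\lambda_\gamma^{1/3}$ of them, per point. I will bound each term by its $(\Sigma,S)$ integral using only the nondegenerate direction, and sum the overlapping $N$ with the separation \eqref{dertheta}. Balancing the number of overlapping waves against the decay should give $(ht/\gamma)^{1/2}$.

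\textbf{Main obstacle.} I expect the hardest step to be controlling, uniformly in $N$ and $\gamma$, how many $W_{N}$ overlap at a fixed point, together with the degenerate $(\Sigma,S)$ estimate. The perturbative terms $E_0,E_1,E_2$ and $\tilde\Gamma_\gamma$ must not destroy the normal form. I would first try to rescale $(\Sigma-S)$ by $(N/\lambda_\gamma)^{1/2}$, reduce to the refined degenerate stationary phase estimates of Appendix~\ref{app:refined-quartic-estimate}, and check that the $O(\gamma)$ corrections remain symbol-type after this rescaling.
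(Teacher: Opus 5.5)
\textbf{There is a genuine gap.} Both your local picture of the $(\Sigma,S)$ integral and your counting in $N$ are wrong, and these two errors are exactly what you use to produce the two regimes.

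\emph{First, there is no degeneracy in the transverse case.} At the critical points, $\Sigma^2=A_c-\frac x\gamma(1+O(T_0+\sqrt\gamma))$ and $S^2=A_c-\frac a\gamma(1+O(\sqrt\gamma))$, with $A_c\ge1/2$ on the support of $\kappa$ and $x/\gamma,a/\gamma\le1/4$. Hence $m=|\Sigma|/\sqrt{A_c}$ and $n=|S|/\sqrt{A_c}$ are both at least $1/\sqrt2-o(1)$. Differentiating the reduced phase at a critical point, including the $\Sigma,S$ dependence of $A_c$ and $\theta_c$, gives
\begin{equation*}
\frac{\det\nabla^2_{\Sigma,S}\Psi_{N,a,h,\gamma}}{\gamma^3q(\theta_c)}=-4\Sigma S+\frac{2\sqrt{A_c}}{N}(\Sigma-S)+O(\sqrt\gamma).
\end{equation*}
For every $|N|\ge2$ and any signs, its absolute value is at least $A_c(4mn-m-n)-o(1)\ge A_c(2-\sqrt2)-o(1)$.

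So the coupling $\sqrt{A_c}/N$ is a bounded perturbation of the diagonal entries $2\Sigma,-2S$, which have size one. It never ``dominates the cubic scale'' for large $N$, and it never produces cusps for moderate $N$. The only possible cancellation is $N=1$ with $\Sigma<0<S$, where the determinant equals $-2A_c(m+n-2mn)$. There the $A$ equation gives $t\sqrt{q}/\sqrt{\gamma A_c}=2-m-n+O(T_0+\sqrt\gamma)$. Hence $t\ge c_*\sqrt\gamma$ forces $m+n-2mn\ge\min(m,n)(2-m-n)-o(1)\ge c'>0$.

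This $N=1$ case is the only place where the hypothesis $t\gtrsim\sqrt\gamma$ is used, and your proposal never addresses it. The outcome is Lemma~\ref{lem:transverse-directions}: every reduced integral is $O(\lambda_\gamma^{-1})$ by ordinary stationary phase. No $\lambda_\gamma^{-5/6}$ factor and no appeal to Appendix~\ref{app:refined-quartic-estimate} are needed.

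\emph{Second, your counting fails for the Schr\"odinger flow.} Fix $(t,x,y)$ with $|y|\sim2t$. Then all $N$ in a band of length comparable to $t/\sqrt\gamma$ have a critical point on the support. Indeed, $A_c^{1/2}\approx t\sqrt{q(\theta_c)}/(2N\sqrt\gamma)$ sweeps the support of $\kappa$ as $N$ varies, while by \eqref{dertheta} $\theta_c$ moves only by $O(\gamma)$ across the band. Thus \eqref{dertheta} does not separate the reflections in space, and integrating by parts in $A$ cannot discard all but $O(1)$ of them. This is exactly the difference with the wave equation noted after Lemma~\ref{lemmaNgam}.

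The correct conclusion is the trivial sum. Each term is $h^{-d}(h/t)^{\frac{d-1}2}(h/t)^{1/2}$, as you computed, and there are $O(t/\sqrt\gamma)$ of them. This gives $h^{-d}(h/t)^{\frac{d-1}2}(ht/\gamma)^{1/2}$ in both regimes. The $h^{1/3}$ bound for $t/\sqrt\gamma\lesssim\lambda_\gamma^{1/3}$ is then just the inequality $(ht/\gamma)^{1/2}\lesssim h^{1/3}$, not a cusp effect.

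Your scheme is also internally inconsistent. Terms of size $h^{1/3}N^{-1/2}$, summed over the $\sim t/\sqrt\gamma$ contributing indices, would give $h^{1/3}(t/\sqrt\gamma)^{1/2}$, which exceeds the claimed bound. The ``balancing'' step for $N\gtrsim\lambda_\gamma^{1/3}$ rests on the same false premises.
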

Proposition \ref{proptang} will be proved in the following section \ref{sectang}. Proposition \ref{proptransv} is postponed to section \ref{sectransv}. 
\begin{rmq}
Our main estimate for $a\geq (ht)^{1/2}$, \eqref{eq:dispersion-reflections} from Theorem \ref{dispintermediaire}, follows from the previous propositions, in the regime $\max\{h^{2/3-\epsilon}, (ht)^{1/2}\}\leq a\leq a_0$ : the additional condition $a\geq (ht)^{1/2}$ is necessary in order to have $\Big({\frac{ht}{a}}\Big)^{1/2}\leq (ht)^{1/4}\leq (\frac ht)^{1/4}$. 
For $a\ll (ht)^{1/2}$, the first bound in \eqref{eq:37ter} is worse than \eqref{eq:dispersion-reflections}.
\end{rmq}

We recall that for $a\leq  \gamma$, $W_{N,h,\gamma}$ was defined in \eqref{eq:newVNgam}, where the symbol of $W_{N,h,\gamma}$ (the same for every $N$) is of degree $0$. Let $|N|\geq 1$, $a\lesssim \gamma$ {and $t\ge c_*\sqrt\gamma$,} and set as before $\lambda_{\gamma}=\gamma^{3/2}/h$. Applying the (usual) stationary phase with respect to $A,\theta$ (see Proposition \ref{propcritptsalphaeta}) yields, for all $a\lesssim \gamma\ll 1${,} {$W_{N,h,\gamma}(t,x,y)=\frac{1}{h^{d}}\Big(\frac ht\Big)^{\frac{d-1}{2}}V_{N,h,\gamma}(t,x,y),$} \begin{equation}\label{VNdef}
V_{N,h,\gamma}(t,x,y)=\frac{\gamma^2}{h}\frac{1}{\sqrt{\lambda_{\gamma}N}}\int e^{\frac ih \Psi_{N,a,h,\gamma}(t,x,y,\Sigma,S) } \sigma_{V,h,\gamma}(\Sigma,S,t,x,y,h,a,\gamma,1/N) d\Sigma dS,
\end{equation}
where, for $a\sim \gamma$, $\Psi_{N,a,h,a}(t,x,y,\Sigma,S):=\tilde \Phi_{N,a,h,a} (t,x,y,\Sigma,S,A_c,\theta_c)$ and for $\gamma>4a$, $\Psi_{N,a,h,\gamma}(t,x,y,\Sigma,S):=\tilde \Phi_{N,a,h,\gamma} (t,x,y,\Sigma,S,A_c,\theta_c)$. The symbols $\sigma_{V,a,h}$, of order $0$, are obtained from \eqref{eq:newVNgam} after the changes of variables and the stationary phase. They are asymptotic expansions with small parameters {$h/t$} and $1/(\lambda_{\gamma}N)$ and main contribution of the form
\[
\kappa(A_c)\chi(\sqrt{\gamma}q(\theta_c)^{1/6}S) p_h(x,y,\theta_c,\gamma q(\theta_c)^{1/3}A_c,\sqrt{\gamma}q(\theta_c)^{1/6}\Sigma)\tilde q_h (\theta,\gamma q(\theta_c)^{1/3}A_c,\sqrt{\gamma}q(\theta_c)^{1/6}A_c S),
\]
where we have dropped the cutoffs $\cutoffchi^{\flat}(\gamma q(\theta)^{1/3}A_c/\ceps_{0})$ and $\cutoffchi^{\sharp}(\lambda_{\gamma}^{2/3}q(\theta)^{1/3}A_c)$ of \eqref{eq:newVNgam} as they are identically equal to $1$ on the support of $\kappa(A_c)$ and $p_h$ (which contains the factor $\varkappa(\theta)$ supported where $|\theta|\sim 1 $). Using \eqref{eq:thetac} and \eqref{eq:Acdet}, we observe that, when necessary, we can add cutoffs supported where $\frac{t}{2N\sqrt{\gamma}}\sim 1$ and $\frac{|y|}{2t}\sim 1$ without changing the contributions of $V_{N,h,\gamma}$ modulo $O(h^{\infty})$. We now prove Proposition \ref{proptang}.

\subsubsection{Tangent initial directions $\gamma\sim a$, $a\gtrsim h^{2/3-\epsilon}$, $a\gtrsim x\geq 0$}\label{sectang}

First let $\gamma=a$ and set $\lambda=a^{3/2}/h$. 
Let $\Ab_0=\Ab_0(t,x,y,a,\gamma,h,N)$ be defined in Lemma \ref{lemAc}. For $ t/\sqrt{a} \sim N\geq 1$ the following estimates hold true:

\begin{prop}\label{dispNgrand}
Let $N\geq\lambda^{1/3}$, $0\leq x\lesssim a$ and $t\leq T_0$,
with $N\sim t/\sqrt a$ and $T_0$ sufficiently small. Set
$\omega_c^0=\omega_c|_{\Sigma=S=0}$. Then, for either sign of the
two differences,
\begin{equation}\label{eq:1ff}
 |V_{N,a}(t,x,y)|\lesssim
 \frac{h^{1/3}}{(N/\lambda^{1/3})^{1/2}}
\max\left(
 \frac{1}{1+\lambda^{1/3}|\Ab_0^2(\cdot,N)-1|^{1/2}},
 \frac{1}{1+\lambda^{1/3}
 |\Ab_0^2(\cdot,N)-\frac xa(1+\Lp(y,\omega_c^0))|^{1/2}}
 \right)\,.
\end{equation}
\end{prop}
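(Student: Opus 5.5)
We start from the representation \eqref{VNdef} with $\gamma=a$. Since $\frac{a^2}{h}(\lambda N)^{-1/2}=\lambda^{1/3}h^{1/3}\cdot\lambda^{1/6}N^{-1/2}$, the claim reduces to showing that the two-dimensional oscillatory integral in $(\Sigma,S)$ with large parameter $\lambda$ is bounded by $\lambda^{-1/3}$ times the maximum in \eqref{eq:1ff}. Equivalently, the integral in each variable should contribute at most $\lambda^{-1/3}$ (Airy-type scale), with an extra factor $(1+\lambda^{1/3}|\cdot|^{1/2})^{-1}$ in one of the two variables.

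The first step is to compute the reduced phase $\Psi_{N,a,h,a}$ to the needed order. We substitute $A_c$ from Lemma \ref{lemAc} and $\theta_c$ from Lemma \ref{lemthetac} into $\tilde\Phi_{N,a,h,a}$. Since $\partial_\Sigma\theta_c,\partial_S\theta_c=O(a^{3/2}/t)$, and $\theta_c$ is critical, its variation contributes only at higher order. Expanding $-\frac43 Na^{3/2}A_c^{3/2}$ and the terms linear in $A_c$ with \eqref{eq:Acdet}, we expect, after division by $a^{3/2}q^{1/2}(\theta_c)$,
\[
\frac{\Psi}{a^{3/2}q^{1/2}}=\mathrm{const}+\frac{\Sigma^3}{3}+\Sigma\Bigl(\tfrac xa(1+\Lp)-\Ab_0^2\Bigr)-\frac{S^3}{3}-S\Bigl(1-\Ab_0^2\Bigr)+\frac{\Ab_0}{N}\,Q(\Sigma,S)+\frac1{N^2}O(|\Sigma|^4+|S|^4)+a\,\mathcal R.
\]
Here $Q$ is quadratic, essentially $-(\Sigma-S)^2/4$ up to $O(a)$ coefficients, and $\mathcal R$ is a smooth remainder with $\partial_\Sigma^3\mathcal R,\partial_S^3\mathcal R=O(\sqrt a)$, using the bounds after \eqref{Gammatildegam} and \eqref{eq:normalized-endpoint-derivatives}. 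Thus the phase is a product of two Airy phases in $\Sigma$ and $S$, coupled by the quadratic term $\lambda Q/N$. The coupling has size $\lambda/N\le\lambda^{2/3}$ exactly when $N\ge\lambda^{1/3}$, so on the natural scale $\Sigma,S\sim\lambda^{-1/3}$ it is $O(1)$ and can be treated as a perturbation.

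The second step is the estimate in each variable. We rescale $\Sigma=\lambda^{-1/3}u$ and $S=\lambda^{-1/3}v$. The phase becomes
\[
\tfrac{u^3}{3}+u\,\lambda^{2/3}\bigl(\tfrac xa(1+\Lp)-\Ab_0^2\bigr)-\tfrac{v^3}{3}-v\,\lambda^{2/3}\bigl(1-\Ab_0^2\bigr)+\frac{\lambda^{1/3}}{N}\tilde Q(u,v)+\text{small}.
\]
By the previous step the cross term $\lambda^{1/3}N^{-1}\tilde Q$ has bounded coefficients, and we plan to keep it in the symbol. For fixed $v$, we use the uniform Airy bound $|\int e^{i(u^3/3+zu)}b\,du|\lesssim(1+|z|)^{-1/4}$, valid for symbols of the type obtained, namely $e^{i\,\mathrm{bounded\ quadratic}}$ times smooth compactly supported factors in the original variables. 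Applying it in both variables gives $\lambda^{-2/3}(1+|z_1|)^{-1/4}(1+|z_2|)^{-1/4}$ with $z_1=\lambda^{2/3}(\frac xa(1+\Lp)-\Ab_0^2)$ and $z_2=\lambda^{2/3}(1-\Ab_0^2)$. In the original variables this is $\lambda^{-1/3}\prod_j(1+\lambda^{1/3}|\cdot|^{1/2})^{-1/2}$. The target is one full power in one of the variables, and since
\[
\prod_j(1+\lambda^{1/3}|\cdot|^{1/2})^{-1/2}\le \max_j(1+\lambda^{1/3}|\cdot|^{1/2})^{-1},
\]
this gives \eqref{eq:1ff}. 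Signs of the differences only change $z\mapsto-z$: for positive $z$ we have exponential decay, and for negative $z$ two nondegenerate critical points with the $|z|^{-1/4}$ bound. Finally, $\Ab_0$ depends on $(\Sigma,S)$ only through $\theta_c$ at order $a^{3/2}/t$; the statement is written with the frozen values ($\omega_c^0$), so the difference goes into the symbol.

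The main obstacle is the second step when the coupling and the cubic corrections $N^{-2}(|\Sigma|^4+|S|^4)$ are no longer small, that is, when $|\Sigma|,|S|\gg\lambda^{-1/3}$ near the critical points of the Airy phases, with $|z_j|$ large. There the Airy bound must come from a genuine stationary phase with Hessian $\sim\lambda|\Sigma_c|$, and the coupling $\lambda/N$ must be dominated by $\lambda\sqrt{|\cdot|}$. This holds when $\lambda^{1/3}|\cdot|^{1/2}\gtrsim\lambda^{1/3}/N^{\,}\cdot\lambda^{1/3}$, which is where $N\ge\lambda^{1/3}$ is used. In the remaining transitional window, where both are comparable, we would perform the two-variable stationary phase jointly, checking that the $2\times2$ Hessian $\lambda\,\mathrm{diag}(2\Sigma_c,-2S_c)+O(\lambda/N)$ has determinant $\gtrsim\lambda^2|\Sigma_c S_c|$. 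Failing that, we would use the refined degenerate estimates of Appendix~\ref{app:refined-quartic-estimate}.
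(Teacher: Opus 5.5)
\textbf{There is a genuine gap in your second step.} The product estimate $\lambda^{-2/3}(1+|z_1|)^{-1/4}(1+|z_2|)^{-1/4}$, on which your argument rests, is false. The geometric-mean inequality at the end is correct, but the product it starts from is not available.

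In your variables $u=\lambda^{1/3}\Sigma$, $v=\lambda^{1/3}S$, the coupling coming from \eqref{eq:Acdet} is $\frac{\lambda^{1/3}\Ab_0}{2N}(u-v)^2-\frac{(u-v)^3}{12N^2}$, not $-\frac14(\Sigma-S)^2$ plus a quartic term. Its coefficient $\lambda^{1/3}/N$ is of size one when $N\sim\lambda^{1/3}$. A bounded coefficient does not make $e^{i\lambda^{1/3}\Ab_0(u-v)^2/(2N)}$ part of an amplitude: near the stationary points $u_c\approx\pm|z_1|^{1/2}$ (for $z_1<0$ large), its $v$-derivative is of size $|z_1|^{1/2}$.

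After stationary phase in $u$, the reduced phase satisfies, on $|v|\lesssim|z_1|^{1/2}$,
$\partial_v\psi=-(v^2+z_2)\mp\frac{\lambda^{1/3}\Ab_0}{N}|z_1|^{1/2}+O(1+|v|)$, with $\partial_v^3\psi$ close to $-2$. So the linear coefficient is shifted by an amount of order $|z_1|^{1/2}$. One can choose $z_2$ with $|z_2|\sim|z_1|^{1/2}\to\infty$ so that the reduced phase has a fold, and then the $v$-integral is of size one. The integral there has true size $\lambda^{-2/3}|z_1|^{-1/4}$, larger than your product by a factor $|z_2|^{1/4}\sim|z_1|^{1/8}$.

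Your Hessian check in the last paragraph fails for the same reason. What matters is the first-order effect of the coupling, not its effect on the Hessian entries: the coupled critical point $S_c$ solves the shifted equation and can vanish while $|z_2|$ is large. The swallowtail lemma of Appendix~\ref{app:refined-quartic-estimate} is not the right fallback either. The degeneracies here are folds along shifted curves, and as $\lambda^{1/3}/N\to0$ the Hessian at the origin vanishes entirely, so the rank-one hypothesis fails.

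\textbf{The missing idea} is that the maximum in \eqref{eq:1ff} is the true size, and you must aim at it directly. This is what the paper does in Lemma~\ref{lem:large-reflections}. Put $\Af=-z_1$, $\Bf=-z_2$ and $r=(\Af^2+\Bf^2)^{1/2}$. Bounded $r$ is trivial. For large $r$, rescale $(\xx,\yy)=\sqrt r(\xx',\yy')$ and reduce, by exchanging the variables, to $|\Af|\sim r$ and $|\Bf|\le|\Af|$. If $\Af<0$, integrate by parts. Otherwise apply stationary phase in $\xx'$ with parameter $r^{3/2}$, and split the reduced $\yy$-integral at $|\Bf|\sim K\sqrt r$.
\begin{itemize}
\item For $|\Bf|\le K\sqrt r$, Van der Corput's third-derivative estimate, uniform in the $O(\sqrt r)$ shift, bounds the reduced integral by $O(1)$. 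Including the Jacobians, the total is $O(r^{-1/4})$. Since $r^{-1/4}\le K^{1/2}|\Bf|^{-1/2}$, this is $\lesssim(1+|\Bf|^{1/2})^{-1}$: the full power appears in the smaller parameter.
\item For $\Bf>K\sqrt r$, the shift is dominated and nondegenerate stationary phase gives $r^{-1/4}\Bf^{-1/4}$.
\item For $\Bf<-K\sqrt r$, integration by parts gives $r^{-1/4}|\Bf|^{-1}$.
\end{itemize}
In the last two regimes your product bound does hold; it fails exactly in the window $|\Bf|\lesssim\sqrt r$.

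\textbf{A bookkeeping slip.} The prefactor is $\frac{a^2}{h}(\lambda N)^{-1/2}=\lambda^{5/6}h^{1/3}N^{-1/2}$, so the integral must be bounded by $\lambda^{-2/3}\max(\cdots)$. Your Jacobian does supply $\lambda^{-2/3}$; the two places where you wrote $\lambda^{-1/3}$ are slips that happen to cancel.
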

This follows from Lemma \ref{lem:large-reflections} and
\eqref{VNdef}, since
$(a^2/h)(\lambda N)^{-1/2}\lambda^{-2/3}
=h^{1/3}(N/\lambda^{1/3})^{-1/2}$.
We next give the counting and summation estimates for these two weights.

\begin{lemma}\label{rmqNgrand}
Fix $(t,x,y)$ with $0\leq x\lesssim a$, $|y|\sim t\leq T_0$,
and choose $N_t\geq1$ with
$N_t\sim t/(2\sqrt a)$. For either of the two differences
$\Ab_0^2(\cdot,N)-1$ and
$\Ab_0^2(\cdot,N)-\frac xa(1+\Lp(y,\omega_c^0(\cdot,N)))$,
the number of integers $N\sim N_t$ for which its absolute value
is at most $\delta$ is $O(1+N_t\delta)$, uniformly for
$0<\delta\leq1$. This holds also when $N_t\leq\lambda^{1/3}$.
\end{lemma}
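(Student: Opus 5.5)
The plan is to treat each difference as a smooth function of the continuous variable $\nu=N/N_t$ near $1$ and to show that its derivative in $\nu$ is bounded below, uniformly in $(t,x,y,a)$. The counting bound then reduces to a one-dimensional statement. If $F(\nu)$ is smooth on an interval $\nu\sim1$ with $|F'(\nu)|\ge c>0$, then $\{\nu:|F(\nu)|\le\delta\}$ is an interval of length $\le 2\delta/c$. The integers $N=N_t\nu$ in it therefore number at most $1+2N_t\delta/c$.

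\textbf{Step 1: the first difference.} By Lemma \ref{lemAc} and \eqref{eq:defA0}, $N$ enters $\Ab_0$ only through the factor $\frac{t}{2\sqrt a N}$. In $\theta_c|_{\Sigma=S=0}$ it also enters only through this factor, as noted after \eqref{Ab0} and quantified by \eqref{dertheta}. So I write
\[
\Ab_0(\cdot,N)=\frac{q^{1/2}(\theta_c^0)}{\nu}\,\frac{t+\frac{(B_0-B_2)(y,\omega_{\theta_c^0})}{2|\theta_c^0|}+aE_0}{2\sqrt a N_t}=:\frac{G(\nu)}{\nu}.
\]
Here $G$ is smooth, $G\sim1$, and $G'(\nu)=O(a)$. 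Indeed, \eqref{dertheta} gives $|\partial_\nu\theta_c^0|\lesssim a$, and $E_0$ depends smoothly on $t/(2\sqrt aN)$ with bounded derivatives. After multiplying by $t/(2\sqrt aN_t)\sim1$, every correction stays $O(a)$.

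Hence $\partial_\nu(\Ab_0^2)=-2G^2\nu^{-3}+O(a)$, which is bounded away from zero for $a\le a_0$ small. This proves the count for $\Ab_0^2-1$.

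\textbf{Step 2: the second difference.} It remains to control $\partial_\nu\bigl[\frac xa(1+\Lp(y,\omega_c^0(\cdot,N)))\bigr]$. Since $x/a=O(1)$ and $\Lp$ is smooth, this equals $\frac xa\nabla_\omega\Lp\cdot\partial_\nu\omega_c^0=O(a)$, again by \eqref{dertheta} and $|\theta_c|\sim1$. So the second difference has the same derivative $-2G^2\nu^{-3}+O(a)$, and the same argument applies.

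\textbf{Step 3: small $N_t$.} The argument uses no lower bound on $N_t$ relative to $\lambda^{1/3}$, only $N_t\ge1$ and $N\sim N_t$. When $N_t$ is bounded, $O(1)$ already counts all admissible $N$. This covers the case $N_t\le\lambda^{1/3}$.

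\textbf{Main obstacle.} The delicate point is justifying that $\theta_c^0$, $E_0$ and the $\Lp$ term depend on $N$ only through $t/(2\sqrt aN)$, and that the derivative in this variable is $O(a)$ and not merely $O(1)$. For this I would differentiate the critical point equation \eqref{nablaPhiNagamma2} at $\Sigma=S=0$ in $\nu$. By \eqref{forA}, its $N$-dependence enters through $aA_c\sim a\nu^{-2}$ times bounded coefficients, and its Jacobian in $\theta$ is $Id+O(T_0+a)$ by \eqref{eq:critical-reduced-hessian}. The implicit function theorem then gives $\partial_\nu\theta_c^0=O(a)$, as recorded in \eqref{dertheta}. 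Smallness of $a_0$ and $T_0$ closes the argument.
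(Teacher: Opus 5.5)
Your proposal is correct and is essentially the paper's proof: treat $N$ (equivalently $\nu=N/N_t$) as a continuous variable, and use \eqref{eq:defA0} and \eqref{dertheta} to show that each difference is strictly monotone on the band with derivative of size $1/N_t$ in $N$. Then count the integers in the preimage of $[-\delta,\delta]$. One small correction: bounded derivatives of $E_0$ only give a contribution $O(a/t)$ to $G'(\nu)$ from the $aE_0$ term, not $O(a)$, and this $O(a/t)$ is what the paper records. Since $N_t\geq1$ forces $a/t\lesssim\sqrt a$, this term is still small for small $a_0$, so your conclusion is unaffected.
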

\begin{proof}
Regard $N$ as a real variable in {the equations defining}
$\theta_c^0$ and $\Ab_0$, with $N\sim N_t$. From
\eqref{eq:defA0} and \eqref{dertheta},
\begin{equation}\label{Ab0NtildeN}
 \partial_N\Ab_0^2=-\frac{2\Ab_0^2}{N}
       +O\left(\frac{a+a/t}{N_t}\right),\qquad
 \partial_N\left[\frac xa(1+\Lp(y,\omega_c^0))\right]
       =O\left(\frac{a|y|}{N_t}\right).
\end{equation}
Indeed, $\partial_N\theta_c^0=O(a/N_t)$; differentiating the
correction $aE_0$ in \eqref{eq:defA0} gives the term
$O(a/t)$ relative to the main contribution. The terms $B_0-B_2$
are included in the same differentiation. On this band,
$\Ab_0\sim1$ and $a/t\lesssim\sqrt a\lesssim T_0$.
Thus, for small $T_0,a_0$, each difference decreases with
$N$, with derivative of size $1/N_t$. Its inverse image of
$[-\delta,\delta]$ has length $O(N_t\delta)$, and therefore
contains $O(1+N_t\delta)$ integers.
\end{proof}

\begin{lemma}\label{rmqNgrand2}
Under the assumptions of Lemma \ref{rmqNgrand}, for every $L\geq1$,
\begin{equation}\label{eq:two-weight-sum}
 \sum_{N\sim N_t}\left[
 \frac{1}{1+L|\Ab_0^2(\cdot,N)-1|^{1/2}}
 +\frac{1}{1+L
 |\Ab_0^2(\cdot,N)-\frac xa(1+\Lp(y,\omega_c^0))|^{1/2}}
 \right]
 \lesssim 1+\frac{N_t}{L}\,.
\end{equation}
\end{lemma}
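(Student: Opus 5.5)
The plan is a dyadic (layer-cake) decomposition in the size of each difference, fed by the counting bound of Lemma~\ref{rmqNgrand}. The two weights are handled separately, in the same way. Write $D(N)$ for either $\Ab_0^2(\cdot,N)-1$ or $\Ab_0^2(\cdot,N)-\frac xa(1+\Lp(y,\omega_c^0))$, with $N\sim N_t$. Split the integers $N\sim N_t$ according to the size of $|D(N)|$.

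\textbf{Near set.} Let $\mathcal N_0=\{N\sim N_t:\ |D(N)|\le L^{-2}\}$. On $\mathcal N_0$ each term is bounded by $1$. Lemma~\ref{rmqNgrand} with $\delta=L^{-2}\le1$ gives
\[
\#\mathcal N_0=O(1+N_tL^{-2}),
\]
which is at most $O(1+N_t/L)$ because $L\ge1$.

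\textbf{Dyadic shells.} For $j\ge1$ with $2^jL^{-2}\le 1$, let
\[
\mathcal N_j=\{N\sim N_t:\ 2^{j-1}L^{-2}<|D(N)|\le 2^jL^{-2}\}.
\]
On $\mathcal N_j$ the term is at most $(1+2^{(j-1)/2})^{-1}\lesssim 2^{-j/2}$. Lemma~\ref{rmqNgrand} with $\delta=2^jL^{-2}$ gives $\#\mathcal N_j=O(1+N_t2^jL^{-2})$. Summing over the shells,
\[
\sum_{j}2^{-j/2}\bigl(1+N_t2^{j}L^{-2}\bigr)\lesssim 1+N_tL^{-2}\sum_{2^j\le L^2}2^{j/2}\lesssim 1+N_tL^{-2}\cdot L=1+\frac{N_t}{L}.
\]

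\textbf{Far set.} The remaining $N$ satisfy $|D(N)|>1$, so each term is at most $(1+L)^{-1}$. There are $O(N_t)$ indices with $N\sim N_t$, so these contribute $O(N_t/L)$.

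Adding the three pieces for each of the two weights gives \eqref{eq:two-weight-sum}.

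The only delicate point is that Lemma~\ref{rmqNgrand} must apply uniformly for every $\delta\in(0,1]$, and to each weight separately. This holds, because the monotonicity in $N$ established there (derivative of size $1/N_t$ on the band) is independent of $\delta$ and of which difference is taken. The bound is also uniform in $N_t$, including the case $N_t\le\lambda^{1/3}$.
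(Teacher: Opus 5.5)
Your argument is correct and is essentially the paper's proof. You use the same split into the near set $|D|\le L^{-2}$, dyadic shells up to size one counted with Lemma~\ref{rmqNgrand} and weighted by $2^{-j/2}$, and the far set with weight at most $1/L$, applied separately to each of the two differences. One bookkeeping remark: your shells stop at $2^JL^{-2}\in(1/2,1]$, so the indices with $2^JL^{-2}<|D(N)|\le 1$ are not yet covered. Put them in the far set, where the weight is still $\lesssim 1/L$.
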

\begin{proof}
For either difference, the integers where its absolute value is
at most $L^{-2}$ contribute $O(1+N_t/L^2)$ by
Lemma \ref{rmqNgrand}. On the sets where it lies between
$2^jL^{-2}$ and $2^{j+1}L^{-2}$, up to size $1$, the weight is
$O(2^{-j/2})$ and there are $O(1+N_t2^j/L^2)$ integers.
Summing these bounds gives $O(1+N_t/L)$. The remaining integers
have weight at most $1/L$ and contribute $O(N_t/L)$.
We apply this argument to each of the two differences.
\end{proof}

\begin{prop}
\label{dispNpetitloin}
Fix a sufficiently large constant $C_0$, as in the proof below.
For $1\leq N\leq 16$ and $|\Ab^2_0(\cdot,N)-1|+|\Ab^2_0(\cdot,N)-\frac xa(1+\Lp(y,\omega_c^0))|\geq C_0/N^2$, $x\lesssim a$, we have 
  \begin{equation}
    \label{eq:21ff}
       \left| V_{N,a}(t,x,y)\right| \lesssim h^{1/3}\,.
  \end{equation}

For $16\leq N\leq \lambda^{1/3}$ and $|\Ab^2_0(\cdot,N)-1|+|\Ab^2_0(\cdot,N)-\frac xa(1+\Lp(y,\omega_c^0))|\geq C_0/N^2$, $x\lesssim a$, we have
  \begin{equation}
    \label{eq:2ff}
       \left| V_{N,a}(t,x,y)\right| \lesssim \max\Big\{\frac{h^{1/3}}{1+N|\Ab^2_0(\cdot,N)-1|^{1/2}}, \frac{h^{1/3}}{1+N|\Ab^2_0(\cdot,N)-\frac xa(1+\Lp(y,\omega_c^0))|^{1/2}}\Big\}\,.
  \end{equation}
\end{prop}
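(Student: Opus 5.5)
We estimate the two-dimensional oscillatory integral \eqref{VNdef} in $(\Sigma,S)$, with $\gamma=a$, large parameter $\lambda$, prefactor $\frac{a^2}{h}(\lambda N)^{-1/2}$, and reduced phase $\Psi_{N,a,h,a}=\tilde\Phi_{N,a,h,a}(\cdot,A_c,\theta_c)$. Using Lemma \ref{lemAc} and Lemma \ref{lemthetac}, we first expand $\Psi/a^{3/2}$ in $\Sigma$ and $S$. The model part is
\[
\frac{\Sigma^3}{3}+\Sigma\Big(\frac xa(1+\Lp)-\Ab_0^2\Big)-\frac{S^3}{3}-S\Big(1-\Ab_0^2\Big)+\frac{\Ab_0}{N}\,(\Sigma-S)^2\cdot c_N+\dots
\]
This comes from $A_c=\Ab_0^2-2\Ab_0\Ab_1+(\Sigma-S)^2/(4N^2)$, substituted into $-\frac43 N A^{3/2}$ and the terms $-A(\Sigma-S)$. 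The quadratic coupling is of size $(\Sigma-S)^2/N$, so for $N\le \lambda^{1/3}$ it is not negligible at scale $\lambda^{-1/3}$. The corrections coming from $\tilde\Gamma_a$, $\theta_c$ and $E_{0,1,2}$ contribute $O(a)$ to the cubic and quadratic coefficients, with smooth dependence, by \eqref{dertheta} and the bounds after \eqref{Gammatildegam}. So in each variable the phase is a perturbed Airy-type phase, $\Sigma^3/3-\Sigma\,b_x$ and $S^3/3-S\,b_a$, where $b_x=\Ab_0^2-\frac xa(1+\Lp)$ and $b_a=\Ab_0^2-1$, plus a mixed quadratic term of size $1/N$.

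\textbf{Case $1\le N\le16$.} The coupling is $O(1)$, so we do not separate the variables. Write the phase as $\lambda F(\Sigma,S)$, where $F$ has cubic leading terms $\pm$ in each variable. The hypothesis $|b_a|+|b_x|\ge C_0/N^2\gtrsim C_0$ means we are away from the fully degenerate point. We do a case split on $\nabla F=0$. If no critical point lies on the support, integrate by parts. Otherwise the critical points have Hessian of size at least $\sqrt{C_0}$ in at least one direction. In the remaining direction the phase is at worst a cubic (fold) degeneracy, and the van der Corput / Airy bound gives $\lambda^{-1/2}\cdot\lambda^{-1/3}$. Multiplying by the prefactor, $\frac{a^2}{h}\lambda^{-1/2}\lambda^{-5/6}=h^{1/3}$, which is \eqref{eq:21ff}. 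The role of $C_0$ is to make the cubic swallowtail-type degeneracy of the full $2\times2$ problem occur only inside the excluded region.

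\textbf{Case $16\le N\le\lambda^{1/3}$.} Change variables to $U=\Sigma-S$ and $V=\Sigma+S$. Up to $O(a)$ corrections, the phase is $\lambda$ times
\[
\tfrac{1}{12}V^2U+\tfrac{1}{12}U^3+\tfrac{c}{N}U^2-\tfrac U2(b_x+b_a)-\tfrac V2(b_x-b_a).
\]
Its $V$-Hessian is $\lambda U/6$. On the region $|U|\gtrsim 1/N$, stationary phase in $V$ gains $(\lambda|U|)^{-1/2}$. What remains in $U$ is a one-variable phase whose cubic part is shifted by the $U^2/N$ term. Completing the cube, this is an Airy integral centered at distance $\sim 1/N$, with effective parameter $b-c^2/N^2$ for $b\in\{b_a,b_x\}$. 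The hypothesis $|b|\ge C_0/N^2$ keeps this parameter comparable to $b$. Nondegenerate stationary phase in $U$ then gives $(\lambda|b|^{1/2})^{-1/2}$ at $|U|\sim|b|^{1/2}$. Multiplying, the total is $\lambda^{-1}|b|^{-1/2}\cdot$ an Airy factor. This gives $\frac{a^2}{h}\lambda^{-1/2}N^{-1/2}\min(\lambda^{-5/6},\lambda^{-1}|b|^{-1/2}\dots)$, which we check against $h^{1/3}(1+N|b|^{1/2})^{-1}$. On the region $|U|\lesssim 1/N$, estimate trivially in $U$ (measure $1/N$) and by van der Corput in $V$ with the cubic term.

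\textbf{Main obstacle.} The main difficulty is that the two weights $b_a,b_x$ play symmetric roles, corresponding to the observation and source Airy factors. The maximum in \eqref{eq:2ff} has to be produced by choosing which one controls the decay. We must also track the dependence of the quadratic coupling $\sim1/N$ accurately enough that the bound $N|b|^{1/2}$, rather than $\lambda^{1/3}|b|^{1/2}$ as in Proposition \ref{dispNgrand}, emerges uniformly in $N$. We would first try a dyadic decomposition in $|U|$, keeping the $O(a)$ perturbations in the symbol via the derivative bounds of Lemma \ref{lemthetac}.
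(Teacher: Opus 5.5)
Your expansion of the model phase is correct: it is \eqref{eq:moderate-full-phase-expansion} before rescaling, with $c_N=1/2$. Your treatment of $1\le N\le16$ also follows the paper's route, namely a bounded parameter set on which $C_0$ excludes the swallowtail, leaving nondegenerate and fold critical points. The claim that the second direction is ``at worst a fold'' still requires computing the reduced third derivative; the paper does this with \eqref{eq:moderate-exterior-reduced-phase}, including the special case $N=1$.

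\textbf{The gap is in the case $16\le N\le\lambda^{1/3}$, where the $(U,V)$ reduction fails.}

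\emph{The reduced phase in $U$ is not an Airy phase in a single $b$.} We have $(\Sigma^3-S^3)/3=U^3/12+UV^2/4$, not $UV^2/12$. Up to $O(a)$, the $V$-dependence of the phase is only $\lambda\bigl(\tfrac14UV^2-\tfrac12(b_x-b_a)V\bigr)$, so there is no cubic term in $V$. Eliminating $V$ for $U\neq0$ gives
\[
\frac{1-N^{-2}}{12}U^3+\frac{\Ab_0}{2N}U^2-\frac{b_x+b_a}{2}U-\frac{(b_x-b_a)^2}{4U}.
\]
This is \eqref{eq:moderate-exterior-reduced-phase} after $\xi_1=NU/\sqrt2$, up to a factor $N^3$. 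It is not an Airy phase with parameter $b\in\{b_a,b_x\}$. The linear coefficient mixes $b_x$ and $b_a$. The term $-(b_x-b_a)^2/(4U)$, which you dropped, is what separates the sign branches $\Sigma\approx\pm\sqrt{b_x}$, $S\approx\pm\sqrt{b_a}$ and places the folds near $b_x=0$ or $b_a=0$. Note also that the hypothesis only bounds $|b_x|+|b_a|$ from below, not each $|b|$. One of them may vanish, and then you need a uniform fold estimate, not a lower bound on ``$b$''.

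\emph{The strip $|U|\lesssim1/N$ cannot be handled as proposed.} Since $\partial_V^2\sim\lambda U$, the best you can get there is
\[
\int_{|U|\lesssim1/N}(\lambda|U|)^{-1/2}\,dU\sim(\lambda N)^{-1/2},
\]
that is, $|V_{N,a}|\lesssim\sqrt a/N=h^{1/3}\lambda^{1/3}/N$. This exceeds $h^{1/3}$ for $N<\lambda^{1/3}$ and has no decay in $b$. Even a cubic van der Corput bound $\lambda^{-1/3}$ in $V$, integrated trivially over the strip, would give $h^{1/3}(\lambda^{1/3}/N)^{3/2}\ge h^{1/3}$. Worse, for $b_x=b_a=b>0$ the critical points are exactly $\Sigma=S=\pm\sqrt b$, i.e.\ on $U=0$. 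They are nondegenerate (the $(\Sigma,S)$ Hessian has determinant $\approx-4\lambda^2b$), but $\partial_V^2=0$ there, and your reduction in $V$ for $U\ne0$ misses them entirely. Near $U=0$ you must eliminate $U$, using $\partial_U^2\approx\lambda\Ab_0/N$ from the coupling; this is also where the swallowtail comes from.

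\textbf{The missing idea is the paper's two-step rescaling.}
\begin{itemize}
\item Set $(\Sigma,S)=(\xx/N,\yy/N)$. The coupling becomes $O(1)$ and the oscillatory parameter becomes $\Lambda=\lambda/N^3\ge1$. The prefactor of \eqref{VNdef}, including the Jacobian, becomes $h^{1/3}\Lambda^{5/6}$. The linear coefficients become $\Af=N^2b_x$ and $\Bf=N^2b_a$, so the weights $1+N|b|^{1/2}$ are exactly $1+|\Af|^{1/2}$ and $1+|\Bf|^{1/2}$.
\item For bounded $r=(\Af^2+\Bf^2)^{1/2}$, $C_0$ excludes the swallowtail and the finite cover gives $O(\Lambda^{-5/6})$.
\item For $r\ge r_0$, set $(\xx,\yy)=\sqrt r(\xx',\yy')$. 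The phase is then a small perturbation of $\frac{\xx'^3-\yy'^3}{3}-\cos\tf\,\xx'+\sin\tf\,\yy'$, with parameter $\Lambda r^{3/2}$ and coupling $O(r^{-1/2})$.
\item Exchange source and observation so that $\Af\ge\Bf$; then $\Af\sim r$ whenever there is a critical point. Three cases follow:
\begin{itemize}
\item if $|\Bf|\le K\sqrt r$, the fold estimate gives $\Lambda^{-5/6}r^{-1/4}$;
\item if $\Bf>K\sqrt r$, nondegenerate stationary phase with $|\det|\gtrsim\sqrt{\Af\Bf}/r$ (from \eqref{eq:80}) gives $\Lambda^{-1}(\Af\Bf)^{-1/4}$;
\item if $\Bf<-K\sqrt r$, integration by parts suffices.
\end{itemize}
\item Multiplying by $h^{1/3}\Lambda^{5/6}$ gives at most $h^{1/3}/(1+|\Bf|^{1/2})$ in every case, which is \eqref{eq:2ff}.
\end{itemize}
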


\begin{prop}
\label{dispNpetitpres}
For $1\leq N\leq \lambda^{1/3}$ and $|\Ab^2_0(\cdot,N)-1|+|\Ab^2_0(\cdot,N)-\frac xa(1+\Lp(y,\omega_c^0))|\leq C_0/N^2$, $x\lesssim a$,
we have
  \begin{equation}
    \label{eq:2hh}
       \left| V_{N,a}(t,x,y)\right| \lesssim \frac{h^{1/3}}{(N/\lambda^{1/3})^{1/4}}\,.
  \end{equation}
\end{prop}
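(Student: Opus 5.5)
We start from the representation \eqref{VNdef} with $\gamma=a$:
\[
V_{N,a}=\frac{a^2}{h}(\lambda N)^{-1/2}\int e^{i\lambda\widetilde\Psi_N(\Sigma,S)}\sigma_V\,d\Sigma\,dS,
\qquad \widetilde\Psi_N=\Psi_{N,a,h,a}/a^{3/2}.
\]
The prefactor equals $h^{1/3}\lambda^{2/3}(\lambda N)^{-1/2}\cdot \lambda^{-1/3}\cdots$. The bound $h^{1/3}(N/\lambda^{1/3})^{-1/4}$ therefore amounts to showing that the two-dimensional integral is $O(\lambda^{-2/3}\cdot(\lambda/N)^{1/4}\cdot N^{1/2}\lambda^{-1/2}\cdots)$. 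Equivalently, after writing the effective large parameter in each variable as $\lambda/N$, we must show that the double integral is bounded by $(\lambda/N)^{-1/4}$ per variable, up to the prefactor arithmetic. This is exactly the decay of a swallowtail-type (quartic) degenerate oscillatory integral.

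The first step is to compute the reduced phase using Lemma \ref{lemAc} and Lemma \ref{lemthetac}. Substituting $A_c$ from \eqref{eq:Acdet} into $\tilde\Phi_{N,a,h,a}$, the reflection term $-\frac43 N a^{3/2}A_c^{3/2}$ combines with $\Sigma^3/3-S^3/3+\Sigma(x/a-A_c)-S(1-A_c)$. Expanding in $\Sigma/N$ and $S/N$, the phase becomes, up to a $(\Sigma,S)$-independent term,
\[
\frac{\Sigma^3}{3}+\Sigma\Big(\frac xa(1+\Lp)-\Ab_0^2\Big)+\frac{\Sigma^4}{\,c N\,}+\cdots
\;-\;\Big[\frac{S^3}{3}+S(1-\Ab_0^2)-\frac{S^4}{cN}+\cdots\Big]
\;+\;\frac{\Sigma S\,(\ldots)}{N}+O\Big(\frac{a(\Sigma,S)^2}{N}\Big).
\]
The quartic term appears because $\Ab_0\Ab_1$ and $\Ab_1^2$ enter at order $1/N$. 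The cubic coefficient is not cancelled by these $1/N$ terms.

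To bring out the quartic structure, I would rescale $\Sigma=N^{1/3}u$, or more naturally treat the phase as a cubic plus an $N^{-1}$-quartic perturbation.

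In the regime of Proposition \ref{dispNpetitpres}, both linear coefficients are $O(N^{-2})$. With parameter $\lambda$, the relevant normal form is $\lambda(\Sigma^3/3+\epsilon_1\Sigma+\Sigma^4/(cN))$, with $|\epsilon_1|\lesssim N^{-2}$. This is a swallowtail/cusp unfolding: the cubic and quartic terms interact on the scale $\Sigma\sim N^{-1}$. The $\Sigma\Sigma$-Hessian is $2\Sigma+12\Sigma^2/(cN)$, which vanishes at $\Sigma=0$. The third derivative is $2+O(1/N)$, which is nonzero.

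This would naively give the Airy bound $\lambda^{-1/3}$ per variable. That is better than needed, so something is off. The correct bookkeeping is that the decay in the stated proposition must come from the full two-variable degeneracy along the coupled direction, after accounting for the $\Sigma S/N$ cross term and for the fact that $\Ab_0^2$ itself depends on the integration through $\theta_c$.

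I would therefore invoke the refined degenerate stationary phase estimate of Appendix~\ref{app:refined-quartic-estimate}. The plan is as follows.

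\textbf{Step (i): uniform cubic normal form.} Apply it to each variable separately, in the form of a uniform bound for
\[
\int e^{i\Lambda(u^3/3+\epsilon u+\beta u^4)}b\,du
\;\lesssim\;\Lambda^{-1/3}\,\big(1+\Lambda^{2/3}|\epsilon|\big)^{-1/4}
\qquad\text{when } |\beta|\ll1.
\]

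\textbf{Step (ii): cross term.} Treat the cross term $\Sigma S/N$ as a perturbation. This is possible by the bounds $\partial_\Sigma\theta_c,\partial_S\theta_c=O(a^{3/2}/t)$ from \eqref{dertheta}.

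\textbf{Step (iii): combine with the prefactor.} Here $\Lambda=\lambda$, and $\epsilon$ is of size $\le C_0N^{-2}$ but is effectively averaged over a window of width $N^{-2}$. Combining with the prefactor $(a^2/h)(\lambda N)^{-1/2}=h^{1/3}\lambda^{2/3}\lambda^{-1/2}N^{-1/2}$, together with $\lambda^{-2/3}$ from the two Airy integrals, gives $h^{1/3}(N/\lambda^{1/3})^{-1/2}$, which is Proposition \ref{dispNgrand}'s size. The loss to the exponent $1/4$ must come from the region $N\le\lambda^{1/3}$, where the quartic term $\lambda\Sigma^4/N$ is no longer dominated on the Airy scale $\Sigma\sim\lambda^{-1/3}$.

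\textbf{Step (iv): swallowtail scaling.} In that region I would rescale $\Sigma=(N/\lambda)^{1/4}v$. The quartic term becomes $O(1)$, while the cubic term $\lambda(N/\lambda)^{3/4}v^3$ is then a large but coupled coefficient. The quartic normal form gives decay $(\lambda/N)^{-1/4}$ per variable. Multiplying by the prefactor yields exactly $h^{1/3}(N/\lambda^{1/3})^{-1/4}$.

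\textbf{Main obstacle.} The hardest step is justifying this uniform swallowtail bound. It must hold uniformly with respect to the unfolding parameters, namely the two linear coefficients of size $\le C_0N^{-2}$ and the cubic coefficient, and in the presence of the $O(a/N)$ quadratic and cross remainders. I would rely on the appendix's uniform estimate for $\int e^{i\mu(v^4+c_3v^3+c_2v^2+c_1v)}b\,dv=O(\mu^{-1/4})$, uniformly in bounded $(c_j)$, obtained via van der Corput with the fourth derivative. The work would then reduce to checking that all coefficients, after the rescaling, remain bounded in $C^k$.
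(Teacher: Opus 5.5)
There is a genuine gap. The proposal misidentifies where the degeneracy comes from, and the exponent bookkeeping in Steps (iii)--(iv) does not produce \eqref{eq:2hh}.

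\textbf{The principal phase has no quartic term.} Eliminating $A$ against $-\frac43NA^{3/2}$ gives exactly $(T-(\Sigma-S))^3/(12N^2)$ with $T/(2N)\simeq\Ab_0$. So, in units of $a^{3/2}\sqrt q$, the $N$-dependent part of the phase is
\[
-\Ab_0^2(\Sigma-S)+\frac{\Ab_0}{2N}(\Sigma-S)^2-\frac{(\Sigma-S)^3}{12N^2},
\]
which is what \eqref{eq:Acdet} encodes. The coupling between $\Sigma$ and $S$ therefore goes through $A_c$, not through $\theta_c$.

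\textbf{The quadratic term is not a perturbation.} At the Airy scale $\Sigma\sim\lambda^{-1/3}$, the term $\frac{\Ab_0}{2N}(\Sigma-S)^2$ contributes $\lambda^{1/3}/N\gtrsim1$ to the phase exactly when $N\le\lambda^{1/3}$. So Steps (i)--(ii), which use separate one-variable bounds and treat the cross term perturbatively, fail precisely in the regime of the proposition.

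\textbf{The exponents do not close.} You have the prefactor wrong: $a^2/h=h^{1/3}\lambda^{4/3}$, so the prefactor in \eqref{VNdef} is $h^{1/3}\lambda^{5/6}N^{-1/2}$. To reach $h^{1/3}(\lambda^{1/3}/N)^{1/4}$, the double integral in $(\Sigma,S)$ must be $O(\lambda^{-3/4}N^{1/4})$. Your bound of $(\lambda/N)^{-1/4}$ in each variable gives $(\lambda/N)^{-1/2}$, hence $h^{1/3}\lambda^{1/3}$, which is not the claimed estimate.

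\textbf{The missing idea is the rank-one structure.} The quadratic form $(\Sigma-S)^2$ has kernel $\Sigma=S$. Both $\frac{\Sigma^3-S^3}{3}$ and $(\Sigma-S)^3$ have vanishing third derivative along that kernel. So you must first do stationary phase in the nondegenerate direction $\Sigma-S$, where the curvature is $\lambda/N$, giving a factor $(\lambda/N)^{-1/2}$. This elimination produces a quartic in $\Sigma+S$ with coefficient of order $\lambda N$, not $\lambda/N$. The uniform fourth-order van der Corput bound then gives $(\lambda N)^{-1/4}$, and the product is $\lambda^{-3/4}N^{1/4}$, as needed.

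The paper does this in normalized form:
\begin{itemize}
\item It sets $(\Sigma,S)=(\xx,\yy)/N$ and $\Lambda=\lambda/N^3\ge1$, so the phase becomes \eqref{eq:moderate-full-phase-expansion}.
\item The hypothesis of the proposition is exactly $|\Af|+|\Bf|\le C_0$, i.e. bounded linear coefficients.
\item In the rotated coordinates \eqref{eq:moderate-rotated-principal-phase}, the reduced phase has a swallowtail point where its first three derivatives vanish and its fourth derivative is $-3/\Ab_0+O(T_0)$.
\item Lemma~\ref{lem:refined-quartic-family} then gives $\Lambda^{-1/2}\cdot\Lambda^{-1/4}$ near that point.
\item The Airy-type and nondegenerate critical points on the bounded set give $O(\Lambda^{-5/6})$, and integration by parts handles the complement.
\item Multiplying by $h^{1/3}\Lambda^{5/6}$ (the prefactor including the Jacobian $N^{-2}$) yields $h^{1/3}\Lambda^{1/12}$.
\end{itemize}

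Your one-dimensional quartic van der Corput estimate is the right tool. However, it has to be applied to this reduced phase, with large parameter $\Lambda$. It also requires uniform $C^m$ control of the remainder $R_{N,a}$ through \eqref{eq:full-phase-critical-chain}, which the proposal does not address.
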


For the bound \eqref{eq:2hh} (corresponding to the worst degeneracy), only a bounded number of
reflection indices need to be retained.
\begin{lemma}\label{lemNpetit}
Let $N_t\sim t/(2\sqrt a)$, with $1\leq N_t\lesssim\lambda^{1/3}$.
There are $O(1)$ integers $N\sim N_t$ satisfying
\[
 |\Ab_0^2(\cdot,N)-1|
 +|\Ab_0^2(\cdot,N)-\tfrac xa(1+\Lp(y,\omega_c^0))|
 \leq C_0/N^2\,,
\]
where $C_0$ is the sufficiently large constant fixed in
Proposition~\ref{dispNpetitloin}; the $O(1) $number
of relevant $N$'s may depend on $C_0$, but is uniform in $h,a,t,x,y$.
\end{lemma}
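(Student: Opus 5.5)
The plan is to reuse the monotonicity argument of Lemma~\ref{rmqNgrand}, now at the scale $\delta=C_0/N^2$. Regard $N$ as a real variable on the band $N\sim N_t$. By \eqref{Ab0NtildeN}, each of the two differences
\[
 D_1(N)=\Ab_0^2(\cdot,N)-1,\qquad D_2(N)=\Ab_0^2(\cdot,N)-\tfrac xa\bigl(1+\Lp(y,\omega_c^0(\cdot,N))\bigr)
\]
is smooth in $N$. Each is strictly decreasing, with
\[
 \partial_N D_j=-\frac{2\Ab_0^2}{N}+O\Bigl(\frac{a+a/t+a|y|}{N_t}\Bigr).
\]
Since $\Ab_0\sim1$ on the support and $a/t\lesssim\sqrt a$, $|y|\lesssim T_0$, we get $|\partial_N D_j|\sim 1/N_t$ once $a_0,T_0$ are small. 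These smallness conditions depend only on the geometry, not on $C_0$.

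The hypothesis requires both $|D_1(N)|\le C_0/N^2$ and $|D_2(N)|\le C_0/N^2$, so it suffices to use $D_1$ alone. Its inverse image of $[-C_0/N^2,C_0/N^2]$, for $N\sim N_t$, lies in the inverse image of $[-C'C_0/N_t^2,\,C'C_0/N_t^2]$. By the mean value theorem and the lower bound on $|\partial_N D_1|$, that set is an interval of length $O(N_t\cdot C_0/N_t^2)=O(C_0/N_t)$. It therefore contains at most $1+O(C_0/N_t)\le 1+O(C_0)$ integers, since $N_t\ge1$. This gives the $O(1)$ count, depending on $C_0$ and uniform in $h,a,t,x,y$. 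Equivalently, one may apply Lemma~\ref{rmqNgrand} directly with $\delta=\min(1,C'C_0N_t^{-2})$, which yields $O(1+C_0/N_t)$.

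The only delicate point is uniformity near $N_t\sim1$. There the band $N\sim N_t$ contains boundedly many integers anyway, so the claim is trivial. For large $N_t$ the count is in fact $O(1)$ with constant $1+o(1)$. One must also check that \eqref{Ab0NtildeN} holds with the $B_0-B_2$ and $aE_0$ corrections included; this was already established in the proof of Lemma~\ref{rmqNgrand} via \eqref{dertheta}, so no new estimate is needed. The restriction $N_t\lesssim\lambda^{1/3}$ plays no role beyond matching the range of Proposition~\ref{dispNpetitpres}.
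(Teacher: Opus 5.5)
Your proof is correct and follows the paper's own argument. Both reduce to the single difference $\Ab_0^2-1$ at scale $C_0/N_t^2$ and apply the monotonicity count of Lemma~\ref{rmqNgrand} to get $O(1+C_0/N_t)$, with bounded $N_t$ handled by noting that the band $N\sim N_t$ contains only $O(1)$ integers.
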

\begin{proof}
Such indices satisfy $|\Ab_0^2(\cdot,N)-1|\lesssim C_0/N_t^2$.
Lemma \ref{rmqNgrand} bounds their number by
$O(1+C_0/N_t)=O(1)$ when $N_t$ is large. For bounded $N_t$,
the whole band contains only $O(1)$ integers.
\end{proof}

\begin{rmq}\label{rmqsummarizeminim}
For fixed $(t,x,y)$, the smaller of the two absolute differences
in \eqref{eq:1ff} and \eqref{eq:2ff} may change with $N$.
The ordering of $1$ and $\frac xa(1+\Lp(y,\omega_c^0))$ alone
does not determine it. We sum the two weights separately, using
Lemma \ref{rmqNgrand2}, after bounding their maximum by their sum.

There is one useful simplification when the two reference values
are close. If
$|1-\frac xa(1+\Lp(y,\omega_c^0(\cdot,N_t)))|\lesssim aT_0$
for some $N_t\sim t/(2\sqrt a)$, then \eqref{dertheta} gives the
same bound, with a possibly larger constant, for all $N\sim N_t$.
Put $L=\lambda^{1/3}$ when $N\geq\lambda^{1/3}$, and $L=N$
when $N\leq\lambda^{1/3}$. In both cases $aL^2\lesssim t^2$,
and the triangle inequality gives
\begin{equation}\label{eq:nearby-reference-weights}
 1+L|\Ab_0^2(\cdot,N)-1|^{1/2}
 \sim 1+L
 |\Ab_0^2(\cdot,N)-\tfrac xa(1+\Lp(y,\omega_c^0))|^{1/2}.
\end{equation}
Indeed the two denominators differ by at most
$L|1-\frac xa(1+\Lp(y,\omega_c^0))|^{1/2}\lesssim T_0^{3/2}$.
This compares the regularized weights even when one of the
underlying differences vanishes; no comparison of the unregularized
differences is needed. The summation below works without this
extra assumption.
\end{rmq}

\begin{rmq}
In \cite[Section 3.1.3]{doiSF21}, we proved that the (worst) bound
\eqref{eq:2hh} is attained in the Friedlander model at $x=a$,
for suitable values of $y$ in the regime $N\ll\lambda^{1/3}$.
At these points, the contribution near such a swallowtail point
dominates the sum of the other reflection terms. This gives
sharpness of the quarter-loss in that model.

For the upper bound in the general case, Lemma \ref{lemNpetit}
leaves only a bounded number of indices to which we apply
\eqref{eq:2hh}. We use \eqref{eq:2ff} for the remaining indices,
and sum the two weights separately. This proves \eqref{eq:37ter}
without locating a contribution near the swallowtail that dominates the full sum.
\end{rmq}

The proof of Proposition \ref{dispNgrand} is postponed to section \ref{sectNlarge}. Propositions \ref{dispNpetitloin} and \ref{dispNpetitpres} will be proved in section \ref{sectmodN}.
In the next subsection we deal with Proposition \ref{proptang}.

\subsubsection{Proof of Proposition \ref{proptang}}
Fix $(t,x,y)$ and choose $N_t\geq1$ with $N_t\sim t/(2\sqrt a)$.
By the cutoffs in \eqref{VNdef}, only $N\sim N_t$ contribute,
modulo $O(h^\infty)$. We sum \eqref{eq:1ff} and \eqref{eq:2ff}
by replacing each maximum by the sum of its two terms.

For $N\geq\lambda^{1/3}$, Lemma \ref{rmqNgrand2} with
$L=\lambda^{1/3}$ gives
\begin{equation}\label{eq:large-reflection-summation}
 \sum_{\substack{N\sim N_t\\N\geq\lambda^{1/3}}}
 |V_{N,a}(t,x,y)|
 \lesssim h^{1/3}\lambda^{1/6}N_t^{-1/2}
       (1+N_t\lambda^{-1/3})
 \lesssim h^{1/3}\lambda^{-1/6}N_t^{1/2}
 \sim (ht/a)^{1/2}\,.
\end{equation}
Here a nonempty sum requires $N_t\gtrsim\lambda^{1/3}$, so the
first term is absorbed by the second. Both signs of the differences are included in this summation. For the remaining indices, we prove the following lemma.
\begin{lemma}\label{lem:moderate-reflection-sum}
If $1\leq N_t\lesssim\lambda^{1/3}$ and $N_t\sim t/(2\sqrt a)$,
then
\begin{equation}\label{eq:moderate-reflection-summation}
 \sum_{\substack{N\sim N_t\\1\leq N\leq\lambda^{1/3}}}
 |V_{N,a}(t,x,y)|\lesssim (ha/t)^{1/4}.
\end{equation}
\end{lemma}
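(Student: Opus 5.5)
We split the indices $N\sim N_t$ with $1\leq N\leq\lambda^{1/3}$ into the ``close'' set $\mathcal N_{\rm c}$, where
$|\Ab_0^2(\cdot,N)-1|+|\Ab_0^2(\cdot,N)-\frac xa(1+\Lp(y,\omega_c^0))|\leq C_0/N^2$, and the ``far'' set $\mathcal N_{\rm f}$, its complement. On $\mathcal N_{\rm c}$ we use Proposition~\ref{dispNpetitpres}, and on $\mathcal N_{\rm f}$ we use Proposition~\ref{dispNpetitloin}. The target is $(ha/t)^{1/4}$. Since $N_t\sim t/\sqrt a$ and $\lambda=a^{3/2}/h$, we have
\[
 h^{1/3}\Big(\frac{\lambda^{1/3}}{N_t}\Big)^{1/4}
 = h^{1/3}\frac{a^{1/8}h^{-1/12}a^{1/8}}{t^{1/4}}
 = \Big(\frac{ha}{t}\Big)^{1/4}.
\]
So the goal reduces to showing that the whole sum is $O\big(h^{1/3}(\lambda^{1/3}/N_t)^{1/4}\big)$.

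\emph{Close indices.} Lemma~\ref{lemNpetit} shows that $\mathcal N_{\rm c}$ contains $O(1)$ integers, uniformly in the parameters. Each of them contributes at most $h^{1/3}(N/\lambda^{1/3})^{-1/4}\sim h^{1/3}(\lambda^{1/3}/N_t)^{1/4}$ by \eqref{eq:2hh}. Their total is therefore $\lesssim (ha/t)^{1/4}$.

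\emph{Far indices.} If $N_t\leq 16$, there are $O(1)$ indices, each bounded by $h^{1/3}$ through \eqref{eq:21ff}. Since $N_t\lesssim\lambda^{1/3}$, we have $(\lambda^{1/3}/N_t)^{1/4}\gtrsim 1$, so $h^{1/3}\lesssim(ha/t)^{1/4}$ and these indices are harmless. If $N_t\geq 16$, we bound the maximum in \eqref{eq:2ff} by the sum of its two weights. On the band $N\sim N_t$ we may replace $N$ by $N_t$ in the denominators, at the cost of constants. We then apply Lemma~\ref{rmqNgrand2} with $L=N_t$, which gives a total of
\[
 \lesssim h^{1/3}\Big(1+\frac{N_t}{N_t}\Big)\lesssim h^{1/3}\lesssim \Big(\frac{ha}{t}\Big)^{1/4},
\]
again because $N_t\lesssim\lambda^{1/3}$. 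Adding the close and far contributions proves \eqref{eq:moderate-reflection-summation}.

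\emph{Main subtlety.} The key point is that the counting in Lemma~\ref{rmqNgrand} stays valid for $N_t\leq\lambda^{1/3}$, which that lemma states. It provides the strict monotonicity in $N$ of both differences, with derivative of size $1/N_t$, even when one of the reference values $\frac xa(1+\Lp)$ varies with $N$ through $\omega_c^0$. With it, the sum over $\mathcal N_{\rm f}$ does not grow with $N_t$.

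We do not need to know which of the two differences is smaller for a given $N$, as Remark~\ref{rmqsummarizeminim} explains. A naive sum of $h^{1/3}$ over all $N_t$ indices would lose a factor $N_t$. This is avoided precisely because the regularized weights $1/(1+N_t|\cdot|^{1/2})$ are summable with bound $O(1)$ under that counting.
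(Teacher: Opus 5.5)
Your proof is correct and is essentially the paper's argument. You bound the $O(1)$ swallowtail indices from Lemma~\ref{lemNpetit} by \eqref{eq:2hh}, which gives $(ha/t)^{1/4}$. You sum the remaining indices with $N\ge16$ via \eqref{eq:2ff} and Lemma~\ref{rmqNgrand2} with $L=N_t$, use \eqref{eq:21ff} for small $N$, and conclude from $h^{1/3}\lesssim(ha/t)^{1/4}$.

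The only cosmetic difference is that you split according to $N_t\le16$, whereas the paper splits according to $N\le16$. A band $N\sim N_t$ that straddles $16$ then uses both \eqref{eq:21ff} and \eqref{eq:2ff}, each giving at most $h^{1/3}$ per index, which is harmless.
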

\begin{proof}
By Lemma \ref{lemNpetit}, at most $O(1)$ indices satisfy the
hypothesis of Proposition \ref{dispNpetitpres}. Their contribution
is bounded by
$h^{1/3}(N_t/\lambda^{1/3})^{-1/4}\sim(ha/t)^{1/4}$.
For all other indices $N\geq16$, use Proposition
\ref{dispNpetitloin} and \eqref{eq:two-weight-sum} with $L=N_t$;
since $N\sim N_t$, their sum is $O(h^{1/3})$.
The at most $16$ remaining indices are bounded by \eqref{eq:21ff}.
Finally $h^{1/3}\lesssim(ha/t)^{1/4}$, since
$N_t\lesssim\lambda^{1/3}$.
\end{proof}
Combining \eqref{eq:large-reflection-summation} and
\eqref{eq:moderate-reflection-summation} proves
\eqref{eq:37ter}. Bands meeting $N=\lambda^{1/3}$ are covered by
both arguments with the same constants.

\subsubsection{Proof of Proposition \ref{dispNgrand} (large number of reflections $N\geq \lambda^{1/3}$)}\label{sectNlarge}
We first prove a uniform estimate for the integral in \eqref{VNdef}.
\begin{lemma}\label{lem:large-reflections}
Let $0\leq x\lesssim a$, $\lambda=a^{3/2}/h\geq1$ and
$|N|\geq\lambda^{1/3}$, with $|N|\sim |t|/\sqrt a$ and
$|t|\leq T_0$, for $T_0$ sufficiently small. Then
\begin{equation}\label{eq:large-reflections-away}
\left|\int e^{\frac ih\Psi_{N,a,h,a}}
 \sigma_{V,h,a}(\Sigma,S,\cdot)\,d\Sigma\,dS\right|
\lesssim\lambda^{-2/3}
\max\left(
 \frac{1}{1+\lambda^{1/3}|\Ab_0^2-1|^{1/2}},
 \frac{1}{1+\lambda^{1/3}
 |\Ab_0^2-\frac xa(1+\Lp(y,\omega_c^0))|^{1/2}}
\right)\,.
\end{equation}
The implicit constant is independent of $h,a,t,N,x,y$ in the
above regime. No sign condition on the two differences is required.
\end{lemma}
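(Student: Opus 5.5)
We analyze the two-dimensional integral in $(\Sigma,S)$ by separating variables to leading order. From Lemma~\ref{lemAc} and \eqref{eq:thetac}, substituting $A_c,\theta_c$ into $\tilde\Phi_{N,a,h,a}$ puts the reduced phase $\Psi_{N,a,h,a}/h$ in the form
\[
\lambda\,q(\theta_c)^{1/2}\Big[\frac{\Sigma^3}{3}+\Sigma\big(\tfrac xa(1+\Lp(y,\omega_c^0))-\Ab_0^2\big)-\frac{S^3}{3}-S\big(1-\Ab_0^2\big)+\frac{(\Sigma-S)^2\Ab_0}{N}\cdot c+\dots\Big]+\mathrm{const}.
\]
The key point is the structure of this phase. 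The terms coupling $\Sigma$ and $S$ come from expanding $-\tfrac43 NA_c^{3/2}$ with $A_c^{1/2}=\Ab_0-\Ab_1+\dots$ from \eqref{eq:Acdet}. They have size $\lambda(\Sigma-S)^2/N$, together with corrections of relative size $O(a)$ coming from $E_1,E_2$ and from $\partial_\Sigma\theta_c,\partial_S\theta_c=O(a^{3/2}/t)$ in \eqref{dertheta}. Since $N\geq\lambda^{1/3}$, the quadratic coupling $\lambda/N\leq\lambda^{2/3}$ is at most the natural Airy scale $\lambda^{2/3}$ of a quadratic term. On the Airy scale $\Sigma,S\sim\lambda^{-1/3}$ it therefore contributes $O(\lambda^{1/3}/N)=O(1)$, which is harmless, and becomes smaller as $N/\lambda^{1/3}$ grows.

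The plan follows from this structure. First, localize in $|\Sigma|,|S|\lesssim 1$. Outside this region the cubic terms dominate, and integration by parts gives $O(\lambda^{-\infty})$. Next, rescale $\Sigma=\lambda^{-1/3}u$ and $S=\lambda^{-1/3}v$. This produces the prefactor $\lambda^{-2/3}$, and the phase becomes
\[
\frac{u^3}{3}+u\,\lambda^{2/3}\delta_x-\frac{v^3}{3}-v\,\lambda^{2/3}\delta_1+\frac{\lambda^{1/3}}{N}\,Q(u-v)+r(u,v),
\]
where $\delta_1=1-\Ab_0^2$ and $\delta_x=\tfrac xa(1+\Lp)-\Ab_0^2$, up to $O(1)$ factors in $q(\theta_c)$. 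Here $Q$ is a bounded quadratic form, and $r$ collects the remaining corrections. The symbol bounds after rescaling should be uniform, by the derivative bounds on $\tilde\Gamma_\gamma$ following \eqref{Gammatildegam}, namely $O(\sqrt a)$ for the $\Sigma,S$ derivatives. We then treat the resulting integral as a two-variable Airy-type integral with bounded perturbation. Integrating first in $u$ (respectively $v$) and using the standard uniform bound for Airy-type integrals $\int e^{i(u^3/3+zu+p(u))}b\,du=O((1+|z|)^{-1/4})$ with bounded smooth perturbation $p$ gives a factor $(1+\lambda^{1/3}|\delta_x|^{1/2})^{-1/2}$. The trivial bound in the other variable, however, is not integrable. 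So instead we will work on the region where one of the variables has a nondegenerate critical point.

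To obtain the stated $\max$ with full power $-1$ rather than $-1/2$, we will argue as follows. If $\lambda^{2/3}|\delta_1|\gg1$, the $v$ integral is either non-stationary, when $\delta_1>0$ in the appropriate sign, giving rapid decay, or it has two nondegenerate critical points at $v=\pm\lambda^{1/3}|\delta_1|^{1/2}$ with Hessian of size $\lambda^{1/3}|\delta_1|^{1/2}$. Stationary phase in $v$ then gains $(\lambda^{1/3}|\delta_1|^{1/2})^{-1/2}$. The remaining $u$ integral, with a bounded perturbation from the coupling, gains $(1+\lambda^{1/3}|\delta_x|^{1/2})^{-1/2}$ by the same argument. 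The product is bounded by the maximum of the two reciprocal weights, since each factor is at most $1$ and $\min(X,Y)^{1/2}\max(X,Y)^{1/2}\geq\min(X,Y)$ for the denominators. When both $|\delta_1|$ and $|\delta_x|$ are $\lesssim\lambda^{-2/3}$, the integral is $O(1)$ after rescaling, and the bound holds trivially. If only one of them is small, the other variable provides the full factor, because the trivial bound in the compactly localized small variable is $O(1)$.

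The main obstacle is to show that, after the stationary phase in one variable, the coupling term still leaves a perturbation of the remaining Airy phase with uniformly bounded derivatives. The critical point $v_c$ depends on $u$ through $\lambda^{1/3}N^{-1}Q$, and this dependence is harmless only because $N\geq\lambda^{1/3}$. This is exactly where the hypothesis on $N$ enters. We will check it by tracking $\partial_\Sigma E,\partial_S E=O(\sqrt a)$ and \eqref{dertheta}. The absence of any sign condition follows because the oscillatory and decaying sides of the Airy integral both satisfy the same $(1+|z|)^{-1/4}$ bound.
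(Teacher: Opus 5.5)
Your overall scheme is the right starting point: rescale $(\Sigma,S)=\lambda^{-1/3}(u,v)$, treat the result as a two-variable Airy integral with parameters $\Af=-\lambda^{2/3}\delta_x$ and $\Bf=-\lambda^{2/3}\delta_1$, and combine one-dimensional Airy or stationary-phase gains. However, the step you flag as the ``main obstacle'' is not resolved by $N\ge\lambda^{1/3}$, and the argument fails exactly there.

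After rescaling, the coupling is $\beta(u-v)^2$ with $2\beta=\lambda^{1/3}\Ab_0/N$. The hypothesis $N\ge\lambda^{1/3}$ only gives $\beta\lesssim1$; it does not make the coupling a bounded perturbation. Its $v$-derivative is $-2\beta(u-v)$, and on the stationary set $|u|\sim|\Af|^{1/2}$. So the effective Airy parameter of the $v$-integral is $\Bf-2\beta u+\beta^2$, not $\Bf$, and its fold sits near $\Bf\approx\pm2\beta|\Af|^{1/2}$ rather than near $\Bf=0$. Set $r=(\Af^2+\Bf^2)^{1/2}$ and take $|\Af|\sim r$, $1\ll|\Bf|\lesssim\sqrt r$. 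Then the following fail:
\begin{itemize}
\item Your claim that the $v$-integral is either non-stationary or has two nondegenerate critical points $v=\pm|\Bf|^{1/2}$ with Hessian $|\Bf|^{1/2}$ is false.
\item The product bound $(1+|\Af|^{1/2})^{-1/2}(1+|\Bf|^{1/2})^{-1/2}$ is also false. Near the shifted fold the reduced $v$-integral is an Airy integral at its turning point and gains nothing, which leaves $r^{-1/4}$ rather than your $r^{-3/8}$.
\item Symmetrically, after stationary phase in $v$ with $\Bf\sim r$, the reduced $u$-phase has Airy parameter $\Af\pm2\beta|\Bf|^{1/2}+O(1)$. So the gain $(1+|\Af|^{1/2})^{-1/2}$ you assert fails when $|\Af|\lesssim\sqrt r$.
\end{itemize}
A minor point: the ``trivial'' $O(1)$ bounds you use for small parameters require integration by parts outside a fixed ball, since the rescaled support has size $\lambda^{1/3}$.

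The missing idea is a second rescaling adapted to $r$, together with a threshold $K\sqrt r$ (not $O(1)$) for the smaller parameter. This is the paper's argument.
\begin{itemize}
\item Set $(u,v)=\sqrt r\,(u',v')$. The phase becomes $r^{3/2}\phi$, and the coupling is now only $O(r^{-1/2})$ relative to the cubic terms.
\item Exchanging the variables if needed, assume $|\Af|\sim r$ and $|\Bf|\le|\Af|$. If $\Af<0$, integrate by parts.
\item Otherwise, eliminate first the variable carrying the \emph{dominant} parameter, by stationary phase with uniformly nondegenerate Hessian. This gives a factor $r^{-3/4}$.
\item The reduced phase in $v$ has derivative $-(v^2-\Bf+O(\sqrt r))$ and third derivative $-2+O(r^{-1/2})$.
\item If $|\Bf|\le K\sqrt r$, Van der Corput of order three gives a total of $O(r^{-1/4})$. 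This suffices because $1/(1+|\Bf|^{1/2})\gtrsim r^{-1/4}$ there.
\item If $\Bf>K\sqrt r$, stationary phase gives $r^{-1/4}\Bf^{-1/4}\lesssim1/(1+\Bf^{1/2})$.
\item If $\Bf<-K\sqrt r$, integrate by parts.
\item The Jacobian $\lambda^{-2/3}$ of the first rescaling then yields the lemma.
\end{itemize}
This argument uses the slack between the maximum in the statement and your product bound precisely in the regime where the product bound is false.
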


\begin{proof}
We write the proof for $t,N>0$; negative times follow by conjugation.
We use the rescaling from \cite[Proposition 4]{doiSF21} and
\cite[Lemma 3.21]{ILLP}, keeping the source and observation corrections in the full phase.
Using $\tilde\Phi_{0,a,h,a}$ as provided in \eqref{eqtildePhiN} for $N=0$ and $\gamma$ replaced by $a$, we compute
\begin{multline}\label{derivSigmaphizero1}
\partial_{\Sigma}\Psi_{N,a,h,a}
=\partial_{\Sigma}\tilde\Phi_{N,a,h,a}|_{\theta_c,A_c}
=a^{3/2}q(\theta_c)^{1/2}\Big[\Sigma^2+\frac xa-A_c\\
+\frac xa\partial_{\Xi}A_{\Gamma}
\Big(x,y,\frac{\sqrt a\Sigma q(\theta_c)^{1/2}}
{\tau(aA_cq(\theta_c)^{1/3},\theta_c)},
\frac{\theta_c}{\tau(aA_cq(\theta_c)^{1/3},\theta_c)}\Big)\Big].
\end{multline}
The derivatives of $A_c,\theta_c$ cancel in this first derivative, since
$\Psi$ is the critical value of $\tilde\Phi$. They must be included when
this identity is differentiated again.
In the same way we have, with $A_c$ given in \eqref{eq:Acdet},
\begin{equation}
\partial_{S} \Psi_{N,a,h,a}=\partial_{S}\tilde\Phi_{N,a,h,a}|_{\theta_c,A_c}
= -a^{3/2}q(\theta_c)^{1/2}\Big[S^2+1-A_c+\partial_{\Xi}A_{\Gamma}(a,0,\Xi,\frac{\theta_{c}}{{\tau(a A_cq(\theta_{c})^{1/3},\theta_{c})}})|_{\Xi=\frac{\sqrt{a}S q^{1/2}(\theta_{c})}{ {\tau(a A_cq(\theta_{c})^{1/3},\theta_{c})}}}\Big],
\end{equation}
With the normalization $\beta_1=0$, the expansion used here is
\[
\partial_{\Xi}A_{\Gamma}
=\Lp+2\mu_\Gamma\Xi+\alpha_1x+3\mu_1\Xi^2+\partial_{\Xi}R_A,
\qquad R_A\in\mathcal H_{\ge4},\quad \Lp(0,\cdot)=0.
\]
All coefficients are evaluated at $(y,\Theta/|\Theta|)$. In particular,
the weight-two terms give both a constant term and a quadratic term
after rescaling; they cannot all be included in a remainder independent
of the rescaled normal variables. The estimates below hold after
composition with $(A_c,\theta_c)$, with all derivatives of any fixed
order in the variables used in each rescaling. To check them we
repeatedly use the chain rule, including the derivatives of $A_c$ and
$\theta_c$ from Lemmas~\ref{lemAc} and~\ref{lemthetac}.
The bounds obtained by differentiating {the equations for the critical points},
\eqref{eq:full-phase-critical-chain}, also apply here.
The constant source and observation corrections have size
$\lambda^{2/3}a\lesssim t^2$, since $N\ge\lambda^{1/3}$.
We treat this case separately: as $\lambda^{1/3}/N\to0$,
the two third-order terms decouple and the limiting Hessian can vanish.
Lemma \ref{lem:refined-quartic-family}, which requires rank one,
does not apply to this limit.

We rescale variables $(\Sigma,S)=(\lambda^{-1/3} \xx,\lambda^{-1/3}\yy)$. The symbol bounds give, for any multi-index $\nu$,
\begin{equation}
  \label{eq:66}
|\partial^{\nu}_{(\xx,\yy)} \sigma_{V,h,a}(\lambda^{-1/3}\xx,\lambda^{-1/3}\yy,\rho)|\leq C_{\nu} (1+|\xx|+|\yy|)^{-|\nu|}\,.
\end{equation}
Set
$\Af \lambda^{-2/3}=\Ab_0^2-\frac{x}{a} (1+\Lp(y,\omega^0_c))$, $\Bf\lambda^{-2/3}=\Ab_0^2-1$.
We prove, uniformly in $(\Af,\Bf)$,
\begin{equation}
  \label{eq:65}
  \left|   \int e^{\frac i h \Psi_{N,a,h,a}(\lambda^{-1/3}\xx,\lambda^{-1/3}\yy,\rho)} \sigma_{V,h,a}(\lambda^{-1/3}\xx,\lambda^{-1/3}\yy,\cdot) \,d\xx d\yy\right|  \lesssim \max\{\frac{1}{1+|\Bf|^{1/2}},\frac{1}{1+|\Af|^{1/2}}\}\,.
\end{equation}
In the new variables, $A_c$ from \eqref{eq:Acdet} reads as follows
\begin{multline}
A_{c}=\Ab_0^2-2\Ab_0\Big(\frac{\Sigma}{2N}(1-a E_1)-\frac {S} {2N}(1-a E_2)\Big)+\Big(\frac{\Sigma}{2N}-\frac {S} {2N}\Big)^2+ O\Big(a (\frac{\Sigma}{N},\frac{S}{N})^2\Big)\\
=\Ab_0^2-\Ab_0\frac{\lambda^{-1/3}}{N}(\xx(1-a E_1)- \yy(1-a E_2))+\frac{\lambda^{-2/3}}{4N^2}(\xx-\yy)^2+a\frac{\lambda^{-2/3}}{N^2}O(\xx^2,\yy^2).
\end{multline}
Also 
\[
\partial_{\Xi}A_{\Gamma}(x,y,\Xi,\Theta)|_{\Xi=\frac{\sqrt{a}\Sigma q^{1/2}(\theta_{c})}{ {\tau(a A_cq(\theta_{c})^{1/3},\theta_{c})}}}=\Lp(y,\omega_c)+2\frac{\sqrt{a}\lambda^{-1/3}\xx q^{1/2}(\theta_{c})}{ {\tau(a A_cq(\theta_{c})^{1/3},\theta_{c})}}{\mu_\Gamma}(y,\omega_c)+\mathcal{H}_{j\geq 2}.
\]
Introducing these formulas in \eqref{derivSigmaphizero1} gives
\begin{align*}
\frac{\partial_{\xx}\Psi_{N,a,h,a}}{h\sqrt{q(\theta_c)}}
&\begin{multlined}[t]
=\xx^2-\Af+\frac{\lambda^{1/3}\Ab_0}{N}
\big(\xx(1-aE_1)-\yy(1-aE_2)\big)-\frac{(\xx-\yy)^2}{4N^2}\\
+2\sqrt a\lambda^{1/3}\frac xa\mu_\Gamma(y,\omega_c)
\frac{\sqrt{q(\theta_c)}}{\tau(aA_cq(\theta_c)^{1/3},\theta_c)}\xx
+\lambda^{2/3}\frac xa\alpha_1(y,\omega_c)x\\
+3a\frac xa\mu_1(y,\omega_c)
\frac{q(\theta_c)}{\tau(aA_cq(\theta_c)^{1/3},\theta_c)^2}\xx^2
+O(a(\xx^2+\yy^2))+O(a^{3/2}\lambda^{2/3}),
\end{multlined}\\
-\frac{\partial_{\yy}\Psi_{N,a,h,a}}{h\sqrt{q(\theta_c)}}
&\begin{multlined}[t]
=\yy^2-\Bf+\frac{\lambda^{1/3}\Ab_0}{N}
\big(\xx(1-aE_1)-\yy(1-aE_2)\big)-\frac{(\xx-\yy)^2}{4N^2}\\
+2\sqrt a\lambda^{1/3}\mu_\Gamma(0,\omega_c)
\frac{\sqrt{q(\theta_c)}}{\tau(aA_cq(\theta_c)^{1/3},\theta_c)}\yy
+\lambda^{2/3}\alpha_1(0,\omega_c)a\\
+3a\mu_1(0,\omega_c)
\frac{q(\theta_c)}{\tau(aA_cq(\theta_c)^{1/3},\theta_c)^2}\yy^2
+O(a(\xx^2+\yy^2))+O(a^{3/2}\lambda^{2/3}).
\end{multlined}
\end{align*}
Here the coefficient of $\xx$ coming from $A_\Gamma$ occurs only at the
observation point, and that of $\yy$ only at the source point.
The variation of $\Lp(y,\omega_c)$ from its value at
$\xx=\yy=0$ is included in the remainders. On bounded sets these are
bounds in $C^m$ for every fixed $m$. On the support of the rescaled
symbol, the weight-three remainder is $O(a^{3/2}\lambda^{2/3})$;
its derivatives are estimated by the same chain rule. In particular,
the constant terms have size $O(a\lambda^{2/3})$, whereas the
quadratic terms have size $O(a(\xx^2+\yy^2))$.

Write $r^2=\Af^2+\Bf^2$. If $\lambda$ is bounded, the rescaled
symbol has uniformly bounded support and \eqref{eq:65} follows directly.
We may therefore take $\lambda$, and hence $N$, sufficiently large.
For bounded $r$, the integral in \eqref{eq:65} is $O(1)$: this is
immediate on bounded sets, and integration by parts applies outside
a fixed ball, where the quadratic terms in the gradient dominate. We henceforth take
$r_0<r\lesssim\lambda^{2/3}$, for $r_0$ sufficiently large.
The relations $N\sim t/\sqrt a$ and $N\geq\lambda^{1/3}$ give
\[
 \lambda^{1/3}/N\lesssim1,\qquad
 N^{-2}\lesssim r^{-1},\qquad
 a\lesssim T_0^2/r,\qquad a\lambda^{2/3}\lesssim T_0^2.
\]
Set $(\xx,\yy)=r^{1/2}(\xx',\yy')$ and
$\Psi_{N,a,h,a}/h=r^{3/2}\phi_{N,a,h}$, and denote the rescaled
symbol by $\tilde\sigma_{V,h,a}$. On every fixed bounded set,
the preceding expansions give
\begin{align}\label{eq:74}
 \partial_{\xx'}\phi_{N,a,h}
 &=q(\theta_c)^{1/2}
 \bigl(\xx'^2-\Af/r+r^{-1/2}O(1)\bigr),\\
 \partial_{\yy'}\phi_{N,a,h}
 &=-q(\theta_c)^{1/2}
 \bigl(\yy'^2-\Bf/r+r^{-1/2}O(1)\bigr).
\end{align}
Here the remainders are bounded in $C^m$ for every fixed $m$.
This includes differentiation of $(A_c,\theta_c)$ through {the equations defining them}, not just differentiation before stationary phase.
Outside a sufficiently large fixed ball in $(\xx',\yy')$, the
quadratic terms dominate; integration by parts, with a dyadic partition,
gives $O(r^{-M})$ in the integral of \eqref{eq:65}, for any fixed $M$.
We may therefore work on this ball.

At least one of $|\Af|,|\Bf|$ is comparable to $r$.
If that quantity is negative, the corresponding derivative in
\eqref{eq:74} does not vanish, and integration by parts proves
\eqref{eq:65}. Otherwise, exchanging the two integration variables
if necessary, we may assume $\Af\sim r>0$ and $|\Bf|\leq\Af$.
Stationary phase in $\xx'$ applies near
$\xx'_\pm=\pm\sqrt{\Af/r}+O(r^{-1/2})$, where
$|\partial_{\xx'}^2\phi_{N,a,h}|\gtrsim1$; its complement is
nonstationary. Thus, to any fixed order,
\begin{equation}\label{eq:75}
 \int e^{ir^{3/2}\phi_{N,a,h}}\tilde\sigma_{V,h,a}
 \,d\xx'\,d\yy'
 =r^{-3/4}\sum_{\pm}\int
 e^{ir^{3/2}\phi_{N,a,h,\pm}(\yy')}
 \tilde\sigma_{V,h,a,\pm}(\yy')\,d\yy'+O(r^{-M}).
\end{equation}
The phases on the right are the critical values in $\xx'$.
{To compute these derivatives we must also differentiate the equation defining the eliminated critical point.} In the variable $\yy=r^{1/2}\yy'$, these derivatives satisfy
\begin{align*}
 \partial_{\yy}\bigl(r^{3/2}\phi_{N,a,h,\pm}(r^{-1/2}\yy)\bigr)
 &=-q(\theta_c^0)^{1/2}\bigl(\yy^2-\Bf+O(\sqrt r)\bigr),\\
 \partial_{\yy}^2\bigl(r^{3/2}\phi_{N,a,h,\pm}(r^{-1/2}\yy)\bigr)
 &=-2q(\theta_c^0)^{1/2}\yy+O(1),\\
 \partial_{\yy}^3\bigl(r^{3/2}\phi_{N,a,h,\pm}(r^{-1/2}\yy)\bigr)
 &=-2q(\theta_c^0)^{1/2}+O(r^{-1/2}).
\end{align*}
These bounds hold uniformly for $|\yy|\lesssim\sqrt r$.
They follow either by differentiating {the equation defining the eliminated critical point} or by the
Schur complement formula for the Hessian; its denominator is bounded
away from zero in the primed variables. The symbols after stationary
phase have uniformly bounded supremum and total variation in $\yy$,
and uniform symbol bounds after each rescaling below.
In particular, the third derivative is bounded away from zero.

Fix $K$ sufficiently large. If $|\Bf|\leq K\sqrt r$, Van der Corput's
third derivative estimate gives
\begin{equation}\label{eq:76}
 \left|\int e^{ir^{3/2}\phi_{N,a,h,\pm}(r^{-1/2}\yy)}
 \tilde\sigma_{V,h,a,\pm}(r^{-1/2}\yy)\,d\yy\right|\lesssim1.
\end{equation}
Multiplying by the Jacobian $r$ and the factors $r^{-3/4}$ and
$r^{-1/2}$ in \eqref{eq:75}, we obtain $O(r^{-1/4})$ for
\eqref{eq:65}. Since $|\Bf|\leq K\sqrt r$, this is bounded by
$C/(1+|\Bf|^{1/2})$.

If $\Bf>K\sqrt r$, the reduced critical points have
$\yy_\pm=\pm\sqrt{\Bf}+O(\sqrt r/\sqrt{\Bf})$, and the second
derivative there has size $\sqrt{\Bf}$. Rescaling
$\yy=\sqrt{\Bf}\,\yy''$, stationary phase with parameter
$\Bf^{3/2}$ bounds each reduced integral by $O(\Bf^{-1/4})$.
Outside fixed neighborhoods of these two points, integration by parts
has the same or better bound. Hence the integral in \eqref{eq:65} is
\[
 O(r^{-1/4}\Bf^{-1/4})
 \lesssim\frac{1}{1+\Bf^{1/2}},
 \qquad \Bf\leq\Af\sim r.
\]
Finally, if $\Bf<-K\sqrt r$, the reduced first derivative has
absolute value at least $c(|\Bf|+\yy^2)$. The same rescaling
$\yy=\sqrt{|\Bf|}\,\yy''$ and integration by parts with parameter
$|\Bf|^{3/2}$ bound each reduced integral by $O(|\Bf|^{-1})$.
A dyadic partition gives this bound also for large $|\yy''|$.
Thus the integral in \eqref{eq:65} is
$O(r^{-1/4}|\Bf|^{-1})$, which is again bounded by
$C/(1+|\Bf|^{1/2})$.

This proves \eqref{eq:65} for both signs of $\Bf$; exchanging the
variables gives the remaining cases. The Jacobian $\lambda^{-2/3}$
of the first rescaling proves the lemma. The transition
between the Airy and nonstationary regimes occurs at
$|\Bf|\sim\sqrt r$, not at a fixed negative value of $\Bf$.
\end{proof}
\subsubsection{Proof of Propositions \ref{dispNpetitloin} and \ref{dispNpetitpres} (moderately large $1\leq |N|\leq \lambda^{1/3}$)}\label{sectmodN}

We start in the same way as the proof of Lemma \ref{lem:large-reflections}, replacing $\lambda^{1/3}$ by $|N|$ : we rescale variables with $(\Sigma,S)=(\frac{\xx}{|N|},\frac{\yy}{|N|})$. 
Here we need all the first three terms in the formula \eqref{eq:Acdet}. In our new variables we have, with $\Lambda:=\lambda/N^3$
  \begin{align}
\nonumber
 \frac 1h  \partial_{\xx}\Psi_{N,a,h,a} & \! \begin{multlined}[t] = \Lambda q^{1/2}(\theta_c)\Big(\xx^{2}+N^2(\frac xa (1+\Lp(y,\omega_c))-\Ab_0^2)+\Ab_0\Big(\xx(1-aE_1)-\yy(1-aE_2)\Big)\\+2N\sqrt{a}\xx\frac xa{\mu_\Gamma}(y,\omega_c)\frac{q^{1/2}(\theta_c)}{{\tau(aA_cq(\theta_c)^{1/3},\theta_c)}} -\frac{1}{4N^2}(\xx-\yy)^2+O(a\frac{\yy^2}{N^2},a\frac{\xx^2}{N^2})+N^2O(a)
 \Big),
\end{multlined}\\
\nonumber
\frac 1h  \partial_{\yy}\Psi_{N,a,h,a} & \! \begin{multlined}[t] = -\Lambda q^{1/2}(\theta_c)\Big(\yy^{2}+N^2(1-\Ab_0^2)+\Ab_0\Big(\xx(1-aE_1)-\yy(1-aE_2)\Big)\\{+2N\sqrt{a}\yy\mu_\Gamma}(0,\omega_c)\frac{q^{1/2}(\theta_c)}{{\tau(aA_cq(\theta_c)^{1/3},\theta_c)}}-\frac{1}{4N^2}(\xx-\yy)^2+O(a\frac{\yy^2}{N^2},a\frac{\xx^2}{N^2})+N^2O(a)
\Big),
\end{multlined}
      \end{align}
where the terms $O(a\frac{\yy^2}{N^2},a\frac{\xx^2}{N^2})$ come from the formula \eqref{eq:Acdet} of $A_c$ while the terms $N^2O(a)$
in the last two formulas are homogeneous terms of order $j\geq 2$ in the expression of $\partial_{\Xi}A_{\Gamma}$. We recall that $\partial_{\Xi}A_{\Gamma}(x,y,\Xi,\Theta)=\Lp(y,\omega_c)+2\Xi{\mu_\Gamma}(y,\omega_c)+\mathcal{H}_{j\geq 2}$ where {the weight-two term is
$\alpha_1x+3\mu_1\Xi^2$, and the remaining terms are
$\partial_\Xi R_A\in\mathcal H_{\ge3}$}, with
$\Xi=\sqrt{a}\Sigma q^{1/2}/\tau$.
In fact, we infer that we can write
 \begin{gather}\label{eq:derivePhiN2}
 \frac 1h  \partial_{\xx}\Psi_{N,a,h,a}=\Lambda  q^{1/2}(\theta_c)\Big(\xx^{2}-\Af+(b \xx-d \yy)-\frac{1}{4N^2}(\xx-\yy)^2+O(a\xx^2)+O(a\yy^2)+O(N^2a)\Big)\\
 \frac 1h  \partial_{\yy}\Psi_{N,a,h,a}=-\Lambda  q^{1/2}(\theta_c)\Big(\yy^{2}-\Bf+(d \xx-c\yy)-\frac{1}{4N^2}(\xx-\yy)^2+O(a\xx^2)+O(a\yy^2)+O(N^2a)\Big),
\end{gather}
The terms $O(N^2a)$ include the constant contribution of $\alpha_1x$
at the observation point and of $\alpha_1a$ at the source point. The contribution of
$3\mu_1\Xi^2$ is included in $O(a\xx^2,a\yy^2)$.
These are smooth remainders after composition with the critical points,
with the corresponding derivative bounds; in particular, the notation
$O(N^2a)$ does not assert independence of $\xx,\yy$.
We use the following coefficients, with
$\omega_c^0=\omega_{\theta_c}|_{\xx=\yy=0}$ and
$\theta_c^0=\theta_c|_{\xx=\yy=0}$:
\begin{gather*}
-\frac{\Af}{N^2}=\frac xa(1+\Lp(y,\omega_c^0))-\Ab_0^2,
\qquad -\frac{\Bf}{N^2}=1-\Ab_0^2,\\
\begin{aligned}
b&=\Ab_0(1+O(a))+2N\sqrt a\frac xa\mu_\Gamma(y,\omega_c^0)
\frac{\sqrt{q(\theta_c^0)}}{\tau(aA_cq(\theta_c^0)^{1/3},\theta_c^0)}
+O(Na^{3/2}),\\
c&=\Ab_0(1+O(a))-2N\sqrt a\mu_\Gamma(0,\omega_c^0)
\frac{\sqrt{q(\theta_c^0)}}{\tau(aA_cq(\theta_c^0)^{1/3},\theta_c^0)}
+O(Na^{3/2}),\\
d&=\Ab_0(1+O(a))+O(Na^{3/2}).
\end{aligned}
\end{gather*}
The quotients in these formulas are evaluated at $\xx=\yy=0$.
The cutoff gives $|\theta_c^0|\sim1$, not $|\theta_c^0|=1$,
so their denominators must be retained.
Since $N^2a\lesssim t^2$, the constant corrections are small.
On bounded sets the nonconstant corrections are small in $C^m$ for
any fixed $m$; after the radial rescaling below the quadratic
corrections are $O(a)$ and the constant corrections are $O(t^2/r)$.

This follows easily from writing the Taylor expansion for the first order derivatives of $\Psi_{N,a,h,a}$ with respect to $\xx,\yy$ and use that $\partial^2_{\xx,\yy}\Psi_{N,a,h,a}=\partial^2_{\yy,\xx}\Psi_{N,a,h,a}$. The  terms $O(Na^{3/2})$ come from the contribution of $N^2\frac xa \Lp(y,\omega_{\theta_c})-N^2\frac xa \Lp(y,\omega_{\theta_c}^0)$, $\omega^0_c=\omega_c|_{\xx=\yy=0}$ that we write as follows using \eqref{dertheta} and $\Lp(y,\cdot)=O(|y|)$
\[
N^2\frac xa (O(< \nabla \Lp(y,\omega_c),\partial_{\xx}\theta_c>)+O(< \nabla \Lp(y,\omega_c),\partial_{\yy}\theta_c>))=N^2O\Big(|y|\times\frac{1}{N}\times \frac{a^{3/2}}{|y|}\Big)=O(Na^{3/2})=O(ta),
\]
where the factor $1/|N|$ comes from $\partial\Sigma/\partial\xx=1/|N|$.
Next, we compute the second derivatives : notice that \eqref{dertheta} yields $|\nabla_{\Sigma,S}\theta_c|\sim O(a^{3/2}/t)\sim O(a/N)$ since $|\theta_c|\sim 1$, $|y|\sim 2t$ and $t/\sqrt{a}\sim 2N$.
We obtain, at $\nabla_{\xx}\Psi_{N,a,h,a}=\nabla_{\yy}\Psi_{N,a,h,a}=0$
\begin{align}\label{eq:160bis}
\frac1h\partial_{\xx}^2\Psi
&=\Lambda\sqrt{q(\theta_c)}
\Big(2\xx+b-\frac{\xx-\yy}{2N^2}+O(a(1+|\xx|+|\yy|))\Big),\\
\frac1h\partial_{\yy}^2\Psi
&=\Lambda\sqrt{q(\theta_c)}
\Big(-2\yy+c-\frac{\xx-\yy}{2N^2}+O(a(1+|\xx|+|\yy|))\Big),\\
\frac1h\partial_{\xx}\partial_{\yy}\Psi
&=\Lambda\sqrt{q(\theta_c)}
\Big(-d+\frac{\xx-\yy}{2N^2}+O(a(1+|\xx|+|\yy|))\Big).
\end{align}
Here $\Psi=\Psi_{N,a,h,a}$. We denote this Hessian by
$\Lambda\sqrt{q(\theta_c)}M(\xx,\yy)$; thus $M$ is the normalized
matrix. Its determinant at a critical point is
\begin{equation}\label{eq:80}
\det M=-\Big(4\xx\yy+2(b\yy-c\xx)+d^2-bc
+\frac{(\xx-\yy)^2}{N^2}
+\frac{(\xx-\yy)(b+c-2d)}{2N^2}\Big)
+O\big(a(1+|\xx|+|\yy|)^2\big).
\end{equation}
The determinant of the Hessian itself is
$\Lambda^2q(\theta_c)\det M$.

We first establish uniform bounds for the reduced phase.
We work with $t>0$, $N>0$; the other signs follow by conjugation.
In the definition below, $\Psi_{N,a,h,a}$ is evaluated in its
original variables $(\Sigma,S)$. Put $\Lambda=\lambda/N^3\ge1$
and define
\begin{equation}\label{eq:moderate-normalized-full-phase}
 \Phi^{\mathrm{nor}}_{N,a}(\xx,\yy)
 =\frac{\Psi_{N,a,h,a}(\xx/N,\yy/N)
             -\Psi_{N,a,h,a}(0,0)}
 {h\Lambda\sqrt{q(\theta_c^0)}}.
\end{equation}
Subtracting a constant does not change the estimates, nor does
the factor $\sqrt{q(\theta_c^0)}\sim1$.
We omit the indices of $\Phi^{\mathrm{nor}}$ when they are fixed.
On any fixed compact set in the integration variables,
\eqref{eq:derivePhiN2} and \eqref{eq:Acdet} give
\begin{equation}\label{eq:moderate-full-phase-expansion}
 \Phi^{\mathrm{nor}}
 =\frac{\xx^3-\yy^3}{3}+\frac{\Ab_0}{2}(\xx-\yy)^2
 -\frac{(\xx-\yy)^3}{12N^2}-\Af\xx+\Bf\yy+R_{N,a},
 \qquad \|R_{N,a}\|_{C^m}\lesssim T_0.
\end{equation}
Here $m$ is fixed and sufficiently large to apply the lemmas
in Appendix~\ref{app:refined-quartic-estimate}. The bound also holds
for the parameter derivatives used below, and $R_{N,a}(0,0)=0$.
The remainder depends on the integration variables.

To check uniformity, also for the other rescalings in this section,
we differentiate {the equations for the critical points}. If $U=(A,\theta)$ and
$\Psi$ is the critical value of $\tilde\Phi$, then
\begin{equation}\label{eq:full-phase-critical-chain}
 \Psi_i=\tilde\Phi_i|_c,\qquad
 \Psi_{ij}=\left(\tilde\Phi_{ij}
       -\tilde\Phi_{iU}\tilde\Phi_{UU}^{-1}\tilde\Phi_{Uj}
                \right)\Big|_c,\qquad i,j\in\{\Sigma,S\}.
\end{equation}
The two scales of $\tilde\Phi_{UU}$ are $Na^{3/2}$ and $t$,
where $t\sim N\sqrt a$, and its mixed entries are $O(at)$.
Dividing its rows and columns by $(Na^{3/2})^{1/2}$ and $t^{1/2}$
leaves a uniformly invertible matrix with mixed block $O(\sqrt a)$.
This follows from \eqref{eq1Ac}, \eqref{eq:dervisecPhi} and
\eqref{Acexpres}. Repeated implicit differentiation of this
normalized system gives the required bounds for $A_c,\theta_c$;
in particular,
$\partial_{\Sigma,S}\sqrt{A_c}=O(N^{-1})$ and
$\partial_{\Sigma,S}\theta_c=O(a/N)$.
We differentiate the expansion of the source and observation terms after substitution:
$\Xi=\sqrt a\Sigma\sqrt q/\tau=O(\sqrt a)$,
$\rho-1=O(a)$ and
$\partial_\Xi R_A=O_{C^m}(a^{3/2})$.
These factors are preserved under differentiation in the rescaled variables. The term
$aAq(B_0-B_2)/(2|\theta|)$ and the quadratic $A$ term in
\eqref{Gammatildegam} change the critical points and are included
in this system.

On a fixed compact set, \eqref{eq:derivePhiN2} gives
$b,c,d=\Ab_0+O(N\sqrt a)$; the constant corrections are
$O(N^2a)$ and the quadratic ones are $O(a)$. The ratio $\sqrt{q(\theta_c)}/\sqrt{q(\theta_c^0)}$
adds a small smooth correction as well. Since
$N\sqrt a\lesssim t\le T_0$, these observations and
\eqref{eq:full-phase-critical-chain} prove
\eqref{eq:moderate-full-phase-expansion} with any fixed required
number of derivatives. Uniformity in $N$ uses the bounded scale
parameters $N^{-1}$ and $N\sqrt a$.

To locate the swallowtail point, we first consider bounded
$r=(\Af^2+\Bf^2)^{1/2}\le r_0$, with $r_0$ fixed.
Fix $t,a,N$ and the direction of $y$. Near a reference point,
we vary $x$ and the length of $y$ by setting, with $z=(z_{1},z_{2})$
\begin{equation}\label{eq:moderate-physical-parameters}
 x=x_*+\frac a{N^2}z_1,\qquad y=(1+z_2/N^2)y_*.
\end{equation}
We check that the map $z\mapsto(\Af,\Bf)$ has a Jacobian
determinant bounded away from zero, so that $(\Af,\Bf)$
can be used as local parameters. The preceding derivative bounds remain uniform for bounded $z$.
Using \eqref{eq:defA0},
$\theta_g=-(y+\nabla B_0(y,\theta_g))/(2t)$ and the homogeneity
of $q^{1/2}$, with $B_0,B_2=O(|y|^2)$, gives
\begin{equation}\label{eq:moderate-physical-Jacobian}
 D_z(\Af,\Bf)=
 \begin{pmatrix}-(1+\Lp)&2\Ab_0^2\\0&2\Ab_0^2\end{pmatrix}
 +O(T_0),\qquad
 \det D_z(\Af,\Bf)=-2\Ab_0^2(1+\Lp)+O(T_0).
\end{equation}
For example, the $x$ derivative of the correction $aE_0$
in \eqref{eq:defA0} is bounded; the scale $a/N^2$ in
\eqref{eq:moderate-physical-parameters} gives
$\partial_{z_1}\Ab_0=O(\sqrt a/N^3)$.
Differentiating $\Lp(y,\omega_c^0)$ in the radial $y$ direction
adds $O(T_0)$ to the second column. The other terms coming from $A_c,\theta_c$ are controlled by
the same normalized system.
Thus the determinant is bounded away from zero, and we may use
$(\Af,\Bf)$ as local parameters. This check uses a radial variation
of $y$: in the principal approximation,
$\theta_c^0=-y/(2t)$ and
$\Ab_0=\sqrt{q(y)}/(4N\sqrt a)$, so varying $t$ at fixed $y$
would not give the required second parameter.

For the principal part of \eqref{eq:moderate-full-phase-expansion},
rotate to
$\xi_1=(\xx-\yy)/\sqrt2$, $\xi_2=(\xx+\yy)/\sqrt2$.
It becomes
\begin{equation}\label{eq:moderate-rotated-principal-phase}
 \Ab_0\xi_1^2+\frac{\xi_1\xi_2^2}{\sqrt2}
 +\frac{1-N^{-2}}{3\sqrt2}\xi_1^3+P\xi_1+Q\xi_2,
 \qquad P=-\frac{\Af+\Bf}{\sqrt2},\quad
 Q=\frac{\Bf-\Af}{\sqrt2}.
\end{equation}
Let $\psi$ be the critical value of this principal phase after
eliminating $\xi_1$; from now $\xi_2$ is the remaining, one dimensional, integration variable, and we denote with (multiple) $'$ derivatives in $\xi_{2}$. The nondegenerate second derivative is
$2\Ab_0$, the mixed third derivative is $\sqrt2$, and the
reduced fourth derivative after eliminating $\xi_1$ is
$-3/\Ab_0$. At its central parameter value,
\begin{equation}\label{eq:moderate-central-expansion}
 D_{\Af,\Bf}(\psi',\psi'')=
 \begin{pmatrix}-1/\sqrt2&1/\sqrt2\\
                 1/(2\Ab_0)&1/(2\Ab_0)\end{pmatrix}.
\end{equation}
Since $\Ab_0$ multiplies $\xi_1^2$, its parameter derivatives
contribute zero at the origin. The determinant has absolute value
$1/(\sqrt2\Ab_0)$, and the full phase changes these derivatives
by $O(T_0)$.
Hence the implicit function theorem applied to
$\psi' =\psi'' =\psi''' =0$, with unknowns
$(\Af,\Bf,\xi_{2})$, gives a unique nearby swallowtail point for the full
phase, with displacement $O(T_0)$ in both integration and
parameter variables. The fourth derivative of the reduced phase remains
$-3/\Ab_0+O(T_0)$, bounded away from zero. Then, Lemma \ref{lem:refined-quartic-family} applies uniformly.
To put the phase in the coordinates of its statement, we may
diagonalize the full Hessian at the displaced swallowtail point
by an additional (small !) $O(T_0)$ rotation.
The parameter condition \eqref{eq:quartic-parameter-jacobian}
is preserved, since its determinant is bounded away from zero.
This auxiliary rotation is used only to apply the lemma;
we keep the initial $45^\circ$ coordinates in our calculations.
The parameters $(\widehat P,\widehat Q)$ are defined by
\eqref{eq:quartic-adapted-parameters} for the full reduced phase.

Denote by $I_{\swallowtail}$ the contribution to the integral
in \eqref{eq:85} below, localized by a smooth cutoff near
the swallowtail point of the full phase. Lemma \ref{lem:refined-quartic-family}, with oscillatory parameter
$\Lambda\sqrt{q(\theta_c^0)}\sim\Lambda$, now gives
\begin{equation}\label{eq:moderate-full-refined-bound}
 |I_{\swallowtail}|\lesssim
 \frac{\Lambda^{-5/6}}
 {\Lambda^{-1/12}+|\widehat P|^{1/6}+|\widehat Q|^{1/9}}.
\end{equation}
Once the branches $\widehat Q=\widehat Q_\pm(\widehat P)$
are separated, \eqref{eq:refined-quartic-branch-bound} applies near
either one. Since the coefficient of the mixed third-order term is
$\sqrt2+O(T_0)$, Section~\ref{app:cubic-quartic-matching} also
applies when we rescale or change the eliminated variable.
We keep the coordinates centered at the full swallowtail point:
when the branches are very close, an $O(T_0)$ rotation of the
old coordinates is not negligible.

We next consider the other critical points on the fixed compact set.
Outside that common small neighborhood of the swallowtail point, the principal
phase has only nondegenerate or Airy-type critical points.
Indeed, on $\xi_1=0$ its Hessian is singular only at
$\xi_2=0$, which we have just dealt with. For $\xi_1\ne0$ we eliminate
$\xi_2$, and use $\psi$ again for the remaining phase and $'$ to denote its derivatives w.r.t. $\xi_{1}$, obtaining
\begin{equation}\label{eq:moderate-exterior-reduced-phase}
 \psi=\Ab_0\xi_1^2+\frac{1-N^{-2}}{3\sqrt2}\xi_1^3
       +P\xi_1-\frac{Q^2}{2\sqrt2\xi_1},\qquad
 \psi'''=\sqrt2(1-N^{-2})+\frac{3Q^2}{\sqrt2\xi_1^4}.
\end{equation}
For $N\ge2$ the third derivative is uniformly positive; for
$N=1$ it can vanish only at $Q=0$, where $\psi''=2\Ab_0$.
A finite cover of the remaining compact therefore satisfies
Lemma \ref{lem:uniform-cubic-continuation}, uniformly also in the
limit $N^{-2}=0$. The small $C^m$ perturbation preserves these
critical-point conditions.

For the estimate transverse to the branches, we must also
differentiate $\Ab_0^2=1+\Bf/N^2$. For the principal phase,
\begin{equation}\label{eq:moderate-exterior-transversality}
 (\partial_{\Af}+\partial_{\Bf})\psi'
 =\frac{\xi_1}{N^2\Ab_0}-\sqrt2
 =-\frac{\sqrt2}{\Ab_0}
    \left(\Ab_0-\frac{\xi_1}{\sqrt2N^2}\right).
\end{equation}
The factor in parentheses is the principal part of the positive
square root of $A_c$, from \eqref{Acexpres} before squaring to
obtain \eqref{eq:Acdet}. It is bounded away from zero on the
symbol support. The $O(T_0)$ correction from the full phase
preserves this bound, so Lemma \ref{lem:uniform-cubic-continuation}
gives the estimate across each branch, matching
stationary phase away from the branch.

For $r\le r_0$ all relevant critical points lie in a fixed ball
$|(\xx,\yy)|\le C\sqrt{1+r_0}$. For $N\ge2$ this follows from
the full gradient \eqref{eq:derivePhiN2}: its leading quadratic
part stays away from zero outside a sufficiently large fixed
ball. For $N=1$ the original symbol has fixed compact support.
On the complement we integrate by parts, keeping the ball fixed
even when $r$ is small. Combining the worst estimate with the finite covering of the
other critical points gives, for the phase defined in \eqref{eq:moderate-normalized-full-phase} but after we rescaled its variables $(\Sigma,S)=(\xx/N,\yy/N)$,
\begin{equation}\label{eq:85}
 \left|\int e^{i\Lambda\sqrt{q(\theta_c^0)}\Phi^{\mathrm{nor}}}
       \sigma_{V,h,a}(\xx/N,\yy/N,\cdot)\,d\xx\,d\yy\right|
 \lesssim\Lambda^{-3/4},\qquad r\le r_0\,.
\end{equation}
On fixed bounded parameter sets separated from zero by the
constant in the hypothesis of Proposition \ref{dispNpetitloin},
the parameter value corresponding to the swallowtail is excluded once $T_0$ is small.
The same cover then gives $O(\Lambda^{-5/6})$.
Near the parameter value corresponding to the swallowtail we only need the uniform bound;
no comparison of $(\widehat P,\widehat Q)$ with the coefficients
of the principal phase is used.

We now consider $r\ge r_0\gg1$. Denote by $I_{\mathrm{large}\,r}$
the integral in \eqref{eq:85} in this regime, before
multiplication by the prefactor in \eqref{VNdef}. Write
$(\Af,\Bf)=r(\cos\tf,\sin\tf)$ and
$(\xx,\yy)=\sqrt r(\xx',\yy')$.
We divide $\Phi^{\mathrm{nor}}$ by $r^{3/2}$; its oscillatory
parameter is $\Lambda r^{3/2}\sqrt{q(\theta_c^0)}$.
The full normalized gradients in \eqref{eq:derivePhiN2} have
constant terms $-\cos\tf,\sin\tf$, third-order coefficients $1,N^{-2}$,
linear terms $O(r^{-1/2})$ and remainder
$O_{C^m}(a)+O_{C^m}(N^2a/r)$.
On the original symbol support $\sqrt r/N$ is bounded. In
particular,
\begin{equation}\label{eq:moderate-radial-scale-bounds}
 r\lesssim N^2,\qquad N^{-2}\lesssim r^{-1},\qquad
 a\lesssim T_0^2/N^2\lesssim T_0^2/r.
\end{equation}
For $N\ge2$ the critical points remain in a common annulus.
The limiting homogeneous polynomial of degree three is
\eqref{eq:moderate-rotated-principal-phase} with its quadratic
term removed, together with its normalized linear terms.
On that annulus its degenerate critical points have a
nondegenerate $\xi_2$ direction and reduced third derivative
at least $3\sqrt2/4$. Large $r_0$ and small $T_0,a_0$ preserve
these bounds for the full phase. Lemma \ref{lem:uniform-cubic-continuation} gives
\begin{equation}\label{eq:83}
 |I_{\mathrm{large}\ r}|\lesssim
 r(\Lambda r^{3/2})^{-5/6}
 =\Lambda^{-5/6}r^{-1/4}.
\end{equation}
We integrate by parts on the inner and outer regions without
critical points. For $N=1$ and large $r$, the phase has no
critical point on the symbol support, so integration by
parts suffices. Varying the coefficients of the normalized linear terms
gives a nonzero parameter derivative of the first derivative
of the reduced phase.
The dependence of $\Ab_0/\sqrt r$ contributes only
$O(\sqrt r/N^2)=O(r^{-1/2})$ by
\eqref{eq:moderate-radial-scale-bounds}.
Thus the transversality condition of
Lemma~\ref{lem:uniform-cubic-continuation} holds uniformly, and we get our estimate across both branches, matching stationary phase
away from them.

For the stronger bound in \eqref{eq:2ff}, take $N\ge16$ and,
exchanging the source and observation points if necessary, $\Af\ge\Bf$, so that
$\Af\sim r$ whenever there is a critical point. Fix a large
constant $K$. If $|\Bf|\le K\sqrt r$, the $\xx'$ direction is
nondegenerate, with critical points
$\xx'_c=\pm\sqrt{\cos\tf}+O(r^{-1/2})$.
Its reduced third derivative in $\yy'$ stays bounded away from
zero, so \eqref{eq:83} applies. If $\Bf<-K\sqrt r$, the full
$\yy'$ derivative has no zero on the relevant support, and we
integrate by parts. If $\Bf>K\sqrt r$, the stationary points
satisfy $|\xx'_c|\sim1$ and
$|\yy'_c|\gtrsim\sqrt{\Bf/r}$.
Using \eqref{eq:80}, the normalized Hessian determinant is bounded
below by a constant times $\sqrt{\Af\Bf}/r$.
Indeed the product $|\xx'_c\yy'_c|$ dominates the linear terms
of size $r^{-1/2}$ once $K$ is large, the $N^{-2}$ terms by
\eqref{eq:moderate-radial-scale-bounds}, and the smooth remainder
once $T_0,a_0$ are small. Stationary phase, including
the Jacobian $r$, gives
\begin{equation}\label{bounddetNsl}
 |I_{\mathrm{large}\ r}|\lesssim
 \Lambda^{-1}(\Af\Bf)^{-1/4}.
\end{equation}

Finally the prefactor in \eqref{VNdef}, including the Jacobian
$N^{-2}$, is
$h^{1/3}\lambda^{5/6}N^{-5/2}=h^{1/3}\Lambda^{5/6}$.
Thus \eqref{eq:85} gives
$|V_{N,a}|\lesssim h^{1/3}\Lambda^{1/12}$, proving
\eqref{eq:2hh}. The Airy bound away from the swallowtail on bounded parameter
sets gives $|V_{N,a}|\lesssim h^{1/3}$ there.
For large $r$, \eqref{eq:83} gives
$h^{1/3}r^{-1/4}$; on $|\Bf|\le K\sqrt r$ this is at most a
constant times $h^{1/3}/(1+|\Bf|^{1/2})$.
For $\Bf>K\sqrt r$, \eqref{bounddetNsl} gives the same bound
because $\Lambda\ge1$ and $\Af\ge\Bf$.
The nonstationary part is smaller. Exchanging the source and observation points gives
\eqref{eq:2ff}; for $1\le N\le16$, \eqref{eq:83} and the
Airy bound on bounded parameter sets give \eqref{eq:21ff}.
This proves Propositions \ref{dispNpetitloin} and
\ref{dispNpetitpres}.
\subsubsection{Almost transverse initial directions $\gamma=2^j a\geq 4a$, $a\gtrsim h^{2/3-\epsilon}$, $0\leq x\leq a$. Proof of Proposition \ref{proptransv}}\label{sectransv}
Let $V_{N,h,\gamma}$ be as in \eqref{VNdef}.
Proposition \ref{proptransv} follows from the following lemma :
\begin{lemma}\label{lem:transverse-directions}

{Let $0\le x\le a$. There exists $C$,
independent of $N$, such that the following estimate holds for
$|N|\ge1$, with $t/\sqrt\gamma\ge c_*>0$ when $|N|=1$;
in the latter case $C$ may depend on $c_*$.}
  \begin{equation}
    \label{eq:transverse-stationary-phase}
\left|      \int e^{\frac i h \Psi_{N,a,h,\gamma}} \sigma_{V,h,a}(\Sigma,S,\cdot) d\Sigma dS  \right|  \leq C\lambda_{\gamma}^{-1} \text{ hence }  |V_{N,h,\gamma}(t,x,y)|\lesssim \frac{\gamma^2}{h}\frac{1}{\sqrt{\lambda_{\gamma}N}}\frac{1}{\lambda_{\gamma}}.
  \end{equation}
\end{lemma}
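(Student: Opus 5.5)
The plan is to prove that, for almost transverse directions, the two-dimensional integral in $(\Sigma,S)$ admits nondegenerate stationary phase with large parameter $\lambda_\gamma=\gamma^{3/2}/h$. That gives the factor $\lambda_\gamma^{-1}$, and the bound on $V_{N,h,\gamma}$ then follows at once from \eqref{VNdef}. The point is that for $\gamma\ge4a$ and $0\le x\le a$ the normalized source and observation positions satisfy $x/\gamma,a/\gamma\le1/4$. At the critical points the $\Sigma$ and $S$ equations of \eqref{desclambdaN} then give $\Sigma^2=A_c-x/\gamma+O(\sqrt\gamma)$ and $S^2=A_c-a/\gamma+O(\sqrt\gamma)$. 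Since $A_c\sim1$ on the support of $\kappa$, we get $|\Sigma_c|,|S_c|\gtrsim1$. This keeps us away from any Airy or cusp degeneracy. We work with $t,N>0$; the other signs follow by conjugation.

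\emph{Step 1 (localization).} Using $\partial_\Sigma\Psi=\partial_\Sigma\tilde\Phi|_{c}$, as in \eqref{derivSigmaphizero1} with $a$ replaced by $\gamma$, we have
$$\partial_\Sigma\Psi_{N,a,h,\gamma}=\gamma^{3/2}q(\theta_c)^{1/2}\bigl(\Sigma^2+\tfrac x\gamma(1+\Lp+O(\sqrt\gamma))-A_c\bigr),$$
and similarly for $S$. We split the $(\Sigma,S)$ support by a smooth partition into a region where $|\Sigma|\le c_1$ or $|S|\le c_1$, and its complement. On the first region the corresponding derivative is bounded below by $\gamma^{3/2}$ times a constant, because $A_c-x/\gamma\ge1/2$ there. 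Integrating by parts with parameter $\lambda_\gamma\gtrsim h^{-3\epsilon/2}$ then gives $O(\lambda_\gamma^{-\infty})$. The derivatives of $A_c$ and $\theta_c$ are controlled through \eqref{eq:full-phase-critical-chain} and Lemma \ref{lemAc}.

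\emph{Step 2 (Hessian).} On the remaining region we compute the normalized Hessian $\gamma^{-3/2}\nabla^2_{\Sigma,S}\Psi$ by the Schur complement formula \eqref{eq:full-phase-critical-chain}. Its diagonal entries are $2\Sigma+O(1/N)+O(\sqrt\gamma)$ and $-2S+O(1/N)+O(\sqrt\gamma)$. This uses $\partial_\Sigma A_c=-\Ab_0/N(1+O(\gamma))+O(\Sigma/N^2)$ from \eqref{eq:Acdet}, and the same for $S$. The mixed entry is $O(1/N)$. The determinant is therefore
$$-4\Sigma S+O\bigl(\tfrac1N(|\Sigma|+|S|)+\tfrac1{N^2}\bigr)+O(\sqrt\gamma).$$
For $N$ large, that is for $N\ge N_0$ fixed, this is bounded away from zero, since $|\Sigma S|\gtrsim1$. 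We then apply nondegenerate stationary phase in two variables. Symbol bounds are uniform by \eqref{eq:66}-type estimates in unscaled variables, and there are at most four critical points, one for each sign pair. This gives $\lambda_\gamma^{-1}$.

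\emph{Step 3 (bounded $N$), the main obstacle.} For $1\le N\le N_0$ the $1/N$ corrections in the Hessian are not small. We use the exact principal determinant,
$$-4\Sigma S+2\Ab_0(\Sigma-S)/N-(\Sigma-S)^2/N^2,$$
in analogy with \eqref{eq:80}. At critical points $\Sigma,S$ have the signs dictated by the $A$ equation \eqref{eq1Ac}: $t\sqrt q/\sqrt\gamma-(\Sigma-S)=2NA^{1/2}$ with $t/\sqrt\gamma\ge c_*$. We need to check that on the stationary set, with $x/\gamma,a/\gamma\le1/4$, the quantity $|\Sigma|,|S|\ge\sqrt{3A_c/4}$ keeps this determinant away from zero, uniformly in $c_*$. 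The degenerate swallowtail and cusp configurations of Section \ref{sectmodN} require $\Af,\Bf$ to be small relative to $N^2$, that is $A_c$ close to $x/\gamma$ or to $a/\gamma$, and this is excluded here. If an exceptional sign configuration survives (e.g. $\Sigma S<0$ with the linear term cancelling), we treat it as at most an Airy-type degeneracy. There a third derivative is nonzero, and we invoke Lemma \ref{lem:uniform-cubic-continuation} with a constant depending on $c_*$. We expect, however, that the lower bound $|\Sigma_c|,|S_c|\gtrsim1$ forbids it.
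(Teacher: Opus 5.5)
Your Steps 1 and 2 are fine, up to small slips. The critical-point equations give only $|\Sigma|,|S|\ge\sqrt{A_c/2}$, not $\sqrt{3A_c/4}$, since all you have is $A_c\ge1/2$ and $x/\gamma,a/\gamma\le1/4$. Also, $\partial_\Sigma A_c$ carries the term $(\Sigma-S)/(2N^2)$, not $O(\Sigma/N^2)$. Your principal determinant agrees with the paper's $-4\Sigma S+\frac{2\sqrt{A_c}}{N}(\Sigma-S)$.

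The gap is Step 3, which is where the lemma has its content, and there the mechanism you expect is wrong. Put $m=|\Sigma|/\sqrt{A_c}$ and $n=|S|/\sqrt{A_c}$, so that $m,n\in[1/\sqrt2,\,1+O(T_0+\sqrt\gamma)]$. Take $N=1$ and the sign choice $\Sigma<0<S$, which is the single-reflection configuration. Then the principal determinant divided by $A_c$ is $-2(m+n-2mn)$. This vanishes at $m=n=1$, and to first order in $x/\gamma,a/\gamma$ it equals $-(x+a)/(\gamma A_c)$. So $|\Sigma|,|S|\gtrsim1$ does not forbid the degeneracy: it occurs as $x/\gamma,a/\gamma\to0$.

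What rules it out is the $A$ equation \eqref{Acexpres}. In this configuration it reads $t\sqrt{q(\theta_c)}/(\sqrt\gamma\sqrt{A_c})=2-m-n+O(T_0+\sqrt\gamma)$, as in \eqref{eq:transverse-one-reflection-time}. With $t/\sqrt\gamma\ge c_*$ this gives $2-m-n\ge c>0$. Since $m,n\le1+O(T_0+\sqrt\gamma)$, you get
\[
 m+n-2mn=n(1-m)+m(1-n)\ge\min(m,n)(2-m-n)-O(T_0+\sqrt\gamma)\ge c'>0 .
\]
The other three sign choices give a fixed lower bound directly.

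In particular, the bound cannot be uniform in $c_*$, as you propose to check. In the principal approximation the determinant degenerates like $t/\sqrt\gamma$ as $t/\sqrt\gamma\to0$. This is the same degeneration that makes the $A$-Hessian only of size $t\gamma$ in \eqref{eq:first-reflection-A-hessian}. It is precisely why the statement lets $C$ depend on $c_*$ when $|N|=1$.

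Your fallback does not close the gap either. Treating a surviving degeneracy as Airy type via Lemma \ref{lem:uniform-cubic-continuation} gives only $O(\lambda_\gamma^{-5/6})$, not the claimed $C\lambda_\gamma^{-1}$. So the lemma genuinely needs nondegeneracy at every critical point, and for $N=1$ the time condition is the decisive input.

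For $2\le N\le N_0$ you also leave the check open, but it is immediate. For every $N\ge2$ and every sign choice,
\[
 \bigl|-4\Sigma S+\tfrac{2\sqrt{A_c}}{N}(\Sigma-S)\bigr|\ge A_c(4mn-m-n)\ge(2-\sqrt2)A_c,
\]
up to $O(T_0+\sqrt\gamma)$. This also makes the large-$N_0$ split in your Step 2 unnecessary.
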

Summing up over $N\sim t/\sqrt{\gamma}$ yields
\eqref{eq:37bis} (where, for $\frac{t}{\sqrt{\gamma}}\lesssim \lambda_{\gamma}^{1/3}$, we have $\Big(\frac{ht}{\gamma}\Big)^{1/2}\lesssim h^{1/3}$). 

\begin{proof}(of Lemma \ref{lem:transverse-directions})
{We differentiate the reduced phase $\Psi_{N,a,h,\gamma}$ defined in \eqref{VNdef}. Its first derivatives are}
\begin{align}\label{derivSigmaphizero}
\partial_{\Sigma} \Psi_{N,a,h,\gamma} &
= \gamma^{3/2}q(\theta_c)^{1/2}\Big[\Sigma^2+\frac{x}{\gamma}-A_c+\frac{x}{\gamma}\partial_{\Xi}A_{\Gamma}(x,y,\Xi,\frac{\theta_c}{ {\tau(\gamma A_cq(\theta_{c})^{1/3},\theta_{c})}})|_{\Xi=\frac{\sqrt{\gamma}\Sigma q^{1/2}(\theta_{c})}{{\tau(\gamma A_cq(\theta_{c})^{1/3},\theta_{c})}}}\Big],\\
\partial_{S} \Psi_{N,a,h,\gamma} &
= -\gamma^{3/2}q(\theta_c)^{1/2}\Big[S^2+\frac{a}{\gamma}-A_c+\frac{a}{\gamma} \partial_{\Xi}A_{\Gamma}(a,0,\Xi,\frac{\theta_{c}}{{\tau(\gamma A_cq(\theta_{c})^{1/3},\theta_{c})}})|_{\Xi=\frac{\sqrt{\gamma}S q^{1/2}(\theta_{c})}{ {\tau(\gamma A_cq(\theta_{c})^{1/3},\theta_{c})}}}\Big].
\end{align}
Here $\omega_c=\omega_{\theta_c}$ and $A_c$ is given by \eqref{eq:Acdet}.
The critical points (which will be denoted $\Sigma_{\pm},S_{\pm}$) satisfy
\begin{equation}
\Sigma^2+\frac{x}{\gamma}(1+\Lp(y,\omega_c)+O(\sqrt{\gamma}))=A_c,\quad
S^2+\frac{a}{\gamma}(1+O(\sqrt{\gamma}))=A_c.
\end{equation} 
{The cutoff $\kappa=\phi-\phi(2\cdot)$ is supported in
$1/2\le A\le3/2$. We retain this support.
Since $0\le x\le a$ and $\gamma\ge4a$, both $x/\gamma$ and
$a/\gamma$ are at most $1/4$. With $\Lp=O(|y|)$ and
$|y|\lesssim t\le T_0$, the two equations give
$|\Sigma_\pm|^2,|S_\pm|^2\ge1/8$ for small $T_0,\varepsilon_0$.}

Writing $\Psi=\Psi_{N,a,h,\gamma}$, we differentiate
\eqref{derivSigmaphizero}, including the dependence of $A_c,\theta_c$
on $\Sigma,S$:
\begin{equation}\label{secderivSigSig}
\partial_\Sigma^2\Psi
=\gamma^{3/2}\sqrt{q(\theta_c)}\Big[2\Sigma-\partial_\Sigma A_c
+\frac x\gamma\partial_\Sigma(\partial_\Xi A_\Gamma)\Big]
+\frac{\langle\nabla q(\theta_c),\partial_\Sigma\theta_c\rangle}
{2q(\theta_c)}\partial_\Sigma\Psi.
\end{equation}
In this formula $\partial_\Sigma(\partial_\Xi A_\Gamma)$ is a total
derivative, evaluated at the arguments in \eqref{derivSigmaphizero}.
Indeed, before evaluation at the critical points,
\begin{multline}\label{formderivXiAGam}
\partial_\Sigma(\partial_\Xi A_\Gamma)
=\left.\partial_\Sigma(\partial_\Xi A_\Gamma)\right|_{A,\theta}
+\partial_A(\partial_\Xi A_\Gamma)\partial_\Sigma A_c
+\nabla_\theta(\partial_\Xi A_\Gamma)\cdot\partial_\Sigma\theta_c\\
=2\sqrt\gamma\mu_\Gamma(y,\omega_c)
\frac{\sqrt{q(\theta_c)}}{\tau(\gamma A_cq(\theta_c)^{1/3},\theta_c)}
+6\gamma\Sigma\mu_1(y,\omega_c)
\frac{q(\theta_c)}{\tau(\gamma A_cq(\theta_c)^{1/3},\theta_c)^2}
+O(\gamma^{3/2}).
\end{multline}
Here $\partial_A(\partial_\Xi A_\Gamma)=O(\gamma^{3/2})$ with
$\Sigma,\theta$ fixed. This follows from $\beta_1=0$, the expansion
$\partial_\Xi A_\Gamma=\Lp+2\mu_\Gamma\Xi+\alpha_1x+
3\mu_1\Xi^2+\partial_\Xi R_A$, and
$\tau^2=|\theta|^2+\gamma Aq(\theta)$.
The terms containing $\partial_\Sigma\theta_c$ are
$O(\gamma^{3/2})$, since $\partial_\Sigma\theta_c=O(\gamma^{3/2}/t)$,
$\Lp(y,\cdot)=O(|y|)$ and $|y|\sim t$.
These bounds also hold with any fixed number of derivatives on bounded
sets. The $S$ derivative of $\partial_\Xi A_\Gamma$ at the
observation point is $O(\gamma^{3/2})$; the source point is
handled in the same way.

By \eqref{eq:Acdet},
\[
-\partial_\Sigma A_c=\frac{\Ab_0}{N}
-\frac{\Sigma-S}{2N^2}+O(\gamma/N),\qquad
-\partial_S A_c=-\frac{\Ab_0}{N}
+\frac{\Sigma-S}{2N^2}+O(\gamma/N).
\]
Since $\sqrt{A_c}=\Ab_0-(\Sigma-S)/(2N)+O(\gamma)$, at a critical
point of $\Psi_{N,a,h,\gamma}$ we obtain the full Hessian:
\begin{align}\label{eq:secderivSTprime}
\partial_\Sigma^2\Psi_{N,a,h,\gamma}
&=\gamma^{3/2}\sqrt{q(\theta_c)}
\big(2\Sigma+\sqrt{A_c}/N+O(\sqrt\gamma)\big),\\
\partial_S^2\Psi_{N,a,h,\gamma}
&=\gamma^{3/2}\sqrt{q(\theta_c)}
\big(-2S+\sqrt{A_c}/N+O(\sqrt\gamma)\big),\\
\partial_\Sigma\partial_S\Psi_{N,a,h,\gamma}
&=\gamma^{3/2}\sqrt{q(\theta_c)}
\big(-\sqrt{A_c}/N+O(\sqrt\gamma)\big).
\end{align}
Consequently,
\[
\frac{\det\nabla^2_{\Sigma,S}\Psi_{N,a,h,\gamma}}
{\gamma^3q(\theta_c)}
=-4\Sigma S+\frac{2\sqrt{A_c}}N(\Sigma-S)+O(\sqrt\gamma).
\]
{We have $x/\gamma,a/\gamma\le1/4$ and $A_c\ge1/2$.
The equations for the critical points give
$|\Sigma|/\sqrt{A_c},|S|/\sqrt{A_c}
\ge1/\sqrt2-O(T_0+\sqrt\gamma)$.}
For $|N|\geq2$, irrespective of the signs of $\Sigma,S$,
\begin{equation*}
\left|-4\Sigma S+\frac{2\sqrt{A_c}}N(\Sigma-S)\right|
\geq A_c\left(4\frac{|\Sigma|}{\sqrt{A_c}}
\frac{|S|}{\sqrt{A_c}}-
\frac{|\Sigma|+|S|}{\sqrt{A_c}}\right)
\geq A_c\left({2-\sqrt2}
-O(T_0+\sqrt\gamma)\right).
\end{equation*}

The last constant is positive. Taking $T_0,\varepsilon_0$ small
proves the determinant bound for $|N|\ge2$.
For $N=1$ put $m=|\Sigma|/\sqrt{A_c}$,
$n=|S|/\sqrt{A_c}$. Cancellation can only occur for the sign choice
$\Sigma=-m\sqrt{A_c}$, $S=n\sqrt{A_c}$; the other three
give a fixed positive lower bound immediately. For this choice
the principal determinant divided by $A_c$ is
$-2(m+n-2mn)$. The equation in $A$, \eqref{Acexpres}, gives
\begin{equation}\label{eq:transverse-one-reflection-time}
 \frac{t\sqrt{q(\theta_c)}}{\sqrt\gamma\sqrt{A_c}}
 =2-m-n+O(T_0+\sqrt\gamma).
\end{equation}
The $B_0-B_2$ term is a relative $O(T_0)$ correction and the
source and observation terms contribute $O(\sqrt\gamma)$.
If $t/\sqrt\gamma\ge c_*>0$, as in Proposition
\ref{proptransv}, we obtain $2-m-n\ge c>0$.
{The equations for the critical points give} $m,n\le1+O(T_0+\sqrt\gamma)$,
as well as the fixed positive lower bounds above. Hence
\begin{equation*}
 m+n-2mn=n(1-m)+m(1-n)
 \ge\min(m,n)(2-m-n)-O(T_0+\sqrt\gamma)\ge c'>0,
\end{equation*}
after decreasing $T_0,\varepsilon_0$ if necessary, with constants
depending on $c_*$. The full determinant is therefore bounded
away from zero after division by $\gamma^3q(\theta_c)$ for
$N=1$ as well. Conjugation treats $N=-1$.
Stationary phase proves
\eqref{eq:transverse-stationary-phase} in all these cases.
Here $t/\sqrt\gamma$ is bounded below, as required in the proposition.
\end{proof}

\subsection{Case $(ht)^{1/2}\geq a\geq h^{2/3-\varepsilon}$}\label{seca<}
We use the data $g_{h,a}$ obtained in Lemma \ref{lemp}, (as everything done before works as long as $a\geq h^{2/3-\varepsilon}$ except for the sum over $N$). Then $\mathcal{P}_{S,h,a}$ is given by both formulas \eqref{eq:Prond} and \eqref{eq:Prond2}. Write
\[
\mathcal{P}_{S,h,a}(t,x,y)=\sum_{a\leq\gamma< (ht)^{1/2}}\mathcal{P}_{S,h,a,\gamma}(t,x,y) +\sum_{\varepsilon_0>\gamma\gtrsim  (ht)^{1/2}}\mathcal{P}_{S,h,a,\gamma}(t,x,y).
\]

\subsubsection{The sum for $\varepsilon_0\geq \gamma={2^j} a\geq (ht)^{1/2}$} \label{seca>(ht)12}

 For $\gamma\geq \max\{h^{2/3-\varepsilon}, (ht)^{1/2}\}$ we use the estimates from Proposition \ref{proptransv} which are available for all $a\geq h^{2/3-\varepsilon}$ and all $\gamma\geq 4a$. As in \eqref{eq:37bis} we find
  \begin{equation}
    \label{eq:37bis<}
    \|(\Prond_{S,h,a,\gamma}-W_{0,h,\gamma})(t,\cdot)\|_{L^{\infty}({0\leq x\leq a}, y)}\leq \sup_{\substack{0\leq x\leq a\\y}}\sum_{|N|\geq 1}|W_{N,h,\gamma}(t,x,y)| \lesssim 
   \left\{ \begin{array}{l} 
\frac{1}{h^d}\Big(\frac ht \Big)^{\frac{d-1}2}\Big({\frac{ht}{\gamma}}\Big)^{1/2}, \text{ if } \frac{t}{\sqrt{\gamma}}\gtrsim \lambda_{\gamma}^{1/3}, \,\\
\frac{1}{h^d}\Big(\frac ht \Big)^{\frac{d-1}2}h^{1/3}, \text{ if } 1\lesssim \frac{t}{\sqrt{\gamma}}\lesssim\lambda_{\gamma}^{1/3},
 \end{array} \right.
  \end{equation}
hence, using that $\frac{t}{\sqrt{\gamma}}\gtrsim \lambda_{\gamma}^{1/3}$ when $\gamma<h^{1/3}t$, we get
\begin{align}
\sum_{\varepsilon_0>\gamma=2^j a \gtrsim  (ht)^{1/2}}|\mathcal{P}_{S,h,a,\gamma}(t,x,y)|= & \sum_{(ht)^{1/2}\leq \gamma\leq h^{1/3}t}|\mathcal{P}_{S,h,a,\gamma}(t,x,y)|+\sum_{\varepsilon_0>\gamma\geq h^{1/3}t}|\mathcal{P}_{S,h,a,\gamma}(t,x,y)|\\
\lesssim & \frac{1}{h^d}\Big(\frac ht \Big)^{\frac{d-1}2}\Big(\sum_{(ht)^{1/2}\leq \gamma={2^j} a \leq h^{1/3}t}\Big({\frac{ht}{\gamma}}\Big)^{1/2}+\sum_{\varepsilon_0>\gamma ={2^j} a \geq h^{1/3}t}h^{1/3}\Big)\\
\lesssim &\frac{1}{h^d}\Big(\frac ht \Big)^{\frac{d-1}2}\Big((ht)^{1/2-1/4}+h^{1/3}\log(\varepsilon_0/a)\Big) \sim \frac{1}{h^d}\Big(\frac ht \Big)^{\frac{d-1}2}(ht)^{1/4}\,.
\end{align}
In the last line we have used the fact that in the regime $(ht)^{1/2}\geq a\geq h^{2/3-\varepsilon}$ we have 
\[
(ht)^{1/4}\geq h^{1/3-\varepsilon/2}\gg h^{1/3}\log(1/h)\gtrsim h^{1/3}\log(\varepsilon_0/a).
\]

\subsubsection{The sum for $a\leq \gamma\leq (ht)^{1/2}$}

Applying \eqref{eq:AiryPoisson} in the formula \eqref{defProndgamma} of $\mathcal{P}_{S,h,a,\gamma}$ yields
\[
\sum_{a\leq\gamma< (ht)^{1/2}}\mathcal{P}_{S,h,a,\gamma}(t,x,y)=2\pi\sum_{\lambda\lesssim k\lesssim \lambda_{\gamma},\\a\leq \gamma={2^j}a\leq (ht)^{1/2}}\frac{1}{L'(\omega_k)}K_{S,\omega_k,\gamma}(g_{h,a})(t,x,y),
\]
where we can drop the localisation ${\cutoffchi^{\flat}}(h^{2/3}\omega_k/\ceps_0)$ as this cutoff is identically equal to $1$ for $\omega_k\lesssim \lambda^{2/3}_{\gamma}$ as here $\gamma\leq (ht)^{1/2}$ which implies $h^{2/3}\omega_k\lesssim h^{2/3}\lambda^{2/3}_{\gamma}=\gamma\leq \gamma_{max} := (ht)^{1/2}\ll \ceps_0$. We could in fact introduce a cutoff ${\cutoffchi^{\flat}}(\omega_k/\omega_{\lambda_{\gamma_{max}}})$ in the previous sum to keep better track of the size of the indices. We use the expression of $K_{S,\omega}(g_{h,a})$ provided in \eqref{eq:Kequiv} with $\hat{f}$ replaced by $\hat{g}_{h,a}$ given in \eqref{ghaF}, which allows us to write $K_{S,\omega_k}(g_{h,a})$ as follows 
\begin{multline}\label{Kghak2}
K_{S,\omega_k}(g_{h,a})(t,x,y)=\frac{h^{2/3}}{2\pi h^{d+1}}{\int e^{\frac ih t\tau^2(h^{2/3}\omega_k,\theta)}}q(\theta/h)^{1/6}G(x,y,\theta/h,\omega_k)\chi^{\#}(\omega_k)\varkappa(\theta)\varkappa(\tau(h^{2/3}\omega_k,\theta))\\
\times q(\theta)^{-1/6}\int {e^{-\frac ih\Phi(a,0,\theta,h^{2/3}\omega_k,s)}}\chi(s) \tilde q_h(\theta,\alpha,s) ds d\theta,
\end{multline}
{By Lemma \ref{lemp}, the asymptotic expansion of
$\tilde q_h(\theta,h^{2/3}\omega_k,s)$ has main contribution
\[
 q_h(a,0,\theta,h^{2/3}\omega_k,s)
 \chi_0(\partial_x\Phi(a,0,\theta,h^{2/3}\omega_k,s))
 \chi(\nabla_y\Phi(a,0,\theta,h^{2/3}\omega_k,s)),
\]
where}
\begin{equation}\label{symbKgha}
q(\theta/h)^{1/6}q^{-1/6}(\theta) q_h(a,0,\theta,h^{2/3}\omega_k,s)=h^{-1/3}\varkappa(\theta)\varkappa(\tau(h^{2/3}\omega_k,\theta))\chi^{\#}(\omega_k) h^{1/3}(\sum_{m\geq 0} h^mq_{h,m}),
\end{equation}
where $\chi^{\#}(\omega_k)=1$ for all $k\geq 1$ (as $\omega_k\geq \omega_1>2$). We now use \cite[Thm. 7.7. 18]{Hormander}, based on Malgrange preparation theorem \cite[Thm.7.5.13]{Hormander}, to rewrite the integrals with respect to $s$ and $\sigma$ as follows:
\begin{thm}\label{thmmalg}(\cite[Thm.7.5.13]{Hormander})
Let $f(T,z)$ be a real valued $C^{\infty}$ function near $0$ in $\R^{1+n}$ such that $\partial_T f=\partial^2_T f=0$ but $\partial^3_T f\neq 0$ at $T=0$. Then {there exist} $C^{\infty}$ real valued functions $a_{1,2}(z)$ near $0$ such that $a_1(0)=0$, $a_2(0)=f(0,0)$ and 
\begin{equation}\label{eqmalg}
\int q(T,z)e^{\frac ih f(T,z)}dT \sim e^{\frac ih a_2(z)}\Big(h^{1/3}Ai(a_1(z)/h^{2/3})\sum_{m\geq 0} q^{(1)}_{m}(z)h^{m}+i h^{2/3}Ai'(a_1(z)/h^{2/3})\sum_{m\geq 0} q^{(2)}_{m}(z)h^{m}\Big),
\end{equation}
provided that $q\in C^{\infty}_0$ and supp$(q)$ is sufficiently close to $0$. Here $q^{(1,2)}_m\in C^{\infty}$. 
\end{thm}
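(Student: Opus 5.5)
We prove the statement in two steps: first the Malgrange normal form, then a standard Airy integral.

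\emph{Reduction to a cubic normal form.} Since $\partial_T f(0,0)=\partial_T^2 f(0,0)=0$ and $\partial_T^3 f(0,0)\neq0$, the Malgrange preparation theorem \cite[Thm.~7.5.13]{Hormander} applies. Equivalently, we use the classical normal form for a function with a simple fold in one variable. It gives a local $C^\infty$ change of variables $T=T(u,z)$, with $\partial_uT\neq0$, together with real $C^\infty$ functions $a_1,a_2$, such that
\[
f(T(u,z),z)=a_2(z)+a_1(z)u+\frac{u^3}{3}\,\mathrm{sgn}\bigl(\partial_T^3f(0,0)\bigr).
\]
Evaluating at $z=0$ gives $a_1(0)=0$ and $a_2(0)=f(0,0)$. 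Concretely, the map $(T,z)\mapsto(u,z)$ is built by matching the two critical points $T_\pm(z)$ of $f(\cdot,z)$, which are complex when $a_1>0$, to $u=\pm\sqrt{-a_1(z)}$. The functions $a_1,a_2$ are fixed by
\[
a_2=\tfrac12\bigl(f(T_+)+f(T_-)\bigr),\qquad \tfrac43(-a_1)^{3/2}=f(T_-)-f(T_+).
\]
Both expressions are even in $\sqrt{-a_1}$, hence smooth in $z$. This smoothness is the one delicate point, and we take it from Hörmander.

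\emph{Division of the amplitude.} We change variables in the integral, replacing $q(T,z)\,dT$ by $\tilde q(u,z)\,du$. By the Malgrange division theorem we write
\[
\tilde q(u,z)=q_0(z)+q_1(z)u+(u^2+a_1(z))\,r(u,z),
\]
with smooth $q_0,q_1,r$. The factor $u^2+a_1$ is the $u$-derivative of the cubic phase (with positive sign, after possibly replacing $u$ by $-u$). So $(u^2+a_1)e^{\frac ih(\cdots)}=-ih\,\partial_u e^{\frac ih(\cdots)}$, and one integration by parts turns the remainder term into $h\int \partial_u r\, e^{\frac ih(\cdots)}\,du$. Iterating this decomposition yields an asymptotic expansion in powers of $h$ whose coefficients are $q^{(1)}_m(z)$ (from the constant parts) and $q^{(2)}_m(z)$ (from the linear parts). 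The boundary terms from the cutoff are $O(h^\infty)$, because the phase is nonstationary outside a small neighborhood of $0$.

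\emph{Airy integrals.} It remains to evaluate the two model integrals
\[
\int e^{\frac ih(u^3/3+a_1u)}\,du=2\pi h^{1/3}\,Ai\bigl(a_1/h^{2/3}\bigr)
\]
and
\[
\int u\,e^{\frac ih(u^3/3+a_1u)}\,du=-2\pi i\,h^{2/3}\,Ai'\bigl(a_1/h^{2/3}\bigr).
\]
Both follow from the rescaling $u=h^{1/3}v$, the definition $Ai(s)=\frac1{2\pi}\int e^{i(v^3/3+sv)}\,dv$, and one differentiation in $s$. Cutting off the amplitude only adds $O(h^\infty)$ terms, since away from the support of the cutoff we can integrate by parts. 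Absorbing the constants into $q^{(1,2)}_m$ gives \eqref{eqmalg}.
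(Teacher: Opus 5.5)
Your proposal is correct and is essentially the argument behind the result the paper cites without proof (H\"ormander's Theorem~7.7.18, resting on the cubic normal form of Theorem~7.5.13): reduce the phase to $u^3/3+a_1(z)u+a_2(z)$, divide the amplitude by $u^2+a_1$, integrate by parts, and evaluate the two model Airy integrals, whose constants you compute correctly. As in the paper, the one delicate point is the smoothness of $a_1$, which you rightly take from H\"ormander; your symmetry remark does not give it on its own, since $f(T_-)-f(T_+)$ is odd under exchanging the two critical points. The missing factor $i$ in $ih\int\partial_u r\,e^{\frac ih(\cdots)}\,du$ is immaterial.
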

Using Theorem \ref{thmMelrose}, there exists a smooth change of coordinates $s\rightarrow T$, $\frac{ds}{dT}\sim 1$, which transforms the phase function $\Phi(a,0,\theta,h^{2/3}\omega_k,s)$ into $f(T,z)=T^3/3-T\zeta(z)+\psi(z)$, where $\zeta, \psi$ are introduced in Theorem \ref{thmMelrose}. Here $z=(a,0,\theta,h^{2/3}\omega_k)$ and $T\sim s$ remains close to $0$ on the support of $\chi(s)$. After this change of coordinates the symbol \eqref{symbKgha} remains elliptic, supported where $T$ {is near} $0$. Recall that $\zeta(x,y,\theta,\alpha)=\alpha-x|\theta|^{2/3} e_0(x,y,\theta/|\theta|,\alpha/|\theta|^{2/3})$. {Homogeneity of degree $2/3$ gives
$\zeta(x,y,\theta,h^{2/3}\omega_k)/h^{2/3}
 =\zeta(x,y,\theta/h,\omega_k)$.}

Taking $a_1(z):=-\zeta(z)$, $a_2(z)=\psi(z)$ transforms the integral in $s$ of $K_{S,\omega_k}(g_{h,a})(t,x,y)$ into an integral involving {a linear combination} of an Airy function and its derivative as in \eqref{eqmalg}, with symbols $q_{0,1}(z,h)$ obtained from $\tilde q_h$ after the change of variables $s\rightarrow T$ and after applying Theorem \ref{thmmalg}, with $q_{0}(z,h)$ elliptic and with 
{$a_1(z)/h^{2/3}=-\zeta(a,0,\theta/h,\omega_k)$, where
$-\zeta(a,0,\theta/h,\omega_k)=\allowbreak
 a(|\theta|/h)^{2/3}e_0(a,0,\theta/|\theta|,h^{2/3}\omega_k/|\theta|^{2/3})-\omega_k$.}
Applying twice Theorem \ref{thmmalg} and using \eqref{eq:defG} yields $K_{S,\omega_k}(x,y,t)$ of the form 
\begin{multline}\label{Kghak}
K_{S,\omega_k}(x,y,t)=\frac{h^{1/3}}{2\pi h^{d}}\int e^{\frac ih (t\tau^2(h^{2/3}\omega_k,\theta)+\psi(x,y,\theta,h^{2/3}\omega_k)-\psi(a,0,\theta,h^{2/3}\omega_k))} \\
\times \Big(p_0Ai(-\zeta(x,y,\theta/h,\omega_k))+ih^{1/3}xp_1q^{-1/6}(\theta)Ai'(-\zeta(x,y,\theta/h,\omega_k))\Big)\varkappa(\theta)\varkappa(\tau(h^{2/3}\omega_k,\theta))\\
\times  \Big(q_{0}(z,h)Ai(-\zeta(a,0,\theta/h,\omega_k))+i h^{1/3}q_{1}(z,h)Ai'(-\zeta(a,0,\theta/h,\omega_k))\Big) d\theta \Big|_{z=(a,0,\theta, h^{2/3}\omega_k)}+O(h^{\infty}).
\end{multline}
In what follows, we use \eqref{Kghak} in order to estimate the sum of $\frac{1}{L'(\omega_k)} K_{S,\omega_k}(x,y,t)$ for $\gamma\leq (ht)^{1/2}$.  
\begin{prop}\label{propdispsumk} For $t\geq h^{1/3-2\varepsilon}$, the following dispersive estimates hold
\begin{equation}\label{estimdispK}
\Big|\sum_{\lambda\lesssim k\lesssim \lambda_{\gamma},\\a\leq \gamma={2^j}a\leq (ht)^{1/2}}\frac{1}{L'(\omega_k)}K_{S,\omega_k,\gamma}(g_{h,a})(t,x,y)\Big|\lesssim \frac{1}{h^d}\Big(\frac ht\Big)^{(d-1)/2} (ht)^{1/4},
\end{equation}
where the condition $t\geq h^{1/3-2\varepsilon}$ ensures that we indeed have $(ht)^{1/2}\geq \gamma\geq a\geq h^{2/3-\varepsilon}$.
\end{prop}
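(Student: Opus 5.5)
We estimate each term $\frac{1}{L'(\omega_k)}K_{S,\omega_k}$ separately, in the form \eqref{Kghak}, and then sum over the $O(\lambda_{\gamma_{\max}})$ indices $k$, with $\gamma_{\max}=(ht)^{1/2}$. For fixed $k$, \eqref{Kghak} is an oscillatory integral in $\theta\in\R^{d-1}$ only. Its phase is $t\tau^2(h^{2/3}\omega_k,\theta)+\psi(x,\cdot)-\psi(a,\cdot)$, and its amplitude is a product of two Airy factors evaluated at $-\zeta(x,y,\theta/h,\omega_k)$ and $-\zeta(a,0,\theta/h,\omega_k)$.

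\textbf{Step 1: the amplitude.} We bound the Airy factors uniformly by $|Ai(-\zeta)|\lesssim \omega_k^{-1/4}$ on the oscillatory side and by exponential decay on the other side. Since $0\le x\le a$ and $\omega_k\gtrsim\lambda^{2/3}$, the product is $O(\omega_k^{-1/2})$. The normalization gives $1/L'(\omega_k)\sim \omega_k^{-1/2}$ by Lemma \ref{lem:airy-wronskian}. The amplitude of the $k$-th term, including the prefactor $h^{1/3}h^{-d}$, is therefore $\lesssim h^{1/3-d}\omega_k^{-1}$.

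\textbf{Step 2: dispersion in $\theta$.} We write the Airy factors through $A_\pm$ and absorb the phases $\pm\frac23\zeta^{3/2}$, which have size $\omega_k^{3/2}=O(\lambda_\gamma)$, into the total phase. The $\theta$-Hessian of the main part $t|\theta|^2+y\cdot\theta+B_0(y,\theta)$ is $2t\,Id+O(T_0 t)$. The Airy contributions add $O(h^{-1}\cdot h^{2/3}\omega_k\, x)$ to the Hessian of the phase divided by $h$, that is $O(a\omega_k h^{-1/3})$ in the unscaled units. Relative to $t/h$ this is $O(\gamma/t)\ll1$, because $\gamma\le (ht)^{1/2}\ll t$ when $t\ge h^{1/3-2\ceps}$. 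Their $\theta$-derivatives obey symbol bounds: each derivative costs at most $\gamma^{3/2}/h\cdot(h/t)^{1/2}$, which is $\lesssim1$ in the rescaled variable $\theta\sqrt{t/h}$ since $\gamma^{3}\le (ht)^{3/2}\lesssim ht$. Stationary phase in $\theta$ then gains $(h/t)^{(d-1)/2}$ uniformly in $k$. Each term is thus $\lesssim h^{-d}(h/t)^{(d-1)/2}h^{1/3}\omega_k^{-1}$.

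\textbf{Step 3: summation.} With $\omega_k\sim k^{2/3}$, we get
\[
h^{1/3}\sum_{k\lesssim\lambda_{\gamma_{\max}}}k^{-2/3}\lesssim h^{1/3}\lambda_{\gamma_{\max}}^{1/3}=\gamma_{\max}^{1/2}=(ht)^{1/4}.
\]
This is exactly the bound in \eqref{estimdispK}. The dyadic sum over $\gamma$ is already contained in this sum over $k$, because the pieces $\kappa$ form a partition, so no logarithm appears.

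\textbf{Main obstacle.} The hard step is Step 2: showing that the Airy factors can be treated as symbols of $\theta$ after the rescaling, uniformly in $k$, so that the stationary phase does not lose anything. Near the turning point $\zeta\approx0$ at the observation point $x$, only the uniform Airy bound is available, not the WKB phase. There we would keep $Ai(-\zeta)$ as an amplitude and check that $\partial_\theta^\beta Ai(-\zeta(x,y,\theta/h,\omega_k))$ costs $(\gamma/h)^{|\beta|}\cdot(h/t)^{|\beta|/2}\lesssim1$ per derivative in the rescaled variable. This is where the condition $\gamma\le(ht)^{1/2}$ enters again, through $\gamma^2\le ht$. If this direct argument fails, the fallback is to integrate by parts to remove the $\theta$ region away from the critical point $\theta_g$. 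On the remaining region we would apply a van der Corput type bound, which still yields $(h/t)^{(d-1)/2}$ up to constants.
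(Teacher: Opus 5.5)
Your architecture matches the paper's: for each mode you do stationary phase in $\theta$ in \eqref{Kghak}, keeping the Airy factors in the amplitude, and then you sum over $k\lesssim L:=\lambda_{\gamma_{\max}}$, where $\gamma_{\max}=(ht)^{1/2}$. Your ``main obstacle'' fallback is exactly Lemma~\ref{lemphasestatksmall1}. There a $\theta$-derivative of an Airy factor costs $\omega_k^{3/2}\lesssim t^{3/4}h^{-1/4}=(ht)^{1/4}(t/h)^{1/2}$, so there is even a gain, and the splitting into $A_\pm$ is unnecessary.

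The genuine gap is in Step~1. The bound $|Ai(-\zeta)|\lesssim\omega_k^{-1/4}$ only holds at distance $\gtrsim\omega_k$ from the turning point. Uniformly, you only have $|Ai(b-\omega_k)|\lesssim(1+|b-\omega_k|)^{-1/4}$, and the restriction $\omega_k\gtrsim\lambda^{2/3}$ does not keep you away from the turning point. Write $-\zeta(a,0,\theta/h,\omega_k)=b_a-\omega_k$ with $b_a\sim ah^{-2/3}\sim\lambda^{2/3}$. On the central piece $\gamma\sim a$, the support of $\kappa(\alpha/(aq^{1/3}(\theta)))$ contains $\omega_k\approx b_a$; these are precisely the rays leaving the source tangentially. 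Since $\omega_{k+1}-\omega_k\sim\omega_k^{-1/2}$, about $\lambda^{1/3}$ indices satisfy $|\omega_k-b_a|\lesssim1$. For them the source factor has size one, not $\lambda^{-1/6}$. The same happens for the observation factor when $x$ is close to $a$. So your per-term bound $h^{1/3}\omega_k^{-1}$ fails exactly on the most singular terms, and Step~3 sums an incorrect majorant.

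The conclusion survives, but you need a summation estimate that is uniform in the location of the turning point, not a pointwise bound. The paper separates the observation and source factors by Cauchy--Schwarz in $k$. It then applies \eqref{estairy21}, $\sup_b\sum_{k\le L}Ai^2(b-\omega_k)/L'(\omega_k)\le C_0L^{1/3}$, which yields $h^{1/3}L^{1/3}=(ht)^{1/4}$.

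You can also derive this from the uniform bound by counting:
\begin{itemize}
\item The indices with $|\omega_k-b|\sim2^j\lesssim b$ number $O(1+2^jb^{1/2})$ and contribute $O(2^{j/2})$ each block, hence $O(b^{1/2})=O(L^{1/3})$ altogether.
\item The indices with $\omega_k\ge2b$ give your sum $\sum k^{-2/3}$.
\item The indices with $\omega_k\le b/2$ are exponentially small.
\end{itemize}
In particular, the turning-point indices contribute only $O(h^{1/3})$ after the prefactor, which is why the final bound is unchanged.

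Two smaller points. First, you dropped the $Ai'$ terms in \eqref{Kghak}. They are controlled by the second bound in \eqref{estairy21} and add $h^{1/3}(h^{1/3}L^{2/3}+h^{2/3}L)\ll(ht)^{1/4}$. Second, after stationary phase the Airy arguments are evaluated at $\theta_c$, which depends on $k$. Over the whole range, $b$ moves by $O(xh^{-2/3}\gamma_{\max})=O(h^{1/3}t)\ll1$, so the uniform-in-$b$ bound still applies.
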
 
\begin{proof}
The idea of the proof is the following : we first apply the standard stationary phase theorem with respect to $\theta$ in \eqref{Kghak} while keeping the Airy factors (of the second and third line of \eqref{Kghak}) as part of the symbol. This will provide a factor $(h/t)^{(d-1)/2}$ ; the remaining sum in $k$ will only involve Airy factors and their derivatives and the corresponding sums will be evaluated using well known properties of Airy functions.

{The equivalence of \eqref{eq:defG} and \eqref{eq:Gosc} follows from the Airy normal form \cite{ChFrUr}, as detailed in Lemma~\ref{lemphaseG} of Appendix~\ref{app:phase-equivalence}. There the normal phase is written as $s^3/3-s\zeta+\Upsilon$, which defines the smooth function $\Upsilon(x,y,\theta,\alpha)$. Corollary~\ref{corUpsi} gives}
\[
 \psi(x,y,\theta,\alpha)
 =y\cdot\theta+\tau(\alpha,\theta)
 B_\Gamma(y,\theta/\tau)
 +\Upsilon(x,y,\theta,\alpha),
\]
where
\begin{equation}\label{eq:Ups1}
 \Upsilon
 =
 x^2\tau\,{\mathcal F}
 -\frac{x^2\mu_\Gamma(y,\theta/|\theta|)q(\theta)}
        {\tau}
  (1+\Lp(y,\theta/|\theta|))
 +x\mathcal H_{\ge4}.
\end{equation}
{With our normalization we take ${\mathcal F}=0$ in this expansion; higher-order contributions are included in $x\mathcal H_{\ge4}$.}
Here the weight-five contribution to the mean critical value vanishes,
and the first undetermined contribution has weighted degree at least
six.
In the same way, the phase function in the second line of \eqref{Kghak2} is the phase of $G(a,0,\theta/h,\omega_k)$ and the change of variables $s\rightarrow T$ above allowed to transform the phase of $G$ into $T^3/3-T\zeta(a,0,\theta,h^{2/3}\omega_k)+\psi(a,0,\theta,h^{2/3}\omega_k)$. As such, after bringing all the Airy factors (the second and third lines of \eqref{Kghak}) into the symbol, the remaining phase of $K_{S,\omega_k}(g_{h,a})(x,y,t)$ reads as $t\tau^2(h^{2/3}\omega_k,\theta)+\psi(x,y,\theta,h^{2/3}\omega_k)-\psi(a,0,\theta,h^{2/3}\omega_k)$, with
\begin{equation}\label{phasestat}
\psi(x,y,\theta,h^{2/3}\omega_k)=<y,\theta>+\tau(h^{2/3}\omega_k,\theta)B_{\Gamma}(y,\theta/\tau)+\Upsilon(x,y,\theta,h^{2/3}\omega_k).
\end{equation}
\begin{lemma}\label{lemphasestatksmall1}
Let $t\geq h^{1/3-3\varepsilon}$. The stationary phase theorem with respect to $\theta$ applies in \eqref{Kghak} with large parameter $t/h$, 
with phase function $t\tau^2(h^{2/3}\omega_k,\theta)+\psi(x,y,\theta,h^{2/3}\omega_k)-\psi(a,0,\theta,h^{2/3}\omega_k)$ and symbol
\begin{multline}\label{symbstat1}
\sigma_{h,k}(x,y,a,\theta,h^{2/3}\omega_k):=\Big(p_0Ai(-\zeta(x,y,\theta/h,\omega_k))+ih^{1/3}xp_1q(\theta)^{-1/6}Ai'(-\zeta(x,y,\theta/h,\omega_k))\Big)\\\times \varkappa(\theta)\varkappa(\tau(h^{2/3}\omega_k,\theta))   
\Big(q_0Ai(-\zeta(a,0,\theta/h,\omega_k))+ih^{1/3}q_1Ai'(-\zeta(a,0,\theta/h,\omega_k))\Big).
\end{multline}
Here $p_0, p_1$ are as in \eqref{eq:defG} (and the factor $h^{1/3}$ comes from $q^{-1/6}(\theta/h)=h^{1/3}q^{-1/6}(\theta)$) and $q_0,q_1$ are the smooth symbols obtained by applying Theorem \ref{thmmalg} in the second line of \eqref{Kghak2}.
\end{lemma}
\begin{proof}(of Lemma \ref{lemphasestatksmall1})
The critical points of the phase in the first line of \eqref{Kghak} satisfy
\begin{multline}\label{eqcritthetahkcrit}
\nabla_{\theta}\Big(t(|\theta|^2+h^{2/3}\omega_kq^{2/3}(\theta))+ <y,\theta>+\tau(h^{2/3}\omega_k,\theta)B_{\Gamma}(y,\theta/\tau)+\Upsilon(x,y,\theta,h^{2/3}\omega_k)\\
-\tau(h^{2/3}\omega_k,\theta)B_{\Gamma}(0,\theta/\tau)-\Upsilon(a,0,\theta,h^{2/3}\omega_k)\Big)
=0.
\end{multline}
As we have $\omega_k\lesssim \lambda^{2/3}_{\gamma}$ and $\gamma\leq (ht)^{1/2}$, we already saw that $h^{2/3}\omega_k\leq \gamma_{max}=(ht)^{1/2}$.

We now check directly that the corrected expansion
\eqref{eq:Ups1} gives exactly the size needed for stationary phase.
Set $\delta_k=h^{2/3}\omega_k$. On the present support,
\(
 x\lesssim\gamma\), \(
 \delta_k\lesssim\gamma\), \(
 \gamma\le (ht)^{1/2}\), \( |\theta|\mathrel{\sim}1\). The coefficients $\tau,{\mathcal F},\mu_\Gamma,q,\Lp$ are smooth symbols of
$\theta$ on this fixed conic set.  The two explicit terms in
\eqref{eq:Ups1} therefore satisfy, for every fixed multi-index
$\beta$,
\begin{equation}\label{eq:Schr-Upsilon-explicit-theta}
 \left|
 \partial_\theta^\beta
 \left(
 x^2\tau{\mathcal F}
 -\frac{x^2\mu_\Gamma q}{\tau}(1+\Lp)
 \right)\right|
 \le C_\beta x^2.
\end{equation}

For the remainder we use the weighted information proved in the
Appendix.  Since the weight-five contribution to the mean critical
value cancels, the non-model remainder begins at weight six and is
divisible by $x$.  With $\deg x=\deg\alpha=2$, and
$\alpha=\delta_k$ here, its monomials are bounded by $x^3$,
$x^2\delta_k$ and $x\delta_k^2$, up to smooth symbol factors.
Differentiating in $\theta$ does not
change these powers: $|\theta|\mathrel{\sim}1$ and
$\partial_\theta^\beta\delta_k=0$ when $\delta_k=h^{2/3}\omega_k$ is
viewed as the fixed spectral parameter, while all occurrences through
$\tau$ and the homogeneous coefficients remain smooth symbols.
Hence
\begin{equation}\label{eq:Schr-Upsilon-rem-theta}
 |\partial_\theta^\beta(x\mathcal H_{\ge4})|
 \le C_\beta
 (x^3+x^2\delta_k+x\delta_k^2).
\end{equation}
Because $x,\delta_k\lesssim\gamma\ll1$, the right hand side is
$O_\beta(\gamma^3)$ and in particular $O_\beta(\gamma^2)$.
Combining \eqref{eq:Schr-Upsilon-explicit-theta} and
\eqref{eq:Schr-Upsilon-rem-theta} gives the uniform estimate
\begin{equation}\label{eq:Schr-Upsilon-theta-final}
 |\partial_\theta^\beta\Upsilon|
 \le C_\beta\gamma^2
 \le C_\beta ht
 \le C_\beta h,
 \qquad |\beta|\ge1,
\end{equation}
where the last inequality uses the fixed local time interval
$t\le T_0$.  The same estimate holds at the source point $x=a$ since
$a\lesssim\gamma$.
{In particular, both $\nabla_\theta\Upsilon$ and
$\nabla_\theta^2\Upsilon$ are $O(h)$, as required below.}
Using the expression of $B_{\Gamma}$ (and the fact that $B_{0,2}(y,\cdot)=O(|y|^2)$, $B_{2j}(y,\cdot)=O(|y|)$), we also have {$\nabla_\theta(\tau B_\Gamma(y,\theta/\tau))
=\nabla_\theta B_0(y,\theta)+O(h^{2/3}\omega_k|y|)
=O(|y|^2,h^{2/3}\omega_k|y|)$}. From \eqref{eqcritthetahkcrit} we obtain {a unique critical point} ${\theta_c=-(y+O(|y|^2;h^{2/3}\omega_k;(h^{2/3}\omega_k)^2))/(2t)}$. The matrix of second derivatives is of the form ${2t}\times \Big(Id_{d-1}+O(h^{2/3}\omega_k)+\frac{1}{t}O(|y|^2,h^{2/3}\omega_k|y|,(h^{2/3}\omega_k)^2)\Big)$ and as $|y|\lesssim t\leq T_0$ with $T_0$ small enough, $h^{2/3}\omega_k\leq (ht)^{1/2} \ll 1$ and $(h^{2/3}\omega_k)^2/t\lesssim (ht)/t=h$, it follows that its determinant behaves like $t^{d-1}$. 
In order to apply the stationary phase with symbol $\sigma_{h,k}$ it remains to check that there exists some $\nu>0$ such that $\forall j\geq 1$ and $\forall\alpha$ with $|\alpha|=j$,
\[
|\nabla^{\alpha}_{\theta}\sigma_{h,k}|\leq C_j\Big(\frac th\Big)^{j(1-2\nu)/2}.
\]
To see that, it will be enough to check that there exists some $\nu>0$ such that $\forall j\geq 1$ and $\forall\alpha$ with $|\alpha|=j$,
\[
|\nabla^{\alpha}_{\theta} Ai(-\zeta(x,y,\theta/h,\omega_k) |\leq C_j\Big(\frac th\Big)^{j(1-2\nu)/2},
\]
where we recall that $-\zeta(x,y,\theta/h,\omega_k)=\frac{x}{h^{2/3}} |\theta|^{2/3}e_0(x,y,\theta/|\theta|,h^{2/3}\omega_k/|\theta|)-\omega_k$. Recall that, if $-\zeta>0$, then $Ai(-\zeta)$ is exponentially decreasing : as $e_0$ is an elliptic, bounded symbol, this implies that, for $\gamma/h^{2/3}\gg \omega_k$, the previous inequality becomes trivial. Let $\gamma/h^{2/3}=\lambda_{\gamma}^{2/3}\gtrsim \omega_k$ with $\gamma\leq (ht)^{1/2}$. As one has, for all $l\geq 0$, $\sup_{b\geq 0}\Big|b^l Ai^{(l)}(b-\omega_k)\Big|\leq C_l \omega_k^{3l/2}$,
it is sufficient to check that for $t\geq h^{1/3-2\epsilon}$ and $\omega_k\lesssim (ht)^{1/2}/h^{2/3}$ the following holds
\begin{equation}\label{toprove1}
\omega_k^{3/2}\lesssim \Big(\frac th\Big)^{(1-2\nu)/2}.
\end{equation}
As for $\omega_k\lesssim (ht)^{1/2}/h^{2/3}$ we have $\omega_k^{3/2}\lesssim (ht)^{3/4}/h=t^{3/4}/h^{1/4}\ll (t/h)^{1/2}$, \eqref{toprove1} holds for all $0<\nu<1/4$ and the stationary phase theorem applies and yields a factor $(h/t)^{\frac{d-1}{2}}$. The condition on $t$ is only used to ensure that $(ht)^{1/2}\gtrsim \gamma\geq a\geq h^{2/3-\varepsilon}$.
\end{proof}
We now complete the proof of Proposition \ref{propdispsumk}. After applying stationary phase in $\theta$, the symbol becomes an asymptotic expansion with main contribution $\sigma_{h,k}$ computed at $\theta_c$ and small parameter $(h/t)$. Moreover, as the new symbol is obtained by taking derivatives on $\sigma_{h,k}$, and, since $Ai^{''}(z)=zAi(z)$, all the contributions in the asymptotic expansions come with factors $Ai^{(')}(-\zeta(x,y,\theta/h,\omega_k) )$ or $Ai^{(')}(-\zeta(a,0,\theta/h,\omega_k))$. We need the following lemma.
\begin{lemma}( \cite[Lemma 3.5]{ILLP})
There exists $C_0$ such that for $L\geq 1$,
\begin{equation}\label{estairy21}
\sup_{b\in\R}\sum_{1\leq k\leq L}
\frac{Ai^2(b-\omega_k)}{L'(\omega_k)} \leq C_0L^{1/3}\,, \quad\quad\sup_{b\in\R_+}\sum_{1\leq k\leq L}
\frac{Ai'^2(b-\omega_k)}{L'(\omega_k)} \leq C_0L\,.
\end{equation}
\end{lemma}
We first bound the main contribution of \eqref{Kghak} after stationary phase, by applying the {Cauchy--Schwarz} inequality to separate the Airy terms in $\sigma_{h,k}$ {depending on} $(x,y)$ and those depending on $(a,0)$. We get
\[
\frac{h^{1/3}}{h^d}\Big(\frac ht\Big)^{(d-1)/2} 
\Big(\sum_{k\leq \lambda_{\gamma}, \gamma\leq (ht)^{1/2}} \frac{1}{L'(\omega_k)}{Ai^2(-\zeta(x,y,\theta/h,\omega_k))}\Big)^{1/2} \Big(\sum_{k\leq \lambda_{\gamma}, \gamma\leq (ht)^{1/2}} \frac{1}{L'(\omega_k)}{Ai^2(-\zeta(a,0,\theta/h,\omega_k))}\Big)^{1/2}\,,
\]where $-\zeta(x,y,\theta/h, \omega_k)=x(|\theta|/h)^{2/3}e_0(x,y,\theta/|\theta|,h^{2/3}\omega_k/|\theta|^{2/3})-\omega_k$ and where $h^{-(d-1)}(h/t)^{(d-1)/2}$ comes from the change of variables followed by the stationary phase in $\theta$.
Using \eqref{estairy21} with $L:= \lambda_{\gamma_{\max}}=((ht)^{1/2})^{3/2}/h$, the product of the two factors in brackets is bounded by $L^{1/3}=(ht)^{1/4}/h^{1/3}$, hence $h^{1/3}L^{1/3}\leq (ht)^{1/4}$ and therefore the main contribution (involving Airy terms only) satisfies \eqref{estimdispK}. Proceeding in the same way with the terms in $\sigma_{h,k}$ involving $Ai'$, which come with an additional factor $h^{1/3}$ allows us to bound the sum of the main contributions (from $\sigma_{h,k}$) of $\frac{2\pi}{L'(\omega_k)}K_{S,\omega_k}(g_{h,a})(t,x,y)$ by
\[
\frac{C}{h^{d}}\Big(\frac ht\Big)^{\frac{d-1}{2}}h^{1/3}(L^{1/3}+L^{1/6}\times h^{1/3}L^{1/2}+h^{2/3}L)=\frac{C}{h^{d}}\Big(\frac ht\Big)^{\frac{d-1}{2}}h^{1/3}\Big(L^{1/3}+h^{1/3}L^{2/3}+h^{2/3}L\Big),
\]
where we have used \eqref{estairy21}. As $h^{1/3}L^{1/3}=(ht)^{1/4}\gg (hL)^{2/3}\gg (hL)$, we obtain \eqref{estimdispK}. As all the lower order terms in the asymptotic expansion of the symbol obtained after the stationary phase involve Airy factors and their derivatives (which can be seen using the equation satisfied by the Airy function), they will all satisfy similar estimates; as they come with factors that are powers of $(h/t)$, we can sum all these contributions and end up with an estimate of the form \eqref{estimdispK}.
\end{proof}
\subsection{Case $a>h^{2/3-\varepsilon}$ and $t<h^{1/3-2\varepsilon}$}\label{seca<t<}
Here $(ht)^{1/2}<h^{2/3-\varepsilon}\le a$.
For the dyadic pieces with $t\le c_*\sqrt\gamma$, use
\eqref{eq:first-reflection-short-sum}, together with the unreflected
bound in Proposition \ref{propfreeN=0}. A first reflected trajectory
may reach the boundary at time $a/\sqrt\gamma$, so it must be retained
even when $t\ll\sqrt a$.
For the remaining dyadic pieces, $\gamma\lesssim t^2<h^{2/3-4\varepsilon}$
and the preceding reflection estimates apply.
After factoring out $h^{-d}(h/t)^{(d-1)/2}$, the large-reflection
sum is bounded by $(ht/a)^{1/2}\le h^{1/3-\varepsilon/2}$.
For the moderate-reflection sum, $a\lesssim t^2$ gives
$(ha/t)^{1/4}\lesssim(ht)^{1/4}\le h^{1/3-\varepsilon/2}$.
The transverse $h^{1/3}$ terms contribute at most
$O(h^{1/3}(1+|\log h|))$. All these factors are bounded by
$C h^{1/3-2\varepsilon}$, proving the stated transition bound.

\section{Dispersion for small $a\lesssim h^{2/3-\ceps}$}
\label{anciennelast}
We now obtain dispersion for $\mathcal{P}_{S,h,a}$ for small $a\in (0,h^{2/3-\ceps}]$ with a small $\ceps>0$. The only difference with respect to the previous situations when $a>h^{2/3-\varepsilon}$ is that we cannot obtain $g_{h,a}$ anymore using Lemma \ref{lemp} but as a sum of two contributions, $g_{h,a}=g_{h,a,1}+g_{h,a,2}$ with $g_{h,a,j}$ defined in Propositions \ref{propdataapetitpetit1}, \ref{propdataapetitpetit2}, \ref{propparama<}. Write
$
\Prond_{S,h,a}(t,x,y):=\Prond^{1}_{S,h,a}+\Prond^{2}_{S,h,a}\,,
$
where $\Prond^{j}_{h,a}$ is given by \eqref{eq:Prond2cut} with $g_{h,a}$ replaced by $g_{h,a,j}$, $j\in\{1,2\}$. We treat $\Prond^1_{S,h,a}$ through the gallery-mode sum.
For the reflected contribution $\Prond^2_{S,h,a}$,
the cutoff in \eqref{eq:g1} gives
$\gamma\gtrsim h^{2/3-2\ceps}$.
The rescaled parameter $\gamma^{3/2}/h$ is therefore large
even in the present small-$a$ regime.
\begin{prop}\label{prop:dispersion-small-a}
Let $0<a\le h^{2/3-\ceps}$, $0<\ceps<1/24$ and $h<t\le T_0\le1$.
For the first contribution,
\begin{equation}\label{P1disp}
\|\Prond^{1}_{S,h,a}(t,.)\|_{L^{\infty}_{x>0,y}}\lesssim h^{-d}\Big(\frac ht\Big)^{\frac{d-1}{2}}((ht)^{1/4}+h^{1/3-2\ceps}).
\end{equation}
For the second contribution on $0\le x\le a$, the same bound holds
when $t\ge h^{1/3-2\ceps}$. For all $h<t\le T_0$ in this region,
\begin{equation}\label{eq:small-a-second-all-times}
 |\Prond^2_{S,h,a}(t,x,y)|
 \lesssim h^{-d}(h/t)^{(d-1)/2}
 \bigl((ht)^{1/4}+(h/t)^{1/2}\bigr),\qquad h<t\le T_0.
\end{equation}
\end{prop}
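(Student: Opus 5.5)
The two contributions are handled separately, and each follows a method already developed above.

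For $\Prond^1_{S,h,a}$ we use the finite gallery-mode sum \eqref{eq:Prond2cut}, restricted to $k\lesssim K_{\ceps}\lesssim h^{-1/4+\ceps}$. By Corollary \ref{lemgomegak}, each term is $\frac{2\pi}{L'(\omega_k)}K_{S,\omega_k}(g_{h,a,1})$, with $\hatdeux{g}_{h,a,1}(\cdot,\omega_k/h^{1/3})$ given by an elliptic symbol times $I_a(\eta,\omega_k)$. Up to smooth symbol factors, $I_a(\eta,\omega_k)$ is the model overlap $e_k(a,\eta)$, since $e$ and $e_k$ agree to leading order for $h^{2/3}\omega_k\ll 1$. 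Each term thus becomes an integral in $\theta$ with phase $t\tau^2(h^{2/3}\omega_k,\theta)+\psi(x,y,\theta,h^{2/3}\omega_k)-\psi(0,0,\theta,h^{2/3}\omega_k)$ and a symbol carrying $Ai(-\zeta(x,y,\theta/h,\omega_k))\,Ai(aq^{1/3}(\theta/h)-\omega_k)$. We apply stationary phase in $\theta$ with parameter $t/h$, exactly as in Lemma \ref{lemphasestatksmall1}.

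1. The derivative bounds on $\tilde e$ from Lemma \ref{lemestimderivekk}, together with $\omega_k^{3/2}\lesssim K_{\ceps}\ll (t/h)^{1/2}$ for $t>h$, show that the symbol is admissible. Here $\ceps<1/24$ leaves the needed room.
2. We then bound the $k$-sum by Cauchy--Schwarz and \eqref{estairy21} with $L=K_{\ceps}$. This gives $h^{-d}(h/t)^{(d-1)/2}h^{1/3}K_{\ceps}^{1/3}\lesssim h^{-d}(h/t)^{(d-1)/2}h^{1/3-2\ceps}$ after choosing $K_{\ceps}$ appropriately.
3. The estimate is needed for all $x>0$. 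When $x\gg h^{2/3-\ceps}$, the Airy factor in $x$ is still controlled by \eqref{estairy21}, whose supremum is over all $b$.

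The uniform invertibility from Proposition \ref{propmatrix} is what keeps the symbols of $g_{h,a,1}$ bounded uniformly in $k$.

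For $\Prond^2_{S,h,a}$, the cutoff $\cutoffchi^{\sharp}(h^{2\ceps}\omega)$ restricts to $\alpha\gtrsim h^{2/3-2\ceps}$. We decompose dyadically in $\gamma\gtrsim h^{2/3-2\ceps}$, so that $\lambda_\gamma=\gamma^{3/2}/h\gtrsim h^{-3\ceps}$ and stationary phase remains available. The data $g_{h,a,2}$ from Proposition \ref{propapetitg2} has a phase built, as in \eqref{EM1}--\eqref{EM2}, from $\pm\frac23(\omega-aq^{1/3})^{3/2}$, with $a\ll\gamma$. This matches the transverse situation $\gamma\ge 4a$, with the source essentially at the boundary.

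1. For $t\le c_*\sqrt\gamma$, we repeat the first-reflection analysis leading to \eqref{eq:first-reflection-short-sum}. Together with the unreflected part, as in Proposition \ref{propfreeN=0}, this gives free dispersion, at most $h^{-d}(h/t)^{d/2}$. This is the $(h/t)^{1/2}$ term in \eqref{eq:small-a-second-all-times}.
2. For $t\ge c_*\sqrt\gamma$, Lemma \ref{lem:transverse-directions} applies with $a/\gamma\le1/4$. Summing over $N\sim t/\sqrt\gamma$ as in Proposition \ref{proptransv} gives $(ht/\gamma)^{1/2}$ or $h^{1/3}$ per dyadic piece.
3. We split the dyadic sum at $\gamma\sim(ht)^{1/2}$, as in Section \ref{seca>(ht)12}. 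Above the threshold we use the reflection bounds. Below it (possible only if $t\gtrsim h^{1/3-4\ceps}$) we switch to the $k$-sum and use Proposition \ref{propdispsumk}, which is valid once $t\ge h^{1/3-2\ceps}$. The result is $(ht)^{1/4}+h^{1/3}|\log h|$, which is absorbed into $(ht)^{1/4}+h^{1/3-2\ceps}$.

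The main obstacle is showing that the phases of $g_{h,a,2}$ have the structure the reflection analysis requires, namely the critical-point equations \eqref{desclambdaN} with $a$ replaced by $O(a)$ and the nondegeneracy of Lemma \ref{lem:transverse-directions}. We will rely on the explicit form of $E_{M,\pm}$ and the cutoff $\chi_1(x/h^{2/3-\ceps})$. We also need to check that the tail $|N|\ge C\gamma^{-1/2}$ is negligible via \eqref{eq:suminN-gamma}, whose hypotheses are polynomial seminorm bounds on $g_{h,a,2}$.
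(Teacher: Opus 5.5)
Your architecture is the paper's. For $\Prond^1$ you use the gallery-mode sum once $K_{S,\omega_k}(g_{h,a,1})$ is written as in Lemma~\ref{lemformgapetit}. For $\Prond^2$ you use dyadic pieces $\gamma\gtrsim h^{2/3-2\ceps}$, treated by the mode sum below $(ht)^{1/2}$, by reflections above it, and by a first-reflection analysis when $t<c_*\sqrt\gamma$. However, two steps fail as you state them.

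First, for $\Prond^1$ the claim $\omega_k^{3/2}\lesssim K_\ceps\ll(t/h)^{1/2}$ for all $t>h$ is false. Indeed $K_\ceps\gtrsim h^{-2\ceps}$, while $(t/h)^{1/2}=O(1)$ for $t\sim h$. Each $\theta$-derivative of the Airy amplitudes costs $\omega_k^{3/2}$ (cf.\ \eqref{toprove1}), so stationary phase in $\theta$ with these factors kept in the amplitude is only justified for $t\gg hK_\ceps^2$. The paper uses this argument only for $t>h^{1/3-2\ceps}$, where $K_\ceps\le h^{-1/4+\ceps}\ll(t/h)^{1/2}$. It treats $t\le h^{1/3-2\ceps}$ separately, along the lines of Section~\ref{seca<t<}.

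Second, for $\Prond^2$ with $t\le c_*\sqrt\gamma$ you cannot simply repeat the first-reflection analysis and quote Proposition~\ref{propfreeN=0}: both require $a\ge h^{2/3-\ceps}$. In particular, \eqref{eq:first-reflection-large-parameter} discards the pieces with $t\gamma\lesssim h$, because a stationary point would need $t\gamma\sim(x+a)\sqrt\gamma\ge a^{3/2}\gg h$. For tiny $a$ these pieces carry genuine stationary direct and once-reflected trajectories, on which the $A$-integral gives no decay. The paper keeps them:
\begin{itemize}
\item After stationary phase in $\Sigma,S$ and $\theta$, each piece is $O\bigl(h^{-d}(h/t)^{(d-1)/2}\sqrt\gamma\,\min\{1,(h/(t\gamma))^{1/2}\}\bigr)$.
\item The pieces with $t\gamma\le h$ are summed absolutely: $\sum_{\gamma\le h/t}\sqrt\gamma\lesssim(h/t)^{1/2}$.
\item Only boundedly many pieces with $t\gamma>h$ are stationary, through $\gamma\sim((x\pm a)/t)^2$, with the minus sign for the direct term.
\end{itemize}
That summation is where the $(h/t)^{1/2}$ in \eqref{eq:small-a-second-all-times} comes from, not a free-dispersion estimate inherited from the large-$a$ case.

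The step you flag as the main obstacle is where the paper does the real work, and you only indicate a direction for it. The paper proves \eqref{eq:small-a-inverted-source}: on each dyadic piece, $\widehat g_{h,a,2,\pm}$ is an oscillatory integral with source phase $-\Phi(a,0,\theta,\alpha,s)$. Its amplitude is supported near $\pm S>0$, $|S|\sim1$, with bounds uniform as $a\to0$. This comes from two steps:
\begin{itemize}
\item rewriting the direct model term of \eqref{EM1}--\eqref{EM2} in the normal frequency $\xi_\pm$, with $|\xi_\pm|\sim\sqrt\gamma$;
\item composing it with the inverses of the elliptic operators of Proposition~\ref{propapetitg2}.
\end{itemize}
The reflected terms are non-stationary, because the two normal frequencies have opposite signs and stay $c\sqrt\gamma/h$ apart. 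Only then do Lemma~\ref{lem:transverse-directions} and \eqref{eq:suminN-gamma} apply with $a/\gamma\le1/4$.

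Two smaller points. For \eqref{eq:small-a-second-all-times}, your crude $h^{1/3}|\log h|$ is not dominated by $(ht)^{1/4}+(h/t)^{1/2}\sim h^{1/3}$ at $t\sim h^{1/3}$. Keep $(ht/\gamma)^{1/2}$ instead, as the paper does: it holds on the whole range and sums geometrically. Also, Proposition~\ref{propdispsumk} is proved for the data of Lemma~\ref{lemp} and for $t\ge h^{1/3-2\ceps}$, which leaves $h^{1/3-4\ceps}\lesssim t<h^{1/3-2\ceps}$ uncovered. The paper instead bounds each piece with $\gamma\lesssim(ht)^{1/2}$ directly by $h^{-d}(h/t)^{(d-1)/2}\sqrt\gamma$. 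It uses the oscillatory expansions of both Airy factors ($\zeta\sim\omega_k$ since $x,a\ll\gamma$), with no such time restriction.
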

\begin{proof}

In what follows, we evaluate the $L^{\infty}$ norm of $\Prond^{1}_{S,h,a}(t,.)$ : recall first from section \ref{sectproofs}) that 
\[
\Prond^{1}_{S,h,a}(t,.)=\sum_{k\geq 1}\frac{2\pi}{L'(\omega_k)}\cutoffchi^{\flat}(\omega_k/\omega_{K_\epsilon})K_{S,\omega_k}(g_{h,a,1})(t,\cdot),
\]
where $\cutoffchi^{\flat}$ is compactly supported in a fixed neighborhood of $0$ and $K_{\epsilon}\leq h^{-1/4+\epsilon}$ for some $\epsilon>0$ small.

We need the following lemma, relying on Corollary \ref{lemgomegak} and the notations therein :
\begin{lemma}\label{lemformgapetit}(see \cite[Lemma 4.2]{ILLP})
{There exist symbols} $\sigma(\eta,\omega_k)$ 
and $r_j(a,\eta,\omega_k)$, of degree $0$, $\sigma, r_0$ elliptic such that 
 \begin{multline}\label{K<} 
K_{S,\omega_k}(g_{h,a,1})(t,x,y)=\int e^{iht\tau^2(\omega_k,\eta)}G(x,y,\eta,\omega_k)q^{\frac 16}(\eta)\varkappa(h\eta)\varkappa(h\tau(\omega_k,\eta))r(\eta,\omega_k)\sigma(\eta,\omega_k)\\
 \cutoffchi^{\flat}\Big(\frac{\omega_k}{\omega_{K_{\ceps}}}\Big) q^{\frac 16}(\eta)\Big(r_0Ai(-\zeta(a,0,\eta, \omega_k))+i q^{-\frac 16}(\eta)r_1Ai'(-\zeta(a,0,\eta,\omega_k))\Big)d\eta\,,
\end{multline}
where $\zeta$ is the phase introduced in Theorem \ref{thmMelrose}.
\end{lemma}
This lemma is the Schr\"odinger analogue of \cite[Lemma 4.2]{ILLP},
which was proved there for the wave equation using the properties of
$e(\cdot,\omega_k)$ detailed in \cite[Section 2.3.2]{ILLP}. The only
difference between the two settings lies in the time phase: in the wave
case the corresponding factor is
$e^{it\tau(\omega_k,\eta)}$, whereas here it is
$e^{iht\tau^2(\omega_k,\eta)}$. Since this factor is carried unchanged
throughout the computation of $K_{S,\omega_k}(g_{h,a,1})$, this
difference does not affect the argument. The proof therefore follows
exactly as in \cite[Lemma 4.2]{ILLP}. We nevertheless provide it here
for completeness.

\begin{proof} (of Lemma \ref{lemformgapetit}) 
Replacing $\hat{g}_{h,a,1}$ given by Corollary \ref{lemgomegak} in formula \eqref{eq:Kequiv} yields
\[
K_{S,\omega_k}(g_{h,a,1})=\int e^{iht\tau^2_{q}(\omega_k,\eta)}G(x,y,\eta,\omega_k)\chi^{\#}(\omega_k)q^{\frac 16}(\eta)\varkappa(h\eta)\varkappa(h\tau_{q}(\omega_k,\eta))r(\eta,\omega_k)\frac{\sqrt{L'(\omega_k)}}{\sqrt{2\pi}}I_a(\eta,\omega_k)d\eta\,,
\]
with $I_a(\eta,\omega_k)$ defined in \eqref{Io}. To compute $I_a$ use \eqref{defEmathcalM} and $\overline{e( x, y,\eta,\omega_k)}=e^{-i|\eta|B_0}\overline{\tilde e( x, y,\eta,\omega_k)}$
\begin{equation}\label{I<}
 I_a  = \int e^{-i(\tilde y\cdot (\eta-\tilde \eta)+|\eta| B_{0}(\tilde y,\frac{\eta}{|\eta|}))}\varkappa(h\tilde\eta)\cutoffchi^{\flat}\Big(\frac{\omega_k}{\omega_{K_{\ceps}}}\Big) e_k(a,\tilde \eta)\left[\int_{\R_{+}} \overline{\tilde e( \tilde x, \tilde y,\eta,\omega_k)}e_k(\tilde x,\tilde \eta) d\tilde x\right] 
d\tilde \eta d\tilde y\,.
\end{equation}
As in the proof of Lemma \ref{lemestimderivekk}, the bracket term in \eqref{I<} behaves like a symbol. For small $a\lesssim h^{2/3-\ceps}$ and $k\leq K_{\ceps}$, $e_k(a,\tilde\eta)$ can also be included in the symbol, and
 with $\eta=\theta/h$ and $\tilde \eta=\tilde\theta/h$, stationary phase applies in $\tilde y,\tilde \theta$ for the phase $-(\tilde y\cdot\theta+|\theta|B_{0}(\tilde y,\vartheta))+\tilde y\cdot \tilde \theta$, with large parameter $\frac 1h$ and symbol 
\begin{equation}\label{symbK<}
\frac{1}{h^{d-1}}\cutoffchi^{\flat}\Big(\frac{\omega_k}{\omega_{K_{\ceps}}}\Big)\varkappa(\tilde\theta)\chi_0(h^{2/3}\omega_kq^{2/3}(\tilde\theta))e_k(a,\tilde\theta/h)
\Big(\int_0^{\infty}  \overline{\tilde e( \tilde x, \tilde y,\theta/h,\omega_k)}e_k(\tilde x,\tilde \theta/h) d\tilde x\Big)\,.
\end{equation}
Stationary points are such that $\tilde y=0$, $\tilde \theta=\theta+|\theta|\partial_{\tilde y} B_{0}(\tilde y,\vartheta)=\theta$ (as $\partial_y B_0(0,\vartheta)=0$). All derivatives with respect to $\tilde y$ land on $ \overline{\tilde e( \tilde x, \tilde y,\theta/h,\omega_k)}$ and we can use Lemma \ref{lemestimderivekk}.
Derivatives with respect to $\tilde \theta$ land either on cutoffs, $e_k(\tilde x,\tilde \theta/h)$ or $e_k(a,\tilde \theta/h)$ : using \eqref{estimderivekk} for $e_k$,
\[
\Big|\partial^{\beta_1}_{\tilde y}\partial^{\beta_2}_{\tilde \eta}\Big(\int_0^{\infty}  \overline{\tilde e( \tilde x, \tilde y,\eta,\omega_k)}e_k(\tilde x,\tilde \eta) d\tilde x\Big)\Big|\leq \|\partial^{\beta_1}_{\tilde y}\tilde e(.,\omega_k) \|_{L^{2}_{x\geq 0}} \|\partial^{\beta_2}_{\tilde \eta} e_k \|_{L^{2}_{x\geq 0}}\lesssim (\omega_k|\eta|^{1/3})^{|\beta_1|+|\beta_2|}\,.
\]
As $|\eta|\sim 1/h$ and $k\lesssim K_{\ceps}< h^{-1/4+\ceps}$ we find $\omega_k|\eta|^{1/3} \leq h^{-1/2+2\ceps/3}$. As one derivative on $e_k(a,\tilde\theta/h)$ yields at most $\frac ah\sqrt{h^{2/3}\omega_k}$ and we have $\frac ah\sqrt{h^{2/3}\omega_k}\leq h^{-1/12-2\ceps/3}\ll h^{-1/2}$ for $\ceps<5/8$, stationary phase applies in $\tilde y,\tilde\theta$. After stationary phase, $e_k(a,\tilde\theta/h)$ transforms into a linear combination of $e_k$ and its derivative : indeed, repeated derivatives in $\tilde\theta$ lead to $Ai(aq^{1/3}(\tilde\theta/h)-\omega_k)$ and $Ai'(aq^{1/3}(\tilde \theta/h)-\omega_k)$, and we denote $r_0, r_1$ the corresponding asymptotic expansions; $r_0, r_1$ are functions of $(a,\theta/h,\omega_k)$ and $r_0$ is elliptic with main contribution equal to $1$. Taking the difference between phase functions of $e_k(a,\tilde\theta/h)$ and $e(a,0,\theta/h,\omega_k)$ yields $a\tau_{q}(h^{2/3}\omega_k,\theta)A_{\Gamma}(a,0,s q^{1/3}(\theta)/\tau_{q},\theta/\tau_{q})$, which behaves (at worst) like $a(s^2-h^{2/3}\omega_kq^{2/3}(\theta)+O(s^3,s(h^{2/3}\omega_k))$ (where $s^2\lesssim h^{2/3}\omega_k$);
as $ah^{2/3}\omega_k\leq h^{7/6-\ceps/3}$, $e^{\frac ih a\tau_{q}(h^{2/3}\omega_k,\theta)A_{\Gamma}(a,0,s q^{1/3}(\theta)/\tau_{q},\theta/\tau_{q})}$ does not oscillate and can be brought into the symbol.
The $L^2_{x>0}$ product in \eqref{I<} becomes a new symbol $\sigma(\theta/h,\omega_k)$, with main contribution $\sigma_0$
\begin{multline*}
\sigma_{0}=\int_0^{\infty}  \overline{\tilde e( \tilde x, 0,\theta/h,\omega_k)}e_k(\tilde x, \theta/h) d\tilde x=h^{-2/3}\int_0^{\infty}e^{\frac ih (-\tilde s^3/3-\tilde s(\tilde xq^{1/3}(\theta)-h^{2/3}\omega_k)+s^3/3+s(\tilde x q^{1/3}(\theta)-h^{2/3}\omega_k))}\\
\times \frac{2\pi}{L'(\omega_k)}  \frac{q^{1/3}(\theta)}{h^{2/3}}   \tilde p_h(\tilde x,\theta, h^{2/3}\omega_k,\tilde s, h/t) e^{-\frac ih \tilde x \tau_{q}(h^{2/3}\omega_k,\theta)A_{\Gamma}(\tilde x,0,\sigma q^{1/3}(\theta)/\tau_{q},\theta/\tau_{q})} d\tilde s ds d \tilde x,
\end{multline*}
for a symbol $\tilde p_h$ obtained from $p_h$ in \eqref{eq:qh}. For $\tilde x\gtrsim h^{2/3}\omega_k$, repeated integrations by parts in $s$ yield an $O(h^{\infty})$ contribution. For $\tilde x\lesssim h^{2/3}\omega_k$, apply stationary phase in $s,\tilde x$: as $\tilde xq^{1/3}(\theta)=h^{2/3}\omega_k-s^2$ and $s q^{1/3}(\theta)=\tilde s q^{1/3}(\theta)+\tau_{q} (A_{\Gamma}+\tilde x\partial_{\tilde x}A_{\Gamma})$, it remains to consider an integral in $\tilde s$ with phase 
\[
e^{\frac ih (h^{2/3}\omega_k-\tilde s^2)q^{1/3}(-\theta)\tau_{q}(h^{2/3}\omega_k,\theta)A_{\Gamma}((h^{2/3}\omega_k-\tilde s^2)q^{-1/3}(\theta),0,\tilde s q^{1/3}(\theta)/\tau_{q},\theta/\tau_{q})}.
\]
One derivative with respect to $\theta$ on this phase yields at most $\frac 1h (h^{2/3}\omega_k)^2\lesssim  h^{4\ceps/3}$ for $k\leq K_{\ceps}\leq h^{-1/4+\epsilon}$ and therefore $\sigma_0$ is of order $0$. Rewriting $K_{S,\omega_{k}}(g_{h,a,1})$ after stationary phase, relabelling $r_0$ and $r_1$, we obtain \eqref{K<}.
\end{proof}
{To complete the proof} of \eqref{P1disp} we can now proceed in the same way as sections \ref{seca<} and \ref{seca<t<}, using the form of $K_{S,\omega_k}(g_{h,a,1})$ obtained in Lemma \ref{lemformgapetit} exactly as we did with  $K_{S,\omega_k}(g_{h,a})$ written in the form \eqref{Kghak}, for both $t>h^{1/3-2\varepsilon}$ and $t\leq h^{1/3-2\varepsilon}$.

We now estimate the second contribution. The source point is
$(a,0)$ at time $0$, and the observation point is $(x,y)$ at time $t$.
We work with $0\le x\le a$ and $0<a\le h^{2/3-\varepsilon}$,
where $0<\varepsilon<1/24$.
We prove \eqref{eq:small-a-second-all-times}.
For $t\ge h^{1/3-2\varepsilon}$, the free factor is at most
$h^{1/3+\varepsilon}$, and this gives the right-hand side of
\eqref{P1disp} for $j=2$ on $0\le x\le a$.
The free term in \eqref{eq:small-a-second-all-times} is necessary:
when $a/h\to\infty$ and $t\sim h$, a ray with fixed nonzero normal
frequency can remain inside $0<x\le a$ without reaching the boundary.
Its localized kernel has size $h^{-d}$.

We first give the representation obtained from the inversion in
Proposition~\ref{propapetitg2}.
We insert dyadic cutoffs in $\alpha=h^{2/3}\omega$, with
$\alpha\sim\gamma$ on the fixed tangential frequency support. Then
\begin{equation}\label{eq:small-a-transverse-support}
 \gamma\gtrsim h^{2/3-2\varepsilon},\qquad
 \lambda_\gamma=\gamma^{3/2}/h\gtrsim h^{-3\varepsilon},\qquad
 a/\gamma\lesssim h^\varepsilon.
\end{equation}
Thus $x/\gamma,a/\gamma\le1/4$ for $h$ sufficiently small.
The equivalent localization $\alpha=\gamma q(\theta)^{1/3}A$
keeps $A$ in a fixed compact subset of $(0,\infty)$; we subdivide
and rescale the dyadic cutoffs by fixed constants if necessary.

On each piece, the functions $g_{h,a,2,\pm}$ admit the representation
\begin{equation}\label{eq:small-a-inverted-source}
 \widehat g_{h,a,2,\pm}(\theta/h,\alpha/h)
 =h^{-1}q(\theta)^{-1/6}
 \int e^{-i\Phi(a,0,\theta,\alpha,s)/h}
 b_{\gamma,\pm}(a,\theta,\alpha,s;h)\,ds
 +O(h^\infty).
\end{equation}
Here $s=\sqrt\gamma q(\theta)^{1/6}S$ and
$\alpha=\gamma q(\theta)^{1/3}A$.
The amplitudes have uniform bounds with any fixed number of
$(\theta,A,S)$ derivatives, and are supported near the source
critical point with $\pm S>0$ and $|S|\sim1$.
The bounds are uniform as $a\to0$.

To verify \eqref{eq:small-a-inverted-source}, we use the elliptic
operators with phases $\psi\pm\frac23\zeta^{3/2}$ which are inverted
in Proposition~\ref{propapetitg2}.
In \eqref{EM1}--\eqref{EM2}, the direct model term has phase
$y\cdot\eta\pm\frac23\{\zeta_M(x,\eta,\omega)^{3/2}-\zeta_M(a,\eta,\omega)^{3/2}\}$, where $\zeta_M(x,\eta,\omega)=\omega-x|\eta|^{2/3}$.
With $\eta=\theta/h$ and $\omega=\alpha/h^{2/3}$,
the direct model phase is
\[
 \frac1h\bigl(y\cdot\theta+(x-a)\xi_\pm\bigr),
 \qquad
 \xi_\pm=\mp|\theta|^{2/3}
 \int_0^1
 \sqrt{\alpha-(a+u(x-a))|\theta|^{2/3}}\,du.
\]
Since $\alpha\sim\gamma\gg x,a$, we have
$|\xi_\pm|\sim\sqrt\gamma$ and
$|\partial_\alpha\xi_\pm|\sim\gamma^{-1/2}$.
We therefore change variables from $\alpha$ to $\xi_\pm$.
The direct term becomes a pseudodifferential operator applied
to $\delta_{(a,0)}$, with amplitude bounds uniform in the
rescaled variables. The reflected term has the opposite sign
of the normal frequency and gives a non-stationary composition
with the chosen inverse.
The same is true of the unwanted general reflected term.
These estimates remain uniform up to $x=0$: the inverses are
constructed in coordinates extended across the boundary,
and the two normal frequencies have opposite signs and
remain separated by at least $c\sqrt\gamma/h$.
For the auxiliary cutoff on the strip
$x\lesssim h^{2/3-\varepsilon}$, integration by parts gains at least
$h/(h^{2/3-\varepsilon}\sqrt\gamma)\lesssim h^{2\varepsilon}$.

Applying the chosen inverse to the direct model term gives
the source phase $-\Phi(a,0,\theta,\alpha,s)$ in
\eqref{eq:small-a-inverted-source}.
We compute this composition by stationary phase in the
intermediate variables. After rescaling
$s=\sqrt\gamma q(\theta)^{1/6}S$ and the normal spatial
variables by $\gamma$, the Hessian is uniformly invertible.
The phase and amplitude derivatives are uniformly bounded
for $a/\gamma\le1/4$ and $|S|\sim1$.
Stationary phase therefore gives
\eqref{eq:small-a-inverted-source}, with expansion parameter
$\lambda_\gamma^{-1}$.
The model cutoff becomes
part of $b_{\gamma,\pm}$; its support meets only a fixed number of
neighboring $\gamma$ pieces, which we regroup.
This retains $B_\Gamma$ and $aA_\Gamma$ in the phase and does not
require their exponentials to be symbols.

Substitution in the parametrix gives the full phase
\begin{equation}\label{eq:small-a-full-phase}
 t\tau^2(\alpha,\theta)
 +\Phi(x,y,\theta,\alpha,\sigma)
 -\Phi(a,0,\theta,\alpha,s)
 -NhL(\alpha h^{-2/3}).
\end{equation}
The amplitudes have the bounds just established. In particular,
the spectral derivatives of $\tau A_\Gamma$ are retained at the
source and observation points. The source and observation terms
in the rescaled $A$ equation are
$O(x/\sqrt\gamma)+O(a/\sqrt\gamma)=O(\sqrt\gamma)$,
with the differentiated bounds from the normalized expansion.
The phase reductions of the transverse argument therefore remain
uniform without a lower bound on $a$.

We sum the quasimodes first when $\gamma\lesssim(ht)^{1/2}$.
Both Airy arguments satisfy $\zeta\sim\omega_k$, since $x,a\ll\gamma$,
so we use their two oscillatory expansions and retain their phases.
At fixed $\omega_k$, their leading terms
$\pm\frac23h\omega_k^{3/2}$ are independent of $\theta$.
The $\theta$ derivatives of the remaining normal phases are
$O((x+a)\sqrt\gamma)$, and those of the boundary phase are
$O(|y|^2)$. On the stationary region $|y|\lesssim t$ the Hessian in
$\theta$ is consequently
\[
 2t\,Id+O\bigl(t^2+t\gamma+(x+a)\sqrt\gamma\bigr).
\]
Moreover,
$a\sqrt\gamma/t\lesssim h^{1/6-\varepsilon}$ for $t\ge h$ and
$\gamma\lesssim(ht)^{1/2}$.
Thus stationary phase in all $d-1$ tangential frequencies gives
$(h/t)^{(d-1)/2}$, with uniform amplitude bounds. Outside this
region the tangential phase is non-stationary.
Each Airy factor is $O(\omega_k^{-1/4})$ and
$L'(\omega_k)\sim\omega_k^{1/2}$. With the normalization of
\eqref{eq:Prond2cut}, each term is therefore bounded by
$C h^{-d}(h/t)^{(d-1)/2}h^{1/3}\omega_k^{-1}$.
There are $O(\lambda_\gamma)$ terms with
$\omega_k\sim\gamma h^{-2/3}$, and hence
\begin{equation}\label{eq:small-a-mode-piece}
 |\Prond^2_{S,h,a,\gamma}|
 \lesssim h^{-d}(h/t)^{(d-1)/2}\sqrt\gamma.
\end{equation}
The differentiated Airy terms are included in these amplitudes:
the observation phase contributes terms of size
$x\sqrt\gamma=O(1)$ to its $\theta$ derivatives, and the
source amplitude in \eqref{eq:small-a-inverted-source} has uniform
bounds before stationary phase.
Summing \eqref{eq:small-a-mode-piece} over
$\gamma\lesssim(ht)^{1/2}$ gives the factor $(ht)^{1/4}$.

For $\gamma\gtrsim(ht)^{1/2}$ and $t\ge c_*\sqrt\gamma$, we use
\eqref{eq:small-a-full-phase} as a sum over reflections.
The $N=0$ term is non-stationary here: its $A$ equation would give
$t/\sqrt\gamma=O(a/\gamma+\sqrt\gamma)+O(T_0t/\sqrt\gamma)$,
which is incompatible with $t/\sqrt\gamma\ge c_*$ for small
$h,T_0$ and the fixed upper bound on $\gamma$.
Lemma~\ref{lem:transverse-directions} applies: its proof only uses
$x/\gamma,a/\gamma\le1/4$, a large $\lambda_\gamma$, and the uniform
phase and amplitude bounds above. Its reduced integral is
$O(\lambda_\gamma^{-1})$. Together with the prefactor in
\eqref{VNdef}, this yields
\[
 |W^2_{N,h,\gamma}|
 \lesssim h^{-d}(h/t)^{(d-1)/2}
 \frac{\gamma^2}{h\lambda_\gamma^{3/2}\sqrt{|N|}}.
\]
Since $|N|\sim t/\sqrt\gamma$, summation gives
$C h^{-d}(h/t)^{(d-1)/2}(ht/\gamma)^{1/2}$.
We keep this bound rather than replace it by $h^{1/3}$ on part of
the range. Its geometric sum over $\gamma\gtrsim(ht)^{1/2}$ is
$O((ht)^{1/4})$, after removing the common prefactor.

It remains to treat $t<c_*\sqrt\gamma$ on these latter pieces.
Only $N=0,\pm1$ can contribute. We use the short-time reduction
leading to \eqref{eq:first-reflection-free-factor}, but do not exclude $t\gamma\lesssim h$.
Stationary phase in $\Sigma,S$ is still uniform with parameter
$\lambda_\gamma$; with tangential stationary phase its prefactor is
$h^{-d}(h/t)^{(d-1)/2}\sqrt\gamma$.
The reflected $A$ equation requires
$t\gamma\sim(x+a)\sqrt\gamma$, and its second derivative is
comparable to $t\gamma$. Thus integration in $A$ gives
$\min\{1,(h/(t\gamma))^{1/2}\}$.
For the direct term we obtain the same bounds, with the difference $x-a$ between the observation and source
normal coordinates instead of $x+a$.
The remainders in these statements are relatively
$O(T_0+\sqrt\gamma)$ on the stationary region, as in the short-time
calculation. Here we use the hypothesis $\gamma\gtrsim (ht)^{1/2}$ in the final estimate to obtain $C h^{-d}(h/t)^{(d-1)/2}(ht/\gamma)^{1/2}\lesssim C h^{-d}(h/t)^{(d-1)/2}(ht)^{1/4}$.

The pieces with $t\gamma\le h$ sum by their absolute bounds:
$\sum\sqrt\gamma\lesssim\sqrt{h/t}$.
When $t\gamma>h$, a stationary point selects only a bounded number
of dyadic pieces through
$\gamma\sim((x+a)/t)^2$, or
$\gamma\sim((x-a)/t)^2$ for the direct term.
Their contribution is again $O(\sqrt{h/t})$.
Away from these pieces, integrations by parts in $A$ give powers
of $h/(t\gamma+|x\pm a|\sqrt\gamma)$ and a geometric summation.
On the part where $|x\pm a|\sqrt\gamma$ is not small relative to
$t$, these integrations give arbitrary powers of $h/t$; on the
remaining part the tangential Hessian is uniformly comparable to
$t\,Id$. This gives the same bound for the non-stationary terms.
Thus the whole short-time contribution is bounded by
$C h^{-d}(h/t)^{d/2}$, uniformly even when the source and observation points approach
the boundary. Combining the three sums proves
\eqref{eq:small-a-second-all-times}.
\end{proof}

\subsection{Completion of the global linear estimates}
\label{sec:completion-linear-proof}

\begin{proof}[Proof of the dispersive bound in Theorem~\ref{thmdisperS}]
Decrease $a_0,T_0,h_0$ so that they are at most one, and fix
$0<\varepsilon<1/24$ small enough for all the local constructions.
The constants are uniform over the fixed finite atlas of coordinate patches and finite covering of the relevant frequency directions.
We first sum the estimates in a glancing patch and then sum the
finitely many terms of the global partition.

Start with the summation within a glancing patch. For $h<t\leq T_0$, factor out
$h^{-d}(h/t)^{(d-1)/2}$. The unreflected term, already summed in
$\gamma$ in Proposition~\ref{propfreeN=0}, has remaining factor
$(h/t)^{1/2}$.
{For $a\ge h^{2/3-\varepsilon}$, first separate the dyadic pieces for which
  $t\le c_*\sqrt\gamma$. Their first reflections have the same free
bound by \eqref{eq:first-reflection-short-sum}; all higher reflection
indices are non-stationary. In the following sums we retain only
$t\ge c_*\sqrt\gamma$.}
We first consider the case where $a\geq h^{2/3-\varepsilon}$ and
$a\geq(ht)^{1/2}$. The {central cutoff obtained by grouping the dyadic pieces with $\gamma\sim a$ is}
estimated by Proposition~\ref{proptang}, whose proof uses
Propositions~\ref{dispNgrand}, \ref{dispNpetitloin}, and
\ref{dispNpetitpres}. Their remaining factors are bounded by
\[
 (ha/t)^{1/4}\leq(h/t)^{1/4},\qquad
 (ht/a)^{1/2}\leq(ht)^{1/4}\leq(h/t)^{1/4}.
\]
For $\gamma\geq4a$, Proposition~\ref{proptransv} applies.
The threshold $t/\sqrt\gamma\sim\lambda_\gamma^{1/3}$ is
$\gamma\sim h^{1/3}t$. The terms with remaining factor
$(ht/\gamma)^{1/2}$ form a geometric sum, bounded by its first
term. The terms with remaining factor $h^{1/3}$ contribute at most
$C h^{1/3}(1+|\log h|)$, since
$a\geq h^{2/3-\varepsilon}$.

If $h^{2/3-\varepsilon}\leq a<(ht)^{1/2}$, split at
$\gamma\sim(ht)^{1/2}$ as in Section~\ref{seca<}.
For the dyadic pieces with larger $\gamma$, Section~\ref{seca>(ht)12} gives
\[
 \sum_{\substack{\gamma\gtrsim(ht)^{1/2}\\
                  \gamma\lesssim h^{1/3}t}}
      (ht/\gamma)^{1/2}
       \lesssim (ht)^{1/4},
 \qquad
 \sum_{\substack{\gamma\gtrsim(ht)^{1/2}\\
                  \gamma\gtrsim h^{1/3}t}}h^{1/3}
       \lesssim h^{1/3}(1+|\log h|).
\]
Empty sums are understood to vanish. For the dyadic pieces with smaller $\gamma$ and
$t\geq h^{1/3-2\varepsilon}$, use the already summed quasimode
estimate in Proposition~\ref{propdispsumk}, whose remaining factor
is $(ht)^{1/4}$. For $t<h^{1/3-2\varepsilon}$ use
Section~\ref{seca<t<}: the nondegenerate short-time contribution
has free decay, and the transition contribution has remaining
factor at most $h^{1/3-2\varepsilon}$.
We thus sum the dyadic pieces in their respective regimes and retain
the logarithmic term.
For $0<a\leq h^{2/3-\varepsilon}$, \eqref{P1disp} and
\eqref{eq:small-a-second-all-times} give the remaining factors
$(ht)^{1/4}+h^{1/3-2\varepsilon}$ and
$(ht)^{1/4}+(h/t)^{1/2}$, respectively, uniformly as $a\to0$.
The additional free factor satisfies $(h/t)^{1/2}\le(h/t)^{1/4}$.

All these terms are bounded by $C(h/t)^{1/4}$. In particular, for the
logarithmic term and the small-$a$ contribution we use
\begin{equation}\label{eq:absorption-dyadic-losses}
 h^{1/3}(1+|\log h|)(h/t)^{-1/4}
   =t^{1/4}h^{1/12}(1+|\log h|)\lesssim 1\,,\quad\quad  h^{1/3-2\varepsilon}(h/t)^{-1/4}
   =t^{1/4}h^{1/12-2\varepsilon}\lesssim 1\,.
\end{equation}
Thus the glancing contribution is bounded by
$C h^{-d}(h/t)^{d/2-1/4}$.
The same bounds hold after inserting the additional spectral
cutoffs chosen above:
{ in the Airy representation these cutoffs
multiply the amplitude by \eqref{eq:airy-global-energy-factor},
with the uniform symbol bounds recorded there.}

We now proceed with the finite reconstruction and explain how the local estimates with {$x\leq a$} give a bound
for both variables of the kernel. For a real metric and real $\psi$, the kernel of $\mathcal K_h$,
defined in \eqref{eq:global-localized-kernel}, satisfies
\[
 \mathcal K_h(t;z,z')=\mathcal K_h(t;z',z),\qquad
 \mathcal K_h(-t;z,z')=\overline{\mathcal K_h(t;z,z')}.
\]
These identities follow by choosing a real Dirichlet eigenbasis.
Before making the local microlocal decomposition, we exchange the two
points if necessary so that the source has the larger distance to the
boundary. In boundary normal coordinates the observation then satisfies
$x\leq a$. This requires no spatial cutoff depending on $h$.
We apply the preceding estimates with this ordering and
tangential coordinates centered at the source footpoint. Uniformity
over that compact family of coordinate systems is part of
Lemma~\ref{lem:direct-glancing-matching}. Negative times follow from
the second identity.

We apply \eqref{eq:global-operator-partition}, estimate the interior and
mixed terms by \eqref{eq:interior-piece-dispersion}, and use the
one-sided boundary partition \eqref{eq:global-kernel-partition}.
The hyperbolic terms satisfy \eqref{eq:hyperbolic-kernel-bound};
the elliptic terms are $O(h^\infty)$ by
Lemma~\ref{lem:elliptic-spectral-removal}. For the glancing terms,
Lemmas~\ref{lem:boundary-locality} and
\ref{lem:direct-glancing-matching} identify the local and global
kernels. {The tangential and time-frequency cutoffs} preserve their bounds
by \eqref{eq:tangential-time-cutoff-kernel}--\eqref{eq:weighted-time-convolution}.
The normal variable is fixed by these cutoffs, so the ordering $x\leq a$
is preserved. Physical multipliers and coordinate density factors
are uniformly bounded. Summing the fixed finite number of pieces
therefore gives the same bound for the global kernel. The remainders are uniform in the source distance and are absorbed
by taking their order sufficiently large.

For $|t|\leq h$ we use the uniform bound $Ch^{-d}$ for the globally
localized kernel. Indeed, if
$-\Delta_g e_m=\lambda_m e_m$ with an orthonormal eigenbasis,
Cauchy--Schwarz gives
\[
 |\mathcal K_h(t;z,z')|
 \leq\left(\sum_m\psi(h^2\lambda_m)^2|e_m(z)|^2\right)^{1/2}
      \left(\sum_m\psi(h^2\lambda_m)^2|e_m(z')|^2\right)^{1/2}
 \lesssim h^{-d}.
\]
For completeness, each diagonal sum is bounded by a constant times
the Dirichlet heat kernel at time $h^2$, since
$\psi(h^2\lambda)^2\lesssim  e^{-h^2\lambda}$.
The uniform heat-kernel bound $C(h^2)^{-d/2}$ follows from the
Sobolev (or Nash) inequality for the Dirichlet form on the fixed smooth
compact domain; it holds up to the boundary. This gives the small-time bound, including the bounded continuation
through zero used in {the convolutions in time above}. At these times one may
use the exact spectrally localized kernel in the matching lemma:
{the nested time-frequency and tangential cutoffs} have uniformly integrable
kernels, so they preserve its $Ch^{-d}$ bound. The local comparison
then gives the same bound for the corresponding cut-off parametrix.
Carrying out the local
estimates on a slightly larger time interval before applying
$\psi^2(hD_t)$ gives
precisely the time margins of Section~\ref{sec:boundary-spectral-matching}.

We have proved $Ch^{-d}\min\{1,(h/|t|)^{d/2-1/4}\}$.
For $h<|t|\leq T_0$ the other two terms in the bracket of
\eqref{dispco} are dominated by its first term, since
$(h/|t|)^{1/2}\leq(h/|t|)^{1/4}$ and
$h^{1/3}(h/|t|)^{-1/4}=h^{1/12}|t|^{1/4}\lesssim 1$.
This proves \eqref{dispco}; substituting
$t=t_{\mathrm{phys}}/h$ proves
\eqref{eq:dispersion-physical-time}.
\end{proof}

\begin{proof}[Proof of Theorem~\ref{thmStrichartz}]
We use the exact global family $\mathcal U_h(t)\Pi_h$.
Its $L^2$ norm is bounded uniformly, and its $TT^*$ kernel is
$\mathcal K_h(t-t')$. We choose the time interval short enough for all differences of times
to lie in the dispersive range.
For a pair on the admissible line,
$1/q=(d/2-1/4)(1/2-1/r)$.
The $TT^*$ argument, including the endpoint theorem of
\cite{KeTa98}, gives the power {$h^{-(d-(d/2-1/4))(1/2-1/r)} =h^{-d(1/2-1/r)+1/q}=h^{-s-1/q}.$} Here the equality of the two powers uses the admissible-line relation.
For a pair below that line and $q<\infty$, choose $r_0\leq r$
on the line with the same $q$ and apply spectral Bernstein from
$L^{r_0}$ to $L^r$, with a larger cutoff equal to one on
$\supp\psi$. Its factor $h^{-d(1/r_0-1/r)}$ gives exactly the
stated exponent. For $q=\infty$, the assertion is the $L^2$
energy bound followed by spectral Bernstein. The change of time preceding \eqref{eq:strichartz-physical-time}
then gives the estimate in physical time.
\end{proof}

\begin{rmq}
For the Strichartz estimates one may also localize the solution
before applying $TT^*$. For the physical partition above, setting
$T_{j,h}(t)=\chi_j\mathcal U_h(t)\Pi_h$ gives
\[
 T_{j,h}(t)T_{j,h}(s)^*
 =\chi_j\mathcal U_h(t-s)\Pi_h^2\chi_j.
\]
The finite reconstruction $\mathcal U_h(t)\Pi_h=\sum_j\chi_jT_{j,h}(t)$
requires estimates only for these diagonal families. Microlocal
refinements can likewise be chosen on the observation side, with their
adjoints appearing automatically on the source side in $TT^*$.
This is in the spirit of the Isozaki--Kitada construction
\cite{IsozakiKitada85} and its use for Strichartz estimates by
Bouclet--Tzvetkov \cite[Section~4, especially Lemma~4.3]{BoucletTzvetkov07}:
the adjoint symmetry of the $TT^*$ kernel allows one to use the time
direction in which the local parametrix gives the required estimates. Such a proof would
still require the corresponding local dispersive estimates and the
$L^2$ bounds for the localized families. We mention this only for Strichartz estimates: Theorem~\ref{thmdisperS}
still uses the bound for the full kernel proved above.
\end{rmq}

\section{The defocusing cubic nonlinear Schr\"odinger equation}
\label{sec:defoc-cubic-nonl}

Throughout this section $t$ is physical time and the linear propagator is
$e^{-it\Delta_g}$. We work on the compact three-dimensional manifold
$(\Omega,g)$ of Theorem~\ref{gwp}, with its Dirichlet Laplacian and
Riemannian volume measure. {All frequency cutoffs in this section are global spectral multipliers
and commute with the linear flow.}

To define the global spectral cutoffs, we choose a nonincreasing function $\phi\in C^\infty_0([0,+\infty))$,
equal to one on $[0,1]$ and zero on $[4,+\infty)$, and set
$\rho(\lambda)=\phi(\lambda/4)-\phi(\lambda)$.
Thus $\rho$ is smooth, nonnegative, and supported in $[1,16]$.
Define the global Dirichlet Littlewood--Paley operators by
\[
 \Delta_{-1}=\phi(-\Delta_g),\qquad
 \Delta_j=\rho(-2^{-2j}\Delta_g),\quad j\geq0.
\]
Indeed, the telescoping identity
\[
 \phi(\lambda)+\sum_{j=0}^{J}\rho(2^{-2j}\lambda)
       =\phi(2^{-2(J+1)}\lambda)
\]
gives $\sum_{j\geq-1}\Delta_j=\mathrm{Id}$, with strong convergence
on $L^2(\Omega)$ and on the spectral Sobolev spaces used below.
For $j\geq0$ and $h=2^{-j}$, the operator $\Delta_j=\rho(-h^2\Delta_g)$
is precisely the construction of $\Pi_h$ in
\eqref{eq:global-spectral-cutoffs}, with the annular profile $\psi$
replaced by $\rho$. These are different operators when the cutoff
profiles differ, but the same construction applies. The preceding
linear estimates apply to this fixed annulus
by a finite subdivision and rescaling of the spectral window.
An enlarged profile equal to one on $\supp\rho$ plays the same role as
$\widetilde\psi$ in that construction. The block $\Delta_{-1}$ is dealt with
separately as a compact low-frequency spectral multiplier.
Spectral Sobolev norms are denoted here by
$\|f\|_{H_D^s}=\|(1-\Delta_g)^{s/2}f\|_2$; in particular
$H_D^1=H^1_0(\Omega)$ and $H_D^2=H^2(\Omega)\cap H^1_0(\Omega)$.
At higher regularity we will keep track of the Dirichlet boundary
conditions; we do not identify the spectral and ordinary Sobolev
spaces without these conditions.

We use the Dirichlet multiplier and square-function theory of
\cite[Theorems~1.1--1.2 and Proposition~1.7]{IPsquare}.
The heat-flow proof applies to smooth uniformly elliptic coefficients,
as explained there, and hence to the present Laplace--Beltrami operator
in a finite smooth atlas of coordinate charts. In particular, smooth Littlewood-Paley operators
are uniformly bounded on each fixed $L^p$, $1<p<\infty$, and
\begin{equation}\label{eq:nonlinear-spectral-tools}
 \begin{aligned}
 \|\Delta_j f\|_{L^p}
   &\leq C\,2^{3j(1/10-1/p)}\|\Delta_j f\|_{L^{10}},
                       &&10\leq p\leq\infty,\\
 \sum_{j\geq-1}\|\Delta_j f\|_{H^1}^2
   &\leq C\|f\|_{H^1}^2, && f\in H^1_0(\Omega).
 \end{aligned}
\end{equation}
The first constant is uniform in $p$: we factor an enlarged cutoff
through $e^{h^2\Delta_g}$, $h=2^{-j}$, use its $L^{10}\to L^{10}$
and $L^{10}\to L^\infty$ bounds, and interpolate the output norm.
The remaining compact spectral multiplier acts on the fixed space
$L^{10}$. The second bound follows directly from the spectral theorem
and finite overlap. In the uniqueness argument the square-function estimate is used at
fixed exponent; its constant need not remain bounded as $p\to\infty$.

We next record the consequence of Theorem~\ref{thmStrichartz}
in physical time on an arbitrary fixed interval, with space dimension $d\geq 2$.
\begin{thm}\label{thmStriSob}
Let $2\leq q\leq\infty$, $2\leq r<\infty$ satisfy the admissibility
condition of Theorem~\ref{thmStrichartz}, and put
$s=d/2-2/q-d/r$. For every $T_*>0$, the homogeneous linear solution
$v(t)=e^{-it\Delta_g}v_0$ satisfies
\begin{equation}\label{eq:Hs}
 \|v\|_{L^q([0,T_*];L^r(\Omega))}
 \leq C_{T_*}\|v_0\|_{H_D^{s+1/q}}.
\end{equation}
The constant is bounded for $0<T_*\leq1$.
\end{thm}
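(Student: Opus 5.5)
The plan is to reduce \eqref{eq:Hs} to the frequency-localized estimate \eqref{eq:strichartz-physical-time}, summed over the Littlewood--Paley blocks $\Delta_j$ just defined. The time interval $[0,T_*]$ is long in physical time, so each dyadic piece is handled by chopping the interval into subintervals of length $hT_0$, $h=2^{-j}$.

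\textbf{Step 1 (one block on a long interval).} Fix $j\ge0$ and $h=2^{-j}$. By the remarks above, \eqref{eq:strichartz-physical-time} holds with $\psi$ replaced by $\rho$ (finite subdivision of the spectral window). Cover $[0,T_*]$ by $M\lesssim 1+T_*/(hT_0)$ intervals $I_m=[t_m,t_m+hT_0]$. On $I_m$ write $v(t)=e^{-i(t-t_m)\Delta_g}v(t_m)$ and apply the estimate with initial datum $v(t_m)$; unitarity gives $\|\Delta_j v(t_m)\|_2=\|\Delta_j v_0\|_2$. Summing the $q$-th powers over $m$ yields
\[
 \|\Delta_j v\|_{L^q([0,T_*];L^r)}\le C\,M^{1/q}h^{-s}\|\Delta_j v_0\|_{L^2}
 \le C(1+T_*)^{1/q}\,2^{j(s+1/q)}\|\Delta_j v_0\|_{L^2},
\]
with the usual modification when $q=\infty$. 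For $0<T_*\le1$ we have $M\lesssim h^{-1}$, so the constant stays bounded.

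\textbf{Step 2 (low frequencies).} The block $\Delta_{-1}=\phi(-\Delta_g)$ is a compact spectral multiplier. Energy conservation, together with the smoothing bound $L^2\to L^r$ for $\phi(-\Delta_g)$ (factor it through the heat semigroup $e^{\Delta_g}$ via an enlarged cutoff), bounds its contribution by $C T_*^{1/q}\|v_0\|_2$.

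\textbf{Step 3 (summation).} This is the expected main obstacle: summing in $j$ inside $L^q_tL^r_x$ without a loss. For $r<\infty$ use the square-function estimate of \cite{IPsquare}, $\|v\|_{L^r}\lesssim\|(\sum_j|\Delta_j v|^2)^{1/2}\|_{L^r}$, valid at fixed $r$. Since $q,r\ge2$, Minkowski's inequality gives
\[
\|v\|_{L^q_tL^r_x}\lesssim\Big(\sum_j\|\Delta_j v\|_{L^q_tL^r_x}^2\Big)^{1/2}.
\]
Here the square-function estimate is applied to a slightly enlarged family of cutoffs $\tilde\Delta_j$ with $\tilde\Delta_j\Delta_j=\Delta_j$, so that the square function dominates $v$. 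Inserting Step 1 and using almost orthogonality, $\sum_j 2^{2j(s+1/q)}\|\Delta_j v_0\|_2^2\sim\|v_0\|_{H_D^{s+1/q}}^2$ by the spectral theorem, gives \eqref{eq:Hs}.

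If the vector-valued square function caused trouble, a fallback is to shave an $\varepsilon$ off $r$ via Bernstein, but that would lose derivatives. The clean route is therefore the square-function estimate from \eqref{eq:nonlinear-spectral-tools}, which explains why $r<\infty$ is assumed in the statement.
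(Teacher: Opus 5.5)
Your proposal is correct and is essentially the paper's proof: you chop $[0,T_*]$ into $O(1+T_*/h)$ intervals of length $\sim h$, apply \eqref{eq:strichartz-physical-time} with unitarity to get $Ch^{-s}(1+T_*/h)^{1/q}\le C(1+T_*)^{1/q}h^{-s-1/q}$, and then sum the blocks in $\ell^2$ using the Dirichlet square-function estimate, Minkowski for $q,r\ge2$ and spectral orthogonality, with $\Delta_{-1}$ treated as a compact spectral multiplier. One small repair in Step 1: as stated, the block estimate gives $\|v(t_m)\|_2$ rather than $\|\Delta_j v_0\|_2$ on the right, so apply it instead to $\tilde\Delta_j v(t_m)$ with $\tilde\Delta_j\Delta_j=\Delta_j$; this costs nothing in the final orthogonality sum.
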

\begin{proof}
For a block at frequency $h^{-1}$, partition $[0,T_*]$ into at most
$C(1+T_*/h)$ intervals of length at most $ch$ and use
\eqref{eq:strichartz-physical-time}, together with unitarity.
This gives $C h^{-s}(1+T_*/h)^{1/q}$, not
$C T_*^{1/q}h^{-s-1/q}$ uniformly for all $T_*>0$.
For fixed $T_*$ it is bounded by $C_{T_*}h^{-s-1/q}$.
The square-function inequality, Minkowski's inequality for $q,r\geq2$,
and spectral orthogonality allow summation in $\ell^2$.
The low-frequency term is finite dimensional. This gives the stated estimate for the homogeneous linear equation.
\end{proof}

We also need the double-endpoint estimate for the Duhamel term,
with integration from the initial time to $t$.
In dimension three the endpoint is $(2,10)$ and
$\sigma=3/2-1-3/10=1/5$.
\begin{prop}\label{inStri}
Let $I=[a,b]$, $|I|\leq ch$, where $c$ depends only on the linear
time window, and let
\begin{equation}\label{schrodinhomo}
 -i\partial_tu+\Delta_g u=f,\qquad u(a)=0,
 \qquad u|_{\partial\Omega}=0.
\end{equation}
For every fixed smooth annular spectral cutoff, in particular
$\Delta_j$ with $h=2^{-j}$, one has
\begin{equation}\label{eq:inhom-endpoint-global}
 \|\Delta_j u\|_{L^2(I;L^{10})}
 \leq C h^{-2/5}\|\Delta_j f\|_{L^2(I;L^{10/9})}.
\end{equation}
The analogous homogeneous estimate on $I$ has factor $h^{-1/5}$.
\end{prop}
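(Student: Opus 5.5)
The plan is to derive \eqref{eq:inhom-endpoint-global} from the semiclassical dispersive bound of Theorem~\ref{thmdisperS}, via the abstract Keel--Tao machinery applied on a physical-time interval of length $\leq ch$. Since $\Delta_j$ is a global spectral multiplier, it commutes with the flow and with the Duhamel formula. Hence
\[
\Delta_j u(t)=i\int_a^t e^{-i(t-s)\Delta_g}\Delta_j f(s)\,ds .
\]
Write $\Delta_j=\widetilde\Delta_j\Delta_j$, with $\widetilde\Delta_j=\widetilde\rho(-h^2\Delta_g)$ for an enlarged profile equal to one on $\supp\rho$. Set $T(t)=e^{-it\Delta_g}\widetilde\Delta_j$ for $t\in I$. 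Then $T(t)T(s)^*=e^{-i(t-s)\Delta_g}\widetilde\Delta_j^2$, which is uniformly bounded on $L^2$ because the flow is unitary. Its kernel is the globally localized kernel $\mathcal K_h$ of \eqref{eq:global-localized-kernel}, with $\psi$ replaced by $\widetilde\rho$ (after the finite subdivision and rescaling of the spectral window already mentioned). Taking $c\leq T_0$, all differences $|t-s|\leq ch$ lie in the dispersive range. By \eqref{eq:dispersion-physical-time} with $d=3$ we get
\[
\|T(t)T(s)^*\|_{L^1\to L^\infty}\lesssim h^{-3}\min\bigl(1,(h^2/|t-s|)^{5/4}\bigr).
\]

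Next I rescale time by $t=a+h^2\tilde t$, so that $\tilde t$ ranges over an interval of length $\leq c/h$. The operators $\widetilde T(\tilde t)=T(a+h^2\tilde t)$ then satisfy the energy bound and the decay $\|\widetilde T(\tilde t)\widetilde T(\tilde s)^*\|_{L^1\to L^\infty}\lesssim h^{-3}|\tilde t-\tilde s|^{-\sigma}$ with $\sigma=5/4>1$. This is exactly the Keel--Tao setting with decay exponent $\sigma$, multiplied by the constant $h^{-3}$. The sharp admissible endpoint is $1/q=\sigma(1/2-1/r)$ with $q=2$, which gives $r=10$.

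Keel--Tao \cite{KeTa98} yields the retarded double-endpoint estimate $\|\int_{\tilde s<\tilde t}\widetilde T(\tilde t)\widetilde T(\tilde s)^*F(\tilde s)\,d\tilde s\|_{L^2L^{10}}\lesssim h^{-3\theta}\|F\|_{L^2L^{10/9}}$. Here $\theta=1-2/r=4/5$, because the dispersive constant enters with power $1-2/r$ under interpolation with the $L^2$ bound. The truncation $s>a$ corresponds to $F$ supported in $\tilde s\geq0$. Undoing the rescaling, $ds=h^2d\tilde s$, and each $L^2_t$ norm brings $h$ (one from the output norm, one inverse factor from the input norm), so the net time-scaling factor is $h^2$. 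The total power is therefore $h^{-12/5}h^{2}=h^{-2/5}$. The homogeneous estimate comes the same way from the $TT^*$ bound, with half the power, $h^{-1/5}$; this agrees with Theorem~\ref{thmStrichartz}, where $s=1/5$.

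Finally I insert $\Delta_j f=\widetilde\Delta_j\Delta_j f$ and remove $\widetilde\Delta_j^2$ on the output side. This uses only the uniform $L^p$-boundedness of smooth spectral multipliers from \cite{IPsquare}, or alternatively running the argument with $\Delta_j$ itself as the cutoff.

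The step I expect to be the main obstacle is checking that the endpoint argument of \cite{KeTa98} applies to the \emph{retarded}, time-truncated operator at $q=\tilde q=2$. The Christ--Kiselev lemma fails there, so I will appeal directly to Keel--Tao's Section~7 treatment of the inhomogeneous endpoint. Its only inputs are the energy and dispersive bounds for $\widetilde T(\tilde t)\widetilde T(\tilde s)^*$, here with $\sigma=5/4$ and restricted to $|\tilde t-\tilde s|\leq c/h$. A second point to check is that the bounded continuation of $\mathcal K_h$ through $t-s=0$, given by the $Ch^{-d}$ bound proved at the end of Section~\ref{sec:completion-linear-proof}, is compatible with the stated $\min$ form. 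That bound is exactly what makes the dyadic time decomposition in Keel--Tao converge.
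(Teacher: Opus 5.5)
Your proposal is correct and is essentially the paper's proof. Both apply the retarded double-endpoint estimate (7) of \cite[Theorem~1.2]{KeTa98} directly to the exact spectrally localized propagator, restricted to $I$, with an enlarged cutoff that acts as the identity on $\Delta_j f$; since the decay exponent $5/4>1$, both pairs may be $(2,10)$ and no Christ--Kiselev argument is needed. The only difference is bookkeeping: you rescale by $t=a+h^2\tilde t$ and carry the dispersive constant $h^{-3}$ through the power $1-2/r=4/5$, obtaining $h^{-12/5}\cdot h^{2}=h^{-2/5}$, whereas the paper works in semiclassical time with homogeneous factor $h^{-7/10}$ (Duhamel factor $h^{-7/5}$) and gains one factor $h$ from $t=h\tilde t$.
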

\begin{proof}
We apply \cite[Theorem~1.2, estimate~(7)]{KeTa98} to the exact spectrally
localized propagator, with an auxiliary cutoff equal to one on the
frequency support in question. Its decay exponent is $5/4>1$;
both pairs in the inhomogeneous estimate may therefore be $(2,10)$.
We restrict the family to $I$ and extend the source by zero.
All time differences lie in the range of the global dispersive theorem.
In semiclassical time the homogeneous factor is $h^{-7/10}$ and the
Duhamel integral without the equation's prefactor has factor
$h^{-7/5}$. The substitution $t=h\widetilde t$ contributes a factor
$h$ to the latter operator norm, giving $h^{-2/5}$ in physical time.
Equivalently, the two homogeneous physical-time factors are
$h^{-1/5}$. The auxiliary cutoff equals one on the support of $\Delta_j f$,
so its square acts as the identity there. We apply this estimate directly to the Duhamel term, without a Christ--Kiselev argument or a spatial
commutator.
\end{proof}
We are interested in $d=3$: for $q=2$ we have $r=10$, $s+1/q=7/10<1$. The crucial point is that $r<+\infty$: in \cite{bss12}, for $q=2$, $r=+\infty$, $s+1/q=1$ and one ends up with two successive logarithmic losses that force to consider only lower order nonlinearities. In \cite{FP}, Strichartz estimates are bypassed and replaced by bilinear estimates: logarithmic losses turn out to be more manageable in that setting and one can implement a suitable version of the Brezis-Gallouet argument, obtaining global existence of solutions to the cubic nonlinear Schr\"odinger equation but only for $H^{s}(\Omega)$ data, with $1<s\leq 3$.
\subsection{Global well-posedness in the energy space: proof of Theorem \ref{gwp}}
\label{sec:glob-wellp-ness}
\leavevmode

In this section we closely follow the elegant argument from \cite{bgt04}, replacing their double endpoint Strichartz estimate by the estimate from Proposition \ref{inStri}. The double endpoint estimate need not be lossless with respect to factors of $h^{-1}$, but must lie on the scale invariance line and have a finite spatial exponent. The rest of the argument then requires routine modifications in the numerology (which is, again, tied by scaling considerations).

We consider $-i\partial_t v+\Delta_gv=|v|^2v$, with
$v_0\in H^1_0(\Omega)$, and use {$E(v)=\tfrac12\|\nabla_gv\|_2^2+\tfrac14\|v\|_4^4.$} Finite-dimensional Dirichlet spectral Galerkin approximations conserve
mass and energy. Their uniform $L^\infty H^1_0$ bound and the equation
give compactness in $C_tL^2$ on bounded time intervals: the time
derivatives are bounded in $L^\infty H^{-1}$, and $H^1_0\Subset L^2$.
Passing to the limit gives a global distributional solution in
$L^\infty_{\mathrm{loc}}(\mathbb R;H^1_0)\cap C(\mathbb R;L^2)$,
with conserved mass and $E(v(t))\leq E(v_0)$ for every $t$.
The strong $L^4$ convergence needed for the potential energy follows
by interpolation between strong $L^2$ convergence and boundedness
in $L^6$. The same construction is available with any initial time.
We first prove uniqueness for the solutions so obtained and then
deduce energy conservation and strong $H^1$ continuity.

For the cubic term we use the following estimate.
For $j\geq0$, $h=2^{-j}$, and $F\in W^{1,10/9}_0(\Omega)$,
\begin{equation}\label{eq:dirichlet-product-block}
 \|\Delta_j F\|_{10/9}\leq Ch\|\nabla_g F\|_{10/9}.
\end{equation}
We give a direct proof of this estimate.
For a test function $G\in L^{10}$, integration by parts gives
\[
 |\langle\Delta_jF,G\rangle|
 =|\langle\nabla_gF,
       \nabla_g(-\Delta_g)^{-1}\Delta_jG\rangle|
 \lesssim h\|\nabla_gF\|_{10/9}\|G\|_{10}.
\]
There is no boundary term because $F$ has zero trace. The operator
bound follows by writing
\[
 \nabla_g(-\Delta_g)^{-1}\Delta_j
   =h^2\nabla_g e^{h^2\Delta_g}m(-h^2\Delta_g),
 \qquad m(\lambda)=e^\lambda\rho(\lambda)/\lambda,
\]
and using the fixed-$L^{10}$ gradient heat estimate and multiplier
bound in \cite{IPsquare}. Thus, for $v\in H^1_0(\Omega)$,
\begin{equation}\label{eq:cubic-product-energy}
 \|\Delta_j(|v|^2v)\|_{10/9}
 \lesssim h\|v\|_5^2\|\nabla_gv\|_2
 \lesssim h\|v\|_{H^1}^3.
\end{equation}
Indeed, $1/(10/9)=2/5+1/2$. Approximation by compactly supported
smooth functions in $H^1_0$ proves that $|v|^2v$ belongs to
$W^{1,10/9}_0$ and justifies the preceding calculation for energy
solutions. This is the product estimate we shall use in the uniqueness argument.

We next localize in time to track the dependence on $p$.
Let $I=[a,b]$, $\ell=b-a\leq1$, and set
$M=1+\|v\|_{L^\infty(I;H^1)}$.
Time cutoffs on intervals of length $ch$, followed by
Proposition~\ref{inStri}, give
\begin{equation}\label{tameH1}
 h^{1/5}\|\Delta_jv\|_{L^2(I;L^{10})} \lesssim \big(\|\Delta_jv(a)\|_2+\|\Delta_jv(b)\|_2 +h^{1/2}\|\Delta_jv\|_{L^2(I;H^1)} +h^{-1/5}\|\Delta_j(|v|^2v)\|_{L^2(I;L^{10/9})}\big).
\end{equation}
On an interior interval multiply $\Delta_jv$ by a
smooth cutoff with derivative $O(h^{-1})$ and vanishing initial value.
{The term produced by differentiating the time cutoff is estimated in
$L^1_tL^2_x$ by the homogeneous}
endpoint bound and Minkowski, giving
$C h^{-1/5-1/2}\|\Delta_jv\|_{L^2_tL^2_x}$ before multiplication
by $h^{1/5}$. Frequency localization converts this to the third
term in \eqref{tameH1}. The cubic source term is controlled by the double-endpoint
Duhamel estimate. We sum the squares over the bounded-overlap intervals;
the two end intervals are handled from $v(a)$ and $v(b)$,
respectively, using forward and backward Duhamel. If $\ell<ch$,
one interval suffices. This proves \eqref{tameH1} also for weak
energy solutions, since each spectral block satisfies Duhamel's
formula and the cubic term belongs to $L^\infty_tL^2_x$.

Combining \eqref{eq:cubic-product-energy}, Bernstein, and
$\|\Delta_jv(a)\|_2+\|\Delta_jv(b)\|_2\lesssim hM$ gives, uniformly
for $10\leq p<\infty$,
\begin{equation}\label{eq:derdesder}
 \|\Delta_jv\|_{L^2(I;L^p)}
 \lesssim \left[
 h^{3/p}\|\Delta_jv\|_{L^2(I;H^1)}
 +h^{1/2+3/p}M
 +h^{3/10+3/p}\sqrt\ell\,M^3\right].
\end{equation}
The exponent $3/10=4/5-(1/5+3/10)$ in the
last term is positive and independent of $p$.
We now sum by the triangle inequality in $L^2_tL^p_x$.
For the first term, we use \eqref{eq:nonlinear-spectral-tools} and
\[
 \sum_{j\geq0}2^{-3j/p}\|\Delta_jv\|_{L^2(I;H^1)}
 \leq\left(\sum_{j\geq0}2^{-6j/p}\right)^{1/2}
       \left(\sum_{j\geq0}\|\Delta_jv\|_{L^2(I;H^1)}^2\right)^{1/2}
 \lesssim \sqrt{p\ell}\,M.
\]
The other two sums are geometric with constants independent of $p$.
The low block contributes at most $C\sqrt\ell M$, uniformly in $p$,
by its fixed $L^2\to L^\infty$ bound. Consequently, for $ 10\leq p<\infty$,
\begin{equation}\label{eq:yudovich-large-p}
 \|v\|_{L^2(I;L^p)}\leq C_M(1+\sqrt{p\ell})\,,
\end{equation}
where $C_{M}$ is independent of both $I$ and $p$.

We now follow the Yudovich argument in
\cite[proof of Theorem~3]{bgt04}, using our $(2,10)$ endpoint.
Let $v_1,v_2$ be two energy solutions with the same value at time $a$.
On a fixed bounded time interval, we choose
$M\geq1+\|v_1\|_{L^\infty H^1}+\|v_2\|_{L^\infty H^1}$, put
$w=v_1-v_2$, and let $Y(t)=\|w(t)\|_2^2$. Then
\[
 -i\partial_tw+\Delta_gw=|v_1|^2v_1-|v_2|^2v_2,
 \qquad w(a)=0,\qquad w|_{\partial\Omega}=0.
\]
The equation gives $\partial_tw\in L^\infty H^{-1}$.
The $H^1_0,H^{-1}$ duality identity for the derivative of the
$L^2$ norm (or a time regularization) shows that $Y$ is absolutely
continuous and, for almost every $t$, {$|Y'(t)|\lesssim \int_\Omega(|v_1|^2+|v_2|^2)|w|^2.$} For any finite $p\geq10$, H\"older and interpolation between $L^2$
and $L^6$ yield
\begin{equation}\label{eq:yudovich-differential}
 |Y'(t)|\lesssim  M^{6/p}
    \big(\|v_1(t)\|_p^2+\|v_2(t)\|_p^2\big)
    Y(t)^{1-3/p},
\end{equation}
since
$\|w\|_{2p/(p-2)}\leq\|w\|_2^{1-3/p}\|w\|_6^{3/p}$.
We first integrate with $Y$ replaced by $Y+\delta$ and let
$\delta\downarrow0$. From \eqref{eq:yudovich-large-p} on $[a,t]$,
\begin{equation}\label{eq:yudovich-vanishing}
 Y(t)^{3/p}\leq \frac{C_M}{p}\big(1+p(t-a)\big),
 \qquad
 Y(t)\leq\big[C_M(t-a+p^{-1})\big]^{p/3}.
\end{equation}
We choose $\ell_0>0$ so that $C_M\ell_0<1/2$, independently of $p$.
For $a\leq t\leq a+\ell_0$, let $p\to\infty$ in
\eqref{eq:yudovich-vanishing}: the right-hand side tends to zero.
Hence $w=0$ on this interval. We restart at its right endpoint, using
$C_tL^2$ continuity, and iterate finitely many times on every bounded
time interval. The backward argument is identical. Thus the solution is unique in
$L^\infty_{\mathrm{loc}}H^1_0\cap C_tL^2$.

We now prove conservation, strong continuity and continuous dependence on the data.
Fix any time $a$. The compactness construction with datum $v(a)$ gives
an energy-inequality solution both forward and backward from $a$;
uniqueness identifies it with $v$. Thus $E(v(t))\leq E(v(a))$ for
all $t$. Exchanging $a,t$ proves equality, without having assumed
energy conservation in the uniqueness argument.
The $C_tL^2$ continuity and boundedness in $H^1_0$ give weak $H^1_0$
continuity and strong $L^4$ continuity. Energy conservation then gives
continuity of $\|\nabla_gv(t)\|_2$, hence strong continuity in $H^1_0$.

The same compactness and uniqueness argument gives continuous
dependence on the initial data. If $v_{0,n}\to v_0$ in $H^1_0$, the solutions
have a common energy bound; any subsequential compactness limit in
$C_tL^2$ is the unique solution with datum $v_0$. For any $t_n\to t$,
strong $L^4$ convergence and conservation of the convergent initial
energies give convergence of the gradient norms, while the gradients
converge weakly. Therefore $v_n(t_n)\to v(t)$ strongly in $H^1_0$.
A contradiction argument on each compact time interval yields
$v_n\to v$ in $C_tH^1_0$. This proves the defocusing assertion of
Theorem~\ref{gwp}.

For the focusing equation the estimates and the uniqueness argument
are the same. We obtain the required uniform energy bound as follows. Write $m=\|v_0\|_2$, $X=\|\nabla_gv\|_2$ and
$E_-(v)=X^2/2-\|v\|_4^4/4$. The Dirichlet Sobolev inequality and
interpolation give {$E_-(v)\geq \tfrac12X^2-CmX^3.$} For $m>0$, let $X_*=(4Cm)^{-1}$. If $X(0)<X_*/4$, then
$E_-(v_0)\leq X(0)^2/2<X_*^2/8$, whereas any first crossing of
$X=X_*$ would force $E_-(v)\geq X_*^2/4$, a contradiction.
This trapping argument applies uniformly to the Galerkin approximations
under the hypothesis
$\|v_0\|_2\|\nabla_gv_0\|_2<c_\Omega$, with
$c_\Omega=(16C)^{-1}$. Indeed, this is precisely $X(0)<X_*/4$,
and the spectral truncations do not increase either factor. It gives global
compactness, uniqueness, and conservation as above; $m=0$ is trivial.
Indeed, $E_-(v)\geq X^2/4$ for $X\leq X_*$, hence
$X(t)\leq\sqrt{2}X(0)<X_*/2$ for the approximations and their limit.
Restarting the construction from $v(a)$ preserves the energy bound
below $X_*^2/4$, so the conservation argument above applies.
The smallness assumption in Theorem~\ref{gwp} is therefore a
condition on the product of the initial mass and gradient norms. It is less
restrictive than smallness of the whole $H^1_0$ norm. The quantity has
the critical $\dot H^{1/2}$ scaling of the three-dimensional cubic
equation, and spectral interpolation gives {$\|(-\Delta_g)^{1/4}v_0\|_2^2 \leq \|v_0\|_2\|\nabla_gv_0\|_2.$} We use only the displayed implication; the converse is not needed.
For preservation of $H^{2}$ regularity, one may proceed exactly as in \cite{bgt04}, using the Brezis-Gallouet argument. One should notice, however, that the resulting bound on the Sobolev norm is a double exponential (to be compared to the triple exponential from \cite{FP}). We address this issue in the next subsection.
\subsection{Exponential growth for higher Sobolev norms $H^{m}$, $m>1$. Proof of Theorem \ref{growthHs}}
\label{sec:growth-norm-Sobolev}
The double exponential growth for higher Sobolev norms was reduced to a single exponential, for solutions on a generic compact, boundaryless manifold, in \cite{PTV}, using modified energy methods. We now check that the careful treatment of modified energies in \cite{PTV} is not spoiled if we replace the endpoint Strichartz estimate from \cite{bgt04} by our endpoint estimate.

In \cite{PTV}, on a compact $d$-dimensional Riemannian manifold $M^d$
without boundary, bounds on the $H^{2k}$ norms follow from computing
the time derivative of
\begin{equation}
  \label{E2k}
   {\mathcal E}_{2k}(v)=
 \|\partial_t^k v\|_{L^2(M^d)}^2-\frac{1}2 \int_{M^d} |\partial^{k-1}_{t} \nabla_g (|v|^2)|_g^2 - \int_{M^d}|\partial^{k-1}_{t} (|v|^{2} v)|^2 
\end{equation}
All integrals and norms are taken with respect to the Riemannian volume form. One gets 
\begin{multline}
  \label{dtE2k}
\frac d{dt} {\mathcal E}_{2k}(v)=2  \int_{M^d} \partial^{k}_{t} (|v|^{2})\partial^{k-1}_{t}
(|\nabla_g v|_g^2)
+\Re \sum_{j=0}^{k-2}
c_j   \int_{M^d} \partial^{k}_{t} (|v|^{2}) \partial_t^j (\Delta_g v) \partial_t^{k-1-j} \bar v 
\\
{}+ \Re \sum_{j=0}^{k-1} c_j  \int_{M^d}  
\partial^{j}_{t} (|v|^{2})  \partial_t^{k-j} v \partial_{t}^{k-1 }(|v|^{2}\bar v) 
 +\Im
\sum_{j=1}^{k-1} 
 c_j \int_{M^d} \partial_t^j (|v|^{2}) \partial_t^{k-j} v \partial^{k}_{t}\bar v \,,
\end{multline}
where $c_{j}$ are harmless numerical constants.
For the Dirichlet problem on $\Omega$, we use this identity for
$k=1,2$. At the $H^2$ level the initial datum belongs to
$H^2(\Omega)\cap H^1_0(\Omega)$. At the $H^4$ level the natural
assumption is
\[
 v_0\in D\bigl((-\Delta_{g,D})^2\bigr)
 =\{u\in H^4(\Omega):
       u|_{\partial\Omega}=0,\quad
       (\Delta_gu)|_{\partial\Omega}=0\},
\]
where $-\Delta_{g,D}$ denotes the Dirichlet realization of
$-\Delta_g$. These are the only compatibility conditions required at this regularity. Indeed, for a solution with {$v\in C(I;H^4),\quad \partial_tv\in C(I;H^2),\quad \partial_t^2v\in C(I;L^2),$} differentiating the Dirichlet condition once in time gives
$\partial_tv|_{\partial\Omega}=0$. Since
$i\partial_tv=\Delta_gv-|v|^2v$ and $v|_{\partial\Omega}=0$,
this is equivalent to $(\Delta_gv)|_{\partial\Omega}=0$.
No boundary trace is imposed on $\partial_t^2v$, which is only
$L^2$ at this regularity. Thus we do not impose a boundary trace
on $\Delta_g^2v_0$.

The cubic nonlinearity preserves the above domain: if
$v\in D((-\Delta_{g,D})^2)$, then $|v|^2v\in H^4$ and {$(|v|^2v)|_{\partial\Omega}=0,\quad \Delta_g(|v|^2v)|_{\partial\Omega}=0.$} The second equality holds because every term obtained by applying
two spatial derivatives to the cubic product still contains an
undifferentiated factor $v$ or $\bar v$. Thus no additional
nonlinear compatibility condition occurs at the $H^4$ level.
The integrations by parts leading to \eqref{dtE2k}, for $k=1,2$,
are first justified for smooth compatible solutions and then by
approximation at the stated regularity; the Dirichlet traces of
$v$ and $\partial_tv$ are the ones used in the $H^4$ argument.

An extension to $H^m$ with $m>4$ would require tracking the higher
compatibility conditions obtained by differentiating
$v|_{\partial\Omega}=0$ in time and using the nonlinear equation,
whenever the corresponding traces are defined, as just requiring the solution to be in the domain of higher powers of the linear Dirichlet Laplacian is no longer sufficient. Indeed, from three spatial
derivatives onward, derivatives of $|v|^2v$ may contain products in
which every factor is differentiated, whose boundary traces need
not vanish. Nonlinear compatibility conditions can therefore appear at
higher regularity, although this does not mean that a new independent
condition occurs for every $m>4$. We will not pursue these higher
orders here.

We use the following identities for $v$:
\[
 \|\partial_tv\|_2^2=\|\Delta_gv-|v|^2v\|_2^2\,,
 \qquad
 \|\partial_t^2v\|_2^2
   =\|\Delta_g\partial_tv-\partial_t(|v|^2v)\|_2^2\,.
\]
All norms here are over $\Omega$. Elliptic regularity and product
estimates compare these quantities with the corresponding Sobolev
norms, up to lower-order terms which we control below. We use
$\mathcal E_2(v)$ and $\mathcal E_4(v)$ defined by \eqref{E2k}
{with $M^d$ replaced by $\Omega$. We first recall the estimate of
\cite{PTV} on $M^d$, then adapt it to $\Omega$.}
The modified energy bound in \cite{PTV} proceeds with
\begin{equation}
  \label{modE2}
\frac{d}{dt} \mathcal{E}_{ 2}(v) \lesssim \int_{M^{d}} |\nabla ^{2}v| |\nabla v|^{2} |v |\lesssim \|v\|_{H^{2}}\|\nabla v\|^{2}_{L^{6}}\|v\|_{L^{6}}\,,
\end{equation}
using the equation to eliminate all time derivatives and ignoring lower order terms; one then uses a suitable version of Proposition \ref{inStri}, involving the endpoint Strichartz estimate, for the gradient term, which is therefore bounded by (the square of) an $H^{3/2}$ with an (important) additional power of time; by interpolation one recovers another $H^{2}$ norm and this leads to exponential growth for the $H^{2}$ norm by standard arguments.

From the computation of the time derivative \eqref{dtE2k} (with $M^{d}$ replaced by $\Omega$), we similarly get
\begin{equation}
  \label{modE2Omega}
  \frac{d}{dt} \mathcal{E}_{ 2}(v) \lesssim \int_{\Omega} |\nabla ^{2}v| |\nabla v|^{2} |v |
  \,,
\end{equation}
We leave aside the lower-order terms for the moment. To estimate
the displayed integral, let $p>14$ be fixed and let $I=[0,T]$,
$0<T\leq1$, recall {$M=1+\|v\|_{L^\infty(I;H^1)}$ , and define $ B=\|v\|_{L^\infty(I;H^{3/2})}.$} The constant $M$ is bounded in terms of the conserved defocusing
energy. Constants below may depend on the fixed exponent $p$ and on
the geometry, but not on $T$ or the dyadic index.

We will use the elementary inequality
\begin{equation}\label{eq:dyadic-minimum-sum}
 \sum_{j\geq0}\min\{U2^{-\mu j},V2^{\nu j}\}
 \leq C_{\mu,\nu}
 U^{\nu/(\mu+\nu)}V^{\mu/(\mu+\nu)},
 \qquad \mu,\nu>0,\quad U,V\geq0.
\end{equation}
Indeed, for $U,V>0$, one may split the sum where $2^{(\mu+\nu)j}$ crosses $U/V$;
both resulting sums are then geometric. If the crossing lies below $j=0$,
the decreasing bound alone gives the same inequality.

We first sum the bounds for the undifferentiated factor.
Set {$A_v(t)=\sup_{j\geq0}2^{3j/p}\|\Delta_jv(t)\|_{L^p}.$} Applying \eqref{eq:derdesder} with $h=2^{-j}$ and $\ell=T$ gives
\[
 2^{3j/p}\|\Delta_jv\|_{L^2(I;L^p)}
 \lesssim \left(
 \|\Delta_jv\|_{L^2(I;H^1)}
 +2^{-j/2}M+2^{-3j/10}\sqrt T\,M^3\right).
\]
Taking the square sum in $j$, using the spectral theorem and finite
overlap for the first term, yields
\begin{equation}\label{eq:Av-time-bound}
 \|A_v\|_{L^2(I)} \leq\left(\sum_{j\geq0}2^{6j/p} \|\Delta_jv\|_{L^2(I;L^p)}^2\right)^{1/2} \lesssim \bigl(\sqrt T\,M+M+\sqrt T\,M^3\bigr) \leq C_M.
\end{equation}
Thus we have obtained an $L^2(I)$ estimate for $A_v$ directly
{from the dyadic square sum.}

On the other hand, Bernstein's inequality and the energy bound imply
\[
 \|\Delta_jv(t)\|_{L^p}
 \leq C_p2^{3j(1/2-1/p)}\|\Delta_jv(t)\|_2
 \leq C_p M2^{j(1/2-3/p)}.
\]
Combining this with the definition of $A_v(t)$ and applying
\eqref{eq:dyadic-minimum-sum} with $U=A_{v}(t)$, $V=C_p M$, $\mu=3/p$ and
$\nu=1/2-3/p$, we obtain {$\sum_{j\geq0}\|\Delta_jv(t)\|_{L^p} \leq C_p A_v(t)^{1-6/p}M^{6/p}.$} Thus the dyadic series is absolutely summable in $L^p$ for almost
every $t$. The low block satisfies
$\|\Delta_{-1}v\|_{L^\infty(I;L^p)}\leq C_pM$.
Integrating in time and using \eqref{eq:Av-time-bound}, we conclude
\begin{equation}\label{eq:growth-v-mixed-norm}
 \|v\|_{L^{2/(1-6/p)}(I;L^p)}
 \leq C_p\left(M^{6/p}\|A_v\|_{L^2(I)}^{1-6/p}
                  +T^{(1-6/p)/2}M\right)
 \leq C_{p,M}.
\end{equation}

For the gradient factors, we choose {$q=\frac{4p}{p-2},\quad r=\frac{4p}{p+6},\quad \theta=\frac2r=\frac{p+6}{2p}.$} Then $4<q<14/3$, $2<r<4$, and {$\frac1p+\frac2q=\frac12,\quad \frac12\left(1-\frac6p\right)+\frac2r=1, \quad \frac3q+\frac1r=1.$} These relations allow us to estimate $|v||\nabla v|^2$ in
$L^1(I;L^2)$ by H\"older's inequality.

We next prove the fractional source-term bound
\begin{equation}\label{eq:fractional-cubic-dyadic}
 \|\Delta_j(|v|^2v)\|_{L^{10/9}}
 \lesssim 2^{-3j/2}\|v\|_{H^{3/2}}\|v\|_{H^1}^2,
 \qquad j\geq0,
\end{equation}
for functions satisfying the Dirichlet condition. We deduce it from product estimates at orders one and two by
smoothing at the spectral scale. Write $F(v)=|v|^2v$.
The chain rule, H\"older's inequality and the three-dimensional
Sobolev embeddings give
\begin{align}
 \|F(v)-F(w)\|_{W^{1,10/9}}
 &\lesssim (\|v\|_{H^1}+\|w\|_{H^1})^2\|v-w\|_{H^1},
 \label{eq:cubic-integer-difference}\\
 \|F(w)\|_{W^{2,10/9}}
 &\lesssim \|w\|_{H^1}^2\|w\|_{H^2}.
 \label{eq:cubic-integer-two}
\end{align}
For the first estimate, we place the differentiated factor in $L^2$
and the two remaining factors in $L^5$; the terms without derivatives
are controlled by $H^1\hookrightarrow L^{10/3}$.
For the second estimate, the terms with two derivatives on one factor
are bounded in the same way. For the other second-order terms, we use
\[
 \|w\,|\nabla w|^2\|_{10/9}
 \leq \|w\|_{30/7}\|\nabla w\|_3^2
 \lesssim \|w\|_{H^1}^2\|w\|_{H^2},
\]
since $H^1\hookrightarrow L^{30/7}$ and
$\|\nabla w\|_3^2\leq\|\nabla w\|_2\|\nabla w\|_6$.

Set $h=2^{-j}$ and $w=e^{h^2\Delta_g}v$, with the Dirichlet heat
semigroup. The spectral theorem and elliptic regularity imply
\[
 \|w\|_{H^1}\lesssim\|v\|_{H^1},\qquad
 \|v-w\|_{H^1}\lesssim h^{1/2}\|v\|_{H_D^{3/2}},\qquad
 \|w\|_{H^2}\lesssim h^{-1/2}\|v\|_{H_D^{3/2}}.
\]
Indeed, these follow from the scalar bounds for
$1-e^{-h^2\lambda}$ and $e^{-h^2\lambda}$ in the Dirichlet spectral
resolution. The first-order spectral estimate used in
\eqref{eq:dirichlet-product-block} gives
\[
 \|\Delta_j G\|_{10/9}\lesssim h\|G\|_{W^{1,10/9}}
 \quad\text{for }G\in W^{1,10/9}_0(\Omega).
\]
For $G\in W^{2,10/9}(\Omega)\cap W^{1,10/9}_0(\Omega)$,
we also have
\[
 \Delta_jG=h^2\left(\frac{\rho(\lambda)}{\lambda}\right)
                (-h^2\Delta_g)(-\Delta_g)G,
 \qquad
 \|\Delta_jG\|_{10/9}\lesssim h^2\|G\|_{W^{2,10/9}},
\]
by the uniform $L^{10/9}$ boundedness of smooth spectral multipliers
supported away from zero. Both $F(v)-F(w)$ and $F(w)$ have zero
boundary trace. Splitting $F(v)$ accordingly, we obtain
{\[
 \|\Delta_jF(v)\|_{10/9}
 \lesssim h\|F(v)-F(w)\|_{W^{1,10/9}}
            +h^2\|F(w)\|_{W^{2,10/9}} 
 \lesssim h^{3/2}\|v\|_{H^1}^2\|v\|_{H_D^{3/2}}.
\]}
At this regularity the Dirichlet spectral norm is equivalent to the
ordinary $H^{3/2}$ norm with zero boundary trace, proving
\eqref{eq:fractional-cubic-dyadic}. The calculation may first be made
for smooth Dirichlet functions and then extended by density.

To estimate the gradient blocks in time, we apply \eqref{tameH1} at the dyadic scale, multiply by $2^j$, and use
the spectrally localized derivative estimate
$\|\nabla\Delta_jv\|_{10}\lesssim2^j\|\Delta_jv\|_{10}$.
With \eqref{eq:fractional-cubic-dyadic}, this gives
\begin{equation}\label{tameH32}
 2^{-j/5}\|\nabla\Delta_jv\|_{L^2(I;L^{10})} \lesssim \bigl(2^j\|\Delta_jv(0)\|_2 +2^j\|\Delta_jv(T)\|_2 +\|\Delta_jv\|_{L^2(I;H_D^{3/2})} +2^{-3j/10}\sqrt T\,BM^2\bigr).
\end{equation}
The source exponent is $1+1/5-3/2=-3/10$.
For Dirichlet functions at this regularity, the spectral
$H_D^{3/2}$ norm is equivalent to the $H^{3/2}$ norm with zero
boundary trace. The endpoint terms in \eqref{tameH32} have square
sum bounded by $CM$, the third term by $C\sqrt T\,B$, and the last
term has geometric decay. Consequently, with {$A_{\nabla v}(t) =\sup_{j\geq0}2^{-j/5}\|\nabla\Delta_jv(t)\|_{L^{10}},$} we obtain the stronger form of the dyadic bound needed below:
\begin{equation}\label{eq:6}
 \|A_{\nabla v}\|_{L^2(I)} \leq\left(\sum_{j\geq0}2^{-2j/5} \|\nabla\Delta_jv\|_{L^2(I;L^{10})}^2\right)^{1/2} \lesssim \bigl(M+\sqrt T(1+M^2)B\bigr).
\end{equation}

We now sum in the final spatial norm.
At each time and for $j\geq0$, write $g_j=\nabla\Delta_jv$. We have {$\|g_j(t)\|_{10}\leq A_{\nabla v}(t)2^{j/5},\quad \|g_j(t)\|_2\lesssim B2^{-j/2}.$} Interpolate spatially, with {$\frac1q=\frac{\vartheta}{10}+\frac{1-\vartheta}{2}, \quad \vartheta=\frac{5(p+2)}{8p}.$} This gives a decreasing bound in the final norm:
\[
 \|g_j(t)\|_q
 \lesssim  A_{\nabla v}(t)^{\vartheta}B^{1-\vartheta}2^{-a_pj},
 \qquad a_p=\frac{1-\vartheta}{2}-\frac{\vartheta}{5}
           =\frac{p-14}{16p}>0.
\]
The complementary increasing bound follows from Bernstein and the
$H^{3/2}$ bound:
\[
 \|g_j(t)\|_q\lesssim 2^{3j(1/2-1/q)}\|g_j(t)\|_2
       \lesssim B2^{b_pj},\qquad
 b_p=1-\frac3q=\frac{p+6}{4p}>0.
\]
The derivative version of Bernstein here is applied to the
spectrally localized scalar function $\Delta_jv$; it follows from
the corresponding localized gradient multiplier bound.
Since {$a_p+b_p=\frac{\vartheta}{2},\quad \frac{\vartheta b_p}{a_p+b_p}=2b_p=\theta,$} the geometric summation \eqref{eq:dyadic-minimum-sum} yields {$\sum_{j\geq0}\|\nabla\Delta_jv(t)\|_q \leq C_p A_{\nabla v}(t)^\theta B^{1-\theta}.$} As $\theta r=2$, integration in time and \eqref{eq:6} imply
\begin{equation}\label{eq:growth-gradient-mixed-norm}
 \|\nabla v\|_{L^r(I;L^q)}
 \leq C_p\bigl(M+\sqrt T(1+M^2)B\bigr)^\theta B^{1-\theta}.
\end{equation}
The low block is included in this bound: its contribution is at most
$C_pT^{1/r}\min\{M,B\}\leq C_pM^\theta B^{1-\theta}$.
Since $\theta>1/2$ and $T\leq1$, we may also write
\[
 \|\nabla v\|_{L^r(I;L^q)}
       \leq C_{p,M}\bigl(B^{1-\theta}+T^{1/4}B\bigr).
\]
We now combine this estimate with \eqref{eq:growth-v-mixed-norm}
and the H\"older relations above.
Gathering all our estimates, we obtain the following bound for the principal contribution to the increment of the modified energy on $0<T<1$:
\begin{align}
  \int_{0}^{T} \int_{\Omega} |\nabla_g^{2}v|_g|\nabla_g v|_g^{2}|v|\,dV_g\,dt & \lesssim \|v\|_{L^{\infty}_{T}H^{2}}\|v |\nabla_g v|^{2}\|_{L^{1}_{T}L^{2}}\\
   & \lesssim \|v\|_{L^{\infty}_{T}H^{2}}\|\nabla_g v\|^{2}_{L^{r}_{T}L^{q}}\|v\|_{L^{2/(1-6/p)}_{T}L^{p}}\\
 &    \lesssim C(E) \big(\sup_{0\leq t\leq T}\|v\|_{H^{2}}^{2-\theta}+ T^{1/2} \sup_{0\leq t \leq T}\|v \|^{2}_{H^{2}}\big)
\end{align}
As in \cite{PTV}, we pass from the modified energy $\mathcal E_2(v)$ to the squared $H^2$ norm using elliptic regularity and absorbing the lower-order correction terms, with constants depending on the uniform $H^1$ bound; the remaining lower-order contributions to the energy increment are dealt with in the same way, and iteration on sufficiently short time intervals yields exponential growth.
  This concludes the proof of Theorem \ref{growthHs}  for $m=2$. The case $m=4$ is handled similarly, following \cite{PTV} and using the numerology established above for the case $m=2$.

\appendix

\section{Equivalence of phase functions for $G(x,y,\eta,\omega)$}
\label{app:phase-equivalence}
\subsection{Choice of the canonical normalization}
\label{app:canonical-normalization}

We first specify the representative of $\Gamma$ used throughout
the paper; its role in the generating function of $\canonchi_M$
and its decomposition \eqref{eq:GAB} are given in
Proposition~\ref{lemgamma}.
The weights are one for $\Xi$ and two for $x$ and $\rho-1$,
where $\rho=|\Theta|$; the coefficient functions depend on
$(y,\Theta/|\Theta|)$.
The coefficient $\alpha_\Gamma$ introduced in \eqref{AGam}
is set to zero by the canonical transformation following
Proposition~\ref{propimpformgamma}.
With this normalization, the third-order term in $A_\Gamma$ is
$\alpha_1x\Xi+\beta_1(\rho-1)\Xi+\mu_1\Xi^3$.
There is still a canonical freedom at this order.
Define the Hamiltonian $F_5$ by
\begin{equation}\label{eq:residual-canonical-Hamiltonian}
 F_5=X_M(p_M-1)\Xi b_1(Y_M,\Theta),\qquad
 p_M=\Xi^2+|\Theta|^2+X_Mq(\Theta)\,,
\end{equation}
where the subscript 5 denotes its leading weighted degree (two for $X_M$ and $p_M-1$ and one for $\Xi$).
We compose on the model side with the canonical transformation given by
the Hamilton flow generated by $F_5$. This flow preserves the boundary
and the model characteristic set and
fixes their glancing intersection.

Let $\Gamma^{\rm new}$ denote the function obtained after the preceding canonical transformation: this composition leaves the terms of weighted degree below five
unchanged; the degree-five term of $\Gamma^{\rm new}-\Gamma$,
after expanding as in the Appendix of \cite{ILLP}, is
\begin{equation}\label{eq:residual-canonical-action}
(\Gamma^{\rm new}-\Gamma)_5=(1+\Lp)b_1x\Xi
 \{\Xi^2+2(\rho-1)+q(1+\Lp)x\}
 \pmod{\mathcal H_{\ge6}}.
\end{equation}
Thus
$\mu_1^{\rm new}-\mu_1=(1+\Lp)b_1$,
$\beta_1^{\rm new}-\beta_1=2(1+\Lp)b_1$, and
$\alpha_1^{\rm new}-\alpha_1=q(1+\Lp)^2b_1$.
The subsequent terms of the Hamilton flow have weighted order at least
six in $\Gamma$, so do not affect these identities. The coefficients
on the right-hand sides are those before this transformation. We choose
$b_1=-\beta_1/[2(1+\Lp)]$. The transformed coefficients satisfy
\begin{equation}\label{eq:normalized-cubic-coefficients}
 \beta_1^{\rm new}=\beta_1+2(1+\Lp)b_1=0,\qquad
 \mu_1^{\rm new}=\mu_1-\frac{\beta_1}{2},\qquad
 \alpha_1^{\rm new}=\alpha_1-\frac{q(1+\Lp)\beta_1}{2}.
\end{equation}
This does not change $\alpha_\Gamma=0$, $\mu_\Gamma$, $\Lp$ or
$B_\Gamma$. We now drop the superscripts $\rm new$ and keep the same notations
for the transformed coefficients. Both $\alpha_1$ and $\mu_1$ have
changed along with $\beta_1$; the old $\beta_1$ term is not simply
deleted with the other coefficients left unchanged.
The homogeneous quadratic term is
$\mu_\Gamma\{\Xi^2+2(\rho-1)\}$.
Replacing $2(\rho-1)$ by $\rho^2-1$ changes it by a term of weight
four, absorbed in $R_A$. We therefore use
\eqref{eq:normalized-AGamma} with $\beta_1=0$.

For the rescaling in \eqref{deftildephiNagamma}, $\tau^2=|\theta|^2+\gamma A q(\theta)$,
$\Xi=\sqrt\gamma\Sigma q^{1/2}(\theta)/\tau$ and $\rho=|\theta|/\tau$.
Equation~\eqref{Agamtau} follows directly, and gives
\begin{equation}\label{eq:normalized-endpoint-expansion}
\begin{split}
 x\tau A_\Gamma={}&\gamma^{3/2}\frac{x}{\gamma}
 q^{1/2}(\theta)\Sigma\Lp
 +\gamma^2\frac{x}{\gamma}\frac{q(\theta)}{\tau}
 (\Sigma^2-A)\mu_\Gamma\\
 &+\gamma^{5/2}\left\{
 \left(\frac{x}{\gamma}\right)^2\alpha_1q^{1/2}(\theta)\Sigma
 +\frac{x}{\gamma}\mu_1\frac{q^{3/2}(\theta)}{\tau^2}\Sigma^3
 \right\}+O_{C^m}(\gamma^3).
\end{split}
\end{equation}
The bounds are uniform for bounded $x/\gamma,\Sigma,A$, $|\theta|\sim1$
and $y$ in the fixed patch, for any fixed $m$.
Differentiation in these rescaled variables preserves the remainder
bound. The $\alpha_1$ term is independent of $A$, and differentiating
$\tau^{-1}$ or $\tau^{-2}$ in $A$ gains a factor $\gamma$.
This proves \eqref{eq:normalized-endpoint-derivatives}.
The same reasoning applies with $(x,y,\Sigma)=(a,0,S)$, uniformly for
$0\le a\lesssim\gamma$; the terms at the source point carry the factor
$a/\gamma$ (or its square).
\subsection{Equivalence of the two phase functions}
We return here to the phase functions used in the paper. The two
representations of $G$ come from \cite{ILLP}; we derive directly
their equivalence and the expansion of $\Upsilon$.

\begin{samepage}
Both $\psi+s^3/3-s\zeta$ (from \eqref{eq:defG}) and
\[
 y\cdot\eta+\sigma^3/3
 +\sigma(xq^{1/3}(\eta)-\omega)
 +\tau_q\Gamma\left(
 x,y,\frac{\sigma q^{1/3}(\eta)}{\tau_q},
 \frac{\eta}{\tau_q}\right)
\]
(from \eqref{eq:Gosc}) generate the same local Lagrangian.  We now compare the two normal variables.
\end{samepage}

\begin{lemma}\label{lemphaseG}
Let $w=(x,y,\theta,\alpha)$ and
\[
 \phi(w,\sigma)
 =
 \frac{\sigma^3}{3}
 +\sigma(xq^{1/3}(\theta)-\alpha)
 +x\tau_q
 A_\Gamma\left(
 x,y,\frac{\sigma q^{1/3}(\theta)}{\tau_q},
 \frac{\theta}{\tau_q}\right).
\]
After shrinking the glancing neighborhood, there exist smooth
functions $\zeta(w)$ and $\Upsilon(w)$ and a smooth local change
$\sigma\mapsto s=s(w,\sigma)$ with $\partial_\sigma s\ne0$ such that
\begin{equation}\label{eq:Schr-Airy-normal}
 \phi(w,\sigma)=\frac{s^3}{3}-s\zeta(w)+\Upsilon(w).
\end{equation}
On the side $\zeta>0$, let $\sigma_\pm(w)$ correspond to
$s_\pm=\pm\sqrt{\zeta(w)}$.  Then
\begin{equation}\label{formephaseUpsilon}
 \frac23\zeta(w)^{3/2}=\frac 12\{\phi(w,\sigma_-)-\phi(w,\sigma_+)\}
 ,\qquad
 \Upsilon(w)
 =\frac12\{\phi(w,\sigma_+)+\phi(w,\sigma_-)\}.
\end{equation}
The functions defined by these identities extend smoothly through
$\zeta=0$.

If $\sigma_0(w)$ is the unique zero of
$\phi_{\sigma\sigma}(w,\cdot)$ and
$c_3(w)=\frac12\phi_{\sigma\sigma\sigma}(w,\sigma_0(w))$,
then the inverse normal change has
\begin{equation}\label{eq:Schr-inverse-normal}
 \sigma=\sigma_0(w)+c_3(w)^{-1/3}s+O(s^2),
 \qquad c_3(w)=1+O(x).
\end{equation}
In particular, the change is uniformly non-degenerate, although
its linear coefficient is not necessarily one.
\end{lemma}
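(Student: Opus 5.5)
The plan is to derive Lemma~\ref{lemphaseG} from the Chester--Friedman--Ursell normal form \cite{ChFrUr} for a one-variable phase with a cubic degeneracy. The first step is to check the hypotheses at the glancing point $w_0=(0,0,\theta_0,0)$. At $w=w_0$ the correction $x\tau_qA_\Gamma$ vanishes, so $\phi(w_0,\sigma)=\sigma^3/3$. This gives $\phi_\sigma=\phi_{\sigma\sigma}=0$ and $\phi_{\sigma\sigma\sigma}=2\neq0$ at $\sigma=0$. The correction is $x$ times a smooth function of $\Xi=\sigma q^{1/3}/\tau_q$. Hence $\phi_{\sigma\sigma\sigma}=2+O(x)$ holds on a small neighborhood, and $\phi_{\sigma\sigma}(w,\cdot)$ is strictly increasing there. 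By the implicit function theorem it has a unique zero $\sigma_0(w)$, which is smooth and satisfies $\sigma_0=O(x)$. It also follows that $c_3(w)=\tfrac12\phi_{\sigma\sigma\sigma}(w,\sigma_0)=1+O(x)$.

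The second step applies \cite{ChFrUr}, or equivalently Malgrange preparation as in Theorem~\ref{thmmalg}. This produces a smooth local diffeomorphism $\sigma\mapsto s(w,\sigma)$ and smooth $\zeta,\Upsilon$ satisfying \eqref{eq:Schr-Airy-normal}. I would carry this out along the lines of the standard proof, so that the formulas in \eqref{formephaseUpsilon} appear along the way. Where $\zeta>0$, the critical points of the normal form are $s_\pm=\pm\sqrt\zeta$, and they must correspond to the two critical points $\sigma_\pm$ of $\phi$. The values are $\phi(w,\sigma_\pm)=\mp\tfrac23\zeta^{3/2}+\Upsilon$. Taking the half-sum and the half-difference gives \eqref{formephaseUpsilon}.

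The order of the argument is then as follows. Smoothness of $\zeta$ and $\Upsilon$ across $\zeta=0$ comes from the standard fact that the half-sum of $\phi(w,\sigma_\pm)$ is a smooth symmetric function of the roots. The half-difference is of the form $\zeta^{3/2}$ times a smooth function, so $\zeta$ itself is smooth. One then defines $s$ by solving \eqref{eq:Schr-Airy-normal} near $\sigma_0$, using the relation $s^3/3-s\zeta=\phi-\Upsilon$. The main obstacle is the nondegeneracy $\partial_\sigma s\ne0$ uniformly through the caustic $\zeta=0$. This is exactly the delicate part of the \cite{ChFrUr} theorem. I would not reprove it but quote it, after verifying its hypothesis $\phi_{\sigma\sigma\sigma}\neq0$ uniformly in $w$.

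Finally, \eqref{eq:Schr-inverse-normal} comes from Taylor expansion at the inflection point, where $s=0$ corresponds to $\sigma=\sigma_0$. Differentiating the normal form three times in $s$ at $s=0$ gives $2=\phi_{\sigma\sigma\sigma}(\sigma_0)(\partial_s\sigma)^3+\phi_{\sigma\sigma}(\sigma_0)(\ldots)+\phi_\sigma(\sigma_0)\partial_s^3\sigma$. Here $\phi_{\sigma\sigma}(\sigma_0)=0$ by definition of $\sigma_0$. The first-derivative equation reads $\partial_s\phi=s^2-\zeta$. Evaluated at $s=0$ it gives $\phi_\sigma(\sigma_0)\partial_s\sigma=-\zeta$. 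The second-derivative equation at $s=0$ gives $\phi_\sigma(\sigma_0)\partial_s^2\sigma=0$, using $\phi_{\sigma\sigma}(\sigma_0)=0$. Where $\zeta\neq0$ this forces $\partial_s^2\sigma=0$, and by continuity it holds everywhere. Substituting back into the three-derivative identity leaves $2=\phi_{\sigma\sigma\sigma}(\sigma_0)(\partial_s\sigma)^3-\zeta\,\partial_s^3\sigma/\partial_s\sigma$. Solving for $\partial_s\sigma$ at the point where $\zeta=0$ yields $(\partial_s\sigma)^3=c_3^{-1}$. In general the leading coefficient is $c_3^{-1/3}$ up to an $O(\zeta)$ correction, which is absorbed by the $O(s^2)$ in the statement as written. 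This suffices for the uniformity claim.
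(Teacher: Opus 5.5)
Your route for the main part is the paper's: you check $\phi_{\sigma\sigma\sigma}=2+O(x)$, obtain $\sigma_0(w)$ and $c_3=1+O(x)$, quote the Chester--Friedman--Ursell ($A_2$) normal form, and read $\zeta,\Upsilon$ off the critical values $\Upsilon\mp\frac23\zeta^{3/2}$. Your last paragraph, however, rests on a false premise: $s=0$ does \emph{not} in general correspond to $\sigma=\sigma_0$. For $\zeta>0$ the normal change is unique, since it must be increasing and send $\sigma_\pm$ to $\pm\sqrt\zeta$. So $\sigma(w,0)$ is the point between $\sigma_-$ and $\sigma_+$ where $\phi$ takes the mean critical value $\Upsilon$. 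That point is the inflection point for an exact cubic, but not in general. Take $\phi=\sigma^3/3+\epsilon\sigma^4-\zeta_0\sigma$. Here $\sigma_0=0$ and $c_3=1$, while, up to $O(\epsilon^2)$, $\zeta=\zeta_0$, $\Upsilon=\epsilon\zeta_0^2$ and $\sigma=s-\epsilon(s^2+\zeta_0)$. Thus $\sigma(0)=-\epsilon\zeta_0\neq\sigma_0$, and $\partial_s^2\sigma(0)=-2\epsilon\neq0$ even when $\zeta_0=0$. Your conclusion ``$\partial_s^2\sigma=0$ everywhere by continuity'' is therefore false. Such quartic terms are generically present here, with coefficients $O(x)$. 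Also, an $O(\zeta)$ error in the linear coefficient produces a term $O(\zeta)s$, which is not absorbed by $O(s^2)$.

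What your identities do give is the exact statement on $\{\zeta=0\}$. There the first identity gives $\phi_\sigma(w,\sigma(w,0))=0$. The second then gives $\phi_{\sigma\sigma}(w,\sigma(w,0))=0$, so $\sigma(w,0)=\sigma_0(w)$. The third gives $(\partial_s\sigma)^3=c_3^{-1}$, because its middle term already vanishes; no information on $\partial_s^2\sigma$ is needed. By smoothness, in general $\sigma(w,0)=\sigma_0+O(\zeta)$ and $\partial_s\sigma(w,0)=c_3^{-1/3}+O(\zeta)$, i.e.\ $\sigma=\sigma_0+c_3^{-1/3}s+O(s^2+|\zeta|)$. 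This is also all that the paper's one-line ``matching the third-order terms'' yields: its $s=c_3^{1/3}r+O(r^2)$ is exact only on $\zeta=0$. It is enough for the uniform non-degeneracy claimed at the end of the lemma, since $\zeta$ is small on the shrunken neighborhood.
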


\begin{proof}
At the model glancing point $\phi_{\sigma\sigma\sigma}=2$; the {Taylor expansion of $A_\Gamma$ established above gives}
$\phi_{\sigma\sigma\sigma}=2+O(x)$.
Hence $\phi_{\sigma\sigma}$ is strictly monotone in $\sigma$ and has
a unique zero $\sigma_0(w)$, smooth in $w$.  Put
$r=\sigma-\sigma_0(w)$.  Taylor's formula gives
\begin{equation}\label{eq:Schr-centered-A2}
 \phi(w,\sigma_0+r)
 =
 C(w)+A(w)r+\frac{c_3(w)}3r^3+r^4R(w,r),
 \qquad c_3(w)=1+O(x).
\end{equation}
The one-dimensional $A_2$ normal-form theorem with parameters
applied to \eqref{eq:Schr-centered-A2} yields
\eqref{eq:Schr-Airy-normal}. Matching the third-order terms gives
$s=c_3(w)^{1/3}r+O(r^2)$; inversion proves
\eqref{eq:Schr-inverse-normal}.

The two critical values of the normal form are
\[
 \left(\frac{s^3}{3}-s\zeta+\Upsilon\right)_{s=\mp\sqrt\zeta}
 =\Upsilon\pm\frac23\zeta^{3/2}.
\]
Taking their difference and mean gives \eqref{formephaseUpsilon};
in particular, the coefficient of the full difference is $4/3$.  These
{identities are invariant under the normal change of variable. The
normal-form construction shows that these functions extend smoothly
through $\zeta=0$.}
\end{proof}

\begin{cor}\label{corUpsi}
We may choose
\[
 \psi(x,y,\theta,\alpha)
 =
 y\cdot\theta
 +\tau_q(\alpha,\theta)
 B_\Gamma(y,\theta/\tau_q)
 +\Upsilon(x,y,\theta,\alpha).
\]
With the weighted convention used in the local calculation,
$\deg\sigma=1$ and $\deg x=\deg\alpha=2$, the mean critical value has
the expansion
\begin{equation}\label{eq:Ups}
 \Upsilon
 =
 x^2\tau_q\,{\mathcal F}
 -\frac{x^2\mu_\Gamma(y,\theta/|\theta|)q(\theta)}
        {\tau_q}
  (1+\Lp(y,\theta/|\theta|))
 +x\mathcal H_{\ge4}.
\end{equation}
The weight-five contribution to the mean critical value vanishes; the
first coefficient not determined by {the expansions used here} has weighted
degree at least six.
\end{cor}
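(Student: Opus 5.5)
The plan is to compare the normal form of Lemma~\ref{lemphaseG} with the representation \eqref{eq:defG}, and then to expand the mean critical value $\Upsilon=\frac12\{\phi(w,\sigma_+)+\phi(w,\sigma_-)\}$ in weighted degree.

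For the choice of $\psi$, start from \eqref{eq:Gosc}. Its phase is $y\cdot\theta+\tau_qB_\Gamma(y,\theta/\tau_q)+\phi(w,\sigma)$, because $\Gamma=B_\Gamma+xA_\Gamma$ and $B_\Gamma$ does not depend on $\sigma$. Substituting the normal change \eqref{eq:Schr-Airy-normal} turns it into $y\cdot\theta+\tau_qB_\Gamma+\Upsilon+s^3/3-s\zeta$. The Jacobian $\partial_\sigma s$ is nonvanishing, so it goes into the amplitude, and the Malgrange-type reduction (Theorem~\ref{thmmalg}) gives exactly the form \eqref{eq:defG}. Here $\psi$ is the stated function and $\zeta$ is the normal variable of Lemma~\ref{lemphaseG}. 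Both representations generate the same Lagrangian, and the eikonal system \eqref{systeikeq} determines $(\psi,\zeta)$ from it. Hence this choice is admissible, after adjusting the symbols $p_0,p_1$ and renormalizing $\zeta$ by the nonvanishing factor $c_3^{1/3}$ of \eqref{eq:Schr-inverse-normal}.

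For the expansion \eqref{eq:Ups}, insert the normalized form \eqref{eq:normalized-AGamma}, with $\alpha_\Gamma=\beta_1=0$, into $x\tau_qA_\Gamma$. Writing $\Xi=\sigma q^{1/3}/\tau_q$ and $\rho-1=O(\alpha)$, this gives
\[
x\tau_qA_\Gamma=xq^{1/3}\Lp\,\sigma+x\frac{q^{2/3}}{\tau_q}\mu_\Gamma(\sigma^2-\alpha q^{2/3}/\ldots)+x(\alpha_1x\Xi+\mu_1\Xi^3)\tau_q+x\mathcal H_{\ge4}.
\]
The first term only renormalizes the linear coefficient, $\sigma(xq^{1/3}(1+\Lp)-\alpha)$. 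The quadratic $\mu_\Gamma$ term shifts the inflection point to $\sigma_0=O(x\mu_\Gamma)$.

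I then compute the mean of the two critical values perturbatively. For the model cubic $\sigma^3/3-\sigma Z$ the mean is zero. A quadratic perturbation $\epsilon\sigma^2$ changes each critical value by $\epsilon\sigma_\pm^2=\epsilon Z$ to first order. With $Z=\alpha-xq^{1/3}(1+\Lp)$ and $\epsilon=x q^{2/3}\mu_\Gamma/\tau_q$, this produces $\epsilon Z$. Its $x\alpha$ part must combine with the $-\alpha$ constant inside the $\mu_\Gamma(\Xi^2+\rho^2-1)$ term; the two cancel, leaving $-x^2\mu_\Gamma q(1+\Lp)/\tau_q$. The quadratic correction $O(\epsilon^2 Z)$ is of weighted degree six and is divisible by $x$, so it belongs to $x\mathcal H_{\ge4}$.

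The odd terms of degree five, $x^2\alpha_1\Xi$ and $x\mu_1\Xi^3$, change the critical values by amounts odd in $\sigma_\pm$ at first order. They therefore cancel in the mean, and only their contributions of degree at least six survive. This is the claimed vanishing of the weight-five part, and it is where the normalization $\beta_1=0$ is used: without it, $\beta_1(\rho-1)\Xi$ would also be odd, but it would come with an $\alpha$ factor whose cross terms need tracking. The term $x^2\tau_q\mathcal F$ collects whatever degree-four even contribution the representative might carry; with the present normalization it is absent, up to $x\mathcal H_{\ge4}$.

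The main obstacle is making the perturbative computation of the mean critical value rigorous and uniform through $\zeta=0$. I would avoid expanding in $\sqrt\zeta$ directly. Instead I would use that $\Upsilon$ is a smooth, even symmetric function of $\sigma_\pm$. Concretely, $\Upsilon$ is the value at $\sigma_0$ corrected by an expression in the Taylor coefficients $C,A,c_3,R$ of \eqref{eq:Schr-centered-A2}, which is polynomial in $A$ to each finite order. I would then compute the weighted Taylor coefficients of $C(w)$ and of the correction up to degree five.
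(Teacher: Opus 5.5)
Your proposal is correct and follows essentially the paper's route. You truncate $\phi$ at the cubic $\sigma^3/3+b\sigma^2-D\sigma+C$ and remove the quadratic term; your first-order perturbation is equivalent to the paper's exact translation $\sigma=s-b$, which gives $\Upsilon=C+bD+\frac23 b^3$, so the correction beyond $bD$ is $\frac23 b^3$ rather than $O(\epsilon^2 Z)$, still of weight six. You then observe that the odd weight-five terms cancel in the mean and that everything remaining is divisible by $x$. Two slips do not affect the result: $\zeta$ needs no renormalization by $c_3^{1/3}$, since the normal form already has exactly $s^3/3$ and $c_3$ only enters $\partial_s\sigma$; and the weight-five cancellation does not use $\beta_1=0$, since $x\tau_q\beta_1(\rho-1)\Xi=\beta_1 x(\rho-1)q^{1/3}\sigma$ is itself odd in $\sigma$.
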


\begin{proof}
{Our normalization of the generating function $\Gamma$ allows ${\mathcal F}=0$ in the following expansion.} Let
\[
 b=\frac{x\mu_\Gamma(y,\theta/|\theta|)q(\theta)^{2/3}}{\tau_q},
 \qquad
 D=\alpha-xq(\theta)^{1/3}
 (1+\Lp(y,\theta/|\theta|)),
\]
and denote by $C=x^2\tau_q{\mathcal F}-b\alpha$ the constant part
through weight four. The corresponding {third-order polynomial expansion} is
\begin{equation}\label{eq:Schr-cubic-expansion}
 \phi_{\rm cub}(\sigma)
 =
 \frac{\sigma^3}{3}+b\sigma^2-D\sigma+C.
\end{equation}
The exact translation $\sigma=s-b$ gives
\begin{equation}\label{eq:Schr-cubic-translation}
 \phi_{\rm cub}(s-b)
 =
 \frac{s^3}{3}-(D+b^2)s+C+bD+\frac23b^3.
\end{equation}
Thus
\[
 \zeta=D+b^2,\qquad
 \Upsilon=C+bD+\frac23b^3
\]
at this order.  Substituting $C,D,b$ yields
\[
 \Upsilon
 =
 x^2\tau_q{\mathcal F}
 -\frac{x^2\mu_\Gamma q}{\tau_q}(1+\Lp)
 +\frac23b^3.
\]
The last term has weighted degree six.

In our normalization, the two weight-five terms of the normal phase are
\begin{equation}\label{eq:normalized-odd-phase-terms}
 \alpha_1x^2\sigma q^{1/3}(\theta)
 +\mu_1x\sigma^3q(\theta)/\tau_q^2.
\end{equation}
They are odd in $\sigma$. Since $b$ has weight two, replacing
$\sigma$ by $s-b$ changes these terms only at weight six or higher.
Their contributions to the critical values are opposite through weight
five and therefore cancel in the mean. They can change $\zeta$ and
the higher terms of $\Upsilon$, but not the displayed coefficient in
\eqref{eq:Ups}. In particular, $\alpha_1$ has not been set to zero.
The cancellation at the preceding order can also be seen directly.
After centering, the contributions of weights three, four and five
are respectively odd, even and odd in the normal variable. The
weight-four even term translates the
two critical points by the same amount, whereas the weight-five
correction is antisymmetric.  Consequently the weight-five
contribution cancels in the mean of the two critical values.  The
remaining non-model contribution is divisible by $x$, since at
$x=0$ the normal phase is the exact Airy model phase.  This gives the
notation $x\mathcal H_{\ge4}$ in \eqref{eq:Ups}.
\end{proof}

\makeatletter
\let\SchrSavedSection\section
\def\section{\@startsection{section}{1}  \z@{.7\linespacing}{.5\linespacing}  {\normalfont\scshape\centering}}
\makeatother
\section{Localization and reduction to the glancing region}
\label{app:localization-reduction}
\let\section\SchrSavedSection
We prove Lemma~\ref{lem:boundary-locality}, estimate the
interior,
hyperbolic and elliptic terms, and match the remaining
glancing part to our parametrix.

\subsection{A common localization argument}
\label{app:common-localization}
For either type of patch, we choose a smooth compact extension of the
common coordinate neighborhood, retaining the physical boundary and
Dirichlet condition when present. The cutoffs stay a fixed distance
from where the two problems differ. Finite propagation for the wave equation, with the Dirichlet condition
when a physical boundary is present, gives
$s_0>0$ such that
\begin{equation}\label{eq:common-wave-kernel}
 \chi_k\cos(s\sqrt{-\Delta_g})\chi_j
 =\chi_k\cos(s\sqrt{-\Delta_{\mathrm{loc}}})
                      \chi_j,\qquad |s|<s_0 .
\end{equation}
Here $s_0$ depends on the distance to the artificial edge, not to
the physical boundary, as in the wave reduction of
\cite[Section~2]{ILLP}.

Fourier inversion of the even function
$e^{it\tau^2/h}f(\tau^2)$ gives
\begin{align}
 c_h(t,s)&=\frac{1}{2\pi h}\int_{\mathbb R}
       e^{i(t\tau^2-s\tau)/h}f(\tau^2)\,d\tau,
       \label{eq:wave-transfer-weight}\\
 \mathcal U_h(t)f(-h^2\Delta_g)&=\int_{\mathbb R}c_h(t,s)
                         \cos(s\sqrt{-\Delta_g})\,ds .
       \label{eq:wave-transfer}
\end{align}
The same formula holds for the auxiliary problem. We choose a smooth
even cutoff $\zeta_0(s)$ supported in $(-s_0,s_0)$ and equal to one
for $|s|\leq s_0/2$. We choose $T_0$ so small that
$2T_0\sup\{|\tau|:\tau^2\in\supp f\}<s_0/4$.
On the support of $1-\zeta_0$, the derivative
$2t\tau-s$ of the phase with respect to $\tau$ is bounded away
from zero and has size comparable to $1+|s|$.
Repeated integration by parts in $\tau$ therefore gives, for all
nonnegative integers $N,L,p,q$,
\begin{equation}\label{eq:wave-tail-symbol}
 \left|\partial_t^p\partial_s^q
       \bigl[(1-\zeta_0(s))c_h(t,s)\bigr]\right|
 \leq C_{N,L,p,q}h^N(1+|s|)^{-L}.
\end{equation}
Further integrations by parts absorb the factors $h^{-1}$ from derivatives.

For either Laplacian, denote the tail operator by
$\mathcal T_h(t)=\int(1-\zeta_0)c_h(t,s)
\cos(s\sqrt{-\Delta})\,ds$.
The wave equation gives
$(1-\partial_s^2)\cos(s\sqrt{-\Delta})
=(1-\Delta)\cos(s\sqrt{-\Delta})$.
Integration by parts in $s$ yields
\begin{equation}\label{eq:wave-tail-smoothing}
 \big\|(1-\Delta)^m\partial_t^p\mathcal T_h(t)
                  (1-\Delta)^m\big\|_{L^2\to L^2}
 \leq
 \big\|(1-\partial_s^2)^{2m}
       \partial_t^p[(1-\zeta_0)c_h(t,\cdot)]\big\|_{L^1}
 =O(h^N)
\end{equation}
for every $m,p,N$. The identity holds first on finite spectral sums,
then by extension. Elliptic regularity, including the Dirichlet
condition, and Sobolev embedding for arbitrarily large $m$ give the
kernel bounds up to the boundary; physical cutoffs preserve them.
The parts with $\zeta_0$ agree by
\eqref{eq:common-wave-kernel}, and the two tails are negligible.
The same argument at $t=0$ gives the spectral comparison.
Taking the smallest time margin over the finite atlas gives uniformity.

\subsection{The strictly interior contribution}
\label{app:strictly-interior-contribution}

Fix an interior cutoff $\chi_j^{\mathrm{int}}$ and a larger cutoff
$\widehat\chi_j^{\mathrm{int}}$ in a coordinate neighborhood compactly
contained in $\Omega$. We choose a smooth compact boundaryless extension
of the metric, with Laplacian $\widetilde\Delta_g$.
Lemma~\ref{lem:boundary-locality}, at $t=0$, gives
\begin{equation}\label{eq:interior-spectral-comparison}
 \chi_j^{\mathrm{int}}f(-h^2\Delta_g)\widehat\chi_j^{\mathrm{int}}
 =\chi_j^{\mathrm{int}}f(-h^2\widetilde\Delta_g)
   \widehat\chi_j^{\mathrm{int}}+O(h^\infty).
\end{equation}
For $|t|\le T_0$, the same lemma with $f=\psi^2$ identifies the
localized propagators. Decrease $T_0$ so that the {rays with frequencies in the chosen spectral window and issuing
from $\supp\chi_j^{\mathrm{int}}$ remain} in this enlarged interior
neighborhood. {The kernel with its other variable outside this enlarged neighborhood
is $O(h^\infty)$ by the short-time locality used in
Section~\ref{sec:global-microlocal-partition}.}
We may therefore estimate the kernel localized at the source by
the corresponding boundaryless kernel.

The spectral pseudodifferential localization and local WKB estimates of
\cite[Proposition~2.1, Corollary~2.4, and Lemmas~2.5 and~2.7]{bgt04}
therefore apply. With physical time $s=ht$, they give
$C(h|t|)^{-d/2}=Ch^{-d}(h/|t|)^{d/2}$; for $|t|\le h$, the compact
semiclassical amplitude gives $Ch^{-d}$. This proves
\eqref{eq:interior-piece-dispersion}, uniformly over the finite atlas.
By taking the adjoint, we obtain the same bound when the interior
cutoff is at the observation point. The physical partition then gives
\begin{equation}\label{eq:interior-eliminated-kernel}
 \mathcal K_h(t)=
 \sum_{k,k'\in\mathcal J_\partial}
 (\chi_k^\partial)^2\mathcal K_h(t)(\chi_{k'}^\partial)^2
 +\mathcal K_h^{\mathrm{int}}(t).
\end{equation}
Here $\mathcal K_h^{\mathrm{int}}$ is the finite sum of the interior
and mixed terms. Its free decay is stronger than the required glancing
bound. We now divide the boundary terms into hyperbolic, elliptic and
glancing pieces. This reduction to \cite{bgt04} for interior terms
is also used in \cite[Section~2]{bss12}.

\subsection{The uniformly hyperbolic boundary contribution}
\label{app:hyperbolic-contribution}

On the hyperbolic set \eqref{eq:hyperbolic-region}, the normal
characteristic roots at the boundary are {$\xi=\pm\sqrt{\tau^2-R_0(y,\eta)},\quad |\xi|\geq\sqrt{\delta}.$} Here the frequency variables are understood at semiclassical scale.
The fixed number $\delta$ does not depend on $h$ or on the source
distance $a$.  The two roots are therefore real and uniformly separated.
The geometrical-optics construction of \cite[Section~1.3]{meta}
applies to this transverse region.  The beginning of
\cite[Section~7.5]{meta} likewise separates the classical Fourier
integral contribution of rays avoiding glancing from the glancing
analysis.  Although that section concerns a diffractive boundary, we
use here only the transverse construction of Section~1.3, which
does not require that sign of the curvature.

We now prove Proposition~\ref{prop:hyperbolic-dispersion}.
\begin{proof}
On our {compact spectral range}, uniform
transversality gives a positive lower bound, depending on $\delta$,
for the interval between transverse reflections: the inward normal
speed cannot reverse immediately. Taking $T_0$ below this bound
leaves only the direct and once-reflected branches. No lower bound
on the initial source-to-boundary travel time is used.

For these branches, we compose the transverse geometrical optics
parametrix with {the smooth spectral cutoffs}.  {Passing from the wave-frequency parameter $\tau$ to the Schrödinger spectral parameter $\tau^2$}, or equivalently solving
the reflected Schrödinger eikonal and transport equations directly,
gives a real-phase WKB representation.  On each member of a finite
refinement, its kernel has the form
\begin{equation}\label{eq:hyperbolic-wkb}
\begin{split}
 \mathcal K_{r,h}^{\mathrm{hyp}}(t,x,y;a,y')
 ={}&(2\pi h)^{-d}\int
 e^{\frac{i}{h}(\Phi_r^{\mathrm{hyp}}(t,x,y,\xi,\eta)-a\xi-y'\cdot\eta)}
 \\[-2mm]
 &\hspace{8mm}\times
 c_r^{\mathrm{hyp}}(t,x,y,a,y',\xi,\eta;h)\,d\xi\,d\eta
 +O(h^\infty).
\end{split}
\end{equation}
Here $r$ labels a direct or reflected branch and its coordinate
patch.  The amplitudes have order zero and compact semiclassical
frequency support, with uniform symbol bounds.
The incident and reflected amplitudes satisfy the Dirichlet matching
condition, and  construction to arbitrary order, followed by the energy
estimate, controls the localized remainder in arbitrary kernel
seminorms.

We now explain why such a Fourier integral representation yields the
Schrödinger decay.  Freeze the metric and flatten the boundary at a
reflection point, using orthonormal coordinates there.  For a flat
boundary the reflected ray is obtained by changing the sign of the
normal component; the frequency Hessian of the Schrödinger phase is
$2t\,\mathrm{Id}$, just as for the direct ray, with the sign dictated
by \eqref{SE}.  For a transverse reflected ray in the variable
geometry, both its legs have length $O(|t|)$.

The reflection time is determined by the vanishing of the incident
ray's normal coordinate. Differentiating this equation with respect
to the initial frequencies divides by the corresponding velocity
component, whose absolute value is bounded below by $c\sqrt{\delta}$;
the frequency derivatives of the reflection time and point are
therefore bounded by $C_\delta|t|$.
Compared with reflection at a flat boundary, the variation of the
boundary normal contributes $O_\delta(|t|)$ to the frequency
derivatives of the reflected velocity. Over the remaining time
$O(|t|)$, this gives an $O_\delta(t^2)$ correction to the frequency
derivatives of the final position.

Smooth variation of the metric produces an error of the same order.
The mixed position--frequency derivatives remain invertible, so
the corresponding generating phase satisfies, near its critical set,
in these normalized coordinates,
\begin{equation}\label{eq:hyperbolic-hessian}
 \partial^2_{(\xi,\eta)}\Phi_r^{\mathrm{hyp}}
       =2t\,\mathrm{Id}+O_\delta(t^2),\qquad
 \left|\det\partial^2_{(\xi,\eta)}\Phi_r^{\mathrm{hyp}}\right|
       \geq c_\delta |t|^d .
\end{equation}
Coordinate changes replace the identity by a uniformly invertible
matrix and preserve the determinant bound.  Shrinking $T_0$ makes
this uniform over the finite atlas.  For the dispersion estimate we use this short-time
nonfocusing property of the phase, in addition to the classical
Fourier integral representation.

Stationary phase in the $d$ variables $(\xi,\eta)$, with large
parameter $|t|/h$, now gives $C h^{-d}(h/|t|)^{d/2}$ for $|t|\geq h$,
as in \cite[Lemma~2.7]{bgt04}.  Nonstationary pieces are handled by
integration by parts.  For $|t|\leq h$, the absolute value of the
frequency integral is bounded by $C h^{-d}$.  The finite branch sum
proves \eqref{eq:hyperbolic-kernel-bound}.
\end{proof}

By continuity of the flow and reflection map, shrink the boundary
neighborhood and $T_0$ so that rays from the compact hyperbolic
microsupport stay uniformly transverse at each encounter. Thus the
proposition also handles distinct source and observation patches on
such a ray, and \eqref{eq:hyperbolic-kernel-bound} is stronger than
\eqref{dispco}.

The localization arguments of \cite[Section~2]{bss08} and
\cite[Sections~2--3]{bss12} give a modern wave-packet treatment.
The second assertion of \cite[Theorem~2.3]{bss12}, for example,
assumes $|\xi_n|\geq |\xi|/20$. Its Strichartz proof provides a
localization framework from which a pointwise bound could also be
extracted; here we use transverse geometrical optics and stationary
phase for \eqref{eq:hyperbolic-kernel-bound}.

This region is separated from glancing; the transverse regime
$\gamma\geq4a$ of Section~\ref{sectransv} lies within a glancing patch
and uses the Airy analysis. Extending that analysis to large $\gamma$
could recover the hyperbolic regime, as in \cite{OanaKG} for
large-time asymptotics on the Friedlander model.
\subsection{The elliptic boundary contribution}
\label{app:elliptic-contribution}

We next remove the uniformly elliptic pieces of the boundary partition.
Here $\tau$ retains its wave-frequency meaning, and the corresponding
{semiclassical spectral parameter is $\tau^2$.}
We will use elliptic regularity for the homogeneous Dirichlet problem.

{We first subdivide the spectral interval.}
The condition $R_0(y,\eta)-\tau^2>\delta$ is {relative to the spectral parameter $\tau^2$ of the piece under consideration}. It does not imply that $R_0(y,\eta)$
lies above the entire support of $\widetilde\psi$.
We refine the compact elliptic support by finitely many product
neighborhoods in $(y,\eta)$ and $\tau^2$, {using smooth spectral cutoffs}
$f_\ell$ supported in intervals of width smaller than $\delta/4$.
The number of these neighborhoods is independent of $h$.
Shrinking their tangential supports and the neighborhood of the boundary if necessary,
each associated tangential cutoff
$\mathcal A_{k,\ell,h}^{\mathrm{ell}}$ can be chosen so that
\begin{equation}\label{eq:elliptic-window-gap}
 R(x,y,\eta)-\tau^2\geq\delta/2
 \quad\hbox{on its symbol support, whenever }\tau^2\in\supp f_\ell \,,
\end{equation}
using nested, properly supported cutoffs whose enlarged supports still
satisfy a fixed positive gap. The spatial cutoffs are smooth up to
$x=0$, act by multiplication in the normal variable, and hence
preserve the zero Dirichlet trace. Such product neighborhoods cover
the original elliptic support. A subordinate finite partition allows
the elliptic part to be quantized using pieces
$\mathcal A_{k,\ell,h}^{\mathrm{ell}}f_\ell(-h^2\Delta_g)$,
with symbol corrections supported in the same enlarged elliptic
sets.
We now prove Lemma~\ref{lem:elliptic-spectral-removal}.

\begin{proof}
We consider a normalized Dirichlet eigenfunction with eigenvalue
$\tau^2$ for $-h^2\Delta_g$, where $\tau^2\in\supp f_\ell$.
The principal normal equation in the region
\eqref{eq:elliptic-window-gap} has roots
$\xi=\pm i\sqrt{R(x,y,\eta)-\tau^2}$, uniformly separated from
the real axis. The elliptic Dirichlet parametrix, applied with
nested cutoffs, therefore makes this eigenfunction microlocally
$O(h^\infty)$ there, in all semiclassical Sobolev norms.
More explicitly, the local estimate controls a cutoff by the
localized equation, its Dirichlet trace, and $h$ times a norm with
a larger cutoff, with a residual smoothing error. The first two
terms vanish. Iteration through arbitrarily many nested cutoffs
gives any prescribed power of $h$; conversion to ordinary
$H^M$ norms costs only a fixed power.

The direct boundary construction is the elliptic Poisson parametrix
in \cite[Section~1.3, equations~(1.3.26)--(1.3.27)]{meta}.
Its decaying normal root handles the boundary, while ordinary
interior elliptic regularity handles the rest of the {neighborhood} of $\partial\Omega$.
The exponential decay of the Poisson factor does not by itself give
smallness uniformly up to $x=0$; the homogeneous Dirichlet condition
removes the boundary-data term. One may find all the necessary 
microlocal boundary framework in \cite{melrosesjostrand};
the uniform semiclassical and kernel estimates used here are the
consequences of elliptic regularity detailed in this proof.

To pass from individual eigenfunctions to the operator statement,
expand the smooth spectral window in an orthonormal Dirichlet
eigenbasis. There are $O(h^{-d})$ eigenvalues in a fixed compact
{semiclassical spectral interval}. Normalized eigenfunctions have
at most polynomial growth in each fixed Sobolev or $C^m$ norm,
by global elliptic regularity. The localized eigenfunction factor
has an arbitrarily large power of $h$, uniformly over the window.
This absorbs both the number of terms and all fixed derivative
losses, proving \eqref{eq:elliptic-spectral-smoothing}.
In the kernel expansion, differentiate both the localized factor
and the unlocalized eigenfunction factor. The same argument gives
$O(h^\infty)$ in both kernel variables, including at the boundary.
The factors supplied by $\mathcal U_h(t)$ have modulus one;
any fixed number of time derivatives costs only a further power
of $h^{-1}$. The source-side assertion follows by taking adjoints
and replacing $t$ by $-t$.
\end{proof}

Thus every refined elliptic piece of \eqref{eq:global-kernel-partition}
is negligible in the kernel norm of \eqref{dispco}, on either side.
Composition with the fixed semiclassical cutoffs preserves this
conclusion: their fixed Sobolev norms grow at most polynomially in
$h^{-1}$. The finite spectral subdivision and nested symbol
corrections introduce no loss. 

\subsection{Spectral matching of the boundary parametrices}
\label{app:boundary-spectral-matching}

We first realize the spectral cutoff exactly by a Fourier
multiplier in time.
We recall that $D_t=-i\partial_t$ and  $\mathcal U_h(t)=\exp(-ith\Delta_g)$. The spectral theorem gives
\begin{equation}\label{eq:temporal-spectral-matching}
 \psi(hD_t)\mathcal U_h(t)=\mathcal U_h(t)\Pi_h,\qquad
 \psi^2(hD_t)\mathcal U_h(t)=\mathcal K_h(t).
\end{equation}
These identities hold as operator-valued distributions in $t$.
Since the physical cutoffs are independent of time,
\begin{equation}\label{eq:temporal-physical-matching}
 \chi_k^\partial\mathcal K_h(t)\chi_j^\partial
 =\psi^2(hD_t)
   \bigl[\chi_k^\partial\mathcal U_h(t)
                    \widetilde\Pi_h\chi_j^\partial\bigr].
\end{equation}
The larger spectral window is kept in this formula to provide smooth
kernels with polynomial bounds in $h^{-1}$ {before time-frequency localization}.
The equality follows from $\psi^2\widetilde\psi=\psi^2$;
it does not require moving $\widetilde\Pi_h$ past $\chi_j^\partial$.
The same observation applies to time-independent microlocal operators
placed at the two ends. {For tangential operators depending on the spectral parameter, we realize them as operators in $(t,y)$ with coefficients independent of time:}
the scalar multiplier $\psi^2(hD_t)$ commutes with these operators.

We choose {a smooth time cutoff} equal to one on a
neighborhood of the closed time interval under consideration, and
supported where the local parametrix is defined. The kernel of $\psi^2(hD_t)$ is
$h^{-1}k((t-s)/h)$ with $k$ Schwartz. For $t$ in this interval and $s$ outside the neighborhood where
the time cutoff equals one, $|t-s|$ is bounded below. The rapid decay
of $k$ absorbs any polynomial
$h^{-1}$ growth of the kernel and its fixed-order derivatives, giving
$O(h^\infty)$ in every fixed kernel seminorm. For the exact
propagator with $\widetilde\Pi_h$, spectral expansion and elliptic
regularity supply these polynomial bounds uniformly in time, since
the evolution factors have unit modulus. Applying $\psi^2(hD_t)$ to a local $O(h^\infty)$ remainder
preserves this bound, including derivatives in both spatial
variables of the kernel. Hence comparison
on the enlarged local interval suffices, without a global-in-time
parametrix or an unlocalized high-frequency kernel.

We next apply this multiplier in the Airy representation.
In \eqref{eq:Kequiv} {the time-dependent oscillatory factor is}
$\exp(iht\tau^2(\omega,\eta))$. Applying $\psi^2(hD_t)$ therefore inserts the scalar factor
\begin{equation}\label{eq:airy-global-energy-factor}
 \psi^2\!\bigl(h^2\tau^2(\omega,\eta)\bigr)
   =\psi^2\!\bigl(\tau^2(\alpha,\theta)\bigr),
 \qquad \alpha=h^{2/3}\omega,\quad \theta=h\eta .
\end{equation}
The same factor is inserted in each summand of
\eqref{eq:Prond2cut}, or inside the pairing in
\eqref{eq:Prondcut}; it does not alter the Airy--Poisson identity.
For a single global spectral cutoff the factor is $\psi$, rather
than $\psi^2$. {All auxiliary spectral cutoffs}, including the factor
$\varkappa(h\tau(\omega,\eta))$, are chosen equal to one on the
{spectral support needed for this matching}, with a fixed margin.
The tangential factor $\varkappa(h\eta)$ and the glancing cutoffs
in $\alpha$ have separate roles and are retained.

On the compact parameter support, $\psi^2(\tau^2(\alpha,\theta))$
is an order-zero symbol with uniform bounds; it changes neither
the phase nor its critical set. With $\alpha=\gamma u$, derivatives
in $u$ bring factors $\gamma$, not inverse powers. In the discrete
sum the factor is evaluated at $\alpha=h^{2/3}\omega_k$, so the
oscillatory symbol bounds persist. Spatial and conic compositions
are dealt with separately below.

For the glancing refinement, we choose semiclassical operators
$\mathcal Q_{\nu,h}^{\mathrm{gl}}$ in time and the tangential variables,
acting at fixed $x$, with compact frequency support and symbol bounds
uniform up to $x=0$. The time-frequency variable associated with
$hD_t$ represents $\tau^2$. We refine the spectral and conic cover so
that these supports lie strictly inside the region where the
corresponding multipliers of Definition~\ref{dfnparametrix} equal one;
{on each spectral subinterval, we replace $h$ by a fixed multiple to obtain
these supports. We choose physical supports and time intervals strictly
inside the patch and time interval where that definition gives the comparison.} We now prove Lemma~\ref{lem:direct-glancing-matching}
using Propositions~\ref{propsommefinie} and~\ref{propparama<}.

\begin{proof}
We use Lemma~\ref{lem:boundary-locality} to replace the exact global
kernel by the exact local one on a slightly enlarged time
interval. For a source $(a,0)$ the local comparison is precisely
\begin{equation}\label{eq:local-comparison-used}
 \varkappa(hD_t)\varkappa(hD_y)
 \bigl(\mathcal P_{S,h,a}
       -\mathcal G_S(h\,\cdot,\cdot,a)\bigr)
       =O_{C^\infty_{t,x,y}}(h^\infty),
       \qquad 0<a\leq a_0,
\end{equation}
with uniform constants. Here $\mathcal G_S$ denotes the kernel
of the auxiliary local problem. Multiplication by the physical
cutoffs followed by $\mathcal Q_{\nu,h}^{\mathrm{gl}}$ preserves
{this comparison: because the larger frequency cutoffs equal one on the
chosen supports, inserting them produces an $O(h^\infty)$ error when
applied to these spectrally localized kernels. The larger spatial and
time cutoffs similarly control terms outside the chosen supports.
The kernels and their fixed-order derivatives have at most polynomial
growth in $h^{-1}$, so these errors remain $O(h^\infty)$; keep the larger
scalar spectral window throughout, as in
\eqref{eq:temporal-physical-matching}.}
We do not commute these cutoffs through the Laplacian.

We then apply $\psi^2(hD_t)$. By the preceding time-cutoff
argument, contributions from outside the local comparison
interval are $O(h^\infty)$. For the exact kernel this is the {global spectral localization}; for the parametrix it inserts
\eqref{eq:airy-global-energy-factor} and the result follows.
For a general source point, center the tangential coordinates at
its boundary projection point. The constructions and cutoff choices
are made uniformly over the compact family of source points
and the finite conic atlas. Only a uniform bound in the source
is required for the $L^1\to L^\infty$ kernel estimate; no source
derivative estimate is inferred solely from
\eqref{eq:local-comparison-used}. The two propositions cover all
$0<a\leq a_0$, so the argument imposes no additional lower bound
on $a$.
\end{proof}

We finally check that the preceding cutoffs preserve the
pointwise dispersive bound.
At fixed $x$, the kernel of $\mathcal Q_{\nu,h}^{\mathrm{gl}}$
satisfies, for every $N$,
\begin{equation}\label{eq:tangential-time-cutoff-kernel}
 |K_{\mathcal Q}(t,y;s,z;x)|
 \leq C_N h^{-d}
       \left(1+\frac{|t-s|+|y-z|}{h}\right)^{-N}.
\end{equation}
This follows by integration by parts in {its $d$ time and tangential frequency variables}. Integrating in $z$ gives an integrable
majorant of the form $C_Nh^{-1}(1+|t-s|/h)^{-N}$.
Consequently the bound
$h^{-d}(1+|t|/h)^{-d/2+1/4}$ is preserved, since for sufficiently
large $N$,
\begin{equation}\label{eq:weighted-time-convolution}
 \int_{\mathbb R}(1+|u|)^{-N}
       (1+|t/h-u|)^{-d/2+1/4}\,du
 \lesssim (1+|t|/h)^{-d/2+1/4}.
\end{equation}
For example, we use
$1+|t|/h\leq(1+|u|)(1+|t/h-u|)$ and then integrate.
The same argument applies to {the scalar time-frequency cutoff}.
Cutoffs in physical space and the smooth coordinate-density
factors preserve the estimate as well. Thus no transfer of an
$L^1\to L^\infty$ bound through an arbitrary Fourier integral
operator is involved.

Here we may microlocalize only at the observation point, while
keeping the physical partition in both kernel variables. The source is already dealt with
uniformly by the local kernel estimates. An additional fixed
source-side tangential localization is possible using its
uniformly integrable kernel, but is not required for this
matching lemma.

\section{A refined estimate near the swallowtail and cusp critical points}
\label{app:refined-quartic-estimate}
We refine the analysis of degenerate stationary phases from \cite[Lemma 2.21]{ilp12} to describe
the two branches with cusp-like singularities and the estimate across each branch, as well as how they degenerate when they coalesce into a swallowtail.
The phase may depend nonlinearly on the parameters. The estimates
are uniform under the derivative bounds specified below.

\subsection{The swallowtail point and its two Airy branches}
\label{app:quartic-branches}
Let $\Phi(\xi,z,\nu)$ be a smooth real phase, where
$\xi=(\xi_1,\xi_2)\in\R^2$ are the integration variables,
$z\in\R^2$ are two parameters to be chosen, and $\nu$ denotes
the other parameters. After
translation and an orthogonal change of the integration variables,
assume that, at $(\xi,z)=(0,0)$,
\begin{equation}\label{eq:quartic-center-expansion}
 \Phi_1= \Phi_2=0,\qquad \Phi_{11}\ne0,\qquad
 \Phi_{12}=\Phi_{22}=\Phi_{222}=0,\qquad
 k_4:=\Phi_{2222}-3\frac{\Phi_{122}^2}{\Phi_{11}}\ne0\,,
\end{equation}
where subscripts $1,2$ denote derivatives in $\xi_1,\xi_2$;
all derivatives in \eqref{eq:quartic-center-expansion} are evaluated
at zero, with $\nu$ fixed. The implicit function theorem gives
$\xi_1=\xi_{1,c}(z,s,\nu)$ solving $\Phi_1=0$, where $s=\xi_2$.
Set $\psi(z,s,\nu)=\Phi(\xi_{1,c}(z,s,\nu),s,z,\nu)$.
At the center, with the same subscript notation for derivatives,
\begin{equation}\label{eq:quartic-reduced-expansion}
 \psi_s=\psi_{ss}=\psi_{sss}=0,\qquad \psi_{ssss}=k_4.
\end{equation}
We assume that the two parameters satisfy
\begin{equation}\label{eq:quartic-parameter-jacobian}
 \det D_z(\psi_s,\psi_{ss})(0,0,\nu)\ne0.
\end{equation}
On a fixed neighborhood assume uniform $C^m$ bounds for the
phase, with $m$ sufficiently large. At zero assume positive
uniform lower bounds for $|\Phi_{11}|$, $|k_4|$ and the absolute
determinant in \eqref{eq:quartic-parameter-jacobian}. By continuity,
the arguments below hold on a smaller fixed neighborhood.
The amplitudes have uniform $C^m$ bounds and are supported in a fixed
sufficiently small neighborhood of $\xi=0$, also when they depend on
$z,\nu,\Lambda$.

\begin{lemma}\label{lem:refined-quartic-family}
Under \eqref{eq:quartic-center-expansion} and
\eqref{eq:quartic-parameter-jacobian}, there are smooth local
parameter coordinates $(\widehat P,\widehat Q)$, vanishing at
$z=0$, such that, for $z$ sufficiently small and $\Lambda\ge1$,
\begin{equation}\label{eq:refined-quartic-family-bound}
 \left|I(z,\nu,\Lambda)\right|
 :=\left|\int e^{i\Lambda\Phi(\xi,z,\nu)}
                  \sigma_I(\xi,z,\nu,\Lambda)\,d\xi\right|
 \lesssim
 \frac{\Lambda^{-5/6}}
 {\Lambda^{-1/12}+|\widehat P|^{1/6}+|\widehat Q|^{1/9}}.
\end{equation}
In particular,
\begin{equation}\label{eq:refined-quartic-one-weight}
 |I|\lesssim\Lambda^{-1/2}
 \min\{\Lambda^{-1/4},\Lambda^{-1/3}|\widehat P|^{-1/6}\},
\end{equation}
where the second term is omitted when $\widehat P=0$.
Degenerate critical points occur on two parameter branches on the side
$k_4\widehat P<0$, with
\begin{equation}\label{eq:refined-quartic-branches}
 \widehat Q_\pm(\widehat P,\nu)
 =\pm\frac{2^{3/2}}{3\sqrt{|k_4|}}|\widehat P|^{3/2}
   +O(|\widehat P|^2).
\end{equation}
Away from the swallowtail point, each branch has an Airy-type critical
point, with third derivative of size
$|\widehat P|^{1/2}$. Write
$\delta_\pm=\widehat Q-\widehat Q_\pm(\widehat P,\nu)$.
For $|\widehat P|\ge C_0\Lambda^{-1/2}$, the contribution $I_\pm$
localized near either branch satisfies
\begin{equation}\label{eq:refined-quartic-branch-bound}
 |I_\pm|\lesssim\Lambda^{-5/6}|\widehat P|^{-1/6}
 \left(1+\Lambda^{2/3}
       \frac{|\delta_\pm|}{|\widehat P|^{1/6}}\right)^{-1/4},
 \qquad |\delta_\pm|\le c_1|\widehat P|^{3/2}.
\end{equation}
The reduced variable is localized within distance
$c|\widehat P|^{1/2}$ of the corresponding Airy-type critical point, with
uniform cutoff bounds after rescaling. The constants and
neighborhoods are uniform under the preceding assumptions.
\end{lemma}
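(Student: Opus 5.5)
We first reduce to a one-dimensional problem by stationary phase in $\xi_1$. Since $|\Phi_{11}|$ is bounded below on the fixed neighborhood, the implicit function theorem gives $\xi_{1,c}(z,s,\nu)$. Stationary phase in $\xi_1$ then writes
\[
 I=\Lambda^{-1/2}\int e^{i\Lambda\psi(z,s,\nu)}\widetilde\sigma(z,s,\nu,\Lambda)\,ds+O(\Lambda^{-\infty}),
\]
with uniformly bounded $\widetilde\sigma$. The complement of a neighborhood of $\xi_1=\xi_{1,c}$ is nonstationary. By \eqref{eq:quartic-reduced-expansion}, $\psi(\cdot,s)$ is a deformation of an $A_3$ singularity $k_4s^4/24$.

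We then choose the parameters so that $\psi$ is a normal form. Write $\psi_s=a_0(z)+a_1(z)s+a_2(z)s^2+\frac{k_4}{6}s^3(1+O(s))$, with $a_2(0)=0$. Translating $s\mapsto s-s_0(z,\nu)$, where $s_0$ is defined by $\psi_{sss}(z,s_0)=0$, removes the quadratic coefficient of $\psi_s$. This uses $\psi_{ssss}\neq0$. We then set
\[
 \widehat Q=\psi_s(z,s_0),\qquad \widehat P=\psi_{ss}(z,s_0).
\]
By \eqref{eq:quartic-parameter-jacobian}, and since $\partial_z s_0$ only adds terms multiplied by $\psi_{sss}(s_0)=0$, the map $z\mapsto(\widehat P,\widehat Q)$ is a local diffeomorphism. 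In the centered variable $r$ we get
\[
 \psi_r=\widehat Q+\widehat P r+\tfrac{k_4}{6}r^3+O(r^4),
\]
and the remainder has uniform $C^{m-4}$ bounds.

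\textbf{Uniform bound \eqref{eq:refined-quartic-family-bound}.}
\begin{itemize}
\item If $|\widehat P|\le C\Lambda^{-1/2}$ and $|\widehat Q|\le C\Lambda^{-3/4}$, rescale $r=\Lambda^{-1/4}u$. The phase becomes a bounded perturbation of $k_4u^4/24$, and van der Corput's fourth derivative test gives $\Lambda^{-1/4}$.
\item If $|\widehat Q|\gtrsim|\widehat P|^{3/2}$ and $|\widehat Q|\gg\Lambda^{-3/4}$, rescale $r=|\widehat Q|^{1/3}u$. Then $\psi_r\sim|\widehat Q|(1+u^3)$ has at most one critical point, with $\psi_{rr}\sim|\widehat Q|^{1/3}u^2$. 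Stationary phase, or integration by parts away from it, gives $(\Lambda|\widehat Q|^{2/3})^{-1/2}\lesssim\Lambda^{-1/2}|\widehat Q|^{-1/3}$. We then only need the weaker $|\widehat Q|^{-1/9}\Lambda^{-1/3}$, which follows by interpolating with the trivial bound. The exponent $1/9$ in the statement is generous.
\item If $|\widehat Q|\lesssim|\widehat P|^{3/2}$ and $|\widehat P|\gg\Lambda^{-1/2}$, rescale $r=|\widehat P|^{1/2}u$. We get the phase $\Lambda|\widehat P|^{2}\big(\widehat q\,u\pm u^2/2+k_4u^4/24+\dots\big)$ with $|\widehat q|\lesssim1$.
\end{itemize}

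\textbf{Case $k_4\widehat P>0$.} Only nondegenerate critical points appear, with $|\psi_{rr}|\gtrsim|\widehat P|$. Stationary phase gives $\Lambda^{-1/2}|\widehat P|^{-1/2}\le\Lambda^{-1/3}|\widehat P|^{-1/6}$ for $|\widehat P|\ge\Lambda^{-1/2}$.

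\textbf{Case $k_4\widehat P<0$.} Critical points collide pairwise exactly when $\psi_r=\psi_{rr}=0$. This means $r^2=-2\widehat P/k_4$ and $\widehat Q=-\widehat Pr-\frac{k_4}{6}r^3$, which gives \eqref{eq:refined-quartic-branches} after the implicit function theorem absorbs the $O(r^4)$ terms.

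\textbf{Airy branch.} Near either branch we localize $r$ within $c|\widehat P|^{1/2}$ of $r_\pm$. There $\psi_{rrr}=k_4r_\pm+O(r^2)\sim|\widehat P|^{1/2}$, and $\psi_r$ is an Airy deformation with parameter $\delta_\pm$. Rescaling $r-r_\pm=(\Lambda|\widehat P|^{1/2})^{-1/3}v$ puts the phase into the standard $v^3/3-\beta v$ form, with $\beta=\Lambda^{2/3}|\widehat P|^{-1/6}\delta_\pm(1+O(\cdot))$. The classical Airy bound $(1+|\beta|)^{-1/4}$ then yields \eqref{eq:refined-quartic-branch-bound}, after the prefactor $\Lambda^{-1/2}(\Lambda|\widehat P|^{1/2})^{-1/3}=\Lambda^{-5/6}|\widehat P|^{-1/6}$.

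Away from both branches the critical points are nondegenerate and satisfy the same bound. Summing these cases gives \eqref{eq:refined-quartic-family-bound}, and \eqref{eq:refined-quartic-one-weight} is an immediate consequence.

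\textbf{Main obstacle.} The hardest step is uniformity in the transition regions, where $|\widehat P|\sim\Lambda^{-1/2}$ or $\delta_\pm\sim|\widehat P|^{3/2}$. There the cutoffs separating the two branches, and separating branches from nondegenerate points, must remain uniformly smooth after rescaling. We plan to handle this with a dyadic partition in $|r-r_\pm|/|\widehat P|^{1/2}$. In addition, we need to check that the $O(r^4)$ remainder in $\psi_r$ stays a small $C^3$ perturbation in each rescaled variable. This follows from $|r|\lesssim|\widehat P|^{1/2}\ll1$ together with the uniform $C^m$ bounds.
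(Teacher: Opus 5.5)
Your proposal is correct. The reduction, the adapted parameters, the branch formula and the branch estimate are the paper's. Where you differ is in how you obtain the uniform bound \eqref{eq:refined-quartic-family-bound}.

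\textbf{What coincides.} You do stationary phase in $\xi_1$, center at $s_0$ with $\psi_{sss}(z,s_0,\nu)=0$, set $(\widehat P,\widehat Q)=(\psi_{ss},\psi_s)(z,s_0,\nu)$, solve $\psi_r=\psi_{rr}=0$ for the branches, and estimate on the Airy scale near them; all of this is what the paper does. For the last step the paper rescales by $|\widehat P|^{1/2}$ and applies the transverse estimate of Lemma~\ref{lem:uniform-cubic-continuation}. That lemma is the robust substitute you need for the ``classical Airy bound'', because your rescaled phase is only a perturbed cubic on an interval of length $\sim\Lambda^{1/3}|\widehat P|^{2/3}$, not exactly $v^3/3-\beta v$.

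\textbf{How the paper gets the uniform bound.} It uses the single scale $r_*=\max\{|\widehat P|^{1/2},|\widehat Q|^{1/3}\}$. After $s=r_*v$, the normalized coefficients lie in the compact set $\max\{|\widehat P|/r_*^2,|\widehat Q|/r_*^3\}=1$, so on bounded $v$-sets the first three derivatives never vanish together. A finite partition with van der Corput of order at most three, plus a dyadic second-derivative bound for $|s|\gtrsim r_*$, then gives $\Lambda^{-5/6}r_*^{-1/3}$. This never locates the branches, so the transition regions you flag as the main obstacle do not arise.

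\textbf{What your route buys, and what it needs.} Your regime-by-regime analysis is longer but more informative. For instance, it gives $\Lambda^{-1}|\widehat Q|^{-1/3}$ when $\widehat Q$ dominates. No interpolation is needed there, since $\Lambda^{-1/2}|\widehat Q|^{-1/3}\le\Lambda^{-1/3}|\widehat Q|^{-1/9}$ exactly when $|\widehat Q|\ge\Lambda^{-3/4}$. In that case you should have $\psi_{rr}\sim|\widehat Q|^{2/3}u^2$, not $|\widehat Q|^{1/3}u^2$.

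For the case split to be consistent, take the $\widehat Q$-dominated case as $|\widehat Q|\ge K|\widehat P|^{3/2}$ with $K$ well above $2^{3/2}/(3\sqrt{|k_4|})$. On the branches $\psi_r$ has a double root with $|\widehat Q|\sim|\widehat P|^{3/2}$, so the claim ``at most one critical point'' fails for smaller $K$. The branches then fall into your $\widehat P$-dominated case.

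In that case you do not need to separate the branches for \eqref{eq:refined-quartic-family-bound}. On $|r|\le c|\widehat P|^{1/2}$ you have $|\psi_{rr}|\ge|\widehat P|/2$, so van der Corput of order two applies. On the complement $|\psi_{rrr}|\gtrsim|\widehat P|^{1/2}$, so van der Corput of order three applies. Together they give $\Lambda^{-1/3}|\widehat P|^{-1/6}$ for the reduced integral, for either sign of $k_4\widehat P$. The branches matter only for \eqref{eq:refined-quartic-branch-bound}.

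\textbf{Minor points.} The cubic coefficient is $k(\widehat P,\widehat Q,\nu)$, not $k_4$. In $D_z\widehat Q$, the $D_zs_0$ term carries the factor $\psi_{ss}(s_0)=\widehat P$, not $\psi_{sss}(s_0)$; it vanishes at $z=0$, which is all the inverse function theorem needs.
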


\begin{proof}
Since $\psi_{ssss}\ne0$, there is a unique smooth $s_0(z,\nu)$
near zero with $\psi_{sss}(z,s_0,\nu)=0$. Define
\begin{equation}\label{eq:quartic-adapted-parameters}
 \widehat P=\psi_{ss}(z,s_0,\nu),\qquad
 \widehat Q=\psi_s(z,s_0,\nu).
\end{equation}
At zero, \eqref{eq:quartic-reduced-expansion} cancels the terms containing
derivatives of $s_0$. Hence \eqref{eq:quartic-parameter-jacobian}
gives a uniformly invertible change of parameters. Taylor's formula, with $s$ now measured from
$s_0$, gives, up to an additive constant,
\begin{equation}\label{eq:quartic-centered-phase}
 \psi=\widehat Qs+\frac{\widehat P}{2}s^2
       +\frac{k(\widehat P,\widehat Q,\nu)}{24}s^4
       +s^5R(\widehat P,\widehat Q,s,\nu),\qquad k(0,0,\nu)=k_4.
\end{equation}
The functions $k,R$ have uniform smooth bounds and $|k|$ is
bounded below. We work with this expansion, including the remainder.

Stationary phase in $\xi_1$ reduces $I$ to a finite sum of
one-dimensional integrals with phase
\eqref{eq:quartic-centered-phase}, uniformly smooth amplitudes and
prefactors $\Lambda^{-1/2-j}$, $j\ge0$. The remainder is
$O(\Lambda^{-M})$ for a fixed sufficiently large $M$.
It suffices to estimate the leading term. Set
\begin{equation}\label{eq:quartic-parameter-scale}
 r_* =\max\{|\widehat P|^{1/2},|\widehat Q|^{1/3}\}.
\end{equation}
For $r_*\le\Lambda^{-1/4}$, the fourth derivative of the reduced
phase is bounded away from zero on the small fixed support.
Van der Corput of order four gives $|I|\lesssim\Lambda^{-3/4}$.

For $r_*>\Lambda^{-1/4}$, set $s=r_*v$. We divide the phase by
$r_*^4$. It becomes
\begin{equation}\label{eq:quartic-scaled-phase}
 \frac{\widehat Q}{r_*^3}v+\frac{\widehat P}{2r_*^2}v^2
 +\frac{k}{24}v^4+r_*v^5R(\widehat P,\widehat Q,r_*v,\nu),
\end{equation}
with oscillatory parameter $\Lambda r_*^4>1$.
On every fixed interval $|v|\le C$, the polynomial part cannot
have its first three derivatives vanish simultaneously: its third
derivative vanishes only at zero, while
$\max\{|\widehat P|/r_*^2,|\widehat Q|/r_*^3\}=1$.
By compactness this remains true, with a uniform lower bound,
after adding the small $C^4$ remainder. A finite partition, followed by integration by
parts or Van der Corput of order two or three, bounds this part of
the reduced integral by
$r_*(\Lambda r_*^4)^{-1/3}=\Lambda^{-1/3}r_*^{-1/3}$.

The rescaled support need not be bounded. On $|s|\ge Cr_*$,
with $C$ large and the original support small, we have
$|\psi_{ss}|=|\widehat P+ks^2/2+O(s^3)|\gtrsim s^2$.
On $|s|\sim2^jr_*$ the second derivative estimate is
$O(\Lambda^{-1/2}(2^jr_*)^{-1})$, with uniformly bounded
amplitude variation. Summing gives
$O(\Lambda^{-1/2}r_*^{-1})$, which is at most
$O(\Lambda^{-1/3}r_*^{-1/3})$ because $\Lambda r_*^4\ge1$.
Multiplying by the stationary phase factor gives
\begin{equation}\label{eq:quartic-two-scales}
 |I|\lesssim
 \begin{cases}
  \Lambda^{-3/4},& r_*\le\Lambda^{-1/4},\\
  \Lambda^{-5/6}r_*^{-1/3},& r_*>\Lambda^{-1/4}.
 \end{cases}
\end{equation}
This is \eqref{eq:refined-quartic-family-bound}; omitting the
$|\widehat Q|^{1/9}$ term gives
\eqref{eq:refined-quartic-one-weight}.

Degenerate critical points solve $\psi_s=\psi_{ss}=0$.
Solving for the two adapted parameters as functions of $s$ gives
\begin{equation}\label{eq:quartic-branch-parametrization}
 \widehat P=-k_4s^2/2+O(s^3),\qquad
 \widehat Q=k_4s^3/3+O(s^4).
\end{equation}
The parameter Jacobian at zero is invertible. Taking the two
signs of $s$ gives
\eqref{eq:refined-quartic-branches}, and
$|\psi_{sss}|\sim|s|\sim|\widehat P|^{1/2}$ on either branch.

For the localized estimate, put $r_*=|\widehat P|^{1/2}$ and
$s=s_\pm+r_*v$, where $s_\pm$ denotes the Airy-type critical point on the
chosen branch. After division by $r_*^4$, the phase has uniformly
bounded derivatives, with the coefficient of the third-order term bounded away from
zero on a small fixed interval. Off the branch, we solve $\psi_{ss}=0$ again. At this new point,
the first derivative is comparable to
$\delta_\pm$, uniformly for $|\delta_\pm|\le c_1r_*^3$:
its derivative with respect to $\widehat Q$ is
$1+O(r_*^3)$ in \eqref{eq:quartic-centered-phase}, since
the derivative of the new center is multiplied by $\psi_{ss}=0$.
After rescaling, the linear coefficient is therefore comparable to
$\delta_\pm/r_*^3$.
The third derivative estimate, combined with the transverse Airy
estimate proved in \ref{app:cubic-continuation}, gives
\begin{equation}\label{eq:quartic-local-cubic-proof}
 |I_\pm|\lesssim\Lambda^{-1/2}r_*
 (\Lambda r_*^4)^{-1/3}
 \left(1+(\Lambda r_*^4)^{2/3}
             \frac{|\delta_\pm|}{r_*^3}\right)^{-1/4}\,,
\end{equation}
which is \eqref{eq:refined-quartic-branch-bound}.
\end{proof}

We can treat the two branches separately when
$|\widehat P|\gtrsim\Lambda^{-1/2}$: their separation is
$\sim|\widehat P|^{3/2}$, while the Airy scale across either is
$\Lambda^{-2/3}|\widehat P|^{1/6}$.
At $|\widehat P|\sim\Lambda^{-1/2}$ both are of size
$\Lambda^{-3/4}$, and the bound matches the estimate near the swallowtail.
Below this scale we use the estimate near the swallowtail.
\begin{rmq}
Lemma~\ref{lem:refined-quartic-family} may also be formulated
in coordinates diagonalizing the Hessian of the model phase.
For the perturbed phase, we eliminate the same nondegenerate
variable and locate the displaced swallowtail point by the
vanishing of the first three derivatives of the reduced phase.
The lower bounds for its fourth derivative and for the
parameter Jacobian in \eqref{eq:quartic-parameter-jacobian}
persist under sufficiently small smooth perturbations,
with uniform constants. Exact diagonalization of the
perturbed Hessian is therefore unnecessary.
\end{rmq}
\subsection{The geometric hypotheses of \cite[Lemma~2.21]{ilp12}}
\label{app:quartic-geometric-hypotheses}
For comparison with \cite[Lemma 2.21]{ilp12}, let
$\Phi(\xi,P,Q)=P\xi_1+Q\xi_2-H(\xi)$, with
$H(0)=0$ and $\nabla_\xi H(0)=0$. Throughout this subsection,
$H_{i_1\cdots i_k}$ denotes
$\partial_{\xi_{i_1}}\cdots\partial_{\xi_{i_k}}H(0)$.
Assume that $D_\xi^2H(0)$ has rank one,
$\nabla_\xi(\det D_\xi^2H)(0)\ne0$, and, for a regular parametrization
$\xi(s)$ of $\det D_\xi^2H=0$ through zero, that
$\Xi(s)=\nabla_\xi H(\xi(s))$ satisfies $\Xi'(0)=0$, $\Xi''(0)\ne0$.
Primes on $\Xi$ denote derivatives with respect to $s$.
We choose orthogonal coordinates with $H_{11}\ne0$ and
$H_{12}=H_{22}=0$ at zero. The tangent to this curve is the kernel
direction. Since
$\nabla_\xi(\det D_\xi^2H)(0)=(H_{11}H_{122},H_{11}H_{222})$,
we have $H_{222}=0$ and $H_{122}\ne0$.
We parametrize the curve by $\xi(s)=(\xi_1(s),s)$, which gives
\begin{equation}\label{eq:quartic-geometric-expansion}
 \xi_1(s)=\left(\frac{H_{122}}{H_{11}}
                  -\frac{H_{2222}}{2H_{122}}\right)s^2+O(s^3),
 \qquad
 \Xi''(0)=\left(-\frac{H_{11}}{H_{122}}
                \left(H_{2222}-3\frac{H_{122}^2}{H_{11}}\right),0\right).
\end{equation}
Thus the fourth derivative of the reduced phase is
$k_4=-H_{2222}+3H_{122}^2/H_{11}\ne0$. In the normalized coordinates used in the proof of
\cite[Lemma~2.21]{ilp12}, $H_{11}=1$, and the Taylor
coefficients used there are such that
$H_{111}=6a$, $H_{112}=2b$, $H_{122}=2c$,
$H_{222}=6d$, and $H_{2222}=24e$.
Here $d=0$, $c\ne0$, and $k_4=12(c^2-2e)$,
so $k_4\ne0$ is precisely the condition $c^2\ne2e$
obtained there.
After eliminating $\xi_1$ and using
\eqref{eq:quartic-adapted-parameters}, the parameter Jacobian is
\begin{equation}\label{eq:quartic-linear-parameter-jacobian}
 D_{P,Q}(\widehat P,\widehat Q)(0)
 =\begin{pmatrix}-H_{122}/H_{11}&0\\0&1\end{pmatrix}.
\end{equation}
Lemma \ref{lem:refined-quartic-family} therefore applies: in this
linear family, the lower bound on $|H_{122}|$ gives the parameter
condition. For nonlinear dependence we check
\eqref{eq:quartic-parameter-jacobian} directly, without requiring
a separate lower bound on $|\Phi_{122}|$.

Equivalently, let $u,v$ be unit eigenvectors of the Hessian at
zero, with $v$ in its kernel. The coefficient of the fourth-order term in the reduced phase is
\begin{equation}\label{eq:quartic-intrinsic-coefficient}
 k_4=D^4_\xi\Phi(v,v,v,v)
     -3\frac{D^3_\xi\Phi(u,v,v)^2}{D^2_\xi\Phi(u,u)}.
\end{equation}
Together with $D^3_\xi\Phi(v,v,v)=0$, this gives
\eqref{eq:quartic-center-expansion}, independently of the signs of $u,v$.
It suffices to choose these eigenvectors locally in the parameters.

\subsection{Continuation along the branches and the transverse Airy estimate}
\label{app:cubic-continuation}
The preceding conditions apply near the swallowtail point. On a larger
compact set we also check the other critical points, allowing a
different nondegenerate variable to be eliminated near each one.

\begin{lemma}\label{lem:uniform-cubic-continuation}
Assume uniform smooth bounds for $\Phi(\xi,z,\nu)$ and the
amplitudes, with fixed compact support for the latter, and
compactness of the family, including its limits. At each critical
point on the supports, assume that $\Phi_{\xi\xi}$ is either
nondegenerate, or has rank one with
\begin{equation}\label{eq:cubic-continuation-hypothesis}
 D^3_\xi\Phi(v,v,v)\ne0,
\end{equation}
where $v$ is a unit vector in its kernel. Then
$|I|\lesssim\Lambda^{-5/6}$ uniformly for $\Lambda\ge1$.
One may instead assume a finite covering on which the
nondegenerate or third derivatives of the reduced phase have uniform
positive lower bounds.

If, in addition, after eliminating the nondegenerate variable,
the reduced phase satisfies $D_z\psi_s\ne0$ at each Airy-type critical point,
with a uniform lower bound, the Airy-type critical points occur on a
smooth regular curve in the parameter plane. Locally, define
\begin{equation}\label{eq:cubic-transverse-coordinate}
 \psi_{ss}(z,s_0(z,\nu),\nu)=0,\qquad
 \delta(z,\nu)=\psi_s(z,s_0(z,\nu),\nu).
\end{equation}
Here $\delta$ measures the deviation from the curve of Airy-type critical
points, defined by $\delta=0$. The contribution localized near this curve
satisfies
\begin{equation}\label{eq:cubic-transverse-estimate}
 |I_{\mathrm{Airy}}|\lesssim
 \Lambda^{-5/6}\mathfrak b^{-1/2}\mathfrak c^{-1/3}
 \left(1+\Lambda^{2/3}\frac{|\delta|}{\mathfrak c^{1/3}}
 \right)^{-1/4}\,.
\end{equation}
Here $\mathfrak b$ is the absolute second derivative in the
eliminated direction and $\mathfrak c=|\psi_{sss}(z,s_0,\nu)|$;
we restrict to a neighborhood where these derivatives remain
comparable to their values at the Airy-type critical point. On fixed
neighborhoods both coefficients have positive uniform upper and
lower bounds. The additional assumption defines the degeneracy curve
and a transverse coordinate.
\end{lemma}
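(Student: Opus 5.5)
The first assertion is a uniform stationary-phase/van der Corput statement, and I would prove it by a finite compactness covering. For each parameter value in the (compact, including limits) family and each critical point $\xi_*$ on the amplitude support, either $\Phi_{\xi\xi}(\xi_*)$ is nondegenerate, or it has rank one with $D^3_\xi\Phi(v,v,v)\ne0$. In the first case, a small neighborhood in $(\xi,z,\nu)$ on which the Hessian stays uniformly invertible gives $O(\Lambda^{-1})$ by stationary phase with uniform constants. In the second case, choose orthogonal coordinates $\xi=(\xi_1,s)$ with $v$ along $s$. On a small neighborhood, solve $\Phi_{1}=0$ by the implicit function theorem, $\xi_1=\xi_{1,c}(z,s,\nu)$. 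Stationary phase in $\xi_1$ gives the prefactor $\mathfrak b^{-1/2}\Lambda^{-1/2}$ and a reduced phase $\psi(z,s,\nu)$ with $|\psi_{sss}|\gtrsim1$, the Schur-complement identity showing $\psi_{sss}=D^3\Phi(v,v,v)+O(\text{small})$. Van der Corput of order three then gives $\Lambda^{-1/3}$, with total variation of the reduced amplitude uniformly bounded. Points of the support with no critical point are handled by integration by parts. Compactness yields finitely many such neighborhoods with uniform lower bounds, and summing gives $|I|\lesssim\Lambda^{-5/6}$. The alternative hypothesis (a finite cover with uniform lower bounds on the reduced derivatives) is exactly what this step uses, so it suffices directly.

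For the transverse estimate, I would work in one Airy neighborhood after eliminating $\xi_1$. Since $\psi_{sss}\ne0$, the equation $\psi_{ss}(z,s_0,\nu)=0$ defines $s_0(z,\nu)$ smoothly, and $\delta(z,\nu)=\psi_s(z,s_0,\nu)$. Differentiating, $D_z\delta=D_z\psi_s$, because the $s_0$-derivative is multiplied by $\psi_{ss}=0$. The hypothesis $D_z\psi_s\ne0$ therefore makes $\{\delta=0\}$ a smooth regular curve and $\delta$ a transverse coordinate.

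Taylor expanding around $s_0$ gives $\psi=\mathrm{const}+\delta\, r+\tfrac{\psi_{sss}}{6}r^3+r^4R$, with $r=s-s_0$. Rescale $r=\mathfrak c^{-1/3}\Lambda^{-1/3}w$, so that the cubic coefficient becomes normalized and the linear coefficient becomes $\Lambda^{2/3}\mathfrak c^{-1/3}\delta=:\beta$. The rescaled quartic remainder is small on the localized region $|r|\lesssim$ a fixed small constant, after splitting $|w|$ dyadically.

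The key one-dimensional bound is $\bigl|\int e^{i(\beta w+w^3/3+\epsilon(w))}a(w)\,dw\bigr|\lesssim(1+|\beta|)^{-1/4}$, uniformly for bounded smooth amplitudes in the rescaled variables. For $\beta\ge -C$ it follows from nonstationary phase; the phase derivative is at least $c(|\beta|+w^2)$, giving even $(1+|\beta|)^{-1}$ decay. For $\beta\ll0$ there are two nondegenerate critical points $w_\pm\approx\pm\sqrt{|\beta|}$ with second derivative $\sim|\beta|^{1/2}$, giving $|\beta|^{-1/4}$ each, and the remaining region is handled by integration by parts with a dyadic partition. Multiplying by the Jacobian $\mathfrak c^{-1/3}\Lambda^{-1/3}$ and the prefactor $\mathfrak b^{-1/2}\Lambda^{-1/2}$ gives \eqref{eq:cubic-transverse-estimate}.

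I expect the main obstacle to be the uniformity of this last step. The amplitude and the perturbation $\epsilon$ must have symbol bounds in $w$ uniform in $\Lambda$ and $\beta$ after rescaling, and the remainder $r^4R$ must stay subordinate to the cubic term on the whole localized region, not just near $w=0$. I would handle this by choosing the localization radius small relative to $\mathfrak c/\|R\|_{C^0}$, which is uniform by the assumed bounds. On the complement of that radius the phase derivative is bounded below, and integration by parts gives $O(\Lambda^{-M})$ there, which is dominated by the claimed bound.
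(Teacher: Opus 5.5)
Your first part matches the paper's argument: a compactness covering, stationary phase at nondegenerate points, and at rank-one points elimination of the nondegenerate variable, so that the reduced third derivative is $D^3_\xi\Phi(v,v,v)$. You then use van der Corput of order three, and integration by parts where the gradient is bounded below. Likewise, $D_z\delta=D_z\psi_s$ because $\psi_{ss}(z,s_0,\nu)=0$.

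For the transverse estimate you take a different route. You rescale $r=(\Lambda\mathfrak c)^{-1/3}w$ to a perturbed Airy integral with linear coefficient $\beta=\Lambda^{2/3}\mathfrak c^{-1/3}\delta$, and treat the two signs of $\beta$ separately. On one side you use explicit stationary phase at $w_\pm\approx\pm\sqrt{|\beta|}$, made uniform by the further rescaling $w=|\beta|^{1/2}u$ with parameter $|\beta|^{3/2}$. On the other side you use nonstationary phase.

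The paper never locates the critical points. For $|\delta|\le\Lambda^{-2/3}\mathfrak c^{1/3}$ it applies the third-derivative test. Otherwise it splits at $|v|\sim(|\delta|/\mathfrak c)^{1/2}$. It integrates by parts on the central interval, where $|\psi_s|\gtrsim|\delta|$. On the two outer intervals $\psi_{ss}$ is monotone with $|\psi_{ss}|\gtrsim(\mathfrak c|\delta|)^{1/2}$, and it applies the second-derivative test there. That argument handles both signs of $\delta$ at once. It needs only the pointwise facts $|g|\sim\mathfrak c$ and $|\psi_{ss}|\sim\mathfrak c|v|$ on a small neighborhood, plus bounded variation of the amplitude. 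So the uniformity issue you flag, symbol-type control of the rescaled quartic remainder on $|w|\lesssim(\Lambda\mathfrak c)^{1/3}$, does not appear. This is convenient when the estimate is reapplied after the rescalings of Section~\ref{app:cubic-quartic-matching}. Your route costs more bookkeeping but exhibits the two stationary contributions explicitly.

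One step needs repair: the case $\beta\ge -C$ is not nonstationary. For $-C\le\beta<0$ the phase has critical points at $w=\pm\sqrt{|\beta|}$, so the lower bound $\beta+w^2\ge c(|\beta|+w^2)$ fails. For small $\beta>0$, a single integration by parts produces $\int_{\mathbb R}2|w|(\beta+w^2)^{-2}\,dw=2/\beta$, which is unbounded as $\beta\to0^+$.

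For $|\beta|\le C$ you need the $O(1)$ bound from van der Corput of order three. The rescaled third derivative is bounded below on the whole localized region and the amplitude has bounded variation. This is exactly the paper's first regime. Nonstationary phase is only valid for $\beta\ge C$. With this correction your argument yields \eqref{eq:cubic-transverse-estimate}.
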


\begin{proof}
Stationary phase gives $O(\Lambda^{-1})$ near a
nondegenerate critical point. At a rank-one point, we choose orthonormal coordinates diagonalizing
the Hessian and eliminate the nondegenerate variable. Since the mixed Hessian vanishes at the center,
the reduced third derivative there is
$D^3_\xi\Phi(v,v,v)$. Stationary phase and Van der Corput of order
three give $O(\Lambda^{-5/6})$. These bounds persist on small neighborhoods. By compactness,
a finite covering suffices; on the complement we integrate by
parts since the gradient is bounded away from zero.

For the more precise bound, $\psi_{sss}\ne0$ gives $s_0$ in
\eqref{eq:cubic-transverse-coordinate}. On the curve,
$D_z\delta=D_z\psi_s$, since $\psi_{ss}=0$; hence $\delta$ is a
regular coordinate transverse to the curve. Write $v=s-s_0$. Taylor's formula
gives
\begin{equation}\label{eq:cubic-transverse-Taylor}
 \psi_s=\delta+v^2g(z,\nu,v),\qquad
 |g|\sim\mathfrak c,\qquad
 |\psi_{ss}|\sim\mathfrak c|v|.
\end{equation}
The coefficient of the third-order term has constant sign on a small neighborhood. Stationary phase contributes $(\Lambda\mathfrak b)^{-1/2}$.
For $|\delta|\le\Lambda^{-2/3}\mathfrak c^{1/3}$, Van der Corput
of order three gives $(\Lambda\mathfrak c)^{-1/3}$ for the remaining
integral. Otherwise, we split the integral at
$|v|\sim(|\delta|/\mathfrak c)^{1/2}$, with a small fixed constant
for the central interval. There $|\psi_s|\gtrsim|\delta|$, and
integration by parts gives $O((\Lambda|\delta|)^{-1})$.
On the two outer intervals,
$|\psi_{ss}|\gtrsim(\mathfrak c|\delta|)^{1/2}$, and the second
derivative test gives
$O(\Lambda^{-1/2}\mathfrak c^{-1/4}|\delta|^{-1/4})$.
The central estimate is no larger in this regime. The amplitudes
have bounded variation under this subdivision, proving
\eqref{eq:cubic-transverse-estimate}.
\end{proof}

The Airy scale across the curve is
$|\delta|\sim\Lambda^{-2/3}\mathfrak c^{1/3}$.
On the side where $\delta$ and $\psi_{sss}(s_0)$ have opposite
signs, there are two nearby critical points, at distances
comparable to $(|\delta|/\mathfrak c)^{1/2}$ from $s_0$. At either,
\begin{equation}\label{eq:cubic-stationary-Hessian}
 |\psi_{ss}|\sim(\mathfrak c|\delta|)^{1/2},\qquad
 |\det\Phi_{\xi\xi}|\sim
          \mathfrak b\mathfrak c^{1/2}|\delta|^{1/2}.
\end{equation}
Outside the Airy scale, the right hand side of
\eqref{eq:cubic-transverse-estimate} is therefore comparable to
$\Lambda^{-1}\mathfrak b^{-1/2}\mathfrak c^{-1/4}|\delta|^{-1/4}$,
the usual stationary-phase bound. On the other side there is no nearby critical point, and
integrations by parts give a smaller contribution.

\subsection{Rescaling and changing the eliminated variable}
\label{app:cubic-quartic-matching}
When the coefficient of the third-order term tends to zero, we rescale the two
variables to keep the preceding estimate uniform. Let $\ell$ be the
width of the reduced variable and $w$ that of the eliminated
variable, with $\mathfrak b w^2\sim\mathfrak c\ell^3$. Assume that
the phase divided by $\mathfrak c\ell^3$, after these changes of
variables and subtraction of a constant, has uniform smooth bounds,
and that its nondegenerate second derivative and reduced third
derivative are bounded away from zero. The rescaled amplitudes
have uniform smooth bounds as well. For
$\Lambda\mathfrak c\ell^3\gtrsim1$, the same proof, including
the Jacobian $w\ell$, gives
\eqref{eq:cubic-transverse-estimate}. The derivative bounds are required after rescaling.

Once the two branches in Lemma~\ref{lem:refined-quartic-family} are separated, we work near either one and set $r_*=|\widehat P|^{1/2}$.
In the original nondegenerate direction
$\mathfrak b\sim1$, $\mathfrak c\sim r_*$, and we may take
$\ell\sim r_*$, $w\sim r_*^2$, centering the eliminated variable
at its critical value. The oscillatory parameter in both rescaled
directions is $\Lambda r_*^4$. The transverse coordinate in
\eqref{eq:cubic-transverse-coordinate} is comparable to
$\delta_\pm$ on $|\delta_\pm|\le c_1r_*^3$. Thus
\eqref{eq:cubic-transverse-estimate} gives
\begin{equation}\label{eq:cubic-quartic-matched-bound}
 \Lambda^{-5/6}r_*^{-1/3}
 \left(1+\Lambda^{2/3}\frac{|\delta_\pm|}{r_*^{1/3}}
 \right)^{-1/4},
\end{equation}
which is \eqref{eq:refined-quartic-branch-bound}. At
$r_*\sim\Lambda^{-1/4}$ the prefactor and transverse width are
both $\Lambda^{-3/4}$, matching the swallowtail region.

We now check directly that either nondegenerate variable can be
eliminated. At an Airy-type critical point
where $\Phi_{11}\ne0$ and $\Phi_{12}\ne0$, the rank-one condition
gives $\Phi_{22}=\Phi_{12}^2/\Phi_{11}\ne0$.
Let $\psi^{(1)}$ be the critical value after eliminating $\xi_1$
and $\psi^{(2)}$ that after eliminating $\xi_2$. The corresponding vectors spanning the Hessian kernel are
\begin{equation}\label{eq:cubic-kernel-change}
 v_1=(-\Phi_{12}/\Phi_{11},1),\qquad
 v_2=(1,-\Phi_{11}/\Phi_{12})
       =-\frac{\Phi_{11}}{\Phi_{12}}v_1.
\end{equation}
If $\mathfrak b_j,\mathfrak c_j$ and $\delta^{(j)}$ denote
the coefficients and transverse coordinates for these two
eliminations, then, on their common degenerate critical curve,
\begin{equation}\label{eq:cubic-elimination-identities}
 \mathfrak b_2=\frac{|\Phi_{12}|^2}{\mathfrak b_1},\qquad
 \mathfrak c_2=\left|\frac{\Phi_{11}}{\Phi_{12}}\right|^3
                    \mathfrak c_1,\qquad
 D_z\delta^{(2)}=-\frac{\Phi_{11}}{\Phi_{12}}
                         D_z\delta^{(1)}.
\end{equation}
Indeed, differentiating the critical value three times gives
$D^3_\xi\Phi(v_j,v_j,v_j)$: the other terms vanish since
$\Phi_\xi=0$ and $\Phi_{\xi\xi}v_j=0$. Similarly,
$D_z\delta^{(j)}=v_j\cdot\Phi_{\xi z}$ on the curve.
It follows that the prefactors agree there,
$\mathfrak b_2^{-1/2}\mathfrak c_2^{-1/3}
=\mathfrak b_1^{-1/2}\mathfrak c_1^{-1/3}$, and the normalized
transverse differentials agree in absolute value after division
by $\mathfrak c_j^{1/3}$. These are identities for the full phase.

In the linear family above, $H_{122}\ne0$ gives, on either branch
near zero and in the coordinates of the swallowtail point,
$|\Phi_{11}|\sim1$, $|\Phi_{12}|\sim r_*$ and
$|\Phi_{22}|\sim r_*^2$. Consequently the second elimination has
$\mathfrak b_2\sim r_*^2$, $\mathfrak c_2\sim r_*^{-2}$,
reduced width $\ell\sim r_*^2$ and eliminated width $w\sim r_*$.
Its oscillatory parameter is again $\Lambda r_*^4$.
With uniform bounds in these rescaled variables, the two
transverse coordinates satisfy
$|\delta^{(2)}|\sim r_*^{-1}|\delta^{(1)}|$ for $|\delta^{(1)}|\le cr_*^3$, with $c$ small enough.
This follows from \eqref{eq:cubic-elimination-identities} on
the curve and the mean value theorem after rescaling.
Both choices therefore yield \eqref{eq:cubic-quartic-matched-bound},
including its transverse width. For a perturbed phase we repeat the argument at its swallowtail point,
in coordinates diagonalizing its Hessian and along its Airy branches.

Finally, combine the estimate near the swallowtail point with the
finite covering of the other Airy-type and nondegenerate critical points. If a bounded nonnegative parameter weight $W$ is
comparable to $|\widehat P|$ near the swallowtail point, the sum satisfies
\begin{equation}\label{eq:quartic-global-compact-weight}
 |I_{\mathrm{total}}|\lesssim
 \frac{\Lambda^{-5/6}}{\Lambda^{-1/12}+W^{1/6}}.
\end{equation}
The central contribution follows from
\eqref{eq:refined-quartic-one-weight}; the exterior
$\Lambda^{-5/6}$ bound implies the same estimate because $W$ is
bounded. Any comparison with a physical weight must be checked separately:
the parameters \eqref{eq:quartic-adapted-parameters} are computed
at the swallowtail point of the full phase, which may differ from
that of its principal part.

\end{document}